\numberwithin{equation}{chapter}
\font\tencyr=wncyr10
\def\cyr{\tencyr\cyracc}
\author{Jean-Pierre Ramis}
\address{Institut de France (Acad{\'e}mie des Sciences)
and Laboratoire {\'E}mile Picard, 
Institut de Math{\'e}matiques, CNRS UMR 5219, 
U.F.R. M.I.G., Universit{\'e} Paul Sabatier (Toulouse 3),
31062 Toulouse CEDEX 9}
\email{ramis.jean-pierre@wanadoo.fr}
\author{Jacques Sauloy}
\address{Laboratoire {\'E}mile Picard, 
Institut de Math{\'e}matiques, CNRS UMR 5219, 
U.F.R. M.I.G., Universit{\'e} Paul Sabatier (Toulouse 3),
31062 Toulouse CEDEX 9}
\email{sauloy@math.univ-toulouse.fr}
\urladdr{http://www.math.univ-toulouse.fr/~sauloy/}
\author{Changgui Zhang}
\address{Laboratoire Paul Painlev{\'e}, UMR-CNRS 8524,
UFR Math{\'e}matiques Pures et Appliqu{\'e}es, 
Universit{\'e} des Sciences et Technologies de Lille (Lille 1),
Cit{\'e} Scientique, 59655 Villeneuve d'Ascq Cedex}
\email{Changgui.Zhang@math.univ-lille1.fr}
\urladdr{http://math.univ-lille1.fr/~zhang/}
\title{Local analytic classification of $q$-difference equations}
\def\C{{\mathbf C}}           
\def\Q{{\mathbf Q}}           
\def\Z{{\mathbf Z}}           
\def\R{{\mathbf R}}           
\def\N{{\mathbf N}}           
\def\Ker{\text{Ker~}}         
\def\Id{\text{Id}}            
\def\rk{\text{rk~}}           
\def\Mat{{\text{Mat}}}        
\def\GL{{\text{GL}}}          
\def\sq{\sigma_q}             
\def\Eq{{\mathbf{E}_{q}}}     
\def\F{{\mathcal{F}}}         
\def\B{{\mathcal{B}}}         
\def\Bq{{\mathcal{B}_{q,1}}}  
\def\O{{\mathcal{O}}}         
\def\M{{\mathcal{M}}}         
\def\D{{\mathcal{D}}}         
\def\Dq{{\D_{q}}}             
\def\Dqs{{\D_{q,s}}}          
\def\gr{{\text{gr}}}          
\def\G{{\mathfrak{G}}}        
\def\g{{\mathfrak{g}}}        
\def\Ra{{\C\{z\}}}            
\def\Ka{{\C(\{z\})}}          
\def\Rf{{\C[[z]]}}            
\def\Rfs{{\Rf_{q;s}}}         
\def\Kf{{\C((z))}}            
\def\Kfs{{\Kf_{q;s}}}         
\def\DMod{{Mod_{\D}}}          
\def\DKMod{{Mod_{\D_{K,\sigma}}}}  
\def\DMa{{DiffMod(\Ka,\sq)}}  
\def\DMf{{DiffMod(\Kf,\sq)}}  
\def\DMfs{{DiffMod(\Kfs,\sq)}} 
\def\DM{{DiffMod(K,\sigma)}}   
\def\lmod{\left |}            
\def\rmod{\right |}           
\def\lnorm{\left |\!|}        
\def\rnorm{\right |\!|}       
\def\tq{~ | ~}              
\def\ie{\emph{i.e.}}
\def\eg{\emph{e.g.}}
\def\cf{\emph{cf.}}
\def\Lin{\mathcal{L}}         
\def\Hom{\text{Hom}}          
\def\Homint{\underline{\Hom}} 
\def\Aut{\text{Aut}}          
\def\End{\text{End}}          
\def\Ext{\text{Ext}}          
\def\1{\underline{1}}         
\def\EE{{\mathbb E}} 
\def\AA{{\mathbb A}} 
\def\BB{{\mathbb B}}
\def\OO{{\mathbb O}}
\def\CC{{{\mathcal C}}}
\def\RR{{{\mathcal R}}}
\def\CA{{{\mathcal C}_{A_0}}}
\def\aa{{{\mathcal A}}}
\def\ff{{{\mathcal{F}}}}
\def\U{{\mathbf{U}}}          
\def\ii{{\text{i}}}           
\def\UU{{\mathfrak{U}}}       
\def\VV{{\mathfrak{V}}}       
\def\Cc{{\tilde{C}}}          
\def\Ee{{\mathscr{E}}}        
\def\Aa{{\mathscr{A}}}        
\def\Bb{{\mathscr{B}}}        
\def\thq{{\theta_{q}}}        
\def\thqc{{\theta_{q,c}}}      
\def\tsh{\text{\cyr Ch}}      
\def\div{{\text{div}}}        
\begin{document}
\frontmatter

\begin{abstract}
We essentially achieve Birkhoff's program for $q$-difference 
equations by giving three different descriptions of the moduli 
space of isoformal analytic classes. This involves an extension 
of Birkhoff-Guenther normal forms, $q$-analogues of the so-called 
Birkhoff-Malgrange-Sibuya theorems and a new theory of summation. 
The results were announced in \cite{RSZ1,RSZ2} and in various
seminars and conferences between 2004 and 2006.
\end{abstract}

\begin{altabstract}
Nous achevons pour l'essentiel le programme de Birkhoff pour la
classification des \'equations aux $q$-diff\'erences en donnant
trois descriptions distinctes de l'espace des modules des classes
analytiques isoformelles. Cela passe par une extension des formes
normales de Birkhoff-Guenther, des $q$-analogues des th\'eor\`emes
dits de Birkhoff-Malgrange-Sibuya et une nouvelle th\'eorie de la
sommation. Ces r\'esultats ont \'et\'e annonc\'es dans \cite{RSZ1,RSZ2}
ainsi que dans divers s\'eminaires et conf\'erences de 2004 \`a 2006.
\end{altabstract}

\subjclass{39A13. Secondary: 34M50}

\keywords{q-difference equations, analytic classification,
Stokes phenomenon, summation of divergent series}

\altkeywords{\'Equations aux $q$-diff\'erences, classification analytique,
ph\'enom\`ene de Stokes, sommation des s\'eries divergentes}

\maketitle

\tableofcontents

\mainmatter



\chapter{Introduction}


\section{The problem}


\subsection{The generalized Riemann problem and the allied problems}
\label{subsection:GRP}

This paper is a contribution to a large program stated and begun by
G.D. Birkhoff in the beginning of XXth century \cite{Birkhoff1}:
{\it the generalized Riemann problem\index{Riemann problem (generalized)} 
for linear differential equations
and the allied problems for linear difference and $q$-difference equations}.
Such problems are now called {\it Riemann-Hilbert-Birkhoff problems}.
Today the state of achievement of the program  {\it as it was formulated
by Birkhoff} in \cite{Birkhoff1} is the following:

\begin{itemize}
\item[--]
For linear {\it differential} equations the problem is completely
closed (both in the regular-singular case and in the irregular case).
\item[--]
For linear {\it $q$-difference} equations ($\lmod q\rmod\neq 1$),
taking account of preceding results due to Birkhoff \cite{Birkhoff1},
the second author \cite{JSAIF} and van der Put-Reversat \cite{vdPR}
in the regular-singular case, and to Birkhoff-Guenther \cite{Birkhoff3}
in the {\it irregular case}, the present work {\it essentially closes
the problem} and moreover answers related questions formulated later
by Birkhoff in a joint work with his PhD student P.E. Guenther
\cite{Birkhoff3} (\cf\ below \ref{subsection:BG}).
\item[--]
For linear {\it difference} equations the problem is closed for
global regular-singular equations (Birkhoff, J. Roques \cite{Roques})
and there are some important results in the irregular case \cite{Imm},
\cite{BFI}.
\end{itemize}


\subsection{The Birkhoff-Guenther program}
\label{subsection:BG}

We quote the conclusion of \cite{Birkhoff3}, it contains a program
which is one of our central motivations for the present work.
We shall call it {\it Birkhoff-Guenther program}.
\index{Birkhoff-Guenther program} \\

{\it  Up to the present time, the theory of $q$-difference equations
has lagged noticeabely behind the sister theories of linear difference
and $q$-difference equations. In the opinion of the authors, the use
of the canonical system, as formulated above in a special case, is
destined to carry the theory of $q$-difference equations to a comparable
degree of completeness. This program includes in particular the complete
theory of convergence and divergence of formal series, the explicit
determination of the essential invariants (constants in the canonical
form), the inverse Riemann theory both for the neighborhood of $x=\infty$
and in the complete plane (case of rational coefficients), explicit
integral representations of the solutions, and finally the definition
of $q$-sigma periodic matrices, so far defined essentially only in the
case $n=1$. Because of its extensiveness this material cannot be
presented here.} \\

The paper \cite{Birkhoff3} appeared in 1941 and Birkhoff died in 1944;
as far as we know ``this material" never appeared and the corresponding
questions remained opened and forgotten for a long time.


\subsection{What this paper could contain but does not}
\label{subsection:WTPC}

Before describing the contents of the paper in the following paragraph,
we shall first say briefly  what it could contain but does not.\\

The kernel of the present work is the detailed proofs of some results 
announced in \cite{RSZ1}, \cite{RSZ2} and in various seminars and 
conferences between 2004 and 2006, but there are also some new results 
in the same spirit and some examples.    \\

In this paper, we limit ourselves to the case $\lmod q\rmod\neq 1$.
The problem of classification in the case $\lmod q\rmod=1$ involves
diophantine conditions, it remains open but for the only exception
\cite{LDV2}. Likewise, we do not study problems of confluence of our
invariants towards invariants of differential equations, that is of
$q$-Stokes invariants\index{$q$-Stokes invariants}
towards classical Stokes invariants (\cf\ in
this direction \cite{LuZ}, \cite{zh4}). \\

In this work, following Birkhoff, we classify analytically equations 
admitting a fixed normal form, a moduli problem. There is another way 
to classify equations: in terms of representations of a \emph{``fundamental 
group"}, in Riemann's spirit. This is related to the Galois theory of 
$q$-difference equations, we will not develop this topic here, limiting 
ourselves to the following remarks, even if the two types of classification 
are strongly related. \\

The initial work of Riemann was a ``description" of hypergeometric
differential equations as two-dimensional \emph{representations} of
the free non-abelian \emph{group} generated by two elements. Later
Hilbert asked for a classification of meromorphic linear differential
equations in terms of {\it finite dimensional representations of
free groups}. Apparently the idea of Birkhoff was to get classifications
of meromorphic linear differential, difference and $q$-difference
equations in terms of elementary linear algebra and combinatorics
but {\it not} in terms of group representations. For many reasons
it is interesting to work in the line of Riemann and Hilbert and to
translate Birkhoff style invariants in terms of group representations.
The corresponding groups will be {\it fundamental groups} of the Riemann
sphere minus a finite set or {\it generalized fundamental groups}.
It is possible to define categories of linear differential, difference
and $q$-difference equations, these categories are tannakian categories;
then applying the fundamental theorem of Tannakian categories we can
interpret them in terms of finite dimensional representations of
pro-algebraic groups, the Tannakian groups. The fundamental groups
and the (hypothetic) generalized fundamental groups will be Zariski
dense subgroups of the Tannakian groups. \\

In the case of differential equations the work was achieved by
the first author, the corresponding group is the {\it wild fundamental
group}\index{wild fundamental group} 
which is Zariski dense in the Tannakian group.  In the case
of difference equations almost nothing is known. In the case of
$q$-difference equations, the situation is the following.

\begin{enumerate}
\item
In the \emph{local regular singular case} the work was achieved by
the second author in \cite{JSGAL}, the generalized fundamental group
is abelian, its semi-simple part is abelian free on two generators and
its unipotent part is isomorphic to the additive group $\C$.
\item
In the \emph{global regular singular case} only the abelian case
is understood  \cite{JSGAL}, using the geometric class field theory.
In the general case some non abelian class field theory is needed.
\item
In the \emph{local irregular case}, using the results of the present
paper the first and second authors recently got a generalized
fundamental group \cite{RS1,RS2,RS3}.
\item
Using case 3, the solution of the \emph{global general case} should
follows easily from the solution of case 2.
\end{enumerate}

\subsubsection{About the asumption that the slopes are integral}
\label{subsubsection:integralslopes}

The ``abstract'' part of our paper does not require any assumption
on the slopes\index{slopes (integrality assumption)}
: that means general structure theorems, \eg\
theorem \ref{theo:schemadesmodules} and proposition
\ref{prop:schemadesmodulesqGevrey}. The same is true of the
decomposition of the local Galois group of an irregular equation
as a semi-direct product of the formal group by a unipotent group
in \cite{RS1,RS2}. \\

However, all our \emph{explicit} constructions (Birkhoff-Guenther
normal form, privileged cocycles, discrete summation) rest on the
knowledge of a normal form for pure modules, that we have found
only in the case of integral slopes. \\

In \cite{vdPR} van der Put and Reversat classified pure modules with 
non integral slopes. The extension of our results to the case of non
integral slopes, using \cite{vdPR}, does not seem to have been done - 
and would be very useful\footnote{Long after the present work had been 
submitted for publication, this task was undertaken by Virginie Bugeaud,
see \cite{VB}.}.


\section{Contents of the paper}
\label{section:contents}

We shall classify analytically isoformal $q$-difference equations.
That is, we consider analytic $q$-difference modules together with
an isomorphism of their formalisation with a given formal $q$-difference
module, and we consider equivalences preserving this additional structure.
This is in adequation to the corresponding problem for differential
equations, see for instance \cite{BV}. \\

The isoformal analytic classes form an affine space, we shall give 
\emph{three descriptions} of this space respectively in chapters  
\ref{chapter:isoformalclasses}, \ref{chapter:qMalgrangeSibuya} 
and \ref{chapter:geometryofF(M)} (this third case is a particular 
case of the second and is based upon chapter 
\ref{chapter:asymptotictheory}) using different constructions of 
the analytic invariants. A \emph{direct and explicit} comparison 
between the second and the third description is straightforward 
but such a comparison between the first and the second or the
third construction is quite subtle; in the present paper it will 
be clear for the ``abelian case", of two integral slopes; for the 
general case we refer the reader to \cite{RS1,RS2} and also to 
work in progress \cite{JSmoduli}. \\

Chapter \ref{chapter:generalnonsense} deals with the general
setting of the problem (section \ref{section:catqdiffmods})
and introduces two fundamental tools: the Newton polygon 
and the slope filtration (section \ref{section:Newtonetfiltration}). 
From this, the problem of analytic isoformal classification, that
we state in section \ref{section:analyticisoformalclassification},
admits a purely algebraic formulation: the isograded classification
of filtered difference modules. Simple examples are tackled in the
same section to give an idea of the landscape. This is also the
occasion to introduce the $q$-Borel transformation. The corresponding 
algebraic theory is developped in a more general setting in the 
appendix \ref{chapter:ISOGRAD}. \\

The first attack at the classification problem for $q$-difference
equations comes in chapter \ref{chapter:isoformalclasses}. Section
\ref{section:isoformalclasses} specializes the results of appendix
\ref{chapter:ISOGRAD} to $q$-differences, and section 
\ref{section:indextheorems} provides the proof of some related
index computations. One finds that the space of classes is an
affine scheme (theorem \ref{theo:schemadesmodules}) and computes
its dimension. This is a rather abstract result. In order to
provide explicit descriptions (normal forms, coordinates, 
invariants ...) from section \ref{section:explicitdescription}
up to the end of the paper, we assume that \emph{the slopes of 
the Newton polygon are integers}. This allows for the more precise 
theorem \ref{theo:kintegralslopesspace} and the existence of 
Birkhoff-Guenther normal forms originating in \cite{Birkhoff3}. 
This also makes easier the explicit computations of the following 
chapters. Then some precisions about $q$-Gevrey classification are 
given in section \ref{section:qGevreyinterpolation}. \\

Analytic isoformal classification by normal forms is a special 
feature of the $q$-difference case, such a thing does not exist 
for differential equations. To tackle this case Birkhoff introduced 
functional $1$-cochains using Poincar{\'e} asymptotics 
\cite{Birkhoff0,Birkhoff1}. Later, in the seventies, Malgrange 
interpreted Birkhoff cochains using sheaves on a circle $S^1$ 
(the real blow up of the origin of the complex plane). 
Here, we modify these constructions in order to deal with 
the $q$-difference case, introducing a new asymptotic theory 
and replacing the circle $S^1$ by the elliptic curve 
$\Eq = \C^{*}/q^{\Z}$. \\

In this spirit, chapter \ref{chapter:qMalgrangeSibuya} tackles 
the extension to $q$-difference equations of the so called 
Birkhoff-Malgrange-Sibuya theorems. In section 
\ref{section:asymptotics} is outlined an asymptotic theory 
adapted to $q$-difference equations but weaker than that of 
section \ref{section:asymptoticsreldivisor} of the next chapter:
the difference is the same as between classical Gevrey versus
Poincar{\'e} asymptotics. The counterpart of the Poincar{\'e}
version of Borel-Ritt is theorem \ref{theo:BorelRittfaible};
also, comparison with Whitney conditions is described in lemma
\ref{prop:asymptotiqueetWhitney}. Indeed, the geometric methods 
of section \ref{section:fundamentalisomorphism} rest on 
the integrability theorem of Newlander-Niremberg. They allow
the proof of the first main theorem, the $q$-analogue of the
abstract Birkhoff-Malgrange-Sibuya theorem (theorem 
\ref{theo:firstqMalgrangeSibuya}). Then, in section
\ref{section:theisoformalclassificationJP}, it is applied to 
the classification problem for $q$-difference equations and 
one obtains the $q$-analogue of the concrete Birkhoff-Malgrange-Sibuya 
theorem (theorem \ref{theo:secondqMalgrangeSibuya}): there is
a natural bijection from the space $\F(M_{0})$ of analytic isoformal 
classes in the formal class of $M_{0}$ with the first cohomology set
$H^{1}(\Eq,\Lambda_{I}(M_{0}))$ of the ``Stokes sheaf''. The latter is 
the sheaf of automorphisms of $M_{0}$ infinitely tangent to identity, 
a sheaf of unipotent groups over the elliptic curve $\Eq = \C^{*}/q^{\Z}$.
The proof of theorem \ref{theo:secondqMalgrangeSibuya} appeals to the 
fundamental theorem of existence of asymptotic solutions, previously 
proved in section \ref{section:existasymptsols}. \\

Chapter \ref{chapter:asymptotictheory} aims at developping a summation 
process for $q$-Gevrey divergent series. After some preparatory material 
in section \ref{section:preparatorymaterial}, an asymptotic theory 
``with estimates" well suited for $q$-difference equations is expounded 
in section \ref{section:asymptoticsreldivisor}. Here, the sectors of 
the classical theory are replaced by preimages in $\C^{*}$ of the Zariski 
open sets of the elliptic curve $\C^{*}/q^{\Z}$, that is, complements of 
finite unions of discrete $q$-spirals in $\C^{*}$; and the growth 
conditions at the boundary of the sectors are replaced by polarity 
conditions along the discrete spirals. The $q$-Gevrey analogue of 
the classical theorems are stated and proved in sections 
\ref{section:BorelRitt} and 
\ref{section:relativedivisorsandmultisummable}: the counterpart
of the Gevrey version of Borel-Ritt is theorem \ref{theo:BorelRitt}
and multisummability conditions appear in theorems \ref{theo:mniveaux}
and \ref{theo:masymptotique}. The theory is then applied
to $q$-difference equations and to their classification in 
section \ref{section:analyticclassificationCZ}, where is proved
the summability of solutions (theorem \ref{theo:sommabilite}),
the existence of asymptotic solutions coming as a consequence
(theorem \ref{theo:solutionsasymptotiques}) and the description
of Stokes phenomenon as an application (theorem \ref{theo:Stokes}). \\

Chapter \ref{chapter:geometryofF(M)} deals with some complementary
information on the geometry of the space $\F(M_{0})$ of analytic
isoformal classes, through its identification with the cohomology 
set $H^{1}(\Eq,\Lambda_{I}(M_{0}))$ obtained in chapter 
\ref{chapter:qMalgrangeSibuya}. Theorem \ref{theo:secondqMalgrangeSibuya}
of chapter \ref{chapter:qMalgrangeSibuya} implicitly attaches cocycles 
to analytic isoformal classes and theorem \ref{theo:Stokes} of chapter 
\ref{chapter:asymptotictheory} shows how to obtain them by a summation
process. In section \ref{section:privilegedcocycles}, we give yet another
construction of ``privileged cocycles'' (from \cite{JSStokes}) and study
their properties. In section \ref{section:devissageqGevrey}, we show
how the devissage of the sheaf $\Lambda_{I}(M_{0})$ by holomorphic
vector bundles over $\Eq$ allows to identify $H^{1}(\Eq,\Lambda_{I}(M_{0}))$ 
with an affine space, and relate it to the corresponding result of theorem 
\ref{theo:kintegralslopesspace}. In section \ref{section:vectorbundles},
we recall how holomorphic vector bundles over $\Eq$ appear naturally
in the theory of $q$-difference equations and we apply them to an
interpretation of the formula for the dimension of $\F(M_{0})$. \\

Chapter \ref{chapter:examplesStokes} provides some elementary examples
motivated by their relation to $q$-special functions, either linked 
to modular functions or to confluent basic hypergeometric series. \\

The appendix is devoted to the study of the following purely algebraic
problem: to classify (in a given abelian category) all finitely filtered
objects with fixed associated graded object, up to equivalence compatible
with the graduation. Since the space under study looks like some generalized 
extension module, it relies on homological algebra and index computations.
The problem is solved here under restrictive assumptions, but which are
sufficient to apply them to our specific situation. Note that, to ease
the reading of chapter \ref{chapter:generalnonsense}, there are some
redundancies, so that the reader does not have to swallow the whole 
appendix before reading that chapter !

\subsection{About the wealth of summation processes}

It is important to notice that the construction of $q$-analogs 
of classical objects\index{$q$-analogs (are not canonical)} 
(special functions\ldots) is not canonical, 
there are in general several ``good" $q$-analogs. So there are 
several $q$-analogs of the Borel-Laplace 
summation\index{summation processes (wealth of)} 
(and 
multisummation)\footnote{Note however that the ``algebraic summation'' 
introduced in \cite{JSStokes} and recalled and used here in chapter 
\ref{chapter:geometryofF(M)} is actually just a special case of 
the general summation process described in chapter
\ref{chapter:asymptotictheory}: when one restricts to solutions of
$q$-difference equations, the algorithm of summation admits a more
elementary expression.}:
there are several choices for Borel and Laplace 
kernels (depending on a choice of $q$-analog of the exponential 
function) and several choices of the integration contours (continuous 
or discrete in Jackson style) \cite{RamisPanorama}, \cite{rz}, 
\cite{zh1,zh2,zh3}. The paper \cite{LuZ} studies a case where
distinct processes give the same result, while \cite{CZMORD} dwells
on the difference of such results to obtain modularity properties. \\

Our choice of summation here seems quite "optimal": the entries of our 
Stokes matrices are \emph{elliptic functions} (\cf\ the ``$q$-sigma 
periodic matrices" of Birkhoff-Guenther program), moreover Stokes 
matrices are meromorphic in the parameter of ``$q$-direction of 
summation"; this is essential for applications to $q$-difference 
Galois theory (\cf\ \cite{RS1,RS2,RS3}). Unfortunately we did not 
obtain explicit integral formulae for this summation (except for 
some particular cases), in contrast with what happens for other 
summations introduced before by the third author.

\section{General notations}
\label{section:generalnotations}

Generally speaking, in the text, the sentence $A := B$ means that 
the term $A$ is defined by formula $B$. Numbered equations are 
numbered according to the chapter they appear in: thus, equation 
(2.3) is the third numbered equation in chapter 2. \\

But for some changes, the notations are the same as in \cite{RSZ1}, 
\cite{RSZ2}, etc. Here are the most useful ones\footnote{The reader
will also find on page \pageref{indexnotations} an index of notations
followed by a terminological index.}. \\

\label{not1}
We write $\Ra$ the ring of convergent power series (holomorphic
germs at $0 \in \C$) and $\Ka$ its field of fractions (meromorphic
germs). Likewise, we write $\Rf$ the ring of formal power series 
and $\Kf$ its field of fractions. \\

We fix once for all a complex number $q \in \C$\label{not2}
such that 
$\lmod q \rmod > 1$. Then, the dilatation operator $\sq$ 
is an automorphism of any of the above rings and fields,
well defined by the formula:
$$
\label{not3}
\sq f(z) := f(q z).
$$
Other rings and fields of functions on which $\sq$ operates
will be introduced in the course of the paper. This operator 
also acts coefficientwise on vectors, matrices\ldots over any 
of these rings and fields. \\

\label{not4}
We write $\Eq$ the complex torus (or elliptic curve) 
$\Eq := \C^{*}/q^{\Z}$ and $p: \C^{*} \rightarrow \Eq$
the natural projection. For all $\lambda \in \C^*$, we write 
$[\lambda;q] := \lambda q^{\Z} \subset \C^{*}$ the \emph{discrete 
logarithmic $q$-spiral}\index{$q$-spiral (discrete logarithmic)}
through the point $\lambda \in \C^{*}$. 
All the points of $[\lambda;q]$ have the same image 
$\overline{\lambda} := p(\lambda) \in \Eq$ and we may identify 
$[\lambda;q] = p^{-1}\left(\overline{\lambda}\right)$ with 
$\overline{\lambda}$. \\

A linear analytic (resp. formal) 
$q$-difference equation\index{$q$-difference equation} (implicitly:
at $0 \in \C$) is an equation:
\begin{equation}
\label{not5}
\sq X = A X, 
\end{equation}
where $A \in \GL_{n}(\Ka)$ (resp. $A \in \GL_{n}(\Kf)$).


\subsection{Theta Functions}
\label{subsection:thetafunctions}

Jacobi theta functions
\index{Jacobi theta function} 
\index{theta function}
pervade the theory of $q$-difference equations.
We shall mostly have use for two slightly different forms of them. \\

In chapter \ref{chapter:asymptotictheory}, we shall use:
\label{not6}
\begin{equation}
\label{eqn:tripleproduit}
\theta(z;q) := \sum_{n\in\Z} q^{-n(n-1)/2} z^n =
\prod_{n\in\N}(1 - q^{-n-1})(1 + q^{-n} z)(1 + q^{-n-1}/z).
\end{equation} 
The second equality is Jacobi's celebrated triple product 
formula\index{Jacobi's triple product formula}\index{triple product formula}.
When obvious, the dependency in $q$ will be omitted and we shall write 
$\theta(z)$ instead of $\theta(z;q)$. One has: 
$$
\theta(qz) = q~z~\theta(z) \text{~and~} \theta(z) = \theta(1/q z).
$$

In chapter \ref{chapter:geometryofF(M)}, we shall rather use:
\label{not7}
\begin{equation}
\label{eqn:thq}
\thq(z) := \sum_{n \in \Z} q^{-n(n+1)/2} z^{n} 
        = \prod_{n\in\N}(1 - q^{-n-1})(1 + q^{-n-1} z)(1 + q^{-n}/z).
\end{equation}
One has of course $\thq(z) = \theta(q^{-1}z;q)$ and:
$$
\thq(q z) = z \thq(z) = \thq(1/z).
$$
Both functions are analytic over the whole of $\C^*$ and vanish on 
the discrete $q$-spiral $- q^{\Z}$ with simple zeroes. \\

In chapter \ref{chapter:examplesStokes}, we shall use in alternance
both forms, according to which fits better the needs of the computation.
Indeed, in section \ref{section:QuadForms}, we shall even use the
``classical'' forms $\theta_{i,j}$, $i,j \in \{0,1\}$ to be found in
the standard theory of special functions.


\subsection{$q$-Gevrey levels}
\label{subsection:qGevreylevels}

As in \cite{Bezivin},\cite{RamisGrowth}, we introduce the space 
of formal series of 
\emph{$q$-Gevrey order $s$}\index{$q$-Gevrey order (series)}:
$$
\Rfs := \left\{\sum a_{n} z^{n} \in \Rf \tq 
\exists A > 0 ~:~ a_{n} = O\left(A^{n} q^{s n^{2}/2}\right)\right\}.
$$
We also say that $f \in \Rfs$\label{not8}
is of 
\emph{$q$-Gevrey level $1/s$}\index{$q$-Gevrey level (series)}.
It understood that $\Rf_{q;0} = \Ra$ and $\Rf_{q;\infty} = \Rf$. Note
however that it is \emph{not} true that 
$\bigcap\limits_{s > 0} \Rf_{q;s} = \Ra$, as shows the example of the
classical Euler series $\sum n! z^n$, which belongs to the former
but not to the later (and the same is true for any divergent series
having a finite Gevrey level in the sense of the classical theory
of ordinary differential equations); nor that 
$\bigcup\limits_{s > 0} \Rf_{q;s} = \Rf$, as shows the example of 
$\sum n^n z^n$, which belongs to the latter but not to the former. \\

Similar considerations apply to spaces of Laurent formal series:
$$
\Kfs := \Rfs[1/z].
$$
More generally, one can speak of $q$-Gevrey sequences of complex
numbers. Let $k \in \R^* \cup \{\infty\}$ and $s := \dfrac{1}{k}$. 
A sequence $(a_n) \in \C^{\N}$ is 
\emph{$q$-Gevrey of order $s$}\index{$q$-Gevrey order (sequence)} if 
it is dominated by a sequence of the form 
$\left(C A^n \lmod q \rmod^{n^2/(2k)}\right)$, for some constants 
$C,A > 0$. \\
Note that this terminology is all about sequences, or coefficients of 
series. Extension to $q$-Gevrey 
asymptotics\index{$q$-Gevrey asymptotics}\index{asymptotics ($q$-Gevrey)}
is explained in definition
\ref{defi:ALambda}, while the $q$-Gevrey 
interpolation\index{$q$-Gevrey interpolation}
by growth of decay 
of functions is dealt with in definitions \ref{defi:LambdaF} and
\ref{defi:diviseurrelatif}.

\section*{Acknowledgements}

During the final redaction of this work, the first author beneffited 
of a support of the ANR \emph{Galois} (Programme Blanc) and was invited 
by the departement of mathematics of Tokyo University (College of Arts 
and Science, Komaba). During the same period, the second author 
benefitted of a semester CRCT by the Universit\'e Toulouse 3 and
was successively invited by University of Valladolid (Departamento
de Algebra, Geometria and Topologia), Universit\'e des Sciences
et Technologies de Lille (Laboratoire Paul Painlev\'e) and
Institut Math\'ematique de Jussieu (\'Equipe de Topologie et
G\'eom\'etrie Alg\'ebriques); he wants to thank warmly these
three departments for their nice working atmosphere. \\
We are happy to thank here our friends and colleagues Lucia Di Vizio
and Anne Duval who showed active interest in our work, offering many
valuable suggestions. \\
Last, we heartily thank the referee of prevous submissions of this paper, 
who encouraged us to add some motivating examples (these make up the contents 
of chapter \ref{chapter:examplesStokes}) and to extend it to the present 
form; and we thank even more heartily the referee of the \emph{present} 
submission to Asterisque, who spent a great lot of time and effort to help 
us make it readable.


 
\chapter{Some general nonsense}
\label{chapter:generalnonsense}


\section{The category of $q$-difference modules}
\label{section:catqdiffmods}

General references for this section are \cite{vdPS}, \cite{JSFIL}.


\subsection{Some general facts about difference modules}
\label{subsection:generalitiesdiffmods}

Here, we also refer to the classical litterature about difference fields, 
like \cite{Cohn} and \cite{Franke} (see also \cite{Ore}). A detailed
proof of many elementary algebraic facts can be found in \cite{LDVS}.
We shall also use specific results proved in appendix \ref{chapter:ISOGRAD}. \\

We call \emph{difference field}\index{difference field}\footnote{Much 
of what follows will be extended to the case of difference \emph{rings} 
in appendix \ref{chapter:ISOGRAD}, where the basic linear constructions 
will be described in great detail; the reader is encouraged to refer to 
the appendix only when necessary.} a pair $(K,\sigma)$, where $K$ is 
a (commutative) field and $\sigma$ a field automorphism of $K$. 
We write indifferently $\sigma(x)$ or $\sigma x$ the action 
of $\sigma$ on $x \in K$. One can then form the Ore ring of 
difference operators:
$$
\label{not9}
\D_{K,\sigma} := K \langle T,T^{-1} \rangle
$$
characterized by the twisted commutation relation:
$$
\forall k \in \Z ~,~ x \in K ~,~ T^{k} x = \sigma^{k}(x) T^{k}.
$$
We shall rather write somewhat improperly 
$\D_{K,\sigma} := K \langle \sigma,\sigma^{-1} \rangle$ and, for short,
$\D := \D_{K,\sigma}$ in this section. The center of 
$\D$ is the ``field of constants'':
$$
\label{not10}
K^{\sigma} := \{x \in K \tq \sigma(x) = x\}.
$$
The ring $\D$ is left euclidean and any ideal is generated by a unique
entire unitary polynomial 
$P = \sigma^{n} + a_{1} \sigma^{n-1} + \cdots + a_{n}$. \\

Any (left) $\D$-module $M \in \DMod$ \label{not11}
can be seen as
a $K$-vector space $E$ and the left multiplication $x \mapsto \sigma.x$
as a \emph{semi-linear} (or $\sigma$-linear) automorphism 
\index{semi-linear automorphism}
\index{$\sigma$-linear automorphism}
$\Phi: E \rightarrow E$, which means
that $\Phi(\lambda x) = \sigma(\lambda) \Phi(x)$; and any pair 
$(E,\Phi)$ \label{not12}
of a $K$-vector space $E$ and a semi-linear automorphism $\Phi$ of $E$
defines a $\D$-module; we just write $M = (E,\Phi)$. Morphisms
from $(E,\Phi)$ to $(E',\Phi')$ in $\DMod$ are linear maps
$u \in \Lin_{K}(E,E')$ such that $\Phi' \circ u = u \circ \Phi$. \\

The $\D$-module $M$ has finite length if, and only if, $E$ is a finite 
dimensional $K$-vector space. A finite length $\D$-module is called a 
\emph{difference module} \index{difference module} 
over $(K,\sigma)$, or over $K$ for short. The 
full subcategory of $\DMod$ whose objects are of difference modules is 
written $DiffMod(K,\sigma)$. \label{not13}
The categories $\DMod$ and $DiffMod(K,\sigma)$
are abelian and $K^{\sigma}$-linear. \\

By choosing a basis of $E$, we can identify any difference module
\label{not14}
with some $(K^{n},\Phi_{A})$, where $A \in \GL_{n}(K)$ and 
$\Phi_{A}(X) := A^{-1} \sigma X$ (with the natural operation of 
$\sigma$ on $K^{n})$; the reason for using $A^{-1}$ will become clear soon. 
If $B \in \GL_{p}(K)$, then morphisms from $(K^{n},\Phi_{A})$ to
$(K^{p},\Phi_{B})$ can be identified with matrices $F \in \Mat_{p,n}(K)$ such 
that $(\sigma F) A = B F$ (and composition amounts to the product of matrices).

\subsubsection{Unity}

An important particular object is the \emph{unity} \index{unity} 
\label{not15} 
$\1$, which may be 
described either as $(K,\sigma)$ or as $\D/\D P$ with $P = \sigma - 1$. 
For any difference module $M = (E,\Phi)$ the $K^{\sigma}$-vector 
space $\Hom(\1,M)$ can be identified with the kernel of the $K^{\sigma}$-linear 
map $\Phi - Id: E \rightarrow E$; in case $M =(K^{n},\Phi_{A})$, this boils 
down to the space $\{X \in K^{n} \tq \sigma X = A X\}$ of solutions of a 
``$\sigma$-difference system''; whence our definition of $\Phi_{A}$. The 
\emph{functor of solutions} \index{functor of solutions} 
$M \leadsto \Gamma(M) := \Hom(\1,M)$, from
the category $DiffMod(K,\sigma)$ to the category of finite dimensional
(see remark \ref{rema:fibrefunctors}) vector spaces over $K^{\sigma}$ is 
left exact and $K^{\sigma}$-linear. We shall have use for its right derived 
functors $\Gamma^{i}(M) = \Ext^{i}(\1,M)$ (\emph{see} \cite{BAH}).

\subsubsection{Internal Hom} \index{internal Hom}

Let $M = (E,\Phi)$ and $N = (F,\Psi)$. The map 
$T_{\Phi,\Psi}: f \mapsto \Psi \circ f \circ \Phi^{-1}$ is a semi-linear 
automorphism of the $K$-vector space $\Lin_{K}(E,F)$, whence a difference 
\label{not16} 
module $\Homint(M,N) := \bigl(\Lin_{K}(E,F),T_{\Phi,\Psi}\bigr)$. 
Then one has $\Homint(\1,M) = M$ and
$\Gamma\bigl(\Homint(M,N)\bigr) = \Hom(M,N)$. 
The \emph{dual} \index{dual}
of $M$ is $M^{\vee} := \Homint(M,\1)$, so that 
$\Hom(M,\1) = \Gamma(M^{\vee})$. For instance, the dual
of $M =(K^{n},\Phi_{A})$ is $M^{\vee} =(K^{n},\Phi_{A^{\vee}})$, where
$A^{\vee} := {}^{t} A^{-1}$. 

\subsubsection{Tensor product} \index{tensor product}

Let $M = (E,\Phi)$ and $N = (F,\Psi)$. The map 
$\Phi \otimes \Psi: x \otimes y \mapsto \Phi(x) \otimes \Psi(y)$ from
$E \otimes_{K} F$ to itself is well defined and it is a semi-linear
automorphism, whence a difference module 
\label{not17} 
$M \otimes N = (E \otimes_{K} F,\Phi \otimes \Psi)$. 
The obvious morphism yields the 
adjunction relation \index{adjunction relation}: 
$$
\Hom\bigl(M,\Homint(N,P)\bigr) = \Hom(M \otimes N,P).
$$
We also have functorial isomorphisms $\1 \otimes M = M$ and
$\Homint(M,N) = M^{\vee} \otimes N$. The classical 
computation of the rank \index{rank} through 
$\1 \rightarrow M^{\vee} \otimes M \rightarrow \1$ yields $\dim_{K} E$
as it should. We write $\rk M$ this number.

\subsubsection{Extension of scalars} \index{extension of scalars} 

An extension difference field $(K',\sigma')$ of $(K,\sigma)$ 
consists in an extension $K'$ of $K$ and an automorphism $\sigma'$ 
of $K'$ which restricts to $\sigma$ on $K$. Any difference module 
$M = (E,\Phi)$ over $(K,\sigma)$ then gives rise to a difference 
module $M' = (E',\Phi')$ over $(K',\sigma')$, where 
$E' := K' \otimes_{K} E$ and $\Phi' := \sigma' \otimes \Phi$ is 
defined the same way as above. We then write $\Gamma_{K'}(M)$ 
the ${K'}^{\sigma'}$-vector space $\Gamma(M')$. The \emph{functor of 
solutions (with values) in $K'$} 
\index{functor of solutions with values in $K'$}
is defined as $M \leadsto \Gamma(M')$;
it is left exact and $K^{\sigma}$-linear. The functor $M \leadsto M'$ 
from $DiffMod(K,\sigma)$ to $DiffMod(K',\sigma')$ is compatible 
with unity, internal Hom, tensor product and dual. The image 
of $M =(K^{n},\Phi_{A})$ is $M' =({K'}^{n},\Phi_{A})$.


\subsection{$q$-difference modules} \index{$q$-difference modules}
\label{subsection:qdiffmods}

\label{not18}
We now restrict our attention to the difference fields $\bigl(\Ka,\sq\bigr)$ 
and $\bigl(\Kf,\sq\bigr)$, and to the corresponding categories of analytic, 
resp. formal, $q$-difference modules: they are $\C$-linear abelian categories 
since $\Ka^{\sq} = \Kf^{\sq} = \C$. In both settings, we consider the 
$q$-difference module $M =(K^{n},\Phi_{A})$ as an abstract model for the 
linear $q$-difference system \index{$q$-difference system} 
$\sq X = A X$. Isomorphisms from (the system
with matrix) $A$ to (the system with matrix) $B$ in either category
correspond to analytic, resp. formal, 
gauge transformations, 
\index{gauge transformation}
\ie\ matrices $F \in \GL_{n}(\Ka)$, resp. $F \in \GL_{n}(\Kf)$, such that
$B = F[A] := (\sq F) A F^{-1}$. We write $\Dq$ indifferently
for $\D_{\Ka,\sq}$ or $\D_{\Kf,\sq}$ when the distinction is irrelevant.

\begin{lemm}[Cyclic vector lemma] \index{cyclic vector lemma}
\label{lemm:cyclicvector}
Any (analytic or formal) $q$-difference module is isomorphic to a module
$\Dq/\Dq P$ for some unitary entire $q$-difference operator $P$.
\end{lemm}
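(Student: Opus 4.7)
The plan is standard: translate the statement into the existence of a \emph{cyclic vector} for the underlying $\sq$-linear operator $\Phi$, and then establish the existence of such a vector. Throughout, $K$ stands for either $\Ka$ or $\Kf$.

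Write $M = (E, \Phi)$ with $\dim_K E = n = \rk M$. A cyclic vector is an $e \in E$ such that $\bigl(e, \Phi(e), \ldots, \Phi^{n-1}(e)\bigr)$ is a $K$-basis of $E$. Granted such an $e$, the $\Dq$-linear morphism $\Dq \to M$ sending $L \mapsto L \cdot e$ (where $\sq$ acts via $\Phi$) is surjective; its kernel is a left ideal of $\Dq$, hence by the left-Euclidean property is generated by a unique entire unitary polynomial $P$. The $K$-linear relation that must express $\Phi^n(e)$ in the basis $\bigl(\Phi^i(e)\bigr)_{0 \le i < n}$ forces $\deg P = n$, yielding the desired isomorphism $M \cong \Dq / \Dq P$.

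For the existence of a cyclic vector, I would run a standard genericity argument of the Deligne--Katz type, adapted to the $\sq$-linear setting. Fix any $K$-basis $(e_1, \ldots, e_n)$ of $E$ and consider candidates $v(\lambda) := \sum_i \lambda_i e_i$ with $\lambda \in K^n$. Cyclicity of $v(\lambda)$ is equivalent to the nonvanishing of the $\sq$-Wronskian
$$
\Delta(\lambda) := \det\bigl(v(\lambda) \mid \Phi(v(\lambda)) \mid \cdots \mid \Phi^{n-1}(v(\lambda))\bigr),
$$
where the determinant is computed in the basis $(e_i)$; by $\sq$-linearity of $\Phi$, the $j$-th column depends on the coefficients of $\lambda, \sq(\lambda), \ldots, \sq^j(\lambda)$. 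It suffices to find one $\lambda$ with $\Delta(\lambda) \ne 0$, and one does so by specializing each $\lambda_i$ to a Laurent monomial $z^{c_i}$ with pairwise distinct and widely spread integer exponents. Since $\sq(z^c) = q^c z^c$, the dominant term of $\Delta$ then factors as a power of $z$ times a Vandermonde-type determinant in the pairwise distinct numbers $q^{c_i}$ (using $\lmod q \rmod > 1$), which is nonzero.

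The main obstacle is this final specialization step: one must choose the exponents $c_i$ with gaps large enough that the Vandermonde leading term is not cancelled by the lower-order interactions with the matrices of $\Phi, \Phi^2, \ldots, \Phi^{n-1}$ in the basis $(e_i)$. This reduces to a $z$-valuation bookkeeping whose bound depends on the entries of those matrices. Because the specialization $\lambda_i = z^{c_i}$ lies in $\Ka \subset \Kf$, a single choice handles the analytic case $K = \Ka$ and the formal case $K = \Kf$ uniformly.
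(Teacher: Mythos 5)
The paper offers no proof of its own: the cyclic vector lemma is dispatched with a citation to Di Vizio and Sauloy, so there is nothing in the text to compare your sketch against directly. Your reduction — cyclic vector $e$ gives a surjection $\Dq \to M$, whose kernel is principal by left-euclideanity, generated by a unique unitary entire $P$ of degree $n$ — is correct and is the standard translation.

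The specialization step, however, has a genuine gap, and it is not the kind that ``valuation bookkeeping'' fixes. Write the Wronskian in coordinates: if $\Phi^j$ has matrix form $X \mapsto B_j \sigma^j(X)$ ($B_0 = I$), then
$\Delta(\lambda) = \det[B_j \sigma^j X]_j = \sum_{(k_0,\ldots,k_{n-1})} d_{(k_j)} \prod_j \sigma^j(\lambda_{k_j})$,
where $d_{(k_j)} := \det[\,\mathrm{col}_{k_j}(B_j)\,]_j \in K$ and the sum runs over \emph{all} tuples in $\{1,\ldots,n\}^n$, not just permutations — the $q$-Wronskian is \emph{not} multilinear in the $\lambda_i$, because $\sigma^j(\lambda_k)$ for distinct $j$ and the same $k$ are independent quantities. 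Plugging in $\lambda_i = z^{c_i}$ gives $\Delta = \sum_{(k_j)} d_{(k_j)}\, q^{\sum j c_{k_j}} z^{\sum_j c_{k_j}}$. Your Vandermonde-in-$q^{c_i}$ arises from the permutation tuples, which all carry the \emph{same} power $z^{c_1+\cdots+c_n}$. But these are never the dominant terms when the $c_i$ are spread out: if $c_{i_0}$ is the smallest, the constant tuple $(i_0,\ldots,i_0)$ has strictly lower valuation $n c_{i_0}$, and its coefficient $d_{(i_0,\ldots,i_0)}$ is precisely the $\sq$-Wronskian of the basis vector $e_{i_0}$ — nonzero iff $e_{i_0}$ is already cyclic. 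So either the argument is circular (you need a cyclic basis vector to start with), or, if those coefficients vanish, you must analyze which of the $d_{(k_j)}$ survive, a module-dependent combinatorial problem that is the real content of the lemma. The cited proofs proceed differently, essentially by a ``staircase'' induction: given $v$ generating a proper $\Phi$-invariant subspace, one shows that for all but finitely many constants $c$ the vector $v + c\,w$ (with $w$ outside that subspace) generates a strictly larger one, using that $\sq$ has infinite order; iterating yields a cyclic vector. Your first paragraph is fine, but the existence argument needs to be replaced.
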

\begin{proof}
See \cite{LDV}, \cite{JSAIF}.
\end{proof}

Note however that the ideal $\Dq P$, and thus its generator $P$, are by
no means unique, since they depend on the choice of a cyclic vector. For 
instance, it is an easy exercice to show that the operators $P := \sq - a$ 
and $P' := \sq - a'$ (with $a,a' \in K^{*}$) give rise to isomorphic modules
$\Dq/\Dq P \simeq \Dq/\Dq P'$ if, and only if, $a'/a$ belongs to the subgroup 
$\{\frac{\sigma b}{b} \tq b \in K^{*}\}$ of $K^{*}$ (\cite{LDVS}, exemple 3.9).

\begin{theo}
\label{theo:quasitannakiennes}
The categories $\DMa$ and $\DMf$ are abelian $\C$-linear rigid tensor 
categories. 
\end{theo}
\begin{proof}
See for instance \cite{vdPS}, \cite{JSGAL}. (From proposition 3.1 of 
\cite{RS1}, it actually follows that $\DMa$ is a neutral tannakian 
category, and the same is true for $\DMf$ by \cite{vdPS} or \cite{vdPR},
but we won't use these facts.)
\end{proof}

\subsubsection{The functor of solutions} \index{functor of solutions}
\label{subsubsection:functorofsolutions}

It is customary in $\D$-module theory to call \emph{solution} 
of $M$ a morphism $M \rightarrow \1$. Indeed, a solution of 
$\D/\D P$ is then an element of $\Ker P$. We took the dual 
convention, yielding a \emph{covariant} $\C$-linear left 
exact functor $\Gamma$ for the following reason. To any 
analytic $q$-difference module $M = (\Ka^{n},\Phi_{A})$, 
one can associate a holomorphic vector bundle $F_{A}$ over 
the elliptic curve (or complex torus) $\Eq := \C^{*}/q^{\Z}$ 
\index{elliptic curve $\Eq$}
in such a way that the space of global sections of $F_{A}$ 
can be identified with $\Gamma(M)$. We thus think 
of $\Gamma$ as a functor of global sections, and, 
the functor $M \leadsto F_{A}$ being exact, the $\Gamma^{i}$ 
can be defined through sheaf cohomology. \\

There is an interesting relationship between the space 
$\Gamma(M) = \Hom(\1,M)$ of solutions of $M$ and the space
$\Gamma(M^{\vee}) = \Hom(M,\1)$ of ``cosolutions'' of $M$. Starting 
from a unitary entire (analytic or formal) $q$-difference operator
$P = \sq^{n} + a_{1} \sq^{n-1} + \cdots + a_{n} \in \Dq$,
one studies the solutions of the $q$-difference equation:
$$
P.f := \sq^{n} f + a_{1} \sq^{n-1} f + \cdots + a_{n} f = 0
$$
by vectorializing it into the form:
$$
\sq X = A X, \text{~where~} 
X = \begin{pmatrix} f \\ \sq f \\ \vdots \\ 
\sq^{n-2} f \\ \sq^{n-1} f \end{pmatrix}
\text{~and~}
A = \begin{pmatrix}
0 & 1 & 0 & \ldots & 0 \\
0 & 0 & 1 & \ldots & 0 \\
\vdots & \vdots & \vdots & \ddots & \vdots \\
0 & 0 & 0 & \ldots & 1 \\
- a_{n} & - a_{n-1} & - a_{n-2} & - \ldots & - a_{1} 
\end{pmatrix}.
$$
Solutions of $P$ then correspond to solutions of $M = (\Ka^{n},\Phi_{A})$ 
or $(\Kf^{n},\Phi_{A})$. Now, by lemma \ref{lemm:cyclicvector}, one
has $M = \Dq/\Dq Q$ for some unitary entire $q$-difference operator $Q$.
Any such polynomial $Q$ is \emph{dual} \index{dual polynomial} 
to $P$. An explicit formula for 
a particular dual polynomial is given in \cite[prop. 2.1.10]{JSFIL}. \\

From the derived functors $\Gamma^{i}(M) = \Ext^{i}(\1,M)$, one can
recover general Ext-modules: 

\begin{prop}
\label{prop:GammaetExt}
There are functorial isomorphisms:
$$
\Ext^{i}(M,N) \simeq \Gamma^{i}(M^{\vee} \otimes N).
$$
\end{prop}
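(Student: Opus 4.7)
The plan is to derive both sides of the equality $\Hom(M,N) = \Gamma\bigl(\Homint(M,N)\bigr) = \Gamma(M^\vee \otimes N)$ already recorded in the excerpt. Fixing $M$, I view both $\Hom(M,-)$ and $\Gamma \circ (M^\vee \otimes -)$ as left exact $\C$-linear functors on $DiffMod(K,\sq)$, and I want to identify their right derived functors, which will give the desired natural isomorphism.

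First I would check that the functor $N \mapsto M^\vee \otimes N$ is exact. On the underlying $K$-vector spaces it is just $E^\vee \otimes_K -$, which is exact since $K$ is a field; the semi-linear structure passes through without damage, so a short exact sequence in $DiffMod(K,\sq)$ is carried to a short exact sequence. Next I would use the adjunction recalled in the excerpt together with the functorial identification $\Homint(M,Q) = M^\vee \otimes Q$ to get
$$
\Hom(P, M^\vee \otimes Q) \simeq \Hom(P \otimes M, Q),
$$
which exhibits $M^\vee \otimes -$ as right adjoint to the exact functor $- \otimes M$. A right adjoint of an exact functor automatically preserves injective objects, so $M^\vee \otimes I$ is injective whenever $I$ is.

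Given these two facts, pick an injective resolution $N \to I^\bullet$. By exactness of $M^\vee \otimes -$ the complex $M^\vee \otimes I^\bullet$ is a resolution of $M^\vee \otimes N$, and by the preceding remark it consists of injective, hence $\Gamma$-acyclic, objects. Its cohomology after applying $\Gamma$ therefore computes $\Gamma^i(M^\vee \otimes N)$. On the other hand, the identification of the first paragraph gives $\Gamma(M^\vee \otimes I^\bullet) = \Hom(M, I^\bullet)$, whose cohomology is by definition $\Ext^i(M,N)$. This yields the required isomorphism. Functoriality in both $M$ and $N$ follows from the naturality of $\Homint(M,N) = M^\vee \otimes N$, of the adjunction, and of the uniqueness up to homotopy of injective resolutions.

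The one point demanding care, and which I expect to be the main obstacle, is ensuring that the homological machinery really applies: $DiffMod(K,\sq)$ is a category of \emph{finite length} modules and need not have enough injectives on the nose. One should therefore interpret $\Ext^i$ and $\Gamma^i$ as computed in the ambient Grothendieck category $\DMod$, where enough injectives are available, and check that this agrees with whatever intrinsic definition (Yoneda $n$-extensions) one prefers on $DiffMod(K,\sq)$. Once this compatibility is in place, the argument above is purely formal.
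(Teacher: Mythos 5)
Your proof is correct and follows essentially the same route as the paper's: factor $\Hom(M,-) = \Gamma \circ (M^\vee \otimes -)$, use the adjunction with the exact functor $-\otimes M$ to see that $M^\vee \otimes -$ sends injectives to injectives, and derive the composition (the paper cites Godement for this last step where you compute directly with an injective resolution). Your remark about working in the ambient category $\DMod$, where enough injectives exist, is exactly the point the paper handles by treating $\DMa$ as a thick subcategory of $Mod_{\Dq}$.
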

\begin{proof}
 The covariant functor $N \leadsto \Hom(M,N)$ is obtained by composing
the exact covariant functor $N \leadsto M^{\vee} \otimes N$ with the
left exact covariant functor $\Gamma$. Moreover, considered as a 
left exact functor over $\DMod$, it is isomorphic to the functor
$N \leadsto \Homint(M,N)$, which sends injective modules 
to injective modules. Indeed, the adjunction formula:
$$
\Hom(P,\Homint(M,N)) \simeq \Hom(P \otimes M,N)
$$ 
makes sense and remains valid for arbitrary modules in $\DMod$ for 
definitions of tensor product and internal Hom in $\DMod$ similar to 
the ones given above: see theorem \ref{theo:adjunction} in paragraph
\ref{subsection:internalhom} of section \ref{section:ALDIFF} of the 
appendix; and it implies that the flatness of $M$ over $K$ is enough 
to ensure the exactness of $P \leadsto \Hom(P,\Homint(M,N))$ whenever 
$N$ is injective. One can then derive $N \leadsto \Hom(M,N)$ using 
\cite[chap. III, \S 7, th. 1]{GM}. 
\end{proof} 

\begin{rema}
\label{rema:fibrefunctors}
For any difference field extension $(K,\sigma)$ of $(\Ka,\sq)$, the 
$K^{\sigma}$-vector space of solutions of $M$ in $K$ has dimension 
$\dim_{K^{\sigma}} \Gamma_{K}(M) \leq \rk M$: this follows from the $q$-wronskian 
lemma (\emph{see} \cite{LDV}). One can show that the functor $\Gamma_{K}$ 
is a fibre functor if, and only if, $(K,\sigma)$ is a \emph{universal field 
of solutions}, \ie\ such that one always have the equality
$\dim_{K^{\sigma}} \Gamma_{K}(M) = \rk M$. The field $\M(\C^{*})$ of meromorphic
functions over $\C^{*}$, with the automorphism $\sq$, is a universal field
of solutions, thus providing a fibre functor over 
$\M(\C^{*})^{\sq} = \M(\Eq)$ (field of elliptic functions). Actually, no
subfield of $\M(\C^{*})$ gives a fibre functor over $\C$ (this follows
from the argument at the beginning of \cite[0.3]{JSGAL}). In \cite{vdPS}, 
van der Put and Singer use an algebra of symbolic solutions which is 
reduced but not integral. A transcendental construction of a fibre functor 
over $\C$ is described in \cite{RS1}.
\end{rema}


\section{The Newton polygon and the slope filtration}
\label{section:Newtonetfiltration}

We summarize results from \cite{JSFIL}\footnote{Note however that, from 
\cite{RS1}, we changed our terminology: the slopes of a $q$-difference 
module are the \emph{opposites} of what they used to be (see herebelow
subsection \ref{subsection:Newtonpolygon}); and what we call a pure 
isoclinic, resp. pure module was previously called pure, resp. tamely 
irregular (see herebelow subsection \ref{subsection:puremodules}).}.


\subsection{The Newton polygon}
\label{subsection:Newtonpolygon}

The contents of this section are valid as well in the analytic as
in the formal setting. To the (analytic or formal) $q$-difference operator
$P = \sum a_{i} \sq^{i} \in \Dq$, we associate the \emph{Newton polygon}
\index{Newton polygon of a $q$-difference operator}
\label{not19}
$N(P)$, defined as the convex hull of 
$\{(i,j) \in \Z^{2} \tq j \geq v_{0}(a_{i})\}$, where $v_{0}$
denotes the $z$-adic valuation in $\Ka,\Kf$. Multiplying $P$ by a unit
$a \sq^{k}$ of $\Dq$ just translates the Newton polygon by a vector of
$\Z^{2}$, and we shall actually consider $N(P)$ as defined up to such a 
translation (or, which amounts to the same, choose a unitary entire $P$).
The relevant information therefore consists in the lower part of the boundary
of $N(P)$, made up of vectors 
$(r_{1},d_{1}),\ldots,(r_{k},d_{k})$, $r_{i} \in \N^{*}$, $d_{i} \in\Z$. Going 
from left to right, the \emph{slopes} 
\index{slopes}
$\mu_{i} := \dfrac{d_{i}}{r_{i}}$ are 
rational numbers such that $\mu_{1} < \cdots < \mu_{k}$. Their set is 
written $S(P)$ and $r_{i}$ is the \emph{multiplicity} 
\index{multiplicity}
of $\mu_{i} \in S(P)$. 
The most convenient object is however the \emph{Newton function}
\index{Newton function}
$r_{P}: \Q \rightarrow \N$, such that $\mu_{i} \mapsto r_{i}$ and null
out of $S(P)$. 

\label{not20}
\begin{theo}
\label{theo:Newtonpolygon}
(i) For a given $q$-difference module $M$, all unitary entire $P$ such 
that $M \simeq \Dq/ \Dq P$ have the same Newton polygon; 
the \emph{Newton polygon} of $M$
\index{Newton polygon of a $q$-difference module}
 is defined as $N(M) := N(P)$, and we put
$S(M) := S(P)$, $r_{M} := r_{P}$. \\
(ii) The Newton polygon is additive: for any exact sequence,
$$
0 \rightarrow M' \rightarrow M \rightarrow M'' \rightarrow 0
\Longrightarrow r_{M} = r_{M'} + r_{M''}.
$$
(iii) The Newton polygon is multiplicative: 
$$
\forall \mu \in \Q ~,~ r_{M_{1} \otimes M_{1}}(\mu) = 
\sum_{\mu_{1} + \mu_{2} = \mu} r_{M_{1}}(\mu_{1}) r_{M_{2}}(\mu_{2}).
\text{Also:~} r_{M^{\vee}}(\mu) = r_{M}(- \mu).
$$
(iv) Let $\ell \in \N^{*}$, introduce a new variable $z' := z^{1/\ell}$
(ramification) 
\index{ramification}
and make $K' := \Kf[z'] = \C((z'))$ or $\Ka[z'] = \C(\{z'\})$
a difference field extension by putting $\sigma'(z') = q' z'$, where $q'$ is 
any $\ell^{\text{th}}$ root of $q$. The Newton polygon of the $q'$-difference 
module $M' := K' \otimes M$ (computed w.r.t. variable $z'$) is given by the 
formula:
$$
r_{M}(\mu) = r_{M'}(\ell \mu).
$$
\end{theo}

To follow the next example, recall from paragraph
\ref{subsubsection:functorofsolutions} how we vectorialize a scalar
$q$-difference equation $P.f = 0$, $P \in \Dq$, by a $q$-difference
system $\sq X = A X$, and then associate to this system a $q$-difference
module $(\Ka^n,\Phi_A)$. Remember that the latter is isomorphic to some
$\Dq/\Dq Q$ (by the cyclic vector lemma and left euclideanity of $\Dq$),
but that $Q$ is not equal to $P$, it is dual to $P$.

\begin{exem}
\label{exem:Tshakaloff}
Vectorializing an analytic equation of order two yields:
$$
\sq^{2} f + a_{1} \sq f + a_{2} f = 0 \Longleftrightarrow \sq X = A X, 
\text{~with~} X = \begin{pmatrix} f \\ \sq f \end{pmatrix} \text{~and~} 
A = \begin{pmatrix} 0 & 1 \\ -a_{2} & - a_{1} \end{pmatrix}.
$$
The associated module is $(\Ka^{2},\Phi_{A})$. Putting 
$e := \begin{pmatrix} 1 \\ 0 \end{pmatrix}$, one has 
$\Phi_{A}(e) = \begin{pmatrix} -a_{1}/a_{2} \\ 1 \end{pmatrix}$ 
and: 
$$
\Phi_{A}^{2}(e) = \dfrac{-1}{a_{2}} e + \dfrac{- \sq a_{1}}{\sq a_{2}} \Phi_{A}(e)
\Longrightarrow M \simeq \Dq/ \Dq \hat{L}, \text{~where~}
\hat{L} := \sq^{2} + \dfrac{\sq a_{1}}{\sq a_{2}} \sq + \dfrac{1}{a_{2}},
$$
$\hat{L}$ thus being a dual of $L := \sq^{2} + a_{1} \sq + a_{2} \in \Dq$. \\
Note that if we started from vector
$e' := \begin{pmatrix} 0 \\ 1 \end{pmatrix}$, we would compute likewise:
$$
\Phi_{A}^{2}(e') = 
\dfrac{-1}{\sq a_{2}} e' + \dfrac{- a_{1}}{\sq a_{2}} \Phi_{A}(e')
\Longrightarrow M \simeq \Dq/ \Dq \hat{L'}, \text{~where~}
\hat{L'} := \sq^{2} + \dfrac{a_{1}}{\sq a_{2}} \sq + \dfrac{1}{\sq a_{2}},
$$
another dual of $L$. However, they give the same Newton polygon, since
$v_{0}(a_{1}/\sq a_{2}) = v_{0}(\sq a_{1}/\sq a_{2})$ and
$v_{0}(1/\sq a_{2}) = v_{0}(1/a_{2})$. \\
We now specialize to the equation satisfied by a $q$-analogue of the 
Euler series, the so-called \emph{Tshakaloff\footnote{We use the cyrillic
letter $\tsh$ (``tsh'') to denote it.}series}:
\index{Tshakaloff series}
\label{not21}
\begin{equation}
\label{eqn:Tshakaloff}
\tsh(z) := \sum_{n \geq 0} q^{n(n-1)/2} z^{n}.
\end{equation}
Then $\hat{\phi} := \tsh$ satisfies:
$$
\hat{\phi} = 1 + z \sq \hat{\phi} \Longrightarrow L.\hat{\phi} = 0,
$$
where:
$$
q z L := (\sq - 1)(z \sq - 1) = q z \sq^{2} - (1+z) \sq + 1.
$$
By our previous definitions, $S(L) = \{0,1\}$ (both multiplicities
equal $1$). The second computation of a dual above implies that:
$$
M \simeq \Dq/\Dq \hat{L}, \text{~where~}
\hat{L} := \sq^{2} - q(1+z) \sq + q^{2} z = (\sq - q z) (\sq - q),
$$
whence $S(M) = S(\hat{L}) = \{-1,0\}$ (both multiplicities equal $1$). \\
This computation relies on the obvious vectorialisation with matrix
$A = \begin{pmatrix} 0 & 1 \\ -1/qz & (1+z)/qz \end{pmatrix}$. However,
we also have:
$$
L.f = 0 \Longleftrightarrow \sq Y = B Y,
\text{~with~} Y = \begin{pmatrix} f \\ z \sq f - f \end{pmatrix} 
\text{~and~} B = \begin{pmatrix} z^{-1} & z^{-1} \\ 0 & 1 \end{pmatrix}.
$$
The fact that matrix $A$ is analytically equivalent to an upper triangular
matrix comes from the analytic factorisation of $\hat{L}$; the exponents
of $z$ on the diagonal are the slopes: this, as we shall see, is a general 
fact when the slopes are integral.
\end{exem}


\subsection{Pure modules}
\label{subsection:puremodules}

We call \emph{pure isoclinic (of slope $\mu$)} 
\index{pure isoclinic}
a module $M$ such that
$S(M) = \{\mu\}$ and \emph{pure} 
\index{pure}
a direct sum of pure isoclinic modules.
We call \emph{fuchsian} 
\index{fuchsian}
a pure isoclinic module of slope $0$.
The following description is valid whether $K = \Kf$ or $\Ka$.

\begin{lemm}
\label{lemm:puremodules}
(i) A pure isoclinic module of slope $\mu$ over $K$ can be written: 
$$
M \simeq \Dq/\Dq P, \text{~~where~~}
P = a_{n} \sq^{n} + a_{n-1} \sq^{n-1} + \cdots + a_{0} \in \Dq
\text~~{with:}
$$
$$
a_{0} a_{n} \neq 0 ~;~ 
\forall i \in \{1,n-1\} ~,~ v_{0}(a_{i}) \geq v_{0}(a_{0}) + \mu i
\text{~and~} v_{0}(a_{n}) = v_{0}(a_{0}) + \mu n.
$$
(ii) If $\mu \in \Z$, it further admits the following description:
$$
M = (K^{n},\Phi_{z^{\mu} A}) \text{~with~} A \in \GL_{n}(\C).
$$
\end{lemm}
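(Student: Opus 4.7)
For part (i), my plan is to combine the cyclic vector lemma with the characterisation of pure isoclinic modules by their Newton polygon. Lemma~\ref{lemm:cyclicvector} yields $M \simeq \Dq/\Dq P$ for some unitary entire $P$, and by Theorem~\ref{theo:Newtonpolygon}(i) the Newton polygons of $P$ and $M$ coincide. Since $M$ is pure isoclinic of slope $\mu$, this polygon is a single segment of slope $\mu$. Writing $P = a_n \sq^n + \cdots + a_0$, requiring that this polygon be precisely the segment from $(0, v_0(a_0))$ to $(n, v_0(a_n))$ forces $a_0 a_n \neq 0$ and $v_0(a_n) = v_0(a_0) + \mu n$; requiring every other point $(i, v_0(a_i))$ to lie on or above that segment is the inequality $v_0(a_i) \geq v_0(a_0) + \mu i$. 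No extra work is needed: this is a direct translation.

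For part (ii), assuming $\mu \in \Z$, my strategy is to reduce to the fuchsian case by a rank-one twist, invoke a known fuchsian normal form, and twist back. Introduce $T_\mu := (K, \Phi_{z^\mu})$: its defining operator $\sq - z^{-\mu}$ has Newton polygon the segment from $(0,-\mu)$ to $(1,0)$, so $T_\mu$ is pure isoclinic of slope $\mu$, and $T_\mu^\vee = T_{-\mu}$ by the formula for the dual of a rank-one module. By multiplicativity of Newton polygons (Theorem~\ref{theo:Newtonpolygon}(iii)), $M' := M \otimes T_{-\mu}$ is pure isoclinic of slope $0$, i.e.\ fuchsian. At this point I would invoke the classical result (contained in \cite{JSFIL} and going back to \cite{JSAIF, vdPR} for the two ground fields) that every fuchsian $q$-difference module over $\Ka$ or $\Kf$ admits a constant representative: there exists $A_0 \in \GL_n(\C)$ with $M' \simeq (K^n, \Phi_{A_0})$. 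Twisting back and using the elementary identity $(K^n, \Phi_A) \otimes (K, \Phi_b) \simeq (K^n, \Phi_{bA})$ for a scalar twist yields $M \simeq M' \otimes T_\mu \simeq (K^n, \Phi_{z^\mu A_0})$, as desired.

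The main obstacle is the invoked fuchsian normal form. Over $\Kf$ it rests on a formal gauge transformation killing the $z$-dependent part of the matrix, with careful treatment of resonances between eigenvalues of the constant leading term whose ratio lies in $q^\Z$. Over $\Ka$ one must moreover control the convergence of this transformation; this is the genuine content of the $q$-analogue of the Frobenius-Birkhoff reduction, and is why I prefer to import it as a black box rather than reproduce it here. All the other ingredients---the cyclic vector lemma, the additivity and multiplicativity of Newton polygons, and the tensor product formula for rank-one twists---are routine once set up.
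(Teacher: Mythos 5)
Your proof is correct, and both halves track what the paper actually does, though you flesh out more than the paper bothers to. For (i), the paper simply declares the description ``immediate from the definitions'' and your Newton-polygon unpacking (cyclic vector, single slope forces $a_0a_n\neq 0$, convexity gives $v_0(a_i)\geq v_0(a_0)+\mu i$, endpoints pin down $v_0(a_n)$) is precisely the routine verification that claim condenses; the only small point worth making explicit is that the unitary entire $P$ furnished by the cyclic vector lemma automatically has $a_0\neq 0$, since otherwise $P$ would be $\sigma\cdot P'$ and $\Dq/\Dq P$ would have rank $n-1$. For (ii), the paper cites \cite[th.~6.15]{LDVS} directly for the integral-slope normal form, whereas you reduce it to the fuchsian case ($\mu=0$) by the rank-one twist $M\leadsto M\otimes T_{-\mu}$ and then invoke the fuchsian normal form as the remaining black box. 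That is a genuinely different (and cleaner) factorisation of the argument: the twist-and-untwist step is elementary given the multiplicativity of Newton polygons and the formula $(K^n,\Phi_A)\otimes(K,\Phi_b)\simeq(K^n,\Phi_{bA})$, and it isolates all the real analytic difficulty (formal reduction plus convergence/resonance control) in the $\mu=0$ case, which is exactly where the paper's subsequent discussion points to \cite[1.1.1]{JSAIF} for the fundamental-annulus refinement. So your route imports a weaker black box than the paper's citation and derives the general integral slope from it; the cost is a short tensor-product computation, which you have carried out correctly.
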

\begin{proof}
The first description is immediate from the definitions. The second is
proved in \cite[th\'eor\`eme 6.15]{LDVS}.
\end{proof}


\subsubsection{Pure modules with integral slopes}

In particular, any fuchsian module is equivalent to some module 
$(K^{n},\Phi_{A})$ with $A \in \GL_{n}(\C)$. One may moreover require 
that $A$ has all its eigenvalues in the fundamental annulus:
\index{fundamental annulus}
$$
\forall \lambda \in \text{Sp} A \;,\; 
1 \leq \lmod \lambda \rmod < \lmod q \rmod.
$$
Indeed, one can do this using gauge transforms by shearing matrices,
like in \cite[1.1.1]{JSAIF}.  \\

Last, two fuchsian modules $(K^{n},\Phi_{A})$, $(K^{n'},\Phi_{A'})$ with
$A \in \GL_{n}(\C)$, $A' \in \GL_{n'}(\C)$ having all their eigenvalues 
in the fundamental annulus are isomorphic if and only if the matrices 
$A,A'$ are similar (so that $n = n'$). \\

It follows that any pure isoclinic module of slope $\mu$ is equivalent 
to some module $(K^{n},\Phi_{z^{\mu} A})$ with $A \in \GL_{n}(\C)$, the matrix
$A$ having all its eigenvalues in the fundamental annulus; and that two
such modules $(K^{n},\Phi_{z^{\mu} A})$, $(K^{n'},\Phi_{z^{\mu} A'})$ are 
isomorphic if and only if the matrices $A,A'$ are similar.


\subsubsection{Pure modules of arbitrary slopes}

The classification of pure modules of arbitrary (not necessarily 
integral) slopes 
\index{slopes (arbitrary)}
was obtained by van der Put and Reversat in \cite{vdPR}.
\index{Put@van der Put}
\index{van der Put}
\index{Reversat} 
It is cousin to the classification by Atiyah of vector bundles over an 
elliptic curve, which it allows to recover in a simple and elegant way.
Although we shall not need it, we briefly recall that result. \\

The first step is the classification of irreducible (that is, simple)
modules. An irreducible $q$-difference module $M$ is automaticaly pure 
isoclinic of slope say $\mu$. We write $\mu = d/r$ with $d,r$ coprime
and may assume that $r \geq 2$ (the case $r = 1$ is already known). 
Let $K' := K[z^{1/r}] = K[z']$ and $q'$ an arbitrary $r$th root of $q$. 
Then $M$ is isomorphic to the module obtained by restriction of scalars
from some $q'$-difference module $M'$ of rank $1$ over $K'$; and $M'$ is 
isomorphic to $(K',\Phi_{c {z'}^{d}})$ for a unique $c \in \C^{*}$ such that 
$1 \leq \lmod c \rmod < \lmod q' \rmod$. Actually:
$$
M \simeq E(r,d,c^{r}) := 
\Dq/\Dq \left(\sq^{r} - {q'}^{- dr(r-1)/2} c^{-r} z^{-d}\right).
$$
Moreover, for $E(r,d,a)$ and $E(r',d',a')$ to be isomorphic, it is
necessary (and sufficient) that $(r',d',a') = (r,d,a)$. \\

Then van der Put and Reversat prove that an indecomposable module $M$
(that is, $M$ is not a non trivial direct sum) comes from successive 
extensions of isomorphic irreducible modules; indeed, it has the form 
$M \simeq E(r,d,a) \otimes (K^{m},\Phi_{U})$ for some indecomposable
unipotent constant matrix $U$. Last, any pure isoclinic module is a 
direct sum of indecomposable modules in an essentially unique way.


\subsection{The slope filtration}
\label{subsection:slopefiltration}
\index{slope filtration}

Submodules, quotient modules, sums\ldots of pure isoclinic modules of
a given slope keep the same property. It follows that each module
$M$ admits a biggest pure submodule $M'$ of slope $\mu$ (this means
that $M'$ contains all submodules of $M$ that are pure of slope $\mu$); 
one then has \emph{a priori} $\rk M' \leq r_{M}(\mu)$. Like in the case 
of differential equations, one wants to ``break the Newton polygon'' 
and find pure submodules of maximal possible rank. In the formal case, 
the results are similar, but in the analytic case, we get a bonus.

\subsubsection{Formal case} 
 
Any $q$-difference operator $P$ over $\Kf$ admits, for all $\mu \in S(P)$, 
a factorisation $P = Q R$ with $S(Q) = \{\mu\}$ and 
$S(R) = S(P) \setminus \{\mu\}$. As a consequence, writing $M_{(\mu)}$
\label{not22}
the biggest pure submodule of slope $\mu$ of $M$, one has 
$\rk M_{(\mu)} = r_{M}(\mu)$ and $S(M/M_{(\mu)}) = S(M) \setminus \{\mu\}$.
Moreover, all modules are pure:
$$
M = \bigoplus_{\mu \in S(M)} M_{(\mu)}.
$$
The above splitting is canonical, functorial (preserved by morphisms)
and compatible with all tensor operations (tensor product, internal
Hom, dual).

\subsubsection{Analytic case}

Here, from old results due (independantly) to Adams and to Birkhoff
and Guenther, one draws that, if $\mu := \min S(P)$ is the smallest 
slope, then $P$ admits, a factorisation $P = Q R$ with $S(Q) = \{\mu\}$ 
and $S(R) = S(P) \setminus \{\mu\}$. As a consequence, the biggest pure 
submodule of slope $\mu$ of $M$, call it $M'$, again satisfies 
$\rk M' = r_{M}(\mu)$, so that $S(M/M') = S(M) \setminus \{\mu\}$.

\label{not23}
\begin{theo}
\label{theo:slopefiltration}
(i) Each $q$-difference module $M$ over $\Ka$ admits a unique filtration
with pure isoclinic quotients. It is an ascending filtration $(M_{\leq \mu})$ 
characterized by the following properties:
$$
S(M_{\leq \mu}) = S(M) \cap \left]- \infty,\mu\right] \text{~and~}
S(M/M_{\leq \mu}) = S(M) \cap \left]\mu,+\infty\right[.
$$
(ii) This filtration is strictly functorial, \ie\ all morphisms are strict
(see explanation after the ``proof''). \\
(iii) Writing $M_{< \mu} := \bigcup_{\mu' < \mu} M_{\leq \mu'}$ and
$M_{(\mu)} := M_{\leq \mu}/M_{< \mu}$ (which is pure isoclinic of slope $\mu$
and rank $r_{M}(\mu)$), the functor:
$$
M \leadsto \gr\ M := \bigoplus_{\mu \in S(M)} M_{(\mu)}
$$
\index{functor $\gr$}
is exact, $\C$-linear, faithful and $\otimes$-compatible.
\end{theo}
\begin{proof}
See \cite[3.2,3.3]{JSFIL}.
\end{proof}

Note that assertion (ii) says two things. First, a morphism
$f: M \rightarrow N$ of $q$-difference modules automatically respects
the canonical filtration, \ie\ $f(M_{\leq \mu}) \subset N_{\leq \mu}$ for 
all $\mu$. This implies in particular that $f$ induces morphisms of 
$q$-difference modules $M_{(\mu)} \rightarrow N_{(\mu)}$, so that 
$M \leadsto \gr M$ is indeed a functor. \\

Second, recall from \cite[p. II.2]{ALM} that a morphism $f: M \rightarrow N$ 
is said to be \emph{strict} with respect to the filtrations $(M_{\leq \mu})$, 
$(N_{\leq \mu})$, if $f(M_{\leq \mu}) = N_{\leq \mu} \cap f(M)$ for all $\mu$; 
equivalently: the two filtrations on $f(M)$ respectively induced by that 
of $N$ (by restriction) and by that of $M$ (through $f$) are identical.
it is then a classical consequence (and a nice exercice in linear algebra)
that the functor $M \leadsto \gr M$ is indeed exact.


\section{Application to the analytic isoformal classification}
\label{section:analyticisoformalclassification}
\index{analytic isoformal classification}
\index{analytic classification}

The formal classification of analytic $q$-difference modules is
equivalent to the classification of pure modules: the formal class 
of $M$ is equal to the formal class of $\gr M$, and the latter is
essentially the same thing as the analytic class 
\index{analytic class}
of $\gr M$. The classification of pure modules has been described 
in paragraph \ref{subsection:puremodules}. \\

Let us fix a formal class, 
\index{formal class}
that is, a pure module:
$$
M_{0} := P_{1} \oplus \cdots \oplus P_{k}.
$$
Here, each $P_{i}$  is pure isoclinic of slope $\mu_{i} \in \Q$ and 
rank $r_{i} \in \N^{*}$ and we assume that $\mu_{1} < \cdots < \mu_{k}$. 
All modules $M$ such that $\gr M \simeq M_{0}$ have the same Newton 
polygon $N(M) = N(M_{0})$. They constitute a formal class and we want 
to classify them analytically. The following definition is inspired 
by \cite{BV}.

\label{not24}
\begin{defi}
\label{defi:analyticisoformalclass}
We write $\F(M_{0})$ or $\F(P_{1},\ldots,P_{k})$ for the set of equivalence 
classes of pairs $(M,g)$ of an analytic $q$-difference module $M$ and an
isomorphism $g: \gr(M) \rightarrow M_{0}$, where $(M,g)$ is said to be 
equivalent to $(M',g')$ if there exists a morphism $f: M \rightarrow M'$ 
such that $g = g' \circ \gr(f)$.
\end{defi}

Note that $f$ is automatically an isomorphism. The goal of this paper
is to describe precisely $\F(P_{1},\ldots,P_{k})$. This definition
obviously admits a purely algebraic generalization, which is related 
to some interesting problems in homological algebra. In the appendix, 
we shall describe in some detail the necessary formalism and results 
in that direction; this will give us an adequate frame to formulate 
and prove our first structure theorems for the space $\F(M_{0})$ of 
isoformal analytic classes (theorems \ref{theo:schemadesmodules} and 
\ref{theo:kintegralslopesspace}).

\begin{rema}
\label{rema:badquotient1}
The group $\prod\limits_{1 \leq i \leq k} \Aut(P_{i})$ naturally operates
on $\F(P_{1},\ldots,P_{k})$ in the following way: if $(\phi_{i})$ is
an element of the group, thus defining $\phi \in \Aut(M_{0})$, then
send the class of the isomorphism $g: \gr(M) \rightarrow M_{0}$ to the 
class of $\phi \circ g$. The quotient of $\F(P_{1},\ldots,P_{k})$ by
this action is the set of analytic classes within the formal class 
of $M_{0}$, but it is not naturally made into a space of moduli.
\index{moduli (space of)}
An example is given at the end of the next paragraph (remark 
\ref{rema:badquotient2}).
\end{rema}


\subsection{A prototypal example and the $q$-Borel transformation}
\label{subsection:prototypal}
\index{prototypal example}

To give a feeling of the situation encountered we introduce at once a 
prototypal example. To begin with, we only assume that $K := \Ka$ or 
$K := \Kf$ without choosing. This will allow us to treat both the formal 
and the analytical situations, and to see clearly the difference. \\

Let $P_{1} := (K,\Phi_1) = (K,\sq)$ and $P_{2} := (K,\Phi_z) = (K,z^{-1} \sq)$. 
These pure isoclinic modules of respective slopes $0$ and $1$ correspond
to the rank $1$ systems $\sq f = f$ and $\sq f = z f$. The pure module
$M_0 := P_1 \oplus P_2$ defines a formal class and we intend to compute
the set $\F(M_0) = \F(P_1,P_2)$ of isoformal analytic classes. We thus 
consider $q$-difference modules $M$ endowed with a submodule $M_1$ 
and two isomorphisms: $M_1 \rightarrow P_1$ and $M/M_1 \rightarrow P_2$.
Such a module $M$ has the form $M_u = (K^2,\Phi_A)$ for some $u \in K$,
where:
$$
A = A_u := \begin{pmatrix} 1 & u \\ 0 & z \end{pmatrix}.
$$
(Thus, setting $u := 0$ indeed yields $M_0$.)
Note that this description encompasses the additional structure provided 
by the isomorphism $\gr\ M_u \simeq M_0$. Equivalence of two such objects 
$M_u$ and $M_v$ is given by an isomorphism described in matricial form as:
$$
F := \begin{pmatrix} 1 & f \\ 0 & 1 \end{pmatrix}:
\begin{pmatrix} 1 & u \\ 0 & z \end{pmatrix} \simeq
\begin{pmatrix} 1 & v \\ 0 & z \end{pmatrix}
\Longleftrightarrow (\sq F) A_u = A_v F.
$$
The fact that $F$ has this precise shape (unipotent and upper triangular)
corresponds to the condition $g = g' \circ \gr(f)$ in the definition of
our equivalence relation (definition \ref{defi:analyticisoformalclass}). \\

The relation $(\sq F) A_u = A_v F$ is in turn equivalent to the first order
inhomogeneous $q$-difference equation: 
\index{$q$-difference equation (inhomogeneous)}
$$
z \sq f - f = v - u.
$$
So we are actually interested in the cokernel of the mapping 
$\Psi: f \mapsto z \sq f - f$ from $K$ to itself. \\

Now, if $K = \Kf$, this mapping is bijective and admits as inverse the
mapping $w \mapsto - \sum_{n \geq 0} (z \sq)^n w$ (this is well defined
in the $z$-adic topology). Therefore, for all $u,v \in \Kf$, the equation 
$z \sq f - f = v - u$ admits a unique solution $f$ in $\Kf$, so that there 
is a unique class in $\F(P_{1},P_{2})$, that of the pure module 
$M_0 = P_1 \oplus P_2$. \\

However, if $K = \Ka$, one proves that $\Psi$ is injective (this is
obvious) and that its cokernel is isomorphic to $\C$. More precisely, 
for each $u \in \Ka$, there exists a unique $u_0 \in \C$ such that the 
equation $z \sq f - f = u - u_0$ admits a solution; and $u_0$ can be
explicitly computed, as we shall now see.

\label{not25}
\begin{defi}
\label{defi:qBorel}
\index{$q$-Borel transform}
The \emph{$q$-Borel transform} of $f(z) := \sum f_{n} z^{n} \in \Ka$ 
is defined as:
$$
\Bq f(\xi) := \sum \dfrac{f_{n}}{q^{n(n-1)/2}} \xi^{n}.
$$
\end{defi}

\begin{rema}
Actually, this is the transformation at \emph{level $1$}; for general
properties of $\Bq$ see \cite{RamisGrowth}. Also note the customary use 
of the new variable $\xi$ in the so-called \emph{Borel plane}.
\end{rema}

Clearly, the radius of convergence of $f$ being $> 0$, that of $\Bq f$ 
is infinite (under our general convention that $\lmod q \rmod > 1$). 
More precisely, the $\C$-linear mapping $\Bq$ sends $\Ka$ isomorphically 
to the following subspace of $\O(\C)[\xi^{-1}]$:
\label{not26}
$$
\C(\{\xi\})_{q,1} := \left\{\sum \phi_{n} \xi^{n} \in \C((\xi)) 
\tq \phi_{n} \prec q^{-n(n-1)/2}\right\}.
$$
Here, $u_{n} \prec v_{n}$ means: $u_{n} = O(A^{n} v_{n})$ for some $A > 0$. \\

Introduction of the $q$-Borel transform is motivated by the following 
easily checked property:
\begin{align*}
z \sq f - f = g 
& \Longleftrightarrow \forall n ~,~ q^{n-1} f_{n-1} - f_{n} = g_{n} \\
& \Longleftrightarrow \forall n ~,~ 
\dfrac{f_{n-1}}{q^{(n-1)(n-2)/2}} - \dfrac{f_{n}}{q^{n(n-1)/2}} =
\dfrac{g_{n}}{q^{n(n-1)/2}} \\
& \Longleftrightarrow (\xi - 1) \Bq f(\xi) = \Bq g(\xi).
\end{align*}
We therefore get a commutative diagram:
$$
\begin{CD}
  \Ka    @>{1 - z \sq}>>      \Ka         \\
@VV{\Bq}V         @VV{\Bq}V        \\
  \C(\{\xi\})_{q,1}  @>{\times(1-\xi)}>>  \C(\{\xi\})_{q,1}  
\end{CD}
$$
The vertical arrows are isomorphisms and the right one sends $\C \subset \Ka$ 
onto $\C \subset \C(\{\xi\})_{q,1}$. (The fact that $\times(1-\xi)$ does send 
$\C(\{\xi\})_{q,1}$ into itself is easily checked.)

\begin{lemm}
\label{lemm:qBorel}
The image of the lower horizontal arrow is the subspace
$\{\phi \in \C(\{\xi\})_{q,1} \tq \phi(1) = 0\}$.  
\end{lemm}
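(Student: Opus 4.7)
The forward inclusion is immediate. Every element of $\C(\{\xi\})_{q,1}$ is a Laurent series with finite polar part at $0$ whose regular part is entire on $\C$: the condition $\phi_n \prec q^{-n(n-1)/2}$ (recall $\lmod q \rmod > 1$) forces the coefficients to decay faster than any geometric rate. In particular $\psi(1)$ is well defined for every $\psi \in \C(\{\xi\})_{q,1}$, so $\bigl((1 - \xi) \psi\bigr)(1) = 0$.

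For the converse I would construct the preimage by hand. Given $\phi = \sum_{n \geq n_0} \phi_n \xi^n$ in $\C(\{\xi\})_{q,1}$ with $\phi(1) = 0$, the equation $(1 - \xi) \psi = \phi$ with $\psi = \sum_{n \geq n_0} \psi_n \xi^n$ imposes the recursion $\psi_n - \psi_{n-1} = \phi_n$ (with the convention $\psi_{n_0 - 1} := 0$), whose formal solution is the partial sum $\psi_n = \sum_{k = n_0}^n \phi_k$. The pivotal observation is that $\phi(1) = 0$ translates into $\sum_{k \geq n_0} \phi_k = 0$, so this partial sum coincides with the tail $\psi_n = -\sum_{k > n} \phi_k$. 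Passing from partial sum to tail is the only non-formal step; it is what turns an a priori unbounded expression into one that will decay super-exponentially.

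The remaining task is to verify that the $\psi_n$ satisfy $\psi_n \prec q^{-n(n-1)/2}$. Starting from a bound $\lmod \phi_k \rmod \leq C A^k \lmod q \rmod^{-k(k-1)/2}$, the ratio of consecutive terms of the tail is $A \lmod q \rmod^{-k}$, which is $\leq 1/2$ once $k$ is large enough; the tail is therefore dominated by a constant multiple of its leading term, namely $C A^{n+1} \lmod q \rmod^{-(n+1)n/2}$. Rewriting $\lmod q \rmod^{-(n+1)n/2} = \lmod q \rmod^{-n} \lmod q \rmod^{-n(n-1)/2}$ yields a bound of the shape $C' (A/\lmod q \rmod)^n \lmod q \rmod^{-n(n-1)/2}$, which is exactly what is needed. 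I expect no serious obstacle: the $q$-Gevrey growth built into $\C(\{\xi\})_{q,1}$ is so restrictive that, once the substitution partial-sum $\leadsto$ tail has been made available by the hypothesis $\phi(1)=0$, the estimate is essentially forced by geometric domination.
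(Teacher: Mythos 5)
Your proof is correct and follows essentially the same route as the paper: the crucial step in both is to use $\phi(1)=0$ to rewrite the partial sum $\psi_n=\sum_{k\le n}\phi_k$ as the tail $-\sum_{k>n}\phi_k$, and then bound the tail by its leading term using the super-geometric $q$-Gevrey decay. The only cosmetic difference is the way the tail is estimated (your ratio-test/eventual domination versus the paper's uniform geometric bound $\sum_{\ell>0} A^\ell \lmod q \rmod^{-\ell(\ell-1)/2}$), which yields equivalent conclusions.
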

\begin{proof}
It is obvious that $\phi(1)$ makes sense for any $\phi \in \C(\{\xi\})_{q,1}$ 
and so that any $\phi$ in the image satisfies $\phi(1) = 0$. 
If $\phi(1) = 0$, it is also clear that 
$\psi := \dfrac{\phi}{1 - \xi} \in \O(\C)[\xi^{-1}]$, and we are left
to prove that $\psi \in \C(\{\xi\})_{q,1}$. So we write 
$\phi = \sum a_{n} \xi^{n}$,
$\psi = \sum b_{n} \xi^{n}$, so that we have $\sum a_{n} = 0$ and:
$$
b_{n} = \sum_{p \leq n} a_{p} = - \sum_{p > n} a_{p}.
$$
Suppose that $\lmod a_{n} \rmod \leq C A^{n} \lmod q \rmod^{-n(n-1)/2}$ for 
all $n$, where $C,A > 0$. Then:
\begin{align*}
\lmod b_{n} \rmod \leq C \sum_{p > n} A^{p} \lmod q \rmod^{-p(p-1)/2} & \leq 
C A^{n} \lmod q \rmod^{-n(n-1)/2} 
\sum_{\ell > 0} A^{\ell} \lmod q \rmod^{-\ell(2 n + \ell -1)/2} \\
& \leq C' A^{n} \lmod q \rmod^{-n(n-1)/2},
\end{align*}
where 
$C' := \sum\limits_{\ell > 0} A^{\ell} \lmod q \rmod^{-\ell(\ell -1)/2} < \infty$.
\end{proof}

\begin{prop}
\label{prop:qBorel}
One has a splitting of $\C$-vector spaces:
$$
\Ka = \C \oplus \text{Im~} \Psi.
$$
The corresponding projection operators are $u \mapsto \Bq u(1)$
and $u \mapsto u - \Bq u(1)$
\end{prop}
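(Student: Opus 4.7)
The plan is to transport the problem from $\Ka$ to the Borel plane via the $\C$-linear isomorphism $\Bq$ and use the description of $\Im \Psi$ already provided by Lemma \ref{lemm:qBorel}. Since $\Psi = z \sq - 1 = -(1 - z \sq)$, the images coincide: $\Im \Psi = \Im(1 - z \sq)$. The commutative diagram immediately preceding the lemma then identifies $\Im \Psi$ with the image of multiplication by $(1 - \xi)$ on $\C(\{\xi\})_{q,1}$, which by the lemma is exactly $\{\phi \in \C(\{\xi\})_{q,1} \tq \phi(1) = 0\}$. Moreover, $\Bq$ acts by the identity on constants, so $\Bq(\C) = \C$ inside $\C(\{\xi\})_{q,1}$.

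The next step is the obvious direct sum decomposition in the Borel plane: every $\phi \in \C(\{\xi\})_{q,1}$ can be written uniquely as $\phi = \phi(1) + (\phi - \phi(1))$, the first summand a constant and the second vanishing at $\xi = 1$; hence
$$
\C(\{\xi\})_{q,1} = \C \oplus \{\phi \in \C(\{\xi\})_{q,1} \tq \phi(1) = 0\},
$$
with projection onto $\C$ given by evaluation at $\xi = 1$. Pulling this splitting back through $\Bq$ yields the claimed decomposition $\Ka = \C \oplus \Im \Psi$, with projection onto $\C$ given by $u \mapsto (\Bq u)(1)$ and projection onto $\Im \Psi$ given by the complementary map $u \mapsto u - (\Bq u)(1)$.

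For the formula of the projections one only has to check consistency: writing $u = c + \Psi(f)$ with $c \in \C$ and $f \in \Ka$, one applies $\Bq$ to get $\Bq u = c - (1 - \xi)\, \Bq f$, whence $(\Bq u)(1) = c$, as required. There is no real obstacle here: once Lemma \ref{lemm:qBorel} and the commutative diagram are in hand, the whole statement reduces to the trivial splitting of $\C(\{\xi\})_{q,1}$ by evaluation at a single point.
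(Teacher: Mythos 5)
Your proof is correct and takes essentially the same route as the paper's: identify $\Im\Psi$ with $\Im(1-z\sq)$, transport through the commutative diagram to the Borel plane, invoke Lemma~\ref{lemm:qBorel} to characterize the image as $\{\phi : \phi(1)=0\}$, and split off constants by evaluation at $\xi=1$. The paper's proof is terser (``the conclusion follows easily'') but relies on exactly the same two ingredients; your version simply makes the trivial splitting in $\C(\{\xi\})_{q,1}$ and the consistency check explicit.
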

\begin{proof}
It follows from the lemma that the image of the upper horizontal arrow 
in the commutative diagram above is $\{g \in \Ka \tq \Bq g(1) = 0\}$.
The conclusion follows easily.
\end{proof}

Thus, the only \emph{obstruction} in solving $z \sq f - f = g$ analytically 
is $\Bq g(1)$ (we saw that solving formally is always possible). Therefore, 
each $u \in \Ka$ is equivalent to a unique $u_0 \in \C$ modulo $\text{Im} \Psi$,
and $u_0 = \Bq u(1)$. \\

This implies that the map $u \mapsto M_u$ induces a bijection of $\C$ 
with $\F(P_{1},P_{2})$. Moreover, we have representatives in ``normal 
form'' $\begin{pmatrix} 1 & u \\ 0 & z \end{pmatrix}$ with $u_0 \in \C$, 
a particular case of the ``Birkhoff-Guenther normal form'' 
\index{Birkhoff-Guenther normal form}
(subsection 
\ref{subsection:BGNF}). Note that this normal form rests on a transcendental 
computation.

\begin{exem}
If we take for instance $u := 1$ and $v := 0$, we see that the unique 
formal solution of the equation $z \sq f - f = v - u$ is the Tshakaloff
series 
\index{Tshakaloff series}
defined in \eqref{eqn:Tshakaloff}, page \pageref{eqn:Tshakaloff},
and considered as a $q$-analogue of the Euler series $\sum n! z^n$. 
\index{Euler Series}
In the same way as the latter is the simplest non trivial object in the 
analytical classification of complex linear differential equations so is 
the former for $q$-difference equations. As such, it will follow us, see 
\emph{e.g.} example \ref{exem:archetypal} page \pageref{exem:archetypal},
example \ref{exem:Tshakaloff7} page \pageref{exem:Tshakaloff7} and the 
more detailed study in section \ref{section:qEuler} of chapter 
\ref{chapter:examplesStokes}.
\end{exem}

\begin{rema}
\label{rema:badquotient2}
\index{moduli (space of)}
\index{quotient (badly behaved)}
In the notations of remark \ref{rema:badquotient1}, it is clear that 
$\Aut(P_{1}) = \Aut(P_{2}) = \C^{*}$ and it is easy to see that the
action of $\Aut(P_{1}) \times \Aut(P_{2}) = \C^{*} \times \C^{*}$ on
$\F(P_{1},P_{2}) = \C$ is given by $(t_1,t_2).u := (t_1/t_2)u$. Thus
one is led to the quotient $\C/\C^{*}$, which is rather badly behaved: 
it consists of two points $0,\overline{1}$, the second being dense.
\end{rema}


\subsection{The case of two slopes}
\label{subsection:thecaseoftwoslopes}

The case $k = 1$ is of course trivial. The case where $k = 2$ is 
``linear'' or ``abelian'': the set of classes is naturally a finite 
dimensional vector space over $\C$. We call it ``one level case'',
\index{one level case}
\index{two slopes case}
\index{abelian case}
because the \emph{$q$-Gevrey level} 
\index{$q$-Gevrey level (equation)}
$\mu_{2} - \mu_{1}$ (see the
paragraph \ref{subsection:qGevreylevels} of the general notations 
in the introduction for its definition) is the fundamental parameter.
This will be illustrated in section \ref{section:qGevreyinterpolation},
in chapter \ref{chapter:asymptotictheory}. Let $P,P'$ be pure analytic 
$q$-difference modules with ranks $r,r'$ and slopes $\mu < \mu'$. 

\label{not27}
\begin{prop}
\label{prop:classesaunniveau=Ext}
There is a natural one-to-one correspondance:
$$
\F(P,P') \rightarrow \Ext(P',P).
$$
\end{prop}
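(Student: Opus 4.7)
The plan is to exploit the slope filtration from Theorem \ref{theo:slopefiltration}, which is available since we work in the analytic setting, to set up the bijection explicitly and then check that the equivalence relations on both sides match.

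First, I would construct the forward map $\F(P,P') \to \Ext(P',P)$. Take a pair $(M,g)$ representing a class in $\F(P,P')$. Since $M$ has Newton polygon equal to that of $P \oplus P'$, its slope filtration is just
\[
0 \subset M_{\leq \mu} \subset M,
\]
with $M_{\leq \mu} = M_{(\mu)}$ pure isoclinic of slope $\mu$ and rank $r$, while the quotient $M/M_{\leq \mu} = M_{(\mu')}$ is pure isoclinic of slope $\mu'$ and rank $r'$. The given isomorphism $g : \gr M = M_{(\mu)} \oplus M_{(\mu')} \xrightarrow{\sim} P \oplus P'$ is compatible with the grading, so it decomposes as $g = g_\mu \oplus g_{\mu'}$ with $g_\mu : M_{(\mu)} \xrightarrow{\sim} P$ and $g_{\mu'} : M_{(\mu')} \xrightarrow{\sim} P'$. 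Using these identifications, the short exact sequence above becomes an extension $0 \to P \to M \to P' \to 0$, whose class in $\Ext(P',P)$ we denote $[M,g]$.

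Next I would verify well-definedness. If $f : (M,g) \to (M',g')$ is an equivalence in the sense of Definition \ref{defi:analyticisoformalclass}, then by strict functoriality of the slope filtration (Theorem \ref{theo:slopefiltration}(ii)) the morphism $f$ sends $M_{\leq \mu}$ to $M'_{\leq \mu}$ and induces morphisms $\gr_\mu f : M_{(\mu)} \to M'_{(\mu)}$ and $\gr_{\mu'} f : M_{(\mu')} \to M'_{(\mu')}$. The condition $g = g' \circ \gr f$ unwinds to $g_\mu = g'_\mu \circ \gr_\mu f$ and similarly for $\mu'$; this is exactly the statement that $f$ is a morphism of extensions inducing the identity on $P$ and on $P'$. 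Hence the resulting extensions are equivalent.

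For the inverse map, start with an extension $0 \to P \xrightarrow{i} M \xrightarrow{p} P' \to 0$. By additivity of the Newton polygon (Theorem \ref{theo:Newtonpolygon}(ii)) $M$ has only slopes $\mu$ and $\mu'$ with multiplicities $r$ and $r'$. The subobject $i(P) \subset M$ is pure isoclinic of slope $\mu$ and rank $r$; since $M_{\leq \mu}$ is characterized as the largest such subobject (and has rank $r_M(\mu) = r$), we obtain $i(P) = M_{\leq \mu}$. Consequently $i$ induces an isomorphism $P \xrightarrow{\sim} M_{(\mu)}$ and $p$ an isomorphism $M_{(\mu')} \xrightarrow{\sim} P'$; inverting the first and assembling gives $g : \gr M \xrightarrow{\sim} P \oplus P'$. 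Equivalent extensions clearly yield equivalent pairs, by the same strict-functoriality argument run in reverse, so we obtain a map $\Ext(P',P) \to \F(P,P')$.

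The last step is to check that the two constructions are mutually inverse; this is formal once the identifications above have been set up, since both directions use the same decomposition $\gr M = M_{(\mu)} \oplus M_{(\mu')}$. Naturality in $P$ and $P'$ is inherited from functoriality of $\gr$ and of $\Ext$. The only non-routine ingredient is the identification $i(P) = M_{\leq \mu}$, which is really the crux: it relies on the existence and maximality property of the slope filtration in the analytic setting (i.e.\ on the Adams--Birkhoff--Guenther factorisation recalled before Theorem \ref{theo:slopefiltration}), and it is this point that distinguishes the present $q$-analytic classification from purely formal extension problems.
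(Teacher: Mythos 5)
Your proof is correct and follows essentially the same route as the paper: identify the slope filtration of $M$ with the extension data via the isomorphism $g$, observe that conversely $i(P) = M_{\leq\mu}$ for any extension, and check that the two equivalence relations coincide (both using strict functoriality of the slope filtration). You spell out the well-definedness and inverse checks in more detail than the paper does, but the key ideas and the crux (maximality of $M_{\leq\mu}$) are identical.
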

\begin{proof}
Here, $\Ext$ denotes the space of extension classes in the category
of left $\Dq$-modules; the homological interpretation is discussed in
the remark below. \\
Note that an extension of $\Dq$-modules of finite length has finite length,
so that an extension of $q$-difference modules is a $q$-difference module.
To give $g: \gr M \simeq P \oplus P'$ amounts to give an isomorphism
$M_{\leq \mu} \simeq P$ and an isomorphism $M/M_{\leq \mu} \simeq P'$,
\ie\ a monomorphism $i: P \rightarrow M$ and an epimorphism
$p: M \rightarrow P'$ with kernel $i(P)$, \ie\ an extension of $P'$
by $P$. Reciprocally, for any such extension, one automatically has
$M_{\leq \mu} = i(P)$, thus an isomorphism $g: \gr M \simeq P \oplus P'$.
The condition of equivalence of pairs $(M,g)$ is then exactly the condition 
of equivalence of extensions.
\end{proof}

\begin{rema}
By the classical identification of $\Ext$ spaces with $\Ext^{1}$ modules,
we thus get a description of $\F(P,P')$ as a $\C$-vector space. We shall
have use for an explicit description of this structure in terms of matrices:
this will be given in section \ref{section:extdiffmod} of the appendix.
\end{rema}


\subsection{Matricial description}
\label{subsection:matricialdescriptionofF(P1,...,Pk)}
\index{matricial description of $\F(P_1,\ldots,P_k)$}

We now go for a preliminary matricial description, valid without any
restriction on the slopes (number or integrity). Details can be found
in appendix \ref{chapter:ISOGRAD}. 
From the theorem \ref{theo:slopefiltration}, we 
deduce that each analytic $q$-difference module $M$ with 
$S(M) = \{\mu_{1},\ldots,\mu_{k}\}$, these slopes being indexed in 
increasing order: $\mu_{1} < \cdots < \mu_{k}$ and having multiplicities 
$r_{1},\ldots,r_{k} \in \N^{*}$, can be written $M = (\Ka^{n},\Phi_{A})$ with:
\label{not28}
\begin{equation}
\label{eqn:formestandardtriangulaire}
A = A_{U} := \begin{pmatrix}
B_{1}  & \ldots & \ldots & \ldots & \ldots \\
\ldots & \ldots & \ldots  & U_{i,j} & \ldots \\
0      & \ldots & \ldots   & \ldots & \ldots \\
\ldots & 0 & \ldots  & \ldots & \ldots \\
0      & \ldots & 0       & \ldots & B_{k}    
\end{pmatrix},
\end{equation}
where, for $1 \leq i \leq k$, $B_{i} \in \GL_{r_{i}}(\Ka)$ encodes a pure 
isoclinic module $P_{i} = (K^{r_{i}}, \Phi_{B_{i}})$ of slope $\mu_{i}$ and 
rank $r_{i}$ and where, for $1 \leq i < j \leq k$,
$U_{i,j} \in \Mat_{r_{i},r_{j}}(\Ka)$. Here, $U$ stands short for 
$(U_{i,j})_{1 \leq i < j \leq k} \in \prod\limits_{1 \leq i < j \leq k}
\Mat_{r_{i},r_{j}}(\Ka)$. Call $M_{U} = M$ the module thus defined: it is 
implicitly endowed with an isomorphism from $\gr M_{U}$ to
$M_{0} := P_{1} \oplus \cdots \oplus P_{k}$, here identified with
$(K^{n},\Phi_{A_{0}})$. If moreover the slopes are integral: 
$S(M) \subset \Z$, then one may take, for $1 \leq i \leq k$,
$B_{i} = z^{\mu_{i}} A_{i}$, where $A_{i} \in \GL_{r_{i}}(\C)$. \\

Now, a morphism from $M_{U}$ to $M_{V}$ compatible with the graduation 
(as in definition \ref{defi:analyticisoformalclass}) is a matrix:
\begin{equation}
\label{eqn:isoplat}
F := \begin{pmatrix}
I_{r_{1}}  & \ldots & \ldots & \ldots & \ldots \\
\ldots & \ldots & \ldots  & F_{i,j} & \ldots \\
0      & \ldots & \ldots   & \ldots & \ldots \\
\ldots & 0 & \ldots  & \ldots & \ldots \\
0      & \ldots & 0       & \ldots & I_{r_{k}}    
\end{pmatrix}, \text{~with~} 
(F_{i,j})_{1 \leq i < j \leq k} \in \prod_{1 \leq i < j \leq k}
\Mat_{r_{i},r_{j}}(\Ka),
\end{equation}
such that $(\sq F) A_{U} = B_{U} F$. The corresponding relations for 
the $F_{i,j},U_{i,j},V_{i,j}$ will be detailed in subsection 
\ref{subsection:BGNF}. Here, we just note 
that the above form of $F$ characterizes a unipotent algebraic subgroup
\index{unipotent algebraic group}
\label{not29}
\label{endroit:introductiondugroupeG}
$\G$ of $\GL_{n}$, which is completely determined by the Newton polygon
of $M_{0}$. The condition of equivalence of $M_{U}$ and $M_{V}$ reads:
\label{not30}
$$
M_{U} \sim M_{V} \Longleftrightarrow
\exists F \in \G(\Ka) ~:\: F[A_{U}] = A_{V}.
$$
The set of classes $\F(P_{1},\ldots,P_{k})$ may therefore be identified 
with the quotient of $\prod\limits_{1 \leq i < j \leq k} \Mat_{r_{i},r_{j}}(\Ka)$
by that equivalence relation, \ie\ by the action of $\G(\Ka)$. \\

For a more formal description, we introduce the block-diagonal part of $A$,
\ie\ the matrix $A_0$ corresponding to the pure module $M_0$:
\begin{equation}
\label{eqn:formestandarddiagonale}
A_{0} := \begin{pmatrix}
B_{1}  & \ldots & \ldots & \ldots & \ldots \\
\ldots & \ldots & \ldots  & 0 & \ldots \\
0      & \ldots & \ldots   & \ldots & \ldots \\
\ldots & 0 & \ldots  & \ldots & \ldots \\
0      & \ldots & 0       & \ldots & B_{k}    
\end{pmatrix},
\end{equation}
along with the following set:
\label{not31}
$$
\G^{A_{0}}(\Kf) := 
\{\hat{F} \in \G(\Kf) \tq \hat{F}[A_{0}] \in \GL_{n}(\Ka)\}.
$$
If $\hat{F} \in \G^{A_{0}}(\Kf)$ and $F \in \G(\Ka)$, then
$(F \hat{F})[A_{0}] = F\bigl[\hat{F}[A_{0}]\bigr] \in \GL_{n}(\Ka)$,
so that $F \hat{F} \in \G^{A_{0}}(\Kf)$: therefore, the group $\G(\Ka)$
operates on the set $\G^{A_{0}}(\Kf)$.

\begin{prop}
\label{prop:groupeformelsuranalytique}
The map $\hat{F}\mapsto \hat{F}[A_{0}]$ induces a one-to-one correspondance:
$$
\G^{A_{0}}(\Kf)/\G(\Ka) \rightarrow \F(P_{1},\ldots,P_{k}).
$$
\end{prop}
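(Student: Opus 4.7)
The plan is to verify that the map $\hat{F} \mapsto \hat{F}[A_0]$ is well-defined on $\G^{A_{0}}(\Kf)$, descends to the claimed quotient, and induces a bijection. I would handle well-definedness and descent first, then surjectivity, then injectivity.

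First I would observe that for any $\hat{F} \in \G(\Kf)$, the matrix $\hat{F}[A_0] = (\sq\hat{F}) A_0 \hat{F}^{-1}$ has the same diagonal blocks $B_1,\ldots,B_k$ as $A_0$: indeed $\hat{F}$, $\sq\hat{F}$ and $\hat{F}^{-1}$ all lie in $\G$ (unipotent block upper triangular with $I$ on the diagonal blocks) while $A_0$ is block diagonal, so the $(i,i)$-block of the product computes to $B_i$. Hence, for $\hat{F} \in \G^{A_{0}}(\Kf)$, the analytic module $M := (\Ka^n, \Phi_{\hat{F}[A_0]})$ comes with a canonical identification $\gr M = M_0$, producing a well-defined pair in $\F(P_1,\ldots,P_k)$. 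Replacing $\hat{F}$ by $F\hat{F}$ with $F \in \G(\Ka)$ gives $(F\hat{F})[A_0] = F[\hat{F}[A_0]]$, and $F$ itself is an analytic equivalence of pairs as in definition \ref{defi:analyticisoformalclass}, so the construction descends to $\G^{A_{0}}(\Kf)/\G(\Ka)$.

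For surjectivity, given a class in $\F(P_1,\ldots,P_k)$, I would use the matricial normal form \eqref{eqn:formestandardtriangulaire} of subsection \ref{subsection:matricialdescriptionofF(P1,...,Pk)} to represent it by some $M_U$ with $A_U$ block upper triangular and diagonal blocks exactly $B_1,\ldots,B_k$ (the required identification being encoded in the given isomorphism $\gr M \simeq M_0$). The formal case of theorem \ref{theo:slopefiltration}, recalled in subsection \ref{subsection:slopefiltration} (``all modules are pure''), provides a canonical formal splitting of $M_U \otimes_{\Ka} \Kf$ compatible with the slope filtration, which matricially amounts to some $\hat{F} \in \G(\Kf)$ with $\hat{F}[A_0] = A_U \in \GL_n(\Ka)$; this $\hat{F}$ lies in $\G^{A_{0}}(\Kf)$ and its image is the given class.

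Injectivity is where the substantive work lies and will be the main obstacle. Suppose $\hat{F}, \hat{F}' \in \G^{A_{0}}(\Kf)$ yield equivalent pairs, witnessed by $F \in \G(\Ka)$ with $(F\hat{F})[A_0] = \hat{F}'[A_0]$. Setting $H := \hat{F}'^{-1}(F\hat{F}) \in \G(\Kf)$, one has $H[A_0] = A_0$, so it suffices to show that the formal stabilizer of $A_0$ in $\G(\Kf)$ is trivial. Writing $H = I + N$ with $N$ strictly upper block triangular, the relation $H[A_0] = A_0$ translates blockwise into $(\sq N_{i,j}) B_j = B_i N_{i,j}$ for $i < j$; that is, each $N_{i,j}$ is a formal morphism $P_j \to P_i$ in $DiffMod(\Kf,\sq)$. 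The crucial input is the vanishing of $\Hom(P_j, P_i)$ formally when $\mu_i \neq \mu_j$: since $P_j^{\vee} \otimes P_i$ is pure of slope $\mu_i - \mu_j \neq 0$, a direct Newton-valuation analysis on the coefficient recursion for a putative formal solution (or, more invariantly, the canonical formal direct sum decomposition of pure modules recalled in subsection \ref{subsection:slopefiltration}) forces its solution space to vanish. Consequently each $N_{i,j} = 0$, hence $H = I$, and $\hat{F}' = F\hat{F}$ lies in the same $\G(\Ka)$-orbit as $\hat{F}$.
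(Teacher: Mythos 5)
Your proof is correct and follows the same route as the paper's: both reduce surjectivity and injectivity to the existence and uniqueness (for fixed analytic $A_U$) of the formal gauge transform $\hat{F}\in\G(\Kf)$ with $\hat{F}[A_0]=A_U$, which in turn rests on the canonical formal splitting of the slope filtration recalled in subsection \ref{subsection:slopefiltration}. You simply unpack the uniqueness step into the explicit statement that the formal stabilizer of $A_0$ in $\G(\Kf)$ is trivial because $\Hom(P_j,P_i)=0$ over $\Kf$ for $\mu_i\neq\mu_j$, which is exactly the content the paper invokes implicitly.
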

\begin{proof}
From the formal case in \ref{subsection:slopefiltration}, it follows that,
for any $U \in \prod\limits_{1 \leq i < j \leq k} \Mat_{r_{i},r_{j}}(\Ka)$ there 
exists a unique $\hat{F} \in \G(\Kf)$ such  that $\hat{F}[A_{0}] = A_{U}$. 
The equivalence of $\hat{F}[A_{0}]$ with $\hat{F}'[A_{0}]$ is then just
the relation $\hat{F}' \hat{F}^{-1} \in \G(\Ka)$.
\end{proof}

\begin{rema}
\label{not32}
Write $\hat{F}_{U}$ for the $\hat{F}$ in the above proof. More generally, 
for any $U,V \in \prod\limits_{1 \leq i < j \leq k} \Mat_{r_{i},r_{j}}(\Ka)$ there 
exists a unique $\hat{F} \in \G(\Kf)$ such  that $\hat{F}[A_{U}] = A_{V}$;
write it $\hat{F}_{U,V}$. Then $\hat{F}_{U,V} = \hat{F}_{V} \hat{F}_{U}^{-1}$
and the condition of equivalence of $M_{U}$ and $M_{V}$ reads:
$$
M_{U} \sim M_{V} \Longleftrightarrow \hat{F}_{U,V} \in \G(\Ka).
$$
Giving analyticity conditions for a formal object strongly hints towards
a resummation problem!
\index{resummation problem}
(See chapters \ref{chapter:qMalgrangeSibuya} and
\ref{chapter:asymptotictheory}.)
\end{rema}


 
\chapter{The affine space of isoformal analytic classes}
\label{chapter:isoformalclasses}


\section{Isoformal analytic classes of analytic $q$-difference modules}
\label{section:isoformalclasses}
\index{isoformal analytic classes of analytic $q$-difference modules}

We shall now specialize the results of the appendix \ref{chapter:ISOGRAD}, 
in particular its section \ref{section:extensionsofdifferencemodules}, to 
the case of $q$-difference modules\footnote{Reading the appendix is not
a prerequisite to reading that chapter, except for some particular explicit
references.}. 


\subsection{Extension classes of analytic $q$-difference modules}
\label{subsection:extensionsofqdifferencemodules}
\index{extension classes of analytic $q$-difference modules}

From here on, we consider only analytic $q$-difference modules and
the base field is $\Ka$ (except for brief indications about the
formal case).

\begin{theo}
\label{theo:dimensiondeuxpentes}
Let $M,N$ be pure modules of ranks $r,s \in \N^{*}$ and slopes
$\mu < \nu \in \Q$. Then $\dim_{\C} \F(M,N) = r s(\nu - \mu)$.
\end{theo}
\begin{proof}
Since $\F(M,N) \simeq \Ext^{1}(N,M) \simeq \Gamma^{1}(N^{\vee} \otimes M)$
(proposition \ref{prop:GammaetExt}) and since $N^{\vee} \otimes M$ is pure 
isoclinic of rank $r s$ and slope $\mu - \nu < 0$, the theorem is an 
immediate consequence of the following lemma.
\end{proof}

\begin{lemm}
\label{lemm:dimensiondeuxpentes}
Let $M$ be a pure module of ranks $r$ and slope $\mu < 0$. Then 
$\dim_{\C} \Gamma^{1}(M) = - r \mu$.
\end{lemm}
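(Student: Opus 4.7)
My plan is to compute $\Gamma^i(M)$ via a short projective resolution of $\1$, use additivity to reduce to rank $1$ with integer slope, and then handle that case by a Snake Lemma comparison of formal and analytic cokernels.

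Since $\Dq$ is a left Euclidean domain, $\1 = \Dq/\Dq(\sigma_q - 1)$ admits the length-two free resolution $0 \to \Dq \xrightarrow{\cdot(\sigma_q - 1)} \Dq \to \1 \to 0$. Applying $\Hom_{\Dq}(-,M) \simeq M$ identifies $\Gamma^i(M)$ with the cohomology of the two-term complex $M \xrightarrow{\Phi - \Id} M$, so $\Gamma(M) = \Ker(\Phi - \Id)$, $\Gamma^1(M) = \text{Coker}(\Phi - \Id)$, and $\Gamma^i(M) = 0$ for $i \geq 2$. For $M$ pure of slope $\mu < 0$ one has $\Gamma(M) = 0$: in rank one, with $M = (\Ka, \Phi_{c z^\mu})$, any nonzero Laurent solution of $\sigma_q f = c z^\mu f$ would give the valuation identity $v_0(f) = \mu + v_0(f)$, impossible; in higher rank with integer $\mu$, triangularize $A$ in the normal form $(\Ka^r, \Phi_{z^\mu A})$ of Lemma \ref{lemm:puremodules}(ii); for non-integer $\mu$, ramify via Theorem \ref{theo:Newtonpolygon}(iv). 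Combined with $\Gamma^i = 0$ for $i \geq 2$, this makes $\dim_\C \Gamma^1$ additive on short exact sequences of pure modules of negative slope, so the triangulation reduces the integer-slope rank-$r$ case to $r$ copies of the rank-$1$ case.

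In the rank-$1$ integer-slope case, $\Gamma^1(M)$ is the cokernel of $\Psi_c : f \mapsto c z^\mu f - \sigma_q f$ on $\Ka$. A direct analysis of the recurrence on Laurent coefficients shows that $\Psi_c$ is a bijection $\Kf \to \Kf$ (both the forward recurrence and the support-based backward recurrence are invertible, and the formal kernel vanishes), so applying the Snake Lemma to the diagram with rows $0 \to \Ka \to \Kf \to \Kf/\Ka \to 0$ and vertical map $\Psi_c$ yields $\text{Coker}(\Psi_c|_{\Ka}) \simeq \Ker(\Psi_c|_{\Kf/\Ka})$, the space of divergent formal Laurent series $f$, modulo $\Ka$, for which $\Psi_c(f) \in \Ka$. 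Generalizing the Tshakaloff example of \ref{subsection:prototypal} (which provides such a class for $\mu = -1$), a sharp $q$-Gevrey estimate on the recurrence exhibits an explicit $(-\mu)$-parameter family of divergent formal solutions of $q$-Gevrey order $1/(-\mu)$, and a comparison of $q$-Gevrey orders rules out further classes. For non-integer slopes $\mu = d/\ell$, one ramifies to $K' = \Ka[z^{1/\ell}]$ and invokes the van der Put--Reversat classification recalled in \ref{subsection:puremodules}: since $\dim_\C \text{Coker}(\Phi - \Id)$ is preserved under restriction of scalars, the integer-slope case over $K'$ delivers the formula. The main obstacle is the sharp $q$-Gevrey count in the rank-$1$ case; the $q$-Borel transform, in the style of the prototypal example, is the cleanest tool for it, converting the analytic obstruction into $(-\mu)$ explicit evaluation conditions in the Borel plane.
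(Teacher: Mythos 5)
Your outline is structurally sound and, once reduced to rank one with integral slope, is closest in spirit to the paper's third proof (the paper actually gives four proofs of this lemma). The reduction is correctly organized: the two-term complex $M \xrightarrow{\Phi - \Id} M$ computing $\Gamma^{i}$ (corollary \ref{coro:complexedessolutions}), the vanishing $\Gamma(M) = 0$ for nonzero slope, the triangulation to rank-one pieces, and the long exact sequence argument for additivity are all as in the paper. Your Snake Lemma step is valid: formal bijectivity of $\Psi_c$ holds by a $z$-adic contraction argument (the paper records it as a ``Fact'' in its index section), so the six-term sequence collapses to $\text{Coker}(\Psi_c\vert_{\Ka}) \simeq \Ker(\Psi_c\vert_{\Kf/\Ka})$. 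This is pleasantly aligned with the paper's later interpretation of irregularity as an index on $\Kf/\Ka$, but inside the proof of the present lemma it is a pure reformulation: it trades a cokernel for an equally opaque kernel and does not by itself advance the dimension count. The paper instead directly exhibits an explicit complement to the image of $z^m\sigma_q - c$, namely $\C \oplus \C z \oplus \cdots \oplus \C z^{m-1}$ (lemma \ref{lemm:supplementairedeuxpentes}), which is immediately quantitative.

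The genuine gap is your final counting step. The sentence ``a sharp $q$-Gevrey estimate \ldots\ exhibits an explicit $(-\mu)$-parameter family'' followed by ``a comparison of $q$-Gevrey orders rules out further classes'' is not an argument: knowing that every element of $\Ker(\Psi_c\vert_{\Kf/\Ka})$ has $q$-Gevrey order at most $1/(-\mu)$ does not bound a dimension, let alone pin it to $-\mu$. The load-bearing tool is precisely the $q$-Borel transform, which you correctly name at the end but do not deploy. In the paper's route, lemma \ref{lemm:supplementairedeuxpentes} conjugates to the $\mu=-1$ case of the prototypal example in \ref{subsection:prototypal}, where $\Bq$ turns $z\sq f - f = g$ into multiplication by $1-\xi$ in the Borel plane; the obstruction is the single evaluation condition $\Bq g(1)=0$, cutting codimension one, and the splitting of $\Ka$ into $-\mu$ residue classes distributes this over $-\mu$ copies. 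You still have to carry out that computation; the ``$q$-Gevrey comparison'' phrasing is a red herring. Alternatively, the paper's first proof sidesteps $\Bq$ entirely by quoting B\'ezivin's index theorem together with corollary \ref{coro:complexedessolutionsbis}, which is the quickest path if you are willing to black-box the index. Your ramification endgame for non-integral slopes is terse but correct: restriction of scalars from $(K',\sigma')$ preserves the underlying $\C$-vector space and the map $\Phi-\Id$, so the cokernels agree literally, and with the vdPR description of irreducibles plus the integer-slope case over $K'$ the quantity $-r\mu$ is unchanged; this matches the paper's fourth proof, which likewise defers to \cite{vdPR}.
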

\begin{proof}
We give four different proofs, of which two require that $\mu \in \Z$.

1) Write $d := - r \mu \in \N^{*}$. If $M = \Dq/\Dq P$, the module $M^{\vee}$
is pure of rank $r$ and slope $- \mu$, and can be written as $\Dq/\Dq P^{\vee}$
for some dual $P^{\vee} = a_{0} + \cdots + a_{r} \sq^{r}$ of $P$, such that
$v_{0}(a_{0}) = 0$, $v_{0}(a_{r}) = d$ and $v_{0}(a_{i}) \geq i d/r$ for all $i$
(lemma \ref{lemm:puremodules}). We want to apply proposition 2.5 of
\cite{Bezivin}, but the latter assumes $\lmod q \rmod < 1$, so we 
consider $L := P^{\vee} \sq^{-r} = b_{0} + \cdots + b_{r} \sigma^{r}$, where 
$\sigma := \sq^{-1}$ and $b_{i} := a_{r-i}$. After \emph{loc.cit}, the operator
$L: \Ra \rightarrow \Ra$ has index $d$. More generally, for all $m \in \N^{*}$,
the operator $L: z^{-m} \Ra \rightarrow z^{-m} \Ra$ is conjugate to
$z^{m} L z^{-m} = \sum q^{i m} b_{i} \sigma^{i}: \Ra \rightarrow \Ra$,
which also has index $d$. Hence $L: \Ka \rightarrow \Ka$ has index $d$,
and so has $P^{\vee}$. After corollary \ref{coro:complexedessolutionsbis},
this index is $\dim_{\C} \Gamma^{1}(M) - \dim_{\C} \Gamma(M)$. But, $M$
being pure isoclinic of non null slope, $\Gamma(M) = 0$, which ends
the first proof.

2) In the following proof, we assume $\mu \in \Z$. From theorem 1.2.3 
of \cite{JSAIF}, we know that we may choose the dual operator such that:
$$
P^{\vee} = (z^{-\mu} \sq - c_{1}) u_{1} \cdots (z^{-\mu} \sq - c_{r}) u_{r},
$$
where $c_{1},\ldots,c_{r} \in \C^{*}$ and $u_{1},\ldots,u_{r} \in \Ra$, 
$u_{1}(0) = \cdots = u_{r}(0) = 1$. We are thus left to prove that 
each $z^{-\mu} \sq - c_{1}$ has index $-\mu$:
the indexes add up and the sum will be $- r \mu = d$. Since the kernels
are trivial, we must compute the cokernel of an operator $z^{m} \sq - c$,
$m \in \N^{*}$. But it follows from lemma \ref{lemm:supplementairedeuxpentes}
that the image of $z^{m} \sq - c: \Ka \rightarrow \Ka$ admits the
supplementary space $\C \oplus \cdots \oplus \C z^{m-1}$.

3) In the following proof, we assume again $\mu \in \Z$. After 
lemma \ref{lemm:puremodules}, we can write $M = (\Ka^{r},\Phi_{z^{\mu} A})$, 
$A \in \GL_{r}(\C)$. After corollary \ref{coro:complexedessolutions}, we
must consider the cokernel of 
$\Ka^{r} \overset{\Phi_{z^{\mu} A} - \Id}{\longrightarrow} \Ka^{r}$. But, after 
lemma \ref{lemm:supplementairedeuxpentes}, the image of $\Phi_{z^{\mu} A} - \Id$
admits the supplementary space $(\C \oplus \cdots \oplus \C
z^{-\mu-1})^{r}$.

4) A similar proof, but for arbitrary $\mu$, can be deduced from \cite{vdPR}.
It follows indeed from this paper that each isoclinic module of slope $\mu$
can be obtained by successive extensions of modules admitting a dual of the
form $\Dq/\Dq (z^{a} \sq^{b} - c)$, where $b/a = - \mu$ and $c \in \C^{*}$.
\end{proof}

\begin{lemm}
\label{lemm:supplementairedeuxpentes}
Let $d,r \in \N^{*}$ and $A \in \GL_{r}(\C)$. Let $D \subset \Z$ be any set
of representatives modulo $d$, for instance $\{a,a+1,\ldots,a+d-1\}$ for
some $a \in \Z$. Then, the image of the $\C$-linear map 
$F: X \mapsto z^{d} A \sq X - X$ from $\Ka^{r}$ to itself admits as a
supplementary $\left(\sum\limits_{i \in D} \C z^{i}\right)^{r}$.
\end{lemm}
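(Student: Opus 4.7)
The proof proceeds by two successive reductions: first, a decomposition of $\Ka^r$ by residues of exponents modulo $d$ to reduce to the case $d = 1$; second, a Jordan-form argument on $A$ to reduce to the scalar case already treated in the prototypal example of subsection~\ref{subsection:prototypal}.

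\emph{Step 1 (residue decomposition).} Split $\Ka^r = \bigoplus_{j \in \Z/d\Z} V_j$, where $V_j$ is the subspace of Laurent expansions supported on exponents congruent to $j$ modulo $d$. Because $z^d$ shifts exponents by $d$, the operator $F\colon X \mapsto z^d A \sq X - X$ preserves each $V_j$; and the candidate supplement splits as $\bigl(\sum_{i \in D}\C z^i\bigr)^r = \bigoplus_j \C^r z^{i_j}$, where $i_j$ is the unique element of $D$ in the class $j$. It thus suffices to show, for each $j$ separately, that $F|_{V_j}$ admits $\C^r z^{i_j}$ as a supplement in $V_j$. The substitution $X(z) = z^{i_j}\tilde X(w)$ with $w := z^d$ identifies $V_j$ with $\Ka^r$ in the variable $w$, and a short calculation yields
$$
F(X)(z) = z^{i_j}\bigl[(q^{i_j}A)\, w\, \sq'\tilde X(w) - \tilde X(w)\bigr], \qquad \sq' f(w) := f(q^d w).
$$
Since $|q^d| > 1$, $q^{i_j}A \in \GL_r(\C)$, and $\C^r z^{i_j}$ corresponds to the subspace of constants in the $w$-picture, the lemma is reduced to its $d = 1$ case.

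\emph{Step 2 (Jordan reduction and scalar case).} The task is now to prove: for any $A' \in \GL_r(\C)$, the image of $\Psi\colon Y \mapsto w A' \sq' Y - Y$ on $\Ka^r$ admits $\C^r$ as a supplement. If $S \in \GL_r(\C)$ satisfies $S^{-1}A'S = J$ in Jordan form, then $\Psi = S\,\Psi_J\,S^{-1}$ as $\C$-linear operators on $\Ka^r$, and $S$ preserves the subspace of constants; so one may assume $A' = J$, and further, by block decomposition, that $A' = \lambda I + N$ is a single Jordan block with $\lambda \in \C^*$ and $N$ the standard nilpotent upper-triangular. The system $\Psi(Y) = Z$ is then upper triangular,
$$
(\lambda w\sq' - I)Y_r = Z_r, \qquad (\lambda w\sq' - I)Y_i = Z_i - w\sq' Y_{i+1} \quad (1 \leq i < r),
$$
which one solves from $i = r$ upward. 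Each step is an instance of the scalar problem of the prototypal example, extended to arbitrary $\lambda \in \C^*$: the identity
$$
\Bq(\lambda w\sq' f - f)(\xi) = (\lambda \xi - 1)\,\Bq f(\xi)
$$
shows that $\lambda w\sq' - I$ is injective with image of codimension $1$, the supplement $\C$ projecting via $Z \mapsto \Bq Z(\lambda^{-1})$. Iterating, one obtains a unique decomposition $Z = c + \Psi(Y)$ with $c \in \C^r$ and $Y \in \Ka^r$.

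\emph{Main obstacle.} The argument is conceptually straightforward, with no delicate analytical point once the scalar prototypal case is granted. The only effort beyond the paper's existing material is to check that the $q$-Borel argument of subsection~\ref{subsection:prototypal} carries over from $\lambda = 1$ to arbitrary $\lambda \in \C^*$, which amounts to evaluating $\Bq Z$ at $\lambda^{-1}$ in place of $1$; once that variant is in hand, the residue-class decomposition and the Jordan-block reduction are purely formal bookkeeping.
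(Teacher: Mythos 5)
Your proof is correct. The first reduction (decomposition by residue classes of exponents modulo $d$, followed by the substitution $w := z^d$) coincides with the paper's first step, down to the conjugated operator $w\,q^{i}A\,\sigma' Y - Y$.

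Where you diverge is in handling the matrix $A$. The paper applies the $\C$-linear automorphism $X = \sum X_n z^n \mapsto \tilde X := \sum A^{-n}X_n z^n$ of $\Ka^r$, which conjugates $X \mapsto zA\sq X - X$ directly to $X \mapsto z\sq X - X$ (i.e.\ reduces to $A = I_r$) while fixing $\C^r$; the problem then splits into $r$ copies of the scalar prototypal example of subsection~\ref{subsection:prototypal} with $\lambda = 1$, and no further Borel-side computation is needed. You instead conjugate $A$ to Jordan form and solve the resulting upper-triangular system bottom-up, which requires the scalar statement for arbitrary $\lambda \in \C^*$ — hence your remark about evaluating $\Bq Z$ at $\lambda^{-1}$. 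That extension is indeed routine, and is in fact worked out independently in example~\ref{exem:archetypal}, so the ingredient you flag as ``beyond the paper's existing material'' is actually already present (a few pages later). The trade-off is clear: the paper's twist by $A^{-n}$ is shorter and strictly reduces to the case already proved, whereas your Jordan-block iteration is more pedestrian and needs the $\lambda$-variant of the Borel lemma, but it is arguably more transparent since it avoids the somewhat ad hoc substitution $X \mapsto \sum A^{-n} X_n z^n$ (whose well-definedness on $\Ka^r$ — requiring a small estimate on $\|A^{-n}\|$ — the paper leaves implicit).
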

\begin{proof}
For all $i \in \Z$, write $K_{i} := z^{i} \C(\{z^{d}\})$, so that
$\Ka = \bigoplus_{i \in D} K_{i}$. Each of the $K_{i}^{r}$ is stable under $F$.
We write $w := z^{d}$, $L := \C(\{w\})$, $\rho := q^{d}$ and define $\sigma$
on $L$ by $\sigma f(w) = f(\rho w)$. Multiplication by $z^{i}$ sends $L^{r}$
to $K_{i}^{r}$ and conjugates the restriction of $F$ to $K_{i}^{r}$ to the 
mapping $G_{i}: Y \mapsto w q^{i} A \sigma Y - Y$ from $L^{r}$ to itself. 
We are left to check that the image of $G_{i}$ admits $\C^{r}$ as a 
supplementary. But this is just the case $d = 1$, $D = \{0\}$ of the lemma. 
So we tackle this case under these assumptions with the notations of the lemma.

So write $F_{A}: X \mapsto z A \sq X - X$ from $\Ka^{r}$ to itself. Also
write $X = \sum X_{n} z^{n}$ and $Y = F_{A}(X) = \sum Y_{n} z^{n}$ (all sums
here have at most a finite number of negative indices), so that
$Y_{n} = q^{n-1} A X_{n-1} - X_{n}$. Putting $\tilde{X} := \sum A^{-n} X_{n} z^{n}$
and $\tilde{Y} := \sum A^{-n} Y_{n} z^{n}$, from the relation
$A^{-n} Y_{n} = q^{n-1} A^{-(n-1)} X_{n-1} - A^{-n} X_{n}$, we draw
$\tilde{Y} = z \sq \tilde{X} - X$. Of course, $X \mapsto \tilde{X}$ is an
automorphism of $\Ka^{r}$ and we just saw that it conjugates $F_{A}$ to the
map $F: X \mapsto z \sq X - X$ from $\Ka^{r}$ to itself. Moreover, the same
automorphism leaves invariant the subspace $\C^{r} \subset \Ka^{r}$, so the 
question boils down to prove that the image of $F$ has supplementary $\C^{r}$. 
And this, in turn, splits into $r$ times the same problem for $r = 1$.

So we are left with the case of the map $\Psi: f \mapsto z \sq f - f$ 
from $\Ka$ to itself, which is the very heart of the theory ! (See for 
instance example \ref{exem:archetypal} page \pageref{exem:archetypal} and 
section \ref{section:qEuler} of chapter \ref{chapter:examplesStokes}.)
This case has been studied in detail in paragraph \ref{subsection:prototypal}.
We saw there that $\Psi$ is injective and that its image is in direct sum 
with $\C \subset \Ka$. More precisely, for any $u \in \Ka$, the unique
$u_0 \in \C$ such that $u - u_0 \in \text{Im~} \Psi$ is 
$u_0 := \Bq \, u(1)$ (the $q$-Borel transform $\Bq \, u$ was defined
in paragraph \ref{subsection:prototypal}). This achieves the proof.
\end{proof}

Note that from the proof, one draws explicit projection maps from $\Ka^{r}$
to the image of $F_{A}$ and its supplementary $\C^{r}$: these are the maps
$Y \mapsto Y - \Bq Y(A^{-1})$ and $Y \mapsto \Bq Y(A^{-1})$, where:
$$
\Bq Y(A^{-1}) := \sum q^{-n(n-1)/2} A^{-n} Y_{n}.
$$


\subsection{The affine scheme  $\F(P_{1},\ldots,P_{k})$}
\label{subsection:theaffineschemeF(M0)}
\index{affine scheme}

Now let $P_{1},\ldots,P_{k}$ be pure isoclinic $q$-difference modules 
over $\Ka$, of ranks $r_{1},\ldots,r_{k} \in \N^{*}$ and slopes 
$\mu_{1} < \cdots < \mu_{k}$. From section \ref{section:modulispace} of 
the appendix \ref{chapter:ISOGRAD}, we have a functor from commutative 
$\C$-algebras to sets:
$$
C \leadsto F(C) := \F(C \otimes_{\C} P_{1},\ldots,C \otimes_{\C} P_{k}).
$$
Here, the base change $C \otimes_{\C} -$ means that we extend the
scalars of $q$-difference modules from $\Ka$ to $C \otimes_{\C} \Ka$.

\begin{theo}
\label{theo:schemadesmodules}
\index{affine space}
The functor $F$ is representable and the corresponding affine scheme 
is an affine space over $\C$ with dimension: 
$$
\dim \F(P_{1},\ldots,P_{k}) =
\sum\limits_{1 \leq i < j \leq k} r_{i} r_{j} (\mu_{j} - \mu_{i}).
$$
\end{theo}
\begin{proof}
We apply theorem \ref{theo:themodulispaceabstract}. We need two check
the assumptions:
$$
\forall i,j, \text{~~s.t.~~} 1 \leq i < j \leq k ~,~
\Hom(P_{j},P_{i}) = 0 \text{~~and~~}
\dim_{\C}\Ext(P_{j},P_{i}) = r_{i} r_{j} (\mu_{j} - \mu_{i}).
$$
The first fact comes from theorem \ref{theo:slopefiltration} and the
second fact from theorem \ref{theo:dimensiondeuxpentes}; the fact that
the extension modules are free is here obvious since $\C$ is a field.
\end{proof}

\begin{rema}
\index{moduli (space of)}
The use of base changes $C \otimes_{\C} \Ka$ is too restrictive to
consider this as a true space of moduli, as was done for differential
equations in \cite[chap. III]{BV}: we would like to consider families 
of modules the coefficients of which are arbitrary analytic functions
 of some parameter. This will be done in \cite{JSmoduli}.
\end{rema}

Now, to get coordinates on our affine scheme, we just have to find bases 
of the extension modules $\Ext(P_{j},P_{i})$ for $1 \leq i < j \leq k$:
this follows indeed from corollary \ref{coro:themodulispaceabstract}.
An example will be given by corollary \ref{coro:schemadesmodules}.


\section{Index theorems and irregularity}
\label{section:indextheorems}
\index{index theorems}
\index{irregularity}

We shall now give some complements to the results in
\ref{subsection:extensionsofqdifferencemodules}, in the form of index 
theorems. They mostly originate in the works \cite{Bezivin} of 
B\'ezivin\footnote{The index computations in \cite{Bezivin} are more 
general in two directions: they apply to ``difference'' automorphisms 
that are more general that $z \mapsto q z$; and the equations are not 
linear.} and also in \cite{RamisGrowth}: the difference here is merely
the adaptation to our formalism. Although not strictly necessary for what 
follows, these results allow for an interpretation of
$\dim \F(P_{1},\ldots,P_{k})$ (theorem \ref{theo:schemadesmodules})
in terms of ``irregularity'', as in the paper \cite{Ma3} of Malgrange. \\

In most of this section, we write $K$ for (indifferently) $\Kf$ or $\Ka$
and respectively speak of the \emph{formal} or \emph{convergent} case.
\index{formal case}
\index{convergent case}
We intend to compute the index of an analytic $q$-difference operator
$P \in \Dq$ acting upon $\Kf$ and $\Ka$ (and, in the end, upon $\Kf/\Ka$).
To begin with, we do not assume $P$ to have analytic coefficients.


\subsection{Kernel and cokernel of $\sq  - u$}

We start with the case $\deg P = 1$. Up to an invertible factor
in $\Dq$, we may assume that $P = \sq  - u$, where
$u = d q^{k} z^{\nu} v$, $v \in K, \; v(0) = 1$, with $k \in \Z$ 
and $d \in \C^{*}$ such that $1 \leq |d| < \lmod q \rmod$.
We consider $P$ as a $\C$-linear operator on $K$.

\begin{enonce*}{Fact}
The dimensions over $\C$ of the kernel and cokernel of $P$ depend
only of the class of $u \in K^{*}$ modulo the subgroup
$\{\frac{\sq (w)}{w} \tq w \in K^{*}\}$ of $K^{*}$. This class
is equal to that of a $d z^{\nu}$. 
\end{enonce*}
\begin{proof}
Conjugating the $\C$-linear endomorphism $\sq  - u$ of $K$
by $w \in K^{*}$ (undestood as the automorphism $\times w$), one finds:
$$
w \circ (\sq  - u) \circ w^{-1} = 
\frac{w}{\sq (w)} \circ (\sq  - u'),
\text{~with~} u' = u \frac{\sq (w)}{w},
$$
whence the first statement. Then, $q^{k} v = \frac{\sq (w)}{w}$, where
$w = z^{k} \underset{i \geq 1}{\prod} \sq ^{-i}(v) \in K$ (also well
defined in the convergent case), whence the second statement.
\end{proof}

So we now set $u := d z^{\nu}$. 

\begin{enonce*}{Fact}
The kernel of $\sq  - u : K \rightarrow K$ has dimension $1$ if
$(\nu,d) = (0,1)$, and $0$ otherwise.
\end{enonce*}
\begin{proof}
The series $f = \sum f_{n} z^{n}$ belongs to the kernel if, and only 
if $\forall n \;,\, q^{n} f_{n} = d f_{n - \nu}$. Since we deal with
Laurent series having poles, this implies $f = 0$ except maybe
if $\nu = 0$. In the latter case, it implies $f = 0$, except maybe if
$d \in q^{\Z}$. From the condition on $|d|$, this is only possible
if $d = 1$, thus $u = 1$. In that case, the kernel is plainly $\C$.
\end{proof}

\begin{enonce*}{Fact}
The map $\sq  - u : K \rightarrow K$ is onto if $\nu >0$ and also if 
$\nu = 0, \; d \not= 1$.
\end{enonce*}
\begin{proof}
Let $g \in K$. We look for $f$ solving the equation
$\sq  f - d z^{\nu} f = g$ in $K$. This is equivalent to:
$\forall n \;,\, q^{n} f_{n} - d f_{n - \nu} = g_{n}$,
that is, 
$\forall n \;,\, f_{n} = q^{-n} (d f_{n - \nu} + g_{n})$.
If $\nu \geq 1$, one computes the coefficients by induction from
the $\nu$ first among them. Moreover, in the convergent case,
inspection of the denominators show that $f$ converges if $g$ 
does; and then, the functional equation ensures meromorphy.
If $\nu = 0$ and $d \not= 1$ (so that $d \notin q^{\Z}$), 
one gets rightaway $f_{n} = \frac{g_{n}}{q^{n} - d}$. Convergence 
(resp. meromorphy) is then immediate in the convergent case.
(When $\nu \geq 1$, one can also consider the fixpoint equation
$F(f) = f$, where $F(f) := \sq ^{-1}(g + d z^{\nu} f)$: it is easy
to see that this is a contracting operator as well for the formal
topology, \ie\ for $z$-adic convergence, as for the transcendant 
topology, \ie\ for usual convergence.)
\end{proof}

\begin{enonce*}{Fact}
If $(\nu,d) = (0,1)$, the cokernel of $\sq  - u : K \rightarrow K$ 
has dimension $1$.
\end{enonce*}
\begin{proof}
One checks that $\sq  - u$ vanishes on $\C$ and induces
an automorphism of the subspace $K^{\bullet}$ of $K$ made up of
series without constant term.
\end{proof}

\begin{enonce*}{Fact}
Assume $\nu < 0$. Then $\sq  - u : K \rightarrow K$ is onto in
the formal case.
\end{enonce*}
\begin{proof}
Put $F'(f) = d^{-1} z^{- \nu} (\sq (f) - g)$. Since $\nu > 0$, this
is a $z$-adically contracting operator, whence the existence (and
unicity) of a fixed point. 
\end{proof} 

\begin{rema}
This is the first place where the formal and convergent cases differ.
The operator is (rather strongly) \emph{expanding} for the transcendant
topology, it produces Stokes phenomena! 
\index{Stokes phenomenon}
So this is where we need an 
argument from analysis in the convergent case. 
\end{rema}

\begin{enonce*}{Fact}
Assume $\nu = - r, r \in \N^{*}$. Then, in the convergent 
case, the cokernel of  $\sq  - u : K \rightarrow K$ has dimension $r$.
\end{enonce*}
\begin{proof}
This is a consequence of lemma \ref{lemm:dimensiondeuxpentes} (and
therefore relies on the use of the $q$-Borel transformation).
\end{proof}
We now summarize our results:

\begin{prop} 
\label{prop:indices}
Let $u = d z^{\nu} v$, $v \in K, \; v(0) = 1$, with
$d \in \C^{*}$. Write $\overline{d}$ the class of $d$
modulo $q^{\Z}$. The following table shows the ranks of 
the kernel and cokernel, as well as the index 
\index{index}
$\chi(P) := \dim \Ker P - \dim \text{Coker~} P$ of the
$\C$-linear operator $P := \sq  - u: K \rightarrow K$: \\
\begin{tabular}[t]{||l|l||l|l||l||}
\hline
\multicolumn{2}{||c||}{$(\nu,\overline{d})$} & Kernel & Cokernel & 
Index  \\
\hline
\multicolumn{2}{||c||}{$(0,1)$}   &  $1$  &  $1$ & $0$ \\
\multicolumn{2}{||c||}{$(0,\not=1)$} &  $0$  &  $0$ & $0$ \\
\multicolumn{2}{||c||}{$(> 0,-)$} &  $0$  &  $0$ & $0$ \\
\multicolumn{2}{||c||}{$(< 0,-)$} &  $0$  &  
$\begin{cases} 0 \text{ (formal case) } \\ 
- \nu \text{ (convergent case) } \end{cases}$ &  
$\begin{cases} 0 \text{ (formal case) } \\ 
\nu \text{ (convergent case) } \end{cases}$ \\
\hline
\end{tabular}
\end{prop}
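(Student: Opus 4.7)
The plan is essentially to assemble and organize the six \emph{Facts} already established in the preceding subsection, each of which handles one entry of the table. I would open the proof by invoking the first reduction: conjugation by a suitable $w \in K^{*}$ shows that $\sq - u$ is intertwined with $\sq - u'$ whenever $u'/u$ lies in $\{\sq(w)/w \mid w \in K^*\}$, and one checks that $u = d q^{k} z^{\nu} v$ with $v(0)=1$ differs from $d z^{\nu}$ by such a factor, namely $w = z^k \prod_{i \geq 1} \sq^{-i}(v)$ (convergent in both $K = \Kf$ and $K = \Ka$). Hence we may assume $u = d z^{\nu}$ throughout, and the kernel and cokernel dimensions of $\sq - u$ depend only on $(\nu, \overline{d})$.

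Next, I would compute the kernel by a direct inspection of the recursion $q^n f_n = d f_{n-\nu}$ on Laurent coefficients: when $\nu \neq 0$ all coefficients are forced to vanish (using that any Laurent series has only finitely many nonzero negative-index coefficients); when $\nu = 0$ the recursion forces $f_n = 0$ except possibly at some $n$ with $q^n = d$, which by the normalization $1 \leq |d| < |q|$ occurs only for $d = 1$, where the kernel is $\C$. This covers the ``Kernel'' column.

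For the cokernel, I would proceed case by case. If $\nu \geq 1$, the recursion $f_n = q^{-n}(d f_{n-\nu} + g_n)$ determines $f$ from $g$ once the first $\nu$ coefficients are fixed, and a direct estimate of denominators gives convergence of $f$ when $g$ converges (the functional equation then extending meromorphically); so the map is onto in both settings. If $\nu = 0$ and $d \neq 1$, one solves explicitly $f_n = g_n/(q^n - d)$ with the denominators bounded away from $0$, giving surjectivity. If $(\nu,\overline{d})=(0,1)$, the operator kills $\C$ and restricts to an automorphism of the subspace $K^\bullet$ of series without constant term, so the cokernel is $1$-dimensional. If $\nu < 0$ in the formal case, the operator $F'(f) = d^{-1} z^{-\nu}(\sq f - g)$ is $z$-adically contracting, producing a unique formal fixed point and hence surjectivity.

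The main obstacle — and the only place where formal and analytic behaviors genuinely differ — is the case $\nu = -r < 0$ in the convergent setting, where $\sq - u$ is \emph{expanding} for the transcendental topology and Stokes phenomena appear. Here I would not attempt a direct construction but instead invoke lemma \ref{lemm:dimensiondeuxpentes}: the module $\Dq/\Dq(\sq - d z^{-r})$ is pure isoclinic of slope $-r/1 = -r$ and rank $1$ (hence of $\Gamma$ equal to zero since the kernel vanishes), so its $\Gamma^1$ has dimension $-1 \cdot (-r) = r$ by that lemma, which via corollary \ref{coro:complexedessolutionsbis} identifies with the cokernel of $\sq - u$ on $\Ka$. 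Assembling these six computations and taking the difference $\dim \Ker - \dim \text{Coker}$ fills in the Index column and completes the table.
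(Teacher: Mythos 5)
Your plan — reorganize the six \emph{Facts} already proved in this subsection into a table, using the first \emph{Fact} to reduce to $u = d z^{\nu}$ — is exactly what the paper does: the proposition is explicitly introduced by ``We now summarize our results'' and carries no separate proof. Your treatment of the five easy cases matches the corresponding \emph{Facts} faithfully.

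However, there is a concrete slope-sign error in the one hard case that, taken literally, invalidates the appeal to lemma \ref{lemm:dimensiondeuxpentes}. You assert that $\Dq/\Dq(\sq - d z^{-r})$ is pure isoclinic of slope $-r$, but it is actually of slope $+r$: the operator $\sq - d z^{-r}$ has $v_{0}(a_{0}) = -r$, $v_{0}(a_{1}) = 0$, so the lower edge of its Newton polygon is the vector $(1,r)$ and the slope is $r > 0$. Lemma \ref{lemm:dimensiondeuxpentes} requires a module of \emph{negative} slope, so it does not apply to the module you named. The confusion is a matter of duality: the operator $\sq - d z^{-r}$ acting on $K$ is the \emph{dual} operator $P^{\vee}$ of $P := \sq - d^{-1} z^{r}$, and what corollary \ref{coro:complexedessolutionsbis} actually says is that the cokernel of $P^{\vee}(\sigma) = \sq - d z^{-r}$ computes $\Gamma^{1}(M)$ for $M := \Dq/\Dq P = \Dq/\Dq(\sq - d^{-1} z^{r}) \simeq (K,\Phi_{d z^{-r}})$; it is \emph{that} module which has rank $1$ and slope $-r < 0$, so lemma \ref{lemm:dimensiondeuxpentes} gives $\dim_{\C}\Gamma^{1}(M) = -1\cdot(-r) = r$ as wanted. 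Equivalently, and perhaps more directly, observe that $\sq - d z^{-r}$ coincides up to the unit $(d z^{-r})^{-1}$ with $\Phi_{d z^{-r}} - \mathrm{Id}$, and invoke corollary \ref{coro:complexedessolutions} for the slope-$(-r)$ module $(K,\Phi_{d z^{-r}})$. Your final numerical answer is correct; only the identification of the module (and hence the justification for applying the lemma) needs this repair.
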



\subsection{The index in the general case}
\index{index}

Generally speaking, there is no simple formula for the dimensions
of the kernel and cokernel of a $q$-difference operator $P$. However,
if $P$ has integral slopes\footnote{This assumption will be dropped 
in the four next corollaries.}, we can factor it into operators of degree
$1$, and then use proposition \ref{prop:indices} and linear algebra to
deduce:
\begin{enumerate}
\item{The $\C$-linear map $P: f \mapsto P.f$ from $K$ to itself has
an index, that is finite dimensional kernel and cokernel.}
\item{The index of $P$, that is the integer 
$\chi(P) := \dim \Ker P - \dim \text{Coker~} P$ is the sum of indices
of factors of $P$.}
\end{enumerate}

Note that $\chi(P)$ is the Euler-Poincar\'e characteristic of the
complex of solutions of $P$.

\begin{coro}
Let $P$ be an operator of order $n$ and pure of slope $\mu \neq 0$. \\
(i) In the formal case, $\dim \Ker P = \dim \text{Coker~} P = 0$. \\
(ii) In the convergent case $\dim \Ker P = 0$ and
$\dim \text{Coker~} P = n \max(0,\mu)$.
\end{coro}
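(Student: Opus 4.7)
My plan is to reduce the computation of the kernel and cokernel of $P$ to the degree-one case handled by proposition \ref{prop:indices}, and then propagate through the factorisation by composition-of-operator linear algebra. I first treat the case $\mu \in \Z$, which is exactly the setting (factorisation into degree-one operators) flagged just before the corollary; the case of an arbitrary rational $\mu$ is then recovered via the van der Put--Reversat description of isoclinic modules.

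When $\mu \in \Z$, the module $M := \Dq/\Dq P$ is pure of slope $\mu$ and rank $n$ by theorem \ref{theo:Newtonpolygon}, so by lemma \ref{lemm:puremodules}(ii) it has the explicit form $(K^{n}, \Phi_{z^{\mu} A})$ with $A \in \GL_{n}(\C)$. Trigonalising $A$ over $\C$ produces a complete flag of $\Phi_{z^{\mu} A}$-stable subspaces, whose successive quotients are the rank-one pure modules $(K, \Phi_{\lambda_{i} z^{\mu}})$, $\lambda_{i} \in \C^{*}$. Via the cyclic vector lemma, this flag translates into a factorisation (up to units in $\Dq$) $P = Q_{n} Q_{n-1} \cdots Q_{1}$, where each $Q_{i} = \sq - d_{i} z^{-\mu} v_{i}$ with $d_{i} \in \C^{*}$, $v_{i} \in K$, $v_{i}(0) = 1$; the Newton polygon of such a $Q_{i}$ has vertices $(0,-\mu)$ and $(1,0)$, hence slope $\mu$, as required.

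I then apply proposition \ref{prop:indices} to each $Q_{i}$ with $\nu := -\mu$. For $\mu > 0$ (so $\nu < 0$) it gives $\Ker Q_{i} = 0$ and $\dim \text{Coker~} Q_{i} = \mu$ in the convergent case, $0$ in the formal case; for $\mu < 0$ (so $\nu > 0$) each $Q_{i}$ is bijective in both settings. The exact sequence $0 \to \Ker R \to \Ker(QR) \to \Ker Q$ shows that injectivity is preserved under composition, so $\Ker P = 0$ always. Additivity of the index, $\chi(QR) = \chi(Q) + \chi(R)$, then forces $\dim \text{Coker~} P = -\chi(P) = -\sum_{i} \chi(Q_{i})$, which evaluates to $n\mu$ in the convergent case $\mu > 0$ and to $0$ in all other cases, exactly matching (i) and (ii).

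For an arbitrary rational $\mu$, I would mimic proof (4) of lemma \ref{lemm:dimensiondeuxpentes}: by \cite{vdPR}, any pure isoclinic module of slope $\mu$ is a successive extension of simple modules whose duals have the form $\Dq/\Dq(z^{a} \sq^{b} - c)$, $c \in \C^{*}$. A direct analysis of $z^{a} \sq^{b} - c : K \to K$ --- $z$-adic inversion of $1 - c^{-1} z^{a} \sq^{b}$ or of $1 - c q^{ab} z^{-a} \sq^{-b}$ (depending on the sign of $a$) for the formal case, together with a $q$-Borel type localisation of the analytic obstruction in the convergent case --- supplies kernel and cokernel dimensions for each building block. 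Additivity of indices through the extensions then produces the stated formulas. The main obstacle lies precisely in this base computation for $z^{a} \sq^{b} - c$ in the convergent case, where the shortcut via proposition \ref{prop:indices} is unavailable and the $q$-Borel estimate from lemma \ref{lemm:dimensiondeuxpentes} has to be re-done by hand; the integer-slope step above is then essentially book-keeping with exact sequences.
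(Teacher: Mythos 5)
Your argument in the integral-slope case coincides with the paper's: factor $P$ into degree-one pure operators, read the kernel and cokernel of each factor from the table of proposition \ref{prop:indices} (with $\nu = -\mu$, giving index $-\mu$ per factor when $\mu>0$ and bijectivity when $\mu<0$; the paper's proof text says ``index $\mu$'' here, a harmless sign slip), and conclude by additivity of the index and preservation of injectivity under composition. Your detour through the matrix form $(K^n,\Phi_{z^\mu A})$ and constant trigonalisation to obtain the factorisation is cosmetic. For non-integral $\mu$, however, you depart from the paper. The paper reduces to the integral case by a one-line ramification: over $K' := K[z^{1/\ell}]$ the slope becomes $\ell\mu\in\Z$ by theorem \ref{theo:Newtonpolygon}(iv), so the already-established integral case applies to $P$ over $K'$ and the count over $K$ follows. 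You propose instead a van der Put--Reversat devissage into modules with duals $\Dq/\Dq(z^a\sq^b - c)$ and a direct $q$-Borel recomputation of the cokernel of each block in the convergent case, and you are right to single this out as the obstruction: proposition \ref{prop:indices} treats only $b = 1$, and the analytic estimate of lemma \ref{lemm:dimensiondeuxpentes} would have to be reproved for mixed $z^a\sq^b$-operators. Ramification sidesteps that analysis entirely, which is why the paper's reduction is one sentence; your route is not wrong in principle, but it leaves a substantial analytic lemma to be supplied --- precisely where your own sketch stops.
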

\begin{proof}
If $\mu \in \Z$, all the factors of $P$ have the form $(z^{\mu} \sq -c).u$. 
If $\mu \neq 0$, they are all bijective in the formal case, whence the
first statement. They are also all injective in the convergent case, 
and onto if $\mu < 0$, whence the second statement in this case. If
$\mu > 0$, they all have index $\mu$ (in the convergent case), whence 
the second statement in this case by additivity of the index. \\
The case of an arbitrary slope is easily reduced to this case by
ramification.
\end{proof}

By similar arguments:

\begin{coro}
Let $P$ be an operator pure of slope $0$. Then, the index of $P$
(in the formal or convergent case) is $0$.
\end{coro}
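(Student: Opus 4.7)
My plan is to mimic the strategy used for the preceding corollary: reduce to degree-one factors via a factorization of $P$, and apply proposition \ref{prop:indices}. The point is that in the slope $0$ case, proposition \ref{prop:indices} already gives index $0$ for every degree-one factor, so that additivity of the index (noted just after proposition \ref{prop:indices}) finishes the argument at once.

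Concretely, I would proceed as follows. Up to multiplication by an invertible element of $\Dq$ (which does not affect the index), I may assume $P$ is unitary entire, so that $M := \Dq/\Dq P$ is a $q$-difference module pure of slope $0$, \emph{i.e.}\ fuchsian. By lemma \ref{lemm:puremodules}(ii) there is an isomorphism $M \simeq (K^{n},\Phi_{A})$ with $A \in \GL_{n}(\C)$. Triangularizing $A$ over $\C$ (a conjugation by an element of $\GL_{n}(\C)$, which is an isomorphism of $q$-difference modules) we may further assume $A$ is upper triangular with diagonal entries $c_{1},\dots,c_{n}\in\C^{*}$. The standard flag of $K^{n}$ is then stable under $\Phi_{A}$, giving a filtration of $M$ whose successive quotients are rank one fuchsian modules of the form $(K,\Phi_{c_{i}}) \simeq \Dq/\Dq(\sq - c_{i})$. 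Translating this filtration back through the cyclic vector lemma yields a factorization $P = Q_{1} \cdots Q_{n}$ in $\Dq$ with each $Q_{i}$ of degree one and pure of slope $0$, that is, of the form $\sq - u_{i}$ with $u_{i} = d_{i} v_{i}$, $d_{i}\in\C^{*}$ and $v_{i}\in K^{*}$ with $v_{i}(0)=1$ (so the exponent $\nu$ of proposition \ref{prop:indices} is $0$ for each factor).

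Now apply proposition \ref{prop:indices} to each $Q_{i}$: both subcases $(\nu,\overline{d}) = (0,1)$ and $(\nu,\overline{d}) = (0,\neq 1)$ yield $\chi(Q_{i}) = 0$, and this holds indistinguishably in the formal and in the convergent case. By additivity of the index, $\chi(P) = \sum_{i=1}^{n} \chi(Q_{i}) = 0$, as claimed.

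The only genuinely delicate point is realizing the triangularization of $A$ as an actual factorization of the operator $P$ in $\Dq$ in the \emph{convergent} setting; the existence of the invariant flag gives a filtration of $M$ by analytic submodules, and one must convert this into an honest factorization of a unitary entire generator. This is exactly the slope-$0$ analogue of the factorization result from \cite[th.~1.2.3]{JSAIF} invoked in the proof of lemma \ref{lemm:dimensiondeuxpentes}, applied here with all slopes equal to $0$ so that the factors $z^{-\mu}\sq - c_{i}$ become simply $\sq - c_{i}$ up to a unit of $\Ra$. Once this factorization is in hand, the rest is a formal consequence of proposition \ref{prop:indices}.
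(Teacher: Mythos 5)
Your proposal is correct and takes essentially the same route as the paper: the paper's proof is simply ``by similar arguments'' to the preceding corollary, namely, factor $P$ into degree-one operators of the form $(\sq - c)\cdot u$ (possible since the slope $0$ is integral), observe that proposition \ref{prop:indices} gives index $0$ for each such factor in both the formal and convergent cases, and conclude by additivity. Your derivation of the factorization via lemma \ref{lemm:puremodules}(ii) and triangularization of the constant matrix $A$ is a perfectly valid way to realize it, and as you note yourself, it is the slope-$0$ instance of the analytic factorization from \cite[th.~1.2.3]{JSAIF} that the paper invokes.
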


Now, using the additivity of the index:

\begin{coro}
The index of an arbitrary operator $P$ is $0$ in the formal case
and $- \sum\limits_{\mu > 0} r_{P}(\mu) \mu$ in the convergent case.
\end{coro}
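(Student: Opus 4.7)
The plan is to reduce to the pure isoclinic case (already treated in the two preceding corollaries) via a factorization of $P$ that mirrors the slope filtration of $M := \Dq/\Dq P$, and then to invoke additivity of the index. Write $S(P) = \{\mu_1 < \cdots < \mu_k\}$ and $r_i := r_P(\mu_i)$.

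First, theorem \ref{theo:slopefiltration} in the convergent setting, and its formal analogue recalled in subsection \ref{subsection:slopefiltration} (where the filtration even splits), provide an ascending filtration
$$
0 = M_{\leq \mu_0} \subsetneq M_{\leq \mu_1} \subsetneq \cdots \subsetneq M_{\leq \mu_k} = M,
$$
with successive quotients $M_{(\mu_i)}$ pure isoclinic of slope $\mu_i$ and rank $r_i$. By left euclideanity of $\Dq$, every submodule of the cyclic module $M = \Dq/\Dq P$ has the form $\Dq R/\Dq P$ for some right divisor $R$ of $P$; and when $P = SR$, one has canonical isomorphisms $\Dq R/\Dq P \simeq \Dq/\Dq S$ and $M/(\Dq R/\Dq P) \simeq \Dq/\Dq R$. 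Applying this to $M_{\leq \mu_1}$ yields a factorization $P = S_1 R_1$ in which $S_1$ is pure of slope $\mu_1$ and order $r_1$, while $R_1$ realizes the quotient module $M/M_{\leq \mu_1}$ and hence has slopes $\mu_2 < \cdots < \mu_k$ with the multiplicities $r_2, \ldots, r_k$. Iterating the same argument on $R_1$ and its successive quotients produces
$$
P = S_1 S_2 \cdots S_k,
$$
where each $S_i \in \Dq$ is pure isoclinic of slope $\mu_i$ and order $r_i$.

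One then invokes additivity of the index: if $P = QR$ then $\chi(P) = \chi(Q) + \chi(R)$, a standard snake-lemma consequence for $R, P, Q$ viewed as $\C$-linear endomorphisms of $K$. Hence $\chi(P) = \sum_i \chi(S_i)$. In the formal case, the second preceding corollary handles the factor with $\mu_i = 0$ and the first handles those with $\mu_i \neq 0$, each contributing $\chi(S_i) = 0$, so $\chi(P) = 0$. In the convergent case, the same two corollaries give $\chi(S_i) = 0$ whenever $\mu_i \leq 0$, while each $S_i$ with $\mu_i > 0$ contributes $\chi(S_i) = -r_i \mu_i$; summation yields $\chi(P) = -\sum_{\mu > 0} r_P(\mu)\,\mu$.

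The only genuinely non-trivial step is the passage from the module-theoretic slope filtration to the operator factorization $P = S_1 \cdots S_k$; additivity of $\chi$ and the per-factor computation are then routine bookkeeping.
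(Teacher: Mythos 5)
Your proof is correct and follows the same route the paper intends: the paper gives no written proof for this corollary beyond the phrase ``using the additivity of the index,'' but it rests on exactly the factorization $P = S_1\cdots S_k$ into pure isoclinic factors furnished by the slope filtration (described in subsection \ref{subsection:slopefiltration} and at the start of the subsection on the index in the general case), followed by an application of the two preceding corollaries to each factor. You have merely spelled out the module-to-operator dictionary (left euclideanity, $\Dq R/\Dq P \simeq \Dq/\Dq S$ for $P = SR$) that the paper leaves implicit, which is a legitimate and correctly executed completion of the argument.
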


\begin{coro}
Let $P \in \Dq$ be an analytic $q$-difference operator. The index
of $P$ acting as a $\C$-linear endomorphism of $\Kf/\Ka$ is equal
to $\sum\limits_{\mu > 0} r_{P}(\mu) \mu$.
\end{coro}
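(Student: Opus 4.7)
The plan is to deduce this from the previous corollary by applying the snake lemma to the short exact sequence of $\C$-vector spaces
\[
0 \to \Ka \to \Kf \to \Kf/\Ka \to 0,
\]
equipped with the vertical $\C$-linear endomorphism induced by $P$ on each term (note that $P$ has analytic coefficients, so it does preserve $\Ka$, hence descends to $\Kf/\Ka$).

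First I would recall that the two previous corollaries provide the indices of $P$ on $\Ka$ and $\Kf$: in the convergent case $\chi(P,\Ka) = - \sum_{\mu > 0} r_{P}(\mu)\, \mu$, while in the formal case $\chi(P,\Kf) = 0$. In particular, both operators have finite-dimensional kernel and cokernel. Applying the snake lemma to the commutative diagram with rows $\Ka \hookrightarrow \Kf \twoheadrightarrow \Kf/\Ka$ and the three copies of $P$ as vertical arrows gives the six-term exact sequence
\[
0 \to \Ker P|_{\Ka} \to \Ker P|_{\Kf} \to \Ker P|_{\Kf/\Ka}
\to \text{Coker}\, P|_{\Ka} \to \text{Coker}\, P|_{\Kf} \to \text{Coker}\, P|_{\Kf/\Ka} \to 0.
\]
Since the two outer $\Ker$ and two outer $\text{Coker}$ are finite-dimensional, so are $\Ker P|_{\Kf/\Ka}$ and $\text{Coker}\, P|_{\Kf/\Ka}$, and the index $\chi(P,\Kf/\Ka)$ is defined. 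Taking alternating dimensions in the above long exact sequence yields the additivity relation
\[
\chi(P,\Kf/\Ka) = \chi(P,\Kf) - \chi(P,\Ka)
= 0 - \left(- \sum_{\mu > 0} r_{P}(\mu)\, \mu\right) = \sum_{\mu > 0} r_{P}(\mu)\, \mu,
\]
which is the desired formula.

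There is essentially no hard step here: once the snake lemma is invoked, the statement is a purely formal consequence of the two preceding corollaries, and the only thing to check is that $P$ (being analytic) genuinely restricts to $\Ka$ and induces a well-defined endomorphism of the quotient, which is immediate. The conceptually interesting point, namely the computation of the irregularity $\sum_{\mu > 0} r_{P}(\mu)\, \mu$ as the contribution of the positive slopes, has already been packaged into the index formula on $\Ka$, where it ultimately came from lemma \ref{lemm:dimensiondeuxpentes} via the $q$-Borel transform.
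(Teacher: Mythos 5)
Your proof is correct and is precisely the argument the paper intends: the paper gives no explicit proof of this corollary, and the snake lemma applied to $0 \to \Ka \to \Kf \to \Kf/\Ka \to 0$, together with the preceding corollary's values $\chi(P,\Kf)=0$ and $\chi(P,\Ka)=-\sum_{\mu>0}r_P(\mu)\mu$, is the standard deduction the authors leave to the reader.
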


Of course, $r_{P}$ denotes here the Newton function of $P$
introduced in subsection \ref{subsection:Newtonpolygon}.


\subsection{Irregularity and the dimension of $\F(P_{1},\ldots,P_{k})$}
\label{subsection:algebraicirregularity}
\index{irregularity}

The following definition is inspired by that of \cite{Ma3}.

\begin{defi}
\label{defi:irregularity}
The \emph{irregularity} of a $q$-difference operator $P$, resp. of
$q$-difference module $M$, is defined by the formulas:
\label{not33}
$$
\text{Irr~}(P) := \sum\limits_{\mu > 0} r_{P}(\mu) \mu, \quad
\text{Irr~}(M) := \sum\limits_{\mu > 0} r_{M}(\mu) \mu.
$$
\end{defi}

Graphically, this is the height of the right part of the Newton 
polygon, from the bottom to the upper right end. It is clearly a 
formal invariant. From its interpretation as an index in $\Kf/\Ka$, 
the irregularity of operators is additive with respect to the product. 
From theorem \ref{theo:Newtonpolygon}, the irregularity of modules is
additive with respect to exact sequences. Moreover, with the notations
of \ref{theo:slopefiltration}, writing
$$
M^{>0} := M/M_{\leq 0}
$$
the ``positive part'' of $M$, one has: 
$$
\text{Irr~}(M) = \text{Irr~}(M^{>0}).
$$

\label{not34}
Write $M_{0} := P_{1} \oplus \cdots \oplus P_{k}$. 
\index{internal End}
Then the ``internal
End'' $E := \underline{\End}(M_{0}) = M_{0}^{\vee} \otimes M_{0}$ of 
this module (subsection \ref{subsection:generalitiesdiffmods}) has as
Newton function:
$$
r_{E}(\mu) = \sum_{\mu_{j} - \mu_{i} = \mu} r_{i} r_{j}.
$$
(This follows from theorem \ref{theo:Newtonpolygon}.) From theorem 
\ref{theo:schemadesmodules} and definition \ref{defi:irregularity}, 
we therefore get the equality:
$$
\dim \F(P_{1},\ldots,P_{k}) = 
\text{Irr~} \left(\underline{\End}(M_{0})\right).
$$
In subsection \ref{subsection:sheaftheoreticirregularity}, we shall
give a sheaf theoretical interpretation of this formula.


\section{Explicit description of $\F(P_{1},\ldots,P_{k})$ 
in the case of integral slopes}
\label{section:explicitdescription}
\index{integral slopes}
\index{slopes (integrality assumption)}

\noindent
\emph{\textbf{\large FROM NOW ON, WE ASSUME THAT THE SLOPES ARE INTEGRAL:}}
$\mu_{1},\ldots,\mu_{k} \in \Z$. \\

It is then possible to make the results of sections 
\ref{section:isoformalclasses} and \ref{section:indextheorems}
more algorithmic by using the matricial description of paragraph
\ref{subsection:matricialdescriptionofF(P1,...,Pk)}. Indeed, as 
explained in paragraph \ref{subsubsection:integralslopes}, our 
construction of normal forms and of explicit Stokes operators depends 
on the normal form \eqref{eqn:formestandardtriangulaireentiere} given 
herebelow for pure modules. \\

For $i = 1,\ldots,k$, we thus write $P_{i} = (\Ka^{r_{i}},\Phi_{z^{\mu_{i}} A_{i}})$, 
with $A_{i} \in \GL_{r_{i}}(\C)$. Then, putting $n := r_{1} + \cdots + r_{k}$,
we have $M_{0} := P_{1} \oplus \cdots \oplus P_{k} = (K^{n},\Phi_{A_{0}})$, with
$A_{0}$ as in equation \eqref{eqn:formestandarddiagonale} page
\pageref{eqn:formestandarddiagonale} from paragraph
\ref{subsection:matricialdescriptionofF(P1,...,Pk)}:
\begin{equation}
\label{eqn:formestandarddiagonaleentiere}
A_{0} := \begin{pmatrix}
z^{\mu_{1}} A_{1}  & \ldots & \ldots & \ldots & \ldots \\
\ldots & \ldots & \ldots  & 0 & \ldots \\
0      & \ldots & \ldots   & \ldots & \ldots \\
\ldots & 0 & \ldots  & \ldots & \ldots \\
0      & \ldots & 0       & \ldots & z^{\mu_{k}} A_{k}    
\end{pmatrix}.
\end{equation}
Any class in $\F(P_{1},\ldots,P_{k})$ can be represented by a module
\label{not35}
$M_{U} := (K^{n},\Phi_{A_{U}})$ for some
$U := (U_{i,j})_{1 \leq i < j \leq k} \in 
\prod\limits_{1 \leq i < j \leq k} \Mat_{r_{i},r_{j}}(\Ka)$,
with, $A_{U}$ as in equation (\ref{eqn:formestandardtriangulaire}):
\begin{equation}
\label{eqn:formestandardtriangulaireentiere}
A_{U} := \begin{pmatrix}
z^{\mu_{1}} A_{1} & \ldots & \ldots & \ldots & \ldots \\
\ldots & \ldots & \ldots  & U_{i,j} & \ldots \\
0      & \ldots & \ldots   & \ldots & \ldots \\
\ldots & 0 & \ldots  & \ldots & \ldots \\
0      & \ldots & 0       & \ldots & z^{\mu_{k}} A_{k} 
\end{pmatrix},
\end{equation}
and $M_{U},M_{V}$ represent the same class if, and only if, the equation
$(\sq F) A_{V} = A_{U} F$ can be solved with $F \in \G(\Ka)$. Such an $F$
is then unique. 


\subsection{Analytic classification with $2$ integral slopes}
\label{subsection:twointegralslopes}

From the results of \ref{subsection:extensionsofqdifferencemodules},
we get the following result for the case of two slopes. Here, and after,
we write, for $\mu < \nu \in \Z$:
$$
\label{not36}
K_{\mu,\nu} := \sum_{i=0}^{\nu - \mu - 1} \C z^{i} \text{~or, at will:~}
K_{\mu,\nu} := \sum_{i=\mu}^{\nu - 1} \C z^{i}.
$$
(The second choice is motivated by good tensor properties, see for
instance \cite{RS1}.) Then, for $r,s \in \N^{*}$, we denote
$\Mat_{r,s}(K_{\mu,\nu})$ the space of $r \times s$ matrices with coefficients
in $K_{\mu,\nu}$.

\begin{prop}
\label{prop:espaceadeuxpentes}
Let $U \in \Mat_{r_{1},r_{2}}(\Ka)$. Then, there exists a unique pair:
$$
\label{not37}
Red(\mu_{1},A_{1},\mu_{2},A_{2},U) := 
(F_{1,2},V) \in \Mat_{r_{1},r_{2}}(\Ka) \times \Mat_{r_{1},r_{2}}(K_{\mu_{1},\mu_{2}})
$$
such that:
$$
(\sq F_{1,2}) z^{\mu_{2}} A_{2} - z^{\mu_{1}} A_{1} F_{1,2} = U - V,
$$
that is:
$$
\begin{pmatrix} I_{r_{1}} & F_{1,2} \\ 0 & I_{r_{2}} \end{pmatrix}:
\begin{pmatrix}
z^{\mu_{1}} A_{1} &  V            \\
0              & z^{\mu_{2}} A_{2} 
\end{pmatrix} \simeq
\begin{pmatrix}
z^{\mu_{1}} A_{1} &  U            \\
0              & z^{\mu_{2}} A_{2} 
\end{pmatrix}.
$$
\end{prop}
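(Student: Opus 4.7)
The plan is to reduce the equation to the setting of lemma \ref{lemm:supplementairedeuxpentes} by vectorising matrices. Writing $f := \mathrm{vec}(F_{1,2})$, $u := \mathrm{vec}(U)$, $v := \mathrm{vec}(V) \in \Ka^{r_{1} r_{2}}$ and using the identity $\mathrm{vec}(PXQ) = (Q^{T} \otimes P)\,\mathrm{vec}(X)$ together with the fact that $\sq$ commutes with $\mathrm{vec}$, the matrix equation becomes
$$z^{\mu_{2}} (A_{2}^{T} \otimes I_{r_{1}}) \sq f \;-\; z^{\mu_{1}} (I_{r_{2}} \otimes A_{1}) f \;=\; u - v.$$
Setting $C := I_{r_{2}} \otimes A_{1}$, $D := A_{2}^{T} \otimes A_{1}^{-1}$ (both in $\GL_{r_{1} r_{2}}(\C)$) and $d := \mu_{2} - \mu_{1} \in \N^{*}$, left multiplication by $z^{-\mu_{1}} C^{-1}$ puts this into the form
$$z^{d} D \sq f \;-\; f \;=\; z^{-\mu_{1}} C^{-1}(u - v).$$

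I would then invoke lemma \ref{lemm:supplementairedeuxpentes} with rank $r_{1} r_{2}$, constant matrix $D$ and the set of representatives $\{0, 1, \ldots, d-1\}$ modulo $d$. That lemma, whose proof reduces, via the $q$-Borel analysis of paragraph \ref{subsection:prototypal}, to the archetypal operator $f \mapsto z \sq f - f$, provides both the injectivity of $X \mapsto z^{d} D \sq X - X$ on $\Ka^{r_{1} r_{2}}$ and the direct sum decomposition
$$\Ka^{r_{1} r_{2}} \;=\; \mathrm{Im}\bigl(X \mapsto z^{d} D \sq X - X\bigr) \;\oplus\; (K_{0, d})^{r_{1} r_{2}}.$$
Therefore, for each right-hand side there exists a unique pair $(\tilde{v}, f) \in (K_{0, d})^{r_{1} r_{2}} \times \Ka^{r_{1} r_{2}}$ satisfying $z^{d} D \sq f - f = z^{-\mu_{1}} C^{-1} u - \tilde{v}$.

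It then remains to translate back. The correspondence $\tilde{v} \mapsto v := z^{\mu_{1}} C \tilde{v}$ is a $\C$-linear bijection from $(K_{0, d})^{r_{1} r_{2}}$ onto $(K_{\mu_{1}, \mu_{2}})^{r_{1} r_{2}}$: multiplication by $z^{\mu_{1}}$ identifies $K_{0, d}$ with $K_{\mu_{1}, \mu_{2}}$ (in the second of the two conventions recalled just above the statement) and the constant invertible matrix $C$ stabilises every $\C$-subspace of $\Ka^{r_{1} r_{2}}$ of the form $L^{r_{1} r_{2}}$. Devectorising then yields the sought unique pair $(F_{1, 2}, V) \in \Mat_{r_{1}, r_{2}}(\Ka) \times \Mat_{r_{1}, r_{2}}(K_{\mu_{1}, \mu_{2}})$.

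The main substance is really carried by lemma \ref{lemm:supplementairedeuxpentes} and ultimately by the $q$-Borel argument of paragraph \ref{subsection:prototypal}; once those are granted, the present proof is a formal linear-algebraic verification, the only slightly delicate point being the bookkeeping of the polynomial subspace $\Mat_{r_{1}, r_{2}}(K_{\mu_{1}, \mu_{2}})$ through the vectorisation and the constant transformation $C = I_{r_{2}} \otimes A_{1}$.
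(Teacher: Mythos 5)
Your argument is correct and is, in substance, exactly what the paper's one-sentence proof (``Using the reductions of lemma~\ref{lemm:dimensiondeuxpentes}, this is just a rephrasing of lemma~\ref{lemm:supplementairedeuxpentes}'') leaves to the reader: you vectorise the Sylvester-type equation into a single system of rank $r_{1}r_{2}$, factor out $z^{\mu_{1}}$ and the constant matrix $C=I_{r_{2}}\otimes A_{1}$ to arrive at the normalised form $z^{d}D\,\sq f - f$, and apply lemma~\ref{lemm:supplementairedeuxpentes}, keeping track of the polynomial supplement under these constant or monomial changes of variable. This is the same mechanism the paper uses in proof (3) of lemma~\ref{lemm:dimensiondeuxpentes}, where the pure module $N^{\vee}\otimes M$ of rank $r_{1}r_{2}$ and slope $\mu_{1}-\mu_{2}$ is precisely the module-theoretic counterpart of your Kronecker-product reduction.
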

\begin{proof}
Using the reductions of lemma \ref{lemm:dimensiondeuxpentes}, this is just
a rephrasing of lemma \ref{lemm:supplementairedeuxpentes}.
\end{proof}

We consider $V$ as a \emph{polynomial normal form}
\index{polynomialnormal form}
for the class of $M_{U}$ and $F_{1,2}$ as the corresponding reduction datum. 
Of course, they depend on the choice of the space of coefficients 
$K_{\mu_{1},\mu_{2} }$. \\

\begin{exem}
\label{exem:archetypal}
\index{archetypal example}
This is the archetypal example (and strongly related to the prototypal
example of subsection \ref{subsection:prototypal}), from which much of 
the theory is built. Fix $c \in \C^{*}$. For any $f,u,v \in \Ka$, the 
isomorphism:
$\begin{pmatrix}1 & f \\ 0 & 1 \end{pmatrix}:
\begin{pmatrix} 1 & v \\ 0 & cz \end{pmatrix} \simeq
\begin{pmatrix} 1 & u \\ 0 & cz \end{pmatrix}$ is equivalent to 
the inhomogeneous $q$-difference equation: $c z \sq f - f = u - v$.
Writing $f = \sum f_{n} z^{n}$, etc, we find the conditions:
$$
c q^{n-1} f_{n-1} - f_{n} = u_{n} - v_{n} \Longleftrightarrow
\dfrac{f_{n-1}}{c^{n-1} q^{(n-1)(n-2)/2}} - \dfrac{f_{n}}{c^{n} q^{n(n-1)/2}} = 
\dfrac{u_{n} - v_{n}}{c^{n} q^{n(n-1)/2}},
$$
which admit an analytic solution $f$ if, and only if, 
$\Bq u(c^{-1}) = \Bq v(c^{-1})$ (recall that the $q$-Borel transform $\Bq$
was defined in definition \ref{defi:qBorel} page \pageref{defi:qBorel}).
The analytic class of the module 
with matrix $\begin{pmatrix} 1 & u \\ 0 & cz \end{pmatrix}$ within the 
formal class $\begin{pmatrix} 1 & 0 \\ 0 & cz \end{pmatrix}$ is therefore
$\Bq u(c^{-1}) \in \C$. Writing $f_{u}$ the unique solution in $\Ka$ of 
the equation $c z \sq f - f = u - \Bq u(c^{-1})$, we have
$Red(0,1,1,c,u) = \bigl(f_{u},\Bq u(c^{-1})\bigr)$. \\
Note that equation $c z \sq f - f = u$ always admits a unique formal
solution $\hat{f}_{u}$. For instance, for $u \in \C$, we find that
$\hat{f}_{u} = - u \tsh(c z)$, where $\tsh$ is the Tshakaloff series
\label{not38}
\index{Tshakaloff series}
defined by \eqref{eqn:Tshakaloff} page \pageref{eqn:Tshakaloff}. 
We infer that, for a general
$u \in \Ka$, one has $\hat{f}_{u} = f_{u} - \Bq u(c^{-1}) \tsh(c z)$.
\end{exem}

\begin{exem}[A direct computation]
In proposition \ref{prop:espaceadeuxpentes}, we meet the equation
$(\sq F_{1,2}) z^{\mu_{2}} A_{2} - z^{\mu_{1}} A_{1} F_{1,2} = U - V$. Here is
an explicit resolution that does not go through all the reductions of 
\ref{subsection:extensionsofqdifferencemodules}. We write 
$X = \sum X_{n} z^{n}$ for $F_{1,2}$ and $Y = \sum Y_{n} z^{n}$ for $U - V$. 
We have:
\begin{align*}
(\sq X) z^{\mu_{2}} A_{2} - z^{\mu_{1}} A_{1} X = Y & \Longleftrightarrow
\forall n ~,~ 
q^{n - \mu_{2}} X_{n - \mu_{2}} A_{2} - A_{1} X_{n - \mu_{1}} = Y_{n} \\
& \Longleftrightarrow
\forall n ~,~ q^{n - d} X_{n - d} A_{2} - A_1 X_{n} = 
Y_{n + \mu_{1}}, \\
& \Longleftrightarrow
\forall n ~,~ q^{n - d} A_{1}^{-1} X_{n - d} A_{2} - X_{n} = 
Z_{n} := A_{1}^{-1} Y_{n + \mu_{1}},
\end{align*}
where we denote $d := \mu_{2} - \mu_{1} \in \N^{*}$ the \emph{level} 
\index{level of an inhomogeneous equation} 
of the above inhomogeneous equation. (The second equivalence is obtained
by a mere translation of indices.) It follows from \cite{Bezivin},
\cite{RamisGrowth} that, for any analytic $Y$, this has a formal
solution of $g$-Gevrey level $d$ (this was defined in paragraph
\ref{subsection:qGevreylevels} of the general notations, in the
introduction); it also follows
from \emph{loc.cit} that, if there is a solution of $q$-Gevrey level
$d' > d$ (which is a stronger condition), then, there is an analytic 
solution. To solve our equation, we introduce a $q$-Borel transform 
of level $d$. 
\index{$q$-Borel transform of level $d$}
This depends on an arbitrary family $(t_{n})_{n \in \Z}$ 
in $\C^{*}$, subject to the condition:
$$
\forall n ~,~ t_{n} = q^{n-d} t_{n-d}.
$$
For $d = 1$, the natural choice is $t_{n} := q^{n(n-1)/2}$. In general,
such a family can be built from Jacobi theta functions as in \cite{RS1}.
At any rate, $t_{n}$ has the order of growth of $q^{n^{2}/2 d}$. The $q$-Borel
transform is then defined by:
$$
\label{not39}
\B_{q,d} \left(\sum f_{n} z^{n}\right) := \sum \dfrac{f_{n}}{t_{n}} \xi^{n}.
$$
We must also choose $d^{th}$ roots $B_{1},B_{2}$ of $A_{1},A_{2}$. Then,
writing $\B_{q,d} X = \sum \tilde{X}_{n} z^{n}$, etc, we find the relations:
$$
\forall n ~,~ 
B_{1}^{-d} \tilde{X}_{n-d} B_{2}^{d} - \tilde{X}_{n} = \tilde{Z}_{n} 
\Longleftrightarrow
\forall n ~,~ B_{1}^{n-d} \tilde{X}_{n-d} B_{2}^{-(n-d)} - 
B_{1}^{n} \tilde{X}_{n}  B_{2}^{-n} = B_{1}^{n} \tilde{Z}_{n} B_{2}^{-n}.
$$
Thus, a necessary condition for the existence of an analytic solution $X$
is that, for all $i$ in a set of representatives modulo $d$, one has:
$$
\sum_{n \equiv i \pmod{d}} B_{1}^{n} \tilde{Z}_{n} B_{2}^{-n} = 0.
$$
This provides us with $d$ obstructions in $\Mat_{r_{1},r_{2}}(\C)$ and it is
not hard to prove, along the same lines as what has been done, that these
form a complete set of invariants. More precisely, the map which sends
$U$ to the $d$-uple of matrices 
$\sum\limits_{n \equiv i \pmod{d}} \dfrac{1}{t_{n}} B_{1}^{n-d} U_{n + \mu_{1}} B_{2}^{-n}$
yields an isomorphism of $\F(P_{1},P_{2})$ with $\Mat_{r_{1},r_{2}}(\C)^{d}$.
Different choices of the family $(t_{n})$ and the matrices $B_{1},B_{2}$
induce different isomorphisms.
\end{exem}


\subsection{The Birkhoff-Guenther normal form}
\label{subsection:BGNF}
\index{Birkhoff-Guenther normal form}

Going from two slopes to general case rests on the following remark. 
The functor $M \leadsto M' := M_{\leq \mu_{k-1}}$ induces an onto mapping
$\F(P_{1},\ldots,P_{k}) \rightarrow \F(P_{1},\ldots,P_{k-1})$. The inverse
image in $\F(P_{1},\ldots,P_{k})$ of the class of $M'$ in 
$\F(P_{1},\ldots,P_{k-1})$ is in natural one-to-one correspondance with
the space $\Ext^{1}(P_{k},M')$. The latter is a $\C$-vector space of
dimension $\sum\limits_{1 \leq j < k} r_{j} r_{k} (\mu_{k} - \mu_{j})$. 
This follows from the slope filtration 
$0 = M_{0} \subset \cdots \subset M_{k-1} = M'$
and the resulting exact sequences, for $1 \leq j < k$:
$$
0 \rightarrow M_{j-1} \rightarrow M_{j} \rightarrow P_{j} \rightarrow 0
\Longrightarrow
$$
$$
0 \rightarrow \Ext^{1}(P_{k},M_{j-1}) \rightarrow \Ext^{1}(P_{k},M_{j}) 
\rightarrow \Ext^{1}(P_{k},P_{j}) \rightarrow 0.
$$
(Recall that $\Hom(P_{k},P_{j}) = 0$.) Actually, there is a non canonical
$\C$-linear isomorphism 
$\Ext^{1}(P_{k},M') \simeq \bigoplus_{1 \leq j < k} \Ext^{1}(P_{k},P_{j})$.
One can go further using the matricial description of paragraph
\ref{subsection:matricialdescriptionofF(P1,...,Pk)}.
We keep the notations recalled at the beginning of subsection
\ref{subsection:twointegralslopes} and moreover denote 
\label{not40}
$\overline{M_{U}}$ the class in $\F(P_{1},\ldots,P_{k})$ of the module 
$M_{U} := (K^{n},\Phi_{A_{U}})$ for 
$U := (U_{i,j})_{1 \leq i < j \leq k} \in 
\prod\limits_{1 \leq i < j \leq k} \Mat_{r_{i},r_{j}}(\Ka)$.

\begin{prop}
\label{prop:kintegralslopesspace}
The map $U \mapsto \overline{M_{U}}$ from 
$\prod\limits_{1 \leq i < j \leq k} \Mat_{r_{i},r_{j}}(K_{\mu_{i},\mu_{j} })$ 
to $\F(P_{1},\ldots,P_{k})$ is one-to-one.
\end{prop}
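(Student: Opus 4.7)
The plan is to prove that the map is both surjective and injective in one stroke, by an algorithm that processes the blocks $(i,j)$ in order of increasing index distance $d := j - i$ and invokes proposition \ref{prop:espaceadeuxpentes} at each stage. Given $U = (U_{i,j})_{1 \leq i < j \leq k}$, I will construct simultaneously the matrix $V \in \prod_{1 \leq i < j \leq k} \Mat_{r_i, r_j}(K_{\mu_i,\mu_j})$ in Birkhoff--Guenther form and a gauge $F \in \G(\Ka)$ realising $F[A_U] = A_V$, \emph{i.e.}\ $(\sq F) A_U = A_V F$.

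Expanding the $(i,j)$-block of $(\sq F) A_U = A_V F$ with the conventions $F_{i,i} = I_{r_i}$ and $(A_U)_{i,i} = (A_V)_{i,i} = z^{\mu_i} A_i$ gives, for each $i < j$,
$$
(\sq F_{i,j}) z^{\mu_j} A_j - z^{\mu_i} A_i F_{i,j} = V_{i,j} - W_{i,j},
$$
where
$$
W_{i,j} := U_{i,j} - \sum_{i < \ell < j} \Bigl[V_{i,\ell} F_{\ell,j} - (\sq F_{i,\ell}) U_{\ell,j}\Bigr].
$$
The crucial observation is that $W_{i,j}$ involves only $U$ and blocks $F_{i,\ell}, V_{i,\ell}, F_{\ell,j}$ whose index distances $\ell - i$ and $j - \ell$ are strictly less than $d$. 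For $d = 1$ the sum is empty and $W_{i,i+1} = U_{i,i+1}$; for $d \geq 2$, once all pairs at smaller distance have been handled, $W_{i,j} \in \Mat_{r_i,r_j}(\Ka)$ is already determined. In either case, proposition \ref{prop:espaceadeuxpentes}, applied with the pure isoclinic data $(\mu_i, A_i, \mu_j, A_j)$ and the matrix $W_{i,j}$ as input (modulo a harmless sign change in $F_{i,j}$), produces a unique pair $(F_{i,j}, V_{i,j}) \in \Mat_{r_i,r_j}(\Ka) \times \Mat_{r_i,r_j}(K_{\mu_i,\mu_j})$ satisfying the equation.

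The algorithm yields existence of the normal form (hence surjectivity of $V \mapsto \overline{M_V}$) together with uniqueness of every block $F_{i,j}$ and $V_{i,j}$ at every stage. Injectivity of $V \mapsto \overline{M_V}$ follows from this built-in uniqueness: if $U$ and $V$ are both in Birkhoff--Guenther form with $\overline{M_U} = \overline{M_V}$, pick any $F_0 \in \G(\Ka)$ with $F_0[A_U] = A_V$; then $(F_0, V)$ is a valid output of the algorithm applied to $U$, but so is the trivial pair $(I, U)$ since $U$ is already in normal form, and uniqueness forces $F_0 = I$ and $V = U$. The only real obstacle is the bookkeeping needed to verify that, at distance $d$, each entry of $W_{i,j}$ depends exclusively on data constructed at strictly smaller distances; once the ordering by $d = j - i$ is in place, no analytic difficulty remains, since the entire two-slope content — ultimately the $q$-Borel argument underlying lemmas \ref{lemm:dimensiondeuxpentes} and \ref{lemm:supplementairedeuxpentes} — is already absorbed into proposition \ref{prop:espaceadeuxpentes}.
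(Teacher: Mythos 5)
Your proof is correct and takes essentially the same route as the paper: both arguments expand the block conjugacy equation and solve for the pairs $(F_{i,j},V_{i,j})$ by induction on the distance $j-i$, invoking proposition \ref{prop:espaceadeuxpentes} (the operator $Red$) at each stage. The only difference is cosmetic: you set up the gauge as $F[A_U]=A_V$, whereas the paper uses $F[A_V]=A_U$, so your right-hand side involves $V_{i,\ell}$ and $U_{\ell,j}$ while the paper's involves $U_{i,\ell}$ and $V_{\ell,j}$ (and your sign flip on $F_{i,j}$ is indeed harmless, since $Red$ is linear in its last argument and $F$ appears linearly).
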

\begin{proof}
We already now that the map is onto and the conclusion will follow from
the following fact: for all 
$U \in \prod\limits_{1 \leq i < j \leq k} \Mat_{r_{i},r_{j}}(\Ka)$,
there exists a unique pair 
$(F,V) \in \G(\Ka) \times 
\prod\limits_{1 \leq i < j \leq k} \Mat_{r_{i},r_{j}}(K_{\mu_{i},\mu_{j}})$
such that $F[A_{V}] = A_{U}$. Writing $F$ as in equation (\ref{eqn:isoplat})
and $V = (V_{i,j})$, this is equivalent to the following system:
$$
\forall (i,j), 1 \leq i < j \leq k ~,~
V_{i,j} + \sum_{\ell = i+1}^{j-1} (\sq F_{i,\ell}) V_{\ell,j} + 
(\sq F_{i,j}) (z^{\mu_{j}} A_{j}) =
(z^{\mu_{i}} A_{i}) F_{i,j} + \sum_{\ell = i+1}^{j-1} U_{i,\ell} F_{\ell,j} + U_{i,j}.
$$
It is understood that an empty sum vanishes: if $(t_{\ell})$ is any sequence,
$\sum\limits_{\ell = i+1}^{j-1} t_{\ell} = 0$ when $j = i+1$.
Then, $U$ being given, the system is (uniquely) solved by induction
on $j - i$ by the following formulas:
$$
(F_{i,j},V_{i,j}) := Red\left(\mu_{i},A_{i},\mu_{j},A_{j},U_{i,j} +
\sum_{\ell = i+1}^{j-1} U_{i,\ell} F_{\ell,j} - 
\sum_{\ell = i+1}^{j-1} (\sq F_{i,\ell}) V_{\ell,j}\right).
$$
\end{proof}

The following is a particular case of theorem \ref{theo:schemadesmodules},
page \pageref{theo:schemadesmodules} (itself a particular case of corollary 
\ref{coro:themodulispaceabstract}, page \pageref{coro:themodulispaceabstract}):
the only novelty being that, in the case of integral slopes, we obtain an
explicit coordinate system.

\begin{theo}
\label{theo:kintegralslopesspace}
\index{affine space}
The set $\F(P_{1},\ldots,P_{k})$ is an affine space of dimension
$\sum\limits_{1 \leq i < j \leq k} r_{i} r_{j} (\mu_{j} - \mu_{i})$.
\end{theo}
\begin{proof}
This is an immediate consequence of the proposition.
\end{proof}

Note that this is the area contained in the lower finite part of
the Newton polygon. This result is the natural continuation of a 
normalisation process 
\index{normalisation process}
found by Birkhoff and Guenther in
\cite{Birkhoff3}.

\begin{defi}
\label{defi:BirkhoffGuenthernormalform}
A matrix $A_{U}$ such that $U := (U_{i,j})_{1 \leq i < j \leq k} \in 
\prod\limits_{1 \leq i < j \leq k} \Mat_{r_{i},r_{j}}(K_{\mu_{i},\mu_{j} })$ 
will be said to be in \emph{Birkhoff-Guenther normal form}.
\index{Birkhoff-Guenther normal form}
\end{defi}

Now, using the argument that follows theorem \ref{theo:schemadesmodules},
we draw from corollary \ref{coro:themodulispaceabstract}:

\begin{coro}
\label{coro:schemadesmodules}
The components of the matrices $(U_{i,j})_{1 \leq i < j \leq k}$ make up
a complete system of coordinates on the space $\F(P_{1},\ldots,P_{k})$.
\index{coordinates on $\F(P_{1},\ldots,P_{k})$}
\end{coro}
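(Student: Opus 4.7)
The plan is to deduce the corollary from Proposition \ref{prop:kintegralslopesspace} combined with the functorial framework of the appendix. First I would observe that Proposition \ref{prop:kintegralslopesspace} already provides a set-theoretic bijection
$$
\prod_{1 \leq i < j \leq k} \Mat_{r_{i},r_{j}}(K_{\mu_{i},\mu_{j}}) \longrightarrow \F(P_{1},\ldots,P_{k}),
$$
and since $\dim_{\C} K_{\mu_{i},\mu_{j}} = \mu_{j} - \mu_{i}$, the source is naturally an affine space over $\C$ of dimension $\sum_{i<j} r_{i} r_{j} (\mu_{j} - \mu_{i})$, with the entries of the matrices $U_{i,j}$ as obvious coordinate functions. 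Theorem \ref{theo:kintegralslopesspace} already used this to get the dimension, but the present corollary asks for the stronger statement that these entries are coordinates on $\F$ as an affine \emph{scheme}.

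Second, to upgrade this from a bare bijection of sets to a genuine coordinate system, I would verify that the bijection of Proposition \ref{prop:kintegralslopesspace} is natural with respect to base change $\C \to C$, for the functor $C \leadsto F(C) = \F(C \otimes_{\C} P_{1},\ldots,C \otimes_{\C} P_{k})$ introduced in subsection \ref{subsection:theaffineschemeF(M0)}. Concretely, I would inspect the inductive resolution
$$
(F_{i,j},V_{i,j}) := Red\left(\mu_{i},A_{i},\mu_{j},A_{j}, U_{i,j} + \sum_{\ell=i+1}^{j-1} U_{i,\ell} F_{\ell,j} - \sum_{\ell=i+1}^{j-1} (\sq F_{i,\ell}) V_{\ell,j}\right)
$$
in the proof of that proposition, and check that the reduction operator $Red$ from Proposition \ref{prop:espaceadeuxpentes} is $\C$-linear and commutes with the scalar extension from $\Ka$ to $C \otimes_{\C} \Ka$. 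This in turn reduces to the $\C$-linearity and base-change compatibility of the projection in Lemma \ref{lemm:supplementairedeuxpentes}, which ultimately rests on the $q$-Borel projection $u \mapsto u - \Bq u(1)$ of subsection \ref{subsection:prototypal}: the latter is manifestly $\C$-linear in $u$ and so tensors with $C$ without change.

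Third, with this functoriality in hand, the bijection promotes to a natural isomorphism of functors
$$
F(C) \simeq \prod_{1 \leq i < j \leq k} \Mat_{r_{i},r_{j}}(C \otimes_{\C} K_{\mu_{i},\mu_{j}}),
$$
whose right-hand side is plainly the functor of points of the affine $\C$-scheme with coordinate ring the polynomial algebra on the entries of the $U_{i,j}$. This is exactly the conclusion furnished by Corollary \ref{coro:themodulispaceabstract} of the appendix, applied as in the argument following Theorem \ref{theo:schemadesmodules}: for each pair $i<j$, a basis of $\Ext^{1}(P_{j},P_{i})$ gives a block of coordinates, and here the monomials $z^{a}$ ($\mu_{i} \leq a < \mu_{j}$) with matrix positions in $\Mat_{r_{i},r_{j}}(\C)$ furnish such a basis explicitly.

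The only real obstacle is notational rather than conceptual: one must track the inductive dependencies in the formula above and check base-change compatibility at each step. Since each ingredient ($\sq$, matrix multiplication, and the single transcendental input $\Bq$) is $\C$-linear, this verification is routine, and the corollary follows at once.
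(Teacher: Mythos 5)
Your proposal is correct and follows essentially the same route as the paper's (implicit) one-line argument: the paper simply invokes corollary \ref{coro:themodulispaceabstract}, which furnishes the scheme isomorphism $\F(P_1,\ldots,P_k) \simeq \prod_{i<j} \Ext(P_j,P_i)$, together with the observation that the monomial matrices $z^a E_{\alpha\beta}$ with $\mu_i \leq a < \mu_j$ give a basis of each factor $\Ext(P_j,P_i) \simeq \Mat_{r_i,r_j}(K_{\mu_i,\mu_j})$ (proposition \ref{prop:espaceadeuxpentes}). Your step of re-verifying base-change naturality of the $q$-Borel projection by hand is sound but is already absorbed into the appendix framework (proposition \ref{prop:supermajik} and theorem \ref{theo:themodulispaceabstract}), so the paper does not need to carry it out explicitly.
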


A generalisation of Birkhoff-Guenther normal form to the case of 
arbitrary slopes has recently been obtained by Virginie Bugeaud \cite{VB}.


\subsection{Computational consequences of the Birkhoff-Guenther normal form}
\label{subsection:computnormalform}

Let $A_{0}$ be as in \eqref{eqn:formestandarddiagonaleentiere}
and $A_{U} = A$ as in \eqref{eqn:formestandardtriangulaireentiere}.
Looking for $\hat{F} \in \G(\Kf)$ such that $\hat{F}[A_{0}] = A$
amounts, with notation \eqref{eqn:isoplat}, to solving the system:
$$
\forall (i,j), 1 \leq i < j \leq k ~,~
(\sq F_{i,j}) (z^{\mu_{j}} A_{j}) =
(z^{\mu_{i}} A_{i}) F_{i,j} + \sum_{\ell = i+1}^{j-1} U_{i,\ell} F_{\ell,j} + U_{i,j}.
$$
This is triangular in the sense that the equation in $F_{i,j}$
depends on previously found $F_{\ell,j}$ with $\ell > i$. \\

Assume that $A$ is in Birkhoff-Guenther normal form (definition
\ref{defi:BirkhoffGuenthernormalform}). Then we can write
$U_{i,j} = z^{\mu_{i}} U'_{i,j}$, where $U'_{i,j}$ has coefficients
in $\C[z]$. The previous equation becomes:
$$
(\sq F_{i,j}) (z^{\mu_{j} - \mu_{i}} A_{j}) =
A_{i} F_{i,j} + \sum_{\ell = i+1}^{j-1} U'_{i,\ell} F_{\ell,j} + U'_{i,j}.
$$
Assume by induction that all $F_{\ell,j}$ have coefficients
in $\Rf$. Then one may write the above equation as:
$$
F_{i,j} = U'' + z^{\delta} A_{i}^{-1} (\sq F_{i,j}) A_{j},
$$
where $U''$ has coefficients in $\Rf$ and $\delta \in \N^{*}$.
This is a fixpoint equation for an operator that is contracting
in the $z$-adic topology of $\Rf$ and so it admits a unique
formal solution. Therefore we find a unique $\hat{F} \in \G(\Rf)$ 
such that $\hat{F}[A_{0}] = A$. \\

A noteworthy consequence is that the inclusions $\Ra \subset \Ka$
and $\Rf \subset \Kf$ induce a natural identification:
$$
\G^{A_{0}}(\Rf)/\G(\Ra) \simeq \G^{A_{0}}(\Kf)/\G(\Ka),
$$
and, as a corollary, a bijection:
$$
\G^{A_{0}}(\Rf)/\G(\Ra) \rightarrow \F(P_{1},\ldots,P_{k}).
$$


\section{Interpolation by $q$-Gevrey classes}
\label{section:qGevreyinterpolation}

We shall now extend the previous results to $q$-Gevrey classification.
We use here notations from paragraph \ref{subsection:qGevreylevels} 
of the general notations in the introduction.


\subsection{$q$-Gevrey extension spaces}
\label{subsection:qGevreyExt}
\index{$q$-Gevrey extension spaces}

\subsubsection{$q$-Gevrey extensions for arbitrary slopes}

If one repaces the field $\Ka$ by the field $\Kfs$ of $q$-Gevrey series 
of level $s >0$ and $\Dq$ by $\Dqs := \Kfs \langle T,T^{-1} \rangle$, 
one gets the abelian category $\DMfs$ and the following extension 
of the results of \ref{subsection:extensionsofqdifferencemodules}.

\begin{prop}
\label{prop:dimensiondeuxpentesqGevrey}
Let $M,N$ be pure modules of ranks $r,s \in \N^{*}$ and slopes
$\mu < \nu \in \Q$ in $\DMfs$. Then one has:
$$
\dim_{\C} \Ext^{1}(M,N) = \begin{cases}
0 \text{~if~} \nu - \mu \geq 1/s, \\
r s (\nu - \mu) \text{~if~} \nu - \mu < 1/s.
\end{cases}
$$
\end{prop}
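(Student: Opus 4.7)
The proposition is a $q$-Gevrey interpolation between the analytic result of Theorem \ref{theo:dimensiondeuxpentes} (recovered in the limit $s\to 0$, where $1/s=\infty$ and the second branch always applies) and the semisimplicity of the formal category $\DMf$ (the limit $s=\infty$, where $1/s=0$ and the first branch always applies so that $\Ext^1$ vanishes). The natural strategy is to transport the proof of Theorem \ref{theo:dimensiondeuxpentes} into the $q$-Gevrey category $\DMfs$, with the key threshold phenomenon arising from a $q$-Gevrey refinement of Proposition \ref{prop:indices}.

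First I would establish the analog of Proposition \ref{prop:GammaetExt} inside $\DMfs$: the proof carries over verbatim, since the tensor/internal-Hom/adjunction formalism of Subsection \ref{subsection:generalitiesdiffmods} is valid over any difference field. This reduces the problem to computing $\dim_\C \Gamma^1(L)$, where $L$ is a pure isoclinic $\Dqs$-module of rank $rs$ and slope $\pm(\nu-\mu)$. Then, following the strategy of Lemma \ref{lemm:dimensiondeuxpentes}, I would use the cyclic vector lemma in $\Dqs$ to write $L$ (up to duality) as $\Dqs/\Dqs P$ and factor $P$ as a product of elementary operators $\sq - cz^m$. When the slope is integral, the factorisation of Theorem 1.2.3 of \cite{JSAIF} applies directly; when not, one first passes to a ramified extension as in Theorem \ref{theo:Newtonpolygon}(iv), as in proof 4 of Lemma \ref{lemm:dimensiondeuxpentes}. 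By additivity of indices, the problem collapses onto the index of a single elementary factor acting on $\Kfs$.

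The heart of the argument is this threshold computation, a $q$-Gevrey refinement of Proposition \ref{prop:indices}. For the operator $\sq - cz^m$ with $m$ corresponding to a positive-slope factor of level $d=|m|$, the unique formal solution of $\sq f - cz^m f = g$ (with $g\in\Rfs$) is, by classical results of \cite{Bezivin} and \cite{RamisGrowth}, of $q$-Gevrey order exactly $1/d$; it therefore lies in $\Rfs$ precisely when $1/d \leq s$, \ie\ when $d \geq 1/s$. This yields the dichotomy: if $\nu-\mu\geq 1/s$ then every elementary factor in the factorisation of $P$ is surjective on $\Kfs$ with trivial kernel, and $\Gamma^1(L)$ vanishes; otherwise the cokernels coincide with those of the analytic setting (each of total dimension $d$ by Lemma \ref{lemm:supplementairedeuxpentes}), and summing contributions using the rank of $L$ and the multiplicativity of Newton data gives $rs(\nu-\mu)$, exactly as in the analytic case. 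The main technical obstacle is the careful verification of this threshold on $\Rfs$ via the $q$-Borel transform $\B_{q,d}$ of Subsection \ref{subsection:twointegralslopes}, which intertwines the equation with multiplication by an explicit factor and makes the growth-rate tracking transparent; this is the $q$-Gevrey analog of the classical Gevrey index / Borel--Ritt statements, and should be extractable from \cite{Bezivin,RamisGrowth}.
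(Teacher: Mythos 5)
Your reduction is the right first step and is exactly what the paper means by ``the same arguments as before'': pass from $\Ext^1$ to $\Gamma^1$ of the internal Hom (a pure isoclinic module of rank $rs$ and slope of absolute value $\nu-\mu$), then compute the dimension of the cokernel of the associated operator on $\Kfs$ via index theory. Where you diverge from the paper is in the second step: the paper simply invokes B\'ezivin's $q$-Gevrey index theorems (propositions 3.2 and 3.3 of \cite{Bezivin}), which directly give the dichotomy on $\Kfs$ for an arbitrary pure operator, parallel to how proof~1 of Lemma \ref{lemm:dimensiondeuxpentes} cites his proposition 2.5 in the convergent case. You instead try to \emph{re-derive} B\'ezivin's threshold by factoring into elementary first-order operators and tracking $q$-Gevrey orders through the $q$-Borel transform, which is in the spirit of proofs~2--4 of Lemma \ref{lemm:dimensiondeuxpentes}. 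That is a legitimate alternative, and has the pedagogical advantage of making the threshold transparent; but it imports two small obligations you haven't discharged. First, your Adams-type factorisation into operators $z^{\delta}\sigma_q - c$ applies only when $\delta=\nu-\mu$ is integral; for the non-integral case you wave at ramification, but a ramification $z'=z^{1/\ell}$ changes the Gevrey order from $s$ to $s/\ell$ (while the slope becomes $\ell\delta$), and you need to verify explicitly that the threshold condition $\delta\geq 1/s$ is preserved under this rescaling --- which it is, but this must be said; alternatively one uses the van der Put--Reversat classification as in proof~4 and avoids ramification. Second, when $\delta<1/s$ you cite Lemma \ref{lemm:supplementairedeuxpentes} to get the $\delta$-dimensional supplementary, but that lemma is stated and proved only for $\Ka$; the statement for $\Kfs$ requires checking that $\B_{q,\delta}$ maps $\Kfs$ (with $s<1/\delta$) into a space of entire functions where the relevant evaluation functionals are defined and still cut out a $\delta$-dimensional cokernel. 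Both gaps are fillable, and filling them essentially reproduces the content of B\'ezivin's propositions, which is why the paper chose to cite them directly.
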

\begin{proof}
This follows indeed, by the same arguments as before, from
propositions 3.2 and 3.3 of \cite{Bezivin}.
\end{proof}

This remains true in the extreme case that $s = \infty$, since, over
the field $\Kf$, the slope filtration splits and $\Ext^{1}(M,N) = 0$;
and also in the extreme case that $s = 0$, by theorem
\ref{theo:dimensiondeuxpentes}. Therefore, the above proposition
is an interpolation between the analytic and formal settings.

\subsubsection{$q$-Gevrey extensions for integral slopes}

Here, we extend the results of \ref{subsection:twointegralslopes}.
If one replaces $\Ka$ by $\Kf$, one gets $V = 0$ in all relations
$(F,V) = Red(\mu_{1},A_{1},\mu_{2},A_{2},U)$ and the corresponding space
of classes has dimension $0$. The previous algorithm then produces
$F = \hat{F}_{U}$ and the ``formal normal form'' $A_{0}$ of $A$.
If one replaces $\Ka$ by $\Kfs$, one gets $V = 0$ and a matrix $F$
of $g$-Gevrey level $\mu_{2} - \mu_{1}$ if $\mu_{2} - \mu_{1} \geq 1/s$,
and the same $Red(\mu_{1},A_{1},\mu_{2},A_{2},U)$ as before otherwise.


\subsection{Isoformal $q$-Gevrey  classification}
\label{subsection:qGevreymoduli}
\index{$q$-Gevrey classification}

\subsubsection{$q$-Gevrey classification for arbitrary slopes}

One can state the moduli problem for $q$-Gevrey classification 
at order $s$, that is, over the field $\Kfs$ of $q$-Gevrey series 
of order $s >0$ (see subsection \ref{subsection:qGevreyExt}). 
From proposition \ref{prop:dimensiondeuxpentesqGevrey}, one gets, 
by the same argument as before:

\begin{prop}
\label{prop:schemadesmodulesqGevrey}
Over $\Kfs$, the functor $F$ is representable and the corresponding 
affine scheme is an affine space over $\C$ with dimension
$\sum\limits_{1 \leq i < j \leq k \atop \mu_{j} - \mu_{i} < 1/s} 
r_{i} r_{j} (\mu_{j} - \mu_{i})$.
\end{prop}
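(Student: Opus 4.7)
The plan is to mimic the proof of Theorem \ref{theo:schemadesmodules} word for word, replacing the base difference field $(\Ka,\sq)$ by $(\Kfs,\sq)$ throughout, and appealing to Proposition \ref{prop:dimensiondeuxpentesqGevrey} in place of Theorem \ref{theo:dimensiondeuxpentes}. The abstract representability statement Theorem \ref{theo:themodulispaceabstract} in the appendix is formulated for an arbitrary abelian tensor category of difference modules over a difference ring satisfying suitable hypotheses, so I would first check that $\DMfs$ fits into that general framework: the ring $\Kfs$ is a difference field (it is closed under $\sq$ and under inversion by a standard $q$-Gevrey estimate), and $\DMfs$ is abelian and $\C$-linear in the same way as $\DMa$ and $\DMf$.

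Next I would verify the two hypotheses required by Theorem \ref{theo:themodulispaceabstract} for the pure isoclinic building blocks $P_{1},\ldots,P_{k}$ (regarded now as objects of $\DMfs$ via extension of scalars $\Ka \hookrightarrow \Kfs$). The vanishing $\Hom(P_{j},P_{i}) = 0$ for $i < j$ holds because any such morphism would extend after scalar extension to a morphism in $\DMf$, and there the slope filtration already forces the vanishing (slopes are strictly functorial, so a morphism between pure isoclinic modules of different slopes is zero). The dimension of $\Ext^{1}(P_{j},P_{i})$ is given by Proposition \ref{prop:dimensiondeuxpentesqGevrey}: it equals $r_{i} r_{j}(\mu_{j} - \mu_{i})$ when $\mu_{j} - \mu_{i} < 1/s$ and vanishes otherwise. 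In both cases the extension modules are finite dimensional $\C$-vector spaces, hence free over $\C$, as required.

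Applying Theorem \ref{theo:themodulispaceabstract} then yields representability of $F$ by an affine scheme over $\C$ of dimension
\[
\sum_{1 \leq i < j \leq k} \dim_{\C} \Ext^{1}(P_{j},P_{i})
= \sum_{1 \leq i < j \leq k \atop \mu_{j} - \mu_{i} < 1/s} r_{i} r_{j} (\mu_{j} - \mu_{i}),
\]
where the pairs with $\mu_{j} - \mu_{i} \geq 1/s$ drop out because they contribute zero. The fact that the scheme is an \emph{affine space} (and not merely an affine scheme) is inherited from the same feature of Theorem \ref{theo:schemadesmodules}, since the abstract construction in the appendix produces a torsor under a vector bundle and here the base is a point.

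The only genuinely non-trivial ingredient is Proposition \ref{prop:dimensiondeuxpentesqGevrey} itself, which has already been proved by reduction to \cite{Bezivin}; everything else is a transposition of the analytic proof into the $q$-Gevrey setting. The expected obstacle—if any—would be to check that the abstract framework of the appendix applies without modification to $\Kfs$, but since that framework is designed over arbitrary difference rings, this is really a matter of verification rather than genuine work. As a sanity check, the formula interpolates correctly between $s = 0$ (where $\Kfs = \Ka$ and one recovers Theorem \ref{theo:schemadesmodules}) and $s = \infty$ (where $\Kfs = \Kf$ and the dimension drops to $0$, reflecting the splitting of the slope filtration in the formal case).
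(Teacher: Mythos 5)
Your proof follows the same route as the paper's, which is stated in one line: ``From proposition \ref{prop:dimensiondeuxpentesqGevrey}, one gets, by the same argument as before''—that is, by applying theorem \ref{theo:themodulispaceabstract} after verifying $\Hom(P_j,P_i)=0$ for $i<j$ and computing $\dim_{\C}\Ext(P_j,P_i)$ via proposition \ref{prop:dimensiondeuxpentesqGevrey}. You have simply filled in the routine details that the paper leaves implicit (the base-change check for $\Kfs$, the vanishing of $\Hom$ by scalar extension to $\Kf$, the freeness of the $\Ext$ spaces over $\C$), all of which are correct.
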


This remains true for $s = \infty$ (formal setting, the space of
moduli is a point) and for $s = 0$ (analytic setting, this is 
theorem \ref{theo:schemadesmodules}).

\subsubsection{$q$-Gevrey classification for integral slopes}

We fix $s > 0$ and write for short $q$-Gevrey for $q$-Gevrey of 
order $s$. Every matrix is $q$-Gevrey equivalent to a matrix $A_{U}$ 
such that $U_{i,j} = 0$ for $\mu_{j} - \mu_{i} \geq 1/s$. The slopes 
being assumed to be integral, there is moreover a unique normal form 
with $U_{i,j} = 0$ for $\mu_{j} - \mu_{i} \geq 1/s$ 
and $U_{i,j} \in \Mat_{r_{i},r_{j}}(K_{\mu_{i},\mu_{j} })$ for 
$\mu_{j} - \mu_{i} < 1/s$.


\subsection{Another kind of $q$-Gevrey interpolation}
\index{$q$-Gevrey interpolation}

A somewhat symmetric problem is to describe the space of analytic
classes within a fixed $q$-Gevrey class. This can be done in a similar
way. We fix a matrix $A_{U_{0}}$ where 
$U_{0} \in 
\prod\limits_{1 \leq i < j \leq k}\Mat_{r_{i},r_{j}}(K_{\mu_{i},\mu_{j} })$
is such that $U_{i,j} = 0$ for $\mu_{j} - \mu_{i} \geq 1/s$
(any $q$-Gevrey class contains such a matrix). This characterizes
a well defined $q$-Gevrey class and the space of analytic classes
within this $q$-Gevrey class is an affine space of dimension
$\sum\limits_{1 \leq i < j \leq k \atop \mu_{j} - \mu_{i} \geq 1/s} 
r_{i} r_{j} (\mu_{j} - \mu_{i})$. 
If the slopes are integral, one can assume that $U_{0}$ is in normal
form, \ie\ each component $U_{i,j}$ such that $\mu_{j} - \mu_{i} < 1/s$
belongs to $\Mat_{r_{i},r_{j}}(K_{\mu_{i},\mu_{j} })$. Then each analytic class
admits a unique normal form $A_{U}$ where $U$ is in normal form
and its components such that $\mu_{j} - \mu_{i} < 1/s$ are the same
as those of $U_{0}$.



\chapter{The $q$-analogs of Birkhoff-Malgrange-Sibuya theorems}
\label{chapter:qMalgrangeSibuya}


\section{Asymptotics}
\label{section:asymptotics}

We shall need a $q$-analogue of asymptotics \emph{in the sense
of Poincar\'e}. 
\index{$q$-analogue of Poincar\'e asymptotics}
In chapter \ref{chapter:asymptotictheory} we
shall develop a more restrictive notion of asymptotics. \\

The underlying idea, coming from the thesis \cite{JLM}
of Jose-Luis Martins, is that an asymptotic theory is
related to a dynamical system. In the ``classical'' case
of Poincar\'e asymptotics for ordinary differential equations,
say locally at $0$ in $\C^{*}$, the dynamical system is given
by the action of the semi-group $\Sigma := e^{]- \infty,0[}$;
\index{semi-group}
in our case, the semi-group is plainly $\Sigma := q^{- \N}$ (in
the case of difference equations there are two semi-groups:
$\Sigma :=\N$ and $\Sigma :=-\N$; this was used by J. Roques
in \cite{Ro}).
Likewise, the sheaves of functions admitting an asymptotic
expansion will be defined on the \emph{horizon}, 
\index{horizon}
that is on the quotient space $\C^{*}/\Sigma$. In the classical case,
this is the circle $S^{1}$ of directions (rays from $0$)
in $\C^{*}$; in our case, this is the elliptic curve
\index{elliptic curve $\Eq$}
$\Eq = \C^{*}/q^{\Z} = \C^{*}/q^{-\N}$ (in the difference case
this is a pair of cylinders).


\subsection{$q$-Asymptotics}
\label{subsection:qAsymptotics}
\index{$q$-asymptotics}

Recall from the introduction that we denote $p: \C^* \rightarrow \Eq$
the quotient map. We set 
$$
\label{not41}
\Sigma := q^{- \N} \; \text{~and, for~} A \subset \C, \;
\Sigma (A) := \bigcup\limits_{a\in A}  q^{- \N}\{ a\}.
$$
Let $U$ be an open set of $\C$ invariant by the semi-group
\index{stable open set}
\index{invariant open set}
$\Sigma := q^{- \N}$, that is, $q^{-1} U \subset U$ (we shall also call 
it ``stable''). We shall say that a $\Sigma$-invariant subset $K$ of 
$U$ is a {\it stable strict subset} 
\index{stable strict subset}
\index{strict subset (stable)}
if there exists a compact subset 
$K'$ of $U$ such that $K = \Sigma (K')$; then $K \cup \{ 0\}$ is compact 
in $\C$. An invariant subset $K \subset U$ is strict if and only if $p(K)$ 
is a compact subset of $\Eq$. \\

Let $f$ be a function holomorphic on the $\Sigma$-invariant open subset
$U$ of $\C^*$ and let $\hat f =\sum\limits_{n\geq 0} a_nz^n \in \Rf$. We 
shall say that $f$ is \emph{$q$-asymptotic} to $\hat f$ on $U$ if, for 
\index{$q$-asymptotics}
all $n \in \N$ and every $\Sigma$-invariant strict subset $K \subset U$:
$$
z^{-n}\bigl(f(x)-S_{n-1} \hat{f}\bigr)
$$
\label{not42}
is bounded on $K$. Here, $S_{n-1} \hat{f} :=\sum\limits_{p=0}^{n-1} a_p z^p$ 
stands for "truncation to $n$ terms" of $\hat f$. If $f$ admits a
$q$-asymptotic expansion it is plainly unique. In the following
we will say for simplicity ``asymptotic expansion" for ``$q$-asymptotic
expansion". For $n \in \N$ we shall denote $f^{(n)}(0) := n! \, a_n$. \\
\index{$q$-asymptotic expansion}
\index{asymptotic expansion}

We denote:
\label{not43}
\begin{itemize}
\item{$\Aa(U)$ the space of holomorphic functions admitting a $q$-asymptotic 
expansion on $U$,}
\item{$\Aa_{0}(U)$ the subspace of $\Aa(U)$ made up of (infinitely) flat
\index{flat functions (infinitely)}
functions, \ie\ those such that $\hat{f} = 0$ and}
\item{$\Bb(U)$ the space of holomorphic functions bounded on every stable 
strict subset of $U$.}
\end{itemize}
If $f$ is holomorphic on $U$, then $f \in \Aa(U)$ if and only if
$z^{-n} \bigl(f(z) -S_{n-1} \hat{f}(z)\bigr) \in \Bb(U)$ for all 
$n \in \N$. We check easily that $\Aa(U)$ is a $\C$-algebra and 
that $\Aa_{0}(U)$ is an ideal of $\Aa(U)$. \\

Let again $U$ be a $\Sigma$-invariant open subset of $\C^*$ and let
$U_{\infty} := p(U)$; we think of $U_{\infty}$ as the \emph{horizon} of $U$.
\index{horizon} \index{elliptic curve $\Eq$} Every open subset of $\Eq$ 
is the horizon of some $\Sigma$-invariant open subset of $\C^*$ (actually, 
of a great many of them~!) and, from now on, we shall often write 
$U_{\infty}$ an arbitrary open subset of $\Eq$ and $U$ some adequate 
$\Sigma$-invariant open subset of $\C^*$ such that $U_{\infty} := p(U)$. \\

We introduce three sheaves on $\Eq$, also denoted $\Aa$, $\Aa_{0}$ 
and $\Bb$; 
\index{sheaves on $\Eq$ of asymptotic functions}
no confusion should be caused by this overloading of notations.
The reader is invited to check that these are indeed sheaves:
\begin{itemize}
\label{not44}
\item{$\Aa(U_{\infty})$ is the direct limit of all $\Aa(U)$,}
\item{$\Aa_{0}(U_{\infty})$ is the direct limit of all $\Aa_{0}(U)$ and}
\item{$\Bb(U_{\infty})$ is the direct limit of all $\Bb(U)$.}
\end{itemize}
In each case the limit is taken for all $\Sigma$-invariant open subset $U$
of $\C^*$ such that $U_{\infty} := p(U)$. Then the $\Aa(U_{\infty})$ form 
a sheaf of $\C$-algebras and the $\Aa_{0}(U_{\infty})$ a sheaf of ideals.

\begin{lemm}
\label{lemm:asderive}
Let $U \subset \C$ be a  $\Sigma$-invariant open set and $K \subset V$
be a $\Sigma$-invariant strict subset of  $U$. There exists a real
number $\rho > 0$ such that the closed disc of radius
$\rho \lmod z \rmod$ and center $z$ lies in $U$ for all $z \in K$.
Moreover it is possible to choose $\rho >0$ such that
 $\displaystyle\bigcup_{z\in K} \bar D(z,\rho\lmod z\rmod)$
be a $\Sigma$-invariant strict subset of $U$.
\end{lemm}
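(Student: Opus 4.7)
The plan is to reduce the problem from the infinite invariant set $K$ to its compact ``generator'' $K'$, exploiting the fact that the condition $\bar D(z,\rho|z|) \subset U$ is invariant under multiplication by $q^{-1}$ (thanks to $\Sigma$-invariance of $U$).

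First I would work on $K'$. Since $K \subset \C^{*}$ (because $K' \subset U \subset \C^{*}$, so $0 \notin \Sigma(K')$), the number $m := \min_{z' \in K'} |z'|$ is strictly positive and $M := \max_{z' \in K'} |z'|$ is finite. Since $K'$ is compact in the open set $U$, a standard compactness–tube argument gives an $\epsilon > 0$ such that $\bar D(z',\epsilon) \subset U$ for every $z' \in K'$. Choosing any $\rho \in (0, \epsilon/M]$ then ensures $\bar D(z', \rho|z'|) \subset \bar D(z',\epsilon) \subset U$ uniformly for $z' \in K'$.

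Second, I would transport this along $\Sigma$. Every $z \in K = \Sigma(K')$ is of the form $z = q^{-n} z'$ for some $z' \in K'$ and $n \in \N$, and then
$$\bar D(z, \rho|z|) = q^{-n}\, \bar D(z', \rho|z'|) \subset q^{-n} U \subset U,$$
the last inclusion following by iterating $q^{-1}U \subset U$. This proves the first assertion.

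For the second assertion, I would introduce
$$V' := \bigcup_{z' \in K'} \bar D(z', \rho|z'|).$$
It is the image of the compact set $K' \times \bar D(0,1)$ under the continuous map $(z',\zeta) \mapsto z' + \rho|z'|\zeta$, hence compact, and it is contained in $U$ by the previous step. A straightforward computation, again using that multiplication by $q^{-n}$ commutes with the formation of $\bar D(\cdot, \rho|\cdot|)$, shows that $\bigcup_{z \in K} \bar D(z,\rho|z|) = \Sigma(V')$, so this union is a $\Sigma$-invariant strict subset of $U$. The only delicate point (and the reason for the factor $|z|$ rather than a constant radius in the statement) is the scale-invariance observation at the heart of the second step; once noted, everything else is routine.
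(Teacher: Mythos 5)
Your proof is correct and takes essentially the same approach as the paper's one-line proof (compactness on a generating compact $K'$, then propagate by $\Sigma$-invariance using the scaling identity $\bar D(q^{-n}z',\rho|q^{-n}z'|)=q^{-n}\bar D(z',\rho|z'|)$); you simply spell out the details the paper leaves implicit, including the choice $\rho \le \epsilon/M$ and the compactness of $V'$. One small irrelevance: you introduce $m:=\min_{z'\in K'}|z'|$ but never use it.
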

\begin{proof}
There exists by definition a subset $K'$ of $K$, compact in $U$,
such that $K = \Sigma (K')$. By compacity the result of the lemma
is true for every $z\in K'$; using the action of $\Sigma$ we get
that it is also true for all $z\in K$.
\end{proof}

Let $f \in \Aa(U_{\infty})$ and $\hat f := \sum\limits_{n\geq 0} a_n z^n$
its asymptotic expansion. We set
$R_n := z^{-n}\bigl(f-S_{n-1} \hat{f}\bigr)$.
We have $\displaystyle f=\sum_{p=0}^n a_pz^p+z^{n+1}R_{n+1}$; therefore
the derivative $f'$ is given by:
$$
f' = \sum_{p=1}^n p a_p z^{p-1} + (n+1) z^n R_{n+1} + z^{n+1} R'_{n+1}.
$$
If $K\subset U$ is a $\Sigma$-invariant strict subset, then $R_{n+1}$ 
is bounded by $M_{K,n+1}$ on $K$. Therefore, using Cauchy formula for 
the derivative and lemma \ref{lemm:asderive}, $R'_n$ is bounded by some 
positive constant $\frac{M_{K,n+1}}{\rho\lmod z\rmod}$ on $K$. \\

We have $\displaystyle f'=\sum_{n\geq 1} na_nz^{n-1}$ and:
$$
z^{-n}\bigl(f'(x)-S_{n-1} \hat{f'}\bigr) =
z^{-n}\left(f'(x)- \sum_{p=1}^n n a_p z^{p-1}\right) =
(n+1)R_{n+1} + z R'_{n+1}.
$$
Since
$\lmod (n+1)R_{n+1}+zR'_{n+1} \rmod \leq
\bigl(n + 1 + \frac{1}{\rho}\bigr)M_{K,n+1}$, then
$z^{-n} \bigl(f'(x)-S_{n-1} \hat{f'}\bigr)$ is bounded on $K$,
so that $f'$ admits $\hat f'$ as a $q$-asymptotic expansion on $U$.
Hence $\Aa(V_{\infty})$ is a differential algebra and
$\Aa_{0}(V_{\infty})$ is a differential ideal of $\Aa(V_{\infty})$.
\index{differential (algebra, ideal)}


\subsection{Asymptotic expansions and ${\mathcal C}^{\infty}$
functions in Whitney sense}
\label{subsubtion:asymptoticsandWhitney}

\begin{lemm}
\label{lemm:ascle}
Let $U$ be a $\Sigma$-invariant open set of $\C$ and
$f \in \Aa(U_{\infty})$. Let $K\subset U$ be an invariant
strict subset.
\begin{itemize}
\item[(i)]
{There exists a positive constant $C_K$ (depending on $f$)
such that, for all $z_1,z_2 \in K \cup \{ 0\}$:
$$
\lmod  f(z_1)-f(z_2) \rmod \leq C_K \lmod z_1 - z_2 \rmod.
$$}
\item[(ii)]
{For all $n\in\N$ there exists a positive constant $C_{K,n}$
(depending on $f$) such that, for all $z_1,z_2\in K\cup\{ 0\}$:
$$
\lmod  f(z_1) -\sum_{p=0}^n \frac{f^{(p)}(z_2)}{p!} (z_1-z_2)^p \rmod
\leq C_{K,n} \lmod z_1 - z_2\rmod^{n+1}.
$$}
\item[(iii)]
{$$
\lim_{z_1 \rightarrow 0, z_2 \rightarrow 0
\atop ~z_1,z_2 \in K\cup\{ 0\},z_1 \neq z_2}
\dfrac{1}{\lmod z_1 - z_2 \rmod^n}
\left(f(z_1)-\sum_{p=0}^n \frac{f^{(p)}(z_2)}{p!} (z_1-z_2)^p \right) = 0.
$$}
\end{itemize}
\end{lemm}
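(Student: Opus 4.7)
The strategy combines two ingredients. First, the iterated differentiation property established in subsection \ref{subsection:qAsymptotics}: all derivatives $f^{(p)}$ again belong to $\Aa(U_{\infty})$ with asymptotic expansion $\hat{f}^{(p)}$, hence each is bounded on any stable strict subset. Second, the enlargement mechanism of Lemma \ref{lemm:asderive}: fixing a stable strict subset $K$ of $U$, one can build a larger stable strict subset $\tilde K := \bigcup_{z \in K} \bar D(z,\rho\lmod z\rmod) \subset U$ on which $f,f',\ldots,f^{(n+1)}$ are all bounded. Note also that because $z^{-1}(f - a_0)$ is bounded on $K$, $f$ extends continuously to the compact $K \cup \{0\}$ with $f(0) = a_{0}$, so all $f^{(p)}$ are bounded by constants on $K \cup \{0\}$.

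For (i), I would make the dichotomy: either $\lmod z_1 - z_2 \rmod \leq \rho\,\min(\lmod z_1 \rmod,\lmod z_2 \rmod)$, in which case the segment $[z_1,z_2]$ lies in $\tilde K$ and integrating the bound on $f'$ yields $\lmod f(z_1)-f(z_2) \rmod \leq (\sup_{\tilde K}\lmod f' \rmod)\,\lmod z_1 - z_2 \rmod$; or $\lmod z_1 - z_2 \rmod > \rho\,\min(\lmod z_1 \rmod,\lmod z_2 \rmod)$, in which case, using $\lmod f(z) - a_0 \rmod \leq C\lmod z \rmod$ on $K$, one gets $\lmod f(z_1) - f(z_2) \rmod \leq C(\lmod z_1 \rmod + \lmod z_2 \rmod) \leq C(2/\rho + 1)\lmod z_1 - z_2\rmod$. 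The degenerate cases $z_1 = 0$ or $z_2 = 0$ are covered directly by the asymptotic expansion at order $1$.

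For (ii), I would proceed similarly. If the segment $[z_1,z_2]$ sits in $\tilde K$, Taylor's formula with integral remainder
\[
f(z_1)-\sum_{p=0}^n \frac{f^{(p)}(z_2)}{p!}(z_1-z_2)^p = \int_{z_2}^{z_1}\frac{(z_1-w)^n}{n!}f^{(n+1)}(w)\,dw
\]
and the bound on $f^{(n+1)}$ on $\tilde K$ give the desired estimate. Otherwise, both $\lmod z_1 \rmod$ and $\lmod z_2 \rmod$ are at most a fixed multiple of $\lmod z_1 - z_2 \rmod$; substituting the asymptotic expansions $f(z_1) = \sum_{m=0}^n a_m z_1^m + z_1^{n+1}R_{n+1}(z_1)$ and, for each $p$, $f^{(p)}(z_2)/p! = \sum_{k=0}^{n-p}\binom{k+p}{p}a_{k+p}z_2^k + z_2^{n-p+1}\tilde R_{p}(z_2)$ and invoking the combinatorial identity
\[
\sum_{p=0}^n\sum_{k=0}^{n-p}\binom{k+p}{p}a_{k+p}z_2^k (z_1-z_2)^p = \sum_{m=0}^n a_m z_1^m
\]
(a consequence of the binomial theorem in $z_2 + (z_1-z_2)$), the polynomial parts cancel, and the remainder is a sum of terms of the form $\lmod z_1 \rmod^{i}\lmod z_2 \rmod^{n+1-i}\lmod z_1-z_2 \rmod^{p}$ with $i+p \geq n+1$, each dominated by $\lmod z_1-z_2\rmod^{n+1}$ in this regime. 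Finally, (iii) is an immediate consequence of (ii), since $\lmod f(z_1) - \sum \cdots \rmod/\lmod z_1-z_2\rmod^n \leq C_{K,n}\lmod z_1 - z_2\rmod \to 0$ as $z_1,z_2 \to 0$ in $K \cup \{0\}$.

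The main obstacle is purely geometric, namely controlling a path from $z_1$ to $z_2$ inside a stable strict subset when these points are far apart relative to their moduli; the above dichotomy, powered by Lemma \ref{lemm:asderive} and the boundedness of $f$ itself at $0$, is what makes the two regimes fit together cleanly. The combinatorial cancellation in (ii) is routine but must be laid out to see that all error terms are genuinely of order $\lmod z_1 - z_2\rmod^{n+1}$ in the ``far'' regime.
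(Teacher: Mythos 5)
Your proof is correct, and for parts (i) and (iii) it essentially coincides with the paper's argument (same dichotomy between a ``near'' regime where the segment $[z_1,z_2]$ stays in an enlarged stable strict subset, and a ``far'' regime where both moduli are controlled by $\lmod z_1-z_2\rmod$, followed by the trivial deduction of (iii) from (ii)). The interesting divergence is in the ``far'' case of part (ii). The paper handles it by a reduction: since the quantity $f(z_1)-\sum_{p\le n}\frac{f^{(p)}(z_2)}{p!}(z_1-z_2)^p$ is additive in $f$ and vanishes identically for polynomials of degree $\le n$, one may assume from the start that $f=z^{n+1}g$ with $g\in\Aa(U_\infty)$, and then bound $\lmod z_1^{n+1}g(z_1)\rmod$ and the derivatives $(z^{n+1}g)^{(p)}(z_2)$ directly via the Leibniz rule. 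You instead keep $f$ general, substitute the asymptotic expansions of $f(z_1)$ and of each $f^{(p)}(z_2)/p!$ (relying on the stability of $\Aa(U_\infty)$ under differentiation, established just before the lemma), and observe that the polynomial blocks cancel exactly because of the binomial identity $\sum_{p+k=m}\binom{m}{p}z_2^k(z_1-z_2)^p=z_1^m$; what remains is a finite sum of remainder terms each of which carries at least $n+1$ powers of quantities dominated by $\lmod z_1-z_2\rmod$ in the far regime. Both arguments turn on the same vanishing principle; the paper's version avoids the combinatorial bookkeeping at the price of the reduction step, while yours is more explicit and self-contained. Either is fine, and the constants that emerge are of comparable nature.
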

\begin{proof}
(i) We choose $\rho >0$ (depending on $K$) as in lemma
\ref{lemm:asderive}; then
$K_1:=\displaystyle\bigcup_{z\in K} \bar D(z,\rho\lmod z\rmod)$
is an invariant strict subset of $U$. We set:
$$
M_K := \sup_{z\in K} \frac{\lmod f(z)-f(0)\rmod}{\lmod z\rmod}~,\quad
M'_{K_1} := \sup_{z\in K_1} \lmod f'(z)\rmod~.
$$

Let $z_1,z_2\in K\cup \{ 0\}$. We shall consider separately three cases:
(a) $z_2=0$;
(b) $z_2\neq 0$ and $\lmod z_1 - z_2 \rmod \leq \rho \lmod z_2 \rmod$;
(c) $z_2\neq 0$ and $\lmod z_1 - z_2 \rmod > \rho \lmod z_2 \rmod$.

The cases (a) and (b) are easy. In fact, if $z_2=0$, then:
$$
\lmod f(z_1) - f(z_2) \rmod = \lmod f(z_1)-  f(0) \rmod \leq
M_K \lmod z_1 \rmod =M_K \lmod z_1 - z_2\rmod.
$$
If $z_2 \neq 0$ and $\lmod z_1 - z_2 \rmod \leq \rho \lmod z_2 \rmod$,
then $z_2 \in K$ and the closed interval $[z_1,z_2]$ is contained in
$K_1$; therefore:
$$
\lmod  f(z_1) - f(z_2) \rmod \leq M'_{K_1} \lmod z_1 - z_2\rmod.
$$

Now consider case (c). If  $z_2 \in K$ and
$\lmod z_1 - z_2 \rmod > \rho \lmod z_2 \rmod$, then we write
$f(z_1) - f(z_2) = f(z_1) - f(0) -\bigl(f(z_2)-f(0)\bigr)$ and we have:
$$
\lmod f(z_1) - f(0) \rmod \leq M_K \lmod z_1 \rmod \leq
M_K\bigl(\lmod z_2 \rmod + \lmod z_1 - z_2\rmod\bigr)
\leq \left(\frac{1}{\rho}+1 \right) M_K \lmod z_1 - z_2 \rmod
$$
and
$$
\lmod f(z_2) - f(0) \rmod \leq M_K \lmod z_2 \rmod \leq
\frac{1}{\rho} M_K \lmod z_1 - z_2\rmod,
$$
whence:
$$
\lmod  f(z_1) - f(z_2) \rmod \leq
\frac{2+\rho}{\rho} M_K \lmod z_1 - z_2\rmod.
$$

Last, let $C_K := \max \left(M'_{K_1}, \frac{2+\rho}{\rho}M_K \right)$;
then, for all $z_1,z_2 \in K \cup \{0\}$, we have:
$$
\lmod  f(z_1) - f(z_2)\rmod \leq C_K \lmod z_1 - z_2\rmod.
$$

(ii) We define $\rho$, $K_1$, $C_K$ as in (i). We set:
$$
C_{K,n} := \sup_{z\in K}
\lmod  f(z) - \sum_{p=0}^n \frac{f^{(p)}(0)}{p!} z^p \rmod \lmod z^{-n-1}\rmod~,
\quad
M'_{K_1,n} := \sup_{z\in K_1} \lmod f^{(n+1)}(z) \rmod~.
$$
The result is obviously true if $f$ is a polynomial. Therefore,
fixing $n \in \N$ it suffices to prove the result when $f = z^{n+1}g$,
$g \in \Aa(U_{\infty})$.

As in (i) we shall consider separately the three cases (a), (b), (c).
If $z_2 = 0$, then the result merely comes from the following relation:
$$
\lmod  f(z_1) - \sum_{p=0}^n \frac{f^{(p)}(z_2)}{p!} (z_1-z_2)^p \rmod =
\lmod  f(z_1) - \sum_{p=0}^n \frac{f^{(p)}(0)}{p!} z_1^p \rmod \leq
C_{K,n} \lmod z_1\rmod^{n+1}.
$$

Next, if $z_2 \neq 0$ and
$\lmod z_1 - z_2\rmod \leq \rho \lmod z_2 \rmod$, then the closed
interval $[z_1,z_2]$ is contained in $K_1$, so that:
$$
\lmod  f(z_1) - \sum_{p=0}^n \frac{f^{(p)}(z_2)}{p!} (z_1-z_2)^p \rmod  \leq
M'_{K_1,n} \lmod z_1-z_2\rmod^{n+1}.
$$

It remains to deal with the case (c), that is, when $z_2 \neq 0$
and $\lmod z_1 -z_2\rmod > \rho \lmod z_2 \rmod$. If we set:
$$
\lambda_{K,n} :=
\sup_{z\in K \atop p=0,\ldots,n} \lmod \frac{g^{(p)}(z)}{p!}\rmod~,
\quad
\mu_K := \sup_{z\in K} \lmod z \rmod,
$$
then, for any integer $p \in [0,n]$, we find:
$$
\lmod \frac{ f^{(p)}(z_2)}{p!} \rmod =
\lmod \frac{(z^{n+1}g)^{(p)}(z_2)}{p!} \rmod \leq
\lambda_{K,n}~\sum_{k=0}^p \binom{n+1}{k} \lmod z_2 \rmod^{n+1-k} \leq
\tilde{\lambda}_{K,n}~ \lmod z_2 - z_1\rmod^{n+1-p},
$$
where:
$$
\tilde{\lambda}_{K,n} :=
\lambda_{K,n}~\rho^{p-n-1}~\sum_{k=0}^p\binom {n+1}k \mu_K^{p-k}~.
$$
From this, it follows:
\begin{align*}
\lmod f(z_1) - \sum_{p=0}^n \frac{f^{(p)}(z_2)}{p!} (z_1-z_2)^p\rmod
\leq& ~
\lmod z_1^{n+1} g(z_1) \rmod +
(n+1) \tilde{\lambda}_{K,n}~ \lmod z_2-z_1\rmod^{n+1} \\
\leq&
\left(\left(1+\frac{1}{\rho}~\right)^{n+1}~
\lambda_{K,n} +
(n+1) \tilde{\lambda}_{K,n}\right)~\vert z_2-z_1\vert^{n+1}~.
\end{align*}

To summarize, in the three cases cases we have:
$$
\lmod f(z_1) -\sum_{p=0}^n \frac{f^{(p)}(z_2)}{p!} (z_1-z_2)^p  \rmod \leq ~
C_{K,n} \lmod z_1-z_2 \rmod^{n+1},
$$
where
$$
C_{K,n} :=
\max \left(M'_{K_1,n},
\left(1 + \frac{1}{\rho}~\right)^{n+1}~\lambda_{K,n} +
(n+1) \tilde{\lambda}_{K,n}
\right)~.
$$

(iii) The result follows immediately from (ii).
\end{proof}

Before stating the next proposition, we first recall Whitney conditions
\index{Whitney conditions}
\index{Whitney theorem}
and Whitney theorem \cite[chap. I, \S 4]{Ma1}, \cite{Ma2}. We give them
here in the case of dimension $2$. Let $f$ be a function of $(x,y)$ with 
complex values, $\mathcal{C}^{\infty}$ in an open neighborhood $U'$ of some 
compact subset $K'$ of $\C$. (We keep the letters $U$ and $K$ to denote 
respectively a $\Sigma$-invariant open subset of $\C$ and an invariant 
strict subset of $U$.) We shall use the following standard conventions; 
for $k := (k_1,k_2) \in \N^2$, we set:
$$
f^{(k)} := \dfrac{\partial^{k_1}}{\partial x^{k_1}}
\dfrac{\partial^{k_2}}{\partial y^{k_2}} \,f \cdot
$$ 
We also write $k ! := k_1 ! k_2 !$, $(x,y)^k := x^{k_1} y^{k_2}$, 
$\lmod k \rmod := k_1 + k_2$, etc. We associate to $f$ the family
$(f^{(k)})_{k \in \N^2}$ of its ``Taylor fields" \emph{restricted to $K'$}. \\

We agree to identify the $\mathcal{C}^{\infty}$ function $f$ on $U' \supset K'$ 
with the $\mathcal{C}^{\infty}$ function $g$ on $V' \supset K'$ if their
associated families $(f^{(k)})_{k \in \N^2}$, $(g^{(k)})_{k \in \N^2}$ are equal.
Such a class of functions is called a \emph{$\mathcal{C}^{\infty}$ function 
in the Whitney sense} on $K'$. 
\index{$\mathcal{C}^{\infty}$ function in the Whitney sense}
\index{Whitney ($\mathcal{C}^{\infty}$ function in the - sense)}
\label{not45}
We write $\mathcal{C}^{\infty}_{Whitney}(K')$
the space of $\mathcal{C}^{\infty}$ functions\footnote{They are not really 
functions, but it usually causes no problem; moreover, in our case, they
will be (according to the next proposition).} in the Whitney sense on $K'$.
As is customary when dealing with germs, we sometimes write $f$ the class 
of $f$; likewise, we write $(f^{(k)})$ the family of restrictions to $K'$ 
of the derivatives $f^{(k)}$: by definition, they depend only on the class 
of $f$. \\

So let $f$ be a $\mathcal{C}^{\infty}$ function in the Whitney sense on $K'$.
The functions $f^{(k)}$ are continuous on $K'$ and for all $n \in \N$, the 
family $\bigl(f^{(k)}\bigr)$ (restricted to $K'$) satisfies the following 
conditions, that are called \emph{Whitney conditions of order $n \in \N$}
\index{Whitney conditions}
\label{not46}
and that we shall denote $W_n(K')$:
\begin{equation*}
\lim~\dfrac{
\left(f^{(k)}(x_1,y_1) -
\sum_{\lmod h \rmod \leq n} \frac{f^{(k+h)}(x_2,y_2)}{h!}
(x_1 - x_2,y_1 - y_2)^h \right)
}
{\lnorm (x_1-x_2,y_1 - y_2) \rnorm^{n - \lmod k\rmod}}
 = 0,
\end{equation*}
where the limit is taken for $(x_1,y_1) \to (x_0,y_0) \in K'$, 
$(x_2,y_2) \to (x_0,y_0) \in K'$, with $(x_1,y_1),(x_2,y_2) \in K'$ 
and $(x_1,y_1) \neq (x_2,y_2)$. \\

Conversely a family $\bigl(f^{(k)}\bigr)_{k\in\N^2}$ of continuous
functions on $K'$ is the family of Taylor jets
\index{Taylor jets}
of an element of $\mathcal{C}^{\infty}_{Whitney}(K')$ if and
only if it satisfies Whitney conditions $W_n(K')$ for all $n \in \N$
(this is Whitney's theorem, see \cite{Ma1,Ma2}). Note that, with the usual 
identification of $\C$ with $\R^2$, setting $z := x + \ii y$ and
using $\frac{\partial}{\partial z}$,
$\frac{\partial}{\partial \bar z}$ instead of
$\frac{\partial}{\partial x}$, $\frac{\partial}{\partial y}$,
for all $n \in \N$ we can replace Whitney conditions $W_n(K')$
\label{not47}
by equivalent conditions $\tilde W_n(K')$, that we do not state
more explicitly here. In our case of interest (functions satisfying
the Cauchy-Riemann equation $\partial f/\partial \overline{z} = 0$),
they take the form of condition (iii) of the lemma above.

\begin{prop}
\label{prop:asymptotiqueetWhitney}
Let $U$ be a $\Sigma$-invariant open set of $\C$. Then,
$f \in \Aa(U_{\infty})$ if, and only if, for every invariant
strict subset $K \subset U$, one has:
$$
f_{\vert K}\in \Ker ~ \left(\frac{\partial}{\partial \bar z}:
\mathcal{C}^{\infty}_{Whitney} \bigl({K\cup\{ 0\}},\C\bigr)
\rightarrow \mathcal{C}^{\infty}_{Whitney}\bigl({K\cup\{ 0\}},\C\bigr)\right).
$$
\end{prop}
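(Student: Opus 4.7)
The plan is to match, for each invariant strict subset $K \subset U$, the $q$-asymptotic expansion $\hat f = \sum a_p z^p$ of $f$ at $0$ with its Whitney Taylor field on the compact set $K \cup \{0\}$. Since $f$ is holomorphic on $U$, the Cauchy-Riemann equation $\partial f/\partial \overline{z} = 0$ holds on $K$; under the identification $f^{(k_1, k_2)} := \partial_z^{k_1} \partial_{\overline{z}}^{k_2} f$, the candidate Whitney field must have $f^{(k_1, k_2)} \equiv 0$ on $K$ whenever $k_2 > 0$, and the only nontrivial data at $0$ should be the coefficients $a_{k_1} = f^{(k_1, 0)}(0)/k_1!$ of $\hat f$.

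For the forward implication, assuming $f \in \Aa(U_\infty)$, I would first invoke the stability of $\Aa(U_\infty)$ under $\partial/\partial z$ established in subsection \ref{subsection:qAsymptotics}: iterating, each derivative $f^{(k_1)}$ belongs to $\Aa(U_\infty)$, with expansion the formal derivative of $\hat f$. Applying Lemma \ref{lemm:ascle}(ii) to each $f^{(k_1)}$ yields a uniform estimate of Taylor remainders of the form $C \lmod z_1 - z_2 \rmod^{n+1}$ for $z_1, z_2 \in K \cup \{0\}$, which is exactly the Whitney condition $W_n(K \cup \{0\})$ at index $k = (k_1, 0)$ (and, since all $f^{(k_1 + h_1, h_2)}$ with $h_2 > 0$ vanish, the equivalent $\tilde W_n$ condition as well). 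For multi-indices with $k_2 > 0$ the jet is identically zero and the condition is trivial. Whitney's theorem then produces $f_{|K} \in \mathcal{C}^{\infty}_{Whitney}(K \cup \{0\}, \C)$, visibly annihilated by $\partial/\partial \overline{z}$.

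For the reverse implication, suppose $f_{|K}$ is Whitney $\mathcal{C}^{\infty}$ and killed by $\partial/\partial \overline{z}$ in that sense, for every invariant strict $K \subset U$. The Whitney vanishing of $\partial/\partial \overline{z}$ forces $f^{(k_1, k_2)}(0) = 0$ whenever $k_2 > 0$; set $a_p := f^{(p, 0)}(0)/p!$ and $\hat f := \sum a_p z^p \in \Rf$. This $\hat f$ is independent of the chosen $K$, since any two invariant strict subsets $K, K'$ fit inside the invariant strict $K \cup K'$, on which the Whitney jet at $0$ is uniquely determined. The Whitney condition at index $k = 0$, taken with $(x_1, y_1) = z \in K$ and $(x_2, y_2) = 0$, gives $f(z) - \sum_{p \leq n} a_p z^p = o(\lmod z \rmod^n)$ uniformly as $z \to 0$ in $K$. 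Combined with the continuity of $z \mapsto f(z) - S_{n-1}\hat f(z)$ on the compact set $K \cup \{0\}$ and its vanishing to order $n$ at $0$, this forces $z^{-n}\bigl(f - S_{n-1}\hat f\bigr)$ to be bounded on $K$, so $f \in \Aa(U_\infty)$.

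The main technical step is the Whitney condition in the forward direction for pairs $z_1, z_2 \in K$ neither of which is $0$ and where the straight segment $[z_1, z_2]$ exits $K$ (case (c) of the proof of Lemma \ref{lemm:ascle}). This is overcome exactly as in that proof, by decomposing $f^{(k_1)}(z_1) - f^{(k_1)}(z_2) = \bigl(f^{(k_1)}(z_1) - f^{(k_1)}(0)\bigr) - \bigl(f^{(k_1)}(z_2) - f^{(k_1)}(0)\bigr)$ and combining the growth control of derivatives on the thickened invariant strict set $K_1$ from Lemma \ref{lemm:asderive} with linear bounds at $0$; all remaining steps reduce to Whitney's theorem and the differentiability of $q$-asymptotic expansions.
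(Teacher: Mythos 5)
Your proof is correct and takes essentially the same route as the paper: one direction is an application of Lemma \ref{lemm:ascle} (you cite assertion (ii), the paper cites the weaker consequence (iii), but (ii) implies (iii) so this is immaterial) together with the differentiability of $q$-asymptotic expansions and Whitney's theorem; the other unpacks the order-$n$ Whitney condition at the origin and a compactness argument on $K\cup\{0\}$ to recover the $q$-asymptotic bound, exactly as the paper (tersely) indicates.
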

\begin{proof}
Let $f\in \mathcal{C}^{\infty}_{Whitney}\bigl({K\cup\{ 0\}},\C\bigr)$.
If for all invariant strict subset $K\subset U$, one has:
$$
f_{\vert K}\in \Ker ~
\left(\frac{\partial}{\partial \bar z}:
\mathcal{C}^{\infty}_{Whitney}\bigl({K\cup\{ 0\}},\C\bigr)
\rightarrow
\mathcal{C}^{\infty}_{Whitney}\bigl({K\cup\{ 0\}},\C\bigr)\right),
$$
then, the function $f$ is holomorphic on $V$, its Taylor jet
$\hat f$ at $0$ is formally holomorphic (\ie\ it satisfies the
Cauchy-Riemann equation),
$\hat f = \sum\limits_{n\geq 0} a_n z^n$
and $f$ is asymptotic to $\hat f$ on $V$, $f\in\Aa(U_{\infty})$.

Conversely let $f\in \Aa(U_{\infty})$; then, for all
$p\in\N$, $f^{(p)} \in \Aa(U_{\infty})$ and for
all $r \in \N^*$, we have $ \frac{\partial^r}{\partial \bar {z}^r}f^{(p)}=0$.

We can apply lemma \ref{lemm:ascle} to each function
$f^{(p)}$, $p\in\N$, to the effect that $f$ satisfies
conditions $\tilde W_n(K)$ for all $n\in\N$ (this is assertion (iii)
of the lemma); therefore it satisfies also Whitney conditions $W_n(K)$
for all $n\in\N$ and, using Whitney theorem, we get
$f\in\mathcal{C}^{\infty}_{Whitney}\bigl({K\cup\{ 0\}},\C\bigr)$.
\end{proof}


\subsection{Asymptotics and sheaves on $\Eq$}
\label{subsubsection:asymptoticsandsheaves}

Recall from paragraph \ref{subsection:qAsymptotics} the definition 
of the sheaves $\Aa$ and $\Aa_{0}$ on $\Eq$. There is a natural map 
from $\Aa$ to the constant sheaf $\Rf$ sending a function to its 
asymptotic expansion. Although much weaker than the $q$-Gevrey theorem 
of Borel-Ritt \ref{theo:BorelRitt}, the following result does not directly
flow from it, and we give here a direct proof.
\index{sheaves on $\Eq$ of asymptotic functions}
\index{Borel-Ritt theorem (weak $q$-analogue)}

\begin{theo}[Weak $q$-analogue of Borel-Ritt]
\label{theo:BorelRittfaible}
The natural map from the sheaf $\Aa$ to the constant sheaf $\Rf$ is onto.
\end{theo}
\begin{proof}
Starting from an open set $V \subset \Eq$ small enough that
$U := p^{-1}(V) \subset \C^{*}$ is a disjoint union of open
sets $U_{n} := q^{n} U_{0}$, where $U_{0}$ is mapped homeomorphically
to $V$ by $p$, we divide $\C^{*}$ in a finite number of sectors
$S_{i}$ such that each $U_{n}$ is contained in one of the $S_{i}$.
Now, let $\hat{f} \in \Rf$. Apply the classical
\index{Borel-Ritt theorem (classical)}
Borel-Ritt theorem \cite[chap. XI, \S 1]{HS} within each sector $S_{i}$, 
yielding a map $f_{i}$ holomorphic and admitting the asymptotic expansion 
$\hat{f}$. Glueing the $f_{i}$ provides a section of $\Aa$ over $V$ having
image $\hat{f}$.
\end{proof}


\section{Existence of asymptotic solutions}
\label{section:existasymptsols}

We call \emph{adequate} an open subset $\C^{*}$ of the form
\index{adequate open subset} 
$q^{-\N} U_{0}$, where the $q^{-k} U_{0}, k \geq 0$ are pairwise 
disjoint. Clearly, $\Aa_{0}(U_{\infty})$ is a $\Ka$-vector space 
stable under $\sq$. In particular, any $q$-difference operator
$P := \sq^{n} + a_{1} \sq^{n-1} + \cdots + a_{n} \in \Dq$ defines a
$\C$-linear endomorphism 
$P: f \mapsto P f = \sq^{n}(f) + a_{1} \sq^{n-1} + \cdots + a_{n} f$ 
of $\Aa_{0}(U_{\infty})$. Likewise, any matrix $A \in \GL_{n}(\Ka)$, 
defines a $\C$-linear endomorphism 
$\sq - A: X \mapsto \sq X - A X$ of $\Aa_{0}(U_{\infty})^{n}$.

\index{asymptotic solutions (existence theorem)}
\begin{theo}[Existence of asymptotic solutions]
\label{theo:TFESA}
Let $A \in \GL_{n}(\Ka)$ and let $\hat{X} \in \Rf^{n}$ be a formal solution 
of the system $\sq X = A X$. For any adequate open subset $U$, there exists
a solution $X \in \Aa(U_{\infty})^{n}$ of that system which is asymptotic
to $\hat{X}$.
\end{theo}
\begin{proof}
From theorem \ref{theo:BorelRittfaible}, there exists
$Y \in \Aa(U_{\infty})^{n}$ which is asymptotic to $\hat{X}$
(without however being a solution of the system). Set:
$$
W := \sq Y - A Y = \sq (Y - \hat{X}) - A (Y - \hat{X})
\in \Aa_{0}(U_{\infty})^{n}.
$$
From proposition \ref{prop:surjplats} below, there exists 
$Z \in \Aa_{0}(U_{\infty})^{n}$ such that $\sq Z - A Z = W$. Then
$X := Y - Z \in \Aa(U_{\infty})^{n}$ is asymptotic to $\hat{X}$
and solution of the system.
\end{proof}

\begin{prop}
\label{prop:surjplats}
Let $A \in \GL_{n}(\Ka)$ and let $U$ be an adequate open subset. Then,
the endomorphism $\sq - A$ of $\Aa_{0}(U_{\infty})^{n}$ is onto.
\end{prop}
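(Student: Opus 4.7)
The plan is to construct $Y \in \Aa_{0}(U_{\infty})^{n}$ satisfying $\sq Y - AY = W$ by exploiting the geometry of an adequate open $U$. I would begin by choosing a $\Sigma$-invariant open set $U \subset \C^{*}$ with $p(U) = U_{\infty}$ on which $W$ is represented by a holomorphic flat function, and which is adequate: $U = \bigsqcup_{k \geq 0} U_{k}$ with $U_{k} := q^{-k} U_{0}$ pairwise disjoint. Every $z \in U$ then has a unique decomposition $z = q^{-k} w$ with $w \in U_{0}$, and $U_{k}$ accumulates to $0$ as $k \to \infty$.

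The equation $\sq Y - AY = W$ may be rewritten as the backward recursion $Y(z) = A(z)^{-1}\bigl(Y(qz) - W(z)\bigr)$. For $z \in U_{k}$ with $k \geq 1$ one has $qz \in U_{k-1} \subset U$, so this recursion propagates values of $Y$ from $U_{0}$ through all the $U_{k}$. Setting $Y \equiv 0$ on $U_{0}$ and iterating gives the explicit formula
$$
Y(z) = - \sum_{j=0}^{k-1} A(z)^{-1} A(qz)^{-1} \cdots A(q^{j} z)^{-1}\, W(q^{j} z) \qquad \text{for } z \in U_{k}.
$$
This $Y$ is holomorphic on $U$ and a direct substitution verifies $\sq Y - AY = W$ on every $U_{k}$.

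The substantial part of the proof is to show that this $Y$ lies in $\Aa_{0}(U_{\infty})^{n}$, i.e., is flat at $0$. On a $\Sigma$-invariant strict subset $K \subset U$, I would combine three ingredients: (a) the flatness of $W$, giving $\|W(q^{j}z)\| \leq C_{N} |q^{j} z|^{N}$ for every $N$, with $C_{N}$ depending only on $K$ and $N$; (b) the meromorphy of $A^{-1}$ at $0$, giving a bound $\|A^{-1}(w)\| \leq C |w|^{-p}$ for $|w|$ small, with $p$ determined by the order of pole of $A^{-1}$; (c) the explicit parametrization $z = q^{-k} w$ with $w$ in the bounded set $U_{0}$, so that $q^{j}z = q^{j-k} w$ runs through $U_{k}, U_{k-1}, \ldots, U_{1}$ as $j$ grows from $0$ to $k-1$. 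In the fuchsian subcase where $A$ is holomorphic and invertible at $0$ (so $p = 0$), the $A^{-1}$ factors are uniformly bounded and the super-polynomial decay of $W$ straightforwardly yields $\|Y(z)\| = O(|z|^{N})$ for every $N$, as required.

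The main obstacle is the general case where $A$ has nontrivial, possibly negative, slopes so that iterated $A^{-1}$ produces large amplification factors competing with the decay of $W$. To handle this, I would first reduce to the pure isoclinic case using the analytic slope filtration (theorem \ref{theo:slopefiltration}), which provides an analytic gauge transformation bringing $A$ into block upper triangular form with pure isoclinic diagonal blocks. Since such a gauge transformation preserves $\Aa_{0}(U_{\infty})^{n}$, it suffices to prove surjectivity of $\sq - A$ in each pure block, and by induction on the filtration length to handle the off-diagonal perturbation. For each pure block lemma \ref{lemm:puremodules} supplies the normal form $A = z^{\mu} A_{0}$ with $A_{0} \in \GL_{n}(\C)$, which makes the products $A(z)^{-1} \cdots A(q^{j}z)^{-1}$ explicitly computable (up to a scalar factor $q^{\bullet}$) and allows one to match the two geometric scales: the balance $|q^{j(j+1)\mu/2}|$ from the iterated matrices against the factor $|q|^{jN}$ coming from flatness of $W$ at rate $N$, the latter dominating once $N$ is chosen large enough. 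If the choice $Y \equiv 0$ on $U_{0}$ proves insufficient for strongly negative slopes, it can be replaced by a non-zero boundary value on $U_{0}$ built from a convergent sum of iterates of $W$, tailored to cancel the leading divergent contribution; the super-polynomial decay of $W$ guarantees convergence of such a correction.
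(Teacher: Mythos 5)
Your plan correctly identifies the two essential tools — the backward recursion propagating data from $U_0$ inward, and a convergent-series correction to the boundary value on $U_0$ — but it assigns them to the wrong cases, and the claim that the fuchsian case is ``straightforward'' with $Y\equiv 0$ on $U_0$ is false.

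To see this, track your own estimate carefully. For a (pure, rank $1$) block $A = a z^{\nu}$ of slope $\nu$, one has $A^{-1}(\zeta)=a^{-1}\zeta^{-\nu}$, and for $z\in U_k$ the products $\prod_{i=0}^{j}A(q^iz)^{-1}$ in your explicit formula have exponent of $\lmod q\rmod$ equal to $-\nu\bigl(\tfrac{j(j+1)}{2}-(j+1)k\bigr)$. When $\nu<0$ this combines with the flatness of $W$ to give a bound that decays faster than any $\lmod z\rmod^M$ — this is exactly the paper's case (ii) of lemma \ref{lemm:surjrang1}, where the recursion from a trivial boundary value succeeds. But when $\nu\geq 0$ the argument breaks down: already in the fuchsian case $\nu=0$, if $A\in\GL_n(\C)$ has $\lnorm A^{-1}\rnorm >1$, the term with index $j=k-1$ involves $q^{k-1}z\in U_1$, where $W(q^{k-1}z)$ is merely bounded and not small, and the factor $A^{-k}$ contributes $\sim\lnorm A^{-1}\rnorm^k$, which is not $O(\lmod z\rmod^M)$ for any $M$. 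The same failure occurs, even more severely, for blocks of positive slope (where $A^{-1}$ has a pole). So precisely the cases you call easy (fuchsian) and fail to flag (positive slope) are the ones where $Y\equiv 0$ on $U_0$ is not admissible, while the case you single out as needing repair (negative slope, i.e.\ $A^{-1}$ with a zero) is the one that does work.

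The paper's case (i) handles $\nu\geq 0$ by a genuinely different method: the convergent series $\sum_{m\geq 0}\Phi^m h$ where $\Phi(f)=aq^{\mu}z^{-\mu}\sigma_{q^{-1}}f$, whose convergence relies on flatness of $h$ under iterated application of $\sigma_{q^{-1}}$ rather than on the finite number of annuli separating $z$ from $U_0$. Your proposed fix (a non-trivial boundary value on $U_0$ given as a convergent sum of iterates of $W$) is, conceptually, this very construction; you just apply it to the wrong case.

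Two further comments. First, your reduction (slope filtration to a block-triangular form with pure isoclinic diagonal blocks, then induction on the filtration length for the off-diagonal coupling) is a legitimate alternative to the paper's reduction, which goes through the cyclic vector lemma, the companion matrix, and a factorisation of the scalar operator into first-order factors $z^{\mu}\sq-a$; both routes arrive at essentially the same first-order problem. Second, without normalizing the constant part of the fuchsian block (for instance, bringing eigenvalues into the fundamental annulus), there is no reason for $\lnorm A^{-1}\rnorm$ to be $\leq 1$, so the ``straightforward'' conclusion in your fuchsian sub-case simply is not available.
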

\begin{proof}
Let $F \in \GL_{n}(\Ka)$ and $B := F[A]$. From the commutative diagram:
$$
\begin{CD}
\Aa_{0}(U_{\infty})^{n}   @>{\sq - A}>>  \Aa_{0}(U_{\infty})^{n}        \\
@VV{F}V         @VV{\sq F}V        \\
\Aa_{0}(U_{\infty})^{n}   @>{\sq - B}>>  \Aa_{0}(U_{\infty})^{n}
\end{CD}
$$
in which the vertical arrows are isomorphisms, we just have to prove
that $\sq - B$ is onto. After the cyclic vector lemma (lemma 
\ref{lemm:cyclicvector}), we can take $B := A_{P}$, the companion 
matrix of $P := \sq^{n} + a_{1} \sq^{n-1} + \cdots + a_{n}$ described
in \ref{subsubsection:functorofsolutions}. We then apply lemma 
\ref{lemm:surjsystouequa} (herebelow) and theorem \ref{theo:surjplats}
(further below).
\end{proof}

\begin{lemm}
\label{lemm:surjsystouequa}
Let $R$ be a $\Ka$-vector space on which $\sq$ operates. With the
notations of the previous proposition, $P: R \rightarrow R$ is onto
if, and only if, $\sq - A_{P}: R^{n} \rightarrow R^{n}$ is onto.
\end{lemm}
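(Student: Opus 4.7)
The plan is to translate between the scalar equation $P f = g$ and the first-order system $\sq X - A_P X = Y$ by exploiting the very shape of the companion matrix $A_P$. Concretely, I would start by recording the key identity: if one writes $X := (f, \sq f, \ldots, \sq^{n-1} f)^t$, then a direct matrix computation shows
$$
\sq X - A_P X = (0, 0, \ldots, 0, P f)^t,
$$
since the first $n-1$ components of $A_P X$ shift the coordinates of $X$ and the last row of $A_P$ encodes precisely the coefficients $-a_n, -a_{n-1}, \ldots, -a_1$ of $P$. This identity is the pivot of the proof.

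For the direction ``$\sq - A_P$ onto $\Rightarrow$ $P$ onto'', I would take $g \in R$, form $Y := (0, \ldots, 0, g)^t \in R^n$, and use surjectivity to find $X = (x_1, \ldots, x_n)^t$ with $\sq X - A_P X = Y$. The first $n-1$ equations $\sq x_i - x_{i+1} = 0$ force $x_{i+1} = \sq^i x_1$ inductively, so $X = (x_1, \sq x_1, \ldots, \sq^{n-1} x_1)^t$; the last equation then collapses (by the identity above) to $P x_1 = g$, which is what we need.

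For the reverse direction ``$P$ onto $\Rightarrow$ $\sq - A_P$ onto'', I would proceed by back-substitution. Given $Y = (y_1, \ldots, y_n)^t \in R^n$, the first $n-1$ components of the system $\sq X - A_P X = Y$ read $x_{i+1} = \sq x_i - y_i$, and can be solved recursively:
$$
x_{i+1} = \sq^i x_1 - \sum_{j=1}^{i} \sq^{i-j} y_j.
$$
Substituting these expressions into the last row of $\sq X - A_P X = Y$ and using the identity from the first paragraph, the last equation reduces to a single scalar equation of the form $P x_1 = y_n + \Psi(y_1, \ldots, y_{n-1})$, where $\Psi$ is an explicit $\Ka$-linear combination of the $y_j$ and their iterates under $\sq$ (with coefficients built from the $a_i$). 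Surjectivity of $P$ on $R$ yields an $x_1 \in R$ solving this equation, and the previous formulas then define the remaining components $x_2, \ldots, x_n$ of $X$, producing the desired preimage in $R^n$.

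The only mild obstacle is the bookkeeping in the second direction to verify that the scalar equation obtained after back-substitution is genuinely of the form $P x_1 = (\text{something in } R)$; but this follows automatically from the companion shape of $A_P$ without needing any further structure on $R$, since we only use that $R$ is a $\Ka$-vector space stable under $\sq$. No analytic input (not even the choice of $R = \Aa_0(U_\infty)^n$) is required at this stage, so the lemma is entirely formal.
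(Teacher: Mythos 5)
Your proof is correct and follows the same strategy as the paper's: one direction solves the scalar equation $Pf=g$ by solving the system with right-hand side $(0,\ldots,0,g)^t$ and taking the first component, the other back-substitutes $x_{i+1}=\sq x_i - y_i$ to reduce the system to a scalar equation $Px_1=z$. You merely spell out the computations (the key identity, the recursion for the $x_i$, and the check that the residual right-hand side lies in $R$) that the paper leaves to the reader.
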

\begin{proof}
(i) Assume $P$ is onto. To solve
$(\sq - A_{P}) \begin{pmatrix} x_{1} \\ \vdots \\ x_{n} \end{pmatrix} = 
\begin{pmatrix} y_{1} \\ \vdots \\ y_{n} \end{pmatrix}$, put
$x_{i+1} := \sq x_{i} - y_{i}$; one sees that $x_{1}$ just has
to be solution of an equation $P x_{1} = z$, where $z$ is some
explicit expression of the $y_{i}$. \\
(ii) Assume $\sq - A_{P}$ is onto. To solve $P f = g$, it is enough
to solve 
$\sq X - A_{P} X = \begin{pmatrix} 0 \\ \vdots \\ 0 \\ g \end{pmatrix}$ 
and to set $f$ to the first component of $X$.
\end{proof}

\begin{rema}
This computation is directly related to the homotopy of complexes
discussed in \ref{subsection:complexhomotopy}.
\end{rema}

\begin{theo}
\label{theo:surjplats}
The endomorphism $P$ of $\Aa_{0}(U_{\infty})$ is onto.
\end{theo}
\begin{proof}
We give the proof under the assumption that the slopes are integral,
which is the only case we need in this paper; the reader will easily
deduce the general case by ramification. The counterpart of the slope
filtration on the side of $q$-difference operators is the existence
of a factorisation \cite{MarotteZhang,JSFIL,zh1} of $P$ as a product
of non commuting factors: elements of $\Ra^{*}$ (which induce
automorphisms of $\Aa_{0}(U_{\infty})$) and first order operators
$z^{\mu} \sq - a$, $\mu \in \Z$, $a \in \C^{*}$. We are therefore
reduced to the following lemma. (This is where some analysis comes in !)
\end{proof}

\begin{lemm}
\label{lemm:surjrang1}
The endomorphism $f \mapsto z^{\mu} \sq f - a f$ of $\Aa_{0}(U_{\infty})$ 
is onto.
\end{lemm}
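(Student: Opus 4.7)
The plan is to exploit the ``adequate'' decomposition $U = \bigsqcup_{k \geq 0} q^{-k} U_0$ to solve $z^\mu f(qz) - a f(z) = g(z)$ layer by layer. A holomorphic function $f$ on $U$ is equivalent to the sequence $(f_k)_{k \geq 0}$ of its restrictions to the disjoint pieces, via $f_k(\zeta) := f(q^{-k}\zeta)$ for $\zeta \in U_0$; likewise set $g_k(\zeta) := g(q^{-k}\zeta)$. Flatness of $g$ translates to uniform bounds $|g_k(\zeta)| \leq D_n |q|^{-kn} |\zeta|^n$ on compacta for every $n \geq 0$, and flatness of $f$ to analogous bounds $|f_k(\zeta)| \leq C_n |q|^{-kn} |\zeta|^n$. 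Evaluating the equation at $z = q^{-k}\zeta$ for $k \geq 1$ gives the recurrence
\[
f_k(\zeta) = \frac{q^{-k\mu}\, \zeta^\mu}{a}\, f_{k-1}(\zeta) - \frac{g_k(\zeta)}{a},
\]
in which $f_0$ is a free parameter and each $f_k$ (for $k \geq 1$) is then determined.

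For $\mu \leq 0$, I iterate the recurrence all the way to $0$ and define $f$ by the closed-form series
\[
f(z) := \sum_{j=1}^{\infty} \frac{q^{\mu j(j+1)/2}\, a^{j-1}}{z^{\mu j}}\, g(z/q^j).
\]
Since $|q| > 1$ and $\mu \leq 0$, the factor $q^{\mu j(j+1)/2}$ decays super-exponentially in $j$; combined with the bound $|g(z/q^j)| \leq D_n |z|^n |q|^{-jn}$, the series converges absolutely on every stable strict subset of $U$, and termwise estimation yields $|f(z)| \leq C_n |z|^n$ for every $n$. A direct computation (shift of summation index) verifies that this $f$ satisfies the equation.

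The main obstacle is the case $\mu > 0$, where the series above diverges. Here I instead take $f_0 := 0$ and unwrap the recurrence to obtain the finite sum
\[
f_k(\zeta) = -\sum_{j=1}^{k} \frac{q^{-\mu(k(k+1) - j(j+1))/2}\, \zeta^{(k-j)\mu}}{a^{k-j+1}}\, g_j(\zeta).
\]
Applying the flatness bound on $g_j$ and setting $m := k - j$, the required estimate $|f_k(\zeta)| \leq C_n |q|^{-kn} |\zeta|^n$ reduces to showing that
\[
T_k := \sum_{m=0}^{k-1} |q|^{mn - \mu m(2k - m + 1)/2}\, |\zeta|^{m\mu}\, |a|^{-(m+1)}
\]
is bounded uniformly in $k$. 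Using $|\zeta|^{m\mu} \leq M^{m\mu}$ (with $M := \sup_{U_0}|\zeta|$, valid since $\mu > 0$) and the inequality $2k - m + 1 \geq k + 2$ on $[0, k-1]$, each summand is dominated by $|a|^{-1} \bigl((|q|^n M^\mu/|a|)\, |q|^{-\mu(k+2)/2}\bigr)^m$; for $k$ beyond a threshold depending on $n$, $\mu$, $M$, $a$ the common ratio is $<1$ and the sum is geometrically bounded, while for small $k$ the finiteness of $T_k$ is trivial. This yields $f \in \Aa_{0}(U_\infty)$, and by construction $f$ solves the equation on $q^{-1} U = \bigsqcup_{k \geq 1} q^{-k} U_0$, an adequate open subset with horizon $U_\infty$, so the equation holds at the level of germs.
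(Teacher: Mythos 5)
Your proof is correct and follows the same overall strategy as the paper's: decompose the adequate open set $U$ into the layers $q^{-k}U_0$, rewrite the equation as a first-order recurrence on the layer-restrictions, split into $\mu\leq 0$ and $\mu>0$, and verify the flatness estimate for the resulting solution.

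For $\mu\leq 0$, your closed-form series $\sum_{j\geq 1} q^{\mu j(j+1)/2}a^{j-1}z^{-\mu j}\,g(z/q^{j})$ is exactly what the paper obtains by iterating the operator $\Phi(f)=aq^{\mu}z^{-\mu}\sigma_{q^{-1}}f$ and noting $LH=z^{\mu}\sigma_{q}h$; the two expressions coincide after unwinding. For $\mu>0$ both proofs unroll the recurrence with a prescribed initial datum (you take $f_{0}:=0$, the paper takes $f_{0}=-h_{0}/a$, which is inessential) and reach the same sum over each layer. The genuine difference is how the uniformity in $k$ is extracted. The paper exhibits the recursion $P_{j+1}(X)=P_{j}(|q|^{-\mu}X)+|q|^{-\mu(j+1)(j+2)/2}X^{j+1}$ for the truncated polynomials $P_{j}(X)=\sum_{\ell=0}^{j}|q|^{\mu\ell(\ell-1)/2-j\ell\mu}X^{\ell}$, and deduces that all of them are dominated by the single entire function $P(X;\mu,q)=\sum_{j\geq 0}|q|^{-\mu j(j+1)/2}X^{j}$; this gives a clean, $k$-independent majorant in one stroke. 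You instead bound the $m$-th summand by the $m$-th term of a geometric series whose ratio $|q|^{n}M^{\mu}|a|^{-1}|q|^{-\mu(k+2)/2}$ decreases to $0$ as $k\to\infty$, then deal with the finitely many small $k$ by brute force. Both arguments are valid and of comparable length; the paper's majorant is slightly more elegant and yields an explicit uniform constant, while your threshold argument is more elementary but less explicit.
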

\begin{proof}
Write $L$ this endomorphism. Let $h \in \Aa_0(U_\infty)$; for all
$n \in \N$ and each strict stable subset $V$ of $U$, there exists
$C_{n,V}>0$ such that $\lmod h(z)\rmod < C_{n,V} \lmod z \rmod^n$ over $V$. 
We distinguish two cases: (i) $\mu \leq 0$; (ii) $\mu > 0$.

Case (i): $\mu \leq 0$ (easy case). Let $\Phi$ be the automorphism of 
$\Aa_0(U_\infty)$ defined by $\Phi(f) := a q^\mu z^{-\mu} \sigma_{q^{-1}} f$; 
one has $\Phi^m f = a^m q^{\mu m(m+1)/2} z^{-m\mu}\sigma_{q^{-m}} f$ for all 
$m \geq 0$. Since $\lmod h(z)\rmod \leq  C_{n,V} \lmod z\rmod^n$ 
on each stable strict subset $V$ of $U$ and all integers $n \geq 0$, 
one finds:
$$
\forall z \in V \;,\; \sum_{m \geq 0} \lmod \Phi^m h(z) \rmod \leq 
C_{n,V} \,\sum_{m\ge 0} \lmod a \rmod^m \lmod q \rmod^{\mu m(m+1)/2} \, 
\lmod q\rmod^{-nm}\, \lmod z \rmod^n.
$$
One deduces that the series $ \sum_{m \geq 0} \Phi^m h$ defines a flat
function over $U$, write it $H \in \Aa_0(U_\infty)$. Observing that 
$L H = z^\mu \sq(H-\Phi H) = z^\mu \sq h$, we see that $L$ is onto
over $\Aa_0(U_\infty)$.

Case (ii): $\mu > 0$ (hard case). For all $j \in \N$, set
$U_j := q^{-j} U_0 = \{z \in \C^* \tq q^j z \in U_0\}$; these are 
pairwise disjoint since $U$ was assumed to be adequate. We are going 
to build successively $f_0,f_1,f_2, \ldots$ over $U_0,U_1,U_2,\ldots$, 
which will give rise to a function $f$ defined over $U = q^{-\N} U_0$, 
with $f_j = f_{|U_j}$ and $L f = h$.

For all $j \in \N$, let  $h_j := h_{|{U_j}} : U_j \rightarrow \C$. 
Since $(\sq f)_{|U_j}=\sq f_{j-1}$, equation $L f = h$ is equivalent 
to system:
$$
\forall j \in \N^{*} \;,\; z^\mu \sigma_q f_{j-1} - a f_j = h_j,
$$
also written:
\begin{equation}
\label{equa:Lfh}
\forall j \in \N^{*} \;,\; f_j = (z^\mu \sigma_q f_{j-1}-h_j)/a.
\end{equation}

We agree that $f_{-1} \equiv 0$ and $f_0 = h_0/a $. Iterating the
last relation \eqref{equa:Lfh} above, we find:
\begin{equation}
\label{equa:Lfhj}
\forall j \in \N^{*} \;,\; f_j = 
-\sum_{\ell=0}^j q^{\mu\ell(\ell-1)/2} z^{\ell\mu} \sq^\ell h_{j-\ell}\,a^{-\ell-1}.
\end{equation}
Write $f$ the function defined on $U$ by the relations
$f_{|U_j} = f_j$, $j \in \N$; Clearly, $Lf = h$ on $U$. 
We are left to show that $f\in \Aa_0(U_\infty)$.

Let $n \in \N$ and $V$ a strict stable subset of $U$; let 
$V_j := U_j \cap V$ and choose a $K > 0$ such that 
$\lmod z \rmod \leq K \lmod q \rmod^{-j}$ for all $z \in U_j$; 
one has $|z|\le K|q|^{-j}$ on each $V_j$. Bounding by above
the right hand side of \eqref{equa:Lfhj}, one gets:
\begin{equation}
\label{equa:Lfhj||}
\forall z \in V_j \;,\;
\lmod f_j(z) \rmod \leq 
\dfrac{C_{n,V}}{a} \, P_j(\lmod q \rmod^n\alpha)\,\lmod z \rmod^n,
\end{equation}
where $\alpha := K^\mu/a$ and where $P_j$ denotes the polynomial
with positive coefficients:
$$ 
P_j(X) := \sum_{\ell=0}^j \lmod q \rmod^{\mu\ell(\ell-1)/2-j\ell\mu}\,X^\ell.
$$
Taking in account the relation
$$
P_{j+1}(X) = 
P_j(\lmod q \rmod^{-\mu}X)+ \lmod q \rmod^{-\mu(j+1)(j+2)/2}\,X^{j+1}\,,
$$
one finds that $P_j(X) \leq P(X;\mu,q)$ for all $X > 0$, where
we write $P(z;\mu,q)$ the entire function defined by:
$$
P(z;\mu,q) := \sum_{j=0}^\infty \lmod q \rmod^{-\mu j(j+1)/2}\,z^{j}.
$$
Said otherwise, \eqref{equa:Lfhj||} can be written in the form:
$$
\forall z \in V_j \;,\;
\lmod f_j(z) \rmod \leq 
\frac{C_{n,V}}a\,P(\lmod q \rmod^n\alpha;\mu,q)\,\lmod z \rmod^n,
$$
which allows us to conclude that $f$ is flat.
\end{proof}

We now apply theorem \ref{theo:TFESA} to a situation that we shall 
meet when we prove theorem \ref{theo:secondqMalgrangeSibuya}. \\

Let $A_{0} \in \GL_{n}(\Ka)$ be the matrix of a pure module and let
$A \in \GL_{n}(\Ka)$ a matrix formally equivalent to  $A_{0}$.
Let $\hat{F} \in \GL_{n}(\Kf)$ be a gauge transformation proving this
equivalence, \ie\ $\hat{F}[A_{0}] = A$. (Note that we do not
assume $\hat{F} \in \G(\Kf)$.) The conjugation relation is
equivalent to $\sq \hat{F} = A \hat{F} A_{0}^{-1}$, which we 
regard as a $q$-difference system of rank $n^{2}$ in the space 
$\Mat_{n}(\Ka) \simeq \Ka^{n^{2}}$. After theorem \ref{theo:TFESA}, 
there exists over each adequate open set $U$ a solution
$F_{U} \in \Mat_{n}(\Ka)$ of the system $\sq F = A F A_{0}^{-1}$
that is asymptotic to $\hat{F}$. Since the latter is invertible,
and since $\det F_{U}$ is obviously asymptotic to $\det \hat{F}$, 
whence non zero, $F_{U}$ is invertible. \\

Now let $U'$ another adequate open set which meets $U$. Then
$U'' := U \cap U'$ is adequate and the matrix $G := F_{U}^{-1} F_{U'}$ 
satisfies the two following properties:
\begin{enumerate}
\item{It is an automorphism of $A_{0}, \ie\ G[A_{0}] = A_{0}$.}
\item{It is asymptotic to $\hat{F}^{-1} \hat{F} = I_{n}$,
\ie\ the coefficients of $G - I_{n}$ belong to $\Aa_0(U''_\infty)$.}
\end{enumerate}

\begin{lemm}
\label{lemm:cocyclasympt}
The matrix $G$ belongs to $\G(\Aa_0(U''_\infty))$.
\end{lemm}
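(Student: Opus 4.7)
The plan is to decompose $G$ along the block-diagonal structure of $A_{0}$ and then use the invariance $G[A_{0}] = A_{0}$, combined with the flatness of $G - I_{n}$, to force the strictly lower blocks of $G$ to vanish and the diagonal blocks to equal the identity; the remaining upper blocks will then automatically have flat entries, giving precisely membership in $\G(\Aa_{0}(U''_{\infty}))$.

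Concretely, by lemma \ref{lemm:puremodules} I may assume $A_{0} = \operatorname{diag}(B_{1},\ldots,B_{k})$ with $B_{i} = z^{\mu_{i}} A_{i}$, $A_{i} \in \GL_{r_{i}}(\C)$ and $\mu_{1} < \cdots < \mu_{k}$. Decomposing $G = (G_{i,j})$ accordingly and writing out $\sq G = A_{0} G A_{0}^{-1}$ block by block gives the functional equations
$$
G_{i,j}(qz) = z^{\mu_{i}-\mu_{j}}\, A_{i}\, G_{i,j}(z)\, A_{j}^{-1}.
$$
Setting $H_{i,j} := G_{i,j} - \delta_{i,j} I_{r_{i}}$ and noting that the constant $I_{r_{i}}$ trivially satisfies the same equation in the case $i = j$, one sees that $H_{i,j}$ satisfies the very same relation; moreover $H_{i,j}$ has entries in $\Aa_{0}(U''_{\infty})$ by property $(2)$. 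What remains is to show $H_{i,j} = 0$ whenever $i \geq j$. Iterating the functional equation backwards (legitimate since $U''$ is adequate, hence stable under $z \mapsto q^{-1}z$) yields, for all $m \geq 1$,
$$
H_{i,j}(z) = q^{-\mu m(m+1)/2}\, z^{m\mu}\, A_{i}^{m}\, H_{i,j}(q^{-m} z)\, A_{j}^{-m},
\qquad \mu := \mu_{i} - \mu_{j}.
$$
Picking a $\Sigma$-invariant strict subset $K \subset U''$ containing the given point $z$, all iterates $q^{-m} z$ stay in $K$, and flatness of $H_{i,j}$ provides, for every $n \in \N$, a constant $C_{n,K} > 0$ with $\lmod H_{i,j}(q^{-m}z) \rmod \leq C_{n,K} \lmod q \rmod^{-mn} \lmod z \rmod^{n}$. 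Substituting gives, with $M := \lmod A_{i} \rmod \cdot \lmod A_{j}^{-1} \rmod$:
$$
\lmod H_{i,j}(z) \rmod \leq C_{n,K}\, \lmod z \rmod^{m\mu + n}\, M^{m}\, \lmod q \rmod^{-\mu m(m+1)/2 - mn}.
$$

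The main obstacle is to extract vanishing from this estimate in both regimes $\mu > 0$ and $\mu = 0$; in both cases one lets $m \to \infty$. For $\mu > 0$ (case $i > j$) I would take $n = 0$: the super-exponentially decaying factor $\lmod q \rmod^{-\mu m(m+1)/2}$ beats the merely exponential growth of $M^{m} \lmod z \rmod^{m\mu}$, so $H_{i,j}(z) = 0$. For $\mu = 0$ (case $i = j$) no super-exponential factor is present, so instead I would choose $n$ large enough that $\lmod q \rmod^{n} > M$; the bound then behaves like $C_{n,K} \lmod z \rmod^{n} \bigl(M \lmod q \rmod^{-n}\bigr)^{m}$ and again tends to $0$, forcing $H_{i,i}(z) = 0$. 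This establishes $G_{i,j} = 0$ for $i > j$ and $G_{i,i} = I_{r_{i}}$ for all $i$; together with the automatic flatness of the remaining upper blocks, this is exactly the statement $G \in \G(\Aa_{0}(U''_{\infty}))$.
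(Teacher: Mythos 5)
Your proof is correct and takes essentially the same approach as the paper: the paper notes tersely that the functional equation on the blocks of $G$ forces the growth or decay of $G_{i,j}$ near $0$ to match $\thq^{\mu_i - \mu_j}$, so that flatness kills the strictly lower blocks (and, applying the same argument to $G_{i,i} - I_{r_i}$, normalizes the diagonal ones). You simply unpack that comparison into an explicit iteration of the block functional equation and a choice of the flatness order $n$ (taking $n=0$ when $\mu_i - \mu_j > 0$, and $n$ large enough to dominate the condition number of $A_i$ when $\mu_i = \mu_j$), which is exactly the estimate the paper's one-line ``order of growth'' remark encodes.
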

\begin{proof}
With the usual notations for $A_{0}$, each block of $G$ satisfies:
$$
\sq G_{i,j} = 
\left(z^{\mu_{j}} A_{j}\right)^{-1} G_{i,j} \left(z^{\mu_{i}} A_{i}\right) = 
z^{\mu_{i} - \mu_{j}} A_{j}^{-1} G_{i,j} A_{i}.
$$
The order of growth or decay of $G_{i,j}$ near $0$ is therefore the
same as $\thq^{\mu_{i} - \mu_{j}}$. Since $G_{i,j}$ is flat for $i \neq j$,
this implies $G_{i,j} = 0$ for $i > j$. For $i = j$, we apply the same
argument to $G_{i,i} - I_{r_{i}}$, which therefore vanishes.
\end{proof}


\section{The fundamental isomorphism}
\label{section:fundamentalisomorphism}

\label{not48}
We write $\Lambda_I := I_n + \Mat_n(\Aa_0)$ for the subsheaf
of groups of $\GL_n(\Aa)$ made up of matrices infinitely
tangent to the identity and, if $G$ is an algebraic subgroup
of $\GL_n(\C)$, we put $\Lambda_I^G := \Lambda_I \cap G(\Aa)$.
In particular $\Lambda_I^{\G}$ is a sheaf of triangular matrices
of the form \eqref{eqn:isoplat} with all the $F_{i,j}$ infinitely
flat. \\

For a $q$-difference module $M = (\Ka^n,\Phi_A)$ (section
\ref{subsection:generalitiesdiffmods}), we consider the set of 
\index{infinitely tangent to the identity (automorphisms)}
\emph{automorphisms of $M$ infinitely tangent to the
identity}: this is the subsheaf $\Lambda_I(M)$ of $\Lambda_I$
whose sections satisfy the equality: $F[A] = A$ (recall
that by definition $F[A] = (\sq F) A F^{-1}$);  this is
\index{Stokes sheaf}
also called the \emph{Stokes sheaf} of the module $M$. \\

If the matrix $A$ has the form \eqref{eqn:formestandardtriangulaire}
(see paragraph \ref{subsection:matricialdescriptionofF(P1,...,Pk)})
then $\Lambda_I(M)$ is a subsheaf of $\Lambda_I^{\G}$. Up to
\index{gauge transformation}
an analytic gauge transformation, we can assume that we are
in this case. As a consequence,  $\Lambda_I(M)$ is a sheaf
of \emph{unipotent} (and even triangular) groups.


\subsection{Study of $H^1(\Eq;\Lambda_I)$}
\label{subsection:glue}
\label{not49}

We choose $\tau \in \C$, $\Im(\tau) > 0$, such that $q = e^{-2 \ii\pi\tau}$. 
Since $(1,\tau)$ is a basis of the $\R$-vector space $\C$, the group morphism: 
$$
p_0: (u,v) \mapsto e^{2 \ii \pi (u+v\tau)}
$$
from $\R^2$ to $\C^*$ is onto with kernel $\Z \times \{0\}$, and the
preimage of $q^{\Z}$ is 
$$
\label{not50}
p_0^{-1}(q^{\Z}) = \Z \times \Z.
$$
Therefore, the composition $p \circ p_0$ of $p_0$ with the natural 
projection $p: \C^* \rightarrow \Eq = \C^{*}/q^{\Z}$ is a group epimorphism
\index{elliptic curve $\Eq$} 
with discrete kernel $\Z^2$; it is clearly a local diffeomorphism.

\begin{rema}
The morphism $p_0$ induces an isomorphism between $\R^2/\Z^2$ 
and $\Eq = \C^{*}/q^{\Z}$. Through this isomorphism, the natural 
splitting of $\R^2/\Z^2$ is carried to the splitting:
$$
\C^* = \U \times q^{\R},
$$
where $\U$ denotes the unit circle in $\C^*$ and, for $v \in \R$,
we denote $q^v = e^{-2 \ii \pi\tau v}$
\end{rema}

\index{open parallelogram}
We will call \emph{open parallelogram} of $\Eq$ the image by the map 
$p \circ p_0: \R^2 \rightarrow \Eq$ of an open parallelogram
$\left]u_1,u_2\right[ \times \left]v_1,v_2\right[ \subset \R^2$
where $u_2 - u_1 \leq 1, v_2 - v_1 \leq 1$. This is the same as 
the image by $x \mapsto e^{2 \ii \pi x}$ of:
$$
\{u + v\tau \in \C \tq u_1 < u < u_2, v_1 < v< v_2\},
$$
\index{closed parallelogram}
We will call \emph{closed parallelogram} of $\Eq$ the closure of 
an open parallelogram such that $u_2 - u_1 < 1,v_2 - v_1 < 1$. We 
\index{small parallelogram}
will say that a parallelogram is \emph{small} if  $u_2 - u_1 < 1/4$
and $v_2 - v_1 < 1/4$. \\

For every open covering $\VV$ of $\Eq$, there exists a finite open 
covering $\UU = (U_i)_{i\in I}$, finer than $\VV$, such that:
\begin{enumerate}
\item[(i)]{the $U_i$ are open parallelograms,}
\item[(ii)]{the open sets $U_i$ are four by four disjoint.}
\end{enumerate}
\index{good covering}
We will call \emph{good} such a covering. There is a similar
definition for closed coverings. It is then clear that:
\begin{itemize}
\item{Every element of $H^j(\Eq;\GL_n(\Aa)),j=0,1$ can be
represented by an element of $H^j(\UU;\GL_n(\Aa))$, where
$\UU$ is a good covering.}
\item{The natural map
$H^j(\UU;\GL_n(\Aa)) \rightarrow H^j(\Eq;\GL_n(\Aa))$
is always injective.}
\end{itemize}

\label{not51}
Let $\UU$ be a good open covering of $\Eq$; we denote by
$\Cc^0(\UU;\GL_n(\Aa))$ the subspace of the space of $0$-cochains
$(g_i) \in C^0(\UU;\GL_n(\Aa))$ whose coboundary $(g_{ij} := g_i g_j^{-1})$
belongs to $C^1(\UU;\Lambda_I)$. Thus, a cochain $C^0(\UU;\GL_n(\Aa))$
is any family $(g_i)$ such that $g_i \in \Aa(U_i)$, and it belongs to
$\Cc^0(\UU;\GL_n(\Aa))$ if, and only if, the $g_i$ ($i\in I$) admit the 
same asymptotic expansion $\hat g \in \GL_n\bigl(\Rf\bigr)$. It is also
equivalent to say that the natural image of the cochain $(g_i)$ in
$C^0(\UU; \GL_n\bigl(\Rf\bigr))$ is a cocycle.

\begin{lemm}
\begin{itemize}
\item[(i)]
Let $\UU$ be a good open covering of $\Eq$.
The coboundary map $\partial$ induces an injection
on the double quotient:
\label{not52}
$$
C^0(\UU;\Lambda_I) \backslash
\Cc^0(\UU;\GL_n(\Aa))
/H^0\bigl(\UU;\GL_n(\Aa)\bigr)
\rightarrow H^1(\UU;\Lambda_I).
$$
\item[(ii)]
We have a canonical injection:
$$
{\lim_{\longrightarrow}}_{\UU}~C^0(\UU;\Lambda_I)
\backslash \Cc^0(\UU;\GL_n(\Aa))
/\GL_n\bigl(\Ra\bigr)
\rightarrow H^1(\Eq;\Lambda_I),
$$
where the direct limit is indexed by the good coverings.
\end{itemize}
\end{lemm}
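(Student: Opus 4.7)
The overall strategy is standard Čech-cohomological bookkeeping: part (i) is a one-covering injectivity argument at the level of cochains, and part (ii) is obtained from (i) by passage to the filtered colimit over good coverings, combined with the identification $\Aa(\Eq) = \Ra$.

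For part (i), I would first verify well-definedness. The map $\partial \colon (g_i) \mapsto (g_i g_j^{-1})$ sends $\Cc^0(\UU;\GL_n(\Aa))$ into $Z^1(\UU;\Lambda_I)$ by the very definition of $\Cc^0$, and the cocycle identity is automatic. Right multiplication by a global section $g \in H^0(\UU;\GL_n(\Aa))$ replaces $(g_i)$ by $(g_i g)$, leaving the coboundary $g_i g_j^{-1}$ unchanged; left multiplication by $(h_i) \in C^0(\UU;\Lambda_I)$ replaces $(g_i)$ by $(h_i g_i)$ and changes the coboundary into $h_i\, g_{ij}\, h_j^{-1}$, which is the standard action of a $0$-cochain of $\Lambda_I$ on a $1$-cocycle of $\Lambda_I$ and hence defines the same class in $H^1(\UU;\Lambda_I)$. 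Thus $\partial$ descends to the double quotient.

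For injectivity in (i), I would take $(g_i), (g_i') \in \Cc^0(\UU;\GL_n(\Aa))$ with cohomologous coboundaries, so that $g_i'(g_j')^{-1} = h_i\, g_i g_j^{-1}\, h_j^{-1}$ on every overlap for some $(h_i) \in C^0(\UU;\Lambda_I)$. Rearranging this into $g_i^{-1} h_i^{-1} g_i' = g_j^{-1} h_j^{-1} g_j'$ on $U_i \cap U_j$, the local sections $g_i^{-1} h_i^{-1} g_i'$ agree on overlaps and therefore glue to a global section $g \in H^0(\UU;\GL_n(\Aa))$ satisfying $g_i' = h_i g_i g$, showing that $(g_i)$ and $(g_i')$ belong to the same double coset.

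For part (ii), I would start by identifying $H^0(\UU;\GL_n(\Aa)) = \GL_n(\Aa(\Eq)) = \GL_n(\Ra)$. Indeed, a global section of $\Aa$ over $\Eq$ is represented by some $f \in \Aa(U)$ with $U \subset \C^*$ a $\Sigma$-invariant open set projecting onto $\Eq$; choosing $U$ to be a $\Sigma$-invariant punctured neighborhood of $0$ in $\C^*$, the asymptotic expansion of $f$ at $0$ forces $f$ to extend holomorphically across $0$, so $f \in \Ra$. Thus the right factor $\GL_n(\Ra)$ in (ii) coincides with the right factor in (i) for every good $\UU$. Since refinement of good coverings is compatible with $\partial$ and with both group actions, the filtered colimit in (ii) is well-defined; applying (i) at each $\UU$ and combining with the injectivity $H^1(\UU;\Lambda_I) \hookrightarrow H^1(\Eq;\Lambda_I)$ for good $\UU$ (already recalled just before the lemma), and using that a filtered colimit of compatible injections remains injective, I obtain the desired injection into $H^1(\Eq;\Lambda_I)$. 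The only mildly subtle step is the identification $\Aa(\Eq) = \Ra$; everything else is routine Čech bookkeeping, so there is no substantial obstacle in this argument.
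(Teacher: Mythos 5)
Your proof of (i) follows the paper's argument exactly, up to an inessential change in how the relation is rearranged: the paper solves for $g_i'^{-1} h_i g_i$ while you solve for $g_i^{-1} h_i^{-1} g_i'$; these give inverse global sections and the double-coset conclusion is the same. For (ii), the paper states the result without a separate proof, so your argument adds content; the reduction to (i) via the identification $H^0(\UU;\GL_n(\Aa)) = \GL_n(\Aa(\Eq)) = \GL_n(\Ra)$ and the already-noted injectivity of $H^1(\UU;\Lambda_I) \to H^1(\Eq;\Lambda_I)$ over the filtered system of good coverings is the right way to deduce it.

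One step should be phrased more carefully. You say ``choosing $U$ to be a $\Sigma$-invariant punctured neighborhood of $0$,'' but a section of $\Aa$ over $\Eq$ is a \emph{germ}, represented by \emph{some} $\Sigma$-invariant open $U$ with $p(U) = \Eq$, which is not a priori a punctured disk. The correct observation is that any such $U$ necessarily \emph{contains} a small punctured disk about $0$: choose a compact $K' \subset U$ with $p(K') = \Eq$; then $K := \Sigma K'$ is a $\Sigma$-invariant strict subset of $U$, and for $z$ small enough one can write $z = q^{-m}\lambda$ with $\lambda \in K'$, $m \geq 0$, so $z \in K \subset U$. Boundedness of $f$ on $K$ (the $n = 0$ case of the $q$-asymptotic condition) plus the Riemann removable singularity theorem on that punctured disk then gives $f \in \Ra$. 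With this fix your argument for (ii) is complete.
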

\begin{proof}
We have two natural maps:
$$
\Cc^0(\UU;\GL_n(\Aa)) \rightarrow Z^1(\UU;\Lambda_I), 
$$
which sends the family $(g_i)$ to the family $(g_{ij})$ and
$$
\Cc^0(\UU;\GL_n(\Aa)) \rightarrow \GL_n\bigl(\Rf\bigr),
$$
which sends the family $(g_i)$ to $\hat g \in \GL_n\bigl(\Rf\bigr)$,
the common asymptotic expansion of all the $g_i$.

Two cochains $(g_i),(g'_i)\in \Cc^0(\UU;\GL_n(\Aa))$ define
the same element of $H^1(\UU;\Lambda_I)$ if and
only if:
$$
g'_i{g'}_j^{-1}=h_ig_ig_j^{-1}h_j^{-1}, \ \text{with} \
(h_i)\in C^0(\UU;\Lambda_I).
$$
This is equivalent to ${g'}_j^{-1} h_j g_j = {g'}_i^{-1} h_i g_i$,
which thus defines an element $f \in H^0\bigl(\UU;\GL_n(\Aa)\bigr)$
and we have $g' = h g f^{-1}$.
\end{proof}

\begin{prop}
For every good open covering $\UU$ of $\Eq$ we have
a natural isomorphism:
$$
C^0(\UU:\Lambda_I)
\backslash \Cc^0(\UU;\GL_n(\Aa))
/H^0\bigl(\UU;\GL_n(\Aa)\bigr)
\rightarrow
\GL_n\bigl(\Rf\bigr)/\GL_n\bigl(\Ra\bigr).
$$
(Note that the double quotient on the left hand side is the same as
the double quotient on the left hand side of assertion (i) in the
lemma above.)
\end{prop}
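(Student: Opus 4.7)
The plan is to construct the natural map explicitly as the common asymptotic expansion of a cochain, verify it descends to the double quotient, and identify the result with $\GL_n(\Rf)/\GL_n(\Ra)$.

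\emph{Construction of $\Phi$.} For $(g_i) \in \Cc^0(\UU;\GL_n(\Aa))$, each $g_i$ admits an asymptotic expansion $\hat g_i \in \Mat_n(\Rf)$, and the hypothesis $(g_i g_j^{-1}) \in C^1(\UU;\Lambda_I)$ forces $\hat g_i = \hat g_j$ over every nonempty overlap; since the nerve of a good covering of the connected space $\Eq$ is connected, all the $\hat g_i$ equal a single $\hat g \in \Mat_n(\Rf)$, which lies in $\GL_n(\Rf)$ by taking asymptotic expansions in $g_i g_i^{-1} = I_n$. This defines the map $\Phi : \Cc^0(\UU;\GL_n(\Aa)) \to \GL_n(\Rf)$.

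\emph{Surjectivity.} Given $\hat g \in \GL_n(\Rf)$, theorem \ref{theo:BorelRittfaible} applied entry by entry over each $U_i$ yields $g_i \in \Mat_n(\Aa(U_i))$ with asymptotic expansion $\hat g$. Since $\det \hat g$ has nonzero constant term, $\det g_i$ is nonzero on a stable shrinking of $U_i$, so $g_i \in \GL_n(\Aa(U_i))$ and the family $(g_i)$ lies in $\Cc^0$ (its coboundary having trivial asymptotic expansion, hence belonging to $\Lambda_I$) with $\Phi(g_i) = \hat g$.

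\emph{Passage to the quotient.} The left action of $(h_i) \in C^0(\UU;\Lambda_I)$ preserves $\hat g$, since $\hat h_i = I_n$. For the right action, I identify $H^0(\UU;\GL_n(\Aa)) = H^0(\Eq;\GL_n(\Aa))$ with $\GL_n(\Ra)$: a global section is represented by a holomorphic $f$ on some $\Sigma$-invariant $U \subset \C^*$ with $p(U) = \Eq$, carrying an asymptotic expansion at $0$; choosing a compact $K' \subset U$ with $p(K') = \Eq$ (possible by a section-and-glueing argument over the compact $\Eq$) and thickening by $q^{-1}$, the $\Sigma$-saturate $K := q^{-\N}(K' \cup q^{-1}K')$ covers a full punctured disk around $0$. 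On $K$, the asymptotic bound with $n=0$ shows $f$ bounded, so by Riemann's removable singularity theorem $f$ extends to $0$ with convergent Taylor series equal to its asymptotic expansion, whence $f \in \Ra$. Right multiplication by $f \in \GL_n(\Ra)$ sends $\hat g$ to $\hat g f^{-1}$, so $\Phi$ descends to $\bar\Phi$ from the double quotient into $\GL_n(\Rf)/\GL_n(\Ra)$, which is still surjective.

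\emph{Injectivity and the main obstacle.} If $\hat{g}' = \hat g f^{-1}$ with $f \in \GL_n(\Ra)$, set $h_i := g'_i f g_i^{-1}$; then $\hat h_i = \hat{g}' f \hat g^{-1} = I_n$, so $(h_i) \in C^0(\UU;\Lambda_I)$, and $g'_i = h_i g_i f^{-1}$ exhibits the equivalence of $(g_i)$ and $(g'_i)$ in the double quotient. Thus $\bar\Phi$ is bijective. The only substantive analytic step beyond theorem \ref{theo:BorelRittfaible} is the identification $H^0(\Eq;\Aa) = \Ra$, which is the main technical point: it requires both the geometric argument that a $\Sigma$-invariant open $U$ with $p(U) = \Eq$ contains the $\Sigma$-saturate of a suitably thick compact, and the classical fact that a function holomorphic on a punctured disk with a formal asymptotic expansion is holomorphic at $0$ with that expansion as its Taylor series; everything else is formal bookkeeping.
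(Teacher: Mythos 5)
Your proposal is correct and follows essentially the same approach as the paper: the decisive step in both is the surjectivity of $\Cc^0(\UU;\GL_n(\Aa)) \rightarrow \GL_n(\Rf)$ supplied by the weak Borel--Ritt theorem (theorem \ref{theo:BorelRittfaible}). The paper's proof stops there with ``the proposition follows,'' while you make explicit the remaining fact it implicitly relies on, namely the identification $H^0\bigl(\Eq;\Aa\bigr) = \Ra$ (a function with a $q$-asymptotic expansion on all of $\Eq$ is actually holomorphic at $0$ with convergent expansion), which is exactly what is needed to match the right action of $H^0\bigl(\UU;\GL_n(\Aa)\bigr)$ with the quotient by $\GL_n(\Ra)$.
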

\begin{proof}
Let  $\UU$ be a good open covering of $\Eq$.
Let $\hat g \in \GL_n\bigl(\Rf\bigr)$. Using the $q$-analog
of Borel-Ritt (theorem \ref{theo:BorelRittfaible}), we can
represent it by an element $g_i$ on each open set $U_i$ such
that $g_i$ admits $\hat g$ as asymptotic expansion, therefore
the natural map:
$$
\Cc^0(\UU;\GL_n(\Aa)) \rightarrow \GL_n\bigl(\Rf\bigr)
$$
is onto. The proposition follows.
\end{proof}

\begin{coro}
For every good open covering $\UU$ of $\Eq$, we have natural
injections:
$$
\GL_n\bigl(\Rf\bigr)/\GL_n\bigl(\Ra\bigr)
\rightarrow
H^1(\UU;\Lambda_I) \rightarrow H^1(\Eq;\Lambda_I).
$$
\end{coro}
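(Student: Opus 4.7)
The two injections follow by assembling the preceding proposition and lemma with one small sheaf-theoretic identification. The plan is: first to identify $H^0(\UU;\GL_n(\Aa))$ with $\GL_n(\Ra)$, which makes the preceding proposition directly yield the first arrow once composed with assertion (i) of the lemma; second to deduce the second arrow from the fact that for a good covering the natural \v{C}ech-to-sheaf map is injective.

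For the identification $H^0(\Eq;\Aa) = \Ra$: a global section is represented by a holomorphic function $f$ on some $\Sigma$-invariant open subset $U \subset \C^*$ with $p(U) = \Eq$, admitting an asymptotic expansion at $0$. Since $U$ is open, $q^\Z$-invariant, and its image is all of $\Eq$, it contains a full punctured disc $\{0 < \lmod z \rmod < \epsilon\}$ around $0$. The existence of an asymptotic expansion at $0$ then implies that $f$ is bounded near $0$, so Riemann's removable singularity theorem gives $f \in \Ra$. Thus $H^0(\UU;\GL_n(\Aa)) = \GL_n(\Ra)$, and the preceding proposition becomes a natural isomorphism
$$
C^0(\UU;\Lambda_I) \backslash \Cc^0(\UU;\GL_n(\Aa)) / \GL_n(\Ra) \simeq \GL_n(\Rf)/\GL_n(\Ra).
$$
Composing the inverse of this isomorphism with the injection of assertion (i) of the preceding lemma gives the first arrow $\GL_n(\Rf)/\GL_n(\Ra) \hookrightarrow H^1(\UU;\Lambda_I)$.

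For the second arrow $H^1(\UU;\Lambda_I) \to H^1(\Eq;\Lambda_I)$, I appeal to the general fact, already stated in the paragraph introducing good coverings, that for such a covering the natural \v{C}ech-to-sheaf map is injective; the same argument used there for $\GL_n(\Aa)$ applies verbatim to its subsheaf $\Lambda_I$, since the four-by-four disjointness of the $U_i$ controls all relevant intersections. Alternatively, assertion (ii) of the lemma combined with the proposition already supplies the composite injection $\GL_n(\Rf)/\GL_n(\Ra) \hookrightarrow H^1(\Eq;\Lambda_I)$ obtained by passing to the limit over all good coverings; the compatibility of the coboundary maps with refinement then factors this composite through $H^1(\UU;\Lambda_I)$ as required.

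The main point requiring care is the identification $H^0(\Eq;\Aa)=\Ra$ (where the $q$-asymptotic setup genuinely enters, via Riemann extension); everything else is formal assembly of the preceding results, once one is sufficiently convinced that the non-abelian \v{C}ech-to-sheaf map is injective for good coverings.
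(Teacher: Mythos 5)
Your overall strategy is the one the paper tacitly intends: identify $H^0(\UU;\GL_n(\Aa))$ with $\GL_n(\Ra)$, splice the proposition into assertion (i) of the lemma for the first arrow, and use injectivity of the \v{C}ech-to-sheaf map for the second. That assembly is correct; and for the second arrow you in fact need no appeal to goodness, since $H^1(\UU;\cdot)\to H^1(\Eq;\cdot)$ is injective for any open covering and any sheaf of (possibly nonabelian) groups, which is exactly the paper's second bullet about $\GL_n(\Aa)$ and applies verbatim to $\Lambda_I$.

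There is, however, a genuine error in your justification of the key identification $H^0(\Eq;\Aa)=\Ra$. You assert that a $\Sigma$-invariant open set $U\subset\C^*$ with $p(U)=\Eq$ is ``$q^\Z$-invariant''. This is not what the definitions of subsection \ref{subsection:qAsymptotics} give: $U$ is only assumed stable under the semi-group $\Sigma=q^{-\N}$, i.e.\ $q^{-1}U\subset U$. (Stability under all of $q^\Z$ together with $p(U)=\Eq$ would force $U=\C^*$, which is not the general situation.) The conclusion you want --- that $U$ contains a full punctured disc --- is true, but it requires a compactness argument, not the false invariance: for each $z$ in the compact annulus $\{1\le\lmod z\rmod\le\lmod q\rmod\}$, since $p(U)=\Eq$ and $U$ is $\Sigma$-stable there is some $k\le 0$ with $q^k z\in U$, hence (by openness) an open $V_z\ni z$ with $q^j V_z\subset U$ for all $j\le k$; covering the annulus by finitely many $V_{z_1},\dots,V_{z_l}$ and setting $K:=\min_i k_i$ gives $\{0<\lmod z\rmod\le\lmod q\rmod^{K+1}\}\subset U$. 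With this repair the rest of your argument is sound: $\{0<\lmod z\rmod\le\epsilon\}$ is a $\Sigma$-invariant strict subset of $U$ (it equals $\Sigma(K')$ for $K'=\{\epsilon/\lmod q\rmod\le\lmod z\rmod\le\epsilon\}$), the $n=0$ case of the $q$-asymptotic bound shows $f$ is bounded there, Riemann's removable singularity theorem gives $f\in\Ra$, and one concludes $H^0(\UU;\GL_n(\Aa))=\GL_n(\Ra)$ as required.
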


We will see later (subsection \ref{subsection:theformaltrivialisation})
that these maps are bijections.


\subsection{Geometric interpretation of the elements
of  $H^1(\Eq;\Lambda_I)$}
\label{subsection:geometricinterpretation}

We need some results on regularly separated subsets of
the euclidean space $\R^2$. We first recall some definitions.

If $X \subset \R^n$ is a closed subset of the euclidean
space $\R^n$, we denote by $\Ee(X)$ the algebra
\label{not53}
of $\mathcal{C}^{\infty}$ functions in Whitney sense on $X$
(\cf\ \cite{Ma1}).

\index{Whitney extension theorem}
We recall the \emph{Whitney extension theorem}: if $Y \subset X$
are closed subset of $\R^n$, the restriction map
$\Ee(X) \rightarrow \Ee(Y)$ is surjective.

Let $A,B$ be two closed sets of $\R^n$; we define the following maps:
$$
\alpha: f \in \Ee(A \cup B) \mapsto
(f_{\vert A},f_{\vert B}) \in \Ee(A) \oplus \Ee(B),
$$
$$\beta:  (f,g)\in \Ee(A) \oplus \Ee(B) \mapsto
f_{\vert A\cap B} - g_{\vert A\cap B} \in \Ee(A \cap B),
$$
and we consider the sequence:

\begin{equation*}
0 \rightarrow \Ee(A\cup B) \overset{\alpha}{\rightarrow}
\Ee(A) \oplus \Ee(B) \overset{\beta}{\rightarrow}
\Ee(A \cap B)\rightarrow 0.
\end{equation*}

\begin{defi}
Two subsets $A$ and $B$ of the euclidean space $\R^n$
are regularly separated if the above sequence is exact.
\end{defi}

The following result is due to \L ojasiewicz (\cf\ \cite{Ma1}).

\begin{prop}
\label{prop:regsep}
Let $A$,$B$ be two closed subsets $A$ and $B$ of the euclidean
space $\R^n$; the following conditions are equivalent:
\begin{itemize}
\index{regularly separated subsets}
\item[(i)]{$A$ and $B$ are regularly separated;}
\item[(ii)]{$A \cap B = \emptyset$ or for all $x_0\in A\cap B$,
there exists a neighborhood $U$ of $x_0$ and constants $C > 0$,
$\lambda > 0$ such that for all $x\in U$:
$$
d(x,A) + d(x,B) \geq C d(x,A \cap B)^{\lambda};
$$}
\item[(iii)]{$A \cap B = \emptyset$ or for all $x_0\in A\cap B$,
there exists a neighborhood $U$ of $x_0$ and a constant $C'>0$
such that for all $x\in A\cap U$:
$$
d(x,B) \geq C' d(x,A \cap B)^{\lambda}.
$$}
\end{itemize}
\end{prop}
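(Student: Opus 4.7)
The plan is to first dispatch the easy equivalence (ii) $\Leftrightarrow$ (iii) by elementary distance arguments, and then focus on the content of the proposition, namely (i) $\Leftrightarrow$ (ii), which is the classical Łojasiewicz theorem and rests on Whitney's extension theorem.

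For (ii) $\Rightarrow$ (iii) there is nothing to do: restrict the inequality to $x \in A$, where $d(x,A) = 0$. For (iii) $\Rightarrow$ (ii), given $x$ in a small neighborhood $U$ of $x_0 \in A \cap B$, I would pick $a \in A$ realizing $d(x,a) = d(x,A)$ (possible after replacing $A$ by $A \cap \overline{U}$) and apply (iii) at $a$. Triangle inequalities yield $d(a,B) \leq d(x,A) + d(x,B)$ and $d(a, A \cap B) \geq d(x, A \cap B) - d(x,A)$. Splitting into the cases $d(x,A) \geq \frac{1}{2} d(x, A \cap B)$ and $d(x,A) < \frac{1}{2} d(x, A \cap B)$, and enlarging $\lambda$ so that $\lambda \geq 1$ (which is harmless once $U$ is chosen so that $d(x, A\cap B) \leq 1$), I would recover (ii) with a suitable constant.

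The heart of the proof is (ii) $\Rightarrow$ (i). Injectivity of $\alpha$ is trivial (a Whitney function is determined by its jets on the smaller set), and surjectivity of $\beta$ is immediate from Whitney's extension theorem: an element $h \in \Ee(A \cap B)$ extends to $\tilde{h} \in \Ee(\R^n)$, and then $\beta(\tilde{h}_{\vert A}, 0) = h$. The delicate point is exactness at $\Ee(A) \oplus \Ee(B)$. Given $(f,g)$ with $f_{\vert A \cap B} = g_{\vert A \cap B}$, I would extend $f,g$ to $\tilde{f}, \tilde{g} \in \mathcal{C}^{\infty}(\R^n)$ via Whitney, then glue by a smooth cutoff $\phi$ constructed from a smooth function of the ratio $d(\cdot,B)/(d(\cdot,A) + d(\cdot,B))$; this $\phi$ satisfies the standard estimate $\lmod D^k \phi(x) \rmod \lesssim (d(x,A) + d(x,B))^{-k}$ outside $A \cap B$. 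Setting $F := \phi \tilde{f} + (1 - \phi) \tilde{g}$, the only nontrivial verification is that $F \in \Ee(A \cup B)$ at points of $A \cap B$. Here the Łojasiewicz inequality (ii) comes in: derivatives of $F - \tilde{g}$ are controlled by products of $(d(x,A)+d(x,B))^{-k}$ with derivatives of $\tilde{f} - \tilde{g}$, and the latter are flat to infinite order on $A \cap B$ (since $f$ and $g$ have the same Whitney jet there), so they decay faster than any power of $d(\cdot, A \cap B)$. The bound $d(x,A \cap B)^{\lambda N} \lesssim (d(x,A) + d(x,B))^N$ then forces all derivatives of $F$ to extend continuously and flatly across $A \cap B$, giving the Whitney estimates.

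Finally, for (i) $\Rightarrow$ (ii) I would argue by contrapositive: assuming (ii) fails one can produce a sequence $x_n \to x_0 \in A \cap B$ with $d(x_n,A) + d(x_n,B)$ decaying faster than any power of $d(x_n, A \cap B)$, and use it to cook up compatible Whitney jets $f \in \Ee(A)$, $g \in \Ee(B)$ coinciding on $A \cap B$ but admitting no common extension (the tangential approach of $A$ and $B$ to $A \cap B$ obstructs the derivatives along the complementary directions). The main obstacle will clearly be the gluing estimate in (ii) $\Rightarrow$ (i); once the Łojasiewicz inequality is available, combining it cleanly with the infinite-order flatness of $\tilde{f} - \tilde{g}$ along $A \cap B$ to verify \emph{all} Whitney conditions simultaneously is exactly where the analytic work sits.
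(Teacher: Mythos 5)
The paper does not actually prove Proposition~\ref{prop:regsep}: it states it as a known result of {\L}ojasiewicz and refers the reader to Malgrange's book (the reference \cite{Ma1}). There is therefore no ``paper's proof'' to compare your sketch against; you have independently outlined the classical argument, and I will comment on its soundness.

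Your treatment of (ii)~$\Leftrightarrow$~(iii) is correct, including the harmless enlargement of $\lambda$ to make $\lambda\geq 1$ and the device of projecting $x$ to a nearest point $a\in A$ to reduce to (iii). Your (ii)~$\Rightarrow$~(i) sketch is also the standard route (Whitney extension of $f,g$, gluing by a cutoff, control of the derivatives via the {\L}ojasiewicz inequality combined with infinite-order flatness of $\tilde f-\tilde g$ along $A\cap B$), and the estimate you isolate at the end --- that $(d(\cdot,A)+d(\cdot,B))^{-k}$ is dominated by a negative power of $d(\cdot,A\cap B)$ which the flatness of $\tilde f-\tilde g$ beats --- is exactly the crux. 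One technical point you gloss over: $d(\cdot,A)$ and $d(\cdot,B)$ are only Lipschitz, not $\mathcal{C}^\infty$, so the cutoff $\phi$ must be built from Whitney-type regularized distances $\delta_A,\delta_B$ satisfying $\delta_A\asymp d(\cdot,A)$ and $\lmod D^k\delta_A\rmod \lesssim d(\cdot,A)^{1-k}$; with that in place your derivative estimates go through.

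The direction (i)~$\Rightarrow$~(ii) is where the real gap sits. You describe it in a single sentence (``cook up compatible Whitney jets\dots admitting no common extension'') and do not carry out the construction; moreover you suggest in your closing remark that (ii)~$\Rightarrow$~(i) is the main obstacle, whereas (i)~$\Rightarrow$~(ii) is where the substance of {\L}ojasiewicz's theorem lies. To make it precise one has to choose, from the failing sequence $x_n\to x_0$, nearest points $a_n\in A$ and $b_n\in B$ with $\lmod a_n-b_n\rmod$ decaying faster than every power of $d(x_n,A\cap B)$, then build a jet $g\in\Ee(B)$ flat on $A\cap B$ but with values $g(b_n)$ chosen at an intermediate rate (for instance $\lmod a_n-b_n\rmod^{1/2}$), so that $(0,g)$ lies in the kernel of $\beta$ yet the mean-value inequality forbids any $\mathcal{C}^1$ function on $A\cup B$ from jumping that fast between $a_n$ and $b_n$. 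Verifying that such a $g$ is a genuine Whitney jet on $B$ (not only pointwise values but the full family of derivative data satisfying the Whitney conditions) is the delicate step and is entirely absent from the proposal. So the outline is a faithful account of the classical proof, but (i)~$\Rightarrow$~(ii) would need to be filled in before this could count as a proof rather than a sketch.
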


If we can choose $\lambda = 1$ in the above condition(s), we will
say that the closed subsets $A$ and $B$ are \emph{transverse}.
\index{transverse subsets}
Transversality is in fact a {\it local} condition.

\begin{lemm}
\label{lemm:transverse}
Let $K_1$, $K_2$ be two closed small parallelograms of $\Eq$.
The sets $p^{-1}(K_1)$ and  $p^{-1}(K_2)$ are transverse.
\end{lemm}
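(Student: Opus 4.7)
The plan is to reduce the claim, via the local diffeomorphism $p_0 \colon \R^2 \rightarrow \C^*$, to the standard fact that two closed convex polyhedra in euclidean space are transverse in the sense of \L ojasiewicz with exponent $\lambda = 1$.

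First, I would observe that the transversality condition of proposition \ref{prop:regsep}(iii) is purely local: it need only be verified in a neighborhood of each point of $p^{-1}(K_1) \cap p^{-1}(K_2)$. Since $p_0 \colon \R^2 \rightarrow \C^*$ is a surjective local diffeomorphism, around any $x_0 \in \C^*$ one may choose a small neighborhood $W \subset \C^*$ and a $\mathcal{C}^{\infty}$ local inverse $q_0 \colon W \rightarrow \tilde W \subset \R^2$ of $p_0$. This $q_0$ is bi-Lipschitz, so transversality of $p^{-1}(K_1)$ and $p^{-1}(K_2)$ near $x_0$ in $\C^*$ is equivalent to transversality of their images $q_0(p^{-1}(K_i) \cap W)$ inside $\tilde W$.

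Second, I would describe these preimages concretely. Writing $K_i = (p \circ p_0)(\tilde K_i)$ with $\tilde K_i \subset \R^2$ a closed rectangle of side lengths $< 1/4$, one has $(p \circ p_0)^{-1}(K_i) = \tilde K_i + \Z^2$. The smallness assumption ensures that distinct $\Z^2$-translates of a given $\tilde K_i$ are separated by at least $3/4$ in at least one coordinate, so after shrinking $\tilde W$ if needed, each set $q_0(p^{-1}(K_i) \cap W)$ coincides inside $\tilde W$ with a single translate $\tilde K_i + \alpha_i$, $\alpha_i \in \Z^2$, i.e.\ a single closed convex rectangle.

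The question thus reduces to showing: any two closed convex polyhedra $A, B \subset \R^2$ with $A \cap B \neq \emptyset$ are transverse with exponent $\lambda = 1$. Near any point $x_0 \in A \cap B$, the sets $A$ and $B$ are respectively the intersection of finitely many closed affine half-spaces $\{\ell_i \leq 0\}_{i \in I}$ and $\{m_j \leq 0\}_{j \in J}$ (restricting to constraints that remain locally active). Standard polyhedral estimates give $d(x, A)$ comparable, up to a multiplicative constant, to $\max_i \max(\ell_i(x), 0)$ on a neighborhood of $x_0$, and similarly for $B$ with the $m_j$; while conversely $d(x, A \cap B) \leq C \bigl(\max_i \max(\ell_i(x), 0) + \max_j \max(m_j(x), 0)\bigr)$. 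The main obstacle is this last upper bound on $d(x, A \cap B)$: it is obtained by projecting $x$ onto the tangent cone of $A \cap B$ at $x_0$, exploiting the fact that a finite system of affine inequalities admits a solution whose distance to $x$ is controlled linearly by the maximal violation of these inequalities (a finite-dimensional version of Hoffman's lemma on polyhedra). Combining the two estimates yields $d(x, A) + d(x, B) \geq C' \, d(x, A \cap B)$ on a neighborhood of $x_0$, which is the desired transversality condition with $\lambda = 1$, and concludes the proof.
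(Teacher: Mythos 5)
Your proof is correct and follows essentially the same route as the paper's: pull back through the local diffeomorphism $p_0$ to reduce the statement to transversality of pairs of closed rectangles in $\R^2$, using the smallness of the parallelograms to separate distinct $\Z^2$-translates. The main difference is that the paper dispatches the final step (transversality with exponent $1$ of two rectangles in the plane) with the phrase ``merely deduced,'' whereas you supply a genuine argument via tangent cones and a Hoffman-type bound for finite systems of affine inequalities; this actually fills in a detail the paper omits. One minor divergence: the paper reduces to a fundamental annulus by exploiting the $q$-invariance of $x \mapsto \frac{d(x,A)+d(x,B)}{d(x,A\cap B)}$, while you rely directly on the local nature of condition (iii) in proposition \ref{prop:regsep}; both are sound, and your version avoids the scaling computation altogether.
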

\begin{proof}
Using the map $p_0$ (introduced at the beginning of \ref{subsection:glue}),
we set:
\begin{align*}
A := p^{-1}(K_1) \subset \C^*
& \text{~and~} 
A_0 := p_0^{-1}(A) = (p \circ p_0)^{-1}(K_1) \subset \R^2, \\
B := p^{-1}(K_2) \subset \C^*
& \text{~and~} 
B_0 := p_0^{-1}(B) = (p \circ p_0)^{-1}(K_2) \subset \R^2.
\end{align*}
Since $K_1$ and $K_2$ are small parallelograms, both $A_0$
and $B_0$ are formed of a family of disjoint rectangles,
that is to say:
$$
A_0 = \bigcup_{m,n\in\Z} A_{m,n} \text{~and~} B_0 =\bigcup_{m,n\in\Z} B_{m,n},
$$
where the families $(A_{m,n})_{m,n\in\Z}$, $(B_{m,n})_{m,n\in\Z}$ are
such that $d(A_{m,n}, A_{m',n'})\ge 3/4$ and $d(B_{m,n},B_{m',n'})\ge 3/4$
for any $(m',n') \not= (m,n)$. We only need to deal with the case
when $A  \cap B \not= \emptyset$, that is, when there exists $(m,n)$,
$(m',n')$ such that $A_{m,n} \cap B_{m',n'}\not=\emptyset$.

Consider the function $f$ defined over $\C^* \setminus A \cap B$ by
$$
f(x)=\frac{d(x,A)+d(x,B)}{d(x,A\cap B)}
$$
and observe that $f$ is $q$-invariant; therefore, it is enough to
prove the following statement: for any $x_0 \in  A \cap B$ such that
$1 < \lmod x_0 \rmod \le  \lmod q \rmod$, there exists an open disk
$D(x_0,r)$, $r>0$, and a constant $C>0$ such that $f(x)\ge C$ for
all $x\in D(x_0,r) \setminus (A \cap B)$.

On the other hand, the mapping $p_0$ is a local diffeomorphism from $\R^2$
onto $\C^*$ inducing a local equivalence between the natural metrics.
Therefore the proof of the lemma boils down to the transversality
between $A_0$ and $B_0$, which is merely deduced from the transversality
of each pair of rectangles $(A_{m,n},B_{m',n'})$ of the euclidean plane.
\end{proof}

Let $\gamma \in H^1(\Eq;\Lambda_I)$, we can represent
it by an element $g\in Z^1(\UU;\Lambda_I)$
where $\UU$ is a good open covering  of $\Eq$.
We can suppose that all the parallelograms of the covering
are small. We will associate to $g$ a germ of fibered
space $\bigl(M_g,\pi,(\C,0)\bigr)$. We will see that
if we change the choice of $g$, then these germs of
linear fibered space correspond by a canonical isomorphism.
 This construction mimicks a construction of \cite{MR}
inspired by an idea of Malgrange (\cf\ \cite{Ma2}), it is
a central point for the proof of one of our main results. \\

Let $g \in Z^1(\UU;\Lambda_I)$. We interpret
the $g_i$ as germs of holomorphic functions on the germ
of $q^{-\N}$-invariant open set defined by $U_i$, that is
the germ of $p^{-1}(U_i) \cap D$ ($D$ being an open disk
centered at the origin). For sake of simplicity we will
denote also by $U_i$ this germ.  We consider the disjoint
sum $\displaystyle \prod_{i\in I} U_i \times \C^n$ and (using
representatives of germs) we identify the points
$(x,Y) \in U_i \times \C^n$ and $(x,Z) \in U_j \times \C^n$
if $x \in U_{ij}$ and $Z =g_{ij}(x)(Y)$ (we verify that we have
an equivalence relation using the cocycle condition),
we denote the quotient by ${\mathbf M}_g$. If $x = 0$, $g_i(0)$ is
the identity of $\C^n$ therefore the quotient ${\mathbf M}_g$
is a germ of topological space along $\{0\} \times \C^n$.
\label{not54}

\begin{lemm}
\begin{itemize}
\item[(i)]
The projections $U_i \times \C^n \rightarrow U_i$
induce a germ of continuous fibration $\pi: {\mathbf M}_g\rightarrow (\C,0)$,
the fiber $\pi^{-1}(0)$ is identified with $(\C^n,0)$.
\item[(ii)]
The germ $\overset{\bullet}{\mathbf M}_g := {\mathbf M}_g \setminus \pi^{-1}(0)$ admits
a natural structure of germ of complex manifold of
dimension $n+1$, it is the unique structure such that
the natural injections
$(U_i \setminus \{0\}) \times \C^n \rightarrow \overset{\bullet}{\mathbf M}_g$
are holomorphic. The restriction of $\pi$ to $\overset{\bullet}{\mathbf M}_g$
is holomorphic.
\end{itemize}
\end{lemm}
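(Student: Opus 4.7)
The plan is a standard gluing construction: I would verify that the cocycle $(g_{ij}) \in Z^1(\UU;\Lambda_I)$ defines an equivalence relation on $\coprod_i U_i \times \C^n$ compatible with the first projection, and then transport the ambient topology and (away from $0$) the complex structure to the quotient ${\mathbf M}_g$. The verification of reflexivity, symmetry and transitivity is immediate from $g_{ii}=I_n$, $g_{ji}=g_{ij}^{-1}$, and the cocycle relation $g_{ij}g_{jk}=g_{ik}$ on $U_{ijk}$ (only triples of $U_i$'s meet, by goodness of the covering). Since the equivalence preserves the first coordinate, it induces a well-defined map $\pi: {\mathbf M}_g \to (\C,0)$.

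For (i), the key observation I would exploit is that, for $g_{ij}\in \Lambda_I$, the entries of $g_{ij}-I_n$ lie in $\Aa_0(U_{ij,\infty})$, hence extend continuously to $0$ with value $0$; in particular $g_{ij}$ extends continuously across $0$ with $g_{ij}(0)=I_n$. The transition maps $(x,Y)\mapsto (x,g_{ij}(x)Y)$ are therefore continuous bijections (with continuous inverse provided by $g_{ji}$) between open subsets of the $U_i\times\C^n$, so ${\mathbf M}_g$ inherits a quotient topology for which each $U_i\times\C^n \to {\mathbf M}_g$ is a local homeomorphism onto its image and $\pi$ is continuous. Over $x=0$ all identifications collapse to the identity, so the various $\{0\}\times\C^n$ glue tautologically, yielding $\pi^{-1}(0)\simeq (\C^n,0)$.

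The hard part, I expect, will be verifying that ${\mathbf M}_g$ is Hausdorff. The idea is to exhibit, for any pair of inequivalent points in $\coprod_i U_i\times\C^n$, saturated open neighbourhoods that remain disjoint. Points projecting to distinct base points in $\C$ are separated by pullbacks through $\pi$ of disjoint open sets, using continuity of $\pi$. For points in the same fibre over some $x\neq 0$, one shrinks a product neighbourhood in one chart and transports it through the $g_{ij}$; here goodness of the covering (parallelograms small and four-by-four disjoint) together with lemma \ref{lemm:transverse} provide the needed control on overlaps. For the fibre over $0$, the continuous extension $g_{ij}(0)=I_n$ reduces the question to the obvious Hausdorffness of $\C^n$.

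Finally, for (ii), I would restrict to $\overset{\bullet}{\mathbf M}_g := {\mathbf M}_g\setminus\pi^{-1}(0)$. There each $g_{ij}$ is a genuine holomorphic $\GL_n$-valued function on $U_{ij}\setminus\{0\}$, so the transition maps become biholomorphisms between open subsets of the complex manifolds $(U_i\setminus\{0\})\times\C^n$. Declaring the natural injections to be holomorphic charts produces an atlas giving $\overset{\bullet}{\mathbf M}_g$ the structure of a complex manifold of dimension $n+1$, in which $\pi$ is holomorphic because it is the first projection in each chart. Uniqueness is then clear: the injections are open embeddings whose images cover $\overset{\bullet}{\mathbf M}_g$, so any holomorphic structure for which they are holomorphic must coincide, on overlaps, with the one prescribed by the $g_{ij}$.
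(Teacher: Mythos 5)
Your proof is correct and follows the natural gluing argument; the paper itself simply states ``The proof of the lemma is easy'' and provides no details, so there is no alternate route to compare against.

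One small point worth flagging: your appeal to lemma \ref{lemm:transverse} in the Hausdorffness discussion is not actually needed (nor, I think, apt). That lemma establishes the \L ojasiewicz-type metric transversality of the closed sets $p^{-1}(K_1)$, $p^{-1}(K_2)$, which the paper uses in the \emph{next} lemma to glue $\mathcal{C}^{\infty}$ functions in the Whitney sense via the Whitney extension theorem. For topological separation here, the relevant observation is the one you already make: each $g_{ij}$ is continuous on $U_{ij}$ and extends continuously across $0$ with $g_{ij}(0)=I_n$, since $g_{ij}-I_n$ has entries in $\Aa_0$. Given two unidentified points $(x_0,Y_0)\in U_i\times\C^n$ and $(x_0,Z_0)\in U_j\times\C^n$, one has $Z_0\neq g_{ij}(x_0)Y_0$, and continuity of $g_{ij}$ yields product neighbourhoods whose saturations stay disjoint; the case $x_0=0$ is trivial because the identification there is the identity. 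Beyond that, your argument correctly identifies the two key facts — flatness forces $g_{ij}\to I_n$ at $0$ for part (i), and holomorphy of the $g_{ij}$ away from $0$ gives the complex atlas for part (ii) — so the proof stands.
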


The proof of the lemma is easy.

\begin{lemm}
There exists on ${\mathbf M}_g$ a unique structure of germ
of differentiable manifold ($\mathcal{C}^{\infty}$)
such that the natural injections
$(U_i\setminus \{0\}) \times \C^n \rightarrow \overset{\bullet}{\mathbf M}_g$
are $\mathcal{C}^{\infty}$ (in Whitney sense).
The induced structure on $\overset{\bullet}{\mathbf M}_g$ is the underlying
structure of the holomorphic structure on $\overset{\bullet}{\mathbf M}_g$.
The map $\pi$ is $\mathcal{C}^{\infty}$ of rank one,
$\bigl({\mathbf M}_g,\pi,(\C,0)\bigr)$ is a germ of vector bundle.
\end{lemm}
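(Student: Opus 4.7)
The plan rests on the Whitney characterization in proposition \ref{prop:asymptotiqueetWhitney}: sections of $\Aa_0$ over a $\Sigma$-invariant open set $U$ are exactly the germs of holomorphic functions whose restriction to $K \cup \{0\}$ (for each stable strict $K \subset U$) is Whitney $\mathcal{C}^\infty$ and whose Taylor jet at $0$ vanishes. Since each $g_{ij} \in \Lambda_I(U_{ij})$ has the form $I_n + H_{ij}$ with $H_{ij} \in \Mat_n(\Aa_0(U_{ij,\infty}))$, its entries extend to $K \cup \{0\}$ as Whitney $\mathcal{C}^\infty$ functions flat to infinite order at $0$, and $g_{ij}$ itself extends there with value $I_n$ at $0$.

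To build the structure, I would first refine $\UU$ to a good covering by \emph{closed} small parallelograms $K_i \subset U_i$. By lemma \ref{lemm:transverse} and proposition \ref{prop:regsep}, the preimages $p^{-1}(K_i)$ are pairwise transverse in $\C$, hence regularly separated. Applying the Whitney extension theorem to the jets of the $g_{ij}$ on $p^{-1}(K_{ij}) \cup \{0\}$, I would extend each $g_{ij}$ to a $\mathcal{C}^\infty$ map $\tilde g_{ij}$ on a full neighborhood of $0$ in $\C$, with $\tilde g_{ij}(0) = I_n$ and $\tilde g_{ij} - I_n$ flat at $0$. The Whitney $\mathcal{C}^\infty$ atlas on $\mathbf{M}_g$ is then given by the injections $(K_i \cup \{0\}) \times \C^n \hookrightarrow \mathbf{M}_g$: on overlaps the transitions read $(x,Y) \mapsto (x, \tilde g_{ij}(x) Y)$, which are Whitney $\mathcal{C}^\infty$ and reduce to the identity at $x = 0$ to infinite order, so that the cocycle relation (which holds exactly on $\overset{\bullet}{\mathbf M}_g$ by the previous lemma) propagates across the zero fibre in the Whitney sense.

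The remaining assertions then follow quickly. Uniqueness: imposing that the above charts be Whitney $\mathcal{C}^\infty$ completely determines the structure, and two different choices of extensions $\tilde g_{ij}$ yield equivalent atlases since they differ by functions flat at $0$. Agreement with the holomorphic structure on $\overset{\bullet}{\mathbf M}_g$ is immediate, as off the zero fibre the $\tilde g_{ij}$ coincide with the original holomorphic $g_{ij}$. The map $\pi$ reads in every chart as the first projection $(K_i \cup \{0\}) \times \C^n \to K_i \cup \{0\}$, hence it is Whitney $\mathcal{C}^\infty$ and of (complex) rank one at every point. Finally, the transitions $\tilde g_{ij}(x) \in \GL_n(\C)$ act linearly on the fibre $\C^n$, so the atlas is a system of vector bundle trivialisations and $(\mathbf{M}_g, \pi, (\C,0))$ is a germ of vector bundle of rank $n$.

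The main obstacle is the coherence of the Whitney-smooth patching at the origin: different choices of Whitney extensions of the $g_{ij}$ must yield the \emph{same} germ of $\mathcal{C}^\infty$ structure near $\pi^{-1}(0)$, and the extended transitions must still satisfy the cocycle relation in the Whitney sense on threefold overlaps. This is exactly where the transversality of the $p^{-1}(K_i)$ established in lemma \ref{lemm:transverse} is indispensable, as it allows the Whitney extension apparatus to operate coherently on the unions $p^{-1}(K_i) \cup p^{-1}(K_j) \cup p^{-1}(K_\ell)$ meeting at $0$.
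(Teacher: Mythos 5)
Your proposal identifies the right ingredients (proposition \ref{prop:asymptotiqueetWhitney}, passage to a closed covering, the transversality lemma \ref{lemm:transverse}, the Whitney extension theorem), but it misses the central construction and, because of that, misuses the transversality. The gap is this: you propose to extend each transition $g_{ij}$ separately to a map $\tilde g_{ij}$ on a full neighborhood of $0$, and then to declare that the injections $(K_i\cup\{0\})\times\C^n \hookrightarrow \mathbf{M}_g$ form a ``Whitney $\mathcal{C}^\infty$ atlas''. But the sets $K_i\cup\{0\}$ are closed, not open, so this is not an atlas in the sense required to exhibit a germ of $\mathcal{C}^\infty$ manifold; and the extensions $\tilde g_{ij}$ cannot be used to build a vector bundle over an open disc, because they fail the cocycle relation $\tilde g_{ij}\tilde g_{jk}=\tilde g_{ik}$ off the closed overlaps (the relation only ``propagates by continuity'' on $p^{-1}(K_{ijk})\cup\{0\}$, which is exactly where you already knew it). The closing appeal to transversality (``allows the Whitney extension apparatus to operate coherently on the unions \ldots'') is not an argument: it names the tool without saying where it is applied.

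What the paper actually does is construct, for each point $m_0=(0,Y_0)$ of the zero fibre, genuine local coordinate functions $\eta_1,\ldots,\eta_n$ (together with $\pi$) lying in the ring $\Ee_{m_0}$ of germs compatible with the glueing. Each $\eta_h$ is built by an induction over the covering: start with $f_i:=y_h$ on $V_i\times\C^n$; on $V_j$, the constraint $f_j\circ g_{ji}=f_i$ determines $f_j$ on $V_{ij}\times\C^n$ and Whitney's extension theorem yields $f_j$ on $V_j\times\C^n$; on $V_k$, the two constraints coming from $V_{ik}$ and $V_{jk}$ agree on $V_{ijk}$ by the cocycle relation, and it is \emph{precisely here} that regular separatedness (via lemma \ref{lemm:transverse} and proposition \ref{prop:regsep}) is used --- it guarantees that the two Whitney jets on $V_{ik}\times\C^n$ and $V_{jk}\times\C^n$ patch to a single Whitney function on $(V_{ik}\cup V_{jk})\times\C^n$, which is then Whitney-extended to $V_k\times\C^n$. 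The hypothesis that the good covering is four-by-four disjoint caps this induction at the triple stage: for a fourth index $h$ one has $V_{ijkh}=\emptyset$, so the compatibility constraints never involve more than pairwise gluings. The resulting $(\eta_1,\ldots,\eta_n,\pi)$ is a genuine local coordinate system (it is a local biholomorphism off $\pi^{-1}(0)$ and has identity Jacobian at $m_0$ since the $g_{ij}$ are $I_n$ at $0$), and the $\mathcal{C}^\infty$ structure is pulled back through it; uniqueness follows because the structure is already fixed on $\overset{\bullet}{\mathbf M}_g$. Your argument does not reach this point: extending the $g_{ij}$ individually does not produce coordinate functions, and it does not expose the role of transversality.
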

\begin{proof}
This lemma is the central point. The proof uses
the Whitney extension theorem and the notion of
regularly separated closed sets (\cf\ above).

We replace the good open covering $(U_i)_{i\in I}$ by
a good closed covering $(V_i)_{i\in I}$ with $V_i\subset U_i$.
We interpret the $V_i$ as $q^{-\N}$-invariant germs of
compact sets $p^{-1}(V_i) \cap \bar D \subset \C^*$
(we take $\bar D :=\bar D(r)$, $r>0$). The differentiable
functions on $V_i \times \C^n$ are by definition the
$\mathcal{C}^{\infty}$ functions in Whitney sense.
We can built a germ of set as a quotient of
$\displaystyle \prod_{i\in I} V_i \times \C^n$
using $g$ as above, the natural map from
this quotient to ${\mathbf M}_g$ is an homeorphism and
we can identify these two sets. If we consider
a representative of the germ ${\mathbf M}_g$ and $x \in \C^*$
sufficiently small, there exists a structure of
differentiable manifold along $\pi^{-1}(x)$ and
it satisfies all the conditions. Let $Y_0\in\C^n$,
we set $m_0=(0,Y_0)$ and we denote by $\Ee_{m_0}$
the ring of germs of real functions on ${\mathbf M}_g$
represented by germs at $m_0\in V_i \times \C^n$ of
functions $f_i$ compatible with glueing applications,
that is $f_j \circ g_{ji}=f_i$.

We will prove that each cordinate $y_h$ ($h =1,\ldots,n$)
on $\C^n$ extends in an element $\eta_h$ of $\Ee_{m_0}$.
It suffices to consider $y_1$.

We start with $i\in I$ We extend $y_1$ in a $\mathcal{C}^{\infty}$
function $f_i$ on $V_i \times \C^n$ (we can choose $f_i = y_1$).

Let $j \in I$, $j \neq i$. The glueing map $g_{ij}$ gives
a $\mathcal{C}^{\infty}$ function $h_{j}$ on
$V_{ij} \times \C^n \subset V_j \times \C^n$, using Whitney
extension theorem, we can extend it in a $\mathcal{C}^{\infty}$
function $f_j$ on $V_j\times\C^n$.

Let $k \in I$, $k \neq i,j$. The glueing maps $g_{ik}$ and $g_{jk}$
give  functions $h_{ik}$ and  $h_{jk}$ respectively on
$V_{ik} \times \C^n \subset V_k \times \C^n$ and
$V_{jk} \times \C^n \subset V_k \times \C^n$;
these functions coincide
on $V_{ijk} \times \C^n \subset V_k \times \C^n$. The closed sets
$V_{ik} \times \C^n$ and $V_{jk} \times \C^n$ are regularly separated
($V_{ik}$ and $V_{jk}$ are, as subsets of $\Eq$, closed parallelograms
and we can apply lemma \ref{lemm:transverse}), therefore
the $\mathcal{C}^{\infty}$ functions $h_{ik}$ and  $h_{jk}$ define
a $\mathcal{C}^{\infty}$ function $h_k$ on
$(V_{ik} \cup V_{jk})\times \C^n \subset V_k \times \C^n$,
using Whitney extension theorem, we can extend it in
a $\mathcal{C}^{\infty}$ function $f_k$ on $V_k \times \C^n$.

Let  $h \in I$, $h \neq i,j,k$, then
$ V_{ih} \cap  V_{jh} \cap  V_{kh} =V_{ijkh} =\emptyset$ and we can do
as in the preceding step, using moreover the fact that to be
$\mathcal{C}^{\infty}$ is a local property.

We can end the proof along the same lines.
\end{proof}

The $n$ functions $\eta_1,\ldots,\eta_m\in \Ee_{m_0}$
and $\pi$, interpreted as an element of  $\Ee_{m_0}$,
define a system of local coordinates on ${\mathbf M}_g$ in a neighborhood
of $m_0$; an element of $\Ee_{m_0}$ is a $\mathcal{C}^{\infty}$
function of these coordinates. Therefore ${\mathbf M}_g$ admits a structure
of differentiable manifold, this structure is fixed on 
$\overset{\bullet}{\mathbf M}_g$,
therefore unique.

\begin{prop}
\label{prop:manifold}
\begin{itemize}
\item[(i)]{The germ of differentiable manifold ${\mathbf M}_g$
admits a unique structure of germ of complex analytic manifold
extending the complex analytic structure of $\overset{\bullet}{\mathbf M}_g$.
For this structure the map $\pi$ is holomorphic and its rank is one,
$\bigl({\mathbf M}_g,\pi,(\C,0)\bigr)$ is a germ of holomorphic
vector bundle.}
\label{not55}
\item[(ii)]{We consider the formal completion $\widehat{\mathbf M}_g$
of ${\mathbf M}_g$ along $\pi^{-1}(0)$ (for the holomorphic structure
defined by (i)) and the formal completion $\hat{\mathbf F}$ of
$(\C,0)\times\C^n$ along $\{0\}\times\C^n$. We denote by
$\hat\pi: \widehat{\mathbf M}_g \rightarrow \bigl((\C,0){\hat\vert}\{0\}\bigr)$
the completion of $\pi$ and by
$\hat\pi_0: \hat{\mathbf F} \rightarrow  \bigl((\C,0){\hat\vert}\{0\}\bigr)$
the natural projection. The formal vector bundle
$\Bigl(\widehat{\mathbf M}_g,\hat\pi,\bigl((\C,0){\hat\vert}\{0\}\bigr)\Bigr)$
is naturally isomorphic to the formal vector bundle
$\bigl(\hat{\mathbf F},\hat\pi_0,((\C,0){\hat\vert}\{0\})\bigr)$.}
\end{itemize}
\end{prop}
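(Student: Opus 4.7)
The plan is to upgrade the Whitney $\mathcal{C}^\infty$ structure on ${\mathbf M}_g$ built in the preceding lemma to a complex analytic structure by transporting the product complex structure of each chart $V_i \times \C^n$ through the Newlander--Nirenberg theorem; the formal identification in (ii) will then drop out of the infinite-order tangency of the cocycle to the identity.

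For part (i), on each Whitney chart $V_i \times \C^n$ take the product complex structure of $\C \times \C^n$. These structures agree on $\overset{\bullet}{\mathbf M}_g$ because the cocycle $g_{ij}$ is holomorphic on $U_{ij} \setminus \{0\}$, and they reproduce there the holomorphic atlas already built. The point is that they continue to agree across the central fibre at the level of almost complex structures: since $g_{ij} - I \in \Mat_n(\Aa_0(U_{ij}))$, proposition \ref{prop:asymptotiqueetWhitney} forces all Whitney derivatives $\partial^\alpha_{z,\bar z}(g_{ij}-I)$ to vanish on $\pi^{-1}(0)$, so the two product almost complex structures coming from neighbouring charts coincide to infinite order along $\pi^{-1}(0)$. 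Patching yields a globally well defined $\mathcal{C}^\infty$ almost complex structure $J$ on ${\mathbf M}_g$. Its Nijenhuis tensor is a continuous tensor field that vanishes identically on the dense open subset $\overset{\bullet}{\mathbf M}_g$, hence vanishes on all of ${\mathbf M}_g$, and the Newlander--Nirenberg theorem then provides a complex analytic atlas realising $J$. Uniqueness of this extension is forced by density of $\overset{\bullet}{\mathbf M}_g$. In the charts, $\pi$ is the first projection, hence holomorphic and submersive of rank one; the fibrewise $\C$-linear structures on $V_i \times \C^n$ patch coherently because $g_{ij}$ restricted to $\pi^{-1}(0)$ is the identity of $\C^n$, giving the germ of holomorphic vector bundle.

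For part (ii), the flatness of $g_{ij} - I$ at $0$ means precisely that the Taylor expansion of $g_{ij}$, regarded as an element of $\GL_n(\Rf)$, equals the identity matrix. Consequently the formal cocycle attached to $g$ on the completions along $\pi^{-1}(0)$ is trivial: each chart $V_i \times \C^n$ identifies, upon formal completion, with the trivial formal bundle $\hat{\mathbf F}$, and these identifications patch consistently. This produces the canonical isomorphism $\widehat{\mathbf M}_g \simeq \hat{\mathbf F}$ intertwining $\hat\pi$ and $\hat\pi_0$.

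The main obstacle is the extension argument in part (i): verifying that the product almost complex structures on the Whitney charts fit together into a globally $\mathcal{C}^\infty$ almost complex structure with vanishing Nijenhuis tensor across $\pi^{-1}(0)$. Both facts reduce to the infinite Whitney-order tangency of $g_{ij}$ to $I$ at $0$ supplied by proposition \ref{prop:asymptotiqueetWhitney}; once this is secured, Newlander--Nirenberg together with continuity of $N_J$ finishes part (i), and the same flatness renders part (ii) essentially tautological.
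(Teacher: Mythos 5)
Your proof is correct and follows essentially the same route as the paper's, which simply invokes the Newlander--Nirenberg theorem and refers to a proof in \cite{MR}, page~77, for the details of part (i), and notes for part (ii) that the formal cocycles $\hat g_{ij}$ are trivial. You spell out the key checks that the reference would supply: the product almost complex structures on the Whitney charts $V_i\times\C^n$ agree across $\pi^{-1}(0)$ because $g_{ij}-I\in\Mat_n(\Aa_0)$ (so the Whitney differential of the gluing map at each point of the central fibre is the identity), the resulting smooth tensor $J$ has Nijenhuis tensor vanishing on the dense open part $\overset{\bullet}{\mathbf M}_g$ hence everywhere, and then Newlander--Nirenberg applies; and the triviality of the formal Taylor jet of $g_{ij}$ at $0$ gives (ii) exactly as the paper indicates.
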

\begin{proof}
\index{Newlander-Niremberg integrability theorem}
The proof of (i) is based on the Newlander-Niremberg
integrability theorem, it is similar, mutatis mutandis
to a proof in \cite{MR}, page 77. The proof of (ii) is
easy (the $\hat g_{ij}$ are equal to identity).
\end{proof}

If $g,g'$ are two representatives of $\gamma$,
the complex analytic manifolds ${\mathbf M}_g$ and ${\mathbf M}_{g'}$
are isomorphic (\cf\ \cite{MR}, page 77)


\subsection{The formal trivialisation of the elements
of $H^1(\Eq;\Lambda_I)$ and the fundamental isomorphism}
\label{subsection:theformaltrivialisation}
\index{formal trivialisation}

Using the geometric interpretation of the cocycles
of $Z^1(\UU;\Lambda_I)$ we will build a map:
$$
Z^1(\UU;\Lambda_I) \rightarrow  \GL_n\bigl(\Rf\bigr)
$$
defined up to composition on the right by an element
of $\GL_n\bigl(\Ra\bigr)$. Hence we will get a map:
$$
H^1(\Eq;\Lambda_I) \rightarrow
\GL_n\bigl(\Rf\bigr)/\GL_n\bigl(\Ra\bigr)
$$
and we will verify that it inverses the natural map:
$$
\GL_n\bigl(\Rf\bigr)/\GL_n\bigl(\Ra\bigr)
\rightarrow H^1(\Eq;\Lambda_I).
$$

We consider the germ of trivial bundle $(\C,0) \times\C^n,\pi_0,(\C,0)$.
We choose a holomorphic trivialisation $H$ of the germ
of fiber bundle $\bigl({\mathbf M}_g,\pi,(\C,0)\bigr)$:
$\pi_0\circ H = \pi$ ($H$ is defined up to composition on
the right by an element of  $\GL_n\bigl(\Ra\bigr)$).
From the proposition \ref{prop:manifold} (ii) and $H$ we get
an \emph{automorphism} $\hat\varphi$ of the formal vector
bundle $\bigl(\hat{\mathbf F},\hat\pi_0,(\C{\hat\vert}\{0\}\bigr)$.
We can interpet $\hat\varphi$ as an element of
$\GL_n\bigl(\Rf\bigr)$, this element
is well defined up to the choice of $H$, that is up
to composition on the left by an element of
$\GL_n\bigl(\Ra\bigr)$. Hence $\hat{\varphi}^{-1}$
is defined up to composition on the right by an element
of $\GL_n\bigl(\Ra\bigr)$. \\

The map:
$$
Z^1(\UU;\Lambda_I) \rightarrow
\GL_n\bigl(\Rf\bigr)/\GL_n\bigl(\Ra\bigr)
$$
induced by $g\mapsto \hat\varphi^{-1}$ induces a map:
$$H^1(\Eq;\Lambda_I) \rightarrow
\GL_n\bigl(\Rf\bigr)/\GL_n\bigl(\Ra\bigr).
$$
We will verify that this map is the inverse of the natural map:
$$
\GL_n\bigl(\Rf\bigr)/\GL_n\bigl(\Ra\bigr)
\rightarrow H^1(\Eq;\Lambda_I).
$$

From the natural injections
$(U_i,0) \times \C^n \rightarrow M_g$ ($i\in I$)
and the trivialisation $H$, we get automorphisms of
the vector bundles $\bigl((U_i,0) \times \C^n \rightarrow (U_i,0)\bigr)$,
that we can interpret as elements
$\varphi_i \in \Gamma\bigl(U_i;\GL_n(\Aa)\bigr)$
compatible with the glueing maps $g_{ij}$, \ie\
$\varphi_i=\varphi_j\circ g_{ji}$.
We have:
$$
g_{ij} = \varphi_i^{-1} \circ \varphi_j =
\varphi_i^{-1} \circ (\varphi_j^{-1})^{-1}\
\text{~and~} \partial(\varphi_i^{-1}) = (g_{ij}).
$$
The element $\hat \varphi_i \in \GL_n\bigl(\Rf\bigr)$
is independant of $i\in I$; we will denote it by $\hat\varphi$. \\

We have $(\varphi_i^{-1}) \in \Cc^0(\UU;\GL_n(\Aa)\bigr)$ (remember that
this space of cochains was defined in subsection \ref{subsection:glue}).
The coboundary of $(\varphi_i^{-1})$ is $g$ and its natural image
in $\GL_n\bigl(\Rf\bigr)$ is $\hat\varphi^{-1}$.
This ends the proof of our contention. \\

It is possible to do the same constructions
replacing $\GL_n(\C)$ by an algebraic
subgroup (vector bundles are replaced by vector
bundles admitting $G$ as structure group).
We leave the details to the reader.

\index{$q$-Birkhoff-Malgrange-Sibuya theorem (first)}
\begin{theo}[First $q$-Birkhoff-Malgrange-Sibuya theorem]
\label{theo:firstqMalgrangeSibuya}
Let  $G$ be an algebraic subgroup of $\GL_n(\C)$.
The natural maps:
$$
\GL_n\bigl(\Rf\bigr)/\GL_n\bigl(\Ra\bigr)
\rightarrow H^1(\Eq;\Lambda_I),
$$
$$
G\bigl(\Rf\bigr)/ G\bigl(\Ra\bigr)\rightarrow H^1(\Eq;\Lambda_I^G)
$$
are bijective.\hfill $\Box$
\end{theo}


\section{The isoformal classification by the cohomology
of the Stokes sheaf}
\label{section:theisoformalclassificationJP}
\index{isoformal classification}
\index{Stokes sheaf (cohomology)}

We will see that the constructions of the preceding paragraph are
compatible with discrete dynamical systems data and deduce a new
version of the analytic isoformal classification. We use notations
from section \ref{section:analyticisoformalclassification}.


\subsection{The second main theorem}
\label{subsection:thesecondmaintheorem}

Before going on, let us make some preliminary remarks. Let
$M_{0} := P_{1} \oplus \cdots \oplus P_{k}$ be a pure module.
Represent it by a matrix $A_0$ with the same form as in
\eqref{eqn:formestandarddiagonaleentiere}. Assume moreover
that $A_{0}$ is in Birkhoff-Guenther normal form as found
in \ref{subsection:BGNF}. \\

We saw in proposition \ref{prop:groupeformelsuranalytique} that
there is a natural bijective mapping between
$\F(M_0) = \F(P_{1},\ldots,P_{k})$ and the quotient
$\G^{A_{0}}(\Kf)/\G(\Ka)$. It also follows from the first part of
\ref{subsection:computnormalform} that this, in turn, is equal
to $\G^{A_{0}}(\Rf)/\G(\Ra)$. Last, the inclusion $\G \subset \GL_n$
induces a bijection from $\G^{A_{0}}(\Rf)/\G(\Ra)$ to the quotient
$\GL^{M_0}_n\bigl(\Rf\bigr)/\GL_n\bigl(\Ra\bigr)$, where we take
\label{not56}
$\GL^{M_0}_n\bigl(\Rf\bigr)$ to denote the set of those formal gauge
transformation matrices that send $A_{0}$ into $\GL_n\bigl(\Ra\bigr)$.

\index{$q$-Birkhoff-Malgrange-Sibuya theorem (second)}
\begin{theo}[Second $q$-Birkhoff-Malgrange-Sibuya theorem]
\label{theo:secondqMalgrangeSibuya}
Let  $M_0$ be a pure module represented by a matrix $A_{0}$ in
Birkhoff-Guenther normal form. There is a natural bijective mapping:
$$
\GL^{M_0}_n\bigl(\Rf\bigr)/\GL_n\bigl(\Ra\bigr)
\rightarrow H^1\bigl(\Eq;\Lambda_I(M_0)\bigr),
$$
whence a natural bijective mapping:
$$
\F(M_0) \rightarrow H^1\bigl(\Eq;\Lambda_I(M_0)\bigr).
$$
\end{theo}
\begin{proof}

\noindent
\underline{\emph{The map $\lambda$}}

Let $\UU :=(U_i)_{i\in I}$ be a good open covering of $\Eq$.

Let $\hat F \in \GL^{M_0}_n \subset \GL_n\bigl(\Rf\bigr)$; by
definition there exists an unique $A \in \GL_n\bigl(\Ra\bigr)$
such that $\hat F[A_0] = A$.

For all $i\in I$, there exists $g_i \in \GL_n\bigl({\mathcal
A}(U_i)\bigr)$ asymptotic to $\hat F$ on $U_i$ (\cf\ theorem
\ref{theo:TFESA}). The coboundary $(g_{ij})$ of the cochain $(g_i)$ 
belongs to $Z^1(\UU,\Lambda_I(M_0))$ (the reason for that is lemma 
\ref{lemm:cocyclasympt} which can be applied because the condition
of good covering implies that the open sets are adequate); it does 
not change if we multiply $\hat F$ on the right by an element of 
$\GL_n\bigl(\Ra\bigr)$. If we change the representatives $g_i$, 
we get another cochain in $Z^1(\UU,\Lambda_I(M_0))$ inducing 
the same element of $H^1(\UU,\Lambda_I(M_0))$. We thus get 
a natural map:
$$
\lambda:\GL^{M_0}_n\bigl(\Rf\bigr)/\GL_n\bigl(\Ra\bigr)
\rightarrow H^1\bigl(\Eq;\Lambda_I(M_0)\bigr).
$$
This map is injective.

\noindent
\underline{\emph{Surjectivity of $\lambda$}}
Let $\UU :=(U_i)_{i\in I}$ be a good open covering of $\Eq$; assume
moreover that the compact covering $(\overline U_i)_{i\in I}$ is good
and that the $U_i$ are small parallelograms.

Let $\gamma \in H^1(\Eq;\Lambda_I(M_0))$; we can represent it by
an element $g \in Z^1(\UU;\Lambda_I(M_0)$, $\UU =(U_i)_{i\in I}$ being
a good open covering  of $\Eq$.  Moreover we can choose $\UU$ such
that the compact covering $(\overline U_i)_{i\in I}$ is good and such
that the $U_i$ are small parallelograms.

As in section \ref{subsection:glue}, using $g$,
we can build a germ of analytic
manifold ${\mathbf M}_g$ and a holomorphic fibration
$\pi:{\mathbf M}_g\rightarrow (\C,0)$. But now there is a new
ingredient: we have a holomorphic automorphism
$\Theta_0:(\C,0)\times\C^n\rightarrow (\C,0)\times\C^n$ of the
trivial bundle $\bigl((\C,0)\times\C^n,\pi_0,(\C,0)\bigr)$ defined
by:
$$
(z,X)\mapsto (qz,A_0 X).
$$
It induces, for all $i\in I$ a holomorphic automorphism of fibre
bundles $\Theta_{0,i}: U_i\times\C^n\rightarrow U_i$.

By definition the $g_{ij}\in \Lambda_I(M_0)(U_{ij})$ commute with
$\Theta_0$, therefore the automorphisms $\Theta_{0,i}$ glue together
into a holomorphic automorphism $\Psi$ of the germ of fibered manifold
$\bigl({\mathbf M}_g,\pi, (\C,\{ 0\})\bigr)$. The map  $\Psi$ is
linear on the fibers and the map on the basis is the germ of
$z\mapsto qz$.

We choose a holomorphic trivialisation $H$ of the germ
of fiber bundle $\bigl({\mathbf M}_g,\pi,(\C,0)\bigr)$:
$\pi_0\circ H = \pi$. The map $\Phi :=H\circ\Psi\circ H^{-1}$
is a holomorphic automorphism of the germ of trivial bundle
$\bigl((\C,0)\times\C^n,\pi_0,(\C,0)\bigr)$, it corresponds
to an element $A\in \GL_n\bigl(\Ra\bigr)$. Using the results
and notations of section \ref{subsection:glue}, we see that there exists
an element $\hat{\varphi} \in \GL_n\bigl(\Ra\bigr)$ such that
$g \mapsto \hat{\varphi}^{-1}$ induces the inverse of the natural map
$\GL_n\bigl(\Rf\bigr)/ \GL_n\bigl(\Ra\bigr)\rightarrow
H^1\bigl(\Eq;\Lambda_I\bigr)$. If we set $\hat F := \hat\varphi^{-1}$,
it is easy to check that $\hat F[A_0]=A$. We have $\hat F \in \GL^{M_0}_n$
and $\lambda(\hat F)=g$, therefore $\lambda$ is surjective.
\end{proof}



\chapter{Summation and asymptotic theory}
\label{chapter:asymptotictheory}

To find analytic solutions having a given formal solution as 
asymptotic expansion is a rather old subject in the theory of
$q$-difference equations: see for instance the paper \cite{Tr}
by Trjitzinsky in 1933. The first author had already suggested
in \cite{RamisPanorama} the use of the gaussian function to
formulate a $q$-analogue for Laplace transform; 
\index{$q$-analogue for Laplace transform}
this allowed to find in \cite{zh1} the 
$Gq$-summation,
\index{$Gq$-summation}
a $q$-analog of the exponential summation method of Borel-Laplace. 
\index{$q$-analog of the exponential summation method of Borel-Laplace}
In \cite{MarotteZhang},
the $Gq$-summation was extended to the case of multiple levels,
and it was shown that any formal power series solution of a linear
equation with analytic coefficients is $Gq$-multisummable, 
and thus can be seen as the asymptotic expansion of an analytical 
solution in a sector with infinite opening in the Riemann surface of 
the logarithm. The work in \cite{rz} \cite{RSZ2}, \cite{zh2}, \cite{zh3}
was undertaken with the goal of obtaining a summation over the
elliptic curve, or a finite covering of it\footnote{To be more precise:
note \cite{RSZ2} can be seen as an abridged version of the present
chapter. It summarizes and completes \cite{rz}, the departure point,
which announces the first part of \cite{zh3}. These first works dealt
with divisors supported at a single point, and \cite{zh2} started the
extension to more general divisors (and more slopes).}.


\section{Some preparatory notations and results} 
\label{section:preparatorymaterial}

In this section, we will introduce some notations related to
elliptic curves $\Eq$ and to Jacobi theta function.
\index{elliptic curve $\Eq$}
\index{Jacobi theta function}
\index{theta function}


\subsection{Divisors  and sectors on 
the elliptic curve $\Eq  =  \C^*/q^\Z$}
\label{subsection:divisorsonEq}

The projection $p: \C^{*} \rightarrow \Eq$ and the discrete logarithmic 
$q$-spiral $[\lambda;q]$ 
\index{$q$-spiral (discrete logarithmic)}
were defined in the general notations, section
\ref{section:generalnotations}. In this chapter, we shall usually 
shorten the latter notation into $[\lambda] := [\lambda;q]$. We call 
\emph{divisor} a finite formal sum of weighted such $q$-spirals:
\index{divisors on $\Eq$}
\label{not57}
$$
\Lambda = \nu_1 [\lambda_1] + \cdots + \nu_m [\lambda_m], 
\text{~where~} \nu_i\in \N^* \text{~and~}
[\lambda_i] \neq [\lambda_j] \text{~if~} i \neq j.
$$
This can be identified with the \emph{effective} divisor
\index{effective divisor}
$\sum \nu_{i} \left[\overline{\lambda_{i}}\right]$ on $\Eq$ 
and, to simplify notations, we shall also write 
$\sum \nu_i [\lambda_i]$ that divisor on $\Eq$. 
The \emph{support} of $\Lambda$ is the union
\index{support of a divisor}
of the $q$-spirals $[\lambda_1],\ldots,[\lambda_m]$. 
We write:
\label{not58}
$$
\lmod \Lambda \rmod := \nu_1  + \cdots + \nu_m 
$$
the \emph{degree} of $\Lambda$ (an integer) and:
\index{degree of a divisor}
$$
\Vert \Lambda \Vert := 
(-1)^{|\Lambda|}\lambda_1^{\nu_1} \cdots \lambda_m^{\nu_m} \pmod{q^\Z}
$$
the \emph{weight} of $\Lambda$, an element of $\Eq$. It is
\index{weight of a divisor}
equal, in additive notation, to $\sum \nu_{i} p(- \lambda_{i})$
evaluated in $\Eq$. \\ 
\label{not59}
For any two non zero complex numbers $z$ and $\lambda$, we put:
$$
d_q(z,[\lambda]) := \inf_{\xi \in [\lambda]} \lmod 1-\frac z\xi \rmod.
$$
This is a kind of ``distance'' from $z$ to the $q$-spiral
$[\lambda]$: one has $d_q(z,[\lambda]) = 0$ if, and only if    
$z \in [\lambda]$. 

\begin{lemm}
\label{lemm:a1}
Let $\Vert q \Vert_1 := \inf\limits_{n\in\Z^*}|1-q^n|$ and 
$M_q := \frac{\Vert q\Vert_1}{2+\Vert q\Vert_1}$. 
Let $a \in \C$ be such that $\lmod 1-a \rmod \leq M_q$. Then: 
$$
d_q(a;[1]) = \lmod 1 - a \rmod.
$$
\end{lemm}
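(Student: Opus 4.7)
The plan is to show that among all $\xi\in[1]=q^{\Z}$ the element $\xi=1$ achieves the infimum in $d_q(a,[1])=\inf_{n\in\Z}|1-aq^{-n}|$. Since $n=0$ already gives the value $|1-a|$, it suffices to prove $|1-aq^{-n}|\geq|1-a|$ for every $n\in\Z^{*}$. Writing $\epsilon:=a-1$ with $|\epsilon|\leq M_q$, one decomposes $1-aq^{-n}=(1-q^{-n})-\epsilon q^{-n}$ and the reverse triangle inequality yields $|1-aq^{-n}|\geq|1-q^{-n}|-|\epsilon|\cdot|q^{-n}|$. Hence the desired bound $|1-aq^{-n}|\geq|\epsilon|$ reduces to showing $|\epsilon|\leq\dfrac{|1-q^{-n}|}{1+|q^{-n}|}$ for every $n\neq 0$, and the whole question boils down to the uniform estimate
\begin{equation*}
\inf_{n\in\Z^{*}}\,\frac{|1-q^{-n}|}{1+|q^{-n}|}\;\geq\;M_q.
\end{equation*}

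The first simplifying observation is that multiplying numerator and denominator by $|q^n|$ gives $\dfrac{|1-q^{-n}|}{1+|q^{-n}|}=\dfrac{|1-q^n|}{|q^n|+1}$, so the expression is invariant under $n\mapsto -n$ and we may assume $n\geq 1$. Setting $t:=|q^n|>1$, I would split the analysis at the threshold $t=1+\Vert q\Vert_1$. If $t\leq 1+\Vert q\Vert_1$, then $|q^n|+1\leq 2+\Vert q\Vert_1$ while $|1-q^n|\geq\Vert q\Vert_1$ by the very definition of $\Vert q\Vert_1$; the ratio is therefore at least $\Vert q\Vert_1/(2+\Vert q\Vert_1)=M_q$. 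If instead $t>1+\Vert q\Vert_1$, the cruder bound $|1-q^n|\geq t-1$ suffices: the ratio is at least $(t-1)/(t+1)=1-2/(t+1)>1-2/(2+\Vert q\Vert_1)=M_q$.

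Combining the two cases yields the required inequality, and the lemma follows. The only mild subtlety is recognizing that the constant $M_q=\Vert q\Vert_1/(2+\Vert q\Vert_1)$ is precisely the value at which the two competing lower bounds---$\Vert q\Vert_1/(|q^n|+1)$ for $q^n$ close to $1$, and $(|q^n|-1)/(|q^n|+1)$ for $q^n$ far from $1$---meet; once the dichotomy at $t=1+\Vert q\Vert_1$ is identified, neither case requires any further work. There is no genuine obstacle, just the need to make the right case split.
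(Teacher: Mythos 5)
Your proof is correct, but it takes a genuinely different route from the paper's. The paper uses the decomposition $1-aq^{n}=a(1-q^{n})+(1-a)$, from which the single chain
$\lmod 1-aq^{n}\rmod \geq \lmod a\rmod \,\lmod 1-q^{n}\rmod - \lmod 1-a\rmod \geq (1-M_q)\Vert q\Vert_1 - \lmod 1-a\rmod \geq \lmod 1-a\rmod$
closes the argument with no case distinction at all: attaching the factor $a$ to $(1-q^{n})$ and bounding $\lmod a\rmod \geq 1-M_q$ absorbs the dependence on $\lmod q^{n}\rmod$ uniformly, and the identity $(1-M_q)\Vert q\Vert_1 = 2M_q$ is exactly what makes the last step work. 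You instead isolate $\epsilon = a-1$ and write $1-aq^{-n}=(1-q^{-n})-\epsilon q^{-n}$, which leaves the factor $\lmod q^{-n}\rmod$ on the error term and forces you to control the quantity $\lmod 1-q^{-n}\rmod/(1+\lmod q^{-n}\rmod)$ over all $n\neq 0$; this in turn requires the dichotomy at $\lmod q^{n}\rmod = 1+\Vert q\Vert_1$. Both arguments are correct; yours is slightly longer but has the merit of exhibiting $M_q$ as the exact infimum of the competing lower bounds (hence the optimal constant for this decomposition), while the paper's is shorter because its grouping of terms makes the estimate $n$-independent from the start.
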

\begin{proof} 
This follows at once from the inequalities: 
$$
\lmod 1-aq^n \rmod \geq 
\lmod a \rmod ~ \lmod 1-q^{n} \rmod - \lmod 1-a \rmod \geq 
(1-M_q) \Vert q\Vert_1 - \lmod 1-a \rmod\ \geq \lmod 1 - a  \rmod.
$$
\end{proof} 

For $\rho > 0$, we put:
\label{not60}
\begin{align*}
D([\lambda];\rho) &:= \{z\in\C^* \tq d_q(z,[\lambda])\leq \rho\}, \\
D^c([\lambda];\rho) &:= \C^*\setminus D([\lambda];\rho),
\end{align*}
so that $D([\lambda];1)=\C^*$ and  $D^c([\lambda];1) = \emptyset$.
From lemma \ref{lemm:a1}, it follows that for any $\rho < M_q$:
$$
D([\lambda];\rho) = \bigcup_{n\in\Z} q^n D(\lambda;|\lambda|\rho),
$$
where $D(\lambda;|\lambda|\rho)$ denotes the closed disk with center
$\lambda$ and radius $|\lambda|\rho$. \\

Let $\lambda,\mu\in\C^*$; since:
$$
\lmod 1 - \dfrac z\mu q^n \rmod \geq 
\lmod \dfrac{z}{\lambda} \rmod ~ \lmod 1-\dfrac{\lambda}{\mu} q^n \rmod
- \lmod 1- \dfrac{z}{\lambda} \rmod,
$$
one gets:
\begin{equation}
\label{eqn:lambdamuz}
d_q(z,[\mu]) \geq d_q(\lambda,[\mu]) - \rho \bigl(1+d_q(\lambda,[\mu])\bigr)
\end{equation}
when $z\in D([\lambda];\rho)$ and $\rho < M_q$.

\begin{prop}
\label{prop:lambdamu}
Given two distinct $q$-spirals $[\lambda]$ and $[\mu]$, there exists 
a constant $N > 1$ such that, for all $\rho > 0$ near enough from $0$, 
if $d_q(z,[\lambda]) > \rho$ and $d_q(z,[\mu]) > \rho$, then:
$$
d_q(z,[\lambda]) \, d_q(z,[\mu]) > \dfrac\rho N \cdot
$$
\end{prop}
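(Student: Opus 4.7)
The plan is to reduce this to a direct application of the earlier inequality \eqref{eqn:lambdamuz}. The underlying intuition is that a point $z$ whose ``distance'' $d_q(z,[\lambda])$ is very small must be quite close (in the ordinary sense) to some representative of the $q$-spiral $[\lambda]$, and hence stays uniformly bounded away from the distinct spiral $[\mu]$. So if \emph{both} $d_q(z,[\lambda])$ and $d_q(z,[\mu])$ are $> \rho$, then at worst only one of them can be close to $\rho$, while the other is bounded below by a constant depending only on $\lambda,\mu,q$.

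Concretely, I would introduce the positive constants
$c_\lambda := d_q(\lambda,[\mu])$ and $c_\mu := d_q(\mu,[\lambda])$
(both positive because $[\lambda]\neq[\mu]$), set $c:=\min(c_\lambda,c_\mu)$, and choose a threshold
$\rho_0 := \min\bigl(M_q/2,\ c/(2(1+c))\bigr)$,
where $M_q$ is the quantity of lemma \ref{lemm:a1}. Then I would define $N := \max(1/\rho_0,\ 2/c)$, and verify $N>1$ (easy, by shrinking $\rho_0$ if need be). For any $\rho < \rho_0$ and any $z$ satisfying $d_q(z,[\lambda])>\rho$ and $d_q(z,[\mu])>\rho$, I would then split into two symmetric cases.

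In the first case, suppose $d_q(z,[\lambda])\geq\rho_0$; then using merely the hypothesis on the other distance,
$d_q(z,[\lambda])\,d_q(z,[\mu]) > \rho_0\cdot\rho \geq \rho/N$.
A symmetric argument handles $d_q(z,[\mu])\geq\rho_0$. In the remaining case, $d_q(z,[\lambda]) < \rho_0 < M_q$, so $z\in D([\lambda];d_q(z,[\lambda]))$ and \eqref{eqn:lambdamuz} applies with $\rho$ there replaced by $d_q(z,[\lambda])$; combined with the monotonicity of $x\mapsto x/(2(1+x))$ and the choice of $\rho_0$, this yields
\[
d_q(z,[\mu]) \geq c_\lambda - d_q(z,[\lambda])(1+c_\lambda) \geq c_\lambda - \rho_0(1+c_\lambda) \geq c_\lambda/2 \geq c/2,
\]
hence $d_q(z,[\lambda])\,d_q(z,[\mu]) > \rho\cdot c/2 \geq \rho/N$. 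There is no serious obstacle here: the only delicate point is the bookkeeping involved in picking $\rho_0$ small enough so that both the hypothesis $\rho_0 < M_q$ of \eqref{eqn:lambdamuz} and the algebraic inequality ensuring $c_\lambda-\rho_0(1+c_\lambda)\geq c_\lambda/2$ hold simultaneously for the constant $c$ common to both spirals.
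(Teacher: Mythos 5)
Your proof is correct and takes a genuinely different route: it is direct and constructive, whereas the paper argues by contradiction. Both rest on the same key inequality \eqref{eqn:lambdamuz}. The paper assumes, towards a contradiction, that for every $N>1$ one can find $\rho_N>0$ and $z$ with both distances $>\rho_N$ but product $\leq\rho_N/N$; it then squeezes $\rho_N < d_q(z,[\lambda]) < 1/N$, applies \eqref{eqn:lambdamuz} to obtain $d_q(z,[\mu])\gtrsim d_q(\lambda,[\mu])$ as $N\to\infty$, and concludes by an asymptotic (``up to $o(1)$'') contradiction, never actually exhibiting a value of $N$. You replace this limiting argument by concrete constants, $\rho_0 := \min\bigl(M_q/2,\,c/(2(1+c))\bigr)$ and $N := \max(1/\rho_0,\,2/c)$ with $c := \min\bigl(d_q(\lambda,[\mu]),d_q(\mu,[\lambda])\bigr)$, which makes the bound quantitative and, to my taste, more transparent. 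One small remark on your case structure: since you show in the ``remaining case'' that $d_q(z,[\mu])\geq c/2 > \rho_0$, that case is actually vacuous, and your symmetric Case 2 is never used. The cleanest dichotomy is simply $d_q(z,[\lambda])\geq\rho_0$ versus $d_q(z,[\lambda])<\rho_0$, the latter forcing $d_q(z,[\mu])\geq c_\lambda/2 \geq c/2$ via \eqref{eqn:lambdamuz}; either branch then gives $d_q(z,[\lambda])\,d_q(z,[\mu])>\rho/N$.
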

\begin{proof} 
By contradiction, assume that, for all $ N > 1$, there exists 
$\rho_N > 0$ and $z \in \C^*$ such that $d_q(z,[\lambda]) > \rho_N$ 
and $d_q(z,[\mu]) > \rho_N$ but 
$d_q(z,[\lambda]) \, d_q(z,[\mu]) \leq \dfrac{\rho_N}{N}$; this would
entail:
$$
\rho_N < d_q(z,[\lambda]) < \dfrac{1}{N},
$$
whence, after \eqref{eqn:lambdamuz}:
$$
d_q(z,[\mu])\geq d_q(\lambda,[\mu])-\bigl(1+d_q(\lambda,[\mu])\bigr)/N.
$$
In other words, when $N\to \infty$, one would get:
$$
d_q(z,[\lambda]) \, d_q(z,[\mu]) \gtrsim d_q(\lambda,[\mu])~\rho_N
$$  
(\ie\ there is an inequality up to an $o(1)$ term),
contradicting the assumption
$d_q(z,[\lambda])~d_q(z,[\mu]) < \frac{\rho_N} N$. 
\end{proof} 

Let $\Lambda := \nu_1[\lambda_1] + \cdots + \nu_m[\lambda_m]$ be
a divisor and let $\rho>0$. We put:
\label{not61}
\begin{equation}
\label{eqn:dzLambda}
d_q(z,\Lambda) := \prod_{1\le j\le m}\left(d_q(z,[\lambda_j])\right)^{\nu_j}
\end{equation} 
and
\begin{equation*}
D(\Lambda;\rho) := \{z\in\C^* \tq d_q(z,\Lambda)\le \rho\}~,\quad
D^c(\Lambda;\rho) := \C^* \setminus D(\Lambda;\rho).
\end{equation*}

Proposition \ref{prop:lambdamu} implies that, if
$\Lambda = [\lambda] + [\mu]$ with $[\lambda] \neq [\mu]$, one has:
$$
D^c([\lambda];\rho) \cap D^c([\mu];\rho)
\subset D^c\left(\Lambda;\dfrac{\rho}{N}\right)
$$
as long as $\rho$ is near enough from $0$.

\begin{prop}
\label{prop:Lambdad}
Let  $\Lambda := \nu_1[\lambda_1] + \cdots + \nu_m[\lambda_m]$ be 
a divisor such that $[\lambda_i] \neq [\lambda_j]$ for $i \neq j$. 
There exists a constant $N>1$ such that, for all $\rho>0$ near 
enough from $0$, one has:
\begin{equation}
\label{eqn:Dc}
D^c(\Lambda;\rho) \subset
\bigcap_{1\le j\le m} D^c([\lambda_j];\rho^{1/\nu_j}) \subset 
D^c\left(\Lambda;\dfrac{\rho}{N}\right).
\end{equation}
\end{prop}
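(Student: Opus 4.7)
The plan is to treat the two inclusions separately, with the first essentially trivial and the second relying on the near-to/far-from dichotomy for the spirals.

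For the first inclusion, I would begin by recording the elementary observation that $d_q(z,[\lambda])\le 1$ for all $z,\lambda\in\C^*$: taking $\xi=\lambda q^{-n}$ with $n\to+\infty$, the quantity $|1-z/\xi|$ tends to $1$, so the infimum is at most $1$. Consequently every factor in $d_q(z,\Lambda)=\prod_j d_q(z,[\lambda_j])^{\nu_j}$ is at most $1$, hence for each $j$,
\[
d_q(z,[\lambda_j])^{\nu_j}\;\geq\;\prod_i d_q(z,[\lambda_i])^{\nu_i}\;=\;d_q(z,\Lambda).
\]
If $z\in D^c(\Lambda;\rho)$, then $d_q(z,[\lambda_j])^{\nu_j}>\rho$, so $d_q(z,[\lambda_j])>\rho^{1/\nu_j}$, proving the left inclusion in \eqref{eqn:Dc}.

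The second inclusion is the substantive point and rests on the fact that the spirals are pairwise apart. Set $\delta:=\min_{i\neq j}d_q(\lambda_i,[\lambda_j])>0$ and fix $\rho_0<M_q$ small enough that $\rho_0(1+\delta)\le\delta/2$ (this is where inequality \eqref{eqn:lambdamuz} is going to be applied). I will only consider $\rho>0$ so small that $\rho^{1/\nu_j}<\rho_0$ for every $j$. Let $z\in\bigcap_{j}D^c([\lambda_j];\rho^{1/\nu_j})$, and distinguish two cases. If $d_q(z,[\lambda_j])\ge\rho_0$ for every $j$, then $d_q(z,\Lambda)\ge\rho_0^{|\Lambda|}$, a fixed positive constant, and $d_q(z,\Lambda)>\rho/N$ is automatic as soon as $N$ is taken larger than $\rho_0^{-|\Lambda|}$ (and $\rho$ is small).

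Otherwise there exists some index $j_0$ with $d_q(z,[\lambda_{j_0}])<\rho_0$, so that $z\in D([\lambda_{j_0}];\rho_0)$; applying \eqref{eqn:lambdamuz} (whose proof used exactly this setup) gives, for every $i\neq j_0$,
\[
d_q(z,[\lambda_i])\;\geq\;d_q(\lambda_{j_0},[\lambda_i])-\rho_0\bigl(1+d_q(\lambda_{j_0},[\lambda_i])\bigr)\;\geq\;\delta/2.
\]
Combined with the hypothesis $d_q(z,[\lambda_{j_0}])^{\nu_{j_0}}>\rho$, this yields
\[
d_q(z,\Lambda)\;=\;d_q(z,[\lambda_{j_0}])^{\nu_{j_0}}\prod_{i\neq j_0}d_q(z,[\lambda_i])^{\nu_i}\;>\;\rho\cdot(\delta/2)^{|\Lambda|-\nu_{j_0}},
\]
so setting $N:=\max\bigl(\rho_0^{-|\Lambda|},\,(\delta/2)^{-|\Lambda|}\bigr)+1$ handles both cases simultaneously and proves the right inclusion.

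The main technical obstacle is really the second case: one must ensure a uniform lower bound on $d_q(z,[\lambda_i])$ for $i\neq j_0$ when $z$ lies close to $[\lambda_{j_0}]$, and this is exactly the content of \eqref{eqn:lambdamuz}, which had been established precisely for this purpose in the preceding discussion. Everything else is bookkeeping of constants.
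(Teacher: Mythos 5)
Your proof is correct and takes essentially the same route the paper sketches: both inclusions follow from $d_q(z,[\lambda])\le 1$ for the left one, and from the lower bound \eqref{eqn:lambdamuz} (applied as in the proof of proposition \ref{prop:lambdamu}) for the right one. The paper leaves the details of the second inclusion to the reader; you fill them in carefully via the two-case dichotomy (all factors bounded below by $\rho_0$ versus exactly one small factor), and the bookkeeping is sound.

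One small slip in your preliminary observation: with $\xi=\lambda q^{-n}$ and $n\to+\infty$, the modulus $|\xi|$ tends to $0$ (since $\lmod q\rmod>1$), so $|1-z/\xi|$ tends to $\infty$, not $1$. You want $n\to-\infty$ (equivalently $\xi=\lambda q^n$ with $n\to+\infty$), which sends $|\xi|\to\infty$ and $|1-z/\xi|\to 1$. The conclusion $d_q(z,[\lambda])\le 1$ is unaffected.
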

\begin{proof}
The first inclusion comes from the fact that, if
$d_q(z,[\lambda_j]) \leq \epsilon^{1/\nu_j}$ for some index $j$, 
then $d_q(z,\Lambda) \leq \epsilon$, because 
$d_q(z,[\lambda])\leq 1$ for all $\lambda \in \C^*$.

To prove the second inclusion, we use \eqref{eqn:lambdamuz} in 
the same way as in the proof of proposition \ref{prop:lambdamu};
details are left to the reader.
\end{proof} 

Taking the complementaries in \eqref{eqn:Dc}, one gets: 
\begin{equation}
\label{eqn:D}
D\left(\Lambda;\frac\rho N\right) \subset
\bigcup_{1\le j\le m} D\left([\lambda_j];\rho^{1/\nu_j}\right) 
\subset D(\Lambda;\rho).
\end{equation}
Modifying the definition \eqref{eqn:dzLambda} of
$d_q(z,\Lambda)$ as follows:
\begin{equation}
\label{not62}
\delta(z,\Lambda) := \min_{1\le j\le m}\{d_q(z,[\lambda_j])\}^{\nu_j},
\end{equation}
one gets, for all small enough $\rho$:
$$
D_\delta(\Lambda;\rho) := \{z\in\C^*: \delta(z,\Lambda)\le\rho\} =
\bigcup_{1\le j\le m} D([\lambda_j];\rho^{1/\nu_j})
$$
Relation \eqref{eqn:D} shows that the maps
$z \mapsto d_q(z,\Lambda)$ and $z\mapsto \delta(z,\Lambda)$ 
define equivalent systems of small neighborhoods 
relative to the divisor $\Lambda$; precisely, for $\rho \to 0$:
$$
D\left(\Lambda; \dfrac{\rho}{N}\right) \subset 
D_\delta(\Lambda;\rho) \subset D(\Lambda;\rho).
$$

\begin{defi}
\label{defi:secteur}
Let $\epsilon \geq 0$. We call \emph{germ of sector within
distance $\epsilon$ from divisor $\Lambda$} any set:
\index{germ of sector within distance $\epsilon$ from divisor $\Lambda$}
$$
\label{not63}
S(\Lambda,\epsilon;R) =
\{z\in\C^*: d_q(z,\Lambda)>\epsilon, \lmod z \rmod < R\},
\text{~where~} R > 0.
$$
When $R = +\infty$, we write $S(\Lambda,\epsilon)$ instead of
$S(\Lambda,\epsilon;+\infty)$.
\end{defi}

Note that $S(\Lambda,0;R)$ is the punctured open disk 
$\{0 < \lmod z \rmod < R\}$ deprived of the $q$-spirals
pertaining to divisor $\Lambda$. Since $d_q(z,\Lambda) \leq 1$ 
for all $z\in\C^*$, one has $S(\Lambda,1;R) = \emptyset$;
that is why we shall assume that $\epsilon\in]0,1[$.
Last, each sector within a short enough distance to $\Lambda$
represents, on the elliptic curve $\Eq = \C^*/q^\Z$, the curve
$\Eq$ deprived of a family of ovals centered on the $\lambda_j$
with sizes varying like $\epsilon^{1/\nu_{j}}$. \\

By a \emph{(germ of) analytic function in $0$ out of $\Lambda$},
\index{analytic function in $0$ out of $\Lambda$ (germ of)}
we shall mean any function defined and analytic in some germ of
sector $S(\Lambda,0;R)$ with $R>0$.  Such a function $f$ is said
\index{bounded (at $0$)}
to be \emph{bounded} (at $0$) if, for all $\epsilon >0$ and all 
$r \in ]0,R[$, one has:
$$
\sup_{z\in S(\Lambda,\epsilon;r)} |f(z)| < \infty.
$$
\label{not64}
We write $\BB^{\Lambda}$ the set of such functions.

\begin{prop}
\label{prop:BB}
If $f \in \BB^\Lambda$, then for any $R > 0$ small enough and for all 
$a \in S(\Lambda,0;R)$ such that 
$\lmod a \rmod \notin 
\bigcup\limits_{n\in\N} \lmod \lambda_i \rmod \lmod q^{-n} \rmod$, 
there exists $K > 0$ such that
$$
\sup_{n\in\N} ~ \max_{\lmod z \rmod=|aq^{-n}|} \lmod f(z) \rmod < K.
$$
\end{prop}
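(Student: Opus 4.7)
The plan is to exploit the $q$-invariance of the function $z \mapsto d_q(z,\Lambda)$ in order to reduce the uniform estimate over the shrinking sequence of circles $C_n := \{z \in \C^{*} \tq \lmod z \rmod = \lmod a q^{-n} \rmod\}$, $n \in \N$, to a single estimate on the initial circle $C_0 = \{\lmod z \rmod = \lmod a \rmod\}$, and then to extract from the arithmetic hypothesis on $\lmod a \rmod$ a positive lower bound for $d_q(\cdot,\Lambda)$ on $C_0$.

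The first step is to observe that, for every $\lambda \in \C^{*}$, one has $d_q(qz,[\lambda]) = d_q(z,[\lambda])$, simply because the change of variable $\xi \mapsto q^{-1}\xi$ is a bijection of $[\lambda]$ onto itself. Consequently $d_q(\cdot,[\lambda])$, and therefore also $d_q(\cdot,\Lambda)$, is $q$-invariant and descends to a function on $\Eq$. In particular, the restriction of $d_q(\cdot,\Lambda)$ to $C_n$ has the same range as its restriction to $C_0$ for every $n \in \N$.

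The second step is the arithmetic heart of the proof. For any $z$ with $\lmod z \rmod = \lmod a \rmod$ and any $k \in \Z$, the triangle inequality gives
$$
\bigl| 1 - z/(\lambda_{i} q^{k}) \bigr| \geq \bigl| \lmod a \rmod \lmod \lambda_{i} \rmod^{-1} \lmod q \rmod^{-k} - 1 \bigr|,
$$
so $d_q(z,[\lambda_{i}]) \geq \eta_{i} := \inf_{k \in \Z} \bigl| \lmod a \rmod \lmod \lambda_{i} \rmod^{-1} \lmod q \rmod^{-k} - 1 \bigr|$. The assumption on $\lmod a \rmod$ (interpreted as $\lmod a \rmod \notin \lmod \lambda_{i} \rmod \lmod q \rmod^{\Z}$ for each $i$) forces $\lmod a \rmod / \lmod \lambda_{i} \rmod \notin \lmod q \rmod^{\Z}$, so every term in the infimum is nonzero; since $\lmod q \rmod^{-k} \to 0$ as $k \to +\infty$ and to $+\infty$ as $k \to -\infty$, the infimum is attained at some finite $k$ and is strictly positive. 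Setting $\epsilon_{0} := \prod_{i} \eta_{i}^{\nu_{i}} > 0$, one obtains $d_q(z,\Lambda) \geq \epsilon_{0}$ on $C_0$, hence on every $C_n$ by the first step.

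The third step is purely formal. Since $a \in S(\Lambda,0;R)$ forces $\lmod a \rmod < R$, one can choose $r \in \,]\lmod a \rmod, R[\,$ and any $\epsilon \in \,]0,\epsilon_{0}[\,$. Every point of every $C_n$ then satisfies $\lmod z \rmod = \lmod a q^{-n} \rmod \leq \lmod a \rmod < r$ and $d_q(z,\Lambda) \geq \epsilon_{0} > \epsilon$, so $\bigcup_{n \in \N} C_n \subset S(\Lambda,\epsilon;r)$. Because $f \in \BB^{\Lambda}$, the supremum $K := \sup\{\lmod f(z) \rmod \tq z \in S(\Lambda,\epsilon;r)\}$ is finite and provides the required uniform bound. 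The only real obstacle is the positivity of $\eta_{i}$ in the second step; once one has noticed the $q$-invariance, everything else is bookkeeping, and the arithmetic hypothesis on $\lmod a \rmod$ is used precisely to keep the orbit of $\lmod a \rmod / \lmod \lambda_{i} \rmod$ under multiplication by $\lmod q \rmod^{\Z}$ uniformly away from $1$.
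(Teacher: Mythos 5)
Your proof is correct and follows the same route as the paper's: exploit the $q$-invariance of $z\mapsto d_q(z,\Lambda)$ to reduce to a single circle, show that the arithmetic hypothesis on $\lmod a\rmod$ keeps $d_q(\cdot,\Lambda)$ bounded below by a positive constant on that circle, and conclude that $\bigcup_n C_n$ lies in a sector $S(\Lambda,\epsilon;r)$ where $f$ is bounded by definition of $\BB^\Lambda$. The only difference is that you make the lower bound explicit via the reverse triangle inequality $\lmod 1-w\rmod\geq\bigl|\lmod w\rmod-1\bigr|$, whereas the paper relies tacitly on continuity and compactness of the circle (and in fact appears to have a typo, writing $\max_\tau$ where $\min_\tau$ is intended); your version also usefully spells out that ``$R$ small enough'' is what upgrades the hypothesis $\lmod a\rmod\notin\lmod\lambda_i\rmod\lmod q\rmod^{-\N}$ to $\lmod a\rmod\notin\lmod\lambda_i\rmod\lmod q\rmod^{\Z}$.
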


\begin{proof}
From the assumption on $a$, it follows that:
$$
\epsilon := \frac{\max_{\tau\in[0,2\pi]}d_q(ae^{i\tau},\Lambda)}2 > 0,
$$
whence all circles centered at $0$ and with respective radii 
$|a q^{-n}|$, $n \in \N$ are contained in the sector 
$S(\Lambda,\epsilon;R)$, which concludes the proof.
\end{proof}


\subsection{Jacobi Theta function, $q$-exponential growth 
and $q$-Gevrey series}
\label{subsection:JacobiTheta}
\index{Jacobi theta function}
\index{theta function}
\index{$q$-Gevrey series}

Jacobi's theta function $\theta(z;q) = \theta(z)$ was defined by equation
\eqref{eqn:tripleproduit} page \pageref{eqn:tripleproduit}.
From its functional equation, one draws:
\begin{equation*}
\forall (z,n) \in \C^* \times \Z ~,~
\theta(q^nz)=q^{n(n+1)/2}~z^{n}~\theta(z).
\end{equation*}
Moreover, for all $z\in\C^*$, one has:
$$
\label{not65}
\lmod \theta(z) \rmod \leq 
e(z) := e(z;q) :=\theta\left(\lmod z\rmod;\lmod q \rmod\right).
$$

\begin{lemm}
\label{lemm:thetamin0}
There exists a constant $C>0$, depending on $q$ only, such that,
for all $\epsilon>0$:
$$
\lmod \theta(z) \rmod \geq {C}~ {\epsilon} ~ e(z),
$$
as long as $d_q(z,[-1])>\epsilon$.
\end{lemm}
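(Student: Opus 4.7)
The plan is to show that the ratio $R(z) := |\theta(z)|/e(z)$ and the ``distance'' $d_q(z,[-1])$ are both $q$-invariant and mutually comparable, reducing the estimate to a compactness argument on the elliptic curve $\Eq$.

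First I would check the two $q$-invariances. From $\theta(qz)=qz\,\theta(z)$ one gets $|\theta(qz)|=|q||z||\theta(z)|$; directly from the definition, $e(qz)=\theta(|q||z|;|q|)=|q||z|\,e(z)$; hence $R(qz)=R(z)$. Likewise $d_q(qz,[-1])=\inf_{n}|1+qz\,q^{-n}|=\inf_{n}|1+z\,q^{-(n-1)}|=d_q(z,[-1])$. Therefore both quantities descend to continuous functions on $\Eq$, which is compact.

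Next I would analyse the common zero locus. The triple product formula shows that $\theta(z)$ vanishes exactly on the $q$-spiral $[-1]$, with simple zeros; and $e(z)>0$ for all $z\in\C^{*}$. So $R$ vanishes precisely on $[-1]$, and near $z=-1$ one has $\theta(z)=\theta'(-1)(z+1)+O((z+1)^{2})$, whence
\[
\frac{R(z)}{|z+1|}\;\xrightarrow[z\to -1]{}\;c_{0}:=\frac{|\theta'(-1)|}{e(-1)}>0.
\]
On the other hand, applying lemma \ref{lemm:a1} with $a=-z$ (using $d_q(z,[-1])=d_q(-z,[1])$) yields $d_q(z,[-1])=|1+z|$ as soon as $|1+z|\leq M_{q}$. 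Combining these two facts gives
\[
\frac{R(z)}{d_q(z,[-1])}\;\xrightarrow[z\to -1]{}\;c_{0}>0,
\]
so the $q$-invariant function $z\mapsto R(z)/d_q(z,[-1])$ extends continuously to the puncture $p(-1)\in\Eq$ with value $c_{0}$, and it is strictly positive everywhere on $\Eq$ since $R$ has no other zero in a fundamental annulus.

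By compactness of $\Eq$, there exists a constant $C>0$, depending only on $q$, such that $R(z)\geq C\,d_q(z,[-1])$ for every $z\in\C^{*}$. If $d_q(z,[-1])>\epsilon$ this gives $|\theta(z)|\geq C\,\epsilon\,e(z)$, as required. The only point that needs some care is the simultaneous vanishing at $-1$: one must verify that $d_q(z,[-1])$ and $|1+z|$ coincide on a full neighborhood of $-1$ (not merely that they are comparable), which is exactly the content of lemma \ref{lemm:a1}; once this is established, the rest reduces to the compactness of $\Eq$ and continuity of the extended ratio.
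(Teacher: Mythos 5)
Your proof is correct and follows essentially the same approach as the paper, which reduces the estimate via $q$-invariance of $|\theta(z)|/e(z)$ to a compactness argument on a fundamental annulus (the paper is terse and defers to \cite[Lemma 1.3.1]{zh3} for the details you spell out). The extra care you take at the puncture $p(-1)$ — using lemma \ref{lemm:a1} to identify $d_q(z,[-1])$ with $|1+z|$ near $-1$ and then checking continuity of the ratio — is precisely what makes the compactness argument rigorous.
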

\begin{proof} 
It is enough to see that the function 
$z \mapsto \lmod \theta(z) \rmod/e(z)$ is $q$-invariant, which 
allows one to
restrict the problem to the (compact) closure of a fundamental
annulus, for instance
$\{z \in \C \tq 1\leq \lmod z \rmod \leq \lmod q \rmod\}$. 
See \cite[Lemma 1.3.1]{zh3} for more details.
\end{proof} 

Recall that $\theta(z) = \theta\left(\dfrac{1}{qz}\right)$; 
one draws from this that $e(z) = e\left(\dfrac{1}{qz}\right)$. 
To each subset $W$ of $\C$, one associates two sets $W_{(\infty)}$, 
$W_{(0)}$ as follows:
\label{not66}
$$
W_{(\infty)} := \bigcup_{n\ge 0} q^n~ W~,\quad
W_{(0)} := \bigcup_{n\le 0}q^n~ W~.
$$

\begin{defi}
\label{defi:croissanceqexp}
Let  $f$ be a function analytic over an open subset $\Omega$ 
of $\C^*$ and let $k \geq 0$. 
\begin{enumerate}
\item{We say that \emph{$f$ has $q$-exponential growth of order 
(at most) $k$ at infinity (resp. at $0$) over $\Omega$} if,
\index{$q$-exponential growth of order (at most) $k$}
for any compact $W \subset \C$ such that $W_{(\infty)} \subset \Omega$ 
(resp. such that $W_{(0)} \subset \Omega$), there exists $C > 0$ and
$\mu > 0$ such that $\lmod f(z) \rmod \leq C \bigl(e(\mu z)\bigr)^k$ 
for all $z \in W_{(\infty)}$ (resp. $z\in W_{(0)}$).}
\item{We say that \emph{$f$ has $q$-exponential decay of order 
(at least) $k$ at infinity (resp. at $0$) over $\Omega$} if, 
\index{$q$-exponential decay of order (at least) $k$}
for any compact $W \subset \C$ such that $W_{(\infty)} \subset \Omega$ 
(resp. such that $W_{(0)} \subset \Omega$), there exists  $ C> 0$ and 
$\mu > 0$ such that $\lmod f(z) \rmod \leq C \bigl(e(\mu z)\bigr)^{-k}$
for all $z \in W_{(\infty)}$ (resp. $z \in W_{(0)}$).}
\end{enumerate}
\end{defi} 

\begin{lemm}
\label{lemm:croissanceqexp}
Let $k > 0$ and $f$ an entire function with Taylor expansion
$f(z) = \sum\limits_{n\ge 0}a_nz^n$ at $0$. Then $f$ has $q$-exponential 
growth of order (at most) $k$ at infinity over $\C$, if, and only if,
the sequence $(a_n)$ is $q$-Gevrey of order $(-1/k)$.
\end{lemm}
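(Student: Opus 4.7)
The plan is to translate the lemma's equivalence into a precise comparison between two growth rates, both governed asymptotically by $|q|^{(\log|z|)^2/(2\log|q|)}$. The key auxiliary fact, flowing directly from the functional equation $\theta(q^N r;|q|) = |q|^{N(N+1)/2} r^N \theta(r;|q|)$ and from the boundedness of $\theta(\cdot;|q|)$ on the closure of a fundamental annulus $1\le |z|\le |q|$, is the two-sided estimate
\[
C_1^{-1}\, |q|^{-N(N-1)/2}\, r^{N} \;\le\; \theta(r;|q|) \;\le\; C_1\, |q|^{-N(N-1)/2}\, r^{N},
\]
where $N = N(r) := \lfloor \log r / \log|q| \rfloor$. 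I would also note that, taking $W = \overline{D}(0,1)$, one has $W_{(\infty)} = \C$, so the condition of $q$-exponential growth of order $k$ at infinity over $\C$ boils down to a global bound $|f(z)| \le C\,(e(\mu z))^k$ on $\C^*$ for some constants $C,\mu>0$; symmetrically, we must produce such an estimate in the converse direction.

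For the direct implication ($\Longrightarrow$), I would apply Cauchy's inequalities on circles of cleverly chosen radii. Given $|f(z)|\le C(e(\mu z))^k$, the estimate above yields $|f(z)| \le C'\,\mu^{kN} r^{kN}\, |q|^{-kN(N-1)/2}$ on $|z|=r$, whence $|a_n| \le C'\,\mu^{kN}\, r^{kN-n}\, |q|^{-kN(N-1)/2}$. Choosing $r := |q|^{n/k}$ forces $N = n/k$, so the factor $r^{kN-n}$ collapses to $1$ and the exponent of $|q|$ reduces to $-n^2/(2k) + n/2$; absorbing the extra $|q|^{n/2}$ and $\mu^n$ into a constant $A^n$ gives $|a_n|\le C''\,A^n\,|q|^{-n^2/(2k)}$, which is exactly the $q$-Gevrey estimate of order $-1/k$.

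For the converse ($\Longleftarrow$), starting from $|a_n|\le C\,A^n\,|q|^{-n^2/(2k)}$ I get $|f(z)| \le C\sum_n |q|^{-n^2/(2k)} (A|z|)^n$, and the task reduces to proving the comparison
\[
\sum_{n\ge 0} |q|^{-n^2/(2k)}\,u^n \;\le\; C_2\, \bigl(\theta(c\,u;|q|)\bigr)^k, \qquad u>0,
\]
for suitable constants $C_2,c>0$. For integer $k$ this follows from a multinomial argument: the coefficient of $u^n$ in $\theta(u;|q|)^k$ majorizes the single term with $m_1=\cdots=m_k=n/k$, of size $|q|^{-n^2/(2k)+n/2}$, so after the substitution $u\mapsto c\,u$ with $c$ large enough to absorb a geometric loss in $|q|^{-n/2}$ one dominates the target series. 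For general real $k>0$ I would instead argue \emph{pointwise} on the two-sided $\theta$-estimate: split $[0,\infty)$ into intervals $I_N := [|q|^{N/k},|q|^{(N+1)/k}]$, on each of which the sum $\sum |q|^{-n^2/(2k)}u^n$ and $\theta(cu;|q|)^k$ are both, by a Laplace-type estimate, comparable (up to polynomial factors) to their dominant term at $n\approx N$, respectively $m\approx N/k$, and the two dominant terms match up to a bounded multiplicative factor.

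The main technical obstacle is precisely the last comparison in the converse direction for non-integer $k$: unlike the integer case, one cannot use a multinomial identity, and one must control the ratio of the two series uniformly in $u$, not just asymptotically, to conclude a global bound on all of $\C$. Handling the small-$u$ regime requires that $\theta(cu;|q|)$ stays bounded below by a positive constant as $u\to 0^+$, which is immediate since $\theta(0;|q|)=\prod_{n\ge 1}(1-|q|^{-n})>0$.
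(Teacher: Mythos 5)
Your overall structure is sound and close to the paper's: one direction is obtained by Cauchy's inequalities on well-chosen circles, the other by a direct series majoration. A few remarks, ordered by importance.

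\emph{The $\Longrightarrow$ direction.} Your choice $r=|q|^{n/k}$ is essentially the same as the paper's optimization over radii $\mu|q|^m$, $m\in\Z$ (the paper minimizes the quadratic $m\mapsto A^m|q|^{km^2/2}$ and corrects for the discrete minimum by an extra $|q|^{k/8}$). Be aware, though, that with your $r$ the integer $N=\lfloor\log(\mu r)/\log|q|\rfloor$ equals $n/k$ only up to an $O(1)$ error; the resulting discrepancies in $r^{kN-n}$, in $\mu^{kN}$ and in the exponent of $|q|^{-kN(N-1)/2}$ are each of size $|q|^{O(n)}$ and must be (and can be) absorbed into the $A^n$ factor. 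You treat $N=n/k$ as exact, which is a small imprecision, not a mistake.

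\emph{The $\Longleftarrow$ direction, integer $k$.} You have the inequality backwards. The single diagonal term gives
\[
\bigl[\theta(u;|q|)^k\bigr]_n \ge |q|^{-k\cdot\frac{n}{k}\left(\frac{n}{k}-1\right)/2} = |q|^{-n^2/(2k)+n/2},
\]
which, since $|q|>1$, is \emph{larger} than the target coefficient $|q|^{-n^2/(2k)}$ by a factor $|q|^{n/2}>1$. There is no loss to absorb: $c=1$ already gives term-by-term domination. (For $n\not\equiv 0\pmod k$ you must replace the exact diagonal partition by a nearly-equal one; this only costs a bounded factor.)

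\emph{The $\Longleftarrow$ direction, general $k>0$.} This is where there is a real gap, and your phrase "the two dominant terms match up to a bounded multiplicative factor" is false as stated. Using your own two-sided estimate at $r=|q|^M$ and $c=|q|^a$, one finds
\[
\theta\bigl(|q|^M;|q|^{1/k}\bigr)\asymp |q|^{kM^2/2+M/2},\qquad
\theta\bigl(|q|^{M+a};|q|\bigr)^k \asymp |q|^{kM^2/2 + k(2a+1)M/2},
\]
so their ratio is $\asymp |q|^{M\bigl(1-k(2a+1)\bigr)/2}$: it is \emph{geometric} in $M$, not bounded, and it decays if and only if $a>(1-k)/(2k)$, i.e.\ $c>|q|^{(1-k)/(2k)}$. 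In particular for $k<1$ the comparison genuinely requires a dilated argument. With this quantification, your interval-splitting argument closes the proof for all real $k>0$, and by Laplace-type estimates the full series are controlled by their dominant terms up to at most polynomial factors, which is fine.

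\emph{Comparison with the paper.} The paper's proof of this direction is in fact terser than yours: it bounds $|f(z)|\le C\,\theta(\mu|z|;|q|^{1/k})$ and then asserts directly that this gives $q$-exponential growth of order $k$, i.e.\ it silently invokes exactly the comparison $\theta(r;|q|^{1/k})\lesssim \theta(cr;|q|)^k$ that you are trying to prove (and refers to \cite{RamisGrowth} for variants). So your attempt to fill in this comparison is a genuine addition; it just needs the geometric-ratio computation above rather than the (false) bounded-factor claim.
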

\begin{proof}  
The $q$-Gevrey order of a sequence was defined in paragraph 
\ref{subsection:qGevreylevels} of the general notations, in the
introduction. A variant of this result was given in 
\cite[Proposition 2.1]{RamisGrowth}. The proof shown below
rests on the functional equation $\theta(qz) = q~ z~ \theta(z)$. 

If $(a_n)$ is $q$-Gevrey of order $(-k)$, one has 
$\lmod f(z) \rmod \leq C~\theta(\mu \lmod z \rmod; \lmod q \rmod^{1/k})$, 
which shows that $f$ has $q$-exponential growth of order (at most) $k$ 
at infinity over $\C$. 

On the other hand, by Cauchy formula, one has:
$$
\lmod a_n \rmod \leq 
\min_{m\in\Z} 
\left(\max_{\lmod z \rmod=\mu \lmod q \rmod^m} \lmod f(z)\rmod (\mu ~ \lmod q \rmod^{m})^{-n}\right),
$$
where $\mu > 0$ is an arbitrary parameter. If $f$ is an entire function
with $q$-exponential growth of order (at most) $k$ at infinity, one has
$\lmod f(z) \rmod \leq C~\bigl(e(z)\bigr)^k$ for all $z\in\C^*$. From
relation $e(q^mz)=\lmod q \rmod^{m(m+1)/2}~\lmod z\rmod^m~e(z)$, one draws,
for all integers $n \geq 0$:
$$
\lmod a_n \rmod \leq C~e^k(\mu)~\mu^{-n}~
\bigl(\mu^k~\lmod q \rmod^{-n+k/2}\bigr)^m~\lmod q \rmod^{km^2/2}.
$$
 
Last, we note that the function 
$t\mapsto A^t \lmod q \rmod^{kt^2/2}$ reaches its minimum at 
$t = -\dfrac{\ln A}{k\ln \lmod q \rmod}$, with:
$$
\min_{t\in\R} A^t \lmod q \rmod^{kt^2/2} = e^{-\frac{\ln^2A}{2k\ln \lmod q \rmod}}. 
$$
From this, one deduces:
\begin{equation}
\label{eqn:minqgevrey}
\min_{m\in\Z} A^m \lmod q\rmod^{km^2/2} \leq 
\lmod q \rmod^{\frac k{8}}~e^{-\frac{\ln^2A}{2k\ln \lmod q \rmod}}
\end{equation}
for all $A > 0$. Therefore, if we set $A=\mu^k |q|^{-n+k/2}$, we get the 
wanted $q$-Gevrey estimates for $|a_n|$, which concludes the proof.
\end{proof} 


\section{Asymptotics relative to a divisor}
\label{section:asymptoticsreldivisor}

In this section, we fix a divisor 
$\Lambda := \nu_1[\lambda_1] + \cdots +\nu_m[\lambda_m]$ and 
we write $\nu := \lmod \Lambda \rmod = \nu_1 + \cdots + \nu_m$. 
Without loss of generality, we make the following \\

\noindent
\textbf{ASSUMPTION:} \\
The complex numbers $\lambda_i$ are pairwise distinct and
such that:
\begin{equation}
\label{hypothese:lambdas}
1\leq \lmod \lambda_1 \rmod \leq \lmod \lambda_2 \rmod \leq 
\cdots \leq  \lmod \lambda_m \rmod < \lmod q\lambda_1 \rmod.
\end{equation}

\begin{defi}
\label{defi:ALambda} 
Let $f \in \BB^{\Lambda}$. We write\footnote{The set $\AA_{q}^{\Lambda}$
was previously written $\AA_{q;|\Lambda|}^{\Lambda}$ in our Note \cite{RSZ2}.} 
$f \in \AA_{q}^{\Lambda}$ 
\label{not67}
and we say that $f$ admits a \emph{$q$-Gevrey
asymptotic expansion of level $\nu$ (or order $s := 1/\nu$) along divisor 
$\Lambda$ at $0$}, 
\index{$q$-Gevrey asymptotic expansion along a divisor}
if there exists a power series $\sum\limits_{n \ge0} a_n z^n$ and constants 
$C$, $A > 0$ such that, for any $\epsilon > 0$ and any integer $N \ge 0$, 
one has, for some small enough $R > 0$ and for all 
$z\in S(\Lambda,\epsilon;R)$:
\begin{equation}
\label{eqn:fN}
\lmod f(z) - \sum_{0\le n<N} a_n z^n \rmod <
\dfrac{C}{\epsilon} A^N \lmod q \rmod^{N^2/(2\nu)} \lmod z \rmod^N~.
\end{equation}
\end{defi}

Recall that  condition $z\in S(\Lambda,\epsilon;R)$ means that
$0 < \lmod z \rmod < R$ and $d_q(z,\Lambda) > \epsilon$; see 
definition  \ref{defi:secteur}. One sees at once that, 
if $f \in \AA_{q}^{\Lambda}$, its asymptotic expansion, written
$J(f)$, belongs to the space of $q$-Gevrey series of level $\nu$, 
whence the following linear map:
$$
\label{not68}
J: \quad \AA_{q}^{\Lambda} \rightarrow \Rf_{q;1/\nu}, \quad f \mapsto J(f).
$$
(For the notation $\Rfs$, see paragraph \ref{subsection:qGevreylevels}
of the general notations, in the introduction.)
In section \ref{section:BorelRitt}, we shall see that this map is onto,
providing a "meromorphic $q$-Gevrey version" of classical Borel-Ritt
theorem. In order to get the surjectivity of $J$, we shall obtain
a characterization of $\AA_{q}^{\Lambda}$ in terms of the residues along 
poles belonging to the divisor $\Lambda$. This description will be
shared among the following two paragraphs.


\subsection{Asymptotics and residues (I)}
\label{subsection:residusI}
\index{residues}

When $\nu=1$, divisor $\Lambda$ reduces to $[\lambda]$ and the above 
definition coincides with the definition of $\AA_{q;1}^{[\lambda;q]}$ in
\label{not69}
\cite{zh3}, \cite{rz}. After \cite{rz}, one has $f \in \AA_q^{[\lambda]}$ 
if, and only if, there exists an integer $N \in \N$, a $q$-Gevrey
sequence $(c_n)_{n\ge N}$ of order $(-1)$ and a function $h$ holomorphic
near $z=0$ such that, for any $z\in S([\lambda],0;|\lambda q^{-N}|)$:
\begin{equation*}
f(z) = \sum_{n\ge N} ~ \dfrac{c_n}{z-q^{-n}\lambda} + h(z).
\end{equation*}

In the sequel, we shall give a caracterisation of all elements of
$\AA_{q}^{\Lambda}$ with help of its partial fraction decomposition
\index{partial fraction decomposition}
along divisor $\Lambda$.

\begin{theo}
\label{theo:fdecomposition} 
Let $f\in\BB^{\Lambda}$. The following assertions are equivalent:
\begin{description}
\item[(i)]{One has $f \in \AA_{q}^{\Lambda}$.}
\item[(ii)]{There exists an integer $N_0$ and $\nu$ sequences 
$(\alpha_{i,j,n})_{n\ge N_0}$ ($1 \leq i \leq m$, $0 \leq j< \nu_i$)
of $q$-Gevrey level $(-\nu)$ and a function $h$ holomorphic at 
$0 \in \C$ such that the following equality holds over the sector 
$S(\Lambda,0;|q^{-N_0}\lambda_m|)$:
\begin{equation}
\label{eqn:fLambda}
f(z) = \sum_{n\ge N_0} \sum_{1\le i\le m} \sum_{0\le j<\nu_i}
\dfrac{\alpha_{i,j,n}}{(z-\lambda_i q^{-n})^{j+1}} + h(z).
\end{equation}
}
\end{description}
\end{theo}
\begin{proof}  
Recall that $\nu := \lmod \Lambda \rmod \in \N^{*}$.
Let $f \in \BB^{\Lambda}$. Under assumption \eqref{hypothese:lambdas},
page \pageref{hypothese:lambdas} about the $\lambda_i$, we can choose
\label{pourrefplusloin}
a real $R>0$ and an integer $N_0$ such that, on the one hand, one has
$\lmod q^{-N_0}\lambda_m \rmod < R < \lmod q^{-N_0+1}\lambda_1 \rmod$
and, on the other hand, $f$ is defined and analytic in the open 
sector $S(\Lambda,0;R)$. Therefore, the only possible singularities 
of $f$ in the punctured disc $0 < \lmod z \rmod < R$ belong to the 
half $q$-spirals $\lambda_i q^{-N_0-\N}$, $1 \le i \le m$. \\

\noindent
\underline{\textbf{Proof of (i) $\Rightarrow$ (ii):}}
It will rest on lemmas \ref{lemm:poles}, \ref{lemm:eta} 
and \ref{lemm:h}.

\begin{lemm}
\label{lemm:poles}
We keep the above notations $R$, $N_0$, and moreover assume that
$f \in \AA_q^\Lambda$. Then $f$ has a pole with multiplicity at 
most $\nu_i$ at each point of the half $q$-spiral $\lambda_i q^{-N_0-\N}$.
\end{lemm}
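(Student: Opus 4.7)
The plan is to extract from the $q$-Gevrey asymptotic estimate a pointwise bound on $|f(z)|$ in terms of the distance $d_q(z,\Lambda)$, and then to compare this distance to $|z - \lambda_i q^{-n}|^{\nu_i}$ locally near each point of the $q$-spiral. Since the singularities of $f$ in the punctured disc $|z|<R$ lie on the union of the half-spirals $\lambda_i q^{-N_0-\N}$ and $f$ is analytic elsewhere, controlling the rate of blow-up at each such point will imply that the singularity is a pole of the prescribed order.

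First, I would take $N=0$ in the asymptotic inequality \eqref{eqn:fN} from definition \ref{defi:ALambda}: this yields a constant $C>0$ such that, for every $\epsilon>0$ and every $z\in S(\Lambda,\epsilon;R)$,
\[
|f(z)| < \frac{C}{\epsilon}.
\]
Given any $z$ in the punctured disc $0<|z|<R$ that does not belong to the support of $\Lambda$, apply this with $\epsilon := d_q(z,\Lambda)/2$ (say): we obtain
\[
|f(z)| \leq \frac{2C}{d_q(z,\Lambda)}.
\]
This is the key global estimate.

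The main (and only real) step is then a local comparison of $d_q(z,\Lambda) = \prod_j d_q(z,[\lambda_j])^{\nu_j}$ with $|z-\lambda_i q^{-n}|^{\nu_i}$ for $z$ near the fixed singular point $\lambda_i q^{-n}$ (with $n\geq N_0$). For $z$ sufficiently close to $\lambda_iq^{-n}$, the infimum defining $d_q(z,[\lambda_i])=\inf_k|1-z/(\lambda_iq^{-k})|$ is attained at $k=n$ (this uses lemma \ref{lemm:a1} applied to $a = z\lambda_i^{-1}q^n$), and therefore
\[
d_q(z,[\lambda_i]) \;=\; \left|1-\frac{z}{\lambda_iq^{-n}}\right| \;=\; \frac{|z-\lambda_iq^{-n}|}{|\lambda_iq^{-n}|}.
\]
For each $j\neq i$, the $q$-spiral $[\lambda_j]$ is distinct from $[\lambda_i]$, so $d_q(\lambda_iq^{-n},[\lambda_j])>0$; by continuity, $d_q(z,[\lambda_j])$ remains bounded below by a positive constant $c_{i,j,n}>0$ on a sufficiently small neighbourhood of $\lambda_iq^{-n}$. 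Combining these, there exist a radius $r_{i,n}>0$ and a constant $\kappa_{i,n}>0$ such that
\[
d_q(z,\Lambda) \;\geq\; \kappa_{i,n}\,|z-\lambda_iq^{-n}|^{\nu_i}
\]
whenever $0<|z-\lambda_iq^{-n}|<r_{i,n}$ and $z$ is outside the support of $\Lambda$.

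Putting the two estimates together yields
\[
|(z-\lambda_iq^{-n})^{\nu_i}\,f(z)| \;\leq\; \frac{2C}{\kappa_{i,n}}
\]
for $z$ in a punctured neighbourhood of $\lambda_iq^{-n}$. By Riemann's removable singularity theorem applied to the bounded holomorphic function $(z-\lambda_iq^{-n})^{\nu_i}f(z)$, this function extends analytically to $\lambda_iq^{-n}$, and hence $f$ has at that point a pole of multiplicity at most $\nu_i$, as claimed. The only delicate point of the argument is the local lower bound on $d_q(z,\Lambda)$, which is a routine consequence of the separation of the spirals $[\lambda_j]$ already exploited in proposition \ref{prop:lambdamu}; no new estimate is required.
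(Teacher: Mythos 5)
Your proof is correct and follows essentially the same route as the paper: both take $N=0$ and $\epsilon = d_q(z,\Lambda)/2$ in the defining estimate \eqref{eqn:fN} to get $|f(z)| \lesssim 1/d_q(z,\Lambda)$, then compare $d_q(z,\Lambda)$ locally near $\lambda_i q^{-n}$ to $|z-\lambda_iq^{-n}|^{\nu_i}$. Your write-up is slightly more careful — the paper writes $d_q(z,\Lambda) = |1-z/z_{i,n}|^{\nu_i}$ near $z_{i,n}$, which is really an asymptotic equivalence because of the residual factors $\prod_{j\neq i} d_q(z,[\lambda_j])^{\nu_j}$; you handle those factors explicitly by bounding them below by positive constants — but the underlying argument is identical.
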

\begin{proof}
Let $n \geq N_0$ be an integer. When $z$ tends to 
$z_{i,n} := \lambda_i q^{-n}$, one has 
$d_q(z,\Lambda) = \lmod 1- \dfrac{z}{z_{i,n}} \rmod^{\nu_i} \to 0$. 
Taking $\epsilon := \dfrac{d_q(z,\Lambda)}{2}$ and $N := 0$ in
relation \eqref{eqn:fN}, definition \ref{defi:ALambda},
one finds:
$$
\lmod f(z) \rmod < \dfrac{2C}{\lmod 1-\frac z{z_{i,n}}\rmod^{\nu_i}},
$$
which allows one to conclude.
\end{proof} 

Write $P_{i,n}$ the polar part of $f$ at point $\lambda_i q^{-n}$:
\begin{equation}
\label{eqn:polaire}
P_{i,n}(z) = \sum_{0\le j<\nu_i} 
\dfrac{\alpha_{i,j,n}}{(z-\lambda_{i} q^{-n})^{j+1}} \cdot
\end{equation}
We are going to study the growth of coefficients with respect to
index $n$ when $n$ goes to infinity. For that, we choose some small
$\eta \in \left(0,\frac {\lmod q \rmod}{1+\lmod q \rmod}\right)$ so that, 
putting $r_{i,n} := \lmod \lambda_i q^{-n} \rmod \eta$, all the open disks
\label{not70}
$D^\eta_{i,n} := \{z\in\C \tq \lmod z - \lambda_i q^{-n} \rmod <r_{i,n}\}$ 
($1 \leq i \leq m$, $n \geq N_0$) are pairwise disjoint. Also write
$\CC^\eta_{i,n} = \partial^+ D^\eta_{i,n}$ 
the boundary of disk $D^\eta_{i,n}$, with positive orientation.

\begin{lemm}
\label{lemm:eta} 
Let $\RR_N := f(z) - \sum\limits_{k=0}^{N-1} a_n z^n$, the $N$-th 
remainder of $f$. For all $z \in \CC^\eta_{i,n}$, one has:
\begin{equation}
\label{eqn:RN}
\lmod \RR_N(z) \rmod < 
\dfrac{2C}{\eta^\nu}~\left(2A \lmod \lambda_i q^{-n} \rmod \right)^N
\lmod q \rmod^{N^2/(2\nu)}~,
\end{equation}
where $C$ and $A$ are the constants in relation \eqref{eqn:fN} 
of definition \ref{defi:ALambda}.
\end{lemm}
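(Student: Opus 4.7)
The plan is to apply the defining estimate \eqref{eqn:fN} of $\AA_q^\Lambda$ directly to the remainder $\RR_N(z)$ on the circle $\CC^\eta_{i,n}$. Two ingredients are needed: an upper bound on $|z|$ and a lower bound on $d_q(z,\Lambda)$, the latter playing the role of $\epsilon$ in \eqref{eqn:fN}.

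First I would handle $|z|$: since $z \in \CC^\eta_{i,n}$ means $\lmod z - \lambda_i q^{-n} \rmod = \eta \lmod \lambda_i q^{-n} \rmod$ with $\eta < 1$, the triangle inequality immediately gives $\lmod z \rmod \leq (1+\eta)\lmod \lambda_i q^{-n}\rmod < 2 \lmod \lambda_i q^{-n}\rmod$, which accounts for the factor $(2A\lmod \lambda_i q^{-n}\rmod)^N$ in the target bound.

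The core step is to prove that $d_q(z,\Lambda) \geq \eta^\nu/2$ (up to a harmless further restriction on $\eta$) uniformly on $\CC^\eta_{i,n}$. I would split the product $d_q(z,\Lambda) = \prod_k d_q(z,[\lambda_k])^{\nu_k}$ into two contributions. For $k = i$, writing $z/(\lambda_i q^{-n}) = 1 + \eta e^{\ii\theta}$, the hypothesis $\eta < \lmod q \rmod/(1+\lmod q\rmod)$ already used for disjointness of the disks $D^\eta_{i,n}$ yields, for every $k \in \Z$, the inequality $\lmod 1 - q^{n-k}(1 + \eta e^{\ii\theta})\rmod \geq \eta$, so that $d_q(z,[\lambda_i]) \geq \eta$. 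For $k \neq i$, I would use the $q$-invariance of $d_q$ in its first argument to rewrite $d_q(z,[\lambda_k]) = d_q(q^{n-N_0} z, [\lambda_k])$; the point $q^{n-N_0}z$ lies on the compact circle $\CC^\eta_{i,N_0}$, which is disjoint from the spiral $[\lambda_k]$, giving a lower bound $d_q(z,[\lambda_k]) \geq c_k > 0$ independent of $n$. Combining both contributions and using $\nu_i \leq \nu$ together with $\eta \leq 1$ to absorb the constants $c_k^{\nu_k}$, one gets a uniform lower bound $d_q(z,\Lambda) \geq \eta^\nu/2$.

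With these two bounds in hand, applying \eqref{eqn:fN} with $\epsilon := \eta^\nu/2$ yields \eqref{eqn:RN} at once. The main obstacle is not conceptual but rather the careful bookkeeping of constants: one must convert the naturally occurring factor $\eta^{\nu_i}$ coming from the $i$-th $q$-spiral into the symmetric $\eta^\nu$ appearing in the statement, and simultaneously verify that the single threshold $\eta < \lmod q \rmod/(1+\lmod q \rmod)$ suffices to ensure disjointness of the disks, the lower bound $d_q(z,[\lambda_i]) \geq \eta$ on $\CC^\eta_{i,n}$, and the uniform positivity of the $c_k$.
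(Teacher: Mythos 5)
Your first step (bounding $|z|$ on $\CC^\eta_{i,n}$) agrees with the paper and is fine. The gap is in the lower bound for $d_q(z,\Lambda)$. Your treatment of the $k\neq i$ spirals via a compactness argument produces constants $c_k>0$ but gives no lower bound on how small they can be, and the sentence ``using $\nu_i\leq\nu$ together with $\eta\leq 1$ to absorb the constants $c_k^{\nu_k}$'' does not actually yield $\prod_{k\neq i}c_k^{\nu_k}\geq 1/2$. That inequality need not hold: each $c_k$ lies in $(0,1]$ and can be close to $0$ if the spirals $[\lambda_i]$ and $[\lambda_k]$ are close. The inequality $\eta^{\nu_i}\geq\eta^\nu$ is correct, but it only reduces the problem to showing $\prod_{k\neq i}c_k^{\nu_k}\geq 1/2$, which is an unsupported hypothesis on the geometry. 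As a consequence your argument proves at best a version of \eqref{eqn:RN} with a constant $C/(\tilde c\,\eta^\nu)$, $\tilde c:=\frac{1}{2}\prod_{k\neq i}c_k^{\nu_k}$, in place of the stated $2C/\eta^\nu$.

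The paper avoids the compactness argument and the extraneous constants altogether by noting that, because \emph{all} the open disks $D^\eta_{j,m}$ ($1\leq j\leq m$, $n\geq N_0$) are pairwise disjoint, a point $z\in\CC^\eta_{i,n}=\partial D^\eta_{i,n}$ cannot lie in the interior of any $D^\eta_{j,m}$: a boundary point of one open disk lying in the interior of a disjoint open disk would force the two open disks to intersect. This gives $d_q(z,[\lambda_j])\geq\eta$ for \emph{every} $j$ (with equality for $j=i$), hence $d_q(z,\Lambda)=\prod_j d_q(z,[\lambda_j])^{\nu_j}\geq\eta^\nu$ directly. Taking $\epsilon:=\eta^\nu/2$ in \eqref{eqn:fN} then yields \eqref{eqn:RN} with exactly the constant $2C/\eta^\nu$. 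You should replace your $k\neq i$ step with this simple disjointness observation (and note that it subsumes the $k=i$ computation you did as well), which makes the whole proof a two-line application of \eqref{eqn:fN}.
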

\begin{proof} 
Since $\eta < \frac {\lmod q \rmod}{1+\lmod q \rmod}$, one has $d_q(z,[\lambda_i]) = \eta$ 
and $d_q(z,[\lambda_j]) \geq \eta$ for $j \neq i$. One deduces that
$d_q(z,\Lambda) \geq \eta^{\max(\nu_1,\ldots,\nu_m)} \geq \eta^\nu$, which
allows one to write, taking $\epsilon := \dfrac{\eta^\nu}{2}$ in
\eqref{eqn:fN}:
$$
\lmod \RR_N(z) \rmod < 
\dfrac{2C}{\eta^\nu}~A^N~ \lmod q \rmod^{N^2/(2\nu)}~ \lmod z\rmod^N.
$$
Relation \eqref{eqn:RN} follows, using the fact that
$\lmod z \rmod \leq (1+\eta) \lmod \lambda_i q^{-n} \rmod <
2 \lmod \lambda_i q^{-n} \rmod$.
\end{proof} 

We now express the coefficients $\alpha_{i,j,n}$ in 
\eqref{eqn:polaire} with the help of Cauchy formula
in the following way:
$$
\alpha_{i,j,n} = 
\dfrac{1}{2 \ii \pi} \oint_{\CC_{i,n}^\eta} f(z)~(z-\lambda_i q^{-n})^j ~dz,
$$
that is, for any integer $N \geq 0$:
$$
\alpha_{i,j,n}=
\frac1{2\ii \pi} 
\oint_{\CC_{i,n}^\eta}\RR_N(z)~(z-\lambda_i q^{-n})^j ~dz.
$$
Taking in account estimation \eqref{eqn:RN} above entails: 
$$
\lmod \alpha_{i,j,n} \rmod \leq 
2 C~\eta^{j+1-\nu}~\lmod \lambda_i q^{-n} \rmod^{j+1}~X^N~Q^{N^2/2},
$$
where $X := 2A \lmod \lambda_iq^{-n} \rmod$ and $Q := \lmod q \rmod^{1/\nu}$. 
Relation \eqref{eqn:minqgevrey} implies that, if $X < 1$, then:
$$
\min_{N\in\N} \bigl(X^N~Q^{N^2/2}\bigr) \leq ~Q^{1/8}~e^{-\frac{\ln^2X}{2\ln Q}},
$$ 
which implies that:
$$
\lmod \alpha_{i,j,n} \rmod 
\leq 2 C \eta^{j+1-\nu} \,\lmod \lambda_i \rmod^{j+1}\,
\lmod q \rmod^{1/(8\nu)-(j+1)n}\,
e^{-\frac{\nu \ln^2 (2 A |\lambda_i q^{-n}|)}{2\ln |q|}},
$$
so that all the sequences $(\alpha_{i,j,n})_{n\ge N_0}$ 
are $q$-Gevrey of level $(-\nu)$. Thus, for each pair $(i,j)$
of indexes, the series:
$$
S_{i,j}(z) :=\sum_{n\ge N_0} \dfrac{\alpha_{i,j,n}}{(z-\lambda_iq^{-n})^{j+1}}
$$
converges normally on each compact subset of the sector $S(\Lambda,0)$. 

Last, if $0 < \lmod z \rmod < R$ and if
$z \notin \bigcup\limits_{1\le i\le m}\lambda_iq^{-N_0+\N}$, 
let us put:
$$
h(z) := f(z) - \sum_{1\le i\le m} \sum_{0\le j<\nu_i} S_{i,j}(z).
$$
From normal convergence of the series $S_{i,j}(z)$ on every compact
subset of $S(\Lambda,0)$, 
we draw:
\begin{equation}
\label{eqn:hP}
h(z) = f(z) - \sum_{n\ge N_0} \sum_{1\le i\le m} P_{i,n}(z).
\end{equation}

\begin{lemm}
\label{lemm:h}
The function $h$ represents the germ of an analytic function at $z=0$. 
\end{lemm}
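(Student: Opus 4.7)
The plan is to establish that $h$ extends holomorphically across $z=0$ by exhibiting it as a bounded holomorphic function on a punctured neighborhood, and then invoking Riemann's removable singularity theorem. The proof will split naturally into two stages: first, showing that $h$ is holomorphic throughout the punctured disk $\{0 < \lmod z \rmod < R\}$, and second, showing that it is uniformly bounded there.

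For the first stage, the key observation is that on the punctured disk $\{0 < \lmod z \rmod < R\}$, the only possible singularities of $f$ lie on the half-spirals $\lambda_i q^{-N_0 - \N}$ (with multiplicity at most $\nu_i$, by lemma \ref{lemm:poles}). The polar part of $f$ at $\lambda_i q^{-n}$ is by definition $P_{i,n}(z) = \sum_{0\le j<\nu_i}\alpha_{i,j,n}(z-\lambda_iq^{-n})^{-(j+1)}$, which is exactly what we subtract in \eqref{eqn:hP}. The convergence estimates already obtained for the sequences $(\alpha_{i,j,n})_n$ show that each $S_{i,j}$ converges normally on compact subsets of $S(\Lambda,0)$, so the double series converges absolutely in the punctured disk away from the poles; at each pole only one term contributes a singularity which is exactly compensated by $f$. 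Hence $h$ extends to a holomorphic function on all of $\{0 < \lmod z \rmod < R\}$.

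For the second stage, I would fix some small $\epsilon_0 > 0$ and work in the closed sector $S(\Lambda,\epsilon_0;R)$. In this sector, $f$ is bounded by \eqref{eqn:fN} with $N=0$. To bound each $S_{i,j}(z)$, the crucial inequality is that for $z \in S(\Lambda,\epsilon_0;R)$ we have $\lmod 1 - z/(\lambda_i q^{-n}) \rmod \geq \epsilon_0^{1/\nu_i}$, so that $\lmod z - \lambda_i q^{-n} \rmod \geq \epsilon_0^{1/\nu_i} \lmod \lambda_i q^{-n} \rmod$. Combined with the $q$-Gevrey bound $\lmod \alpha_{i,j,n}\rmod \leq C' (A')^n \lmod q \rmod^{-\nu n^2/2}$, each term of $S_{i,j}(z)$ is bounded by a constant times $(A'')^n \lmod q \rmod^{(j+1)n - \nu n^2/2}$, yielding a geometrically convergent series whose sum is uniformly bounded on the whole sector. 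Thus $h$ is bounded on $S(\Lambda,\epsilon_0;R)$.

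To promote this sectorial bound to a bound on a full punctured disk, I would choose a sequence of radii $r_k \to 0$ (for instance $r_k = \lmod z_\star q^{-k}\rmod$ for some $z_\star$ chosen so that the circle $\lmod z \rmod = r_k$ lies entirely inside $S(\Lambda,\epsilon_0;R)$, which is possible by proposition \ref{prop:BB}). On each such circle $h$ is bounded by the same constant $M$; since $h$ is holomorphic on the annuli bounded by consecutive $r_k$, the maximum modulus principle yields $\lmod h \rmod \leq M$ on a full punctured neighborhood of $0$. The main obstacle here is this choice of radii and the uniformity of the bound, which is exactly where the $q$-Gevrey estimate on the $\alpha_{i,j,n}$ (giving a convergent majorant series \emph{independent of $z$}) is essential. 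Riemann's theorem then concludes that $0$ is a removable singularity, so $h$ is the germ of an analytic function at $0$.
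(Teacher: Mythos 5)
Your proof is correct and follows essentially the same route as the paper's: first establish holomorphy of $h$ on the punctured disk by observing that the $P_{i,n}$ are exactly the polar parts of $f$, then obtain a uniform bound on a sequence of circles shrinking to $0$, and finally invoke the maximum principle together with Riemann's removable singularity theorem. The only cosmetic difference is in how the circles are selected and where the boundedness of $f$ is read off: you fix a sector $S(\Lambda,\epsilon_0;R)$, bound both $f$ (via \eqref{eqn:fN} with $N=0$) and the $S_{i,j}$ uniformly on it, and then choose radii $r_k = \lmod z_\star q^{-k}\rmod$ that keep the circles inside the sector; the paper directly takes the circles $\lmod z\rmod = R\lmod q\rmod^{-n}$ (with $R$ already fixed strictly between $\lmod q^{-N_0}\lambda_m\rmod$ and $\lmod q^{-N_0+1}\lambda_1\rmod$, so they avoid the spirals), invoking proposition \ref{prop:BB} for $f$ and normal convergence for the $S_{i,j}$. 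The two are equivalent in substance.
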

\begin{proof} 
Since all non zero singularities of $f$ in the disk
$\lmod z \rmod < R$ are necessarily situated on the union
of half $q$-spirals $\bigcup\limits_{1\le i\le m}\lambda_i q^{-N_0+\N}$
and each $P_{i,n}(z)$ represents its polar part at each point 
$\lambda_iq^{-n}$ of the latter, the function $h$ has an analytic
continuation to the punctured disk $0 < \lmod z \rmod < R$. 
Moreover, if $\Omega := \bigcup\limits_{n>N_0}\CC(0;R\lmod q \rmod^{-n})$ 
denotes the union of the circles centered at $z=0$ and with
respective radii $R\lmod q \rmod^{-n}$ with $n > N_0$, all the series $S_{i,j}(z)$ 
converge normally over $\Omega$, and therefore remain bounded on
that family of circles. Besides, from proposition \ref{prop:BB}, 
$f$ is also bounded on $\Omega$ because
$f\in\BB^\Lambda$. It follows that $h$ is bounded on
$\Omega$, which entails the analyticity of $h$ at $z=0$. 
\end{proof} 

Combining relations \eqref{eqn:polaire} and \eqref{eqn:hP}, 
we get decomposition \eqref{eqn:fLambda} of $f$, thus achieving
the proof of (i) $\Rightarrow$ (ii). \\

\noindent
\underline{\textbf{Proof of (ii) $\Rightarrow$ (i):}}
It will rest on lemma \ref{lemm:reste}.

\label{not71}
\begin{lemm}
\label{lemm:reste}
Let $j\in\N$, $N\in\N$ and write $\RR_{j,N}$ the rational
fraction defined by relation:
\begin{equation}
\label{eqn:resteN}
\dfrac{1}{(1-t)^{j+1}} =
\sum_{n=0}^{N-1} \binom{n+j}n ~ t^n + \RR_{j,N}(t).
\end{equation}
Let $\delta>0$. If $t\in\C$ satisfies $\lmod 1-t \rmod \geq \delta$,
then:
\label{not72}
\begin{equation}
\label{eqn:RjN}
\lmod \RR_{j,N}(t) \rmod < 
C_{j,N}^{\delta,r}~ \dfrac{\lmod t \rmod^N}{\delta^{j+1}},
\quad 
C_{j,N}^{\delta,r} =
\dfrac{1}{2-r}~
\bigl(\frac{1+r\delta}{r-1}\bigr)^j~
\bigl(\frac{1+r\delta}{1+\delta}\bigr)^N
\end{equation}
for all $r\in(1,2)$.
\end{lemm}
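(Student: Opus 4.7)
The plan is to express $\RR_{j,N}(t)$ as a Cauchy-type integral concentrated at the single ``bad'' singularity $s=1$ of the rational function
$$g(s) := \dfrac{1}{(1-s)^{j+1}\, s^N\,(s-t)},$$
and then bound that integral by the standard length-times-supremum estimate on a small circle around $s=1$ whose radius is tuned to $r$ and $\delta$.

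The first step is to derive the representation
$$\RR_{j,N}(t) = -\dfrac{t^N}{2\ii\pi}\oint_{\lmod 1-s \rmod = \rho} g(s)\, ds,\qquad 0 < \rho < \min(1,\lmod 1-t \rmod).$$
The function $g$ has three finite poles (of orders $N$, $1$, $j+1$ at $0$, $t$, $1$) and is $O(\lmod s \rmod^{-N-j-2})$ at infinity, so its three finite residues sum to zero. The residue at $s=t$ equals $1/\bigl((1-t)^{j+1} t^N\bigr)$; the residue at $s=0$, read off as the coefficient of $s^{N-1}$ in the Cauchy product of $(1-s)^{-j-1} = \sum_{m\geq 0}\binom{m+j}{m} s^m$ and $(s-t)^{-1} = -\sum_{k\geq 0} s^k/t^{k+1}$, equals $-t^{-N}\sum_{n=0}^{N-1}\binom{n+j}{n} t^n$. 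Substituting into the residue sum and comparing with the definition of $\RR_{j,N}$ produces the announced identity.

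The heart of the argument is then the choice
$$\rho := \dfrac{(r-1)\,\delta}{1+r\delta},$$
which satisfies $\rho < \delta \leq \lmod 1-t \rmod$ and $\rho < 1$, so is admissible. On the circle $\lmod 1-s \rmod = \rho$ the inverse triangle inequality gives
$$\lmod s \rmod \geq 1-\rho = \dfrac{1+\delta}{1+r\delta}, \qquad \lmod s-t \rmod \geq \lmod 1-t \rmod - \rho \geq \dfrac{\delta(2-r+r\delta)}{1+r\delta},$$
and using $\rho^{-j} = \bigl((1+r\delta)/(r-1)\bigr)^j \delta^{-j}$ the $ML$-bound produces immediately
$$\lmod \RR_{j,N}(t) \rmod \leq \dfrac{\lmod t \rmod^N}{\delta^{j+1}} \left(\dfrac{1+r\delta}{r-1}\right)^j \left(\dfrac{1+r\delta}{1+\delta}\right)^N \dfrac{1+r\delta}{2-r+r\delta}.$$

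It remains to compare the last factor with $1/(2-r)$. Expanding, $(1+r\delta)(2-r)-(2-r+r\delta) = -r(r-1)\delta$, strictly negative for $r\in(1,2)$ and $\delta>0$; hence $(1+r\delta)/(2-r+r\delta) < 1/(2-r)$, upgrading the previous estimate to the strict inequality with constant $C_{j,N}^{\delta,r}$. The only delicate point in the whole argument is the exact choice of $\rho$: it must stay below $\delta$ (so the contour avoids the pole at $t$) and take the precise form $(r-1)\delta/(1+r\delta)$ so that $1-\rho$ yields the sharp $N$-dependent factor $((1+r\delta)/(1+\delta))^N$ and $\rho^{-j}$ yields the sharp $j$-dependent factor $((1+r\delta)/(r-1))^j$; once this balance is found, everything else is a routine contour estimation.
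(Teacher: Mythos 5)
Your proof is correct, and it takes a genuinely different route from the paper's. The paper begins by taking the $j$-th derivative of the elementary geometric remainder identity, obtaining $\RR_{j,N}(t) = \frac{1}{j!}\bigl(\frac{t^{N+j}}{1-t}\bigr)^{(j)}$, then invokes Cauchy's formula for the $j$-th derivative to write this as an integral of $\frac{s^{N+j}}{(1-s)(s-t)^{j+1}}$ over a circle of \emph{relative} radius $\rho\lmod t\rmod$ centered at $t$, with the choice $\rho = (r-1)\delta/(1+\delta)$. You instead read $\RR_{j,N}(t)$ off as $-t^N$ times the residue at $s=1$ of the rational function $g(s) = 1/\bigl((1-s)^{j+1}s^N(s-t)\bigr)$, using the sum-of-residues-vanishes argument (valid since $g$ decays like $\lmod s\rmod^{-N-j-2}$ at infinity) to eliminate the residues at $0$ and $t$, and then estimate over a circle of \emph{absolute} radius $\rho = (r-1)\delta/(1+r\delta)$ centered at the singularity $s=1$. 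The two computations package the $N$- and $j$-dependence differently: in the paper the bound $\lmod s\rmod \leq (1+\rho)\lmod t\rmod$ yields a single factor $(1+\rho)^{N+j}$, while your form separates $\rho^{-j}$ (from the pole order) and $(1-\rho)^{-N}$ (from the $s^N$ factor); with your radius tuning the $ML$-estimate lands on a constant strictly smaller than $C_{j,N}^{\delta,r}$, and you then relax it, which makes the strict inequality in \eqref{eqn:RjN} visible for free, whereas the paper's estimate hits $C_{j,N}^{\delta,r}$ on the nose. One cosmetic point: like the paper (which explicitly sets $t\neq 0$ aside), you should note the degenerate value $t=0$, where the poles of $g$ at $0$ and $t$ coalesce; the inequality is trivial there, and in any case the representation $\RR_{j,N}(t)=-t^N\,\text{Res}_{s=1}g$ persists by continuity, so nothing is lost.
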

\begin{proof}
The relation \eqref{eqn:RjN} being trivially satisfied at
point $t = 0$, we assume that $t \neq 0$. Taking the $j$-th 
derivative for each side of relation 
$\dfrac{1}{1-t} = \sum\limits_{n=0}^{N+j-1} t^n + \dfrac{t^{N+j}}{1-t}$, 
one finds:
$$
\RR_{j,N}(t) =\dfrac{1}{j!}~\bigl(\frac{t^{N+j}}{1-t}\bigr)^{(j)} =
\dfrac{1}{2\ii \pi}~\oint_{\CC(t,\rho)}\frac{s^{N+j}}{1-s}~\frac{ds}{(s-t)^{j+1}}~,
$$
where $\CC(t,\rho)$ denotes the positively oriented boundary of
the circle centered at $s = t$ and with radius $\rho \lmod t\rmod$,
with $\rho < \lmod 1 - \dfrac{1}{t} \rmod$. One deduces that:
\begin{equation}
\label{eqn:Rrho}
\lmod \RR_{j,N}(t) \rmod \leq 
\dfrac{(1+\rho)^{N+j}~\lmod t \rmod^N}
{\bigl(\lmod 1-t \rmod -\rho \lmod t\rmod\bigr)~\rho^j},
\end{equation}
where $\rho$ denotes a real number located between $0$ and
$\lmod 1-\dfrac{1}{t} \rmod$.

If $\lmod 1 - t \rmod > \delta$, with $t = 1 + Re^{i\alpha}$ 
($\alpha\in\R$), one gets: 
$\lmod 1-\dfrac{1}{t} \rmod \geq 
\dfrac{R}{1+R} > \dfrac{\delta}{1+\delta}$. 
Choose $\rho := \dfrac{r'\delta}{1+\delta}$ with $r' \in (0,1)$, 
so that 
$\lmod 1-t \rmod - \rho \lmod t \rmod \geq
 \lmod 1-t \rmod (1-r') > (1-r')~\delta$;
relation \eqref{eqn:RjN} follows at once (along with lemma
\ref{lemm:reste}) with the help of \eqref{eqn:Rrho}, with 
$\rho := \dfrac{(r-1)~\delta}{1+\delta}$ and $r := r'+1 \in (1,2)$.
\end{proof} 

We shall now use lemma \ref{lemm:reste} to prove that
(ii) $\Rightarrow$ (i); by linearity, it suffices to check
that a series of the form:
\begin{equation}
\label{eqn:Salpha}
S(z) := \sum_{n\ge 0}\dfrac{\alpha_n}{(z-\lambda q^{-n})^{j+1}}
\end{equation}
defines a function in space $A_q^\Lambda$, where
$\lambda = \lambda_i$ for some $i\in\{1,2,\ldots,m\}$, 
$0 \leq j < \nu_i$ and where $(\alpha_n)$ denotes a $q$-Gevrey 
sequence of order $(-\nu)$: there are $C$, $A > 0$ such that:
\begin{equation}
\label{eqn:alphan}
\forall n \in \N ~,~
\lmod \alpha_n \rmod \leq C A^n \lmod q\rmod^{-\nu n(n-1)/2}.
\end{equation}

Let $z\in\C^*$ such that $d_q(z,\Lambda) > \epsilon$; we have
$d_q(z,[\lambda])^{\nu_i} >\epsilon$, hence
$\lmod 1-\dfrac{z}{\lambda q^{-n}} \rmod > \sqrt[\nu_i]\epsilon$. 
Let $N \in \N$ and apply to each term of the series $S(z)$ 
formula \eqref{eqn:resteN} with $t = \dfrac{z}{\lambda q^{-n}}$
to obtain formally $S(z) = S_N(z) + R_N(z)$, where one puts:
$$
S_N(z) :=
\sum_{n\ge 0} \sum_{k=0}^{N-1} \alpha_n \binom{k+j}{k}~
(-\lambda q^{-n})^{-j-1}~\left(\dfrac{z}{\lambda q^{-n}}\right)^k
$$
and:
$$
R_N(z) := \sum_{n\ge 0} \alpha_n~(-\lambda q^{-n})^{-j-1}~
\RR_{j,N}(\dfrac{z}{\lambda q^{-n}}).
$$
Since $(\alpha_n)$ has $q$-Gevrey decay of order $\nu$ (see 
\eqref{eqn:alphan}), we see that $S_N(z)$ and $R_N(z)$ are 
normally convergent series on any domain $\Omega$ such that 
$\inf\limits_{z\in\Omega} d_q(z,[\lambda]) > 0$. 

Define:
\begin{equation}
\label{eqn:Az}
{\aa}(z) := \sum_{n\ge 0} \alpha_n~z^n,
\quad
\tilde {\aa}(z) := \sum_{n\ge 0} \lmod \alpha_n \rmod ~z^n.
\end{equation}
From relation \eqref{eqn:alphan}, one gets:
$$
\lmod {\aa}(z) \rmod \leq \tilde {\aa}(\lmod z \rmod) \leq 
C~\sum_{n\ge 0} \lmod q\rmod^{-\nu n(n-1)/2}~\lmod Az \rmod^n <
C~e(Az;{\lmod q \rmod}^\nu).
$$
On the other hand, one can express the series $S_N(z)$ in the following
way: 
\begin{equation}
\label{eqn:an}
S_N(z) = \sum_{n=0}^{N-1} a_n z^n,\quad
a_n := (-\lambda)^{-j-1}\binom{n+j}n~{\aa}(q^{j+1+n})~\lambda^{-n}~;
\end{equation}
besides, the relation \eqref{eqn:RjN} implies that, putting
$\delta := \sqrt[\nu_i]\epsilon$: 
\begin{equation}
\label{eqn:RNz}
\lmod R_N(z) \rmod \leq K_{j,N}^{\delta,r}~\lmod z\rmod^N,\quad
K_{j,N}^{\delta,r} :=
\dfrac{C_{j,N}^{\delta,r}}{\lmod \lambda \rmod^{j+1+N}~\delta^{j+1}}~
\tilde{\aa}(\lmod q\rmod^{j+1+N})~,
\end{equation}
with $\tilde{\aa}(z)$ as in \eqref{eqn:Az} and $r \in (1,2)$. 

To sum up, we have seen that, if $d_q(z,\Lambda) > \epsilon > 0$ 
and $\delta = \sqrt[\nu_i]\epsilon$, then:
$$
\lmod S(z) - \sum_{n=0}^{N-1} a_n~z^n \rmod \leq K_{j,N}^{\delta,r}~\lmod z \rmod^N,
$$
the coefficients $a_n$ and $K_{j,N}^{\delta,r}$ being defined as in
\eqref{eqn:an} and \eqref{eqn:RNz} respectively. To conclude
that $S(z)$ admits $\sum\limits_{n\ge 0} a_n z^n$ as a $q$-Gevrey asymptotic
expansion of level $\nu$ in $\Lambda$ at $0$, it only remains to be
checked that one can find $C_0>0$, $A_0>0$ with:
\begin{equation}
\label{eqn:CA}
\min_{r\in(1,2)} K_{j,N}^{\delta,r} \leq 
\dfrac{C_0}{\epsilon}~A_0^N~\lmod q \rmod^{N^2/(2\nu)}
\end{equation}
for all $N\in\N$ and $j=0,\ldots,\nu_i-1$. To do that, note that,
after lemma \ref{lemm:croissanceqexp} and relation 
\eqref{eqn:minqgevrey}, one finds:
$$
\tilde{\aa}\left(\lmod q \rmod^{j+1+N}\right) \leq 
C'~\left(\lmod q \rmod^{\frac{j+1}\nu-\frac12}~A^{\frac1\nu}\right)^N~q^{N^2/(2\nu)}~,
$$
where $C'$ denotes a constant independant of $N$; this, along 
with definition \eqref{eqn:RNz} of $K_{j,N}^{\delta,r}$, entails
\eqref{eqn:CA} and we leave the details to the reader. The
proof of theorem \ref{theo:fdecomposition} is now complete.
\end{proof}


\subsection{Asymptotics and residues (II)}
\label{subsection:residusII}

After \cite{rz}, for any $f \in \AA_q^{[\lambda]}$, the function
$z \mapsto F(z) := \theta(-\frac z\lambda)~f(z)$ is analytic
in some punctured disk $0 < \lmod z \rmod < \lmod \lambda q^{-N} \rmod$, 
with (at most) a first order $q$-exponential growth when $z$ goes to $0$,
and such that its restriction to the $q$-spiral $[\lambda]$ has at
most a growth of $q$-Gevrey order $0$\footnote{That is, there exists 
constants $C,A > 0$ such that, for any integer $n \gg 0$, one has
$\lmod F(q^{-n}\lambda) \rmod \leq CA^n$.}. 
In order to extend this result to a general divisor $\Lambda$ 
(with degree $\nu > 1$), we introduce the following definition.

\begin{defi}
\label{defi:LambdaF} 
Let $F$ be a function defined and analytic in a neighborhood
of each of the points of the support of the divisor $\Lambda$ 
near $0$.
\begin{enumerate}
\item{We call \emph{values of $F$ on $\Lambda$ at $0$}, and
\index{values of $F$ on $\Lambda$ at $0$}
write $\Lambda F(0)$, the $\nu$ (germs at infinity of) sequences:
$$
\label{not73}
\Lambda F(0) :=
\left\{\left(F^{(j)}(\lambda_i q^{-n})\right)_{n \gg 0} ~:~
1\le i\le m, 0\le j<\nu_i\right\},
$$
where $F^{(j)}$ is the $j$-th derivative of $F$ (with $F^{(0)} = F$) 
and where "$n \gg 0$" means "$n$ great enough".}
\item{We say that $F$ has \emph{$q$-Gevrey order $k$ along divisor $\Lambda$ 
at $0$} 
\index{$q$-Gevrey order along a divisor}
if all its values are $q$-Gevrey sequences of order $k$.}
\item{We write $F \in \EE_0^{\Lambda}$ if $F$ is analytic in a
\label{not74}
neighborhood of $0$ punctured at $0$, has $q$-Gevrey order 
$\nu$ at $0$ and $q$-Gevrey order $0$ along $\Lambda$ at $0$.}
\end{enumerate}
\end{defi}

To simplify, we shall write:
\begin{equation}
\label{eqn:thetalambda}
\theta_\Lambda(z) := 
\prod_{j=1}^m\left(\theta\left(-\frac z{\lambda_j}\right)\right)^{\nu_j}~.
\end{equation}
Thus, if $f\in\BB^{\Lambda}$, $\theta_\Lambda f$ 
represents an analytic function in a punctured disk 
$0 < \lmod z \rmod < R$.

\begin{theo}
\label{theo:fF} 
Let $f \in \BB^{\Lambda}$ and $F := \theta_\Lambda f$. Then, 
$f \in \AA_{q}^{\Lambda}$ if, and only if, $F \in \EE_0^{\Lambda}$.
\end{theo}
\begin{proof}  
We prove separately the two implications. \\

\noindent
\underline{\textbf{Proof of 
$f \in \AA_q^\Lambda \Rightarrow F \in \EE_0^\Lambda$:}} 
It will rest on lemma \ref{lemm:theta'}.

\begin{lemm}
\label{lemm:theta'}
Let $\lambda \in \C^*$, $k \in \N^*$ and let  
$g(z) := (\theta_\lambda(z))^k$ for $z \in \C^*$. 
For $\ell \geq 0$, consider the $\ell$-th derivative
$g^{(\ell)}$ of $g$.
\begin{enumerate}
\item{The function $g^{(\ell)}$ has $q$-exponential growth
of order $k$ both at $0$ and infinity.}
\item{If $\ell <  k$, then $g^{(\ell)}(\lambda q^{-n}) = 0$ 
for all $n\in\N$; else, $\bigl(g^{(\ell)}(\lambda q^{-n})\bigr)_{n\ge 0}$ 
is a $q$-Gevrey sequence of order $k$.}
\item{Let $n \in \Z$ and put 
$u(z) := \dfrac{g(z)}{(z-\lambda q^{-n})^k}$ for all
$z \in \C^* \setminus \{\lambda q^{-n}\}$. Then $u$ has an analytic
continuation at $\lambda q^{-n}$, with:
\begin{equation}
\label{eqn:uell}
u^{(\ell)}(\lambda q^{-n}) = \dfrac{(\ell+k)!}{k!}~g^{(\ell+k)}(\lambda q^{-n}).
\end{equation}
In particular:
\begin{equation}
\label{eqn:u0}
u(\lambda q^{-n}) =
(-1)^{(n-1)k}~(q^{-1};q^{-1})_\infty^{3k}~\bigl(\frac1\lambda\bigr)^k~q^{kn(n+1)/2}~,
\end{equation}
\label{not75}
where $(q^{-1};q^{-1})_\infty = \prod\limits_{r\ge 0}(1-q^{-r-1})$ 
($q$-Pochhammer symbol).}
\index{$q$-Pochhammer symbol}
\end{enumerate}
\end{lemm}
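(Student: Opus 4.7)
The plan is to treat the three assertions in sequence, each reducing to elementary manipulations with Jacobi's theta function.

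For assertion (1), I would first observe that the pointwise bound $\lmod \theta(z) \rmod \leq e(z)$ recalled in subsection \ref{subsection:JacobiTheta}, combined with the fact that $e(z) = \theta(\lmod z \rmod; \lmod q \rmod)$ has $q$-exponential growth of order $1$ at $\infty$ (and, via $e(z) = e(1/qz)$, also at $0$), shows that $\theta_\lambda(z) = \theta(-z/\lambda)$ and hence $g = \theta_\lambda^{\,k}$ have $q$-exponential growth of order (at most) $k$ at $0$ and $\infty$. To transfer this bound to $g^{(\ell)}$, I would apply the Cauchy integral formula on circles of radius $\rho \lmod z \rmod$ for a fixed small $\rho > 0$: since $\bigl(e(\mu(1+\rho)z)\bigr)^k \leq C_\rho \bigl(e(\mu' z)\bigr)^k$ for suitable $\mu'$, the same order of growth is inherited by the derivative.

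For assertion (2), the key input is that the triple product expression \eqref{eqn:tripleproduit} shows that $\theta$ vanishes exactly on $-q^{\Z}$ with simple zeros. Consequently $\theta_\lambda$ vanishes simply on $[\lambda]$, hence $g$ has a zero of exact order $k$ at each $\lambda q^{-n}$, which immediately gives $g^{(\ell)}(\lambda q^{-n}) = 0$ for $\ell < k$. For $\ell \geq k$, the $q$-Gevrey estimate can be read off from the functional equation: iterating $\theta(y/q) = \theta(y)/y$ gives
$$\theta(-q^{-n}(1+w)) = (-1)^n\,q^{n(n-1)/2}\,(1+w)^{-n}\,\theta(-(1+w)).$$
Raising to the power $k$, one sees that the map $w \mapsto g(\lambda q^{-n}(1+w))$ equals $q^{kn(n-1)/2}$ times a function which, up to the factor $(1+w)^{-kn}$, is independent of $n$; applying Cauchy's formula on a fixed small disk in the $w$-variable then produces bounds of the form $\lmod g^{(\ell)}(\lambda q^{-n})\rmod \leq C A^n \lmod q \rmod^{kn(n-1)/2}$, i.e.\ $q$-Gevrey of order $k$.

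Assertion (3) is then essentially a consequence of the same computation. The analyticity of $u$ at $\lambda q^{-n}$ is automatic from the vanishing of order exactly $k$ established in (2). The identity relating $u^{(\ell)}(\lambda q^{-n})$ to $g^{(\ell+k)}(\lambda q^{-n})$ is just the general Leibniz formula applied to $g(z) = (z - \lambda q^{-n})^k u(z)$, which yields $g^{(\ell+k)}(\lambda q^{-n}) = \binom{\ell+k}{k}\,k!\,u^{(\ell)}(\lambda q^{-n})$. Finally, to obtain the closed form \eqref{eqn:u0}, I would substitute the previous functional equation into $u(z) = g(z)/(z - \lambda q^{-n})^k$, use that $\theta(z)/(1+z)$ admits an analytic extension at $z=-1$ whose value, read off from the triple product \eqref{eqn:tripleproduit}, is
$$\lim_{z \to -1} \frac{\theta(z)}{1+z} = \prod_{n \geq 0}(1-q^{-n-1}) \cdot \prod_{n \geq 1}(1-q^{-n}) \cdot \prod_{n \geq 0}(1-q^{-n-1}) = (q^{-1};q^{-1})_\infty^{3},$$
and then collect the powers of $q$, $\lambda$ and of $-1$ coming from $(\lambda q^{-n})^{-k}$ and $(-1)^{nk}$. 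The main obstacle, modest but requiring care, is precisely this last bookkeeping step: tracking the signs and the exponent of $q$ through iteration of the functional equation so that the terms $q^{kn(n-1)/2}$ and $q^{kn}$ combine into the claimed $q^{kn(n+1)/2}$, and verifying that the cube of the Pochhammer symbol indeed appears.
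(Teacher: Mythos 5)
Your proof takes essentially the same route as the published one. For assertion (3) you apply Leibniz to $g(z) = (z-\lambda q^{-n})^k u(z)$ and compute the constant via the triple product, exactly as the paper indicates. For assertions (1) and (2) the paper's own argument is run at the level of Laurent coefficients: write $g^{(\ell)}$ as a power series in $z$ plus one in $1/z$ and invoke lemma \ref{lemm:croissanceqexp}, noting that differentiation preserves the $q$-Gevrey class of a coefficient sequence. You instead bound $g^{(\ell)}$ by the Cauchy formula on circles of radius proportional to $|z|$, and estimate $g^{(\ell)}(\lambda q^{-n})$ by the substitution $z = \lambda q^{-n}(1+w)$ followed by Cauchy on a fixed small $w$-disk. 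Both variants are sound and of comparable length; the coefficient argument is slightly cleaner in that it avoids tracking the $(1+w)^{-kn}$ factor, but your route has the small advantage of being self-contained.

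One discrepancy you should make explicit. Your Leibniz computation is correct and gives
$g^{(\ell+k)}(\lambda q^{-n}) = \binom{\ell+k}{k}\,k!\,u^{(\ell)}(\lambda q^{-n}) = \frac{(\ell+k)!}{\ell!}\,u^{(\ell)}(\lambda q^{-n})$,
hence $u^{(\ell)}(\lambda q^{-n}) = \frac{\ell!}{(\ell+k)!}\,g^{(\ell+k)}(\lambda q^{-n})$. That is not the constant $\frac{(\ell+k)!}{k!}$ displayed in \eqref{eqn:uell}: already for $k=2$, $\ell=0$ the correct relation is $u(a) = \tfrac{1}{2}g''(a)$, not $g''(a)$. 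This is a typo in the statement (harmless where \eqref{eqn:uell} is invoked in the proof of theorem \ref{theo:fF}, since only the $\ell$-polynomial nature of the constant matters for the $q$-Gevrey estimates), but you present your derivation as producing ``the identity'' of the lemma without noting that it does not match the printed constant; you should call that out rather than leave the reader to reconcile the two.
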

\begin{proof} 
Writing $g^{(\ell)}$ as the sum of a power series in $z$ and one
in $\frac{1}{z}$, one draws from lemma \ref{lemm:croissanceqexp} 
assertion (1), because derivation does not affect the $q$-Gevrey 
character of a series. Assertion (2) is obvious.

The function $u$ can be analytically continuated at $\lambda q^{-n}$, because 
$\theta_\lambda$ has a simple zero at each point of the $q$-spiral
$[\lambda]$; formula \eqref{eqn:uell} comes by differentiating 
$(\ell+k)$ times the equality $g(z) = u(z)(z-\lambda q^{-n})^k$, while
\eqref{eqn:u0} follows from a direct evaluation using Jacobi's
triple product formula \eqref{eqn:tripleproduit}.
\index{Jacobi's triple product formula}
\end{proof} 

We now come to the proof of the direct implication; that is, 
we assume condition (ii) of theorem \ref{theo:fdecomposition}.
Since $\theta_\Lambda h\in\EE_0^\Lambda$, by linearity, we just have
to check that the series $S(z)$ defined by relation 
\eqref{eqn:Salpha} satisfies $\theta_\Lambda S\in\EE_0^\Lambda$. 
To that end, put $T(z) := \theta_\Lambda(z)~S(z)$ for all 
$t \in \C^* \setminus \lambda q^{-\N}$; this clearly has an analytic
continuation to $\C^*$. On the other hand, 
$T^{(\ell)}(\lambda_{i'}q^{-n}) = 0$ for all $i'\neq i$, $\ell < \nu_{i'}$
and $n\in\Z$.

In order to evaluate $T^{(\ell)}(\lambda q^{-n})$, put
$\Lambda' := \Lambda-(j+1)[\lambda]$ and:
$$ 
T_0(z) :=
\sum_{n\ge 0}\alpha_n~
\dfrac{(\theta_\lambda(z))^{j+1}}{(z-\lambda q^{-n})^{j+1}};
$$
one has $T(z) = \theta_{\Lambda'}(z)~T_0(z)$, where $T_0$ is analytic 
on $\C^*$. First note that, if $\ell < \nu_i - j - 1$, one has 
$T^{(\ell)}(\lambda q^{-n}) = 0$ for all $n \in\N$; 
when $\nu_i - j - 1 \leq \ell < \nu_i$, using relation 
\eqref{eqn:uell}, we get:
$$
T^{(\ell)}(\lambda q^{-n}) = \alpha_n~
\sum_{k=0}^{\ell-\nu_i+j+1} \binom{\ell}k~ \theta_{\Lambda'}^{(\ell-k)}(\lambda q^{-n})~
\dfrac{(k+j+1)!}{(j+1)!}~ g^{(k+j+1)}(\lambda q^{-n}),
$$
where $n \in \N$, $g = (\theta_\lambda)^{j+1}$. Taking in account
assertions (1) and (2) of lemma \ref{lemm:theta'}, the sequences
$(\theta_{\Lambda'}^{(\ell-k)}(\lambda q^{-n}))_{n\ge 0}$ and
$(g^{(k+j+1)}(\lambda q^{-n}))_{n\ge 0}$ respectively have $q$-Gevrey order
$(\nu-j-1)$ and $(j+1)$, which implies that 
$(T^{(\ell)}(\lambda q^{-n}))_{n\ge 0}$ is bounded above by a geometric
sequence, for $(\alpha_n)_{n\ge 0}$ has $q$-Gevrey order $(-\nu)$. 

Moreover, like $\theta_\Lambda$, the function $T$ has $q$-exponential
growth of order $\nu$ at $0$. Indeed, let $a \in (1,\lmod q \rmod)$ and put
$\delta := \min\limits_{\lmod z \rmod=a}d_q(z,[1])$; one has $\delta>0$, so that:
\begin{equation}
\label{eqn:Sbornee}
\max_{\lmod z \rmod=a|\lambda q^{-k}|}|S(z)| \leq
\dfrac{1}{\delta~\lmod \lambda\rmod} \sum_{n\ge 0} \lmod \alpha_n q^n \rmod 
< \infty.
\end{equation}
This shows that $S(z)$ stays uniformly bounded over the family of
circles centered at $0$ and such that each one goes through a point
of the $q$-spiral with basis $a\lambda$. It follows first that the 
restriction of function $T$ to the circles has $q$-exponential growth 
of order $\nu$ at $0$; then that function $T$ itself has such growth.
To summarize, we see that $T \in \EE_{0}^\Lambda$, whence 
$F \in \EE_0^\Lambda$. \\

\noindent
\underline{\textbf{Proof of 
$F \in \EE_0^\Lambda \Rightarrow f \in \AA_q^\Lambda$:}}
It will rest on lemma \ref{lemm:thetamin}, which gives 
a minoration on $\theta$ as in lemma \ref{lemm:thetamin0}.

\begin{lemm}
\label{lemm:thetamin}
Let $a > 0$ and $k \in \Z$. Then:
\begin{equation}
\label{eqn:thetamin}
\min_{\lmod z \rmod=a\lmod q \rmod^{-k}} \lmod \theta(z) \rmod \geq  
\dfrac{\lmod (q^{-1};q^{-1})_\infty \rmod}
{(\lmod q \rmod^{-1};\lmod q \rmod^{-1})_\infty}~
\lmod\theta(-a;\lmod q \rmod) \rmod ~a^{-k}~ \lmod q \rmod^{k(k-1)/2}.
\end{equation}
\end{lemm}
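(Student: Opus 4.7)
The plan is to reduce the minimization on the circle $|z|=a|q|^{-k}$ to the minimization on the unit-scaled circle $|z|=a$ via the functional equation, and then exploit the triple product formula together with a pointwise triangle inequality.

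First I would use the functional equation $\theta(qz) = qz\,\theta(z)$ iteratively to derive that for every integer $k$,
$$
\theta(q^{-k}z) \;=\; q^{k(k-1)/2}\, z^{-k}\, \theta(z).
$$
Taking absolute values and substituting $w = q^{-k}z$, this gives the exact identity
$$
\min_{|w|=a|q|^{-k}} |\theta(w)| \;=\; |q|^{k(k-1)/2}\, a^{-k}\, \min_{|z|=a}|\theta(z)|,
$$
so it suffices to prove the desired bound for $k=0$, namely
$$
\min_{|z|=a}|\theta(z;q)| \;\geq\; \frac{|(q^{-1};q^{-1})_\infty|}{(|q|^{-1};|q|^{-1})_\infty}\,|\theta(-a;|q|)|.
$$

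Next I would apply the Jacobi triple product formula \eqref{eqn:tripleproduit} to write, for any $z$ with $|z|=a$,
$$
|\theta(z;q)| \;=\; |(q^{-1};q^{-1})_\infty| \prod_{n\geq 0} |1+q^{-n}z|\cdot |1+q^{-n-1}/z|,
$$
and then bound each factor pointwise using the elementary inequality $|1+\zeta| \geq \bigl|\,1 - |\zeta|\,\bigr|$, valid for every $\zeta\in\C$. Since $|q^{-n}z|=a|q|^{-n}$ and $|q^{-n-1}/z|=|q|^{-n-1}/a$, this gives
$$
|\theta(z;q)| \;\geq\; |(q^{-1};q^{-1})_\infty| \prod_{n\geq 0} \bigl|1 - a|q|^{-n}\bigr|\cdot\bigl|1-|q|^{-n-1}/a\bigr|.
$$

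Finally I would recognize the product on the right as $|\theta(-a;|q|)|/(|q|^{-1};|q|^{-1})_\infty$, by applying the triple product formula once more to the real modulus $|q|>1$ and the argument $-a$. Combining this with the scaling identity from the first step yields the stated inequality \eqref{eqn:thetamin}. There is no serious obstacle here: the only subtle point is bookkeeping the sign of the exponent in the functional equation (making sure the factor $|q|^{k(k-1)/2} a^{-k}$ comes out on the correct side), and checking that the pointwise bound $|1+\zeta|\geq|1-|\zeta||$ applied termwise produces an absolutely convergent product whose value coincides exactly with the modulus of $\theta(-a;|q|)$ divided by the Pochhammer prefactor.
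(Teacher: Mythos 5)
Your proof is correct and uses essentially the same ingredients as the paper's: the triple-product factorization, the termwise bound $\lmod 1+\zeta\rmod \geq \bigl\lmod 1-\lmod\zeta\rmod\bigr\rmod$, and the functional equation. The only difference is cosmetic — you apply the functional equation to $\theta(\cdot\,;q)$ up front to reduce to $k=0$, whereas the paper bounds directly at radius $a\lmod q\rmod^{-k}$, obtains $\lmod\theta(-a\lmod q\rmod^{-k};\lmod q\rmod)\rmod$, and then applies the functional equation to $\theta(\cdot\,;\lmod q\rmod)$ at the end to extract the factor $a^{-k}\lmod q\rmod^{k(k-1)/2}$.
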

\begin{proof} 
Let $z \in \C$ be such that $\lmod z \rmod=a\lmod q \rmod^{-k}$. From a minoration of each 
factor of the triple product formula \eqref{eqn:tripleproduit}, we get:

\begin{eqnarray*}
\min_{\lmod z \rmod=a\lmod q \rmod^{-k}} \lmod \theta(z) \rmod & \geq & 
\lmod (q^{-1};q^{-1})_\infty \rmod
\prod_{n\ge  0} \lmod 1-a\lmod q \rmod^{-n-k}\rmod 
\left\vert1-\dfrac{\lmod q \rmod^{-n-1}}{a\lmod q \rmod^{-k}}\right\vert \\ 
& = & 
\dfrac{\lmod (q^{-1};q^{-1})_\infty \rmod}{(\lmod q \rmod^{-1};\lmod q \rmod^{-1})_\infty}~
\lmod \theta(-a\lmod q \rmod^{-k};\lmod q \rmod) \rmod,
\end{eqnarray*}
yielding relation \eqref{eqn:thetamin}.
\end{proof} 

Now we assume $F \in \EE_0^\Lambda$. We put 
$f := \dfrac{F}{\theta_\Lambda}$ and keep the notations $R$, $N_0$ 
introduced at the beginning of the proof of the theorem 
(page \pageref{pourrefplusloin}); 
also, we choose $a \in \left(\lmod \lambda_m \rmod,R\right)$. 
According to relation \eqref{eqn:thetamin}, there is 
$C > 0$ and $\mu > 0$ such that, if $\lmod z \rmod = a q^{-k}$
and $k \geq N_0$, then $\lmod f(z) \rmod \leq C \lmod z \rmod^\mu$; 
that is, $f$ has moderate growth at $0$ on the circles
$\lmod z \rmod = a \lmod q \rmod^{-k}$, $k \geq N_0$. 

Partial fraction decomposition of $f$ 
at each point of $\Lambda$ contained in the punctured disk 
$0 < \lmod z\rmod < R$, will produce an expression of the form
\eqref{eqn:fLambda}, in which the coefficients $\alpha_{i,j,n}$
have $q$-Gevrey order $(-\nu)$. Indeed, let:
$$
F_{i,k,n} := \dfrac{F^{(k)}(\lambda_iq^{-n})}{k!},\quad
\Theta_{i,k,n} := \dfrac{\theta_\Lambda^{k+\nu_i}(\lambda_iq^{-n})}{(k+\nu_i)!};
$$
then:
$$
\alpha_{i,\nu_i-1,n} = \dfrac{F_{i,0,n}}{\Theta_{i,0,n}}~,\quad
\alpha_{i,\nu_i-2,n} = \dfrac{F_{i,1,n}-\alpha_{i,\nu_i-1,n}~\Theta_{i,1,n}}
{\Theta_{i,0,n}}~,\quad \ldots,
$$
$$
\alpha_{i,0,n} = 
\dfrac{F_{i,\nu_i-1,n}-\sum_{k=1}^{\nu_i-1}\alpha_{i,\nu_i-k,n}~\Theta_{i,\nu_i-k,n}}
{\Theta_{i,0,n}} \cdot
$$
According to relation \eqref{eqn:u0}, one finds:
$$
\Theta_{i,0,n}=
(q^{-1};q^{-1})_\infty^{3\nu_i}~(-\lambda_iq^{-n})^{-\nu_i}~
\theta_{\Lambda_i'}(\lambda_i)~
\prod_{j=1}^m
\left(-\dfrac{\lambda_j}{\lambda_i}\right)^{n\nu_j}~q^{\nu n(n-1)/2}.
$$
Since $(F_{i,0,n})_{n\ge N_0}$ has $q$-Gevrey order $0$, it follows 
that the sequence $(\alpha_{i,\nu_i-1,n})_{n\ge N_0}$ has $q$-Gevrey 
order $(-\nu)$. As for the other coefficients $\alpha_{i,j,n}$, 
$0 \leq j < \nu_i-1$, since the sequences $(\Theta_{i,j,n})_{n\ge N_0}$ 
all have $q$-Gevrey order $\nu$ (see lemma \ref{lemm:theta'}), 
one successively shows that the sequences $(\alpha_{i,\nu_i-2,n})_{n\ge N_0}$, 
$\ldots$, $(\alpha_{i,0,n})_{n\ge N_0}$ all have $q$-Gevrey order $(-\nu)$.

Last, note that after \eqref{eqn:Sbornee}, the function $P$
defined by:
$$
P(z) := \sum_{n\ge N_0} \sum_{1\le i\le m} \sum_{0\le j<\nu_i}
\dfrac{\alpha_{i,j,n}}{(z-\lambda_iq^{-n})^{j+1}}
$$
is uniformly bounded over the family of circles 
$\lmod z \rmod = a \lmod q \rmod^{-k}$, $k \in \Z$ if 
$a \notin \bigcup\limits_{1\le i\le m}[\lambda_i]$. Since
$f\in\BB^\Lambda$, from proposition \ref{prop:BB} we deduce 
that $h(z) := f(z) - P(z)$ is analytic in some open disk 
$0 < \lmod z \rmod < R$ and remains bounded over these circles, 
which implies that $h$ is holomorphic in a neighborhood of $0$, 
achieving the proof of the converse implication and of theorem
\ref{theo:fdecomposition}.
\end{proof}


\section{$q$-Gevrey theorem of Borel-Ritt and flat functions}
\label{section:BorelRitt}

The classical theorem of Borel-Ritt \cite[chap. XI, \S 1]{HS}
says that any formal power series is the asymptotic expansion, 
in Poincar\'e's sense, of some germ of analytical function in 
a sector of the complex plane with vertex $0$ (the Gevrey version 
for differential equations is to be found in \S 2 of the same chapter
of \emph{loc. cit.}). We are going to prove a $q$-Gevrey version of 
that theorem involving the set of functions $\AA_q^\Lambda$ and then 
give a characterization of the corresponding flat functions. 


\subsection{From a $q$-Gevrey power series to an entire function}
\label{subsection:qGevreytoentire}

As before, let $\Lambda = \sum\limits_{1\le i\le m}\nu_i[\lambda_i]$ 
be a divisor and let $\theta_\Lambda$ be the associated theta function 
given by the relation \eqref{eqn:thetalambda}.

\begin{lemm}
\label{lemm:thetatronque}
Consider the Laurent series expansion 
$\sum\limits_{n\in\Z} \beta_n~z^n$ of $\theta_\Lambda$.
\begin{enumerate}
\item{For all $(k, r) \in \Z^2$, one has:
\begin{equation}
\label{eqn:beta}
\beta_{k\nu+r} = \left(\frac L{q^r}\right)^k~q^{-k(k+1)\nu/2}~\beta_r~.
\end{equation}
(The constant $L$ is defined at the beginning of the proof.)
}
\item{For all $n \in \Z$, write $T_n$ the function defined by:
$$
T_n: = \theta_{\Lambda,n}(z) = \sum_{\ell\le n} \beta_\ell~z^\ell~.
$$
If $j\in\{1,\ldots,m\}$, one has:
\begin{equation}
\label{eqn:thetatronque}
 K_{n,j} := \sup_{z\in[\lambda_j]} \lmod z^{-n}~T_n(z) \rmod < +\infty~.
\end{equation}
}
\end{enumerate}
\end{lemm}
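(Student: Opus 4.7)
The plan is to begin by establishing a functional equation for $\theta_\Lambda$, from which part (1) follows by elementary bookkeeping, and then to handle part (2) by combining this with the fact that $\theta_\Lambda$ vanishes identically on the spirals in its divisor.

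For part (1), I would define $L$ via the functional equation $\theta_\Lambda(qz) = Lz^\nu\theta_\Lambda(z)$. This comes directly from $\theta(qu)=qu\,\theta(u)$: substituting $u=-z/\lambda_i$ gives $\theta(-qz/\lambda_i) = -(qz/\lambda_i)\theta(-z/\lambda_i)$, whence, taking the product with multiplicities,
\[
\theta_\Lambda(qz) = \Bigl(\prod_i (-q/\lambda_i)^{\nu_i}\Bigr) z^\nu\,\theta_\Lambda(z),
\qquad L := (-q)^\nu\prod_{i=1}^m \lambda_i^{-\nu_i}.
\]
Matching Laurent coefficients in this identity yields the recursion $\beta_n\,q^n = L\,\beta_{n-\nu}$, i.e.\ $\beta_n = Lq^{-n}\beta_{n-\nu}$. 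Iterating $k$ times from $\beta_r$ produces $\beta_{r+k\nu} = L^k q^{-kr - \nu k(k+1)/2}\beta_r$, which is exactly \eqref{eqn:beta}. A side benefit is that formula \eqref{eqn:beta} shows that $|\beta_\ell|$ decays super-exponentially (of $q$-Gevrey type) both as $\ell\to+\infty$ and as $\ell\to-\infty$; this is what makes part (2) run.

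For part (2), the key observation is that $\theta_\Lambda$ vanishes identically on every spiral in its support: indeed $\theta(-1)=0$ and the functional equation gives $\theta(-q^{-k})=0$ for all $k\in\Z$, whence $\theta_\Lambda(\lambda_j q^{-k})=0$. Writing $R_n := \theta_\Lambda - T_n = \sum_{\ell>n}\beta_\ell z^\ell$, we therefore have $T_n(z)=-R_n(z)$ for every $z\in[\lambda_j]$. Now I would exploit that the decay of $(\beta_\ell)$ from part (1) makes $\sum_{p\ge 1}\beta_{n+p}z^p$ an entire function of $z$ and makes $\sum_{m\ge 0}\beta_{n-m}z^{-m}$ an entire function of $1/z$. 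On the spiral $z_k=\lambda_j q^{-k}$ the quantity $z_k^{-n}T_n(z_k)$ thus admits two complementary representations:
\[
z_k^{-n}T_n(z_k) \;=\; -\sum_{p\ge 1}\beta_{n+p}\,z_k^p
\;=\; \sum_{m\ge 0}\beta_{n-m}\,z_k^{-m}.
\]
The first form tends to $0$ as $k\to+\infty$ (since $z_k\to 0$), the second tends to $\beta_n$ as $k\to-\infty$ (since $z_k\to\infty$). Because $[\lambda_j]$ is a discrete set whose only accumulation points are $0$ and $\infty$, both of which are handled by these two asymptotics, the supremum $K_{n,j}$ is finite.

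The main (modest) obstacle is really the bookkeeping in part (1): one has to keep track of the right sign conventions in the product formula for $L$ so that the recursion comes out with the correct exponent $-k(k+1)\nu/2$ rather than $-k(k-1)\nu/2$; once that is done, part (2) is essentially formal, as the super-exponential decay of $(\beta_\ell)$ supplied by (1) makes the two tail series absolutely and uniformly controllable at their respective endpoints of the spiral.
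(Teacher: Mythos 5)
Your proof is correct and follows essentially the same route as the paper: establish the functional equation $\theta_\Lambda(qz)=Lz^\nu\theta_\Lambda(z)$ and iterate to get \eqref{eqn:beta}, then for (2) use the vanishing of $\theta_\Lambda$ on $[\lambda_j]$ to trade the defining expansion $z^{-n}T_n(z)=\sum_{m\ge 0}\beta_{n-m}z^{-m}$ (analytic at $\infty$) for the tail $-\sum_{p\ge 1}\beta_{n+p}z^p$ (vanishing at $0$), so that the only two accumulation points of the discrete spiral are controlled. The paper phrases the $\infty$ side as ``$z^{-n}T_n(z)$ is analytic on $\C^*\cup\{\infty\}$'' rather than writing out the second series, but the substance is identical.
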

\begin{proof} 
From the fundamental relation $\theta(qz) = qz\theta(z)$, one draws:
$$
\theta_\Lambda(qz) = L~z^\nu~\theta_\Lambda(z),\quad
L := \prod_{j=1}^m\left(-\dfrac{q}{\lambda_j}\right)^{\nu_j}~,
$$
whence, for all $k \in \Z$:
$$
\theta_\Lambda(q^kz) = L^k~z^{k\nu}~q^{k(k-1)\nu/2}~\theta_\Lambda(z)~.
$$
The relation \eqref{eqn:beta} follows immediately.

Since $z^{-n}~T_n(z)$ represents an analytic function over 
$\C^* \cup \{\infty\}$, one obtains \eqref{eqn:thetatronque} 
by noticing that, if $z \in [\lambda_j]$, then:
$$
T_n(z) = -\sum_{\ell>n}\beta_\ell~z^\ell~,
$$
which implies that $z^{-n}~T_n(z)$ tends to zero as $z$ tends $0$ 
along the half-spiral $\lambda_i q^{-\N}$.
\end{proof} 

Consider a $q$-Gevrey series 
$\hat f = \sum\limits_{n\ge 0} a_n z^n \in \Rf_{q;1/\nu}$ with order
$1/\nu$ and, for all $\ell \in \Z$, put
$c_\ell := \sum\limits_{n\ge 0}a_n\beta_{\ell-n}$. 
Set $\ell := k \nu + r$ with $k \in \Z$ and $r \in \Z$, and write:
$$
c_\ell = \sum_{j=0}^{\nu - 1} \sum_{m \ge 0} a_{m \nu + j}\,\beta_{(k-m)\nu+(r-j)}.
$$
As we shall see now, the above infinie series converges when 
the integer $-\ell$ is large enough. Indeed, using relation 
\eqref{eqn:beta}, one gets:
$$
\beta_{(k-m)\nu+(r-j)} = (Lq^{-r})^{k} \, q^{-k(k+1)\nu/2}\,q^{kj} \,
\left(\frac{q^{r-j+k\nu}}L\right)^m \, q^{-m(m-1)\nu/2}.
$$
Since $\hat f$ has $q$-Gevrey order $1/\nu$, there exists $C_1 > 0$,
$A_1 > 0$ such that $|a_n| \le C_1\,A_1^n\,|q|^{n(n-1)/(2\nu)}$ for all
$n\in\N$. One has:
\begin{align*}
\lmod a_{m\nu+j}\,\beta_{(k-m)\nu+(r-j)} \rmod &\leq \\
C_1 (|L||q|^{-r})^{k} \, & |q|^{-k(k+1)\nu/2} \, (|q|^k A_1)^{j} \,
|q|^{j(j-1)/(2\nu)} \left(\frac{|q|^{r+k\nu+(\nu-1)/2}}{|L|A_1^{-\nu}}\right)^m\,.
\end{align*}
Therefore, if we set:
$$
C_2 := \sum_{j=0}^{\nu-1} A_1^{j} |q|^{j(j-1)/(2\nu)},
$$
then we find ($\ell = k \nu + r$):
$$
|c_\ell| \leq C_1\,C_2\,(|Lq^{-r}|)^k\,|q|^{-k(k-1)\nu/2}
\sum_{m\geq 0} \left(\frac{|q|^{\ell+(\nu-1)/2}}{|L|A_1^{-\nu}}\right)^m\,.
$$
It is easy to see that the last series converges if 
$|q|^{\ell+(\nu-1)/2} < |L| A_1^{-\nu}$; moreover, when $\ell\to-\infty$, 
the sequence $(c_\ell)$ is $q$-Gevrey of level $(-\nu)$.

\begin{prop}
\label{prop:ftheta}
Let $\hat f := \sum\limits_{n\ge 0} a_nz^n \in \Rf_{q;1/\nu}$ and
let $N_0$ 
be a negative integer such that any series defining $c_\ell$ converges for
$\ell \leq N_0$. Let $F(z) := \sum\limits_{\ell\le N_0}c_\ell z^\ell$. Then,
$F\in E_0^\Lambda$.
\end{prop}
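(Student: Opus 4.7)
The statement asks to verify both conditions defining $\EE_0^\Lambda$ in definition \ref{defi:LambdaF}(3): analyticity on a punctured neighborhood of $0$ with $q$-exponential growth of order $\nu$ at $0$, and $q$-Gevrey order $0$ along the divisor $\Lambda$. The first condition is almost free: the text preceding the statement has already established that $(c_\ell)$ is $q$-Gevrey of level $(-\nu)$ as $\ell \to -\infty$, so the function $w \mapsto F(1/w) = \sum_{m \ge -N_0} c_{-m} w^m$ is entire in $w$ with Taylor coefficients that are $q$-Gevrey of order $(-1/\nu)$; applying lemma \ref{lemm:croissanceqexp} at infinity in the $w$-variable yields $q$-exponential growth of order $\nu$ for $F$ at $0$.

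For the substantive part, my preferred route is indirect: set $\tilde f(z) := F(z)/\theta_\Lambda(z)$ on $\C^* \setminus \mathrm{supp}(\Lambda)$, prove $\tilde f \in \AA_q^\Lambda$ with $\hat f$ as its $q$-Gevrey asymptotic expansion of level $\nu$, and then invoke theorem \ref{theo:fF} in the reverse direction to conclude $F = \theta_\Lambda \tilde f \in \EE_0^\Lambda$. Boundedness of $\tilde f$ on stable strict subsets of $S(\Lambda, \epsilon)$ follows by combining the growth estimate of $F$ with the minoration of $\theta_\Lambda$ obtained by applying lemma \ref{lemm:thetamin0} to each factor $\theta(-z/\lambda_i)$, using that $d_q(z,\Lambda) > \epsilon$ entails $d_q(z,[\lambda_i]) > \epsilon^{1/\nu_i}$ (proposition \ref{prop:Lambdad}).

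The core of the argument is the asymptotic estimate. For each $N \ge 0$, the truncation $G_N := \theta_\Lambda \cdot S_N \hat f$ is analytic on $\C^*$ with bilateral Laurent coefficients $c_\ell^{(N)} := \sum_{n=0}^{N-1} a_n \beta_{\ell-n}$, and one has the decomposition
\begin{equation*}
\tilde f(z) - S_N \hat f(z) \,=\, -\frac{G_N(z) - F(z)}{\theta_\Lambda(z)} \,=\, -\frac{1}{\theta_\Lambda(z)}\!\left(\sum_{\ell > N_0} c_\ell^{(N)} z^\ell + \sum_{\ell \le N_0} (c_\ell^{(N)} - c_\ell) z^\ell\right)\!.
\end{equation*}
Choosing $N$ optimally as a function of $|z|$ (in the spirit of the classical proof of the Gevrey Borel-Ritt theorem), one estimates the two pieces separately: the negative-index remainder is controlled by the $q$-Gevrey bounds on $(a_n)$ via $|c_\ell - c_\ell^{(N)}| \le \sum_{n \ge N} |a_n \beta_{\ell-n}|$, while the positive-index piece is rewritten as $\theta_\Lambda(z) S_N \hat f(z) - \sum_{\ell \le N_0} c_\ell^{(N)} z^\ell$ and again estimated from the $q$-Gevrey bounds.

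The hard part will be to carry out the bookkeeping so that, on every sector $S(\Lambda, \epsilon; R)$, the remainder satisfies the precise estimate of definition \ref{defi:ALambda}, i.e.\ a bound of the form $(C/\epsilon) A^N |q|^{N^2/(2\nu)} |z|^N$; the main technical difficulty is that one must exploit the vanishing of $\theta_\Lambda$ on the spiral support of $\Lambda$ (so that the formally divergent ``positive-index'' tail is in fact small there) while retaining uniform control away from the spiral where the minoration of $\theta_\Lambda$ in terms of $\epsilon$ degenerates. Once this $q$-Gevrey asymptotic estimate is in place, $\tilde f \in \AA_q^\Lambda$ follows from definition \ref{defi:ALambda}, and the equivalence $\AA_q^\Lambda \ni \tilde f \iff \theta_\Lambda \tilde f \in \EE_0^\Lambda$ of theorem \ref{theo:fF} completes the proof.
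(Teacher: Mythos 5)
Your verification of the first condition (analyticity on a punctured neighborhood and $q$-exponential growth of order $\nu$ at $0$) is correct and matches the paper: the $q$-Gevrey decay of $(c_\ell)$ as $\ell\to-\infty$, established just before the statement, feeds directly into lemma \ref{lemm:croissanceqexp}.

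For the substantive condition ($q$-Gevrey order $0$ along $\Lambda$ at $0$), however, there is a genuine gap. You propose an indirect route: set $\tilde f := F/\theta_\Lambda$, prove $\tilde f\in\AA_q^\Lambda$ by directly establishing the $q$-Gevrey sector estimate of definition \ref{defi:ALambda}, and then invoke theorem \ref{theo:fF} backwards to deduce $F\in\EE_0^\Lambda$. The logic of this reduction is sound, but it puts the whole burden on the sector estimate, and you do not actually carry it out --- you explicitly defer it as ``the hard part,'' noting (correctly) that one must exploit the vanishing of $\theta_\Lambda$ along the spirals to tame the positive-index tail, while the lower bound on $|\theta_\Lambda|$ degenerates near the divisor. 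That cancellation is exactly what makes the sector estimate delicate: the paper handles it in the implication (ii)$\Rightarrow$(i) of theorem \ref{theo:fdecomposition}, via the partial-fraction decomposition \eqref{eqn:fLambda} and the remainder lemma \ref{lemm:reste}, not by a naive triangle-inequality split of $(G_N-F)/\theta_\Lambda$. Your decomposition into a positive-index piece $\theta_\Lambda S_N\hat f - \sum_{\ell\le N_0}c_\ell^{(N)}z^\ell$ and a negative-index tail is arithmetically correct, but on $S(\Lambda,\epsilon;R)$ the first piece, after division by $\theta_\Lambda$, contributes roughly $S_N\hat f(z)$, which is $O(1)$ and does not by itself produce the $A^N\lmod q\rmod^{N^2/(2\nu)}\lmod z\rmod^N$ bound; the needed cancellation against the second piece has to be made explicit. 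As things stand, the conclusion is asserted but not proved.

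You should also note that this route inverts the paper's architecture. Proposition \ref{prop:ftheta} is deliberately an \emph{easy} lemma, proved by a direct check along the divisor; theorem \ref{theo:fF} then carries $F\in\EE_0^\Lambda$ to $F/\theta_\Lambda\in\AA_q^\Lambda$, and the fact that the asymptotic expansion is $\hat f$ is verified separately in the proof of theorem \ref{theo:BorelRitt}. You attempt to prove the downstream, harder statement in order to deduce the upstream, easier one. The intended direct argument is short: reorder the double sum defining $F$ to get $F(z)=\sum_{\ell\ge 0}a_\ell z^\ell T_{N_0-\ell}(z)$, use the recursion $T_{-n-k\nu}(z)=(Lz^\nu)^{-k}q^{-k(k-1)\nu/2}T_{-n}(q^kz)$ from relation \eqref{eqn:beta} to reduce everything to the $\nu$ truncations $T_{N_0-n}$ ($0\le n<\nu$), and observe that $z^{-n}T_n(z)$ is bounded on each $[\lambda_j]$ by \eqref{eqn:thetatronque} precisely because $\theta_\Lambda$ vanishes there --- whence, with the $q$-Gevrey bound on $(a_\ell)$, the values $\Lambda F(0)$ grow at most geometrically. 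The same rewriting, differentiated $\ell<\nu_j$ times, handles the higher-order values $F^{(\ell)}(\lambda_jq^{-n})$.
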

\begin{proof} 
Since $(c_\ell)$ is a $q$-Gevrey sequence of order $(-1/\nu)$ for 
$\ell \to -\infty$, the sum $F$ represents an analytic function in
$\C^* \cup \{\infty\}$, with $q$-exponential growth of order $\nu$ 
at zero. 

We are left to estimate $F(z)$ and its derivatives along the spirals 
$[\lambda_j]$ belonging to the divisor $\Lambda$. Taking in account 
relation \eqref{eqn:beta}, one has:
$$
T_{-n-k\nu}(z) = (Lz^\nu)^{-k}~q^{-k(k-1)\nu/2}~T_{-n}(q^kz).
$$
Therefore, when $z\in [\lambda_j]$, relation \eqref{eqn:thetatronque} 
allows writing $F(z)$ in the following form:
$$
F(z) = \sum_{k\ge
  0}\sum_{n=0}^{\nu-1}a_{k\nu+n}~z^{k\nu+n}~T_{N_0-k\nu-n}(z),
$$
and also:
\begin{equation}
\label{eqn:FT}
F(z) = \sum_{n=0}^{\nu-1}z^n\sum_{k\ge 0}L^{-k}a_{k\nu+n}~q^{-k(k-1)\nu/2}~T_{N_0-n}(q^kz).
\end{equation}
One deduces that $F$ has moderate growth when $z \to 0$ along spirals
$[\lambda_j]$ belonging to the divisor $\Lambda$. 

As for the derivatives $F^{(\ell)}$, along each spiral $[\lambda_j]$, 
with $\ell<\nu_j$, one may proceed all the same: the function 
$T_{n}^{(\ell)}$ is indeed bounded over $[\lambda_j]$ for all $\ell < \nu_j$ 
and one checks that relation \eqref{eqn:FT} remains true up to
order $\ell$, that is, if $z\in[\lambda_j]$:
$$
F^{(\ell)}(z) =
\sum_{n=0}^{\nu-1} \sum_{k\ge 0}
L^{-k}a_{k\nu+n}~q^{-k(k-1)\nu/2}~\bigl[z^n~T_{N_0-n}(q^kz)\bigr]^{(\ell)}.
$$ 
\end{proof} 


\subsection{The $q$-Gevrey theorem of Borel-Ritt}
\label{subsection:qGevreyBorelRitt}

The main result of this paragraph is the following\footnote{Theorem
\ref{theo:BorelRitt} is proved with the help of theorem \ref{theo:fF},
where we gave conditions for a function to have a certain type Taylor 
expansion; here, we deal with the \emph{surjectivity} of the ``Taylor 
expansion mapping''.}.

\index{Borel-Ritt theorem ($q$-Gevrey analogue)}
\index{$q$-Gevrey theorem of Borel-Ritt}
\begin{theo}[$q$-Gevrey theorem of Borel-Ritt]
\label{theo:BorelRitt}
The mapping $J$ sending an element $f$ to its asymptotic expansion
is a surjective linear map from the $\C$-vector space
$\AA_{q}^{\Lambda}$ to the $\C$-vector space $\Rf_{q;1/\nu}$.
\end{theo}
\begin{proof} 
Let $\hat f = \sum\limits_{n\ge 0}a_nz^n$ be a $q$-Gevrey series of 
order $1/\nu$ and let $F$ be the function defined in proposition 
\ref{prop:ftheta}. Maybe at the cost of taking a smaller $N_0$, 
we may assume that formula \eqref{eqn:FT} remains true for 
all $z \in \C^*$; this being granted, one has:
\begin{equation}
\label{eqn:FTa}
F(z) = \sum_{\ell\ge 0}a_\ell~z^\ell~T_{N_0-\ell}(z)~.
\end{equation}
Put $f := \dfrac{F}{\theta_\Lambda}$, so that $f \in \AA_{q}^{\Lambda}$, 
after theorem \ref{theo:fF}. We are going to prove that $f$ admits 
$\hat f$ as an asymptotic expansion. To that aim, it is enough to show
that, if $\lambda \notin \Lambda$, one has:
\begin{equation}
\label{eqn:fell}
\lim_{n\to+\infty}f^{(\ell)}(\lambda q^{-n}) = \ell!~a_\ell 
\end{equation}
for any integer $\ell \geq 0$. 

Since $\lmod T_{N_0-n}(z) \rmod \leq e_\Lambda(z)$ and 
$\lmod \theta_\Lambda(z) \rmod \sim e_\Lambda(z)$ (see lemma 
\ref{lemm:thetamin}), relation \eqref{eqn:FTa} immediately leads
us to the limit \eqref{eqn:fell} for $\ell =0 $. If $\ell \geq
1$, 
one has:
$$
f^{(\ell)}(z) = \sum_{n\ge 0} \sum_{j=0}^\ell 
a_n~\binom nj~\frac{j!}{(\ell-j)!}~z^{n-j}~
\left[\dfrac{T_{N_0-n}(z)}{\theta_\Lambda(z)}\right]^{(\ell-j)}~.
$$
Through direct estimates, one gets the limit \eqref{eqn:fell} 
(omitted details are left to the reader).
\end{proof} 

The theorem \ref{theo:BorelRitt} can also be interpreted with help
of interpolation of a $q$-Gevrey sequence by entire functions at
points in a geometric progression. Indeed, if $f \in \AA_q^\Lambda$ 
is asymptotic to the $q$-Gevrey series $\sum\limits_{k\ge  0}a_kz^k$
of level $\nu$, then, from the decomposition \eqref{eqn:fLambda}, 
one draws:
$$
a_k = \sum_{n\ge N_0}\sum_{1\le i\le m}\sum_{0\le j<\nu_i}(-1)^{j+1}~
\binom{k+j}j~\dfrac{\alpha_{i,j,n}}{(\lambda_iq^{-n})^{k+j+1}}+h_k~,
$$
where $h_k$ denotes the coefficient of $z^k$ in the Taylor series of $h$. 
Put:
$$f_{i,j}(z) := \sum_{n\ge N_0}\alpha_{i,j,n}~z^n~;
$$
then:
\begin{equation}
\label{eqn:afij}
a_k = \sum_{1\le i\le m}\sum_{0\le j<\nu_i}
\binom{k+j}j~\dfrac{(-1)^{j+1}}{\lambda_i^{k+j+1}}~f_{i,j}(q^{k+j+1})+h_k~.
\end{equation}

\begin{coro}
\label{coro:interpol}
Let $(a_n)$ be a $q$-Gevrey sequence of order $\nu$ and let 
$\Lambda = \sum\limits_{1\le i\le m}\nu_i[\lambda_i]$ be a divisor of degree
$\nu := \nu_1 + \cdots + \nu_m$. Then, there exists $\nu$ entire functions 
$A_{i,j}$, $1 \leq i \leq m$, $0 \leq j < \nu_i$ satisfying the
following properties:     
\begin{enumerate}
\item{Each function $A_{i,j}$ has $q$-exponential growth of order (at most) 
$\nu$ at infinity.}
\item{There exists $C>0$, $A>0$ such that, for all $n\in\N$:
\begin{equation}
\label{eqn:interpol2}
\lmod a_n - 
\sum_{1\le i\le m} \sum_{0\le j<\nu_i}
\binom{n+j}j~\dfrac{(-1)^{j+1}}{\lambda_i^{n+j+1}}~A_{i,j}(q^{n+j+1})\rmod < CA^n~.
\end{equation}
}
\end{enumerate}
\end{coro}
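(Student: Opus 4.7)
The plan is to combine the $q$-Gevrey Borel--Ritt theorem \ref{theo:BorelRitt} with the residue decomposition provided by theorem \ref{theo:fdecomposition}; indeed, equation \eqref{eqn:afij} that immediately precedes the corollary already contains essentially all of the needed identity, and it only remains to repackage it. Given the $q$-Gevrey sequence $(a_n)$, I first form the formal series $\hat f := \sum_{n\geq 0} a_n z^n \in \Rf_{q;1/\nu}$ and invoke theorem \ref{theo:BorelRitt} to obtain some $f \in \AA_q^\Lambda$ with $J(f) = \hat f$. Theorem \ref{theo:fdecomposition} then provides, on a sector $S(\Lambda,0;R)$, a decomposition
$$f(z) = \sum_{n \geq N_0}\sum_{1\leq i\leq m}\sum_{0\leq j < \nu_i} \frac{\alpha_{i,j,n}}{(z - \lambda_i q^{-n})^{j+1}} + h(z),$$
in which each family $(\alpha_{i,j,n})_{n\geq N_0}$ is $q$-Gevrey of level $(-\nu)$ and $h$ is holomorphic at $0$. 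After absorbing into $h$ the finitely many rational terms attached to indices $n<0$ (each analytic at $z=0$), I may assume $N_0 \geq 0$.

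For each pair $(i,j)$, I then set $A_{i,j}(z) := \sum_{n \geq N_0} \alpha_{i,j,n}\, z^n$. Since the coefficient sequence is $q$-Gevrey of level $(-\nu)$, lemma \ref{lemm:croissanceqexp} guarantees that $A_{i,j}$ is entire with $q$-exponential growth of order at most $\nu$ at infinity; this proves assertion~(1). To obtain the interpolation bound \eqref{eqn:interpol2}, I expand each rational kernel at $z=0$,
$$\frac{1}{(z - \lambda_i q^{-n})^{j+1}} = \sum_{k \geq 0} \binom{k+j}{j}\,\frac{(-1)^{j+1}}{(\lambda_i q^{-n})^{k+j+1}}\, z^k,$$
substitute into the decomposition of $f$ and identify the Taylor coefficient of $z^k$. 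The normal convergence on compacta around $0$ that was checked inside the proof of theorem \ref{theo:fdecomposition} legitimates the exchange of summations. Recognising $\sum_{n \geq N_0} \alpha_{i,j,n}\, q^{n(k+j+1)} = A_{i,j}(q^{k+j+1})$, one recovers exactly the identity \eqref{eqn:afij}, whence the left-hand side of \eqref{eqn:interpol2} is simply $\lmod h_k \rmod$, the modulus of the $k$-th Taylor coefficient of $h$ at $0$. Since $h$ is analytic at $0$, we have $\lmod h_k \rmod \leq C A^k$ for suitable $C, A>0$, which is the claimed geometric bound.

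There is no genuine obstacle, since all substantive work lives in Borel--Ritt and in theorem \ref{theo:fdecomposition}: the corollary is really just a reformulation of what has already been established. The only bookkeeping worth singling out is the reduction to $N_0 \geq 0$, needed so that the $A_{i,j}$ are honest Taylor series (hence entire) rather than Laurent series; this is handled by the absorption step into $h$ mentioned above, which is harmless because each of the finitely many offending rational terms is holomorphic near $z=0$.
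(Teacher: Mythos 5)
Your overall route is the paper's own: apply theorem \ref{theo:BorelRitt} to realize $\hat f$ as the asymptotic expansion of some $f\in\AA_q^\Lambda$, feed that $f$ into the decomposition of theorem \ref{theo:fdecomposition}, and read off the identity \eqref{eqn:afij} with remainder the (geometrically bounded) Taylor coefficients $h_k$ of the holomorphic piece. The definition of $A_{i,j}$ as the generating function of the residue sequence $(\alpha_{i,j,n})$, and the appeal to lemma \ref{lemm:croissanceqexp} for entireness and growth order at most $\nu$, also match the paper (which handles the case $N_0<0$ by simply truncating the sum at $\max(0,N_0)$ rather than by absorption into $h$, but these amount to the same thing).

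However, the justification you give for the key intermediate step --- the identification of the asymptotic coefficient $a_k - h_k$ with $\sum_{i,j}\binom{k+j}{j}\frac{(-1)^{j+1}}{\lambda_i^{k+j+1}}A_{i,j}(q^{k+j+1})$ --- does not work as stated. You propose to Taylor-expand each kernel $(z-\lambda_i q^{-n})^{-(j+1)}$ at $z=0$ and exchange the sum over $n$ with the sum over the Taylor index $k$, invoking ``normal convergence on compacta around $0$''. There is no such convergence: the $n$-th kernel has its pole at $\lambda_i q^{-n}$, which tends to $0$, so any neighbourhood of $0$ contains infinitely many poles of the series and the functions $f$ and $\sum_{n,i,j}\frac{\alpha_{i,j,n}}{(z-\lambda_iq^{-n})^{j+1}}$ are not analytic at $0$ (they only have an \emph{asymptotic} expansion there). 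The normal convergence established in the proof of theorem \ref{theo:fdecomposition} is only on compact subsets of $S(\Lambda,0)$, which are bounded away from $0$, and does not licence the interchange. What actually legitimates \eqref{eqn:afij} is the truncation-plus-remainder computation carried out in the proof of (ii)$\Rightarrow$(i) of theorem \ref{theo:fdecomposition}: one splits $S = S_N + R_N$, where $S_N$ is the polynomial whose coefficients are exactly those of \eqref{eqn:an}, and one bounds $R_N$ using lemma \ref{lemm:reste} to get the $q$-Gevrey estimate $\lmod R_N(z)\rmod \lesssim A^N\lmod q\rmod^{N^2/(2\nu)}\lmod z\rmod^N$. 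This shows that the asymptotic expansion coefficients of $S$ are the termwise sums, and is the step you need to cite in place of the termwise Taylor interchange. Once that reference is made the rest of your argument goes through.
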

\begin{proof} 
The functions $A_{i,j}$ are defined by replacing $N_0$ by $\max(0,N_0)$ 
in definition \eqref{eqn:afij}; one then uses relation 
\eqref{eqn:afij} to get the bounds \eqref{eqn:interpol2}.
\end{proof} 

Note that if $\Lambda$ is solely made of simple spirals ($\nu_i = 1$), 
then corollary \ref{coro:interpol} spells as follows:
$$
\lmod a_n + \sum_{1\le i\le \nu}{\lambda_i^{-n-1}}~A_{i}(q^{n+1}) \rmod < C A^n~,
$$
where $C>0$, $A>0$ and where the $A_i$ are $\nu$ entire functions
with $q$-exponential growth of order at most $\nu$ at infinity. 
In other words, one can interpolate a $q$-Gevrey sequence of order $\nu$, 
up to a sequence with at most geometric growth, by $\nu$ entire functions
on a half-spiral $\lambda q^\N$.


\subsection{Flat functions with respect to a divisor}
\label{subsection:flatfunctions}
\index{flat functions with respect to a divisor}

In the proof of theorem \ref{theo:BorelRitt}, function $F$ 
was defined up to a function of the form $\dfrac{P}{\theta_\Lambda}$, 
where $P$ is a polynomial in $z$ and $z^{-1}$. We shall see that, 
if $h$ denotes a meromorphic function at $z=0$, then 
$\dfrac{h}{\theta_\Lambda}$ represents a function with trivial 
asymptotic expansion, that is, $\dfrac{h}{\theta_\Lambda} \in \Ker J$
with notations from theorem \ref{theo:BorelRitt}.

\begin{theo}
\label{theo:Lambdaplate}
A function $f\in\AA_{q}^{\Lambda}$ is \emph{flat}, \ie\ has trivial
expansion, if, and only if, $\theta_\Lambda f$ is meromorphic in
a neighborhood of $0 \in \C$.
\end{theo}
\begin{proof} 
If $f \in \AA_{q}^{\Lambda}$ has trivial asymptotic expansion, then,
for all $\epsilon > 0$ and $z \in \C^*$ with small enough modulus:
\begin{equation}
\label{eqn:plateN}
d_q(z,\Lambda) > \epsilon \Longrightarrow
\lmod f(z) \rmod < \dfrac{C}{\epsilon}~A^N~\lmod q \rmod^{N^2/(2\nu)}~\lmod z \rmod^N
\end{equation}
for any integer  $N \geq 0$. Using relation \eqref{eqn:minqgevrey}, 
one deduces that $\theta_\Lambda f$ has moderate growth at zero, thus
is at worst meromorphic at $z = 0$. 

Conversely, note that if $f(z) = \dfrac{h(z)}{\theta_\Lambda(z)}$ with
$h(z) = O(z^\mu)$, $\mu\in\Z$, then lemma \ref{lemm:thetamin0} entails
the following relation for all $z$ with small enough modulus:
$$
d_q(z,\Lambda) > \epsilon > 0 \Longrightarrow
\lmod f(z) \rmod < 
\dfrac{M}{\epsilon}~{\lmod z \rmod^\mu}~
\prod_{j=1}^m\left(e\left(\dfrac{z}{\lambda_j}\right)\right)^{-\nu_j}~,
$$
where $M$ denotes a constant $>0$. But for any integer $n \in \Z$:
$$
e(z) = \sum_{k\in\Z}\lmod q \rmod^{-k(k-1)/2}~\lmod z \rmod^k>\lmod q \rmod^{-n(n-1)/2}~\lmod z \rmod^n~,
$$
so that:
\begin{equation}
\label{eqn:e-1}
\prod_{j=1}^m\left(e\left(\dfrac{z}{\lambda_j}\right)\right)^{-\nu_j} <
\Vert \Lambda \Vert^{-n}~\lmod q \rmod^{\nu n(n-1)/2}~\lmod z \rmod^{-\nu n}~.
\end{equation}
One thereby gets an estimate of type \eqref{eqn:plateN} for $f$,
for a sequence of integral values of $N$, with $N=\mu+\nu n$,
$n\to+\infty$ and, as a consequence, for all integers $N \geq 0$: 
this achieves the proof of the theorem.
\end{proof} 

\begin{coro}
\label{coro:f12}
Let $f_1, f_2 \in \AA_{q}^{\Lambda}$. Assume that $f_1$ et $f_2$ have the same
asymptotic expansion. Then $f_1 = f_2$ if, and only if, one of the following
conditions is satisfied:
\begin{enumerate}
\item{One has $f_1(z_n) = f_2(z_n)$ for some sequence of points $(z_n)$ 
tending to zero outside of divisor $\Lambda$.}
\item{On some spiral $[\lambda_i]$ belonging to divisor $\Lambda$: 
$$
\lim_{z\to\lambda_iq^{n}}(z-\lambda_iq^{n})^{\nu_i}~f_1(z) =
\lim_{z\to\lambda_iq^{n}}(z-\lambda_iq^{n})^{\nu_i}~f_2(z)
$$
as $n \to -\infty$.}
\end{enumerate}
\end{coro}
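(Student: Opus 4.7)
The plan is to reduce everything to the study of the difference $f := f_1 - f_2$. By linearity, $f$ belongs to $\AA_q^\Lambda$ and its asymptotic expansion vanishes identically, so $f$ is flat in the sense of Theorem~\ref{theo:Lambdaplate}. The \emph{only if} direction is trivial in both cases (any sequence $(z_n)$ tending to $0$ off $\Lambda$ works for (1), and if $f_1 = f_2$ the two limits in (2) coincide tautologically). So the whole content lies in the \emph{if} direction, which amounts to proving: a flat function $f \in \AA_q^\Lambda$ satisfying either (1) or (2) is identically zero.

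The first step is to invoke Theorem~\ref{theo:Lambdaplate}, which tells us that the auxiliary function $g := \theta_\Lambda f$ is meromorphic in a genuine neighbourhood of $0 \in \C$. In particular, for some $N \in \N$ the function $z^N g(z)$ is holomorphic at $0$, so any sequence of zeroes of $g$ converging to $0$ forces $g \equiv 0$, hence $f \equiv 0$. The whole game is then to extract such a sequence of zeroes of $g$ from either hypothesis (1) or (2).

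Case (1) is immediate: by assumption, the points $z_n$ lie outside the support of $\Lambda$, so $\theta_\Lambda(z_n) \neq 0$, and $f(z_n) = 0$ yields $g(z_n) = 0$ for a sequence converging to $0$; we conclude by the identity principle applied to $z^N g$.

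Case (2) is the only mildly delicate point. Fix an index $i$ appearing in the hypothesis and, near each point $\lambda_i q^n$, factor
\[
\theta_\Lambda(z) = (z - \lambda_i q^n)^{\nu_i}\, h_n(z),
\]
where $h_n$ is holomorphic at $\lambda_i q^n$ with $h_n(\lambda_i q^n) \neq 0$ (this uses that the zero of $\theta_\Lambda$ on $[\lambda_i]$ has exact order $\nu_i$, together with assumption \eqref{hypothese:lambdas} which ensures no collision of the spirals). Taking the limit as $z \to \lambda_i q^n$ in the identity $g(z) = (z - \lambda_i q^n)^{\nu_i} h_n(z) f(z)$ gives
\[
g(\lambda_i q^n) = h_n(\lambda_i q^n)\, \lim_{z \to \lambda_i q^n} (z - \lambda_i q^n)^{\nu_i} f(z).
\]
Since $h_n(\lambda_i q^n) \neq 0$ and the limit on the right is zero (by (2), for a sequence $n \to -\infty$), we obtain $g(\lambda_i q^n) = 0$ along a sequence of points converging to $0$ (because $|q| > 1$). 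Again the identity principle applied to $z^N g$ forces $g \equiv 0$, hence $f \equiv 0$.

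The main conceptual obstacle — and the reason the corollary is not entirely formal — is the need to know that $\theta_\Lambda f$ is actually meromorphic at $0$ (not merely flat in some sectorial sense), which is exactly what Theorem~\ref{theo:Lambdaplate} provides; once this is granted, the rest is a straightforward application of the identity principle for holomorphic germs.
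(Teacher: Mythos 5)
Your proof is correct and is precisely the argument the paper compresses into the single word ``Immediate'': reduce to the flat difference $f := f_1 - f_2$, use Theorem~\ref{theo:Lambdaplate} to make $g := \theta_\Lambda f$ a meromorphic germ at $0$, and observe that either hypothesis (1) or (2) produces a sequence of zeros of $g$ accumulating at $0$, forcing $g \equiv 0$ by the identity theorem. Nothing is missing.
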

\begin{proof}
Immediate.
\end{proof} 


\section{Relative divisors and multisummable functions}
\label{section:relativedivisorsandmultisummable}

Theorems \ref{theo:BorelRitt} and \ref{theo:Lambdaplate} 
show that, for each divisor $\Lambda$ with degree $\nu \in \N^*$, 
the datum of a $q$-Gevrey series of order $1/\nu$ is equivalent to
the datum of an element in space  $\AA_q^\Lambda$ together with a
function of the form $\dfrac{h}{\theta_\Lambda}$ for some $h \in \Ka$.
When studying $q$-difference equations, we shall observe that the
function $h$ in the correction term $\dfrac{h}{\theta_\Lambda}$ will have
to be determined in some space of asymptotic functions similar to
$\AA_q^\Lambda$, and that is why we are now going to define a notion of
asymptoticity involving more than one divisor level.


\subsection{Relative divisors and two levels asymptotics}
\label{subsection:relativedivisorsandtwolevels}
\index{two levels asymptotics}

Let $\Lambda^\prime$, $\Lambda$ be two divisors. Assume that
$\Lambda^\prime < \Lambda$, meaning that
$\Lambda^\prime = \sum\limits_{1\le i\le m}\nu_i^\prime[\lambda_i]$,
that $\Lambda = \sum\limits_{1\le i\le m}\nu_i[\lambda_i]$, 
$0 \leq \nu_i^\prime \leq \nu_i$, $\nu_i > 0$ and that
$\lmod \Lambda \rmod > \lmod \Lambda^\prime \rmod$.

\begin{defi}
\label{defi:diviseurrelatif}
\index{relative divisors}
\index{values of $F$ over a relative divisor}
\label{not76}
Let $\Lambda^\prime < \Lambda$ and let $F$ be a function defined
and analytic in a neighborhood of each point of the spirals in
the support of $\Lambda$ near $0$.
\begin{enumerate}
\item{We call \emph{values of $F$ over the relative divisor
$\Lambda/\Lambda^\prime$ at $0$}, and we write
$(\Lambda/\Lambda^\prime)F(0)$, the $|\Lambda|-|\Lambda^\prime|$
(germs of) sequences:
$$
(\Lambda/\Lambda^\prime)F(0) :=
\left\{\left(F^{(j)}(\lambda_i q^{-n})\right)_{n\gg 0} ~:~ 
1 \leq i \leq m, \nu^\prime_i \leq j < \nu_i\right\}.
$$}
\index{$q$-Gevrey order over a relative divisor}
\item{We say that \emph{$F$ has $q$-Gevrey order $k$ over
$\Lambda/\Lambda^\prime$ at $0$} if all its values there constitute 
$q$-Gevrey sequences of order $k$.}
\item{Let $\Lambda = \Lambda_1 + \Lambda_2$ and $\Lambda^\prime = \Lambda_1$. 
We write $F \in \EE_{(\Lambda_1,\Lambda_2)}^{\Lambda}$ if $F$ is analytic
in a neighborhood of $0$ punctured at $0$, has $q$-Gevrey order 
$|\Lambda|$ at $0$, has $q$-Gevrey order $|\Lambda_2|$ along 
$\Lambda/\Lambda_2$ at $0$ and has $q$-Gevrey order $0$ along
$\Lambda_2$ at $0$.}
\end{enumerate}
\end{defi}
 
We get the following generalization of theorem \ref{theo:fF}.

\begin{prop}
\label{prop:deuxniveaux}
Let $\Lambda_1 <\Lambda =\Lambda_1 + \Lambda_2$. One has
the following decompositions:
\begin{eqnarray*}
\EE_{(\Lambda_1,\Lambda_2)}^{\Lambda} & = &
\EE_0^{\Lambda_2} + \theta_{\Lambda_2}\EE_0^{\Lambda_1}, \\
\left(\dfrac{1}{\theta_{\Lambda}}~\EE_{(\Lambda_1,\Lambda_2)}^{\Lambda}\right) 
\cap \BB^{\Lambda} & = &
\AA_{q}^{\Lambda_1} + \dfrac{1}{\theta_{\Lambda_1}}~\AA_{q}^{\Lambda_2}.
\end{eqnarray*}
\end{prop}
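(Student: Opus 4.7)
The plan is to reduce the second decomposition to the first via the factorisation $\theta_\Lambda = \theta_{\Lambda_1}\theta_{\Lambda_2}$ (immediate from \eqref{eqn:thetalambda}) combined with the identification $\EE_0^{\Lambda_i}=\theta_{\Lambda_i}\,\AA_q^{\Lambda_i}$ given by theorem \ref{theo:fF}: dividing the first identity by $\theta_\Lambda$ transforms $\EE_0^{\Lambda_2}+\theta_{\Lambda_2}\EE_0^{\Lambda_1}$ into $\theta_{\Lambda_1}^{-1}\AA_q^{\Lambda_2}+\AA_q^{\Lambda_1}$. That the intersection with $\BB^\Lambda$ is automatic follows from $\AA_q^{\Lambda_1}\subseteq\BB^{\Lambda_1}\subseteq\BB^\Lambda$ (since $d_q(z,\Lambda)=d_q(z,\Lambda_1)\,d_q(z,\Lambda_2)\le d_q(z,\Lambda_1)$ gives $S(\Lambda,\epsilon;R)\subseteq S(\Lambda_1,\epsilon;R)$) together with the boundedness of $\theta_{\Lambda_1}^{-1}$ on stable strict subsets of $S(\Lambda,\epsilon;R)$, obtained spiral by spiral from proposition \ref{prop:Lambdad} and lemma \ref{lemm:thetamin0}. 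The real work is thus the first decomposition.

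The inclusion $\EE_0^{\Lambda_2}+\theta_{\Lambda_2}\EE_0^{\Lambda_1}\subseteq\EE_{(\Lambda_1,\Lambda_2)}^\Lambda$ is a direct check. For $F_2\in\EE_0^{\Lambda_2}$, Cauchy inequalities on small circles around the $\lambda_i q^{-n}$, together with the $q$-exponential growth of $F_2$ of order $|\Lambda_2|\le |\Lambda|$ at $0$, show that all Taylor jets of $F_2$ on the support of $\Lambda$ are $q$-Gevrey of order at most $|\Lambda_2|$, matching the required bound on $\Lambda/\Lambda_2$; the stronger geometric estimate on low jets along $\Lambda_2$ is built into the definition of $\EE_0^{\Lambda_2}$. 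For $\theta_{\Lambda_2}F_1$ with $F_1\in\EE_0^{\Lambda_1}$, Leibniz's rule at each $\lambda_i q^{-n}$ combined with the $\nu_{2,i}$-fold zero of $\theta_{\Lambda_2}$ on the support of $\Lambda_2$ (which trivially annihilates the jets of order $<\nu_{2,i}$) and with lemma \ref{lemm:theta'} (giving $q$-Gevrey order $|\Lambda_2|$ for the values $\theta_{\Lambda_2}^{(k)}(\lambda_i q^{-n})$) yields the remaining bounds, the jets of $F_1$ on $\Lambda_1$ being of order $0$ and those outside being of order at most $|\Lambda_1|$.

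For the substantive reverse inclusion I would build $F_2\in\EE_0^{\Lambda_2}$ matching $F$ on the jets $F^{(j)}(\lambda_i q^{-n})$ with $\lambda_i$ in the support of $\Lambda_2$ and $0\le j<\nu_{2,i}$; these are $q$-Gevrey of order $0$ by definition of $\EE_{(\Lambda_1,\Lambda_2)}^\Lambda$. Writing $F_2=\theta_{\Lambda_2}f_2$ via theorem \ref{theo:fF}, the partial-fraction description of $f_2\in\AA_q^{\Lambda_2}$ supplied by theorem \ref{theo:fdecomposition} involves residue coefficients $\alpha_{i,j,n}$ of $q$-Gevrey level $-|\Lambda_2|$. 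The upper-triangular linear system relating these to the jets of $F_2$ (worked out explicitly in the proof of the implication $F\in\EE_0^\Lambda\Rightarrow f\in\AA_q^\Lambda$ of theorem \ref{theo:fF}) has leading denominators $\theta_{\Lambda_2}^{(\nu_{2,i})}(\lambda_i q^{-n})$ of growth $|q|^{|\Lambda_2|n^2/2}$ by \eqref{eqn:u0}, so that order $0$ input data produce coefficients of level $-|\Lambda_2|$, as required. Then $G:=F-F_2$ vanishes to order $\nu_{2,i}$ on the support of $\Lambda_2$, so $F_1:=G/\theta_{\Lambda_2}$ extends analytically in a punctured neighbourhood of $0$; its $q$-exponential growth of order $|\Lambda_1|$ at $0$ follows from the order $|\Lambda|$ growth of $G$ and the lower bound on $|\theta_{\Lambda_2}|$ of lemma \ref{lemm:thetamin0} applied to each factor, while the $q$-Gevrey order $0$ estimates for the jets of $F_1$ on the support of $\Lambda_1$ come from a second triangular Leibniz inversion, expressing $F_1^{(j)}(\lambda_i q^{-n})$ as a quotient of $q$-Gevrey order $|\Lambda_2|$ jets of $G$ (inherited from $F$ and $F_2$) by $q$-Gevrey order $|\Lambda_2|$ values of $\theta_{\Lambda_2}^{(k)}$, the two orders cancelling exactly.

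The main obstacle will be the careful bookkeeping of $q$-Gevrey orders in this reverse inclusion at spirals common to the supports of $\Lambda_1$ and $\Lambda_2$: there one must simultaneously handle the geometric hypothesis on low jets of $F$ along $\Lambda_2$, the order $|\Lambda_2|$ hypothesis on the higher jets encoded by $\Lambda/\Lambda_2$, and the precise growth rate of the $\theta_{\Lambda_2}$-derivatives coming from Jacobi's triple product identity \eqref{eqn:u0}, and see that the two triangular Leibniz inversions dovetail so that all Gevrey orders add up correctly.
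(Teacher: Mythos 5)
Your proof is correct and follows essentially the same route as the paper's: isolate the low-order Taylor jets of $F$ along $\Lambda_2$ as a function $F_2 \in \EE_0^{\Lambda_2}$, then verify that $F_1 := (F - F_2)/\theta_{\Lambda_2}$ lands in $\EE_0^{\Lambda_1}$ using lemma \ref{lemm:thetamin} for the $q$-exponential growth at $0$ and the $q$-Gevrey order $|\Lambda_2|$ hypothesis on the jets along $\Lambda/\Lambda_2$ for the remaining estimates. The only superficial difference is that you repackage $F_2$ through the partial-fraction coefficients supplied by theorem \ref{theo:fdecomposition} (with the triangular Jacobi-type inversion already worked out in the proof of theorem \ref{theo:fF}), whereas the paper writes $F_2 := F - R_{\Lambda_2}$ directly as the Taylor-jet series along $\Lambda_2$ and then appeals to theorem \ref{theo:fF}; these are interchangeable descriptions of the same object.
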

\begin{proof} 
We shall only check the first decomposition, the second one following
immediately with help of theorem \ref{theo:fF}. 

Let $\Lambda_2 = \sum\limits_{i=1}^m\nu_i[\lambda_i]$, with 
$[\lambda_i] \not= [\lambda_{i'}]$ for $i \not= i'$. Let 
$F \in \EE_{(\Lambda_1,\Lambda_2)}^\Lambda$. Since $F$ has $q$-Gevrey order $0$ 
over $\Lambda_2$ at zero, one gets the following Taylor expansion:
\begin{equation}
\label{eqn:FTaylor}
F(z) = 
\sum_{n\gg0} \sum_{i=0}^m \sum_{j=0}^{\nu_i-1}
\dfrac{F^{(j)}(\lambda_iq^{-n})}{j!}~(z-\lambda_iq^{-n})^j+R_{\Lambda_2}(z)~,
\end{equation}
where the convergence of these series for $n \gg 0$ comes from the fact
that each sequence $\bigl(F^{(j)}(\lambda_i q^{-n})\bigr)_{n\gg0}$ is bounded.
Let $f_2 := \dfrac{F}{\theta_{\Lambda_2}}$ and
$F_1 := \dfrac{R_{\Lambda_2}}{\theta_{\Lambda_2}}$ and put
$F_2 := \theta_{\Lambda_2}(f_2-F_1)$; one has $F =F_2 + \theta_{\Lambda_2}~F_1$ 
and $F_2$ is the function represented by the triple summation in the
expansion \eqref{eqn:FTaylor}. After theorem \ref{theo:fF}, 
one has $(f_2 - F_1) \in \AA_q^{\Lambda_2}$ and $F_2 \in \EE_0^{\Lambda_2}$. 

There remains to check that $F_1 \in \EE_0^{\Lambda_1}$. First note that
$R_{\Lambda_2}$ has order at least $\nu_i$ at each $z = \lambda_iq^{-n}$ 
for $n \gg 0$, which implies that $F_1$ represents a germ of analytic
function in a punctured neighborhood of $z = 0$. Moreover, since the
values of $F$ over the relative divisor $\Lambda/\Lambda_2$ at $0$ 
have $q$-Gevrey order $|\Lambda_2|$, the same is true for the function
$R_{\Lambda_2}$ because $F_2$ has $q$-Gevrey order $|\Lambda_2|$ at zero.
Using lemma \ref{lemm:thetamin}, one finds at last that
$F_1 \in \EE_0^{\Lambda_1}$, achieving the proof of the theorem.
\end{proof} 

Note that:
$$
\AA_{q}^{\Lambda_1} \subset 
\AA_{q}^{\Lambda_1} + \dfrac{1}{\theta_{\Lambda_1}} \AA_{q}^{\Lambda_2}~,
$$
the inclusion being strict for a non trivial divisor $\Lambda_2$.


\subsection{Multisummable functions}
\label{subsection:multisummable}
\index{multisummable functions}

More generally, let
$$
\Lambda_1 < \Lambda_1 + \Lambda_2 < \cdots <
\Lambda_1 + \Lambda_2 + \cdots + \Lambda_m= \Lambda.
$$
For $i$ from $1$ to $m+1$, put 
\label{not77}
\begin{align*}
\Lambda_{\ge i} &:= \sum\limits_{j\ge i}\Lambda_j \\
\text{and~}
\Lambda_{\le i} &:= \sum\limits_{j\le i}\Lambda_j.
\end{align*}
Also, we will write $\Lambda_{\ge m+1} = \Lambda_{\le 0} = {\bf O}$, 
where ${\bf O}$ stands for the null divisor, and
$\Lambda_{\ge 0} = \Lambda_{\le m} = \Lambda$. For short,
we shall call $(\Lambda_1,\Lambda_2,\ldots,\Lambda_m)$ an 
\emph{ordered partition of $\Lambda$}.

\begin{theo}
\label{theo:mniveaux}
Let
$(\Lambda_1,\Lambda_2,\ldots,\Lambda_m)$ be an ordered partition 
of $\Lambda$ and define $\EE_{(\Lambda_1,\Lambda_2,\ldots,\Lambda_m)}^{\Lambda}$ as
\label{not78}
the set of functions that are analytic in a punctured neighborhood 
of $0$, have $q$-Gevrey order $|\Lambda|$ at $0$ and, for $i$
from $1$ to $m$, having $q$-Gevrey order $|\Lambda_{\ge i+1}|$ 
over $\Lambda_{\ge i}/\Lambda_{\ge i+1}$ at $0$. Then, one has:
$$
\EE_{(\Lambda_1,\Lambda_2,\ldots,\Lambda_m)}^{\Lambda} =
\EE_0^{\Lambda_m} + \theta_{\Lambda_{\ge m}} \EE_0^{\Lambda_{m-1}} +
\cdots + \theta_{\Lambda_{\ge 2}}\EE_0^{\Lambda_1}.
$$
In other words, writing 
$\OO_{(\Lambda_1,\Lambda_2,\ldots,\Lambda_m)}^{\Lambda} := 
\left(\dfrac{1}{\theta_\Lambda}\EE_{(\Lambda_1,\Lambda_2,\ldots,\Lambda_m)}^{\Lambda}\right)
\cap \BB^{\Lambda}$, one has:
$$
\OO_{(\Lambda_1,\Lambda_2,\ldots,\Lambda_m)}^{\Lambda} =
\AA_{q}^{\Lambda_1} + \dfrac{1}{\theta_{\Lambda_{\le 1}}} \AA_{q}^{\Lambda_2} + \cdots +
\dfrac{1}{\theta_{\Lambda_{\le m-1}}}\AA_{q}^{\Lambda_m} \cdot
$$
 \end{theo}
\begin{proof} 
One proceeds by induction on the length $m$ of the partition of
divisor $\Lambda$: the cass $m = 1$ is theorem \ref{theo:fF} 
while the case $m = 2$ is dealt with in proposition 
\ref{prop:deuxniveaux}. The other cases follow directly 
from the next lemma.
\end{proof} 

\begin{lemm}
\label{lemm:m-1}
Let $m \ge 2$. Then:
$$
\EE_{(\Lambda_1,\Lambda_2,\ldots,\Lambda_m)}^{\Lambda} =
\EE_0^{\Lambda_m} + 
\theta_{\Lambda_{m}} 
\EE_{(\Lambda_1,\Lambda_2,\ldots,\Lambda_{m-1})}^{\Lambda_{\leq m-1}}~.
$$
\end{lemm}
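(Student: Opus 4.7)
The plan is to reduce to the two-level case already handled in proposition \ref{prop:deuxniveaux}. The crucial observation is that the condition defining $\EE_{(\Lambda_1,\ldots,\Lambda_m)}^{\Lambda}$ at the index $i=m$ says precisely that the values of $F$ on $\Lambda_m$ are $q$-Gevrey of order $0$ --- exactly the hypothesis which powered the two-level decomposition. Starting from $F \in \EE_{(\Lambda_1,\ldots,\Lambda_m)}^{\Lambda}$, one constructs as in the proof of proposition \ref{prop:deuxniveaux} a function $F_2 \in \EE_0^{\Lambda_m}$ matching the jets of $F$ of orders $0,\ldots,\nu_{m,s}-1$ at each point $\lambda_s q^{-n}$ ($n \gg 0$) of the support of $\Lambda_m$; the remainder $R := F - F_2$ vanishes to order at least $\nu_{m,s}$ at each such point, hence $F_1 := R/\theta_{\Lambda_m}$ is holomorphic in a punctured neighborhood of $0$ and one has the decomposition $F = F_2 + \theta_{\Lambda_m} F_1$.

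What remains, and what is new compared to the $m=2$ case, is the verification that $F_1$ lies in the multi-level space $\EE_{(\Lambda_1,\ldots,\Lambda_{m-1})}^{\Lambda_{\leq m-1}}$. The $q$-exponential growth of $F_1$ at $0$ of the required order $|\Lambda_{\leq m-1}| = |\Lambda| - |\Lambda_m|$ follows from the growth of $F$ and $F_2$ combined with the minoration of $|\theta_{\Lambda_m}|$ off its zero spirals supplied by lemma \ref{lemm:thetamin}. For each $i$ with $1 \leq i \leq m-1$, the required $q$-Gevrey order $|\Lambda_{\leq m-1, \geq i+1}|$ of the values of $F_1$ along the relative divisor $\Lambda_i = \Lambda_{\leq m-1, \geq i}/\Lambda_{\leq m-1, \geq i+1}$ is obtained by differentiating the identity $R = \theta_{\Lambda_m} F_1$ via Leibniz's formula and inverting the resulting triangular system --- exactly as in the proof of the implication $F \in \EE_0^\Lambda \Rightarrow f \in \AA_q^\Lambda$ of theorem \ref{theo:fF} --- to express each $F_1^{(j)}(\lambda_s q^{-n})$ as a combination of $R^{(k)}(\lambda_s q^{-n})$ and $\theta_{\Lambda_m}^{(\ell)}(\lambda_s q^{-n})$. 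Lemma \ref{lemm:theta'} controls the latter (order $|\Lambda_m|$), while the hypothesis on $F$ combined with $F_2 \in \EE_0^{\Lambda_m}$ controls the former (order $|\Lambda_{\geq i+1}|$); the arithmetic identity $|\Lambda_{\geq i+1}| - |\Lambda_m| = |\Lambda_{\leq m-1, \geq i+1}|$ then produces exactly the required estimate.

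The reverse inclusion is routine and uses the same tools (Leibniz's formula applied in the forward direction, Cauchy estimates for derivatives of elements of $\EE_0^{\Lambda_m}$, and lemma \ref{lemm:theta'}); the key point is that $q$-Gevrey order $|\Lambda_m|$ is stronger than $q$-Gevrey order $|\Lambda_{\geq i+1}|$ for every $i \leq m-1$. The main obstacle in the direct inclusion is the combinatorial bookkeeping: one must simultaneously track the derivative index $j$, the multiplicities $\nu_{j,s}$, and the $q$-Gevrey orders at every stratum as they are propagated through division by $\theta_{\Lambda_m}$. Once the Leibniz inversion is understood to preserve $q$-Gevrey estimates with a uniform shift by $|\Lambda_m|$, the verification reduces to this single arithmetic identity, and the argument becomes a stratified extension of proposition \ref{prop:deuxniveaux}.
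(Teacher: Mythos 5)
Your proof is correct and follows exactly the paper's route: Taylor-expand $F$ along $\Lambda_m$ as in the proof of proposition \ref{prop:deuxniveaux}, take $F_2$ to be the polar part (so that $F_2 \in \EE_0^{\Lambda_m}$), and check that $F_1 := (F-F_2)/\theta_{\Lambda_m}$ lands in $\EE_{(\Lambda_1,\ldots,\Lambda_{m-1})}^{\Lambda_{\leq m-1}}$ using lemmas \ref{lemm:thetamin} and \ref{lemm:theta'} together with the Leibniz inversion. The paper's own proof is terser still --- it merely cites equation \eqref{eqn:FTaylor} and asserts the conclusion --- so the degree bookkeeping and the triangular inversion you spell out are precisely the details the printed proof leaves implicit.
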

\begin{proof}  
Let $F\in \EE_{(\Lambda_1,\Lambda_2,\ldots,\Lambda_m)}^{\Lambda}$. As in equation 
\eqref{eqn:FTaylor} in the course of the proof of proposition 
\ref{prop:deuxniveaux}, one considers the Taylor expansion
of $F$ along the divisor $\Lambda_m$, thereby finding that
$F \in \EE_0^{\Lambda_m} +
\theta_{\Lambda_{m}} 
\EE_{(\Lambda_1,\Lambda_2,\ldots,\Lambda_{m-1})}^{\Lambda_{\leq m-1}}$.
\end{proof} 

\begin{defi}
\label{defi:msommabilite}
\index{multisummable functions}
We call $\OO_{(\Lambda_1,\Lambda_2,\ldots,\Lambda_m)}^{\Lambda}$ the set of
\emph{multisummable functions at $0$ with respect to partition
$(\Lambda_1,\Lambda_2,\ldots,\Lambda_m)$ of divisor $\Lambda$}. 
\end{defi}

\label{not79}
By convention, we will set $\OO_{(\bf O)}^{\bf O} := \C\{z\}$, the ring 
of all germs of analytic functions at $z=0$.

\begin{prop}
\label{prop:OOmodule}
The sets $\EE_{(\Lambda_1,\Lambda_2,\ldots,\Lambda_m)}^{\Lambda}$ and
$\OO_{(\Lambda_1,\Lambda_2,\ldots,\Lambda_m)}^{\Lambda}$ have a structure
of module over $\Ra$ and are stable under the $q$-difference
operator $\sq$.
\end{prop}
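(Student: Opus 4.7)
Both assertions reduce to verifying directly that each of the defining conditions is preserved under multiplication by an element of $\Ra$ and under $\sq$; closure under addition is evident from the definitions.

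\emph{The $\EE$ case.} Let $F \in \EE^\Lambda_{(\Lambda_1,\ldots,\Lambda_m)}$ and $h \in \Ra$. Analyticity of $hF$ in a punctured neighborhood of $0$ and the $q$-exponential growth of order $|\Lambda|$ at $0$ are preserved since $h$ is bounded near $0$. For the values on each relative divisor, Leibniz gives $(hF)^{(j)}(\lambda q^{-n}) = \sum_{k+l=j} \binom{j}{k} h^{(k)}(\lambda q^{-n})\, F^{(l)}(\lambda q^{-n})$; the factors $h^{(k)}(\lambda q^{-n})$ are bounded for $n$ large (since $\lambda q^{-n} \to 0$), so the $q$-Gevrey orders of the values of $F$ transfer intact to those of $hF$. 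For $\sq$: the identity $(\sq F)^{(j)}(\lambda q^{-n}) = q^j\, F^{(j)}(\lambda q^{-(n-1)})$ amounts to an index shift and a geometric rescaling, which preserve $q$-Gevrey orders; and the functional equation $e(qw) = |qw|\,e(w)$ yields preservation of the $q$-exponential growth at $0$.

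\emph{The $\OO$ case.} The $\Ra$-module structure is immediate from the $\EE$ case together with the obvious closure of $\BB^\Lambda$ under multiplication by functions analytic (hence bounded) near $0$. For $\sq$-stability, the functional equation $\theta(qw) = qw\,\theta(w)$ gives $\sq \theta_\Lambda = c\, z^{|\Lambda|}\, \theta_\Lambda$ with $c := \prod_j (-q/\lambda_j)^{\nu_j} \in \C^*$, so for $f = F/\theta_\Lambda$ with $F \in \EE$, one has $\sq f = H/\theta_\Lambda$ with $H := c^{-1}\,\sq F/z^{|\Lambda|}$. The membership $\sq f \in \BB^\Lambda$ is straightforward, since the sectors $S(\Lambda,\epsilon;R)$ are $q^\Z$-invariant and $\sq$ sends stable strict subsets of $S(\Lambda,0;R/|q|)$ to stable strict subsets of $S(\Lambda,0;R)$.

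\emph{The main technical step} is to check that $H \in \EE$. Its analyticity in a punctured neighborhood of $0$ is clear; its $q$-exponential growth of order $|\Lambda|$ at $0$ follows from the simplification $e(\mu qz)^{|\Lambda|}/|z|^{|\Lambda|} = |\mu q|^{|\Lambda|}\, e(\mu z)^{|\Lambda|}$. For the values over each relative divisor $\Lambda_{\ge i}/\Lambda_{\ge i+1}$, Leibniz applied to $H \propto (\sq F)\cdot z^{-|\Lambda|}$ gives $H^{(j)}(\lambda q^{-n}) = \sum_{k+l=j} c_{j,k,l}\, q^{n(|\Lambda|+l)}\, F^{(k)}(\lambda q^{-(n-1)})$, with constants $c_{j,k,l}$ independent of $n$. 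The geometric factor $q^{n(|\Lambda|+l)}$ does not affect $q$-Gevrey orders. The essential point is the monotonicity of the stratification: denoting by $M^\lambda_i$ the multiplicity of $\lambda$ in $\Lambda_{\ge i}$, the order prescribed for $(F^{(k)}(\lambda q^{-n}))_{n\gg 0}$, namely $|\Lambda_{\ge i+1}|$ when $k \in [M^\lambda_{i+1}, M^\lambda_i)$, is non-decreasing in $k$. Hence, when $j$ lies in the range corresponding to $\Lambda_{\ge i}/\Lambda_{\ge i+1}$, every summand is $q$-Gevrey of order at most $|\Lambda_{\ge i+1}|$, and so is $H^{(j)}(\lambda q^{-n})$. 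This yields $H \in \EE$ and concludes the proof that $\sq f \in \OO$.
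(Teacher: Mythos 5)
Your proof is correct. The paper gives no proof at all — it simply declares the statement "Immediate" — so your careful verification (closure under multiplication by $\Ra$ via Leibniz and boundedness of $h^{(k)}$ near $0$; closure under $\sq$ via the functional equations $\theta(qw)=qw\,\theta(w)$ and $e(qw)=|qw|\,e(w)$, the identity $(\sq F)^{(j)}(\lambda q^{-n})=q^j F^{(j)}(\lambda q^{-(n-1)})$, and the crucial monotonicity of the prescribed $q$-Gevrey order in the derivative index, which ensures the lower-order Leibniz terms do not worsen the estimate) is exactly what the paper's "Immediate" is sweeping under the rug.
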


\begin{proof} 
Immediate.
\end{proof} 

If $m \ge 2$, an element $f \in \OO_{(\Lambda_1,\Lambda_2,\ldots,\Lambda_m)}^{\Lambda}$ 
generally has no asymptotic expansion in the sense of definition
\ref{defi:ALambda}. Nonetheless, according to theorem 
\ref{theo:mniveaux}, there exists a $f_1 \in \AA_q^{\Lambda_1}$ 
such that $f(z)-f_1(z) = O\left(\dfrac{1}{\theta_{\Lambda_1}(z)}\right)$ 
as $z \to 0$ outside of $\Lambda$, which allows one to prove the
following.

\begin{theo}
\label{theo:masymptotique}
To each $f \in \OO_{(\Lambda_1,\Lambda_2,\ldots,\Lambda_m)}^{\Lambda}$
there corresponds a unique power series
$\hat f = \sum\limits_{n\ge 0}a_n~z^n\in\Rf_{q;1/|\Lambda_1|}$ 
satisfying the following property: for all $R > 0$ near enough $0$,
there are constants $C$, $A > 0$ such that, for all $\epsilon > 0$ 
and all $z\in S(\Lambda,\epsilon;R)$:
\begin{equation}
\label{eqn:masymptotique}
\lmod f(z) - \sum_{\ell=0}^{n-1}a_\ell~ z^\ell \rmod <
\dfrac{C}{\epsilon}~A^n~\lmod q \rmod^{n^2/(2|\Lambda_1|)}~\lmod z \rmod^n~.
\end{equation}
\end{theo}
\begin{proof} 
Applying theorem \ref{theo:mniveaux} to the function $f$, one gets:
$$
f = f_1 +\dfrac{f_2}{\theta_{\Lambda_1}} + \cdots +
\dfrac{f_m}{\theta_{\Lambda_{\le m-1}}},
$$
where $f_i \in \AA_q^{\Lambda_i}$ for all indexes $i$ from $1$ to $m$. 
Choose $R > 0$ in such a way that the $f_i~\theta_{\Lambda_i}$ have an
analytic continuation to the punctured disk $0 < \lmod z \rmod < 2 R$; 
from the definition of each space $\AA_q^{\Lambda_i}$, one has:
$$
d_q(z,\Lambda_i) > \epsilon \Longrightarrow
\lmod f_i(z) \rmod < \dfrac{C_i}{\epsilon}
$$
for all $z$ such that $0 < \lmod z \rmod < R$, where $C_i$ is some positive constant. 
On the other hand, from lemma \ref{lemm:thetamin0} and relation 
\eqref{eqn:e-1}, one has, for any divisor
$\Lambda' = \Lambda_1,\ldots,\Lambda_{\le m-1}$:
$$
d_q(z,\Lambda') > \epsilon \Longrightarrow
\lmod \dfrac{1}{\theta_{\Lambda'}(z)} \rmod <
\dfrac {C'}{\epsilon}~{A'}^n~\lmod q \rmod^{n^2/(2|\Lambda'|)}~\lmod z \rmod^n
$$
for all integers $n \ge 0$, where $C'$ and $A'$ are positive constants
depending only on the divisor $\Lambda'$. Noting that
$d_q(z,\Lambda) \le d_q(z,\Lambda_i)~d_q(z,\Lambda_{\le i-1})$
for $i=2,\ldots,m$, one gets that, for all integers $n \ge 0$:
\begin{equation}
\label{eqn:ff1}
d_q(z,\Lambda > \epsilon \Longrightarrow
\lmod f(z)-f_1(z) \rmod <
\dfrac {C_1}{\epsilon}~{A_1}^n~\lmod q \rmod^{n^2/(2|\Lambda_1|)}~\lmod z \rmod^n
\end{equation}
as long as $0 < \lmod z \rmod < R$. 

Relation \eqref{eqn:ff1} implies that both  $f$ and $f_1$ have $\hat f$
as an asymptotic expansion in the classical sense (Poincar{\'e}) at $0$,
outside of divisor $\Lambda$. Since $f_1$ is asymptotic to $\hat f$ in
the frame of space $\AA_q^{\Lambda_1}$, one has 
$\hat f \in \Rf_{q;1/|\Lambda_1|}$. The bounds 
\eqref{eqn:masymptotique} are directly obtained from 
\eqref{eqn:fN} and \eqref{eqn:ff1}, using the
obvious relation $d_q(z,\Lambda) \le d_q(z,\Lambda_1)$.
\end{proof} 

It is clear that a function $f$ satisfying \eqref{eqn:masymptotique} 
does not necessarily belong to the space
$\OO_{(\Lambda_1,\Lambda_2,\ldots,\Lambda_m)}^{\Lambda}$. 
To simplify, we shall adopt the following definition.

\begin{defi}
\label{defi:Lambda1Lambda}
\index{asymptotic expansion at $0$ along the divisors $(\Lambda_1,\Lambda)$}
When $f$ and $\hat f$ satisfy condition \eqref{eqn:masymptotique}, 
we shall say that \emph{$f$ admits $\hat f$ as an asymptotic expansion 
at $0$ along the divisors $(\Lambda_1,\Lambda)$.} If moreover $\hat f$
is the null series, we shall say that \emph{$f$ is flat at $0$ along
divisors $(\Lambda_1,\Lambda)$.}
\index{flat at $0$ along divisors $(\Lambda_1,\Lambda)$}
\end{defi}

The following result is straightforward from definitions 
\ref{defi:ALambda} (page \pageref{defi:ALambda}) and \ref{defi:Lambda1Lambda}.

\begin{prop}
\label{prop:3Lambda}
Let $\Lambda_1$, $\Lambda'$ and $\Lambda$ be divisors such that
$\Lambda_1 < \Lambda' < \Lambda$. If $f \in \AA_q^{\Lambda'}$, then
its expansion along $\Lambda'$ is also the expansion of $f$ along
$(\Lambda_1,\Lambda)$.
\end{prop}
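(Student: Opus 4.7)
The plan is to compare directly the two sets of estimates involved in definitions \ref{defi:ALambda} and \ref{defi:Lambda1Lambda}, and to show that the hypothesis ``$f \in \AA_q^{\Lambda'}$'' gives a stronger estimate on a larger set than what is required for the expansion along $(\Lambda_1,\Lambda)$. The proof should therefore just consist in two elementary comparisons: one on the sectors, one on the $q$-Gevrey type.

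First I would check the inclusion of sectors $S(\Lambda,\epsilon;R) \subset S(\Lambda',\epsilon;R)$. Since $\Lambda' < \Lambda$ (componentwise on the multiplicities) and since for every $\lambda\in\C^{*}$ one has $d_q(z,[\lambda]) \leq 1$, the product formula \eqref{eqn:dzLambda} immediately gives $d_q(z,\Lambda) \leq d_q(z,\Lambda')$. Hence, if $d_q(z,\Lambda) > \epsilon$ then $d_q(z,\Lambda') > \epsilon$, which is exactly the required inclusion.

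Next I would compare the $q$-Gevrey growth rates. Put $\nu' := |\Lambda'|$ and $\nu_1 := |\Lambda_1|$. From $\Lambda_1 < \Lambda'$ one gets $\nu_1 \leq \nu'$, and since $\lmod q \rmod > 1$ this yields
\[
\lmod q\rmod^{N^{2}/(2\nu')} \;\leq\; \lmod q\rmod^{N^{2}/(2\nu_{1})}
\qquad\text{for every } N\in\N.
\]
So every constant $A$ of definition \ref{defi:ALambda} is automatically admissible in the estimate \eqref{eqn:masymptotique}.

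Combining the two observations: by hypothesis there exist $C,A > 0$ such that, for all $\epsilon > 0$, all integers $N\geq 0$, and all $z$ in some sector $S(\Lambda',\epsilon;R)$,
\[
\Bigl\lmod f(z) - \sum_{0\leq n < N} a_{n} z^{n} \Bigr\rmod
< \frac{C}{\epsilon}\, A^{N}\, \lmod q\rmod^{N^{2}/(2\nu')}\, \lmod z\rmod^{N}.
\]
Restricting this bound to the smaller sector $S(\Lambda,\epsilon;R)$ and using $\lmod q\rmod^{N^{2}/(2\nu')} \leq \lmod q\rmod^{N^{2}/(2\nu_{1})}$ gives precisely condition \eqref{eqn:masymptotique}, which means by definition \ref{defi:Lambda1Lambda} that $f$ admits $\hat f$ as asymptotic expansion along $(\Lambda_{1},\Lambda)$. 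The uniqueness part of theorem \ref{theo:masymptotique} ensures that this expansion coincides with the one coming from $\AA_q^{\Lambda'}$. There is no real obstacle here; the statement is essentially a sanity check on the compatibility of the two definitions.
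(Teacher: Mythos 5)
Your proof is correct and spells out precisely the two comparisons (inclusion of sectors and monotonicity of the $q$-Gevrey bound in the degree of the divisor) that make the statement hold; this is exactly what the paper's one-word proof ``Immediate'' leaves to the reader. The closing appeal to theorem \ref{theo:masymptotique} for uniqueness is unnecessary — the expansion you verify is the one you started with — but it does no harm.
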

\begin{proof} 
Immediate.
\end{proof} 


\section{Analytic classification of linear $q$-difference  equations} 
\label{section:analyticclassificationCZ}
\index{analytic classification}

We now return to the classification problem, with the 
notations of chapter \ref{chapter:isoformalclasses}. 
In particular, we consider a block diagonal matrice 
$A_{0}$ as in \eqref{eqn:formestandarddiagonaleentiere}
page \pageref{eqn:formestandarddiagonaleentiere},
and the corresponding space $\F(P_{1},\ldots,P_{k})$ of 
isoformal analytic classes within the formal class defined
by $A_{0}$.

The space of block upper triangular matrices $A_{U}$ as in 
\eqref{eqn:formestandardtriangulaireentiere} page 
\pageref{eqn:formestandardtriangulaireentiere} 
such that moreover each rectangular block $U_{i,j}$ belongs to
$\Mat_{r_{i},r_{j}}(K_{\mu_{i},\mu_{j}})$ (according to the notations
of the beginning of subsection \ref{subsection:twointegralslopes})
will be written for short $\CA$. 
\label{not80}
It follows from
proposition \ref{prop:kintegralslopesspace} that sending
$A_{U}$ to its class induces an isomorphism of $\CA$
with $\F(P_{1},\ldots,P_{k})$. 

Let $A \in \CA$ et consider the conjugation equation: 
\begin{equation}
\label{eqn:conjugaison}
(\sq F) A_0 = A F;
\end{equation} 
We saw in the first part of \ref{subsection:computnormalform}
that this admits a unique solution $\hat F = (\hat F_{ij})$ 
in $\G(\Rf)$. After J.P. B\'ezivin \cite{Bezivin}, 
this solution satisfies the following $q$-Gevrey condition:
\begin{equation*}
\forall i < j ~,~
\hat F_{ij} \in \Mat_{r_i,r_j}\bigl(\Rf_{q;1/(\mu_{j}-\mu_{j-1})}\bigr)~.
\end{equation*}

In this section, we shall first prove that the formal power 
series $\hat F$ is multisummable according to definition 
\ref{defi:msommabilite}. We shall deduce the existence 
of solutions that are asymptotic to the formal solution $\hat F$ 
at zero along divisors $(\Lambda_1,\Lambda)$ (see definition 
\ref{defi:Lambda1Lambda}). Last, in \ref{subsection:Stokes}, 
we shall prove that the Stokes multipliers make up a complete set 
of analytic invariants.

In the following, a matrix-valued function will be said to be 
summable or multisummable or to have an asymptotic expansion if 
each coefficient of the matrix is such a function. In this way, 
we will be led to consider spaces $\G(\AA_{q}^\Lambda)$, 
$\Mat_{n_1,n_2}(\OO_{(\Lambda_1,\ldots,\Lambda_{m})}^{\Lambda})$, etc.


\subsection{Summability of formal solutions}
\label{subsection:sommabilite}

Consider the equation \eqref{eqn:conjugaison} and keep
the notation \eqref{eqn:formestandardtriangulaireentiere}. 
To each pair $(i,j)$ such that $i < j$, we associate a
divisor $\Lambda_{i,j}$ with degree $\mu_j - \mu_i$; we assume
that $\Lambda_{i+1,j} < \Lambda_{i,j} < \Lambda_{i,j+1}$, meaning that
$\Lambda_{i,j} = \sum\limits_{\ell=i}^{j-1}\Lambda_{\ell,\ell+1}$. 
Put $\Lambda_j := \Lambda_{j-1,j}$ and
$\Lambda := \sum\limits_{j=2}^{k}\Lambda_j$; then
$\Lambda_{i,j} = \sum\limits_{\ell=i+1}^{j}\Lambda_j$ and
$|\Lambda| = \mu_k - \mu_1$. By convention, we write
$\Lambda_{i,i} = {\rm\bf O}$, the null divisor.

In order to establish the summability of the formal solution 
$\hat F$, we need a genericity condition on divisors.

\begin{defi}
\label{defi:compatible}
Let $A_{0}$ be as in \eqref{eqn:formestandarddiagonaleentiere}
and let $\Lambda$ be a divisor of degree
$\vert \Lambda\vert = \mu_k - \mu_1$. A partition
$(\Lambda_1,\ldots,\Lambda_m)$ is called 
\index{partition compatible with $A_0$}
\index{compatible with $A_0$ (partition)}
\emph{compatible 
with $A_0$} if $m = k-1$ and if, for $i$ from $1$ to $(k-1)$, 
one has $\vert\Lambda_i\vert = \mu_{i+1} - \mu_i$~; when this
is the case, the partition is called \emph{generic for $A_0$} 
\index{partition generic for $A_0$}
\index{generic for $A_0$ (partition)}
if it moreover satisfies the following non-resonancy condition:
$\Vert \Lambda_i \Vert \not\equiv \dfrac{\alpha_i}{\alpha_j} \pmod~ q^\Z$ 
for any eigenvalues $\alpha_i$ of $A_i$ and $\alpha_j$ of $A_j$, $j>i$.
\end{defi}

The following theorem makes more precise the result 
\cite[Theorem 3.7]{JSStokes}. Here, we prove that the \emph{unique} 
meromorphic solution $F = F_{(\Lambda_1,\ldots,\Lambda_{k-1})}$, found by both 
methods, is \emph{asymptotic to the unique formal solution $\hat F$}, 
in the sense of definition \ref{defi:Lambda1Lambda}). Moreover, its 
construction is different \cite{zh2} and uses the theorem of Borel-Ritt 
\ref{theo:BorelRitt}. \\
\begin{tabular}[t]{p{7cm} c}
{\begin{rema}
\label{rema:dictionnary}
The two approaches can be compared with help of the following dictionary
(see also subsection \ref{subsection:algebraicsummation}):
\end{rema}}
&
{\begin{tabular}[t]{|l|l|}
\hline
Here & \cite{JSStokes} \\
\hline
\index{summation divisor}
partition & summation divisor \\ 
\hline
compatible & adapted \\
\hline
generic & allowed \\
\hline
\end{tabular}}
\end{tabular}

\begin{theo}[Summability]
\index{summability}
\label{theo:sommabilite}
Let $A \in \CA$, let $\Lambda$ be a divisor and
$(\Lambda_1,\ldots,\Lambda_{k-1})$ a generic partition of
$\Lambda$ for $A_0$. For $i < j$, put
$\Lambda_{i,j} := \sum\limits_{\ell=i}^{j-1} \Lambda_\ell$.
Then the conjugacy equation $(\sq F) A_0 = A F$ admits a unique 
solution $F = (F_{ij})$ in $\G$ such that
$F_{ij} \in \Mat_{r_i,r_j}(\OO_{\Lambda_i,\ldots,\Lambda_{j-1}}^{\Lambda_{i,j}})$ for $i < j$
\end{theo}
 \begin{proof} 
Take $A$ in the form 
\eqref{eqn:formestandardtriangulaireentiere}. 
The conjugacy equation \eqref{eqn:conjugaison} is
equivalent to the following system; for$1 \leq i < j \leq k$:
$$
z^{\mu_j-\mu_i}(\sq F_{ij})A_j =
A_i F_{ij} + z^{-\mu_i} \sum_{\ell=i+1}^j U_{i\ell} F_{\ell j}.
$$
After J.P. B\'ezivin \cite{Bezivin}, we know that the formal
solution $\hat F_{ij}$ has $q$-Gevrey order $1/(\mu_j - \mu_{j-1})$
for $i < j$. 

Let $1 < j \leq k$ and consider the formal solution $\hat F_{j-1,j}$ 
of equation:
\begin{equation}
\label{eqn:Fj-1j}
z^{\mu_j-\mu_{j-1}}(\sq Y)~A_j = A_{j-1}~Y + z^{-\mu_{j-1}} U_{j-1,j}.
\end{equation}
After the theorem of Borel-Ritt \ref{theo:BorelRitt}, 
there exists a function $\Phi_{j-1,j} \in \AA_q^{\Lambda_{j-1}}$
with $\hat F_{j-1,j}$ as an asymptotic expansion at zero. 
Putting $Y = Z + \Phi_{j-1,j}$ in equation \eqref{eqn:Fj-1j}, 
one gets, from theorem \ref{theo:Lambdaplate}:
$$
z^{\mu_j-\mu_{j-1}}(\sq Y)~A_j = 
A_{j-1}~Y + \frac{H_{j-1,j}}{\theta_{\Lambda_{j-1}}}~,
$$
where $H_{j-1,j}$ denotes a meromorphic function in a neighborhood
of $z = 0$. Putting $Z := \dfrac{X}{\theta_{\Lambda_{j-1}}}$ one finds:
$$
\Vert\Lambda_{j-1}\Vert(\sq X)~A_j = A_{j-1}~X + H_{j-1,j}~.
$$
By the non-resonancy assumption, this yields a unique solution
that is meromorphic at $z = 0$. It follows that the equation
admits a unique solution in class $\AA_q^{\Lambda_{j-1}}$ which is
asymptotic to the series $\hat F_{j-1,j}$; the uniqueness is
obvious. This solution will be written $F_{j-1,j}$. 

Now consider the equation satisfied by $\hat F_{j-2,j}$:
\begin{equation}
\label{eqn:Fj-2j}
z^{\mu_j-\mu_{j-2}}(\sq Y)~A_j =
A_{j-2}~Y + z^{-\mu_{j-2}}~\bigl(U_{j-2,j-1} F_{j-1,j} + U_{j-2,j}\bigr).
\end{equation}
Choosing a function $\Phi_{j-2,j}$ in $\AA_q^{\Lambda_{j-1}}$ with
$\hat F_{j-2,j}$ as an asymptotic expansion, the change of
unknown function $Y = \dfrac{Z+\Phi_{j-2,j}}{\theta_{\Lambda_{j-1}}}$ 
transforms \eqref{eqn:Fj-2j} into:
$$
\Vert\Lambda_{j-1}\Vert~z^{\mu_{j-1}-\mu_{j-2}}~(\sq X)~A_j =
A_{j-2}~X+H_{j-2,j}~,
$$
where $H_{j-2,j}$ is a function meromorphic at $z = 0$. We are thus
led back to a similar situation as in equation \eqref{eqn:Fj-1j}, 
with $\mu_{j-1} - \mu_{j-2}$ instead of $\mu_j - \mu_{j-1}$. One thus
gets a solution of \eqref{eqn:Fj-2j} in $\OO_{\Lambda_{j-2},\Lambda_{j-1}}$ 
which is asymptotic to $\hat F_{j-2,j}$ along the divisors
$(\Lambda_{j-2},\Lambda_{j-1}+\Lambda_{j-2})$. By considering the
associated homogeneous equation, one checks that this solution
is unique. 

Iterating the process, one shows that there is a unique solution
$F_{i,j}$ in $\OO_{\Lambda_i,\ldots,\Lambda_{j-1}}^{\Lambda_{i,j}}$ which is
asymptotic to the formal solution $\hat F_{ij}$ for all $i < j$.
\end{proof} 

Theorem \ref{theo:sommabilite} can be extended in the following way.

\begin{coro}
\label{coro:sommabilite} 
Let $B_1, B_2 \in \G(\C\{z\})$ be such that 
$(A_0)^{-1}~B_i\in\G(\C\{z\})$, $i=1$, $2$, and consider  
the associated $q$-difference equation:
\begin{equation}
\label{eqn:YB12}
(\sigma_q Y)~B_1 = B_2Y ~.
\end{equation}
Then, for any given generic partition of divisors 
$(\Lambda_1,\ldots,\Lambda_{k-1})$ for $A_0$, there exists, in $\G$, 
a unique matrix solution $Y:=(Y_{i,j})$ of \eqref{eqn:YB12} 
and such that, for each pair $i < j$ of indices,  
$Y_{i,j} \in \Mat_{r_i,r_j}(\OO_{\Lambda_i,\ldots,\Lambda_{j-1}}^{\Lambda_{i,j}})$.
\end{coro}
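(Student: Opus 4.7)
The plan is to reduce the equation $(\sq Y) B_1 = B_2 Y$ to two instances of Theorem \ref{theo:sommabilite}, applied to $B_1$ and $B_2$ separately, and then to construct $Y$ as a ``ratio'' of the resulting multisummable matrices. The hypothesis $(A_0)^{-1} B_i \in \G(\Ra)$ says that each $B_i$ shares with $A_0$ the block-diagonal structure with diagonal blocks $z^{\mu_i} A_i$, only the upper blocks being arbitrary analytic (rather than polynomial, as in $\CA$). By Proposition \ref{prop:kintegralslopesspace}, each $B_i$ is then analytically gauge-equivalent to a unique matrix $A^{(i)} \in \CA$ in Birkhoff-Guenther normal form, through some $P_i \in \G(\Ra)$ with $P_i[B_i] = A^{(i)}$.

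Applying Theorem \ref{theo:sommabilite} to $A^{(1)}$ and $A^{(2)}$ with the same generic partition $(\Lambda_1,\ldots,\Lambda_{k-1})$ yields unique $\tilde F_i \in \G$ whose blocks lie in the prescribed multisummable spaces and which solve $(\sq \tilde F_i) A_0 = A^{(i)} \tilde F_i$. Setting $F_i := P_i^{-1} \tilde F_i \in \G$ (whose blocks have the same multisummability type as those of $\tilde F_i$, since $P_i$ is analytic), the identity $A^{(i)} = (\sq P_i) B_i P_i^{-1}$ gives $(\sq F_i) A_0 = B_i F_i$. I then define $Y := F_2 F_1^{-1}$ and compute
$$
(\sq Y) B_1 = (\sq F_2)(\sq F_1)^{-1} B_1 = (\sq F_2) A_0 F_1^{-1} = B_2 F_2 F_1^{-1} = B_2 Y,
$$
using that $(\sq F_1)^{-1} B_1 = A_0 F_1^{-1}$ follows from the equation satisfied by $F_1$. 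So $Y \in \G$ solves the required equation.

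The hard part will be verifying that each block $Y_{i,j}$ lies in the claimed space $\Mat_{r_i,r_j}(\OO_{\Lambda_i,\ldots,\Lambda_{j-1}}^{\Lambda_{i,j}})$. Since $\G$ is unipotent, $F_1^{-1}$ and $Y$ are polynomial in off-diagonal blocks, so each $Y_{i,j}$ is a finite sum of products $(F_2)_{i,\ell}\,(F_1^{-1})_{\ell,j}$ with $i \le \ell \le j$. The stability result I need is that the product of a block in $\OO_{\Lambda_i,\ldots,\Lambda_{\ell-1}}^{\Lambda_{i,\ell}}$ with one in $\OO_{\Lambda_\ell,\ldots,\Lambda_{j-1}}^{\Lambda_{\ell,j}}$ lies in $\OO_{\Lambda_i,\ldots,\Lambda_{j-1}}^{\Lambda_{i,j}}$. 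I would extract this from the decomposition of Theorem \ref{theo:mniveaux} combined with Proposition \ref{prop:OOmodule} and the multiplicative behavior of $\AA_q^\Lambda$ along nested divisors (via the residue characterisation of Theorem \ref{theo:fF}). For uniqueness, if $Y'$ is another solution with the same multisummability, then $Z := Y(Y')^{-1} \in \G$ satisfies $(\sq Z) B_2 = B_2 Z$ with multisummable blocks; proceeding block by block on $j-i$, each $Z_{i,j}$ then satisfies a homogeneous equation of the form treated in the proof of Theorem \ref{theo:sommabilite} with vanishing source, and the uniqueness statement there forces $Z = I$.
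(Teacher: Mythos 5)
Your construction $Y := F_2\,F_1^{-1}$ hits a genuine obstruction exactly at the step you flag as ``the hard part.'' The multiplicativity you need,
$$
\OO_{\Lambda_i,\ldots,\Lambda_{\ell-1}}^{\Lambda_{i,\ell}} \cdot
\OO_{\Lambda_\ell,\ldots,\Lambda_{j-1}}^{\Lambda_{\ell,j}}
\ \subset\
\OO_{\Lambda_i,\ldots,\Lambda_{j-1}}^{\Lambda_{i,j}},
$$
is false in general, and the paper itself exhibits the failure in section \ref{section:qEulersquareanalytic}: there, $f_\lambda \in \AA_q^{[\lambda]}$ and $f_\mu \in \AA_q^{[\mu]}$ are sums of the Tshakaloff series with $[\lambda]\neq[\mu]$, and the stated proposition shows that $f_\lambda f_\mu$ lies \emph{generically outside} $\OO_{([\lambda],[\mu])}^{[\lambda]+[\mu]}$. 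In your notation this is the case $\ell-i = j-\ell = 1$ with $\Lambda_i$ and $\Lambda_\ell$ supported at distinct points, which happens for most generic partitions. Concretely, the block $Y_{i,j}$ of $F_2 F_1^{-1}$ contains a term $(F_2)_{i,\ell}\,(F_1^{-1})_{\ell,j}$ for $i<\ell<j$, a product of a function summed along $(\Lambda_i,\ldots,\Lambda_{\ell-1})$ and one summed along $(\Lambda_\ell,\ldots,\Lambda_{j-1})$ --- precisely the pattern the paper shows to escape the target space. So $F_2 F_1^{-1}$ is indeed a solution of \eqref{eqn:YB12}, and it is even asymptotic to the formal solution, but it need not have the required multisummability, so it is not (in general) the $Y$ the corollary is about; the solutions differing from it by flat automorphisms of $B_2$ are the candidates, and only one of them lies in the prescribed spaces. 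Your uniqueness argument inherits the same defect, since $Z := Y(Y')^{-1}$ again has blocks built out of products of multisummable matrices.

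The paper's route avoids multiplication of two non-analytic multisummable things altogether. After reducing to the form \eqref{eqn:conjugaison} (one can set $Z := Y F_1$ and observe $(\sq Z)A_0 = B_2 Z$, or equivalently work directly block by block on $(\sq Y)B_1 = B_2 Y$ in the unipotent form), one solves for the blocks $Y_{i,j}$ by induction on $j-i$ exactly as in the proof of theorem \ref{theo:sommabilite}. In that iteration, the right-hand side for block $(i,j)$ is a finite sum of products of \emph{analytic} blocks of $B_1, B_2$ with previously constructed multisummable $Y$-blocks; these products stay in the correct $\OO$-spaces because $\OO_{(\Lambda_a,\ldots,\Lambda_{b-1})}^{\Lambda_{a,b}}$ is an $\Ra$-module stable under $\sq$ (proposition \ref{prop:OOmodule}), and the different contributions are compared via the inclusion $\OO_{\Lambda_i,\ldots,\Lambda_{j-1}}^{\Lambda_{i,j}} \subset \OO_{\Lambda_i,\ldots,\Lambda_{j}}^{\Lambda_{i,j+1}}$ quoted in the paper. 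Nowhere does one need $\OO\cdot\OO\subset\OO$. If you want to salvage your write-up, replace the ratio construction by this block-by-block resolution; the reduction to Birkhoff--Guenther normal form via $P_1, P_2 \in \G(\Ra)$ that you set up at the start is fine and compatible with that, again because of the $\Ra$-module property.
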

\begin{proof}
Recall that \eqref{eqn:YB12} is analytically conjugated 
to an equation of form \eqref{eqn:conjugaison}; see section 
\ref{section:analyticisoformalclassification}.
The result follows from proposition \ref{prop:OOmodule} and 
the fact that 
$\OO_{\Lambda_i,\ldots,\Lambda_{j-1}}^{\Lambda_{i,j}} \subset
\OO_{\Lambda_i,\ldots,\Lambda_{j}}^{\Lambda_{i,j+1}})$ for all $i < j$.
\end{proof}

An obvious consequence of theorem \ref{theo:sommabilite} 
and corollary \ref{coro:sommabilite} is the following.

\begin{theo}[Existence of asymptotic solutions]
\index{asymptotic solutions (existence)}
\label{theo:solutionsasymptotiques} 
The functions $F = (F_{i,j})$ and $Y = (Y_{i,j})$ respectively 
considered in theorem \ref{theo:sommabilite} and corollary 
\ref{coro:sommabilite} both admit an asymptotic expansion in 
the following sense: for all pairs $(i,j)$ with $i < j$, the 
blocks $F_{i,j}$ and $Y_{i,j}$ have an asymptotic expansion at $0$ 
along divisors $(\Lambda_j,\Lambda_{i,j})$, according to definition 
\ref{defi:Lambda1Lambda}.
\end{theo}
\begin{proof} 
Immediate, with help of theorem \ref{theo:masymptotique}.
\end{proof}


\subsection{Stokes phenomenon and analytic classification}
\label{subsection:Stokes}
\index{Stokes phenomenon}
\index{analytic classification}

\label{not81}
Write $F^A_{(\Lambda_1,\ldots,\Lambda_{k-1})}$ the solution of
\eqref{eqn:conjugaison} obtained in theorem 
\ref{theo:sommabilite}, which can be seen as a sum of
the formal solution $\hat F$ with respect to generic partition
$(\Lambda_1,\ldots,\Lambda_{k-1})$. One thereby deduces
a summation process of $\hat F$ with respect to each generic
partition of divisors for $A_0$; we shall denote:
$$
{\ff}^A: (\Lambda_1,\ldots,\Lambda_{k-1})
\mapsto F^A_{(\Lambda_1,\ldots,\Lambda_{k-1})}.
$$
The $q$-analogue of Stokes phenomenon is displayed by
the existence of various "sums" of $\hat F$.

\begin{prop}
\label{prop:AA0}
In theorem \ref{theo:sommabilite}, one has $A = A_0$ 
if, and only if, the map ${\ff}^A$  is constant on 
the set of generic partitions of divisors for $A_0$.
\end{prop}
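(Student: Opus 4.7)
The plan is to treat the two implications separately, leveraging the uniqueness in theorem~\ref{theo:sommabilite} for the easy direction and, for the converse, a rigidity argument for multisums followed by the injectivity of proposition~\ref{prop:kintegralslopesspace}.

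If $A = A_0$, then the identity matrix $F := I_n$ solves $(\sq F) A_0 = A F$ and lies constantly in $\G$ and in every asymptotic space involved. The uniqueness assertion of theorem~\ref{theo:sommabilite} then gives $F^{A_0}_{(\Lambda_1,\ldots,\Lambda_{k-1})} = I_n$ for every generic partition, so $\ff^{A_0}$ is constant.

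For the converse, suppose $\ff^A$ is constant with common value $F = (F_{i,j})$. My target is to show that the unique formal solution $\hat F \in \G(\Rf)$ of $(\sq F) A_0 = A F$ actually belongs to $\G(\Ra)$; since $A$ and $A_0$ will then be analytically equivalent elements of $\CA$, proposition~\ref{prop:kintegralslopesspace} will force $A = A_0$. Fix $i < j$. By theorem~\ref{theo:sommabilite}, the block $F_{i,j}$ is analytic on germs of sectors $S(\Lambda_{i,j},0;R)$ and belongs to $\Mat_{r_i,r_j}(\BB^{\Lambda_{i,j}})$; by theorem~\ref{theo:solutionsasymptotiques} it admits $\hat F_{i,j}$ as $(\Lambda_j,\Lambda_{i,j})$-asymptotic expansion. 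I would then choose a second generic partition $(\Lambda'_1,\ldots,\Lambda'_{k-1})$ whose $q$-spirals are all distinct from those of the first. The constancy hypothesis identifies $F_{i,j}$ with the analogous block $F'_{i,j}$ on the intersection of their sectors, so by analytic continuation these two blocks glue into a single analytic function on the whole punctured disk $0 < |z| < R$, since the complement of the union of the two sectors is $\mathrm{supp}(\Lambda_{i,j}) \cap \mathrm{supp}(\Lambda'_{i,j}) = \emptyset$. Proposition~\ref{prop:BB} supplies a uniform bound on a cofinal family of circles shrinking to $0$, which by the maximum principle on the intermediate annuli propagates to boundedness on the entire punctured disk. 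Riemann's removable singularity theorem then yields analyticity at $0$, and uniqueness of the asymptotic expansion identifies the Taylor series of $F_{i,j}$ with $\hat F_{i,j}$, so $\hat F_{i,j} \in \Mat_{r_i,r_j}(\Ra)$.

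The main obstacle is to arrange such a second generic partition with $q$-spirals disjoint from those of the first. The constraints of definition~\ref{defi:compatible} on each $\Lambda'_\ell$ consist of its prescribed degree $\mu_{\ell+1}-\mu_\ell$ together with finitely many non-resonancy conditions $\Vert \Lambda'_\ell \Vert \not\equiv \alpha/\beta \pmod{q^\Z}$ on its weight; since the admissible $q$-spirals are parametrized by the uncountable elliptic curve $\Eq$ and the $\lambda'_{\ell,i}$ can be redistributed freely subject only to these finite constraints, such a partition exists in abundance. Once this geometric point is dispatched, the rest of the converse follows directly from the asymptotic machinery developed in section~\ref{section:asymptoticsreldivisor}.
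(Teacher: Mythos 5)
Your proof is correct and, unlike the paper, actually carries out the converse direction to completion. The paper's proof argues by contraposition: if $A \neq A_0$ then some $U_{j-1,j} \neq 0$, which forces $\hat F_{j-1,j}$ to diverge, and two sums along distinct divisors have polar sets on disjoint $q$-spirals, so they must differ; the paper then closes with ``details are left to the reader'' for the general case. Your argument runs the logic forward: constancy of $\ff^A$ across two generic partitions chosen with disjoint supports (possible since $\Eq$ is uncountable while the resonant and already-occupied spirals are a finite set) forces each block $F_{i,j}$ to be analytic on the whole punctured disk, and the chain ``boundedness on circles via proposition~\ref{prop:BB} $\Rightarrow$ boundedness via maximum modulus $\Rightarrow$ removable singularity $\Rightarrow$ $\hat F_{i,j}$ is the convergent Taylor series'' correctly promotes this to $\hat F \in \G(\Ra)$; the injectivity of proposition~\ref{prop:kintegralslopesspace} then identifies $A$ with $A_0$. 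This is the same core idea as the paper's --- a nonzero Birkhoff--Guenther block is equivalent to divergence of the corresponding $\hat F_{i,j}$, which is equivalent to sums with non-matching polar supports --- but your treatment handles all blocks uniformly, makes explicit the appeal to the Birkhoff--Guenther uniqueness statement that is only implicit in the paper, and supplies the removable-singularity mechanism that the paper elides. The one thing to note is that the ``gluing by analytic continuation'' phrasing slightly overcomplicates the point: since $\ff^A$ constant means the two sums are literally the \emph{same} meromorphic germ, that germ is analytic at any point where at least one realisation is, hence on the whole punctured disk; no continuation argument is needed.
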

\begin{proof} 
If $A = A_0$, the formal solution boils down to the identity
matrix, which plainly coincides with $F^A_{(\Lambda_1,\ldots,\Lambda_{k-1})}$ 
for any compatible partition $(\Lambda_1,\ldots,\Lambda_{k-1})$. 

On the other hand, if $A \not= A_0$, the polynomial $U_{i,j}$ in
\eqref{eqn:formestandardtriangulaireentiere} are not all $0$.
To begin with, assume that $U_{j-1,j} \not= 0$ for some index $j$. 
Since $\hat F_{j-1,j}$ diverges, its sum along divisor $\Lambda_j$ 
will be distinct from its sum along any different divisor.
More generally, when $U_{i,j} \not= 0$, the solution 
$F^A_{(\Lambda_1,\ldots,\Lambda_{k-1})}$ will depend in a one-to-one
way on divisor $\Lambda_j$; details are left to the reader.
\end{proof} 

\label{not82}
For any $\lambda\in\C^*$, let $[\lambda;q:{A_0}]$ be the ordered 
partition of divisors generated by the spiral $[\lambda;q]$ in 
the following way:
$$
[\lambda;q:{A_0}] :=
\bigl((\mu_2-\mu_1)[\lambda;q],(\mu_3-\mu_2)[\lambda;q],\ldots,
(\mu_k-\mu_{k-1})[\lambda;q]\bigr)~,
$$
where $\mu_j$ are as in \eqref{eqn:formestandarddiagonaleentiere}, 
\ie\ $A_0 = \text{diag}\left(z^{\mu_1} A_1,\ldots,z^{\mu_k} A_k\right)$. 

If $[\lambda;q:{A_0}]$ is generic with respect to $A_0$, we will write
$\lambda \in \{A_0\}$ and say that \emph{$\lambda$ is generic for $A_0$}.
\index{generic for $A_0$ ($\lambda$ is)}

\begin{theo}[Stokes phenomenon]
\index{Stokes phenomenon}
\label{theo:Stokes}
Fix some $\lambda_0 \in \{A_0\}$ and for any $\lambda \in \{A_0\}$ 
and $A \in \CA$, set:
$$
\label{not83}
\text{St}_{\lambda_0}(\lambda;A) := 
\bigl(F_{[\lambda_0;q:{A_0}]}^A\bigr)^{-1}~F_{[\lambda;q:{A_0}]}^A~.
$$
The following conditions are equivalent.
\begin{enumerate}
\item{There exists $\lambda \in \{A_0\}$ such that 
$[\lambda;q] \neq [\lambda_0;q]$ but 
$\text{St}_{\lambda_0}(\lambda;A) = Id$.}
\item{For all $\lambda \in \{A_0\}$, the equality 
$\text{St}_{\lambda_0}(\lambda;A)=Id$ holds.}
\item{One has $A=A_0$.}
\end{enumerate}
\end{theo}
\begin{proof}
It follows immediately from proposition \ref{prop:AA0}
\end{proof}

In the previous theorem, each $\text{St}_{\lambda_0}(\lambda;A)$ 
represents an upper-triangular unipotent matrix-valued function 
that is analytic in some disk $0 < \lmod z \rmod < R$ except at spirals 
$[\lambda;q]$ and $[\lambda_0;q]$; moreover, it is infinitely 
close to the identity matrix as $z \to 0$. If 
$Y := \text{St}_{\lambda_0}(\lambda;A) - I_{n}$, then $Y$ is flat 
at zero and satifies the relation:
$$
(\sigma_q Y)~A_0 = A_0~Y.
$$

Theorem \ref{theo:Stokes} implies that each matrix $A$ 
chosen within the isoformal class $\CA$ reduces to 
the normal form $A_0$ whenever the Stokes matrix
$\text{St}_{\lambda_0}(\lambda;A)$ 
becomes trivial for some couple of generic parameters $\lambda_0$, 
$\lambda$ such that $\lambda\not=\lambda_0 \bmod q^\Z$. \\

We shall now show that for any given pair of generic parameters 
$\lambda_0$, $\lambda$ that are not $q$-congruent, the data 
$\{\text{St}_{\lambda_0}(\lambda;A), A_0\}$ constitutes a complete 
set of analytical invariants associated to the $q$-difference 
module determined by $A$. \\
 
Indeed, the notation $\text{St}_{\lambda_0}(\lambda;A)$ introduced 
for any $A \in \CA$ can be naturally generalized for any matrix 
$B \in \G(\C\{z\})$ that belongs to the same formal class as $A_0$. 
To simplify, assume that $(A_0)^{-1}B \in \G(\Ra)$; this is for
instance the case if $B$ is taken in Birkhoff-Guenther normal form
(definition \ref{defi:BirkhoffGuenthernormalform}). From corollary 
\ref{coro:sommabilite} it follows that the corresponding conjugacy 
equation:
$$
(\sigma_q Y)~B = A_0~Y
$$
admits, in $\G$, a unique solution in the space of multi-summable
functions associated to any given generic ordered partition of
divisors $[\lambda;q:A_0]$. Thus, we are led to the notations  
${\ff}^B_{[\lambda;q:A_0]}$ and $\text{St}_{\lambda_0}(\lambda;B)$ as in
the case of $A \in \CA$.

\begin{coro}
\label{coro:Stokes}
Suppose that $B_1$ and $B_2$ are two matrices such that 
$(A_0)^{-1}B_i \in \G(\Ra)$. Then the following assertions 
are equivalent.
\begin{enumerate}
\item{There exists $(\lambda_0,\lambda) \in \{A_0\}\times\{A_0\}$ 
such that $[\lambda;q] \neq [\lambda_0;q]$ but 
$\text{St}_{\lambda_0}(\lambda;B_1) = \text{St}_{\lambda_0}(\lambda;B_2)$.}
\item{The equality 
$\text{St}_{\lambda_0}(\lambda;B_1) =
\text{St}_{\lambda_0}(\lambda;B_2)$ 
holds for all $\lambda_0$, $\lambda\in \{A_0\}$.}
\item{The matrices $B_1$ and $B_2$ give rise to analytically 
equivalent $q$-difference modules.}
\end{enumerate}
\end{coro}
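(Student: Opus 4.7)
The strategy is to mimic the argument used in the proof of theorem \ref{theo:Stokes}, which corresponds to the special case $B_2 = A_0$, and extend it to the comparison of two arbitrary formal representatives of the class of $A_0$. The central object is the ``ratio'' of multisums
\[
K_\mu \;:=\; F^{B_1}_{[\mu;q:A_0]}\,\bigl(F^{B_2}_{[\mu;q:A_0]}\bigr)^{-1}\in \G,
\qquad \mu\in\{A_0\},
\]
whose existence is granted by corollary \ref{coro:sommabilite}. A direct matricial computation shows that the Stokes equality $\text{St}_{\lambda_0}(\lambda;B_1)=\text{St}_{\lambda_0}(\lambda;B_2)$ is equivalent to $K_\lambda = K_{\lambda_0}$. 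Each $K_\mu$ is meromorphic in a punctured neighborhood of $0$, with possible poles confined to the spiral $[\mu;q]$, and (by theorem \ref{theo:solutionsasymptotiques}) admits as asymptotic expansion the formal matrix $\hat{F}_{B_1}\hat{F}_{B_2}^{-1}\in \G(\Rf)$ coming from the unique formal reductions described at the end of subsection \ref{subsection:computnormalform}.

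The implication $(2) \Rightarrow (1)$ is tautological. For $(1) \Rightarrow (3)$: the hypothesis $K_\lambda = K_{\lambda_0}$ with $[\lambda;q]\neq[\lambda_0;q]$ forces the common matrix $K$ to be holomorphic on the entire punctured disk, since its a priori pole loci $[\lambda;q]$ and $[\lambda_0;q]$ are disjoint. Its asymptotic expansion being in $\G(\Rf)$ (ordinary power series, no negative powers), every entry is bounded near $0$; Riemann's removable singularity theorem therefore extends $K$ to a matrix in $\G(\Ra)$. Translating back through the identity $F^{B_1}_{[\mu;q:A_0]}= K\cdot F^{B_2}_{[\mu;q:A_0]}$ and inspecting the gauge equations satisfied by each $F^{B_i}_{[\mu;q:A_0]}$, one extracts an analytic gauge transformation in $\G(\Ra)$ relating $B_1$ and $B_2$, hence proving $(3)$.

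For $(3) \Rightarrow (2)$: given an analytic gauge $H \in \G(\Ra)$ realizing the module isomorphism, proposition \ref{prop:OOmodule} and the uniqueness part of corollary \ref{coro:sommabilite} together imply the identity $F^{B_1}_{[\mu;q:A_0]} = F^{B_2}_{[\mu;q:A_0]}\cdot H$ for every generic $\mu$; the $\mu$-independence of $H$ in this identity then translates, after bookkeeping, into the equality of Stokes matrices required by $(2)$.

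The crucial technical point is the removable singularity argument in step $(1) \Rightarrow (3)$: one must exploit the fact that the asymptotic expansion of $K_\mu$ lies in $\G(\Rf)$ rather than in $\G(\Kf)$, so as to promote ``asymptotic to a formal series'' into genuine boundedness at $z = 0$. All other steps reduce to bookkeeping with the unique multisummation operators of corollary \ref{coro:sommabilite}.
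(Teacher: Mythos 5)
Your proof is correct and takes essentially the same route as the paper: for $(1)\Rightarrow(3)$ both argue that equality of the two multisum ratios confines the a priori singular locus to the (empty) intersection of two distinct $q$-spirals, then use the $\G(\Rf)$-valued asymptotic expansion together with removable-singularity to upgrade to an analytic gauge transformation in $\G(\Ra)$ conjugating $B_1$ to $B_2$. The paper treats $(3)\Rightarrow(2)$ and $(2)\Rightarrow(1)$ as immediate; you fill in the former by the identity ${\ff}^{B_1}_\mu={\ff}^{B_2}_\mu H$ from uniqueness of the multisum, after which $H$ cancels in $\text{St}_{\lambda_0}(\lambda;\cdot)$ — this is a harmless elaboration of what the paper leaves implicit, not a different argument.
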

\begin{proof} 
The only point to prove is that (1) implies (3). To do that, 
notice that from (1) we obtain the equality
$$
{\ff}^{B_2}_{[\lambda_0;q:A_0]}~\bigl({\ff}^{B_1}_{[\lambda_0;q:A_0]}\bigr)^{-1} =
{\ff}^{B_2}_{[\lambda;q:A_0]}~\bigl({\ff}^{B_1}_{[\lambda;q:A_0]}\bigr)^{-1}~,
$$
in which the left hand side and the right hand side are both 
solution to the same equation \eqref{eqn:YB12}. Since 
these solutions are analytical in some disk $0 < \lmod z \rmod < R$
except maybe respectively on the $q$-spiral $[\lambda_0;q]$ 
or $[\lambda;q]$ and that they have the same asymptotic expansion 
at zero, they must be analytic at  $z=0$ and we arrive at assertion (3).
\end{proof}



\chapter{Geometry of the space of classes}
\label{chapter:geometryofF(M)}

In this chapter we describe to some extent the geometry of the 
space $\F(M_{0})$ of analytic isoformal classes in the formal 
class $M_{0}$. In subsection \ref{subsection:BGNF}, we already 
used the Birkhoff-Guenther normal form to find coordinates on 
$\F(M_{0})$. Here, we rather use the identification of $\F(M_{0})$ 
with $H^{1}(\Eq,\Lambda_I(M_0))$ proved in theorem
\ref{theo:secondqMalgrangeSibuya} and completed in section
\ref{section:analyticclassificationCZ}. The description given here 
will be further pursued in a separate work \cite{JSmoduli}.


\section{Privileged cocycles}
\label{section:privilegedcocycles}

In "applications", it is sometimes desirable to have explicit 
cocycles to work with instead of cohomology classes. We shall now 
describe cocycles with nice properties, that can be explicitly 
computed from a matrix in standard form. Most of the proofs of what 
follows are consequences of theorem \ref{theo:sommabilite}. However, 
they can also be obtained through the more elementary approach of 
\cite{JSStokes}, which we shall briefly summarize here in subsection
\ref{subsection:algebraicsummation}. \\

Fix $A_{0}$ as in \eqref{eqn:formestandarddiagonaleentiere} page
\pageref{eqn:formestandarddiagonaleentiere} and let $M_{0}$ be
the corresponding pure module. \emph{All notations that follow 
are relative to this $A_{0}$ and $M_{0}$}.
Recall from section \ref{section:generalnotations} at the end 
of the introduction the notations $\overline{a} \in \Eq$ for 
the class of $a \in \C^{*}$ and $[a;q] := a q^{\Z}$ for the 
corresponding $q$-spiral. Also recall from paragraph
\ref{subsection:thetafunctions} the function $\thq$,
holomorphic over $\C^{*}$ with simple zeroes on $[-1;q]$ and
satisfying the functional equation $\sq \thq = z \thq$.


\subsection{``Algebraic summation''}
\label{subsection:algebraicsummation}

To construct cocycles encoding the analytic class of 
$A \in \GL_{n}(\Ka)$, we use ``meromorphic sums'' of $\hat{F}_{A}$.
These can be obtained either by the summation process of theorem 
\ref{theo:sommabilite}, or by the ``algebraic summation process'' 
of \cite{JSStokes}. We now summarize this process. We restrict to 
the case of divisors supported by a point, since it is sufficient 
for classification. \\

Let $A$ as in \eqref{eqn:formestandardtriangulaireentiere}
(we do not assume from start that it is in Birkhoff-Guenther 
normal form). We want to solve the equation $\sq F = A F A_{0}^{-1}$ 
with $F \in \G_{A_{0}}$ meromorphic in some punctured neighborhood 
of $0$ in $\C^{*}$. Since $A,A_{0},A^{-1},A_{0}^{-1}$ are all 
holomorphic in some punctured neighborhood of $0$ in $\C^{*}$,
the singular locus\footnote{In the context of $q$-difference 
equations, the singular locus of a matrix of functions $P$ is 
made of the poles of $P$ as well as those of $P^{-1}$. For a 
unipotent matrix, like $F$, these are all poles of $F$ since 
$F^{-1}$ is a polynomial in $F$.} of $F$ near $0$ is invariant 
under $q^{-\N}$: it is therefore a finite union of germs of half 
$q$-spirals $a q^{-\N}$ (the finiteness flows from the meromorphy 
of $F$). If we take $A$ in Birkhoff-Guenther normal form (definition 
\ref{defi:BirkhoffGuenthernormalform}), the functional equation 
$\sq F = A F A_{0}^{-1}$ actually allows us to extend $F$ 
to a meromorphic matrix on the whole of $\C^{*}$ and its singular 
locus is a finite union of discrete logarithmic $q$-spirals $[a;q]$. \\

To illustrate the process, we start with an example.

\begin{exem}
\label{exem:Tshakaloff7}
Setting $A_{u} := \begin{pmatrix} z^{-1} & z^{-1} u \\ 0 & 1 \end{pmatrix}$,
with $u \in \Ka$, and $F = \begin{pmatrix} 1 & f \\ 0 & 1 \end{pmatrix}$, 
we saw that $A_{u} = F[A_{0}] \Longleftrightarrow z \sq f - f = u$. 
This admits a unique formal solution $\hat{f}$, which can be computed by 
iterating the $z$-adically contracting operator $f \mapsto - u + z \sq f$. 
If $A$ is in Birkhoff-Guenther normal form, $u \in \C$ and 
$\hat{f} = - u \; \tsh$ (the Tshakaloff series).
\index{Tshakaloff series}
We want $f$ to be meromorphic on some punctured neighborhood of $0$ 
in $\C^{*}$, which we write $f \in \M(\C^{*},0)$, and to have (in germ)
half $q$-spirals of poles. For simplicity, we shall assume that $u$ is 
holomorphic on $\C^{*}$, so that $f$ will have to be meromorphic on the
whole of $\C^{*}$ with complete $q$-spirals of poles.
\label{not84}
We set $\thqc(z) := \thq(z/c)$, which is holomorphic on $\C^{*}$, with 
simple zeroes on the $q$-spiral $[-c;q]$ and satisfies the functional 
equation $\sq \thqc = (z/c) \thqc$. We look for $f$ with simple poles
on $[-c;q]$ by writing it $f = g/\thqc$ and looking for $g$ holomorphic
on $\C^{*}$ and satisfying the functional equation:
\begin{align*}
z \sq (g/\thqc) - (g/\thqc) = u &\Longleftrightarrow c \sq g - g = u \thqc \\
&\Longleftrightarrow \forall n \in \Z \;,\; (c q^{n} - 1) g_{n} = [u \thqc]_{n},
\end{align*}
where we have introduced the Laurent series expansions 
$g = \sum\limits_{n \in \Z} g_{n} z^{n}$ and 
$u \thqc = \sum\limits_{n \in \Z} [u \thqc]_{n} z^{n}$.
If $c \not\in q^{\Z}$, we get the solution:
\label{not85}
$$
f_{\overline{c}} := 
\dfrac{1}{\thqc} \, \sum_{n \in \Z} \dfrac{[u \thqc]_{n}}{c q^{n} - 1} z^{n}.
$$
This is clearly the unique solution of $z \sq f - f = u$ with only 
simple poles on $[-c;q]$, a condition that depends on $\overline{c}$ 
only and justifies the notation $f_{\overline{c}}$. We consider it as 
\emph{the summation of the formal solution $\hat{f}$ in the ``authorized 
direction of summation'' $\overline{c} \in \Eq$}. 
\index{summation in an authorized direction of summation}
\index{authorized direction of summation}
\index{direction of summation}
Accordingly, we write 
it $S_{\overline{c}} \hat{f}$. The sum $S_{\overline{c}} \hat{f}$ is 
asymptotic to $\hat{f}$ (see further below). Note that the ``directions 
of summation'' are elements of $\Eq = \C^{*}/q^{\Z}$ which plays here 
the role of the circle of directions $S^{1} := \C^{*}/\R_{+}^{*}$ of 
the classical theory. Also note that all ``directions of summations'' 
$\overline{c} \in \Eq$ are authorized, except for one.
\end{exem}

Returning to the general equation 
$F[A_{0}] = A \Leftrightarrow \sq F = A F A_{0}^{-1}$, we want to
look for solutions $F$ that are meromorphic on some punctured
neighborhood of $0$ in $\C^{*}$, and we want to have unicity by
imposing conditions on the half $q$-spirals of poles: position
and multiplicity. Thus, for $f \in \M(\C^{*},0)$, and for a finite
family $(a_{i})$ of points of $\C^{*}$ and $(n_{i})$ of integers,
we shall translate the condition: ``The germ $f$ at $0$ has all 
its poles on $\bigcup [a_{i};q]$, the poles on $[a_{i};q]$ having 
at most multiplicity $n_{i}$'' by an inequality of divisors on $\Eq$:
\label{not86}
$$
\div_{\Eq}(f) \geq - \sum n_{i} [\overline{a_{i}}].
$$
We introduce a finite subset of $\Eq$ defined by resonancy conditions:
$$
\Sigma_{A_{0}} := \bigcup_{1 \leq i < j \leq k} S_{i,j}, \text{~where~}
S_{i,j} := \{p(-a) \in \Eq \tq 
q^{\Z} a^{\mu_{i}} \text{Sp}(A_{i}) \cap q^{\Z} a^{\mu_{j}} \text{Sp}(A_{j})
\neq \emptyset\}.
$$
Here, $\text{Sp}$ denotes the spectrum of a matrix. 

\begin{theo}[``Algebraic summation'']
\label{theo:algebraicsummation}
\index{algebraic summation}
\index{summation in an authorized direction of summation}
\index{authorized direction of summation}
\index{direction of summation}
For any matrix $A$ in form \eqref{eqn:formestandardtriangulaireentiere},
and for any ``authorized direction of summation'' 
$\overline{c} \in \Eq \setminus \Sigma_{A_{0}}$, 
there exists a unique meromorphic gauge transform 
$F \in \G_{A_{0}}\bigl(\M(\C^{*},0)\bigr)$ satisfying:
$$
F[A_{0}] = A \text{~and~} 
\div_{\Eq}(F_{i,j}) \geq - (\mu_{j} - \mu_{i}) [\overline{-c}]
\text{~for~} 1 \leq i < j \leq k.
$$
\end{theo}
\begin{proof}
See \cite[theorem 3.7]{JSStokes}.
\end{proof}

Of course, $\div_{\Eq}(F_{i,j}) \geq - (\mu_{j} - \mu_{i}) [\overline{-c}]$
means that all coefficients $f$ of the rectangular block $F_{i,j}$ are
such that $\div_{\Eq}(f) \geq - (\mu_{j} - \mu_{i}) [\overline{-c}]$.
The matrix $F$ of the theorem is considered as \emph{the summation 
of $\hat{F}_{A}$ in the authorized direction of summation 
$\overline{c} \in \Eq$} and we write it $S_{\overline{c}} \hat{F}_{A}$. \\

It is actually the very same sum as that obtained in theorem
\ref{theo:sommabilite}. Indeed, using notations from subsection
\ref{subsection:sommabilite}, if one requires the divisors $\Lambda_{i}$ 
to be supported by a single point $\overline{c} \in \Eq$, the genericity
condition of definition \ref{defi:compatible} translates here to: 
$\overline{c} \in \Eq \setminus \Sigma_{A_{0}}$, as one sees by using the 
dictionary provided in remark \ref{rema:dictionnary}. Then, from the same 
dictionary, one draws that the conditions on the sum guaranteed by the 
conclusion of theorem \ref{theo:sommabilite} immediately imply the conditions 
on $S_{\overline{c}} \hat{F}_{A}$ guaranteed by the above theorem, and the 
unicity in the above theorem allows one to conclude. As a consequence of 
subsection \ref{subsection:sommabilite}, the sum $S_{\overline{c}} \hat{F}_{A}$ 
is asymptotic to $\hat{F}_{A}$. \\

The algorithm to compute $F := S_{\overline{c}} \hat{F}_{A}$ is the
following. We introduce the gauge transform:
$$
\label{not87}
T_{A_{0},c} :=
\begin{pmatrix}
\thqc^{-\mu_{1}} I_{r_{1}}  & \ldots & \ldots & \ldots & \ldots \\
\ldots & \ldots & \ldots  & 0 & \ldots \\
0      & \ldots & \ldots   & \ldots & \ldots \\
\ldots & 0 & \ldots  & \ldots & \ldots \\
0      & \ldots & 0       & \ldots &  \thqc^{-\mu_{k}} I_{r_{k}}
\end{pmatrix}.
\quad \text{(Recall that~} \thqc(z) = \thq(z/c).)
$$
Then $F[A_{0}] = A$ , is  equivalent to $G[B_{0}] = B$, where
$B = T_{A_{0},c}[A]$, $B_{0} = T_{A_{0},c}[A_{0}]$, and
$G = T_{A_{0},c} F T_{A_{0},c}^{-1}$. Now, $B_{0}$ is block-diagonal
with blocks $c^{\mu_{i}} A_{i} \in \GL_{r_{i}}(\C)$, and we can solve
for $G[B_{0}] = B$ with $G \in \G_{A_{0}}\bigl(\O(\C^{*},0)\bigr)$,
because this boils down to a system of matricial equations:
$$
(\sq X) c^{\mu_{j}} A_{j} - c^{\mu_{i}} A_{i} X = \text{~some r.h.s.~} Y.
$$
Expanding in Laurent series $X = \sum X_{p} z^{p}$, $Y = \sum Y_{p} z^{p}$, 
we are led to:
$$
q^{p} X_{p} c^{\mu_{j}} A_{j} - c^{\mu_{i}} A_{i} X_{p} = Y_{p}.
$$
Since the spectra of $c^{\mu_{i}} A_{i}$ and $c^{\mu_{i}} A_{i}$ are non 
resonant modulo $q^{\Z}$, the spectra of $q^{p} c^{\mu_{i}} A_{i}$ and 
$c^{\mu_{i}} A_{i}$ are disjoint and the endomorphism
$V \mapsto q^{p} V c^{\mu_{j}} A_{j} - c^{\mu_{i}} A_{i} V$
of $\Mat_{r_{i},r_{j}}(\C)$ is actually an automorphism. There is
therefore a unique formal solution $X$, and convergence in $(\C^{*},0)$ 
is not hard to prove, so we indeed get $G$ with the required properties.
Last, putting $F := T_{A_{0},c}^{-1} G T_{A_{0},c}$ gives 
$S_{\overline{c}} \hat{F}_{A}$ in the desired form.


\subsection{Privileged cocycles}
\label{subsection:privilegedcocycles}

We have three ways of constructing a cocycle encoding the analytic
isoformal class of $A$ in the formal class defined by $A_{0}$: the
map $\lambda$ defined in the first part of the proof of theorem 
\ref{theo:secondqMalgrangeSibuya}; the construction of theorem
\ref{theo:Stokes}; and the one of \cite{JSStokes}, based on theorem
\ref{theo:algebraicsummation} above. Because of what was said just after 
this theorem, the last two ways give the same \emph{privileged} cocycle, 
which we shall now describe more precisely.

\begin{rema}
The construction in theorem \ref{theo:secondqMalgrangeSibuya} is not
deterministic, since it involves the choice of a good covering and of 
functions $g_i$ asymptotic to $\hat{F}$. However, among the possible 
choices, there is the possibility to take the sums defined by theorems 
\ref{theo:Stokes} and \ref{theo:algebraicsummation}. Thus, among the 
possible cocycles constructed in theorem \ref{theo:secondqMalgrangeSibuya},
there is our privileged cocycle.
\end{rema}

If $M_{0}$ is the pure module with matrix $A_{0}$, recall from section
\ref{section:fundamentalisomorphism} the sheaf $\Lambda_I(M_0)$ of
automorphisms of $A_{0}$ infinitely tangent to the identity. We shall
also denote $\UU$ the covering of $\Eq$ consisting of the Zariski open 
subsets $U_{c} := \Eq \setminus \{c\}$ 
\label{not88}
such that $c \not\in \Sigma_{A_{0}}$
(thus, we drop the upper bar denoting classes in $\Eq$).
For any $c \in \Eq \setminus \Sigma_{A_{0}}$ an authorized direction of 
summation, write for short $F_{c} := S_{c} \hat{F}_{A}$. Then $F_{c}$
is holomorphic invertible over the preimage $p^{-1}(U_{c}) \subset \C^{*}$.
If $c,d \in \Eq \setminus \Sigma_{A_{0}}$, $F_{c,d} := F_{c}^{-1} F_{d}$ is
in $\G\bigl(\O(p^{-1}(U_{c} \cap U_{d}))\bigr)$ and it is an automorphism 
of $A_{0}$ infinitely tangent to the identity:
$$
F_{c,d} \in \Lambda_I(M_0)(U_{c} \cap U_{d}).
$$
The cocycle $(F_{c,d}) \in Z^{1}(\UU,\Lambda_I(M_0))$ involves
rectangular blocks $(F_{c,d})_{i,j}$ (for $1 \leq i < j \leq k$) 
belonging to the space $E_{i,j}$ of solutions of the equation
$(\sq X) z^{\mu_{j}} A_{j} = z^{\mu_{i}} A_{i} X$. For $X \in E_{i,j}$,
it makes sense to speak of its poles on $\Eq$, and of their 
multiplicities. For $c,d \in \Eq \setminus \Sigma_{A_{0}}$
\label{not89}
distinct, we write $E_{i,j,c,d}$ the space of those $X \in E_{i,j}$
that have at worst poles in $c,d$ and with multiplicities 
$\mu_{j} - \mu_{i}$.

\begin{lemm}
\label{lemm:dimensionEijcd}
The space $E_{i,j,c,d}$ has dimension $r_{i} r_{j} (\mu_{j} - \mu_{i})$.
\end{lemm}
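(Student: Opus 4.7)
The plan is to reduce this to counting initial data for a first-order matricial recursion with super-exponentially convergent coefficients. First I would clear the poles by a theta-function substitution. Set $N := \mu_j - \mu_i \in \N^{*}$, choose holomorphic theta-type functions $\varphi_c,\varphi_d$ on $\C^{*}$ with simple zeros exactly on $p^{-1}(c)$ and $p^{-1}(d)$ respectively, satisfying functional equations of the form $\sq\varphi_c = \gamma_c z\, \varphi_c$ and $\sq\varphi_d = \gamma_d z\, \varphi_d$ for nonzero constants $\gamma_c,\gamma_d$ (concretely, translates of $\thq$). Since $c\neq d$ in $\Eq$, the two $q$-spirals are disjoint, so $\Theta := (\varphi_c\varphi_d)^{N}$ has simple zeros on each spiral with multiplicity $N$. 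Then the prescription $\div_{\Eq}(X)\geq -N([c]+[d])$ is precisely the statement that $Y := \Theta\cdot X$ extends to a matrix $Y\in\Mat_{r_i,r_j}\bigl(\O(\C^{*})\bigr)$, giving a $\C$-linear bijection between $E_{i,j,c,d}$ and the space of those $Y$ satisfying the transformed equation.

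Next I would translate the $q$-difference equation for $X$ into one for $Y$. Using $\sq\Theta = \beta^{N} z^{2N}\Theta$ with $\beta := \gamma_c\gamma_d$, a direct computation rewrites $(\sq X)\, z^{\mu_j}A_j = z^{\mu_i}A_i X$ as
\begin{equation*}
(\sq Y)\, A_j = \beta^{N} z^{N} A_i\, Y.
\end{equation*}
Expanding $Y = \sum_{p\in\Z} Y_p z^p$ (Laurent series on $\C^{*}$), this is equivalent to the family of recursions
\begin{equation*}
q^{p}\, Y_{p}\, A_j \;=\; \beta^{N}\, A_i\, Y_{p-N}, \qquad p\in\Z.
\end{equation*}
Since $A_i,A_j\in\GL$ and $\beta\neq 0$, each recursion expresses $Y_{p-N}$ (resp.\ $Y_{p+N}$) uniquely in terms of $Y_p$. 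Splitting indices by residue class modulo $N$ shows that each holomorphic solution $Y$ is uniquely determined by the $N$-tuple $(Y_0,Y_1,\ldots,Y_{N-1})\in \Mat_{r_i,r_j}(\C)^{N}$, giving the upper bound $\dim E_{i,j,c,d}\le N\, r_i r_j$.

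For the reverse inequality I would prescribe arbitrary $(Y_0,\ldots,Y_{N-1})$, define $Y_p$ for all $p\in\Z$ by the recursion, and check that the Laurent series $\sum Y_p z^p$ converges on all of $\C^{*}$. Iterating the recursion $k$ times yields estimates of the form
\begin{equation*}
\lnorm Y_{r-kN}\rnorm \leq C_{1}^{k}\, |q|^{kr - Nk(k-1)/2},
\qquad
\lnorm Y_{r+kN}\rnorm \leq C_{2}^{k}\,|q|^{-kr - Nk(k+1)/2},
\end{equation*}
so that $\lnorm Y_p z^p\rnorm$ has a factor $|q|^{-Nk^{2}/2}$ which, since $|q|>1$, dominates any $|z|^{\pm kN}$ and gives convergence for every $z\neq 0$. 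This is the main technical step, but the estimate is straightforward because the gaussian $q$-factor is the same one that makes $\thq$ entire. Combining the two bounds one obtains $\dim E_{i,j,c,d} = N\,r_i r_j = r_i r_j(\mu_j-\mu_i)$. I note that the genericity assumption $c,d\notin\Sigma_{A_{0}}$ plays no role in this dimension count (the recursion is always solvable because $A_i,A_j$ are invertible); it is needed only downstream, to ensure that the privileged sums $F_c, F_d$ themselves are well-defined and that the resulting cocycle $(F_{c,d})$ lands in the expected space.
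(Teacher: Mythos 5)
Your proposal is correct and follows exactly the same route as the paper: substitute $Y := (\theta_{q,a}\theta_{q,b})^{N}X$ with $p(-a)=c$, $p(-b)=d$ to clear the poles, transform the equation to $\sq Y = (z/ab)^{N}A_i Y A_j^{-1}$, and then count free Laurent coefficients in a window of $N$ consecutive indices. The paper leaves the convergence of the resulting Laurent series implicit (``one sees that $(\mu_j-\mu_i)$ consecutive terms can be chosen at will''), whereas you spell out the gaussian $q^{-Nk^2/2}$ decay that makes the series entire on $\C^{*}$; that extra step is correct and welcome, and your closing remark that the genericity condition $c,d\notin\Sigma_{A_0}$ is irrelevant for this count (the recursion is solvable as soon as $A_i,A_j$ are invertible) is also accurate.
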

\begin{proof}
Writing again $\theta_{q,a}(z) := \thq(z/a)$, any $X \in E_{i,j,c,d}$ 
can be written $(\theta_{q,a} \theta_{q,b})^{-(\mu_{j} - \mu_{i})} Y$, 
where $p(-a) = c$, $p(-b) = d$ and $Y$ is holomorphic on $\C^{*}$ 
and satisfies the equation: 
$$
\sq Y = \left(\dfrac{z}{ab}\right)^{\mu_{j} - \mu_{i}} A_{i} Y A_{j}^{-1}.
$$
Taking $Y$ as a Laurent series, one sees that $(\mu_{j} - \mu_{i})$
consecutive terms in $\Mat_{r_{i},r_{j}}(\C)$ can be chosen at will.
\end{proof}

\begin{defi}
\label{defi:cocycleprivilegie}
The cocycle $(F_{c,d}) \in Z^{1}(\UU,\Lambda_I(M_0))$ is said
\index{privileged cocycle}
to be \emph{privileged} if $(F_{c,d})_{i,j} \in E_{i,j,c,d}$ for
all distinct $c,d \in \Eq \setminus \Sigma_{A_{0}}$ and all
$1 \leq i < j \leq k$. The space of privileged cocycles
is denoted $Z^{1}_{pr}(\UU,\Lambda_I(M_0))$.
\label{not90}
\end{defi}

It is not hard to see that, $i,j$ being fixed, the corresponding
component $(c,d) \mapsto (F_{c,d})_{i,j}$ of a cocycle is totally
determined by its value at any particular choice of $c,d$.
That is, fixing distinct $c,d \in \Eq \setminus \Sigma_{A_{0}}$
gives an isomorphism:
$$
Z^{1}_{pr}(\UU,\Lambda_I(M_0)) \rightarrow 
\bigoplus_{1 \leq i < j \leq k} E_{i,j,c,d}.
$$
Thus, the space of privileged cocycles has the same dimension
as $\F(M_{0})$. Actually, one has two bijections:

\begin{theo}
The following natural maps are bijections:
$$
\F(M_{0}) \rightarrow Z^{1}_{pr}(\UU,\Lambda_I(M_0)) \rightarrow
H^{1}(\Eq,\Lambda_I(M_0)).
$$
\end{theo}
\begin{proof}
See \cite[prop. 3.17, th. 3.18]{JSStokes}. The second map is
the natural one.
\end{proof}


\subsection{$q$-Gevrey interpolation}
\label{subsection:qGevreyinterpolationbis}
\index{$q$-Gevrey interpolation}

The results found in section \ref{section:qGevreyinterpolation}
can be adapted here \emph{mutatis mutandis}: the two classification
problems tackled there lead to the following spaces of classes: 
$\bigoplus\limits_{\mu_{j} - \mu_{i} < 1/s} E_{i,j,c,d}$
and $\bigoplus\limits_{\mu_{j} - \mu_{i} \geq 1/s} E_{i,j,c,d}$, which have 
exactly the desired dimensions. This can be most easily checked using 
the devissage arguments from section
\ref{section:devissageqGevrey} below.


\section{D\'evissage $q$-Gevrey}
\label{section:devissageqGevrey}

The following results come mostly from \cite{JSStokes}.


\subsection{The abelian case of two slopes}
\label{subsection:abeliancase}

\index{one level case}
\index{two slopes case}
\index{abelian case}
When the Newton polygon of $M_{0}$ has only two slopes $\mu < \nu$
with multiplicities $r,s$, so that we can write:
$$
A_{0} = \begin{pmatrix} z^{\mu} B & 0 \\ 0 & z^{\nu} C \end{pmatrix},
\quad B \in \GL_{r}(\C), \; C \in \GL_{s}(\C),
$$
then the unipotent group $\G_{A_{0}}$ is isomorphic to the vector space
$\Mat_{r,s}$ through the isomorphism:
$$
F \mapsto \begin{pmatrix} I_{r} & F \\ 0 & I_{s} \end{pmatrix}
$$
from the latter to the former. Accordingly, the sheaf of unipotent 
groups $\Lambda_I(M_0)$ can be identified with the abelian sheaf
$\Lambda$ on $\Eq$ defined by:
$$
\Lambda(U) := \{F \in \Mat_{r,s}\bigl(\O(p^{-1}(U))\bigr) \tq 
(\sq F) (z^{\nu} C) = (z^{\mu} B) F \}.
$$
\index{locally free sheaf over $\Eq$}
\index{holomorphic vector bundle over $\Eq$}
This is actually a locally free sheaf, hence the sheaf of sections 
of a holomorphic vector bundle on $\Eq$, which we also write $\Lambda$
(more on this in section \ref{section:vectorbundles}). This bundle
can be described geometrically as the quotient of the trivial bundle 
$\C^{*} \times \Mat_{r,s}(\C)$ over $\C^{*}$ by the equivariant action 
of $q^{\Z}$ determined by the action 
$(z,F) \mapsto \bigl(q z,(z^{\mu} B) F (z^{\nu} C)^{-1}\bigr)$ 
of the generator $q$:
$$
\Lambda = \dfrac{\C^{*} \times \Mat_{r,s}(\C)}
{(z,F) \sim \bigl(q z,(z^{\mu} B) F (z^{\nu} C)^{-1}\bigr)} 
\longrightarrow 
\dfrac{\C^{*}}{z \sim q z} = \Eq.
$$
For details, see \cite{JSStokes,JSfvqde}. This bundle is the tensor
product of a line bundle of degree $\mu - \nu$ (corresponding to
the ``theta'' factor $\sq f = z^{\mu - \nu} f$) and of a flat bundle 
(corresponding to the ``fuchsian'' factor $\sq F = B F C^{-1}$);
we shall say that it is \emph{pure (isoclinic)}. 
\index{pure isoclinic vector bundle}
Now, the first cohomology
group of such a vector bundle is a finite dimensional vector space
whose dimension can be easily computed from its rank $r s$ and degree
$\mu - \nu$: it is $r s (\nu - \mu)$. Actually, using Serre duality
and an explicit frame of the dual bundle (made up of theta functions),
one can provide an explicit coordinate system on $H^{1}(\Eq,\Lambda)$:
\index{coordinate system on $H^{1}(\Eq,\Lambda)$}
this is done in \cite{JSfvqde}, where the relation to the $q$-Borel 
transform is shown. In this section, we shall see how, in general,
the non-abelian cohomology set $H^{1}(\Eq,\Lambda)$ can be described
\index{non-abelian cohomology set $H^{1}(\Eq,\Lambda)$}
from successive extensions from the abelian situation.


\subsection{A sequence of central extensions}
\label{subsection:centralextensions}

\label{not91}
For simplicity, we write $\G$ for $\G_{A_{0}}$. The Lie algebra $\g$
of $\G$ consists in all nilpotent matrices of the form:
$$
\begin{pmatrix}
0_{r_{1}}  & \ldots & \ldots & \ldots & \ldots \\
\ldots & \ldots & \ldots  & F_{i,j} & \ldots \\
0      & \ldots & \ldots   & \ldots & \ldots \\
\ldots & 0 & \ldots  & \ldots & \ldots \\
0      & \ldots & 0       & \ldots & 0_{r_{k}}    
\end{pmatrix},
$$
where $0_r$ is the square null matrix of size $r$ and where 
each $F_{i,j}$ is in $\Mat_{r_{i},r_{j}}$ for $1 \leq i < j \leq k$.
For each integer $\delta$, write $\g^{\geq \delta}$ the sub-Lie
algebra of matrices whose only non null blocks $F_{i,j}$ have
level $\mu_{j} - \mu_{i} \geq \delta$; it is actually an ideal
of $\g$ and $\G^{\geq \delta} := I_{n} + \g^{\geq \delta}$ is a
normal subgroup of $\G$. Moreover, one has an exact sequence:
$$
0 \rightarrow \G^{\geq \delta}/\G^{\geq \delta+1}
\rightarrow \G/\G^{\geq \delta+1} 
\rightarrow \G/\G^{\geq \delta} \rightarrow 1,
$$
actually, a central extension. The map $g \mapsto I_{n} + g$ induces
an isomorphism from the vector space $\g^{\geq \delta}/\g^{\geq \delta+1}$
to the kernel $\G^{\geq \delta}/\G^{\geq \delta+1}$ of this exact sequence.
We write $g^{(\delta)}$ this group: it consists of matrices in $\g$ whose 
only non null blocks $F_{i,j}$ have level $\mu_{j} - \mu_{i} = \delta$. 
(This is the reason why we wrote a $0$ instead of a $1$ at the left of 
the exact sequence !) \\

\label{not92}
Restricting the above consideration to $\Lambda := \Lambda_{I}(M_{0})$ 
considered as a sheaf of subgroups of $\G$ (with coefficients in function 
fields), we get a similar central extension:
$$
0 \rightarrow \lambda^{(\delta)} \rightarrow 
\Lambda/\Lambda^{\geq \delta+1} \rightarrow 
\Lambda/\Lambda^{\geq \delta} \rightarrow 1.
$$
Now, a fundamental theorem in the non-abelian cohomology of sheaves 
says that we have an exact sequence of cohomology sets:

\begin{theo}
\label{theo:devissageH1}
One has an exact sequence of pointed sets:
\index{exact sequence of pointed sets}
$$
0 \rightarrow H^{1}(\Eq,\lambda^{(\delta)}) \rightarrow 
H^{1}(\Eq,\Lambda/\Lambda^{\geq \delta+1}) \rightarrow 
H^{1}(\Eq,\Lambda/\Lambda^{\geq \delta}) \rightarrow 1.
$$
\end{theo}
\begin{proof}
The exactness should be here understood in a rather strong sense.
The sheaf $\lambda^{(\delta)}$ is here a pure isoclinic holomorphic 
vector bundle of degree $- \delta$ and rank 
$\sum\limits_{\mu_{j} - \mu_{i} = \delta} r_{i} r_{j}$. Its first cohomology 
group $V^{(\delta)}$ is therefore a vector space of dimension 
$\delta \sum\limits_{\mu_{j} - \mu_{i} = \delta} r_{i} r_{j}$.
The theorem says that $V^{(\delta)}$ operates on the pointed set
$H^{1}(\Eq,\Lambda/\Lambda^{\geq \delta+1})$ with quotient the pointed 
set $H^{1}(\Eq,\Lambda/\Lambda^{\geq \delta})$. For a proof, see
\cite[th. 1.4 and prop. 8.1]{Frenkel}.
\end{proof}

\begin{coro}
\label{coro:devissageH1}
There is a natural bijection of $H^{1}(\Eq,\Lambda_I(M_0))$ with 
$\bigoplus\limits_{\delta \geq 1} H^{1}(\Eq,\lambda^{(\delta)})$.
\end{coro}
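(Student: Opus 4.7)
The plan is to iterate Theorem \ref{theo:devissageH1} along the finite descending filtration $\Lambda \supset \Lambda^{\geq 2} \supset \cdots \supset \Lambda^{\geq \delta_{\max}+1} = \{1\}$, where $\delta_{\max} := \mu_k - \mu_1$. Since $\Lambda^{\geq \delta} = \{1\}$ for $\delta > \delta_{\max}$, the trivial quotient $\Lambda/\Lambda^{\geq \delta_{\max}+1}$ coincides with $\Lambda$ itself, so $H^1(\Eq,\Lambda)$ sits at the top of a finite tower
$$
H^1(\Eq,\Lambda) = H^1(\Eq,\Lambda/\Lambda^{\geq \delta_{\max}+1}) \twoheadrightarrow H^1(\Eq,\Lambda/\Lambda^{\geq \delta_{\max}}) \twoheadrightarrow \cdots \twoheadrightarrow H^1(\Eq,\Lambda/\Lambda^{\geq 2}) = \{*\},
$$
and by the theorem the fibre of the $\delta$-th surjection is a torsor under the finite-dimensional vector space $V^{(\delta)} = H^1(\Eq,\lambda^{(\delta)})$.

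First I would verify that each of these torsor actions is actually \emph{free}; once this is established, the cardinality/dimension count follows instantly, since then $H^1(\Eq,\Lambda)$ is in set-theoretic bijection with $\prod_{\delta\geq 1} V^{(\delta)}$, matching the dimension $\sum_{1\leq i<j\leq k} r_i r_j (\mu_j-\mu_i) = \sum_{\delta \geq 1} \delta \sum_{\mu_j-\mu_i=\delta} r_i r_j$ of $\F(M_0)$ computed in Theorem \ref{theo:kintegralslopesspace}. To upgrade the set-theoretic bijection to the claimed \emph{natural} one, I would construct a canonical section of the tower, level by level, using the privileged cocycles of Definition \ref{defi:cocycleprivilegie}: having fixed once and for all two authorized directions $c\neq d \in \Eq\setminus\Sigma_{A_0}$, each class in $H^1(\Eq,\Lambda)$ has a unique privileged representative $(F_{c,d})_{i,j} \in E_{i,j,c,d}$, and these blocks decompose into a direct sum indexed by $\delta = \mu_j - \mu_i$, namely
$$
Z^1_{pr}(\UU,\Lambda) \;=\; \bigoplus_{\delta \geq 1} \Bigl(\bigoplus_{\mu_j-\mu_i=\delta} E_{i,j,c,d}\Bigr).
$$
Using Lemma \ref{lemm:dimensionEijcd}, the inner sum has dimension $\delta \sum_{\mu_j-\mu_i=\delta} r_i r_j$, which equals $\dim V^{(\delta)}$, and I would identify it with $V^{(\delta)}$ through the natural map induced by $\lambda^{(\delta)} \hookrightarrow \Lambda/\Lambda^{\geq \delta+1}$ and the cocycle interpretation.

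The step I expect to be the main obstacle is the freeness of the $V^{(\delta)}$-action. Abstract non-abelian $H^1$ only guarantees a \emph{transitive} action on each fibre, not a free one, so one must work. The most efficient route is to exploit the filtration by blocks inside $\Lambda$: a cohomology class in $H^1(\Eq,\Lambda/\Lambda^{\geq \delta+1})$ whose image in $H^1(\Eq,\Lambda/\Lambda^{\geq \delta})$ is trivial admits, by Theorem \ref{theo:algebraicsummation}, a privileged cocycle whose blocks of level $<\delta$ vanish; such a cocycle is determined by its level-$\delta$ blocks, which form a cocycle with values in $\lambda^{(\delta)}$. Then two $\lambda^{(\delta)}$-cocycles giving the same class in $H^1(\Eq,\Lambda/\Lambda^{\geq \delta+1})$ must differ by a coboundary in $\Lambda/\Lambda^{\geq \delta+1}$ whose lower-level components can be taken trivial (again by the Birkhoff-Guenther normalisation argument of Section \ref{subsection:BGNF}), hence are cohomologous already in $\lambda^{(\delta)}$, giving freeness. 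With the action free and the canonical privileged splitting block-diagonal with respect to $\delta$, the induction through the tower yields the required natural bijection $H^1(\Eq,\Lambda) \simeq \bigoplus_{\delta\geq 1} V^{(\delta)}$.
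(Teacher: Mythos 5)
Your approach is correct but takes a markedly more explicit route than the paper's. The paper's proof is a two-line argument: it iterates Theorem \ref{theo:devissageH1} through the finite central filtration $\Lambda \supset \Lambda^{\geq 2} \supset \cdots$, and observes that, the cohomology sets being pointed, the theorem (via Frenkel's general result, which is what actually guarantees the torsor structure of the fibres) yields at each stage a bijection of $H^{1}(\Eq,\Lambda/\Lambda^{\geq \delta+1})$ with $H^{1}(\Eq,\Lambda/\Lambda^{\geq \delta}) \times H^{1}(\Eq,\lambda^{(\delta)})$. You, by contrast, refuse to treat the torsor structure as a black box: you make the freeness and the splitting visible in the privileged cocycle model from Section \ref{section:privilegedcocycles}. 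That is a genuine upgrade in concreteness --- it is essentially the explicit computation of Subsection \ref{subsection:explicitcomputation}, pushed through to a full proof --- and it is also the route one needs to verify that the resulting bijection is \emph{linear} with respect to the various affine structures on $\F(M_0)$, which is the content of the theorem in Subsection \ref{subsection:variousgeometries}.

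One remark on your freeness step. You justify the assertion that the trivialising coboundary can be chosen with vanishing lower-level blocks by ``the Birkhoff-Guenther normalisation argument.'' That works, but the cleaner and more robust argument is purely cohomological: if $(h_{ij}), (h'_{ij})$ are $\lambda^{(\delta)}$-cocycles that become cohomologous in $\Lambda/\Lambda^{\geq\delta+1}$ via a cochain $(g_i)$, then reducing the relation $h'_{ij} = g_i h_{ij} g_j^{-1}$ modulo $\Lambda^{\geq\delta}$ forces $\bar g_i = \bar g_j$ on overlaps, so $(\bar g_i)$ glues to a global section of $\Lambda/\Lambda^{\geq\delta}$; but $H^0(\Eq, \Lambda/\Lambda^{\geq\delta}) = \{1\}$ because the associated graded sheaves $\lambda^{(\epsilon)}$, $1\leq\epsilon<\delta$, are pure isoclinic vector bundles of strictly negative degree $-\epsilon$ and hence have no nonzero global sections. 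This yields $g_i \in \lambda^{(\delta)}$ directly, hence $(h_{ij})$ and $(h'_{ij})$ already cohomologous in $\lambda^{(\delta)}$, with no appeal to normal forms at this stage. This is the cohomological mechanism that Frenkel's theorem is encapsulating, so stating it explicitly makes your proof self-contained.
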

\begin{proof}
Indeed, the cohomology sets  being pointed, the theorem yields a bijection 
of each $H^{1}(\Eq,\Lambda/\Lambda^{\geq \delta+1})$ with
$H^{1}(\Eq,\Lambda/\Lambda^{\geq \delta}) \times 
H^{1}(\Eq,\lambda^{(\delta)})$.
\end{proof}

\label{not93}
We shall write 
$\F_{\leq \delta}(M_{0}) := H^{1}(\Eq,\Lambda/\Lambda^{\geq \delta+1})$.
This space is the solution to the following classification problem:
two matrices with diagonal part $A_{0}$ are declared equivalent 
if their truncatures corresponding to the levels $\leq \delta$ 
are analytically equivalent (through a gauge transform in $\G$);
and there is no condition on the components with levels $> \delta$.
This is exactly the equivalence under $\Kfs$ for $\delta < s \leq \delta + 1$.
The corresponding Birkhoff-Guenther normal forms have been described
in section \ref{section:qGevreyinterpolation}. The extreme cases
are $\delta > \mu_{k} - \mu_{1}$, where $\F_{\leq \delta}(M_{0})$
is the whole space $\F(M_{0})$; and $\delta < 1$, where it is trivial.


\subsection{Explicit computation}
\label{subsection:explicitcomputation}

We want to make more explicit theorem \ref{theo:devissageH1}. \\

So we assume that $A,A'$ represent two classes in 
$H^{1}(\Eq,\Lambda/\Lambda^{\geq \delta+1})$ having the same image
in $H^{1}(\Eq,\Lambda/\Lambda^{\geq \delta})$.
Up to an analytic gauge transform, the situation just described
can be made explicit as follows. Assume that $A,A'$ have graded 
part $A_{0}$ and are in Birkhoff-Guenther normal form. Assume 
moreover that they coincide up to level $\delta - 1$. Then the same
is true for $\hat{F}_{A}$ and $\hat{F}_{A'}$; therefore, it is also 
true for $\hat{F}_{A,A'}$ and $I_{n}$, that is, 
$\hat{F}_{A,A'} \in \G^{\geq \delta}(\Kf)$. The first non-trivial
upper diagonal of $\hat{F}_{A,A'}$ is at level $\delta$ and, as
a divergent series, generically it actually has $q$-Gevrey level 
$\delta$; if it has level $< \delta$, then it is convergent.
The classes of $A,A'$ in
$\F_{\leq \delta}(M_{0}) = H^{1}(\Eq,\Lambda/\Lambda^{\geq \delta+1})$
have the same image in
$\F_{\leq \delta-1}(M_{0}) = H^{1}(\Eq,\Lambda/\Lambda^{\geq \delta})$.
After theorem \ref{theo:devissageH1}, there exists 
a unique element of $V^{(\delta)}$ which carries the class of $A$ 
to the class of $A'$. The Stokes matrices 
$S_{c,d} \hat{F}_{A} := \left(S_{c,d} \hat{F}_{A}\right)^{-1} S_{d} \hat{F}_{A}$
\label{not94} 
and 
$S_{c,d} \hat{F}_{A'} := \left(S_{c,d} \hat{F}_{A'}\right)^{-1} S_{d} \hat{F}_{A'}$ 
are congruent modulo $\Lambda^{\geq \delta}$, 
so that their quotient $S_{c,d} \hat{F}_{A,A'}$ is in $\Lambda^{\geq \delta}$. 
Its first non trivial upper diagonal is at level $\delta$; call it 
$f_{c,d}$ and consider it as an element of $\lambda^{(\delta)}$. This 
defines a cocycle, hence a class in $H^{1}(\Eq,\lambda^{(\delta)})$. 
This class is the element of $V^{(\delta)}$ we look for.

\begin{exem}
\label{exem:geometrythreeslopes}
Take 
$A_{0} := \begin{pmatrix} a & 0 & 0 \\ 
0 & b z & 0 \\ 0 & 0 & c z^{2} \end{pmatrix}$
and
$A := \begin{pmatrix} a & u & v_{0} + v_{1} z \\ 
0 & b z & w z \\ 0 & 0 & c z^{2} \end{pmatrix}$
in Birkhoff-Guenther normal form, so that $u,v_{0},v_{1},w \in \C$.
Here, the total space of classes $H^{1}(\Eq,\Lambda_I(M_0))$ has
dimension $4$, while each of the components $H^{1}(\Eq,\lambda^{(1)})$
and $H^{1}(\Eq,\lambda^{(2)})$ has dimension $2$. \\
To compute the class of $A$ in $H^{1}(\Eq,\Lambda_I(M_0))$ identified 
with $H^{1}(\Eq,\lambda^{(1)}) \oplus H^{1}(\Eq,\lambda^{(2)})$,
we introduce the intermediate point
$A' := \begin{pmatrix} a & u & 0 \\ 
0 & b z & w z \\ 0 & 0 & c z^{2} \end{pmatrix}$
and the intermediate gauge transforms
$G := \begin{pmatrix} 1 & f & x \\ 0 & 1 & h \\ 0 & 0 & 1 \end{pmatrix}$
such that $G[A_{0}] = A'$ and
$H := \begin{pmatrix} 1 & 0 & g \\ 0 & 1 & 0 \\ 0 & 0 & 1 \end{pmatrix}$
such that $H[A'] = A$, so that
$F := H G =
\begin{pmatrix} 1 & f & g + x \\ 0 & 1 & h \\ 0 & 0 & 1 \end{pmatrix}$
is such that $F[A_{0}] = A$. \\
The component in $H^{1}(\Eq,\lambda^{(1)})$ is computed by considering 
$G$ alone. Its coefficients satisfy the equations:
$$
b z \sq f - a f = u \text{~and~} c z \sq h - b h = w.
$$
(The coefficient $x$ is irrelevant here.) We shall see in section
\ref{section:qEuler} how to compute the corresponding cocycles
$(f_{c,d})$ and $(h_{c,d})$, but what is clear here is that they 
are respectively linear functions of $u$ and of $w$. So in the
end, the component of the class of $A$ in $H^{1}(\Eq,\lambda^{(1)})$
is $u L_{1}(a,b) + w L_{1}(b,c)$ for some explicit basic classes
$L_{1}(a,b)$, $L_{1}(b,c)$ (the index $1$ is for the level). \\
Similarly, the component in $H^{1}(\Eq,\lambda^{(2)})$ is computed 
by considering $H$ alone. Its coefficient satisfies the equation:
$$
c z^{2} \sq g - a g = v_{0} + v_{1} z.
$$
It is therefore clear that the component we look for is
$v_{0} L_{2,0}(a,c) + v_{1} L_{2,1}(a,c)$ for some explicit 
base of classes $L_{2,0}(a,c),L_{2,1}(a,c)$. \\
This example will be pursued in section \ref{section:qEulersquare}.
\end{exem}


\subsection{Various geometries on $\F(P_{1},\ldots,P_{k})$}
\label{subsection:variousgeometries}

In subsection \ref{subsection:BGNF}, we drawed from the 
Birkhoff-Guenther normal form an affine structure on $\F(M_{0})$.
This structure is made explicit by the coordinates provided
by proposition \ref{prop:kintegralslopesspace}. The d\'evissage 
above implies that it is the same as the affine structure on
$H^{1}(\Eq,\Lambda_I(M_0))$ inherited from the vector space
structures on the $H^{1}(\Eq,\lambda^{(\delta)})$ through corollary 
\ref{coro:devissageH1}. More precisely:

\begin{theo}
The mapping from 
$\prod\limits_{1 \leq i < j \leq k} \Mat_{r_{i},r_{j}}(K_{\mu_{i},\mu_{j} })$
to $\bigoplus\limits_{\delta \geq 1} H^{1}(\Eq,\lambda^{(\delta)})$
coming from proposition \ref{prop:kintegralslopesspace}, corollary 
\ref{coro:devissageH1} and theorems \ref{theo:secondqMalgrangeSibuya} 
and \ref{section:analyticclassificationCZ} is linear.
\end{theo}
\begin{proof}
The computations are in essence the same as in the example. Details
will be written in \cite{JSmoduli}.
\end{proof}

There is a third source for the geometry on $\F(M_{0})$, namely
the identification with the space $\bigoplus E_{i,j,c,d}$ of all
privileged cocycles (where $c,d$ are fixed arbitrary) found in
subsection \ref{subsection:privilegedcocycles}. The corresponding
geometry is the same, see \emph{loc. cit.}.


\section{Vector bundles associated to $q$-difference modules}
\label{section:vectorbundles}

We briefly recall here the general construction of which the vector
bundle in subsection \ref{subsection:abeliancase} is an example, and 
then give an application. This is based on \cite{JSfvqde}.


\subsection{The general construction}
\label{subsection:vectorbundles}

For details on the following, see \cite{JSfvqde}. 
To any $q$-difference module $M$ over $\Ka$, one can associate
\label{not95}
a holomorphic vector bundle $\F_{M}$ over $\Eq$ in such a way
\index{holomorphic vector bundle associated to a $q$-difference module}
that the correspondence $M \leadsto \F_{M}$ is functorial and 
that the functor is faithful, exact and compatible with tensor 
products and duals. If one restricts to pure isoclinic modules
with a fixed slope, the functor is moreover \emph{fully} faithful
(this follows immediately from the case of fuchsian modules, which
is dealt with in \cite{JSGAL}); but this ceases to be true for 
arbitrary modules: for instance, if $M := (\Ka,\sq)$ and 
$N := (\Ka,z^{-1}\sq)$, then the function $\thq$ induces a morphism 
from $\F_{M}$ to $\F_{N}$, although $\Hom(M,N) = 0$. This is one of 
the reasons why this functor is not very important in the present work 
(see however the remark further below). \\

In order to describe $\F_{M}$, we shall assume for simplicity that
the module $M$ is related to a $q$-difference system $\sq X = A X$
such that $A$ and $A^{-1}$ are holomorphic all over $\C^{*}$, for 
instance, that $A$ is in Birkhoff-Guenther normal form. (In the 
general case, one just has to speak of germs at $0$ everywhere). 
The sheaf of holomorphic solutions over $\Eq$ is then defined by 
the relation:
$$
\label{not96}
\F_{M}(U) := \{X \in \O\bigl(p^{-1}(U)\bigr)^{n} \tq \sq X = A X\}.
$$
This is a locally free sheaf, whence the sheaf of sections of
a holomorphic vector bundle over $\Eq$ which we also write $\F_{M}$.
The bundle $\F_{M}$ can be realized geometrically as the quotient of 
the trivial bundle $\C^{*} \times \C^{n}$ over $\C^{*}$ by an equivariant 
action of the subgroup $q^{\Z}$ of $\C^{*}$:
$$
\F_{M} = \dfrac{\C^{*} \times \C^{n}}{(z,X) \sim \bigl(q z,A X)} 
\rightarrow \dfrac{\C^{*}}{z \sim q z} = \Eq.
$$
The construction of the bundle $\Lambda$ in subsection
\ref{subsection:abeliancase} corresponds to the $q$-difference
system $(\sq F) (z^{\nu} C) = (z^{\mu} B) F$, which is the
``internal Hom'' $\Homint(N,M)$ of the modules $M,N$
respectively associated to the $q$-difference systems with
matrices $z^{\mu} B,z^{\nu} C$. From the compatibilities with
tensor products and duals, one therefore draws:
$$
\Lambda = \F_{N}^{\vee} \otimes \F_{M}.
$$

\begin{rema}
From the exactness and the existence of slope filtrations, one
deduces that the vector bundle associated to a $q$-difference
module with integral slopes admits a flag of subbundles such
that each quotient is ``pure isoclinic'', that is, isomorphic
to the tensor product of a line bundle with a flat bundle. The
functor sending the module $M$ to the vector bundle $\F_{M}$
\emph{endowed with such a flag} is fully faithful.
\end{rema}


\subsection{Sheaf theoretical interpretation of irregularity}
\label{subsection:sheaftheoreticirregularity}
\index{irregularity (sheaf theoretical interpretation)}

We interpret here in sheaf theoretical terms the formula relating
irregularity and the dimension of $\dim \F(P_{1},\ldots,P_{k})$
given in subsection \ref{subsection:algebraicirregularity}. \\

Actually, the irregularity of $\underline{\End}(M_{0})$ comes from
its positive part:
$$
\label{not97}
\underline{\End}^{> 0}(M_{0}) = 
\bigoplus_{1 \leq i < j \leq k} \Homint(P_{i},P_{j}).
$$
For each pure component $P_{i,j} := \Homint(P_{j},P_{i})$, 
the computations of sections \ref{section:isoformalclasses} and
\ref{section:indextheorems} give:
$$
\dim \Gamma^{0}(P_{i,j}) - \dim \Gamma^{1}(P_{i,j}) = 
r_{i} r_{j} (\mu_{j} - \mu_{i}).
$$
As we know, $\F(P_{1},\ldots,P_{k}) = \F(M_{0})$ is isomorphic 
to the first cohomology set of the sheaf $\Lambda_I(M_{0})$.
Also, in the previous section, we have seen that this is an 
affine space with underlying vector space the first cohomology 
space of the sheaf $\lambda_I(M_{0})$ of their Lie algebras. 
And the latter, from its description as
$\bigoplus_{\delta \geq 1} \lambda^{(\delta)}$, is the vector bundle 
associated to the $q$-difference module $\underline{\End}^{> 0}(M_{0})$. 
So the irregularity is actually an Euler-Poincar\'e characteristic.



\chapter{Examples of the Stokes phenomenon}
\label{chapter:examplesStokes}

In the study of $q$-special functions, one frequently falls upon series
that are \emph{convergent} solutions of \emph{irregular} $q$-difference
equations, the latter thus also admitting \emph{divergent} solutions;
and this should be so, since Adams lemma\footnote{In its original form,
Adams lemma \cite{Adams,MarotteZhang,LDVS} says that, for any analytic 
$q$-difference operator, the power series appearing in the solutions 
related to the \emph{biggest} slope have a strictly positive radius of 
convergence: see for instance the $q$-Euler equation in paragraph
\ref{subsection:qEuler}. Adams lemma was used implicitly in paragraph
\ref{subsection:slopefiltration} through its consequences: existence
of an analytic factorisation and definition of the slope filtration.}
\index{Adams lemma}
ensures us that irregular equations always have \emph{some} convergent 
solutions. However, in most works, only
convergent solutions have been considered, although the other ones
are equally interesting\footnote{This was of course well known to 
Stokes himself when he studied the Airy equation, as well as to all
those who used divergent series in numerical computations of celestial
mechanics, leading to Poincar\'e work on asymptotics. But, of course, 
one should above all remember Euler, who used divergent series for
numerically computing $\zeta(2) = \pi^{2}/6$, in flat contradiction
to the opinion expressed by Bourbaki in ``Topologie G\'en\'erale'', 
IV, p. 71.} \\

The touchstone of any theory of the Stokes phenomenon is Euler series.
\index{Euler series}
We shall therefore concentrate on one of its $q$-analogs, the $q$-Euler
\index{$q$-Euler series} 
\index{Tshakaloff series} 
or Tshakaloff series $\tsh(z)$ (see equation \eqref{eqn:Tshakaloff}, 
page \pageref{eqn:Tshakaloff}). The simplest $q$-difference equation 
satisfied by $\tsh$ is the $q$-Euler equation $z \sq f - f = -1$.
\index{$q$-Euler equation}
\index{Stokes phenomenon}
We shall detail the Stokes phenomenon for a family of similar equations 
in \ref{section:qEuler} and we shall apply it to some confluent basic 
hypergeometric series. Then we shall show how such equations naturally 
appear in some well known historical cases: that of Mock Theta functions 
in \ref{section:MockTheta}, that of the enumeration of class numbers 
of quadratic forms in \ref{section:QuadForms}; this has been exploited 
by the third author in \cite{CZMTF,CZMORD}.

\subsubsection{Notations}

We use some notations from $q$-calculus 
\index{$q$-calculus}
for this chapter only.
Let $p \in \C$ be such that $0 < \lmod p \rmod < 1$, \eg\
$p := q^{-1}$. Let $a,a_{1},\ldots,a_{k} \in \C$ and $n \in \N$. 
The \emph{$q$-Pochhammer symbols} are:
\label{not98}
\begin{align*}
\index{$q$-Pochhammer symbol}
(a;p)_{n} & := \prod_{0 \leq i < n} (1 - a p^{i}), \\
(a;p)_{\infty} & := \prod_{i = 0}^{\infty} (1 - a p^{i}), \\
(a_{1},\ldots,a_{k};p)_{n} & := \prod_{j=1}^{k} (a_{j};p)_{n}, \\
(a_{1},\ldots,a_{k};p)_{\infty} & := \prod_{j=1}^{k} (a_{j};p)_{\infty}.
\end{align*}
Note that, for all $n \in \N$, we have:
$$
(a;p)_{n} = \dfrac{(a;p)_{\infty}}{(a;p^{n})_{\infty}},
$$
but, since the right hand side is well defined for all $n \in \Z$,
this allows us to extend the definition of $q$-Pochhammer symbols;
and similarly for $(a_{1},\ldots,a_{k};p)_{n}$. \\

\index{Jacobi theta function}
\index{theta function}
\label{not99}
With these notations, Jacobi's theta functions $\theta(z;q)$ and $\thq$ 
respectively defined in equations \eqref{eqn:tripleproduit} and
\eqref{eqn:thq} page \pageref{eqn:thq} admit the factorisations:
\begin{align}
\label{eqn:deuxtriplesproduits}
\theta(z;q) & = (q^{-1},- z,- q^{-1} z^{-1};q^{-1})_{\infty}, \\
\thq(z) & = (q^{-1},-q^{-1} z,-z^{-1};q^{-1})_{\infty}.
\end{align}
(This is again Jacobi's triple product formula.) 
\index{Jacobi's triple product formula}
In section
\ref{section:qEuler}, we use functions derived from $\theta(z;q)$,
as in chapter \ref{chapter:asymptotictheory}. 
In section \ref{section:qEulersquare}, we use functions derived from  
$\thq$, as in chapter \ref{chapter:geometryofF(M)}.
In section \ref{section:MockTheta}, we use both kinds in alternance.
Last, note that, along with these two forms, we shall also use in 
section \ref {section:QuadForms} the ``classical'' forms $\theta_{i,j}$, 
$i,j \in \{0,1\}$. 


\section{The $q$-Euler equation and confluent 
basic hypergeometric series}
\label{section:qEuler}
\index{$q$-Euler equation}
\index{confluent basic hypergeometric series}

Consider a $q$-difference system of rank $2$ and level $1$ (that is,
its slopes are $\mu,\mu+1$ for some $\mu \in \Z$). Through some analytic
gauge transformation, its matrix can be put in the form
$b z^{\mu} ~ \begin{pmatrix} 1 & u \\ 0 & a z \end{pmatrix}$,
where $a,b \in \C^{*}$, $\mu \in \Z$ and $u \in \Ka$. The $b z^{\mu}$
factor corresponds to a tensor product by a rank one object $L$, which
does not affect the Stokes phenomenon, nor the isoformal classification,
\ie\ the map $M \leadsto L \otimes M$ induces an isomorphism:
$$
\F(P_{1},\ldots,P_{k}) \rightarrow \F(L \otimes P_{1},\ldots,L \otimes P_{k}).
$$
We therefore assume that $b = 1, \mu = 0$. The associated inhomogeneous
equation is the following \emph{$q$-Euler equation}:
\begin{equation}
\label{eqn:qEuler}
\begin{pmatrix} 1 & f \\ 0 & 1 \end{pmatrix}:
\begin{pmatrix} 1 & 0 \\ 0 & a z \end{pmatrix} \simeq
\begin{pmatrix} 1 & u \\ 0 & a z \end{pmatrix}
\Longleftrightarrow
a z \sq f - f = u.
\end{equation}


\subsection{A digest on the $q$-Euler equation}
\label{subsection:qEuler}

\subsubsection{Formal solution}

We obtain it, for instance, by iterating the $z$-adically contracting
operator $f \mapsto -u + a z \sq f$. One finds the fixed point:
$$
\hat{f} = - \sum_{n \geq 0} a^{n} q^{n(n-1)/2} z^{n} \sq^{n} u.
$$
If $u$ is a constant, the right hand side is just $- u \tsh(a z)$.
If $u = \sum\limits_{k >> - \infty} u_{k} z^{k}$, then 
$\hat{f} = - \sum\limits_{k >> - \infty} u_{k} z^{k} \tsh(q^{k} a z)$.

\subsubsection{Birkhoff-Guenther normal form}

There is a unique $\alpha \in \C$ such that, setting $v := u - \alpha$,
the unique formal solution of $a z \sq f - f = v$ is convergent.
Indeed, putting $v = \sum\limits_{k >> - \infty} v_{k} z^{k}$, from
the relation $a q^{n-1} f_{n-1} - f_{n} = v_{n}$, we draw that
$\sum\limits_{n \in \Z} a_{n} q^{-n(n-1)/2} v_{n} = 0$, which also
writes: $\alpha = \Bq u(a^{-1})$, where, as usual (see definition 
\ref{defi:qBorel}, page \pageref{defi:qBorel}),
$\Bq u(\xi) = \sum\limits_{k >> - \infty} u_{k} q^{-k(k-1)/2} \xi^{k}$.
(The letter $\xi$ is traditional for the Borel plane.) Note that the
value $\Bq u(a^{-1})$ is well defined, for $\Bq u$ is an entire function.
The Birkhoff-Guenther normal form of 
$\begin{pmatrix} 1 & u \\ 0 & a z \end{pmatrix}$ is therefore
$\begin{pmatrix} 1 & \Bq u(a^{-1}) \\ 0 & a z \end{pmatrix}$.

\subsubsection{``Algebraic'' summation}

For any $a \in \C^{*}$, we shall set:
$$
\label{not100}
\theta_{q,a}(z) := \thq(z/a).
$$
(See \eqref{eqn:thq} in the general notations of section 
\ref{section:generalnotations}.)
One looks for a solution of \eqref{eqn:qEuler} in the form
$f = g/\theta_{q,\lambda}$, with $g \in \O(\C^{*})$ and $\lambda$
adequately chosen in $\C^{*}$. We are led to solve the equation:
$a \lambda \sq g - g = u \theta_{q,\lambda}$. Identifying the
coefficients of the Laurent series, one gets the unique solution:
$$
g = \sum\limits_{n \in \Z} 
\dfrac{\left[u \theta_{q,\lambda}\right]_{n}}{a \lambda q^{n} - 1} z^{n},
$$
which makes sense if, and only if, $\lambda \not\in [a^{-1};q]$. 
(Note that we write $\left[h\right]_{n} := a_{n}$
\label{not101}
when $h = \sum a_{n} z^{n}$.)
Thus, for all ``authorized directions of summation'' $\lambda$, we get 
the unique solution for \eqref{eqn:qEuler} with only simple poles
over $[-\lambda;q]$:
$$
S_{\lambda} \hat{f} := \dfrac{1}{\theta_{q,\lambda}}
\sum\limits_{n \in \Z} 
\dfrac{\left[u \theta_{q,\lambda}\right]_{n}}{a \lambda q^{n} - 1} z^{n},
$$

\subsubsection{``True'' summation}

Since the polar condition that uniquely characterizes the solution
$S_{\lambda} \hat{f}$ only depends on 
$\overline{\lambda} := \lambda \pmod{q^{\Z}} \in \Eq$,
or (equivalently) on $\Lambda := [\lambda;q]$, we shall also write:
$$
S_{\overline{\lambda}} \hat{f} := S_{\Lambda} \hat{f} := S_{\lambda} \hat{f}.
$$
We now make this dependency explicit. From the equality
$u \theta_{q,\lambda} = \sum u_{k} q^{-\ell(\ell+1)/2} z^{k} (z/\lambda)^{\ell}$,
we draw:
$$
\left[u \theta_{q,\lambda}\right]_{n} =
\sum_{k + \ell = n} u_{k} q^{-\ell(\ell+1)/2} \lambda^{-\ell} =
\sum_{k} u_{k} q^{-(n-k)(n-k+1)/2} \lambda^{k-n},
$$
so that:
$$
q^{n(n+1)/2} \lambda^{n} \left[u \theta_{q,\lambda}\right]_{n} =
\sum_{k} u_{k} q^{n k-k(k-1)/2} \lambda^{k} = \Bq u(q^{n} \lambda).
$$
On the other hand, iterating the relation 
$\theta_{q,\lambda}(z) = \frac{z}{q \lambda} \theta_{q,q \lambda}(z)$
yields 
$\theta_{q,\lambda}(z) = 
\frac{z^{n}}{q^{n(n+1)/2} \lambda^{n}} \theta_{q,q^{n} \lambda}(z)$, whence:
$$
\label{not102}
S_{\overline{\lambda}} \hat{f} = S_{\Lambda} \hat{f} =
\sum_{n \in \Z} \dfrac{q^{-n(n+1)/2} \lambda^{-n} \Bq u(q^{n} \lambda)}
{z^{n} q^{-n(n+1)/2} \lambda^{-n} \theta_{q,q^{n} \lambda}(z) (a \lambda q^{n} - 1)} 
~ z^{n} =
\sum_{\mu \in \Lambda} \dfrac{\Bq u(\mu)}{(a \mu - 1) \theta_{q,\mu}(z)}
~ z^{n}.
$$

\begin{rema}
In \cite{RS1,RS2,RS3}, one computes the residue of the meromorphic
function $\overline{\lambda} \mapsto S_{\overline{\lambda}} \hat{f}$
at the pole $\overline{a^{-1}} \in \Eq$. According to the above formula,
one finds:
$$
\text{Res}_{\overline{\lambda} = \overline{a^{-1}}} 
S_{\overline{\lambda}} \hat{f} =
\dfrac{1}{2 \ii \pi} ~ \dfrac{\Bq u(a^{-1})}{\theta_{q}(az)} \cdot
$$
Indeed, for any $b \in \C^{*}$ and for any map $f: \C^{*} \rightarrow \C$ 
analytic in a neighborhood of $[b;q]$, setting:
$$
\forall \overline{\lambda} \in \Eq ~,~
F\left(\overline{\lambda}\right) :=
\sum_{\mu \in [\lambda;q]} \dfrac{f(\mu)}{\mu - b},
$$
defines a meromorphic map $F$ with a simple pole at $\overline{b} \in \Eq$
and the corresponding residue:
$$
\text{Res}_{\overline{\lambda} = \overline{b}} 
S_{\overline{\lambda}} \hat{f} =
\dfrac{1}{2 \ii \pi} ~ \dfrac{f(b)}{b} \cdot
$$
Note that the residue of a function here makes sense, because of
the canonical generator $dx = \dfrac{1}{2 \ii \pi} ~ \dfrac{dz}{z}$
of the module of differentials, which allows one to flatly identify
maps on $\Eq$ with differentials. (Here, as usual, $z = e^{2 \ii \pi x}$, 
where $x$ is the canonical uniformizing parameter of 
$\Eq = \C/(\Z + \Z \tau)$.)
\end{rema}


\subsection{Some confluent basic hypergeometric functions}
\label{subsection:confluenthypergeom}

\index{confluent basic hypergeometric functions}
Usual basic hypergeometrics series have the form:
$$
\label{not103}
{}_{2} \Phi_{1}(a,b;c;q,z) := \sum_{n \geq 0}
\dfrac{(a,b;q^{-1})_{n}}{(c,q^{-1};q^{-1})_{n}} z^{n}~,~
\text{~where~} a,b,c \in \C^{*}.
$$
(Remember that here, $\lmod q \rmod > 1$.) Writing for short $F(z)$
this series, the rescaling $F(- q^{-1} z/c)$ degenerates, when
$c \to \infty$, into a \emph{confluent} basic hypergeometrics series:
$$
\phi(a,b;q,z) := \sum_{n \geq 0}
\dfrac{(a,b;q^{-1})_{n}}{(q^{-1};q^{-1})_{n}} q^{n(n+1)/2} z^{n}~,~
\text{~where~} a,b \in \C^{*}.
$$
Writing $u_{n} := \dfrac{(a,b;q^{-1})_{n}}{(q^{-1};q^{-1})_{n}} q^{n(n+1)/2}$
the general coefficient, we have, for all $n \geq 0$:
$$
(q^{n+1} - 1) u_{n+1} = q^{2} (q^{n} - a) (q^{n} - b) u_{n},
$$
whence, multiplying by $z^{n+1}$ and summing:
$$
(\sq - 1) \hat{f} = q^{2} z (\sq - a) (\sq - b) \hat{f},
$$
where we write for short $\hat{f}(z)$ the divergent series
$\phi(a,b;q,z)$. We shall denote the corresponding $q$-difference
operator as:
$$
L := q^{2} z (\sq - a) (\sq - b) - (\sq - 1) =
q^{2} z \sq^{2} - \bigl(1 + (a+b) q^{2} z)\bigr) \sq + (1 + ab q^{2} z).
$$
We are interested in the equation $L f = 0$. Its Newton polygon (\ie\
that of $L$) has slopes $0$ and $1$. The slope $0$ has exponent $1$ and 
gives rise to the divergent solution $\hat{f}$. To tackle the slope $1$,
we compute:
$$
(z \thq) L (z \thq)^{-1} = \dfrac{1}{q z}
\Bigl(\sq^{2} - \bigl(1 + (a+b) q^{2} z)\bigr) \sq + 
q z (1 + ab q^{2} z)\Bigr),
$$
which has slopes $0$ and $-1$, the latter having exponent $0$. According 
\index{Adams lemma}
to Adams lemma (see the footnote in the introduction to this chapter), we 
thus get a unique solution $g_{0} \in 1 + z \Ra$, whence the ``convergent'' 
solution $f_{0} := \dfrac{g_{0}}{z \thq}$ of equation $L f = 0$.

\subsubsection{Factoring $L$}

To get $L = q^{2} z (\sq - A)(\sq - B)$, we first look for $B$ such that 
$(\sq - B) f_{0} = 0$, that is, 
$B := \dfrac{\sq f_{0}}{f_{0}} = 
\dfrac{1}{q z} ~ \dfrac{\sq g_{0}}{g_{0}} \cdot$. Thus, 
$L = (\sq - A)(q z \sq - q z B) = (\sq - A)
\left(q z \sq - \dfrac{\sq g_{0}}{g_{0}}\right)$ 
and, by identification of the constant terms:
$A = \dfrac{(1 + a b q^{2} z) g_{0}}{\sq g_{0}}$ and in the end:
$$
L = \left(\sq - \dfrac{(1 + a b q^{2} z) g_{0}}{\sq g_{0}}\right)
\left(q z \sq - \dfrac{\sq g_{0}}{g_{0}}\right).
$$
The corresponding non homogeneous equation is 
$q z f \sq - \dfrac{\sq g_{0}}{g_{0}} f = v$, where $v$ is a non trivial
solution of
$\left(\sq - \dfrac{(1 + a b q^{2} z) g_{0}}{\sq g_{0}}\right) v = 0$.
An obvious choice is:
$$
v := \dfrac{1}{g_{0}} \prod_{n \geq 1} (1 + a b q^{2-n} z) =
\dfrac{1}{g_{0}} (- a b q z;q^{-1})_{\infty}.
$$
One checks easily that the above non homogeneous equation has indeed
a unique solution in $1 + z \Rf$, and this has to be $\hat{f}$. This
equation is associated with the matrix 
$\begin{pmatrix} \dfrac{\sq g_{0}}{g_{0}} & v \\ 0 & q z \end{pmatrix}$,
which can be seen to be equivalent to the matrix 
$\begin{pmatrix} 1 & u \\ 0 & z \end{pmatrix}$ through the gauge transform
$\begin{pmatrix} g_{0}^{-1} & 0 \\ 0 & z^{-1} \end{pmatrix}$.


\subsection{Some special cases}
\label{subsection:specialcases}

Taking $a := b := q^{-1}$ yields:
$$
\hat{f} = \sum_{n \geq 0} (q^{-1};q^{-1})_{n} q^{n(n+1)/2} z^{n}.
$$
The recurrence relation $\dfrac{u_{n+1}}{u_{n}} = q^{n+1} - 1$ 
immediately gives the non homogeneous equation
$q z \sq f - (1+z) f = -1$. This, in turn, boils down to the straight
$q$-Euler equation by setting $g := \dfrac{z}{(-q^{-1} z;q^{-1})_{\infty}} f$,
so that $z \sq g - g = \dfrac{- z}{(-z;q^{-1})_{\infty}} \cdot$ \\

Taking $a := b := 0$ yields:
$$
\hat{f} = \sum_{n \geq 0} \dfrac{1}{(q^{-1};q^{-1})_{n}} q^{n(n+1)/2} z^{n}.
$$
The recurrence relation $\dfrac{u_{n+1}}{u_{n}} = \dfrac{q^{2n+2}}{q^{n+1} - 1}$ 
gives the homogeneous equation $q^{2} z \sq^{2} f - \sq f + f = 0$.
The convergent solution is $f_{0} := \dfrac{1}{z \thq} g_{0}$, where
$g_{0} = \sum \gamma_{n} z^{n} \in 1 + z \Ra$ is solution of the equation
$(\sq^{2} - \sq  + q z) g = 0$. The corresponding recurrence relation 
$(q^{2 n} - q^{n}) \gamma_{n} + q \gamma_{n-1} = 0$ can be solved exactly
and entails:
$$
g_{0} = \sum_{n \geq 0} \dfrac{(-1)^{n} q^{-n^{2}}}{(q^{-1};q^{-1})_{n}} z^{n}.
$$
The corresponding factorisation is
$L = \left(\sq - \dfrac{g_{0}}{\sq g_{0}}\right)
\left(q z \sq - \dfrac{\sq g_{0}}{g_{0}}\right)$. Following 
\cite{CZCBHG}\footnote{Note that \cite{LuZ} contains similar formulas.}, 
one can prove the following formula for the Stokes operators:
$$
S_{\lambda} \hat{f} - S_{\mu} \hat{f} = (q^{-1};q^{-1})_{\infty}^{2}
\dfrac{\thq(-\lambda/\mu) \thq(z/\lambda\mu)}
{\thq(-1/\lambda) \thq(-1/\mu) \thq(\lambda/z) \thq(z/\mu)}
\sum_{n \in \Z} \dfrac{(-1)^{n} q^{-n^{2}}}{(q^{-1};q^{-1})_{n}} z^{n}.
$$
We shall not attempt to prove this, but a similar formula is checked 
in the next case. \\

Taking $a := 0, b := q^{-1}$ yields:
$$
\hat{f} = \sum_{n \geq 0} q^{n(n+1)/2} z^{n} = \tsh(q z),
$$
which is solution of the $q$-Euler equation $(q z \sq - 1) f = -1$.
A solution of the associated homogeneous equation is 
$\dfrac{1}{\thq(qz)} = \dfrac{1}{\thq(1/z)}$, so that, for any two
$\lambda,\mu \not\in [1;q]$ (authorized directions of summation):
$$
S_{\lambda} \hat{f} - S_{\mu} \hat{f} = \dfrac{K(\lambda,\mu,z)}{\thq(1/z)},
$$
where $K$ is $q$-invariant in each of the three arguments. We assume
$\overline{\lambda} \neq \overline{\mu}$; then, as a function of $z$,
the numerator $K$ is elliptic with simple zeroes over $[-1;q]$ and at 
most simple poles over $[-\lambda;q]$ and $[-\mu;q]$; thus:
$$
\label{not104}
K(\lambda,\mu,z) = K'(\lambda,\mu) ~
\dfrac{\thq(1/z) \thq(z/\lambda \mu)}{\thq(\lambda/z) \thq(z\mu)},
$$
where $K'(\lambda,\mu)$ is independent of $z$ and, as a function of
$\lambda$, has at most simple poles over $[1;q]$ and $[\mu;q]$; thus:
$$
K'(\lambda,\mu) \dfrac{\thq(z/\lambda \mu)}{\thq(\lambda/z)} = 
K'' \dfrac{\thq(z/\lambda \mu) \thq(-\lambda/\mu)}
{\thq(\lambda/z) \thq(-1/\lambda)},
$$
where $K''$ is independent of $\lambda,\mu,z$. Last, we get:
$$
S_{\lambda} \hat{f} - S_{\mu} \hat{f} = C
\dfrac{\thq(-\lambda/\mu) \thq(z/\lambda\mu)}
{\thq(-1/\lambda) \thq(-1/\mu) \thq(\lambda/z) \thq(z/\mu)} \cdot
$$
We shall now see that $C = -\thq'(-1) = (q^{-1};q^{-1})_{\infty}^{3}$.
The second equality follows immediately from Jacobi Triple Product
Formula. Note that, by simple singularity analysis, one may write:
$$
S_{\lambda} \hat{f} = 
\sum_{n \in \Z} \dfrac{\alpha_{n}}{z + \lambda q^{n}} \cdot
$$
Since $S_{\lambda} \hat{f}(0) = 1$, we have $\sum \alpha_{n} q^{-n} = \lambda$.
On the other hand, from the functional equation, taking residues yields
the recurrence relation: $\alpha_{n-1}  = - \lambda \alpha_{n} q^{n-1}$, then
$\alpha_{n} = (-1/\lambda)^{n} q^{-n(n-1)/2} \alpha_{0}$ and in the end:
$$
\lambda = \alpha_{0} \sum_{n \in \Z} (-1/\lambda)^{n} q^{-n(n-1)/2} = 
\alpha_{0} \thq(-1/\lambda) \Longrightarrow 
\alpha_{0} = \dfrac{\lambda}{\thq(-1/\lambda)} \cdot
$$
On the other hand:
\begin{align*}
\alpha_{0} &= 
\lim_{z \to -\lambda} (z + \lambda) (S_{\lambda} \hat{f} - S_{\mu} \hat{f}) \\
&= 
\lim_{z \to -\lambda} (z + \lambda) C
\dfrac{\thq(-\lambda/\mu) \thq(z/\lambda\mu)}
{\thq(-1/\lambda) \thq(-1/\mu) \thq(\lambda/z) \thq(z/\mu)} \\
&=
C \dfrac{1}{\thq(-1/\lambda)} \dfrac{-\lambda}{\thq'(-1)},
\end{align*}
whence the desired conclusion.


\section{The symmetric square of the $q$-Euler equation}
\label{section:qEulersquare}
\index{$q$-Euler equation (symmetric square)}

This will be our only example with more than two slopes. Consider 
the square $\hat Y := \tsh^{2}$ of the Tshakaloff series:
\index{Tshakaloff series (square of)}
$$
\hat Y(z) = \left(\sum_{n\ge 0}q^{n(n-1)/2} z^n\right)^2.
$$
As we shall see in section \ref{section:qEulersquareanalytic}, the
series $\hat Y = \tsh^{2}$ does not support the same process of
analytic summation as $\tsh$ itself. This comes from the fact that 
the Newton polygon of $\hat Y$ has three slopes, as we shall see, while
that of $\tsh$ has two slopes. First, however, we want to give some 
recipes to tackle such examples.


\subsection{Algebraic aspects}
\label{section:qEulersquarealgebraic}

Remember that the equation $f = 1 + z \sq f$ satisfied by $\tsh$
is nothing but the cohomological equation for 
$\begin{pmatrix} 1 & -1 \\ 0 & z \end{pmatrix}$. 


\subsubsection{Newton polygon of a symmetric square}

To find the equation satisfied by a product of two functions, one uses 
the tensor product of two systems or modules; for a square, one uses 
likewise the symmetric square. \\

Let $M = (V,\Phi)$ be a $q$-difference module and
$T^{2} M := M \otimes M = (V \otimes V,\Phi \otimes \Phi)$ its tensor 
square. The linear automorphism $x \otimes y \mapsto y \otimes x$ 
commutes with $\Phi \otimes \Phi$, so that it actually defines an 
involutive $q$-difference automorphism of $M \otimes M$, and a 
splitting:
$$
\label{not105}
T^{2} M = S^{2} M \oplus \Lambda^{2} M.
$$
If $M$ has slopes $\mu_{1},\ldots,\mu_{k}$ with multiplicities
$r_{1},\ldots,r_{k}$, then $T^{2} M$ has slopes the $\mu_{i} + \mu_{j}$ , 
$1 \leq i,j \leq k$ with multiplicities the $r_{i} r_{j}$. (Of course,
if many sums $\mu_{i} + \mu_{j}$ are equal, the corresponding 
multiplicities $r_{i} r_{j}$ should be added; the same remark will
hold for the following computations.) Said otherwise, the slopes of 
$T^{2} M$ are the $2 \mu_{i}$, $1 \leq i \leq k$ with multiplicities 
the $r_{i}^{2}$; and the $\mu_{i} + \mu_{j}$ , $1 \leq i < j \leq k$ 
with multiplicities the $2 r_{i} r_{j}$. \\

The repartition of these slopes (breaking of the Newton polygon)
among the symmetric and exterior square is as follows:
\begin{itemize}
\item{$S^{2} M$ has slopes the $2 \mu_{i}$, $1 \leq i \leq k$ 
with multiplicities the $\dfrac{r_{i}^{2} + r_{i}}{2}$; and the 
$\mu_{i} + \mu_{j}$ , $1 \leq i < j \leq k$ with multiplicities 
the $r_{i} r_{j}$.}
\item{$\Lambda^{2} M$ has slopes the $2 \mu_{i}$, $1 \leq i \leq k$ 
with multiplicities the $\dfrac{r_{i}^{2} - r_{i}}{2}$; and the 
$\mu_{i} + \mu_{j}$ , $1 \leq i < j \leq k$ with multiplicities 
the $r_{i} r_{j}$.}
\end{itemize}
If there are two slopes $\mu < \nu$, with multiplicities $r,s$,
no confusion of sums $\mu_{i} + \mu_{j}$ can arise, and we find:
\begin{itemize}
\item{$T^{2} M$ has slopes $2 \mu < \mu + \nu < 2 \nu$, with
multiplicities $r^{2},2 r s,s^{2}$;}
\item{$S^{2} M$ has the same slopes, with multiplicities 
$\dfrac{r^{2}+r}{2},r s,\dfrac{s^{2}+s}{2}$;}
\item{$\Lambda^{2} M$ has the same slopes, with multiplicities 
$\dfrac{r^{2}-r}{2},r s,\dfrac{s^{2}-s}{2} \cdot$}
\end{itemize}
We now take $M_{0} = \begin{pmatrix} a & 0 \\ 0 & b z \end{pmatrix}$
and $M = \begin{pmatrix} a & u \\ 0 & b z \end{pmatrix}$, where
$a,b \in \C^{*}$ and $u \in \Ka$. The symmetric squares admit an
obvious choice of basis and corresponding matrices:
$$
N_{0} = \begin{pmatrix} 
a^{2} & 0 & 0 \\ 
0 & a b z & 0 \\ 
0 & 0 & b^{2} z^{2} 
\end{pmatrix} 
\text{~and~}
N = \begin{pmatrix} 
a^{2} & 2 a u & u^{2} \\ 
0 & a b z & u b z \\ 
0 & 0 & b^{2} z^{2} 
\end{pmatrix}.
$$
If $F = \begin{pmatrix} 1 & f \\ 0 & 1 \end{pmatrix}$ is such that
$F[M_{0}] = M$, then we have $G[N_{0}] = N$ with $G$ given by:
$$
G = S^{2} F = \begin{pmatrix} 1 & 2 f & f^{2} \\ 
0 & 1 & f \\ 0 & 0 & 1 \end{pmatrix}.
$$
Actually, if one does not cheat, when looking for 
$G = \begin{pmatrix} 1 & f_{1} & f_{2} \\ 
0 & 1 & f_{3} \\ 0 & 0 & 1 \end{pmatrix}$
such that $G[N_{0}] = N$, one has to solve the system:
\begin{align*}
a b z \sq f_{1} &= a^{2} f_{1} + 2 a u, \\
b^{2} z^{2} \sq f_{2} &= a^{2} f_{2}  + 2 a u f_{3} + u^{2}, \\ 
b^{2} z^{2} \sq f_{3} &= a b z f_{3} + u b z.
\end{align*}
Since we know from start that $b z \sq f = a f + u$, we see that
$f_{1} := 2 f$ and $f_{3} := f$ respectively solve the first and third
equation; then, we find that $f_{2} := f^{2}$ solve the second equation. \\

Using the system above, we find a second order inhomogeneous 
equation for $f_{2}$ alone as follows:
\begin{align*}
(b z \sq - a) \, \dfrac{1}{2 a u} \, (b^{2} z^{2} \sq - a^{2}) f_{2} & =
(b z \sq - a) \, f_{3} + (b z \sq - a) \, \dfrac{u^{2}}{2 a u} 
\Longrightarrow \\
\left(\dfrac{b z}{\sq(u)} \sq - \dfrac{a}{u}\right) \, 
(b^{2} z^{2} \sq - a^{2}) f_{2} & = b z \sq(u) + a u.
\end{align*}
We leave to the reader to find a simpler form, as well as the 
corresponding third order homogeneous equation. At any rate,
in the case that $u \in \C$ (Birkhoff-Guenther normal form) 
we have:
$$
(b z \sq - a) \, (b^{2} z^{2} \sq - a^{2}) f_{2} = b z + a.
$$
In the particular case $a = b = -u = 1$ of the Tshakaloff series, 
we are led to the following equation:
\begin{equation}
\label{equation:carre}
L \hat Y = 1 + z, \text{~where~} L := q^2z^3 \sq^2 - z(1+z) \sq + 1.
\end{equation}

\begin{rema}
We refer here to the example \ref{exem:geometrythreeslopes} and
specialize it to the case of the symmetric square $N$ above.
We see that, when $u$ varies, the class of $N$ in
$H^{1}(\Eq,\lambda^{(1)}) \oplus H^{1}(\Eq,\lambda^{(2)})$ has
components $2 a u L_{1}(a^{2},a b) + u b L_{1}(a b,b^{2})$ and
$u^{2} L_{2,0}(a^{2},b^{2})$: a nice parabola.
\end{rema}


\subsubsection{Algebraic summation of the square of the Tshakaloff series }

The fact that $F[A_{0}] = A \Rightarrow S^{2} F[B_{0}] = B$ is purely 
algebraic and stays true of the sum in direction $\overline{c}$, so
that one gets the following sums:
$$
S_{\overline{c}} G = S^{2} (S_{\overline{c}} F) =
\begin{pmatrix} 1 & 2 f_{\overline{c}} & f_{\overline{c}}^{2} \\ 
0 & 1 & f_{\overline{c}} \\ 0 & 0 & 1 \end{pmatrix}.
$$
Moreover, observing that 
$x \mapsto \begin{pmatrix} 1 & 2 x & x^{2} \\ 
0 & 1 & x \\ 0 & 0 & 1 \end{pmatrix}$
is a morphism from $\C$ $\C$ to $\GL_{3}(\C)$, one gets the explicit
formula for the cocycle:
$$
S_{\overline{c},\overline{d}} G = S^{2} (S_{\overline{c},\overline{d}} F) =
\begin{pmatrix} 
1 & 2 f_{\overline{c},\overline{d}} & f_{\overline{c},\overline{d}}^{2} \\ 
0 & 1 & f_{\overline{c},\overline{d}} 
\\ 0 & 0 & 1 \end{pmatrix}.
$$
The algebraic sums $f_{\overline{c}}$ have been described explicitly
in subsection \ref{subsection:qEuler} where their notation was
$S_{\overline{\lambda}} \hat{f}$, with $\overline{\lambda} = c$. 


\subsection{Analytic aspects}
\label{section:qEulersquareanalytic}


\subsubsection{The square of the Tshakaloff series is not summable 
with one level}

Consider the square $\hat Y := \tsh^{2}$ of the Tshakaloff series:
$$
\hat Y(z) = \left(\sum_{n\ge 0}q^{n(n-1)/2} z^n\right)^2.
$$
Its $q$-Borel transform $\Bq \hat Y$ (at level $1$) can be computed 
from the following simple remark: 
$$
\hat f(z) = \sum_{n\ge 0} a_n z^n \text{~and~} \hat g \in\Rf
\Longleftrightarrow
\Bq(\hat f \hat g) =
\sum_{n \ge 0} a_n q^{-n(n-1)/2} \xi^n \Bq \hat g(q^{-n} \xi).
$$
It follows that, if  
$P(\xi) = \Bq \hat Y(\xi){\prod_{n\ge 0}(1-q^{-n}\xi)}$, 
then $P$ is an entire function such that:
\begin{equation}
\label{equation:BorelCarre}
P(q^m) = (-1)^mq^{m(3m+1)/2}(q^{-1};q^{-1})_m(q^{-1};q^{-1})_\infty.
\end{equation}
From this, we see that $P$ has at infinity $q$-exponential growth
of order $\geq 3$. \\

The $q$-Borel transform of $\hat Y$ represents a meromorphic function 
in $\C$ with (simple) poles on $q^\N$ and having at infinity in  
$\C \setminus q^\N$ $q$-exponential growth of order exactly $2$. \\

\index{$q$-Borel-Laplace summable}
Thus, $\hat Y = \tsh^{2}$ is not $q$-Borel-Laplace summable as $\tsh$ 
itself is. This comes from the fact that the Newton polygon of the former 
(resp. of the latter) has three (resp. two) slopes as we have seen.
Moreover, the problem is cousin to a nonlinear problem.


\subsubsection{Multisummability}

\begin{prop} 
Let $\lambda$, $\mu\in\C^*$ and let $f\in\AA_q^{[\lambda]}$ 
and $g\in_q^{[\mu]}$.
\begin{enumerate}
\item {If $[\lambda] = [\mu]$, then
$fg \in \OO_{([\lambda],\lambda])}^{2[\lambda]}$.}
\item{If $[\lambda] \neq [\mu]$, then, generically 
$fg \notin 
\OO_{([\lambda],[\mu])}^{[\lambda]+[\mu]}\cup 
\OO_{([\mu],[\lambda])}^{[\lambda]+[\mu]}$.}
\end{enumerate}
\end{prop}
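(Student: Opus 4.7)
The plan is to translate both assertions into estimates on the analytic function $H := \theta_{[\lambda]} \theta_{[\mu]} fg = FG$, where $F := \theta_{[\lambda]} f$ and $G := \theta_{[\mu]} g$ belong respectively to $\EE_0^{[\lambda]}$ and $\EE_0^{[\mu]}$ by theorem \ref{theo:fF}. By proposition \ref{prop:deuxniveaux}, membership of $fg$ in $\OO_{(\Lambda_1,\Lambda_2)}^{\Lambda_1+\Lambda_2}$ is equivalent to $H \in \EE_{(\Lambda_1,\Lambda_2)}^{\Lambda_1+\Lambda_2}$; by definition \ref{defi:diviseurrelatif}, this means that $H$ is analytic in a punctured neighborhood of $0$ with $q$-exponential growth of order $|\Lambda_1|+|\Lambda_2|$ there, that the values $H^{(j)}(\lambda_i q^{-n})$ on the support of $\Lambda_1$ form $q$-Gevrey sequences of order $|\Lambda_2|$, and that those on the support of $\Lambda_2$ form $q$-Gevrey sequences of order $0$.

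For part (1), with $\Lambda_1 = \Lambda_2 = [\lambda]$, I would verify these conditions directly for $H = FG$. Analyticity in a punctured neighborhood of $0$ and $q$-exponential growth of order $2$ at $0$ are immediate from the corresponding order $1$ properties of $F$ and $G$ (definition \ref{defi:croissanceqexp}). The values $H(\lambda q^{-n}) = F(\lambda q^{-n}) G(\lambda q^{-n})$ form a geometric sequence, as a product of two such sequences. The only technical step is to show that $H'(\lambda q^{-n})$ is $q$-Gevrey of order $1$: Cauchy's formula on a circle of radius $c|\lambda q^{-n}|$ around $\lambda q^{-n}$ (with $c$ small enough that the circle lies inside a stable neighborhood on which the bound $|F(z)| \leq C \, e(\kappa z)$ holds), combined with $e(\kappa z) \asymp |q|^{n^2/2}$ on that circle, yields $|F'(\lambda q^{-n})| \leq C' A^n |q|^{n^2/2}$; the same holds for $G'$. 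Leibniz' formula, together with the geometric bounds on $F(\lambda q^{-n})$ and $G(\lambda q^{-n})$, then gives the required estimate on $H'(\lambda q^{-n})$.

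For part (2), with $[\lambda] \neq [\mu]$, the strategy is to exhibit an obstruction to the ``order $0$ along $[\mu]$'' condition. Since $\mu q^{-n} \notin [\lambda]$, the function $F$ is regular there, and the functional equation $\theta(q^k z; q) = q^{k(k+1)/2} z^k \theta(z; q)$ applied with $k = -n$ and $z := -\mu/\lambda \in \C^* \setminus q^{\Z}$ gives $|\theta_{[\lambda]}(\mu q^{-n})| \asymp |q|^{n(n-1)/2}$; moreover $f(\mu q^{-n}) \to a_0 := \hat f(0)$. On the other hand, $G$ is also regular at $\mu q^{-n}$, with value $\theta_{[\mu]}'(\mu q^{-n}) \beta_n$, where $(\beta_n)$ is the polar sequence of $g$ on $[\mu]$ provided by theorem \ref{theo:fdecomposition}; lemma \ref{lemm:theta'} yields $|\theta_{[\mu]}'(\mu q^{-n})| \asymp |q|^{n(n+1)/2}$, while $(\beta_n)$ is $q$-Gevrey of level $-1$. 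Combining these estimates,
\[
|H(\mu q^{-n})| \asymp |a_0| \, |\beta_n| \, |q|^{n^2}
\]
up to a geometric factor, which is of order $|q|^{n^2/2}$ (\ie\ $q$-Gevrey order $1$, not $0$) whenever $a_0 \neq 0$ and the $\beta_n$ genuinely saturate level $-1$. An identical computation at $\lambda q^{-n}$, with $f$ and $g$ interchanged, rules out $\OO_{([\mu],[\lambda])}^{[\lambda]+[\mu]}$.

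The main difficulty is to make the word ``generically'' precise: it should be read as the open conditions $\hat f(0) \neq 0$, $\hat g(0) \neq 0$, together with the requirement that the polar sequences $(\alpha_n)$ of $f$ on $[\lambda]$ and $(\beta_n)$ of $g$ on $[\mu]$ be of exact $q$-Gevrey level $-1$ (equivalently, that $\hat f$ and $\hat g$ be genuinely of $q$-Gevrey level $1$, hence divergent). Under such hypotheses the one-sided bound above is sharp, so the obstruction to order $0$ growth of $H(\mu q^{-n})$ becomes unconditional.
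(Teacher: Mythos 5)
Your proof is correct and follows essentially the same route as the paper: factor $f = F/\theta_{[\lambda]}$, $g = G/\theta_{[\mu]}$ via theorem \ref{theo:fF}, translate membership in $\OO$ into membership of $FG$ in $\EE_{(\Lambda_1,\Lambda_2)}^{\Lambda}$ via proposition \ref{prop:deuxniveaux}, and then for (1) bound $(FG)'(\lambda q^{-n})$ by Leibniz, while for (2) observe that $F$ restricted to $[\mu]$ (and $G$ restricted to $[\lambda]$) has $q$-Gevrey order one rather than zero. The paper's proof is terse at both steps; you have filled in the Cauchy-formula estimate for $F'$, $G'$ and made the word ``generically'' precise in terms of $\hat f(0), \hat g(0) \neq 0$ and exact $q$-Gevrey level of the polar sequences, all of which the paper leaves implicit.
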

\begin{proof}
Write $f = F/\theta_\lambda$, $g=G/\theta_\mu$; from theorem \ref{theo:fF}, 
one gets $F \in \EE_0^{[\lambda]}$ and $G\in \EE_0^{[\mu]}$. (The functions
$\theta_\lambda$, $\theta_\mu$ were introduced just before the statement
of theorem \ref{theo:fF}.) \\
If $[\lambda] = [\mu]$, then $FG$ represents an analytic function near $0$
in $\C^*$, with $q$-Gevrey growth of null order on the $q$-spiral 
$[\lambda]$ at $0$ and $q$-Gevrey growth of order $2$ globally. To obtain
the $q$-Gevrey growth order of its values on $2[\lambda]/[\lambda]$ at $0$, 
write:
$$
(FG)'(\lambda q^{-n}) = F'(\lambda q^{-n}) G(\lambda q^{-n}) +
F(\lambda q^{-n}) G'(\lambda q^{-n}),
$$
which will provide the $q$-Gevrey order. Using theorem \ref{theo:mniveaux}, 
we get the first statement. \\
For the second statement, just note that, generally, $FG$ has null
$q$-Gevrey order at $0$ neither on $[\lambda]$, nor on $[\mu]$; indeed,
the sequences $(F(\mu q^{-n}))$ and $(G(\lambda q^{-n}))$ have $q$-Gevrey
order one.
\end{proof}

Let $\lambda$, $\mu \notin [1] = q^\Z$. Write  $f_\lambda$, resp. $f_\mu$,
the solutions of the $q$-Euler equation satisfied by $\tsh$ in 
$\AA_q^{[\lambda]}$, resp. in $\AA_q^{[\mu]}$. If $[\lambda] = [\mu]$, 
from the proposition above, one has:
$$
(f_\lambda)^2 \in \OO_{([\lambda],\lambda])}^{2[\lambda]},
$$
and this is the solution provided by theorem \ref{theo:sommabilite} for
equation \eqref{equation:carre} with $\Lambda=2[\lambda]$ and 
$\Lambda_1=\Lambda_2=[\lambda]$. However, if $[\lambda] \neq [\mu]$, 
then $f_\lambda f_\mu$ is \emph{not} the solution provided by that
theorem with $\Lambda = [\lambda]+[\mu]$ and 
$\{\Lambda_1,\Lambda_2\} =\{[\lambda],[\mu]\}$.


\subsubsection{Possibility of a multisummation process}
\index{multisummation process}

No explicit algorithm is presently known to yield the solution of 
\eqref{equation:carre} in the spirit of theorem \ref{theo:sommabilite}
with $\Lambda=[\lambda]+[\mu]$. A multisummation algorithm does exist 
in a very different setting, due to the third author. The following 
result shows its similarity to classical Borel-Laplace summation.
\index{classical Borel-Laplace summation}

\begin{theo}
\label{thm:12sommable}
In \eqref{equation:carre}, we have $L = (z \sq - 1)(z^2 \sq - 1)$ and
the series $\hat Y$ is $(1,1/2)$-summable by the following process:
\begin{eqnarray*}
\hat f = \sum_{n\ge 0} a_n z^n \quad
&
\overset{\Bq}{\longrightarrow}
&
\varphi := \sum_{n\ge 0} a_n q^{-n(n-1)/2} \xi^n \\
&
\overset{{\mathcal A}^{*}_{q;1,1/2}}{\longrightarrow}
&
f^*_{1}
\overset{{\mathcal L}^{*}_{q;1/2}}{\longrightarrow}
f^*_{(1,1/2)}.
\end{eqnarray*}
\end{theo}

For a basic introduction to this method and explanation of the notations
above, see \cite{DRSZ}, which contains further references to this work.


\section{From the Mock Theta functions to the
$q$-Euler equation}
\label{section:MockTheta}
\index{Mock Theta functions}

Our source here is the famous 1935 paper 
``The final problem: an account of the Mock Theta Functions''
by G. N. Watson, as reproduced, for instance, in \cite{Rama}.
On page 330 of \emph{loc. cit.}, seven ``mock theta functions
of order three'' are considered. The first four are called 
$f,\phi,\psi,\chi$ (after Ramanujan who discovered them);
the three last are called $\omega,\nu,\rho$ (after Watson who
added\footnote{At least, so says Watson in \emph{loc. cit.}.
However, this is questioned by G.E. Andrews and B.C. Berndt
in their introduction to Ramanujan's Lost Notebook. The point
is also discussed in detail in pp. 171-172 of the article by 
Andrews in \cite{Rama}.} them to the list). In the notation of 
Ramanujan and Watson, the unique variable of these analytic 
functions is written $q$ (this tradition goes back to Jacobi) 
and it is assumed there that $0 < \lmod q \rmod < 1$. \\

In \cite{CZMTF}, a new variable $x$ is added (this tradition goes 
back to Euler) and one puts:
$$
\label{not106}
s(\alpha,\beta;q,x) := \sum_{n \geq 0} a_{n} x^{n}, 
$$
where:
\begin{align*}
a_{n} & := \dfrac{q^{n^{2}}}
{\left(\frac{q}{\alpha},\frac{q}{\beta};q\right)_{n}}
\left(\frac{1}{\alpha \beta}\right)^{n} \\
& =
\dfrac{(\alpha,\beta;q)_{\infty}}{(\alpha q^{-n},\beta q^{-n};q)_{\infty}} q^{-n} \\
& =
\dfrac{q^{n^{2}}}
{(\alpha - q) \cdots (\alpha - q^{n})(\beta - q) \cdots (\beta - q^{n})} \cdot
\end{align*}
Note for further use that the second formula makes sense for all $n \in \Z$
and allows one to define another series:
$$
F(\alpha,\beta;q,x) := \sum_{n \in \Z} a_{n} x^{n}, 
$$
\label{not107}
so that $F(\alpha,\beta;q,x) = s(\alpha,\beta;q,x) + G(\alpha,\beta;q,x)$,
where:
$$
G(\alpha,\beta;q,x) = \sum_{n < 0} a_{n} x^{n} = 
\sum_{n \geq 0} (\alpha,\beta;q)_{n} (q/x)^{n}.
$$
The formula giving $s(\alpha,\beta;q,x)$ subsumes all seven mock 
theta functions, which can be respectively recovered by setting 
$(\alpha,\beta,x)$ to be one of the following: $(-1,-1,1)$, 
$(\ii,-\ii,1)$, $(\sqrt{q},-\sqrt{q},1)$, $(j,j^{2},-q)$, 
$(1/q,1/q,1)$, $(\ii/\sqrt{q},-\ii/\sqrt{q},1)$ and $(-j/q,-j^{2}/q,1)$. 
(Note that in all cases, among other multiplicative relations, $\alpha$ 
and $\beta$ map to torsion points of $\Eq$.) \\

In the following, which is intended to motivate the study of the Stokes
phenomenon, we follow recent work by the third author \cite{CZMTF}, 
skipping most of the proofs. In \ref{subsection:functequafors} and 
\ref{subsection:backtoMockTheta}, we use the conventions of \emph{loc. 
cit.}. In particular, it is assumed (exceptionally) that $\lmod q \rmod < 1$
and we use the theta function:
$$
\theta(x;q) := \sum_{n \in \Z} q^{n(n-1)/2} x^{n} = (q,-x,-q/x;q)_{\infty}.
$$ 
(See \eqref{eqn:tripleproduit} in the general notations of section 
\ref{section:generalnotations} and equation \eqref{eqn:deuxtriplesproduits}
page \pageref{eqn:deuxtriplesproduits}.) In \ref{subsection:backtoqEuler}, 
we shall return to the general convention $\lmod q \rmod > 1$ of 
the present paper, and use the function $\thq$


\subsection{Functional equation for $s(\alpha,\beta;q,z)$}
\label{subsection:functequafors}

From the recurrence relation 
$(\alpha - q^{n+1}) (\beta - q^{n+1}) a_{n+1} = q^{2n+1} a_{n}$, one deduces
that $s$, as a function of $x$, is solution of the second order non
homogeneous $q$-difference equation:
\begin{equation}
\label{eqn:NHCBHG}
\bigl((\sq - \alpha)(\sq - \beta) - q x \sq^{2}\bigr) s = 
(1 - \alpha) (1 - \beta).
\end{equation}
The recurrence relation actually remains valid for all $n \in \Z$,
which implies that $F$ is solution of the corresponding homogeneous
equation: 
\begin{equation}
\label{eqn:HCBHG}
\bigl((\sq - \alpha)(\sq - \beta) - q x \sq^{2}\bigr) F = 0.
\end{equation}
This, in turn, entails that $-G = s - F$ is a solution defined at infinity
of \eqref{eqn:NHCBHG}. \\

We shall assume now that $\alpha,\beta \neq 0$ and that 
$\beta/\alpha \not\in q^{\Z}$. One checks easily that the equation
\eqref{eqn:HCBHG} is fuchsian at $0$ with exponents $\alpha,\beta$
(see for instance \cite{JSAIF}). Likewise, taking in account the general
conventions of this paper (\ie\ using the dilatation factor $q^{-1}$ in 
order to have a modulus $> 1$), we see that \eqref{eqn:HCBHG} is pure 
isoclinic at infinity, with slope $-1/2$; this will explain the 
appearance of $\theta(-,q^{2})$ in the following formulas. We define:
$$
M(\alpha,\beta;q,x) := 
\left(\frac{q}{\alpha},\frac{q}{\beta};q\right)_{\infty} F(\alpha,\beta;q,x).
$$
This is another solution of \eqref{eqn:HCBHG}, with more symetries.
It admits the following expansions:
\label{not108}
\begin{align*}
M(\alpha,\beta;q,x) &= 
\theta(q \alpha \beta x;q^{2}) U_{\alpha/\beta}(x) - 
\dfrac{q}{\alpha} ~ \theta(\alpha \beta x;q^{2}) V_{\alpha/\beta}(x) \\
& =
\theta(q x/\alpha \beta;q^{2}) U_{\alpha/\beta}(1/x) - 
\dfrac{q}{\alpha} ~ \theta(q^{2} x/\alpha \beta;q^{2}) V_{\alpha/\beta}(1/x),
\end{align*}
with the following definitions:
\label{not109}
\begin{align*}
U_{\lambda}(x) & := 
\sum_{m \geq 0} q^{m^{2}} S_{2 m}(-q^{-2 m} \lambda;q) (q x/\lambda)^{m}, \\
V_{\lambda}(x) & := 
\sum_{m \geq 0} q^{m(m+1)} S_{2 m+1}(-q^{-2 m-1} \lambda;q) (q x/\lambda)^{m}, \\
S_{n}(x;q) & := 
\sum_{k=0}^{n} \dfrac{q^{k^{2}}}{(q;q)_{k} (q;q)_{n-k}} (-x)^{k}.
\end{align*}
\index{Stieltjes-Wiegert polynomials}
(The $S_{n}$ are the \emph{Stieltjes-Wiegert polynomials}.) \\

\begin{rema}
The parameter $\lambda := \alpha/\beta$ is linked to monodromy.
Indeed, the local Galois group of \eqref{eqn:HCBHG} at $0$
\index{local Galois group}
is the set of matrices 
$\begin{pmatrix} \gamma(\alpha) & 0 \\ 0 & \gamma(\beta) \end{pmatrix}$,
where $\gamma$ runs through the group endomorphisms of $\C^{*}$ that
send $q$ to $1$; and the local monodromy group is the rank $2$ free
\index{local monodromy group}
abelian subgroup with generators corresponding to two particular choices
of $\gamma$ described in \cite{JSGAL}.
\end{rema}


\subsection{Back to the the Mock Theta function}
\label{subsection:backtoMockTheta}

We can for instance study $\phi,\psi,\nu,\rho$ by setting $x = 1$ in 
$s(\alpha,\beta;q,x)$. (The function $\chi$ involves $x = -q$ and
$f,\omega$ will not comply the condition $\beta/\alpha \not\in q^{\Z}$:
in \cite{CZMTF}, their study is distinct, though similar.) Up to the
knowledge of standard $q$-functions, one is reduced to the study of:
$$
\label{not110}
U(\lambda) := U_{\lambda}(1) \text{~and~} V(\lambda) := V_{\lambda}(1).
$$
These are solutions of the $q$-difference equations:
\begin{align*}
U(q \lambda) - \lambda U(\lambda/q) & =
(1 - \lambda) \dfrac{\theta(q \lambda;q^{2})}{(q,q;q)_{\infty}}, \\
V(q \lambda) - q \lambda V(\lambda/q) & =
(1 - \lambda) \dfrac{\theta(\lambda/q;q^{2})}{(q,q;q)_{\infty}} \cdot
\end{align*}
Upon setting:
$$
U(\lambda) =: \dfrac{\theta(\lambda;q^{2})}{(q,q;q)_{\infty}} Y(\lambda)
\text{~and~} 
V(\lambda) =: \dfrac{\theta(\lambda/q;q^{2})}{(q,q;q)_{\infty}} Z(\lambda),
$$
we find that both $Y$ and $Z$ are solutions of the $q$-difference equation:
$$
X(q \lambda) - \dfrac{\lambda^{2}}{q} X(\lambda/q) = 1 - \lambda.
$$


\subsection{Back to the $q$-Euler equation}
\label{subsection:backtoqEuler}

To fit this equation with the convention of this paper, we shall
put $z := \lambda$, $f(z) := X(q \lambda)$ and take $q^{-2}$ as
the new dilatation coefficient, that we shall denote by $q$, so
that $\lmod q \rmod > 1$ indeed. Our equation becomes:
$$
\sqrt{q} z^{2} \sq f - f = z - 1.
$$
(We have implicitly chosen a square root $\sqrt{q}$.) There are four
pathes of attack. The most poweful involves the summation techniques
of chapter \ref{chapter:asymptotictheory} and it is the one used in
\cite{CZMTF}. We show the other three as an easy application exercise.

\subsubsection{$q$-Borel transformation}

Following \ref{subsection:extensionsofqdifferencemodules}, we put
$Z := z^{2}$, $Q := q^{2}$ and $f(z) = g(Z) + z h(Z)$, so that:
$$
\sqrt{q} Z \sigma_{Q} g - g = -1 \text{~and~}
q \sqrt{q} Z \sigma_{Q} h - h = 1.
$$
The end of the computation, \ie\ that of the invariants
$(\mathcal{B}_{Q,1})(-1)(\sqrt{q})$ and 
$(\mathcal{B}_{Q,1})(1)(q \sqrt{q})$, is left to the reader.

\subsubsection{Birkhoff-Guenther normal form}

According to section \ref{section:explicitdescription}, we see that
our equation is already in Birkhoff-Guenther normal form. Actually,
it is the equation for $f$ such that 
$\begin{pmatrix} 1 & f \\ 0 & 1 \end{pmatrix}$ is an isomorphism from
$\begin{pmatrix} 1 & 0 \\ 0 & \sqrt{q} z^{2} \end{pmatrix}$ to
$\begin{pmatrix} 1 & z - 1 \\ 0 & \sqrt{q} z^{2} \end{pmatrix}$.

\subsubsection{Privileged cocycles}

There is here an obvious isomorphism of $\Lambda_I(M_0)$ with
the vector bundle $F_{1/\sqrt{q} z^{2}}$ 
(\cf\ \ref{subsection:extensionsofqdifferencemodules}).
(More generally, if 
$A_{0} = \begin{pmatrix} a & 0 \\ 0 & b z^{\delta} \end{pmatrix}$,
then $\Lambda_I(M_0) \simeq F_{a/b z^{\delta}}$.) The privileged cocycles
of \ref{section:privilegedcocycles} are best obtained by the
elementary approach of \cite{JSStokes} as follows. We look for a
solution $f_{\overline{c}} = \dfrac{g}{\theta_{q,c}^{2}}$ with $g$
holomorphic over $\C^{*}$. The corresponding equation is
$\sqrt{q} c^{2} \sq g - g = (z - 1)\theta_{q,c}^{2}$. Writing
$\thq^{2} =: \sum \tau_{n} z^{n}$, we see that:
$$
f_{\overline{c}} = \dfrac{1}{\theta_{q,c}^{2}} \sum_{n \in\Z} 
\dfrac{(\tau_{n-1} c - \tau_{n})c^{-n}}{\sqrt{q} c^{2} q^{n} - 1} z^{n}.
$$
This is the only solution with poles only on $[-c;q]$ and at most
double. It makes sense only for $\sqrt{q} c^{2} \not\in q^{\Z}$,
which prohibits four values $\overline{c} \in \Eq$. The components 
of the Stokes cocycle are the $(f_{\overline{c}} - f_{\overline{d}})$.


\section{From class numbers of quadratic forms to the
$q$-Euler equation}
\label{section:QuadForms}
\index{$q$-Euler equation}

This topic is related to a paper of Mordell \cite{Mordell} and to
recent work \cite{CZMORD} by the third author. We follow their
notations, except for the use of the letter $q$, and also for
the dependency on the modular parameter $\omega$, which we do not
always make explicit. \\

We shall have here use for the classical theta functions, defined
for $x \in \C$ and $\Im(\omega) > 0$:
\label{not111}
\begin{align*}
\theta_{0,1}(x) = \theta_{0,1}(x,\omega) & :=
\sum_{n \in \Z} (-1)^{n} e^{\ii \pi (n^{2} \omega + 2 n x)}, \\
\theta_{1,1}(x) = \theta_{1,1}(x,\omega) & :=
\dfrac{1}{\ii} \sum_{m~odd} (-1)^{(m-1)/2} e^{\ii \pi (m^{2} \omega/4 + m x)}, \\
& =
e^{\ii \pi (\omega/4+x-1/2)} \sum_{n \in \Z} (-1)^{n} e^{\ii \pi (n(n+1) \omega + 2 n x)}.
\end{align*}
We shall set $q := e^{- 2 \ii \pi \omega}$ (so that indeed $\lmod q \rmod > 1$)
and $z := e^{2 \ii \pi x}$. The above theta functions are related to $\thq$
through the formulas:
\begin{align*}
\theta_{0,1}(x,\omega) & = \thq(- \sqrt{q} z), \\
\theta_{1,1}(x,\omega) & = \dfrac{\sqrt{z}}{\ii q^{1/8}} \thq(-z).
\end{align*}
(Thus, the latter is multivalued as a function of $z$.)


\subsection{The generating series for the class numbers}
\label{subsection:generatingseries}
\index{class numbers (generating series)}

Consider the quadratic forms $a x^{2} + 2 h x y + b y^{2}$, with
$a,b,h \in \Z$, $a,b$ not both even (``uneven forms''), and
$D := a b - h^{2} > 0$, up to the usual equivalence. For any
$D \in \N^{*}$, write $F(D)$ the (finite) number of classes
of such forms.
\label{not112}

\begin{theo}[Mordell, 1916)]
\label{theo:Mordell1916}
Let $f_{0,1}(x) = f_{0,1}(x,\omega)$ be the unique entire function solution
of the system:
$$
\begin{cases}
f_{0,1}(x+1) = f_{0,1}(x), \\
f_{0,1}(x+\omega) + f_{0,1}(x) = \theta_{0,1}(x).
\end{cases}
$$
Then, for $\Im(\omega) > 0$:
$$
\sum_{n \in \N^{*}} F(n) e^{\ii \pi n \omega} =
\dfrac{\ii}{4 \pi} ~ \dfrac{f_{0,1}'(0)}{\theta_{0,1}(0)} \cdot
$$
\end{theo}

If one now defines $G_{0,1}(z) := \dfrac{f_{0,1}(x)}{\theta_{0,1}(x)}$
(which does make sense, since the right hand side is $1$-periodic),
one falls upon the familiar $q$-difference equation:
$$
(\sqrt{q} z \sq - 1) G_{0,1} = \sqrt{q} z.
$$
We leave it as an exercise for the reader to characterize $G_{0,1}$
as the unique solution complying some polar conditions.


\subsection{Modular relations}
\label{section:modularity}

In order to obtain modular and asymptotic properties for the generating
series of theorem \ref{theo:Mordell1916}, Mordell generalized his results
in 1933. We extract the part illustrating our point. Mordell sets:
$$
f(x) = f(x,\omega) := 
\dfrac{1}{\ii} \sum_{m~odd} 
\dfrac{(-1)^{(m-1)/2} e^{\ii \pi (m^{2} \omega/4 + m x)}}{1 + e^{\ii \pi \omega m}} \cdot
$$
This the unique entire function solution of the system:
$$
\begin{cases}
f(x+1) + f(x) = 0, \\
f(x+\omega) + f(x) = \theta_{1,1}(x).
\end{cases}
$$
The interest for Mordell is the (quasi-)modular relation:
$$
f(x,\omega) - \dfrac{\ii}{\omega} f(x/\omega,-1/\omega) =
\dfrac{1}{\ii} \theta_{1,1}(x,\omega) 
\int_{-\infty}^{+\infty} \dfrac{e^{\ii \pi \omega t^{2} - 2 \pi t x}}
{e^{2 \pi t} - 1} ~ dt.
$$
(The path of integration is $\R$ except that one avoids $0$ by below.)
The interest for us is that one can put 
$G(z) := \dfrac{f(x)}{\theta_{1,1}(x)}$
(the right hand side is $1$-periodic), and get the \emph{same} equation
as before:
$$
(\sqrt{q} z \sq - 1) G = \sqrt{q} z.
$$
This is used in \cite{CZMORD} to generalize Mordell results: the 
fundamental idea is to compare two summations of the solutions, one
along the lines of the present paper, the other along different
lines previously developped by C. Zhang.


\subsection{Related other examples}
\label{section:relatedexamples}

Mordell also mentions that the following formula of Hardy and Ramanujan:
$$
\dfrac{1}{\sqrt{- \ii \omega}} 
\int_{-\infty}^{+\infty} \dfrac{e^{-\ii \pi (t - \ii x)^{2}/\omega}} {\cosh \pi t} ~ dt
=
\int_{-\infty}^{+\infty} \dfrac{e^{\ii \pi \omega t^{2} - 2 \pi t x}}{\cosh \pi t} ~ dt
$$
can be proved along similar lines, by noting that both sides are entire
solutions of the system:
$$
\begin{cases}
\Phi(x-1) + \Phi(x) =  
\dfrac{2 e^{\ii \pi (x-1/2)^{2}/\omega}}{\sqrt{- \ii \omega}}, \\
\Phi(x+\omega) + e^{\ii \pi(2 x + \omega)} \Phi(x) = 
2 e^{\ii \pi (3\omega/4 + x)}
\end{cases}
$$
and that the latter admits only one such solution.



\appendix



\chapter{Classification of isograded filtered difference modules}
\label{chapter:ISOGRAD}

In this appendix, we tackle a general classification problem, actually
an algebraic version of the isoformal analytic classification problem
of section \ref{section:analyticisoformalclassification}. We first 
describe in section \ref{section:generalsetting} an abstract version 
of the problem, which we discuss but not solve in full generality. Then, 
in section \ref{section:ALDIFF}, we describe the category of difference 
modules over difference rings, in which we work in the following sections. 
The solution in this setting is given in section \ref{section:modulispace}
by theorem \ref{theo:themodulispaceabstract} and corollary
\ref{coro:themodulispaceabstract}. Since it involves some homological
algebra, we have taken great care in section \ref{section:extdiffmod}
to justify our explicit description of extension spaces along with their 
linear structure\footnote{This is all the more necessary since the work
\cite{RS3} by the first two authors strongly relies on this explicit
description.}. Sections \ref{section:extensionsofdifferencemodules} and
\ref{section:cohomologicalequation} provide some technical tools for the
case of difference fields.


\section{General setting of the problem}
\label{section:generalsetting}

Let $C$ a commutative ring and $\mathcal{C}$ an abelian $C$-linear
category. We fix a finitely graded object:
$$
P = P_{1} \oplus \cdots \oplus P_{k},
$$
and intend to classify pairs 
\label{not113}
$(\underline{M},\underline{u})$ 
made up of a finitely filtered object:
\index{finitely filtered object}
$$
\underline{M} = 
(0 = M_{0} \subset M_{1} \subset \cdots \subset M_{k} = M),
$$
and of an isomorphism from $\gr M$ to $P$:
$$
\underline{u} = (u_{i}: M_{i}/M_{i-1} \simeq P_{i})_{1 \leq i \leq k}.
$$
As easily checked, it amounts to the same as giving $k$ exact sequences:
$$
0 \rightarrow M_{i-1} \overset{w_{i}}{\rightarrow} M_{i} 
\overset{v_{i}}{\rightarrow} P_{i} \rightarrow 0.
$$
The pairs $(\underline{M},\underline{u})$ and 
$(\underline{M'},\underline{u'})$ are said to be equivalent if (with
obvious notations) there exists a morphism from $M$ to $M'$ which
is compatible with the filtrations and with the structural isomorphisms,
that is, making the following diagram commutative:
$$
\xymatrix{
\gr M \ar@<0ex>[rd]^{u} \ar@<0ex>[rr]^{\gr f}  &  
& \gr M'  \ar@<0ex>[ld]_{u'} \\
& P &  \\
}
$$
In the description by exact sequences, the equivalence relation
translates as follows: there should exist morphisms 
$f_{i}: M_{i} \rightarrow M'_{i}$ making the following diagrams
commutative:
$$
\begin{CD}
0 @>>> M_{i-1}   @>{w_{i}}>> M_{i}     @>{v_{i}}>>      P_{i}  @>>> 0     \\
&  &   @VV{f_{i-1}}V      @VV{f_{i}}V      @VV{\Id_{P_{i}}}V       \\
0 @>>> M'_{i-1}   @>{w'_{i}}>> M'_{i}     @>{v'_{i}}>>    P_{i} @>>> 0
\end{CD}
$$
Note that such a morphism (if it exists) is automatically strict
and an isomorphism. \\

\label{not114}
We write $\F(P_{1},\ldots,P_{k})$ the set\footnote{We admit that
$\F(P_{1},\ldots,P_{k})$ is indeed a set; from the devissage arguments 
that follow, (see \ref{subsection:devissage}), it easily seen to be 
true if all the $\Ext$ spaces are sets, \emph{e.g.} in the category of 
left modules over a ring.} of equivalence classes of pairs 
$(\underline{M},\underline{u})$.


\subsection{Small values of $k$}
\label{subsection:smallvalues}

\label{not115}
For $k = 1$, the set $\F(P_{1})$ is a singleton. For $k = 2$, the set
$\F(P_{1},P_{2})$ has a natural identification with the set 
$\Ext(P_{2},P_{1})$ of classes of extensions of $P_{2}$ by $P_{1}$,
which carries a structure of $C$-module\footnote{Note however that
the latter comes by identifying $\Ext$ with $\Ext^{1}$, and that there
are two opposite such identifications, see for instance \cite{cartEil},
exercice 1, p. 308. We shall systematically use the conventions of
\cite{BAH} and, from now on, make no difference between $\Ext$ and 
$\Ext^{1}$.}. The identification generalizes the one that was described 
in proposition \ref {prop:classesaunniveau=Ext} and can be obtained as 
follows. To give a filtered module 
$\underline{M} = (0 = M_{0} \subset M_{1} \subset M_{2} = M)$ endowed with 
an isomorphism from $\gr M$ to $P_{1} \oplus P_{2}$ amounts to give an 
isomorphism from $M_{1}$ to $P_{1}$ and an isomorphism from $M/M_{1}$ to 
$P_{2}$, that is, a monomorphism $i$ from $P_{1}$ to $M$ and an epimorphism
$p$ from $M$ to $P_{2}$ with kernel $i(P_{1})$, that is, an exact sequence:
$$
0 \rightarrow P_{1} \overset{i}{\rightarrow} M 
\overset{p}{\rightarrow} P_{2} \rightarrow 0,
$$
that is, an extension of $P_{2}$ by $P_{1}$. One then checks easily that
our equivalence relation thereby corresponds with the usual isomorphism
of extensions. \\

\label{not116}
When $k = 3$, the description of $\F(P_{1},P_{2},P_{3})$ amounts to the
classification of \emph{blended extensions} (``extensions panach\'ees'').
\index{blended extensions}
These were introduced by Grothendieck in \cite{Grothendieck}. We refer
to the studies \cite{DB1,DB2} by Daniel Bertrand\footnote{Also see the
revised version ``Extensions panach\'ees autoduales'', to be found on
\verb?http://www.math.jussieu.fr/~bertrand/Recherche/recherche.html?.}, 
whose conventions we use. Start from a representative of a class 
in $\F(P_{1},P_{2},P_{3})$, in the form of three exact sequences:
$$
0 \rightarrow M_{0} \overset{w_{1}}{\rightarrow} M_{1} 
\overset{v_{1}}{\rightarrow} P_{1} \rightarrow 0, \quad
0 \rightarrow M_{1} \overset{w_{2}}{\rightarrow} M_{2} 
\overset{v_{2}}{\rightarrow} P_{2} \rightarrow 0, \quad
0 \rightarrow M_{2} \overset{w_{3}}{\rightarrow} M_{3} 
\overset{v_{3}}{\rightarrow} P_{3} \rightarrow 0.
$$
Also recall that $M_{0} = 0$ and $M_{3} = M$. These give rise to two
further exact sequences; first:
$$
\xymatrix{
0 \ar@<0ex>[r] & P_{1} \ar@<0ex>[r]^{w_{2} \circ v_{1}^{-1}} &
M_{2} \ar@<0ex>[r]^{v_{2}} & P_{2} \ar@<0ex>[r] & 0}
$$
Indeed, $v_{1}: M_{1} \rightarrow P_{1}$ is an isomorphism, so that
$v_{1}^{-1}: P_{1} \rightarrow M_{1}$ is well defined; and the exactness 
is easy to check. To describe the second exact sequence, note that
$v_{2}: M_{2} \rightarrow P_{2}$ induces an isomorphism
$\overline{v_{2}}: M_{2}/w_{2}(M_{1}) \rightarrow P_{2}$, whence
$(\overline{v_{2}})^{-1}: P_{2} \rightarrow M_{2}/w_{2}(M_{1})$; then 
$w_{3}: M_{2} \rightarrow M_{3} = M$ induces a morphism
$\overline{w_{3}}: M_{2}/w_{2}(M_{1}) \rightarrow M'$, where
we put $M' := M/w_{3} \circ w_{2}(M_{1})$, and, by composition, a morphism 
$\overline{w_{3}} \circ (\overline{v_{2}})^{-1}: P_{2} \rightarrow M'$; last, 
$v_{3}: M_{3} \rightarrow P_{3}$ is trivial on $w_{3}(M_{2})$, therefore
on $w_{3} \circ w_{2}(M_{1})$, so that it induces a morphism
$\overline{v_{3}}: M' \rightarrow P_{3}$. Now we get the sequence:
$$
\xymatrix{
0 \ar@<0ex>[r] & P_{2} \ar@<0ex>[rr]^{\overline{w_{3}} \circ (\overline{v_{2}})^{-1}} & &
M' \ar@<0ex>[r]^{\overline{v_{3}}} & P_{3} \ar@<0ex>[r] & 0}
$$
We leave for the reader to verify the exactness of this sequence. 
These two sequences can be \emph{blended} (``panach\'ees'') to give
the following commutative diagram of exact sequences:
$$
\xymatrix{
               & &                 
               & 0 \ar@<0ex>[d]        
               & 0 \ar@<0ex>[d] & \\
0 \ar@<0ex>[r] & P_{1} \ar@<0ex>[rr]^{w_{2} \circ v_{1}^{-1}} \ar@{=}[d] & &
                 M_{2} \ar@<0ex>[r]^{v_{2}} \ar@<0ex>[d]^{w_{3}} & 
P_{2} \ar@<0ex>[r] \ar@<0ex>[d]^{\overline{w_{3}} \circ (\overline{v_{2}})^{-1}} & 0 \\
0 \ar@<0ex>[r] & P_{1} \ar@<0ex>[rr]^{w_{3} \circ w_{2} \circ v_{1}^{-1}} & &
                 M \ar@<0ex>[r]^{can} \ar@<0ex>[d]^{v_{3}} & 
                 M' \ar@<0ex>[r] \ar@<0ex>[d]^{\overline{v_{3}}} & 0 \\
               & &
               & P_{3} \ar@{=}[r] \ar@<0ex>[d] 
               & P_{3} \ar@<0ex>[d] \\
               & &                 
               & 0        
               & 0 &
}
$$
(We call $can$ the canonical projection from $M$ to $M'$.) Conversely, 
starting from such a diagram of blended extensions, we recover a 
representative $(\underline{M},\underline{u})$ of a class in 
$\F(P_{1},P_{2},P_{3})$ as follows. The module $M$ is the one sitting 
at the center of the diagram. The submodule $M_1$ is the image of $P_1$ 
by the monomorphism $w_{3} \circ w_{2} \circ v_{1}^{-1}$. Then, $M_2$ is 
the preimage by $can$ of the submodule of $M'$ defined as the image of 
$P_2$ by $\overline{w_{3}} \circ (\overline{v_{2}})^{-1}$. Of course,
$M_0 := \{0\}$ and $M_3 := M$. The structural morphisms $u_i$, $i = 1,2,3$, 
are easy to define. It can then be proven that one thus gets a bijective 
mapping from $\F(P_{1},P_{2},P_{3})$ to the set of equivalence classes of 
diagrams of blended extensions. (Note however that, in \emph{loc. cit.},
D. Bertrand is interested by a slightly different equivalence relation.)
Actually, one can even define a category on both sides of this correspondence 
and get an equivalence of these categories, see \cite{JSmoduli}.


\subsection{The devissage}
\label{subsection:devissage}

Our goal is to give conditions ensuring that $\F(P_{1},\ldots,P_{k})$ 
carries the structure of an affine space over $C$, and to compute its
dimension. The case $k = 2$ suggests that we should assume the $C$-modules 
$\Ext(P_{j},P_{i})$ to be free of finite rank. (As we shall see, the only
pairs that matter are those with $i < j$.) Then, aiming at an induction
argument, one invokes a natural onto mapping:
$$
\F(P_{1},\ldots,P_{k}) \longrightarrow \F(P_{1},\ldots,P_{k-1}),
$$
sending the class of $(\underline{M},\underline{u})$ defined as above
to the class of $(\underline{M'},\underline{u'})$ defined by:
$$
\underline{M'} = 
(0 = M_{0} \subset M_{1} \subset \cdots \subset M_{k-1} = M')
\text{~and~}
\underline{u'} = (u_{i})_{1 \leq i \leq k-1}.
$$
The preimage of the class of $(\underline{M'},\underline{u'})$ described
above is identified with $\Ext(P_{k},M')$; and note that $\Ext(P_{k},M')$ 
indeed only depends (up to a canonical isomorphism) on the class of
$(\underline{M'},\underline{u'})$. Under the assumptions we shall choose,
we shall see that $\Ext(P_{k},M')$ can in turn be unscrewed (d\'eviss\'e)
in the $\Ext(P_{k},P_{i})$ for $i < k$, and we expect to get a space with 
dimension $\sum\limits_{1 \leq i < j \leq k} \dim \Ext(P_{j},P_{i})$.

\begin{rema}
Once described the space $\F(P_{1},\ldots,P_{k})$, one can ask for the
seemingly more natural problem of the classification of those objects
$M$ such that $\gr M \simeq P$ (without prescribing the ``polarization'' 
$u$). One checks that the group $\prod \Aut(P_{i})$ operates on the space
$\F(P_{1},\ldots,P_{k})$: actually, $(\phi_{i}) \in \prod \Aut(P_{i})$ acts 
on the ``class'' of all pairs $(\underline{M},\underline{u})$ through left 
compositions $\phi_{i} \circ u_{i}$. Then, our new classification comes
by quotienting $\F(P_{1},\ldots,P_{k})$ by this action. We shall not deal
with that problem.
\end{rema}

The use of homological algebra in classification problems for functional 
equations is ancient, but it seems that the first step, the algebraic
modelisation, has sometimes been tackled rather casually: for instance,
when identifying a module of extensions with a cokernel, the explicit
description of a map is almost always given; the proof of its bijectivity
comes sometimes; the proof of its additivity seldom; the proof of its
linearity (seemingly) never. For that reason, very great care has been
given here to detailed algebraic constructions and proofs of ``obvious''
isomorphisms.


\section{Difference modules over difference rings}
\label{section:ALDIFF}

Our classification of isograded filtered difference modules over a 
\index{isograded filtered difference modules}
difference field will involve an affine moduli space that is actually
\index{moduli space}
\index{affine scheme} 
an affine scheme. To obtain that, we will have to consider in section 
\ref{section:extscal} a functor in commutative $C$-algebras that involves 
difference modules over difference \emph{rings}. Thus, in order to study 
this ``relative'' situation, we must generalize our basic constructions 
to difference rings. 


\subsection{Difference rings, difference operators}
\label{subsection:diffrings}
\index{difference rings}
\index{difference operators}

Let $K$ be a commutative ring and $\sigma$ a ring automorphism
of $K$. As noticed in \ref{subsection:generalitiesdiffmods}, most 
constructions and statements about difference fields and modules
remain valid over the \emph{difference ring} $(K,\sigma)$. 
\label{not117}
Our favorite examples are, of course, $(\Ra,\sq)$, $(\Ka,\sq)$, $(\Rf,\sq)$ 
and $(\Kf,\sq)$. \\

More precisely, we shall assume the following situation. Let $C$ be a 
commutative ring (we may think of the the field $\C$ of complex numbers, 
or else some arbitrary field of ``constants''). Assume that $K$ is a 
commutative $C$-algebra and $\sigma$ a $C$-algebra automorphism, 
that is, a $C$-linear ring automorphism. \emph{We shall always assume 
that the automorphism $\sigma$ is of infinite order}, \ie\ the iterates
$\sigma^k$, $k \in \Z$, are all different. (In the case of $q$-differences,
this means that $q$ is not a root of unity.) Then the ring of constants:
$$
\label{not118}
K^{\sigma} := \{x \in K \tq \sigma x = x\}
$$
is actually a sub $C$-algebra of $K$. The ring of $\sigma$-difference
operators is defined as the \"Ore-Laurent ring:
$$
\D_{K,\sigma} := K<T,T^{-1}>
$$
with $T$ an invertible indeterminate, subject to the twisted 
commutation relations: 
$$
\forall k \in \Z \;,\; \forall \lambda \in K \;,\;
T^{k}.\lambda = \sigma^{k}(\lambda) T^{k}.
$$
Its elements are the non commutative Laurent polynomials 
$\sum\limits_{-\infty \ll k \ll + \infty} a_k T^k$. The ring  $\D_{K,\sigma}$
is actually is a $C$-algebra with center $K^{\sigma}$. As a consequence,
the category $\DKMod$ of left $\D_{K,\sigma}$-modules is a $C$-linear 
abelian category. \\

Mapping the element $\sum a_k T^k \in \D_{K,\sigma}$ to the $C$-linear 
endomorphism $\sum a_k \sigma^k$ of $K$, one defines a morphism of 
$C$-algebras from $\D_{K,\sigma}$ to $\Lin_C(K)$. The image of this morphism 
is the sub-$C$-algebra $K<\sigma,\sigma^{-1}>$ generated by $K$ (that is,
the operators of left multiplication by elements of $K$), $\sigma$  and 
$\sigma^{-1}$. Beware however that neither $K<\sigma,\sigma^{-1}>$ nor
$\D_{K,\sigma}$ is a $K$-algebra, since $K$ is not central in either case. \\

Since we have assumed that $\sigma$ is of infinite order, one deduces 
from Artin-Dedekind's lemma on independence of characters that, when 
$K$ is a field (more generally, when it is an integral ring), the morphism 
is injective, so that we can identify:
$$
\D_{K,\sigma} = K<\sigma,\sigma^{-1}>.
$$
For this reason, we shall rather sometimes write $K<\sigma,\sigma^{-1}>$
for $\D_{K,\sigma}$ when no confusion thus arises, even if $K$ is not an
integral ring.


\subsection{Difference modules over difference rings}
\label{subsection:moddiff}

We are interested in the category $\DKMod$ of left $\D_{K,\sigma}$-modules.
We shall give two alternative description of these modules. \\

\index{$\sigma$-linear automorphism}
\index{semi-linear automorphism}
To a left $\D_{K,\sigma}$-module $M$, we associate the pair $(V,\Phi)$,
\label{not119}
where the $K$-module $V$ is obtained from $M$ by restriction of scalars
and where $\Phi$ is the $\sigma$-linear (or semi-linear) automorphism of 
$V$ defined by
$x \mapsto \sigma.x$ (external multiplication in the $\D_{K,\sigma}$-module 
$M$). The property of $\sigma$-linearity means:
$$
\forall \lambda \in K \;,\; \forall x \in V \;,\;
\Phi(\lambda x) = \sigma(\lambda) \Phi(x).
$$
Conversely, from such a pair $(V,\Phi)$, we can recover $M$ as the 
$\D_{K,\sigma}$-module having $V$ as underlying group and the external product 
defined by $(\sum a_k \sigma^k).x := \sum a_k \Phi^k(x)$. The fact that we do 
get a $\D_{K,\sigma}$-module depends on the $\sigma$-linearity of $\Phi$. 
If the $\D_{K,\sigma}$-module $N$ likewise corresponds to the pair $(W,\Psi)$, 
then $\D_{K,\sigma}$-linear maps from $M$ to $N$ are exactly the same as 
$K$-linear maps $f: V \rightarrow W$ such that $\Psi \circ f = f \circ \Phi$.
The category $\DKMod$ is therefore equivalent to the following category:
objects are the pairs $(V,\Phi)$, where $V$ is a $K$ -module and $\Phi$ 
is a $\sigma$-linear automorphism of $V$; morphisms from $(V,\Phi)$ to 
$(W,\Psi)$ are $K$-linear maps $f: V \rightarrow W$ such that 
$\Psi \circ f = f \circ \Phi$. From now on, we shall move freely from 
$\D_{K,\sigma}$-modules to pairs $(V,\Phi)$ and conversely. \\

\label{not120}
For any $K$-module $V$, write $\sigma^* V$ the $K$-module obtained by 
restriction of scalars through $\sigma: K \rightarrow K$. This means 
that, writing $\mu.x$ the external multiplication on $V$, the external 
multiplication on $\sigma^* V$ is defined by the formula: 
$(\lambda, x) \mapsto \sigma(\lambda).x$. It follows that a $\sigma$-linear 
automorphism $\Phi: V \rightarrow V$ is just a $K$-linear isomorphism
from $V$ to $\sigma^* V$. This allows for yet another description of the 
category $\DKMod$. Noting that $\Lin_K(V,W) = \Lin_K(\sigma^* V,\sigma^* W)$, 
we will find that many linear properties of $\D_{K,\sigma}$-modules (like
the definition and study of tensor products in paragraph 
\ref{subsection:tensorproduct}; or the definition and study of ``internal
Hom'' in paragraph \ref{subsection:internalhom}) are then immediate 
consequences of general linear algebra over $K$. \\

From general properties of the restriction of scalars, it follows that a 
sequence $M' \rightarrow M \rightarrow M''$ in $\DKMod$ is exact if, and 
only if, it is exact as a sequence of $K$-modules. However, if an exact 
sequence $0 \rightarrow M' \rightarrow M \rightarrow M'' \rightarrow 0$
splits in $\DKMod$ it splits in $Mod_K$, but not conversely. Any such
exact sequence is isomorphic to one where $M = (V,\Phi)$, $M' = (V',\Phi')$,
$M'' = (V'',\Phi'')$, where $V'$ is a sub-$K$-module of $V$ and 
$\Phi' = \Phi_{|V}$, and where $V'' = V/V'$ and $\Phi''$ is induced by $\Phi$.

\begin{defi}
\index{difference modules}
\index{difference $C$-algebra}
A \emph{difference module} over the \emph{difference $C$-algebra}
$(K,\sigma)$ (more shortly, over $K$) is a left $\D_{K,\sigma}$-module 
which, by restriction of scalars to $K$, yields a finite rank projective 
$K$-module.
\end{defi}

In our second description, this is a pair $(V,\Phi)$ such that $V$ is a 
finite rank projective $K$-module. We require that the module be projective
for technical reasons (such that good behaviour under extension of scalars).
The finiteness condition is natural in the context of functional equations. 

\label{not121}
\begin{exem}
\label{exem:unitetPhiA}
\index{unit difference module}
We write $\1$ and call \emph{unit difference module} the $\D_{K,\sigma}$-module 
$(K,\sigma)$. As a $\D_{K,\sigma}$-module, is isomorphic to 
$\D_{K,\sigma}/\D_{K,\sigma}(\sigma-1)$. \\
For any invertible matrix $A \in \GL_n(K)$, putting 
$\Phi_A(X) := A^{-1} (\sigma X)$, we define a difference module
$(K^n,\Phi_A)$. For instance, if $n = 1$ and $A = (1)$, this is $\1$.
Any difference module $(V,\Phi)$ such that $V$ is free over $K$ is
isomorphic to some $(K^n,\Phi_A)$.
\end{exem}

\label{not122}
We write $\DM$ the full subcategory of $\DKMod$ with objects the 
difference modules over $K$. Like $\DKMod$, it is abelian and $C$-linear. 
Thus, monomorphisms, epimorphisms, isomorphisms and exact sequences in
$\DM$ are the same thing as in $\DKMod$. Moreover, since an extension of 
finite rank projective $K$-modules is a finite rank projective $K$-module 
it follows that $\DM$ is a thick subcategory of $\DKMod$, so that the 
calculus of extensions in $\DM$ is the restriction of the calculus of 
extensions in $\DKMod$. For instance, computing $\Ext^i(M,N)$ in $\DM$ 
or in $\DKMod$ is the same.

\begin{rema}
Many constructions still make sense, and many properties remain true,
and moreover useful, without the projectiveness or finiteness conditions.
For instance, one may want to consider \emph{difference extension rings}
\index{difference extension rings}
of $(K,\sigma)$, that is, difference rings $(K',\sigma')$ such that
$K \subset K'$ and $\sigma'_{|K} = \sigma$; in some interesting cases,
$K'$ is not a finite $K$-module: for instance, $\Ka \subset \Kf$. We will 
sometimes briefly mention these generalisations, for future reference. 
\end{rema}


\subsection{The functor $\Gamma$}
\label{subsection:foncteurGamma}

\index{functor $\Gamma$}
\label{not123}
Let $M := (V,\Phi)$ and $N := (W,\Psi)$ two difference modules. Write
$\Lin_{\sigma}(V,W) := \Lin_K(V,\sigma^* W)$ the $K$-module of $\sigma$-linear 
maps from $V$ to $W$. For simplicity, we shall also (improperly) write 
$\Lin_K(M,N) := \Lin_K(V,W)$ and $\Lin_{\sigma}(M,N) := \Lin_{\sigma}(V,W)$. \\

For any $K$-linear map $f: V \rightarrow W$, the map 
$\Psi \circ f - f \circ \Phi$ is $\sigma$-linear from $V$ to $W$.
\label{not124}
The map $t_{\Phi,\Psi}: f \mapsto \Psi \circ f - f \circ \Phi$ from 
$\Lin_K(M,N)$ to $\Lin_{\sigma}(M,N)$ is $C$-linear and, by definition, 
$\Hom(M,N)$ is its kernel. We thus have an exact sequence of $C$-modules:
$$
0 \rightarrow \Hom(M,N) \rightarrow \Lin_K(M,N) 
\overset{t_{\Phi,\Psi}}{\longrightarrow} \Lin_{\sigma}(M,N).
$$
This sequence is clearly functorial in $M$ and in $N$ (contravariant
in $M$, covariant in $N$). We shall see later that, under appropriate
assumptions, its cokernel is the module $\Ext(M,N)$ (theorem
\ref{theo:descriptionextensions}).

\begin{defi}
\index{functor of solutions}
The \emph{functor $\Gamma$ of solutions} is defined as:
$$
\label{not125}
M \leadsto \Gamma(M) := \Hom(\1,M).
$$
\end{defi}

\begin{prop}
The functor $M \leadsto \Gamma(M)$ from difference modules to 
$C$-modules is left exact, and one has an identification:
$$
\Gamma(M) = \{v \in V \tq \Phi(v) = v\}.
$$
\end{prop}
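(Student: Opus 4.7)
The plan is to handle the two assertions in turn, the identification first (which makes the left exactness immediate afterwards).

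First I would spell out what a morphism $\mathbf{1} \to M$ is concretely. Recall $\mathbf{1} = (K,\sigma)$ and $M = (V,\Phi)$, so by the second description of $\DM$ a morphism $f : \mathbf{1} \to M$ is a $K$-linear map $f : K \to V$ satisfying $\Phi \circ f = f \circ \sigma$. Since $K$-linear maps $K \to V$ are in bijection with $V$ via $f \mapsto v := f(1)$, the whole question is to translate the intertwining relation in terms of $v$. Evaluating at $1 \in K$ gives $\Phi(v) = \Phi(f(1)) = f(\sigma(1)) = f(1) = v$, so $v \in V^{\Phi} := \{v \in V \tq \Phi(v) = v\}$. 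Conversely, given $v \in V^{\Phi}$, the map $f_v : \lambda \mapsto \lambda v$ is $K$-linear, and for all $\lambda \in K$ one checks
\[
\Phi(f_v(\lambda)) = \Phi(\lambda v) = \sigma(\lambda)\,\Phi(v) = \sigma(\lambda)\,v = f_v(\sigma(\lambda)),
\]
using the $\sigma$-linearity of $\Phi$. The assignments $f \mapsto f(1)$ and $v \mapsto f_v$ are mutually inverse, and are clearly $C$-linear and natural in $M$, which gives the desired identification $\Gamma(M) \simeq V^{\Phi}$.

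Second, for left exactness I would simply invoke the general fact that for any object $X$ of an abelian category, the functor $\Hom(X,-)$ is left exact; applied to $X = \mathbf{1}$ in the abelian $C$-linear category $\DM$ (see paragraph \ref{subsection:moddiff}), this yields the claim. For the reader who prefers a direct check, the identification above makes it immediate: given an exact sequence $0 \to M' \overset{i}{\to} M \overset{p}{\to} M''$ in $\DM$, with $M' = (V',\Phi')$ etc., the underlying sequence $0 \to V' \to V \to V''$ is exact in $\mathrm{Mod}_K$; taking $\Phi$-invariants, injectivity of $V'^{\Phi'} \to V^{\Phi}$ is inherited from that of $V' \to V$, and if $v \in V^{\Phi}$ maps to $0$ in $V''^{\Phi''}$ then $v = i(v')$ for a unique $v' \in V'$, and applying $i$ to $\Phi'(v') - v'$ gives $\Phi(v) - v = 0$, so that $v' \in V'^{\Phi'}$ by injectivity of $i$.

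There is no real obstacle here: everything reduces to the elementary manipulation of $\sigma$-linearity and the universal property of the unit object. The only point requiring a shade of care is tracking the semi-linear identity $\Phi(\lambda v) = \sigma(\lambda)\Phi(v)$ at the right spot in the computation above; once that is in place, both assertions follow formally.
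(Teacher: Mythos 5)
Your proof is correct and takes the same approach as the paper: identify $\Gamma(M)$ with the $\Phi$-invariants of $V$ via $f \mapsto f(1)$, and invoke left exactness of $\Hom(\1,-)$ in the abelian category $\DM$. The paper's own proof is merely terser, declaring the exactness "plain" and stating the translation of the intertwining condition without the computation you spell out.
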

\begin{proof}
The functoriality and exactness are plain. The identification comes
as follows: under the usual identification $f \mapsto v := f(1)$ of 
$\Lin_K(K,V)$ with $V$, the condition $\Phi \circ f = f \circ \sigma$
translates into $\Phi(v) = v$.
\end{proof}

The $i$-th right derived functor of $\Gamma$ is given by:
$$
\label{not126}
M \leadsto \Gamma^i(M) = \Ext^i(\1,M).
$$

\begin{rema}
The contents of this paragraph remain valid without conditions for
all $\D_{K,\sigma}$-modules.
\end{rema}


\subsection{Tensor products}
\label{subsection:tensorproduct}
\index{tensor product}

Let $M := (V,\Phi)$ and $N := (W,\Psi)$ two difference modules. According 
to \cite[\S 3, no 3]{BAL}, one has a canonical isomorphism:
$$
\sigma^*(V \otimes_K W) \simeq \sigma^* V \otimes_K \sigma^* W.
$$
It follows that there is a unique $\sigma$-linear isomorphism:
$$
\Phi \otimes \Psi: V \otimes_K W \rightarrow \sigma^*(V \otimes_K W)
$$
\label{not127}
such that $x \otimes y \mapsto \Phi(x) \otimes \Psi(y)$. This makes the
tensor product $M \otimes N := (V \otimes_K W,\Phi \otimes \Psi)$ into
a difference module. Note however that the result is not a tensor product
of $\D_{K,\sigma}$-modules.

\begin{prop}
The tensor product in $\DM$ is associative, commutative, bifunctorial
(covariant in each argument) and exact. Moreover, there are canonical 
isomorphisms:
$$
\1 \otimes M \simeq M \simeq M \otimes \1.
$$
\end{prop}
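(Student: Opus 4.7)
The plan is to reduce everything to the corresponding (well-known) properties of the ordinary tensor product of $K$-modules, plus routine verifications that the canonical $K$-linear isomorphisms intertwine the relevant semi-linear automorphisms. The key observation that makes this reduction possible is the canonical isomorphism $\sigma^*(V \otimes_K W) \simeq \sigma^* V \otimes_K \sigma^* W$ cited from \cite{BAL}, which is what makes $\Phi \otimes \Psi$ well defined in the first place.

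First I would handle the unit isomorphism: the standard $K$-linear isomorphism $K \otimes_K V \simeq V$, $\lambda \otimes x \mapsto \lambda x$, intertwines $\sigma \otimes \Phi$ with $\Phi$ because on an elementary tensor $(\sigma \otimes \Phi)(\lambda \otimes x) = \sigma(\lambda) \otimes \Phi(x) \mapsto \sigma(\lambda) \Phi(x)$, which equals $\Phi(\lambda x)$ by $\sigma$-linearity of $\Phi$. The right unit is symmetric. For associativity and commutativity, take the standard $K$-module isomorphisms $(V \otimes W) \otimes U \simeq V \otimes (W \otimes U)$ and $V \otimes W \simeq W \otimes V$; checking on elementary tensors, both sides of each diagram send the generator to the expected elementary tensor built from $\Phi(x)$, $\Psi(y)$, $\Theta(z)$, so uniqueness of the $\sigma$-linear extension (\cite[\S 3, no 3]{BAL}) gives the compatibility.

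Bifunctoriality reduces to the observation that if $f:(V,\Phi)\to(V',\Phi')$ and $g:(W,\Psi)\to(W',\Psi')$ are morphisms in $\DM$, then the $K$-linear map $f \otimes g$ satisfies $(f \otimes g) \circ (\Phi \otimes \Psi) = (\Phi' \otimes \Psi') \circ (f \otimes g)$, which again is checked on elementary tensors using $f \circ \Phi = \Phi' \circ f$ and $g \circ \Psi = \Psi' \circ g$. Composition and identities are immediate.

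The one point requiring an actual hypothesis is exactness. Here I use the defining assumption that the underlying $K$-module of a difference module is finite rank projective, hence flat. Given an exact sequence $0 \to N' \to N \to N'' \to 0$ in $\DM$, by the remark in \ref{subsection:moddiff} it is exact as a sequence of $K$-modules; tensoring with the flat module $V$ keeps it exact in $\Mod_K$, and the $\sigma$-linear automorphisms glue into the tensored sequence by functoriality of $\otimes$ on $K$-modules, so the sequence remains exact in $\DM$. Exactness in the first argument follows symmetrically (or from commutativity). I expect the only mildly delicate step to be bookkeeping the naturality of the $\sigma^*(V \otimes_K W) \simeq \sigma^* V \otimes_K \sigma^* W$ identification when chasing the associativity pentagon, but this is purely formal once one appeals to the universal property defining $\Phi \otimes \Psi$.
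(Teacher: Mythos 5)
Your proposal is correct and takes essentially the same route as the paper: reducing each assertion to the corresponding statement about (projective/flat) $K$-modules and checking that the canonical $K$-linear isomorphisms intertwine the semi-linear automorphisms. The paper's own proof is a one-liner that leaves exactly these verifications implicit; your write-up simply spells them out.
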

\begin{proof}
This follows immediately from the corresponding properties for projective
$K$-modules, with the same meaning (commutativity is understood up to 
canonical isomorphisms, etc).
\end{proof}

\begin{rema}
The contents of this paragraph remain valid without conditions for
all $\D_{K,\sigma}$-modules, except for the exactness in the proposition:
this remains true for $K$-projective  $\D_{K,\sigma}$-modules but reduces 
to right exactness for general $\D_{K,\sigma}$-modules.
\end{rema}


\subsection{Internal $\Hom$}
\label{subsection:internalhom}
\index{internal $\Hom$}

Let $M := (V,\Phi)$ and $N := (W,\Psi)$ two difference modules; then
$\Lin_K(V,W)$ is projective of finite rank over $K$. For any $K$-linear 
map $u: V \rightarrow W$, the map $\Psi \circ u \circ \Phi^{-1}$
is $K$-linear. The map $\Gamma: u \mapsto \Psi \circ u \circ \Phi^{-1}$ from
$\Lin_K(V,W)$ to itself is a $K$-linear automorphism, whence a difference
module:
$$
\label{not128}
\Homint(M,N) := \bigl(\Lin_K(V,W),\Gamma\bigr).
$$
This is called an ``internal Hom''. For instance, $\Homint(\1,M)$ is
canonically isomorphic to $M$.

\begin{theo}
\label{theo:adjunction}
Let $M := (V,\Phi)$, $M' := (V',\Phi')$ and $N := (W,\Psi)$ be three 
difference modules. There is a canonical functorial isomorphism:
$$
\Hom\bigl(M,\Homint(M',N)\bigr) \simeq \Hom(M \otimes M',N).
$$
\end{theo}
\begin{proof}
From \cite[\S 4.1, prop. 1]{BAL}, we have a canonical (and functorial) 
isomorphism:
$$
\Lin_K\bigl(V,\Lin_K(V',W)\bigr) \simeq \Lin_K(V \otimes_K V',W),
$$
which sends $f: V \rightarrow \Lin_K(V',W)$ to the unique linear map
$g: V \otimes_K V' \rightarrow W$ such that $g(v \otimes v') = f(v)(v')$. 
Conversely, $g: V \otimes_K V' \rightarrow W$ is sent to 
$f: V \rightarrow \Lin_K(V',W)$ defined by 
$v \mapsto (v' \mapsto g(v \otimes v'))$. This yields the isomorphism 
of the underlying $K$-modules. An easy computation then shows that:
$$
\Psi \circ g = g \circ (\Phi \otimes \Phi') \Longleftrightarrow
\Gamma \circ f = f \circ \Phi;
$$
with $f$, $g$ and $\Gamma$ defined as above. Functoriality comes from 
the corresponding assertion in linear algebra.
\end{proof}

\begin{rema}
The contents of this paragraph remain valid without conditions for
all $\D_{K,\sigma}$-modules.
\end{rema}


\subsection{Duality}
\label{subsection:duality}

We keep the same notations as in paragraph \ref{subsection:internalhom}.
According to \cite[\S 4.2]{BAL}, the canonical morphism:
$$
\nu: \Lin_K(V,V') \otimes_K W \rightarrow \Lin_K(V,V' \otimes_K W)
$$
is a monomorphism if $W$ is projective and an isomorphism if $V$ or $W$
is projective of finite rank. One checks that the following diagram
is commutative:
$$
\xymatrix{
\Lin_K(V,V') \otimes_K W \ar@<0ex>[d] \ar@<0ex>[rr]^{\nu} & &
\Lin_K(V,V' \otimes_K W)  \ar@<0ex>[d] \\
\Lin_K(V,V') \otimes_K W \ar@<0ex>[rr]^{\nu} & & 
\Lin_K(V,V' \otimes_K W)
}
$$
The left vertical map is given by:
$f \otimes w \mapsto (\Phi' \circ f \circ \Phi^{-1}) \otimes \Psi(w)$,
while the right vertical map is given by:
$g \mapsto (\Phi' \otimes \Psi) \circ g \circ \Phi^{-1}$.

\begin{prop}
For any three difference modules, $\nu$ induces a canonical and
functorial isomorphism of difference modules:
$$
\Homint(M,M') \otimes N \simeq \Homint(M,M' \otimes N).
$$
\end{prop}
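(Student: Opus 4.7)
The plan is to deduce the proposition from the displayed commutative square by unwinding definitions and checking one small computation on pure tensors. Since the argument is entirely formal, there is no serious obstacle; the only mildly delicate point is bookkeeping of the twisting $\sigma$ in the various $\Phi \otimes \Psi$ factors.

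First, I would recall the underlying $K$-linear map $\nu$ on pure tensors: $\nu(f\otimes w)(v)=f(v)\otimes w$. Under the standing hypothesis that the difference modules have finite rank projective underlying $K$-modules, Bourbaki's criterion applies (either $V$ is finite rank projective or $W$ is), so $\nu$ is a $K$-module isomorphism. This already gives the isomorphism of underlying $K$-modules; what remains is the compatibility with the difference module structure.

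Next, I would identify explicitly the $\sigma$-linear automorphisms on both sides: on $\Homint(M,M')\otimes N$ it is $(f\otimes w)\mapsto (\Phi'\circ f\circ \Phi^{-1})\otimes \Psi(w)$, and on $\Homint(M,M'\otimes N)$ it is $g\mapsto (\Phi'\otimes\Psi)\circ g\circ \Phi^{-1}$. These are exactly the two vertical arrows of the square displayed just before the proposition. Thus the proposition reduces to the commutativity of that square, which is the core computation. I would check it on a pure tensor $f\otimes w$ and evaluate on $v\in V$: going left-then-down yields $\Phi'(f(\Phi^{-1}(v)))\otimes \Psi(w)$, while going down-then-left also yields $(\Phi'\otimes\Psi)\bigl(f(\Phi^{-1}(v))\otimes w\bigr)=\Phi'(f(\Phi^{-1}(v)))\otimes \Psi(w)$. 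The two paths agree on pure tensors, hence on all of $\Lin_K(V,V')\otimes_K W$ by $K$-linearity, so $\nu$ intertwines the two semi-linear automorphisms and is therefore an isomorphism of difference modules.

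Finally, canonicity and functoriality (covariant in $M'$ and $N$, contravariant in $M$) are inherited directly from the analogous properties of $\nu$ in linear algebra, since all three modular structures are built from the same canonical construction. This completes the sketch; the whole proof is a one-line computation on pure tensors together with the cited Bourbaki result, with no real obstacle beyond careful tracking of the $\sigma$-linear twists.
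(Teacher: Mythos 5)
Your proof is correct and follows exactly the same path as the paper, which simply says ``Immediate from the discussion above'' after exhibiting the same commutative square and the Bourbaki criterion for $\nu$. You have merely spelled out the pure-tensor verification that the paper leaves implicit.
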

\begin{proof}
Immediate from the discussion above.
\end{proof}

\begin{defi}
\index{dual}
\label{not129}
The \emph{dual} of the difference module $M$ is:
$$
M^{\vee} := \Homint(M,\1).
$$
\end{defi}

If $M = (V,\Phi)$, then $M^{\vee} = (V^*,\Phi^{\vee})$, where
$\Phi^{\vee}(u) := \sigma \circ u \circ \Phi^{-1}$. For instance, 
for $A \in \GL_n(K)$, one has $(K^n,\Phi_A)^{\vee} = (K^n,A^{\vee})$, 
where $A^{\vee} := {}^tA^{-1}$ is the contragredient of $A$.

\begin{coro}
We have a canonical and functorial isomorphism of difference modules:
$$
M^{\vee} \otimes N \simeq \Homint(M,N).
$$
\end{coro}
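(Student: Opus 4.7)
The plan is to deduce the corollary directly from the preceding proposition by specializing one of its arguments to the unit object $\1$. Taking $M' := \1$ in the functorial isomorphism
\[
\Homint(M,M') \otimes N \simeq \Homint(M, M' \otimes N)
\]
yields a canonical isomorphism of difference modules
\[
\Homint(M,\1) \otimes N \simeq \Homint(M, \1 \otimes N).
\]
On the left, by Definition of the dual, $\Homint(M,\1) = M^{\vee}$, so the left-hand side is exactly $M^{\vee} \otimes N$. On the right, the canonical isomorphism $\1 \otimes N \simeq N$ from the proposition in paragraph \ref{subsection:tensorproduct} (which is natural in $N$) induces, by bifunctoriality of $\Homint$, a canonical isomorphism $\Homint(M, \1 \otimes N) \simeq \Homint(M,N)$. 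Composing these two canonical isomorphisms gives the desired
\[
M^{\vee} \otimes N \simeq \Homint(M,N).
\]

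For functoriality, each of the three ingredients used is itself canonical and functorial: the isomorphism of the preceding proposition is functorial in all arguments; the unit isomorphism $\1 \otimes N \simeq N$ is natural in $N$; and $\Homint$ is bifunctorial (contravariant in its first argument, covariant in its second). The composite is therefore a natural transformation between the two bifunctors $(M,N) \leadsto M^{\vee} \otimes N$ and $(M,N) \leadsto \Homint(M,N)$ from $\DM \times \DM$ to $\DM$, which is pointwise an isomorphism; hence it is a natural isomorphism of bifunctors. There is no real obstacle here since all the hard work — the construction and $\sigma$-linear compatibility of the morphism $\nu$ — has been carried out in the preceding proposition; the corollary is simply the case $M' = \1$ rewritten using the definition of $M^{\vee}$ and the unit constraint of the tensor product.
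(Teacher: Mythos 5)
Your proposal is correct and follows exactly the paper's argument: set $M' := \1$ in the preceding proposition, then rewrite the two sides via $M^{\vee} = \Homint(M,\1)$ and the unit isomorphism $\1 \otimes N \simeq N$. The extra remarks on functoriality are a harmless elaboration of what the paper leaves implicit.
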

\begin{proof}
Set $M' := \1$ in the proposition.
\end{proof}

\begin{rema}
Part of these conclusions stay true for more general $\D_{K,\sigma}$-modules.
In the proposition, as well as in the corollary, the map is a monomorphism 
if $M$ is an arbitrary $\D_{K,\sigma}$-module and $N$ is a $K$-projective
$\D_{K,\sigma}$-module; and an isomorphism if $M$ or $N$ is a difference 
module, the other being an arbitrary $\D_{K,\sigma}$-module.
\end{rema}


\section{Finitely filtered difference modules}
\label{section:filtmoddiff}
\index{finitely filtered difference modules}


\subsection{Isograded classification of difference modules}
\label{subsection:isograddiffmod}
\index{isograded classification of difference modules}

Let $(K,\sigma)$ be a difference ring, with $K$ is a commutative 
$C$-algebra and $\sigma$ a $C$-algebra automorphism. Recall from
the previous section that a difference module over $K$ is 
a left $\D_{K,\sigma}$-module which is projective of finite rank as
a $K$-module and that the category $\DM$ of difference modules is
abelian and $C$-linear. Difference modules over $K$ can be realized 
as pairs $(E,\Phi)$, where $E$ is a projective $K$-module of finite rank
and $\Phi$ is a semi-linear automorphism of $E$. In this description, 
a morphism of difference modules from $(E,\Phi)$ to $(F,\Psi)$ is a map 
$u \in \Lin_{K}(E,F)$ such that $\Psi \circ u = u \circ \Phi$. \\

As an instance of the general problem considered in section 
\ref{section:generalsetting}, we fix difference modules $P_{i}$ 
($1 \leq i \leq k$) and consider finitely filtered difference modules
$\underline{M}$ with associated graded module 
$P = P_{1} \oplus \cdots \oplus P_{k}$. We assume that each such object
$\underline{M} = (0 = M_{0} \subset M_{1} \subset \cdots \subset M_{k} = M)$
comes equipped with an isomorphism
$\underline{u} = (u_{i}: M_{i}/M_{i-1} \simeq P_{i})_{1 \leq i \leq k}$
from $\gr M$ to $P$. \\

In accordance with the general problem described in section 
\ref{section:generalsetting}, we consider $(\underline{M},\underline{u})$ 
and $(\underline{M'},\underline{u'})$ to be equivalent if there exists 
a morphism $f: M \rightarrow M'$ that respects the filtration: 
$f(M_i) \subset M'_i$ for $1 \leq i \leq k$, and such that, writing 
$g_i: M_{i}/M_{i-1} \rightarrow M'_{i}/M'_{i-1}$ the morphisms thus induced 
by $f_{|M_i}$, one has $u_i = u'_i \circ g_i$ for $1 \leq i \leq k$. 
Since $f$ respects the filtration, one can define
$\gr f: \gr M \rightarrow \gr M'$ and the above condition is equivalent
to $\underline{u} = \gr f \circ \underline{u'}$. It is a standard fact
(and easy to prove) that $f$ is then automatically an isomorphism. \\

Our goal is to classify such pairs $(\underline{M},\underline{u})$ up to
this equivalence relation. The equivalence classes clearly form a set,
which we write $\F(P_{1},\ldots,P_{k})$. In this section, we give some 
matricial descriptions, under additional assumptions.


\subsection{Matricial description of difference modules}
\label{subsection:descmat1}
\index{matricial description of difference modules}

Here, we assume that the $K$-module $E$ is \emph{free} of finite rank $n$.
This assumption will be maintained in \ref{subsection:descmat2} and
\ref{subsection:descmat3}, where we pursue the matricial description.
We then see that the difference module $M = (E,\Phi)$ is then isomorphic
to some $(K^n,\Phi_A)$ (example \ref{exem:unitetPhiA}). Indeed, choosing
a basis $\B$ allows one to identify $E$ with $K^{n}$. It is then
clear that $\Phi(\B)$ is also a basis of $E$, whence the existence of
$A \in GL_{n}(K)$ such that $\Phi(\B) = \B A^{-1}$. If $x \in E$ has the
coordinate column vector $X \in K^{n}$ in basis $\B$, \ie\ if $x = \B X$, 
computing $\Phi(x) = \Phi(\B X) = \Phi(\B) \sigma(X) = \B A^{-1} \sigma(X)$
shows that $\Phi(x) \in E$ has the coordinate column vector 
$A^{-1} \sigma(X)$; we thus may identify $(E,\Phi) \simeq (K^{n},\Phi_{A})$, 
where $\Phi_{A}(X) := A^{-1} \sigma(X)$. \\

Morphisms from $(K^{n},\Phi_{A})$ to $(K^{p},\Phi_{B})$ are matrices 
$F \in \Mat_{p,n}(K)$ such that $(\sigma F) A = B F$ and their composition 
boils down to matrix product. In particular, isomorphism of modules is 
described by gauge transformations:
$$
(K^{n},\Phi_{A}) \simeq (K^{p},\Phi_{B}) \Longleftrightarrow
n = p \text{~and~} \exists F \in GL_{n}(K) \;:\;
B = F[A] := (\sigma F) A F^{-1}.
$$

\begin{rema}
A similar description can be given for free modules of infinite rank.
Such a $K$-module is isomorphic to $K^{(I)}$, and one should replace
$\GL_n(K)$ by $\GL_I(K)$ and $\Mat_{p,n}(K)$ by $\Mat_{J \times I}(K)$.
Infinite ``matrices'' are to be understood as follows: elements 
of $\End_K(K^{(I)})$ are families $(a_{i,j})$ of elements of $K$ indexed by 
$I \times I$ and such that each column ($j$ fixed) has finite support. 
The set $\Mat_I(K) := \End_K(K^{(I)})$ is a unitary ring and 
$\GL_I(K) := \Aut_K(K^{(I)})$ is its group of units. There is a similar 
condition for $\Mat_{J \times I}(K)$.
\end{rema}


\subsection{Matricial description of filtered difference modules}
\label{subsection:descmat2}
\index{matricial description of filtered difference modules}

Let $P_{i} = (G_{i},\Psi_{i})$ ($1 \leq i \leq k$) be difference modules
such that each $K$-module $G_{i}$ is free of finite rank $r_{i}$ . For
each $G_{i}$, choose a basis $\mathcal{D}_{i}$, and write
$B_{i} \in GL_{r_{i}}(K)$ the invertible matrix such that
$\Psi_{i}(\mathcal{D}_{i}) = \mathcal{D}_{i} B_{i}$. \\

Let $\underline{M}$ be a finitely filtered difference module with
associated graded module $P = P_{1} \oplus \cdots \oplus P_{k}$; more
precisely, 
$\underline{M} = (0 = M_{0} \subset M_{1} \subset \cdots \subset M_{k} = M)$
is equipped with an isomorphism
$\underline{u} = (u_{i}: M_{i}/M_{i-1} \simeq P_{i})_{1 \leq i \leq k}$
from $\gr M$ to $P$. Letting $M_{i} = (E_{i},\Phi_{i})$, one builds a
basis $\mathcal{B}_{i}$ of $E_{i}$ by induction on $i = 1,\ldots,k$ in such
a way that $\mathcal{B}_{i-1} \subset \mathcal{B}_{i}$ and that
$\mathcal{B}'_{i} := \mathcal{B}_{i} \setminus \mathcal{B}_{i-1}$ lifts 
$\mathcal{D}_{i}$ via $u_{i}$ (showing by the way that $K$-modules $E_{i}$ 
are free of finite ranks $r_{1} + \cdots + r_{i}$).
Write $\Phi_{i}$ (resp. $\overline{\Phi_{i}}$) the semi-linear automorphism 
induced by $\Phi$ (resp. by $\Phi_{i}$) on $E_{i}$ (resp. on $E_{i}/E_{i-1}$), 
and $\mathcal{C}_{i}$ the basis of $E_{i}/E_{i-1}$ induced by 
$\mathcal{B}'_{i}$, one draws from equality
$u_{i} \circ \overline{\Phi_{i}} = \Psi_{i} \circ u_{i}$ (due to the fact
that $u_{i}$ is a morphism) the relation:
$\overline{\Phi_{i}}(\mathcal{C}_{i}) = \mathcal{C}_{i} B_{i}$, then,
from the latter, the relation:
$$
\Phi_{i}(\mathcal{B}'_{i}) \equiv \mathcal{B}'_{i} B_{i} \pmod{E_{i-1}}.
$$
Last, one gets that the matrix of $\Phi$ in basis $\mathcal{B}$ is
block upper-triangular:
$$
\Phi(\mathcal{B}) = \mathcal{B} 
\begin{pmatrix}
B_{1} & \star  & \star \\ 
0     & \ddots & \star \\
0     & 0      & B_{k} 
\end{pmatrix}.
$$
As in \ref{subsection:descmat1}, we identify $P_{i}$ ($1 \leq i \leq k$) 
with $(K^{r_{i}},\Phi_{A_{i}})$, where $A_{i} := B_{i}^{-1} \in GL_{r_{i}}(K)$. 
Likewise, $P$ is identified with $(K^{n},\Phi_{A_{0}})$ and $M$ with
$(K^{n},\Phi_{A})$, where $n := r_{1} + \cdots + r_{k}$ and:
$$
A_{0} = \begin{pmatrix} 
A_{1} & 0      & 0 \\
0     & \ddots & 0 \\ 
0     & 0      & A_{k} 
\end{pmatrix} \text{~~et~~}
A = \begin{pmatrix} 
A_{1} & \star  & \star \\
0     & \ddots & \star \\ 
0     & 0      & A_{k} 
\end{pmatrix}.
$$
Note that these relations implicitly presuppose that a filtration 
on $M$ is given, as well as an isomorphism from $\gr M$ to $P$. If
moreover $M' = (K^{n},\Phi_{A'})$, where $A'$ has the same form as $A$ 
(\ie\ $M'$ is filtered and equipped with an isomorphism from $\gr M'$
to $P$), then, a morphism from $M$ to $M'$ respecting filtrations (\ie\ 
sending each $M_{i}$ into $M'_{i}$) is described by a matrix $F$ in the
following block upper triangular form; and the induced endomorphism of 
$P \simeq \gr M \simeq \gr M'$ is described by the corresponding block
diagonal matrix $F_{0}$
$$
F = \begin{pmatrix} 
F_{1} & \star  & \star \\
0     & \ddots & \star \\ 
0     & 0      & F_{k} 
\end{pmatrix} \text{~~et~~}
F_{0} = \begin{pmatrix} 
F_{1} & 0      & 0     \\
0     & \ddots & 0     \\ 
0     & 0      & F_{k} 
\end{pmatrix}.
$$
In particular, a morphism inducing identity on $P$ (thus ensuring that the
filtered modules $M,M'$ belong to the same class in $\F(P_{1},\ldots,P_{k})$) 
is represented by a matrix in $\G(K)$, where we denote $\G$ the algebraic
\label{not130}
subgroup of $GL_{n}$ defined by the following shape:
$$
\begin{pmatrix} I_{r_1} & \star  & \star \\
0     & \ddots & \star \\ 0     & 0      & I_{r_k} 
\end{pmatrix}.
$$
(The unipotent algebraic group $\G$ was previously introduced in paragraph
\ref{subsection:matricialdescriptionofF(P1,...,Pk)}, page
\pageref{endroit:introductiondugroupeG}.) \\

\label{not131}
Now write $A_{U}$ the block upper triangular matrix with block diagonal
component $A_{0}$ and with upper triangular blocks the 
$U_{i,j} \in \Mat_{r_{i},r_{j}}(K)$ ($1 \leq i < j \leq k$); here, $U$ is an
abrevation for the family $(U_{i,j})$. For all $F \in \G(K)$, the matrix
$F[A_{U}]$ is equal to $A_{V}$ for some family of 
$V_{i,j} \in \Mat_{r_{i},r_{j}}(K)$. Thus, the group $\G(K)$ operates on the set
$\prod\limits_{1 \leq i < j \leq k} \Mat_{r_{i},r_{j}}(K)$. The above discussion can 
be summarised as follows:

\begin{prop}
\label{prop:F(P1...Pk)isoProdMat}
The map sending $U$ to the class of $(K^{n},\Phi_{A_{U}})$ induces a
bijection from the quotient of the set 
$\prod\limits_{1 \leq i < j \leq k} \Mat_{r_{i},r_{j}}(K)$ under the action of
the group $\G(K)$ onto the set $\F(P_{1},\ldots,P_{k})$.
\end{prop}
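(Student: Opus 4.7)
The plan is to assemble together the matricial analysis already carried out in subsections \ref{subsection:descmat1} and \ref{subsection:descmat2}, then verify four compatibility points: well-definedness of the map, its surjectivity, compatibility with the $\G(K)$-action, and the converse injectivity statement. Most of the work is already encoded in the preceding discussion, so the proof is essentially a packaging exercise.

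First I would check well-definedness. Given $U = (U_{i,j})_{1 \leq i < j \leq k}$, the matrix $A_U$ is block upper-triangular with diagonal blocks $A_1,\ldots,A_k$; its first $r_1 + \cdots + r_i$ standard basis vectors span a $\Phi_{A_U}$-stable submodule $M_i$ of $K^n$, and the quotient $M_i/M_{i-1}$ is canonically isomorphic to $(K^{r_i},\Phi_{A_i}) = P_i$ via the corresponding basis block. This endows $M_U := (K^n,\Phi_{A_U})$ with a canonical filtration $\underline{M_U}$ and a canonical polarization $\underline{u_U}$, hence a class $\overline{M_U} \in \F(P_1,\ldots,P_k)$.

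Second, surjectivity is exactly what was established in \ref{subsection:descmat2}: for any $(\underline{M},\underline{u})$, the inductive choice of a basis $\mathcal{B}$ adapted to the filtration and lifting the chosen bases $\mathcal{D}_i$ of the $G_i$ identifies $M$ with some $(K^n,\Phi_A)$, where $A$ has the block upper-triangular shape with diagonal blocks $A_1,\ldots,A_k$; setting $U_{i,j}$ to be the off-diagonal blocks of $A$ produces a $U$ such that $A = A_U$, and the construction makes the polarization match $\underline{u_U}$.

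Third, I would verify that the equivalence relation defining $\F(P_1,\ldots,P_k)$ corresponds to the $\G(K)$-action. If $M_U$ and $M_V$ are equivalent, there is a morphism $f: M_U \to M_V$ compatible with filtrations and inducing identity on the graded pieces; in the chosen bases this is represented by a matrix $F$ whose diagonal blocks are the identities $I_{r_i}$, i.e., $F \in \G(K)$. The condition that $f$ intertwines $\Phi_{A_U}$ and $\Phi_{A_V}$ translates (as in \ref{subsection:descmat1}) into $(\sigma F)A_U = A_V F$, i.e., $F[A_U] = A_V$. Conversely, given such an $F \in \G(K)$, one checks that $F[A_U]$ still has $A_0$ as its block diagonal part (this is where the unipotent structure of $\G$ intervenes: the diagonal blocks of $(\sigma F) A_U F^{-1}$ reduce to $I_{r_i} \cdot A_i \cdot I_{r_i}^{-1} = A_i$), so $F[A_U] = A_V$ for a well-defined family $V$, and $F$ induces an equivalence $(\underline{M_U},\underline{u_U}) \sim (\underline{M_V},\underline{u_V})$.

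The only genuine verification — and essentially the single point where one must be slightly careful — is that the $\G(K)$-action is well-defined on $\prod \Mat_{r_i,r_j}(K)$, i.e., that $F[A_U]$ really is of the form $A_V$ when $F \in \G(K)$. This is a direct block-by-block computation on $(\sigma F) A_U F^{-1}$ using that $F - I_n$ is strictly block upper-triangular, and it is the one calculation I would carry out explicitly in the final write-up. Everything else follows by unwinding definitions.
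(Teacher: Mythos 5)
Your proposal is correct and follows essentially the same route the paper takes: the proposition is stated there as a summary of the matricial analysis in the two preceding paragraphs, and your four points (canonical polarization attached to $A_U$, surjectivity via the adapted-basis construction, the two directions of the correspondence between equivalence and the $\G(K)$-action) exactly reconstruct that discussion, with the right emphasis that the choice of block-triangular form is what implicitly fixes the structural isomorphisms $u_i$. The single computational point you flag — that $F[A_U]$ again has block diagonal part $A_0$ when $F$ is unipotent block upper-triangular — is the same assertion the paper makes without proof, and your block-by-block argument (diagonal blocks of a product of block upper-triangular matrices multiply, giving $I_{r_i} A_i I_{r_i} = A_i$) is the right one.
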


\begin{rema}
The point of the above trivial and rather long construction of triangular
matrices is not so much that adequate bases exist, but that, by choosing 
such triangular forms, one implicitly fixes the structural isomorphisms
$M_i/M_{i-1} \simeq P_i$.
\end{rema}

 
\section{Extensions of difference modules}
\label{section:extdiffmod}
\index{extensions of difference modules}

Let $0 \rightarrow M' \rightarrow M \rightarrow M'' \rightarrow 0$
an exact sequence in $\DKMod$. If $M',M''$ are difference modules, 
so is $M$ (it is projective of finite rank because we have a split 
sequence of $K$-modules). The calculus of extensions is therefore
the same in $\DM$ as in $\DKMod$ and we will simply write $\Ext(M'',M')$ 
the group $\Ext_{\D_{K,\sigma}}(M'',M')$ of classes of extensions of $M''$ 
by $M'$. From general homological algebra, we know that $\Ext(M'',M')$
is actually endowed with a structure of $C$-module, whence a $C$-module
structure on $\F(M',M'')$ (as noted at the beginning of section
\ref{subsection:smallvalues}). \\

On the other hand, if $M'$, $M''$ are free over $K$ of respective ranks 
$n'$, $n''$, proposition \ref{prop:F(P1...Pk)isoProdMat} of the previous 
section allows for a description of $\F(M',M'')$ as a quotient of the
space $\Mat_{n',n''}(K)$ by a $C$-linear group action. This also provides
\index{module structure on $\F(M',M'')$}
a $C$-module structure on $\F(M',M'')$. We shall show in this section that 
these structures actually coincide. For any two left modules $M'$,$M''$ 
over a unitary $C$-algebra, the linear structure on $\Ext(M'',M')$ is 
completely described in \cite[\S 7]{BAH}\footnote{And, to the best of 
our knowledge, nowhere else.} and we shall rely on this description to 
make explicit the corresponding structure of $C$-module of $\F(M',M'')$ 
in the case of arbitrary difference modules $M'$ and $M''$. The main result 
is theorem \ref{theo:descriptionextensions}. \\

So let $M = (E,\Phi)$ and $N = (F,\Psi)$. Any extension
$0 \rightarrow N \rightarrow R \rightarrow M \rightarrow 0$ of $M$
by $N$ gives rise (by restriction of scalars) to an exact sequence
of $K$-modules $0 \rightarrow F \rightarrow G \rightarrow E \rightarrow 0$ 
such that, if $R = (G,\Gamma)$, the following diagram is commutative:
$$
\begin{CD}
0 @>>> F   @>{i}>> G     @>{j}>>      E  @>>> 0     \\
&  &   @VV{\Psi}V      @VV{\Gamma}V      @VV{\Phi}V                 \\
0 @>>> F   @>{i}>> G     @>{j}>>      E @>>> 0
\end{CD}
$$
(We wrote again $i,j$ the underlying $K$-linear maps.) Since $E$ is 
projective, the sequence is split and one can from start identify
$G$ with the $K$-module $F \times E$, thus writing $i(y) = (y,0)$ and 
$j(y,x) = x$. The compatibility conditions $\Gamma \circ i = i \circ \Psi$
and $\Phi \circ j = j \circ \Gamma$ then imply:
\label{not132}
$$
\Gamma(y,x) = \Gamma_{u}(y,x) := \bigl(\Psi(y) + u(x), \Phi(x)\bigr),
\text{~with~} u \in \Lin_{\sigma}(E,F),
$$
where we write $\Lin_{\sigma}(E,F)$ the set of $\sigma$-linear maps from
$E$ to $F$. (This means that $u$ is a group morphism such that
$u(\lambda x) = \sigma(\lambda) u(x)$.) Setting moreover
$R_{u} := (F \times E,\Gamma_{u})$, which is a difference module 
naturally equipped with a structure of extension of $M$ by $N$, we see
that we have defined a surjective map:
\begin{align*}
\Lin_{\sigma}(E,F) & \rightarrow \Ext(M,N), \\
u & \mapsto \theta_{u} := \text{~class of~} R_{u}.
\end{align*}
We can make precise the conditions under which $u,v \in \Lin_{\sigma}(E,F)$
have the same image $\theta_{u} = \theta_{v}$, \ie\ under which $R_{u}$
and $R_{v}$ are equivalent extensions. This happens if there exists
a morphism $\phi: R_{u} \rightarrow R_{v}$ inducing the identity map
on $M$ and $N$, that is, a linear map
$\phi: F \times E \rightarrow F \times E$ such that 
$\Gamma_{v} \circ \phi = \phi \circ \Gamma_{u}$ (since it is a morphism
of difference modules) and having the form 
$(x,y) \mapsto \bigl(y + f(x),x\bigr)$ (since it induces the identity 
maps on $E$ and on $F$). Now, the first condition becomes:
$$
\forall (y,x) \in F \times E \;,\;
\bigl(\Psi(y + f(x)) + v(x),\Phi(x)\bigr) =
\bigl(\Psi(y) + u(x) + f(\Phi(x)),\Phi(x)\bigr),
$$
that is:
$$
u - v = \Psi \circ f - f \circ \Phi.
$$
Remark by the way that, for all $f \in \Lin_{K}(E,F)$, the map
$t_{\Phi,\Psi}(f) := \Psi \circ f - f \circ \Phi$ is $\sigma$-linear from
$E$ to $F$.

\begin{theo}
\label{theo:descriptionextensions}
The map $u \mapsto \theta_{u}$ from $\Lin_{\sigma}(E,F)$ to $\Ext(M,N)$ 
is functorial in $M$ and in $N$, $C$-linear, and its kernel is the image
of the $C$-linear map:
\begin{align*}
t_{\Phi,\Psi}: \Lin_{K}(E,F) & \rightarrow \Lin_{\sigma}(E,F), \\
f & \mapsto \Psi \circ f - f \circ \Phi. 
\end{align*}
\end{theo}
\begin{proof}

\noindent \underline{Functoriality.} \\
We shall only prove (and use) it on the covariant side, \ie\ in $N$. 
We invoke \cite[\S 7.1 p. 114, example 3 and \S 7.4, p. 119, prop. 4]{BAH}.
Let $\theta$ be the class in $\Ext(M,N)$ of the extension 
$0 \overset{i}{\rightarrow} N \rightarrow R 
\overset{j}{\rightarrow} M \rightarrow 0$ and
$g: N \rightarrow N'$ a morphism in $\DM$. Let
$$
\begin{CD}
0 @>>> N   @>{i}>>  R     @>{j}>>       M  @>>> 0     \\
&  &   @VV{g}V      @VV{h}V      @VV{\Id_{M}}V                 \\
0 @>>> N'  @>{i'}>> R'    @>{j'}>>      M  @>>> 0
\end{CD}
$$
be a commutative diagram of exact sequences. If $\theta'$ is the class
in $\Ext(M,N)$ of the extension
$0 \overset{i'}{\rightarrow} N' \rightarrow R'
\overset{j'}{\rightarrow} M \rightarrow 0$,
then:
$$
\Ext(\Id_{M},g)(\theta) = g \circ \theta = 
\theta' \circ \Id_{M} = \theta'.
$$
We take:
$$
R' := R \oplus_{N} N' = 
\dfrac{R \times N'}{\{\bigl(i(n),-g(n)\bigr) \tq n \in N\}},
$$
with $i',j'$ the obvious arrows, and for $R$ the extension $R_{u}$;
with the previous notations for $N,M,R$, and also writing 
$N' = (F',\Psi')$, with compatibility condition 
$\Psi' \circ g = g \circ \Psi$, we see that the $K$-module underlying
$R \oplus_{N} N'$ is:
$$
G' := \dfrac{F \times E \times F'}{\{\bigl(y,0,-g(y)\bigr) \tq y \in F\}},
$$
endowed with the semi-linear automorphism induced by the map
$\Gamma_{u} \times \Psi'$ from $F \times E \times F'$ to itself
(the latter does fix the denominator). \\
The map $(y,x,y') \mapsto \bigl(y' + g(y),x\bigr)$ from
$F \times E \times F'$ to $F' \times E$ induces an isomorphism from $G'$ 
to $F' \times E$ and the induced semi-linear automorphism on $G'$ is 
$(y',x) \mapsto \bigl(\Psi'(y') + g\bigl(u(x)\bigr), \Phi(x)\bigr)$,
that is $\Gamma_{gu}$, from which it follows that $R' = R_{gu}$.
The arrows $i',j'$ are determined as follows: $i'(y')$ is the class 
of $(0,y')$ in $G'$, that is, under the previous identification,
$i'(y') = (y',0)$; and $j'(y',x)$ is the image of an arbitrary preimage,
for instance the class of $(0,x,y')$: that image is $j(0,x) = x$. 
We have therefore shown that the class of the extension $R_{u}$ by  
$\Ext(\Id_{M},g)$ is $R_{gu}$, which is the wanted functoriality. It is
expressed by the commutativity of the following diagram:
$$
\begin{CD}
\Lin_{\sigma}(E,F)  @>>> \Ext(M,N) \\
@VV{\Lin_{\sigma}(\Id_{M},g)}V   @VV{\Ext(\Id_{M},g)}V  \\
\Lin_{\sigma}(E,F') @>>> \Ext(M,N')
\end{CD}
$$

\noindent \underline{Linearity.} \\
According to the remark just before the theorem, the map $t_{\Phi,\Psi}$ 
indeed sends $\Lin_{K}(E,F)$ to $\Lin_{\sigma}(E,F)$. \\
\emph{Addition.}
The reference here is \cite[\S 7.6, rem. 2 p. 124]{BAH}. From the
extensions
$0 \rightarrow N \overset{i}{\rightarrow} R 
\overset{p}{\rightarrow} M \rightarrow 0$ and
$0 \rightarrow N \overset{i'}{\rightarrow} R'
\overset{p'}{\rightarrow} M \rightarrow 0$ having classes 
$\theta,\theta' \in \Ext^{1}(M,N)$, one computes $\theta + \theta'$
as the class of the extension
$0 \rightarrow N \overset{i''}{\rightarrow} R''
\overset{p''}{\rightarrow} M \rightarrow 0$, where:
$$
R'' := \dfrac{\{(z,z') \in R \times R' \tq p(z) = p'(z')\}}
{\{(-i(y),i'(y)) \tq y \in N\}},
$$
and $i''(y)$ is the class of $(0,i'(y))$, \ie\ the same as the class of
$(i(y),0)$; and $p''$ sends the class of $(z,z')$ to $p(z) = p'(z')$.
Taking $R = R_{u}$ and $R' = R_{u'}$, the numerator of $R''$ is identified
with $F \times F \times E$ equipped with the semi-linear automorphism
$(y,y',x) \mapsto (\Psi(y) + u(x),\Psi(y') + u'(x),\Phi(x))$. 
The denominator is identified with the subspace $\{(-y,y,0) \tq y \in F\}$
equipped with the induced map. The quotient is identified with
$0 \times F \times E$, through the map $(y,y',x) \mapsto (0,y'',x)$, where
$y'' := y' + y$, equipped with the semi-linear automorphism $\Phi''$ which
sends $(0,y'',x)$ to
$$
(0,\Psi(y') + u'(x) + \Psi(y) + u(x),\Phi(x)) =
(0,\Psi(y'') + (u + u')(x),\Phi(x)).
$$
This is indeed $R_{u+u'}$. \\
\emph{External multiplication.}
The reference here is \cite[\S 7.6, prop. 4 p. 119]{BAH}. 
Let $\lambda \in C$. We apply the invoked proposition to the following
commutative diagram of exact sequences:
$$
\begin{CD}
0 @>>> N   @>>>  R_{u}          @>>>       M  @>>> 0     \\
&  &  @VV{\times \lambda}V  @VV{(\times \lambda,\Id_{M})}V @VV{\Id_{M}}V \\
0 @>>> N  @>>> R_{\lambda u}    @>>>       M  @>>> 0
\end{CD}
$$
If $\theta$, $\theta'$ are the classes in $\Ext(M,N)$ of the two
extensions, one infers from \emph{loc. cit.} that:
$$
\theta' \circ \Id_{M} = (\times \lambda) \circ \theta 
\Longrightarrow \theta' = \lambda \theta.
$$
The class of the extension $R_{\lambda u}$ is therefore indeed equal
to the product of $\lambda$ by the class of the extension $R_{u}$. \\

\noindent \underline{Exactness.} \\
It follows immediately from the computation shown just before the
statement of the theorem.
\end{proof}


\subsection{The complex of solutions}
\label{subsection:complexedessolutions}

The following is sometimes considered as a difference analog of the 
de Rham complex in one variable, see for instance \cite{YA,TarVar}.

\begin{defi}
\index{complex of solutions}
We call \emph{complex of solutions of $M$ in $N$} the following
complex of $C$-modules:
\label{not133}
\begin{align*}
t_{\Phi,\Psi}: \Lin_{K}(E,F) & \rightarrow \Lin_{\sigma}(E,F), \\
f & \mapsto \Psi \circ f - f \circ \Phi. 
\end{align*}
concentrated in degrees $0$ and $1$.
\end{defi}

It is indeed clear that the source and target are $C$-modules, that
the map $t_{\Phi,\Psi}$ does send the source to the target and that
it is $C$-linear. 

\begin{coro}
\label{coro:homologiecomplexesolutions}
The homology of the complex of solutions is $H^{0} = \Hom(M,N)$ and
$H^{1} = \Ext(M,N)$, and these equalities are functorial.
\end{coro}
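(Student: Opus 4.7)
The plan is to assemble this corollary directly from results already established in the preceding material; essentially no new argument is required, only the correct bookkeeping.

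First I would handle $H^{0}$. By definition, $H^{0}$ of the two-term complex concentrated in degrees $0$ and $1$ is the kernel of $t_{\Phi,\Psi}$. But in subsection \ref{subsection:foncteurGamma} the exact sequence
$$0 \rightarrow \Hom(M,N) \rightarrow \Lin_K(M,N) \overset{t_{\Phi,\Psi}}{\longrightarrow} \Lin_{\sigma}(M,N)$$
was established, whose exactness at the middle term is exactly the statement that $\Hom(M,N) = \ker t_{\Phi,\Psi}$. So the identification $H^{0} = \Hom(M,N)$ is immediate, and it is functorial because both the $\Hom$ functor and the formation of $\Lin_{K}, \Lin_{\sigma}$ are manifestly functorial (contravariant in $M$, covariant in $N$), with $t_{\Phi,\Psi}$ a natural transformation between them.

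Next I would handle $H^{1}$. By definition, $H^{1}$ of the complex is $\coker t_{\Phi,\Psi} = \Lin_{\sigma}(E,F)/\Im\, t_{\Phi,\Psi}$. But Theorem \ref{theo:descriptionextensions} provides precisely a surjective $C$-linear map $u \mapsto \theta_{u}$ from $\Lin_{\sigma}(E,F)$ onto $\Ext(M,N)$ whose kernel is the image of $t_{\Phi,\Psi}$. Passing to the quotient therefore yields a $C$-linear isomorphism $H^{1} \simeq \Ext(M,N)$, which is the desired identification.

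For the functoriality of the $H^{1}$ identification, I would invoke the functoriality statement already proved in Theorem \ref{theo:descriptionextensions} (the diagram relating $\Lin_{\sigma}(\Id_M,g)$ to $\Ext(\Id_M,g)$ shown there in $N$, and the analogous contravariant statement in $M$, which goes through by the symmetric construction using pullback of extensions). Combined with the evident naturality of the source complex $\Lin_{K} \to \Lin_{\sigma}$ in both variables, this shows that the induced isomorphism on $H^{1}$ is a natural transformation of bifunctors.

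There is really no obstacle here: both assertions are literal repackagings of previously proved facts, the only point worth stating explicitly being that $H^{1}$, being defined as a cokernel, matches $\Ext(M,N)$ via the surjection of Theorem \ref{theo:descriptionextensions} precisely because its kernel was shown there to coincide with $\Im\, t_{\Phi,\Psi}$. Hence the proof reduces to writing ``this is the content of \ref{subsection:foncteurGamma} and of Theorem \ref{theo:descriptionextensions}.''
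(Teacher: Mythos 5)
Your proposal is correct and follows the same route as the paper: $H^0$ is identified as $\ker t_{\Phi,\Psi} = \Hom(M,N)$ directly from the exact sequence of subsection \ref{subsection:foncteurGamma}, and $H^1 = \coker t_{\Phi,\Psi} = \Ext(M,N)$ is read off from Theorem \ref{theo:descriptionextensions}, with functoriality in both variables inherited from the earlier functoriality statements. The paper's proof is merely terser, observing that ``the statement about $H^1$ is the theorem'' and that the kernel of $t_{\Phi,\Psi}$ is $\Hom(M,N)$ by inspection.
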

\begin{proof} 
The statement about $H^{1}$ is the theorem. As regards $H^{0}$, the 
kernel of $t_{\Phi,\Psi}$ is the $C$-module 
$\{f \in \Lin_{K}(E,F) \tq \Psi \circ f = f \circ \Phi\}$,
that is, $\Hom(M,N)$; and functoriality is obvious in that case.
\end{proof}

\begin{coro}
\label{coro:longuesuite}
From the exact sequence
$0 \rightarrow N' \rightarrow N \rightarrow N'' \rightarrow 0$,
one deduces the ``cohomology long exact sequence'':
\begin{align*}
0 \rightarrow \Hom(M,N') \rightarrow & \Hom(M,N) \rightarrow \Hom(M,N'') \\
& \rightarrow \Ext(M,N') \rightarrow \Ext(M,N) \rightarrow \Ext(M,N'')
\rightarrow 0.
\end{align*}
\end{coro}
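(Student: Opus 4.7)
The plan is to derive the six-term exact sequence from the standard long exact sequence of cohomology applied to a short exact sequence of complexes of solutions, using Corollary \ref{coro:homologiecomplexesolutions} to identify cohomology groups with $\Hom$ and $\Ext$.

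Write $M = (E,\Phi)$, $N' = (F',\Psi')$, $N = (F,\Psi)$, $N'' = (F'',\Psi'')$. By definition of exactness in $\DM$, the given short exact sequence of difference modules yields an exact sequence $0 \rightarrow F' \rightarrow F \rightarrow F'' \rightarrow 0$ of underlying $K$-modules. Because $F''$ is projective of finite rank (being part of a difference module), this sequence of $K$-modules splits. The first step is then to apply the functors $\Lin_K(E,-)$ and $\Lin_\sigma(E,-) = \Lin_K(E,\sigma^*-)$, which preserve the splitting, yielding two short exact sequences of $C$-modules:
\begin{align*}
0 \rightarrow \Lin_K(E,F') \rightarrow \Lin_K(E,F) \rightarrow \Lin_K(E,F'') \rightarrow 0, \\
0 \rightarrow \Lin_\sigma(E,F') \rightarrow \Lin_\sigma(E,F) \rightarrow \Lin_\sigma(E,F'') \rightarrow 0.
\end{align*}

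The second step is to observe that these two sequences are compatible with the differentials $t_{\Phi,\Psi'}$, $t_{\Phi,\Psi}$, $t_{\Phi,\Psi''}$. Indeed, if $g: F' \rightarrow F$ is the morphism from our exact sequence, it satisfies $\Psi \circ g = g \circ \Psi'$, so for any $f \in \Lin_K(E,F')$,
$$
t_{\Phi,\Psi}(g \circ f) = \Psi \circ g \circ f - g \circ f \circ \Phi = g \circ (\Psi' \circ f - f \circ \Phi) = g \circ t_{\Phi,\Psi'}(f),
$$
and similarly for the map $N \rightarrow N''$. Hence we have a short exact sequence of two-term complexes of $C$-modules (both concentrated in degrees $0$ and $1$).

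The third step invokes the standard long exact sequence of cohomology attached to a short exact sequence of complexes, together with Corollary \ref{coro:homologiecomplexesolutions}, which identifies $H^0 = \Hom(M,-)$ and $H^1 = \Ext(M,-)$ functorially. This yields exactly the six-term sequence of the statement. The final surjection to $0$ is automatic from the fact that our complexes have no terms in degree $2$ or higher, so the long exact sequence terminates there. I do not expect a real obstacle: the only point requiring some care is the verification that the connecting morphism $\Hom(M,N'') \rightarrow \Ext(M,N')$ produced by the snake lemma coincides, under the identifications of Corollary \ref{coro:homologiecomplexesolutions}, with the usual boundary map of extension theory, but this is automatic from the functoriality asserted there.
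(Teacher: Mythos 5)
Your proof is correct and takes essentially the same route as the paper: the paper also forms the commutative diagram of split-exact rows obtained by applying $\Lin_K(E,-)$ and $\Lin_\sigma(E,-)$, with vertical arrows $t_{\Phi,\Psi'}$, $t_{\Phi,\Psi}$, $t_{\Phi,\Psi''}$, and invokes the snake lemma. Phrasing it as the long exact sequence for a short exact sequence of two-term complexes, together with the functorial identification from Corollary \ref{coro:homologiecomplexesolutions}, is the same argument; you simply spell out the commutativity check that the paper leaves to the reader.
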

\begin{proof}
We keep the previous notations (and moreover adapt them to $N',N''$). 
The exact sequence of projective $K$-modules
$0 \rightarrow F' \rightarrow F \rightarrow F'' \rightarrow 0$
being split, both lines of the commutative diagram:
$$
\begin{CD}
0 @>>> \Lin_{K}(E,F') @>>> \Lin_{K}(E,F)  @>>> \Lin_{K}(E,F'')  @>>> 0     \\
&  &  @VV{t_{\Phi,\Psi'}}V @VV{t_{\Phi,\Psi}}V @VV{t_{\Phi,\Psi''}}V  \\
0 @>>> \Lin_{\sigma}(E,F') @>>> \Lin_{\sigma}(E,F)  @>>> 
\Lin_{\sigma}(E,F'')  @>>> 0 
\end{CD}
$$
are exact, and it is enough to call to the snake lemma.
\end{proof}


\subsection{Matricial description of extensions of difference modules}
\label{subsection:descmat3}
\index{matricial description of extensions of difference modules}

We now assume $E,F$ to be free of finite rank over $K$ and accordingly
identify them with $M = (K^{m},\Phi_{A})$, $A \in GL_{m}(K)$ and
$N = (K^{n},\Phi_{B})$, $B \in GL_{n}(K)$. An extension of $N$ by $M$ 
then takes the form $R = (K^{m+n},\Phi_{C})$, where
$C = \begin{pmatrix} A & U \\ 0_{n,m} & B \end{pmatrix}$ for some 
rectangular matrix $U \in \Mat_{m,n}(K)$; we shall write $C = C_{U}$.
The injection $M \rightarrow R$ and the projection $R \rightarrow N$
have as respective matrices $\begin{pmatrix} I_{m} \\ 0_{n,m} \end{pmatrix}$
and $\begin{pmatrix} 0_{n,m} & I_{n}\end{pmatrix}$. The extension thus
defined will be denoted $R_{U}$. \\

A morphism of extensions $R_{U} \rightarrow R_{V}$ is a matrix of the form
$F = \begin{pmatrix} I_{m} & X \\ 0_{n,m} & I_{n} \end{pmatrix}$ for some
rectangular matrix $X \in \Mat_{m,n}(K)$. The compatibility condition with
the semi-linear automorphisms writes:
$$
(\sigma F) C_{U} = C_{V} F \Longleftrightarrow U + (\sigma X) B = A X + V
\Longleftrightarrow V - U = (\sigma X) B - A X.
$$

\begin{coro}
\label{coro:descmatrextabstrait}
The $C$-module $\Ext^{1}(N,M)$ is thereby identified with the cokernel
of the endomorphism $X \mapsto (\sigma X) B - A X$ of $\Mat_{m,n}(K)$.
\end{coro}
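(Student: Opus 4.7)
The plan is to obtain the corollary as the explicit matricial translation of Theorem \ref{theo:descriptionextensions} (applied with the roles of the two arguments of $\Ext^1$ swapped, so that we classify extensions $0 \to M \to R \to N \to 0$), together with the direct matrix calculation already carried out just above the statement.

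First, I would justify the \emph{surjectivity} of the map $U \mapsto [R_U]$. Since $N = (K^n,\Phi_B)$ has a free underlying $K$-module, any short exact sequence $0 \to M \to R \to N \to 0$ in $\DM$ is split as a sequence of $K$-modules, so one can canonically identify the underlying $K$-module of $R$ with $K^m \oplus K^n = K^{m+n}$ in such a way that the injection and projection are the standard block inclusion and projection. The compatibility of the semi-linear automorphism of $R$ with those of $M$ and $N$ then forces it to have the form $\Phi_{C_U}$ for a unique $C_U = \begin{pmatrix} A & U \\ 0 & B \end{pmatrix}$ with $U \in \Mat_{m,n}(K)$, exactly as in the discussion preceding the corollary.

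Second, the computation done just above the statement (comparing $(\sigma F)C_U$ with $C_VF$ for $F = \begin{pmatrix} I_m & X \\ 0 & I_n \end{pmatrix}$) shows that $R_U$ and $R_V$ are isomorphic extensions of $N$ by $M$ if and only if $V-U \in \operatorname{Im}\bigl(X \mapsto (\sigma X)B - AX\bigr)$. This identifies $\Ext^1(N,M)$ \emph{as a set} with the cokernel in question.

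Third, it remains to check that the bijection is $C$-linear for the natural $C$-module structure on the cokernel. For this I invoke Theorem \ref{theo:descriptionextensions} with $(E,\Phi) := N$ and $(F,\Psi) := M$, which gives a $C$-linear surjection $\Lin_\sigma(K^n,K^m) \twoheadrightarrow \Ext^1(N,M)$, $u \mapsto [R_u]$, whose kernel is the image of $t_{\Phi_B,\Phi_A}$. Assigning to a $\sigma$-linear map $u: K^n \to K^m$ its matrix $X'$ in the standard bases (so that $u(Y) = X'\sigma(Y)$) yields a $C$-module isomorphism $\Lin_\sigma(K^n,K^m) \xrightarrow{\sim} \Mat_{m,n}(K)$. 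A direct calculation of $\Gamma_u$ shows $R_u = R_U$ with $U = -AX'B$, and the operator $t_{\Phi_B,\Phi_A}$, written on matrices, becomes $X \mapsto A^{-1}\sigma(X) - XB^{-1}$; composing with the $C$-linear automorphism $X' \mapsto -AX'B$ of $\Mat_{m,n}(K)$ transforms this into $X \mapsto AX - (\sigma X)B$, which has the same image as $X \mapsto (\sigma X)B - AX$. This exhibits the desired $C$-linear isomorphism.

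The main (and only) obstacle is the bookkeeping in the third paragraph: the two natural parametrizations of extensions by matrices (the one coming from $\Gamma_u$ in Theorem \ref{theo:descriptionextensions} and the one coming from the upper-right block $U$ of $C_U$) differ by the $C$-linear twist $X' \mapsto -AX'B$, and one must verify that this twist intertwines the two presentations of the cokernel, which it does by the computation above.
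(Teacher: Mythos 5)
Your proof is correct and follows the same route as the paper: the matrix parametrization plus the computation preceding the statement give the set-level bijection, and $C$-linearity is obtained by translating Theorem \ref{theo:descriptionextensions} into matrices. The paper's own proof is a one-sentence appeal to that theorem; you have filled in the bookkeeping it leaves implicit, in particular exhibiting the $C$-linear twist $X' \mapsto -AX'B$ that intertwines the $\sigma$-linear parametrization with the upper-right-block parametrization.
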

\begin{proof}
The above construction provides us with a bijection, but it follows from
theorem \ref {theo:descriptionextensions} that it is indeed an isomorphism.
\end{proof}

By taking $m = n = 1$, and $A := (1)$, $B := (cz)$, we recover example
\ref{exem:archetypal} page \pageref{exem:archetypal}. Letting $c := 1$
yields the case that was studied in paragraph \ref{subsection:prototypal}.


\section{Extension of scalars}
\label{section:extscal}
\index{extension of scalars}

We want to see $\F(P_{1},\ldots,P_{k})$ as a scheme over $C$, that is as
a representable functor
$C' \leadsto \F(C' \otimes_{C} P_{1},\ldots,C' \otimes_{C} P_{k})$ 
\index{functor
$C' \leadsto \F(C' \otimes_{C} P_{1},\ldots,C' \otimes_{C} P_{k})$}
from commutative $C$-algebras to sets. To that end, we shall extend what
we did to a ``relative'' situation. \\

Let $C'$ be a commutative $C$-algebra. We set:
$$
K' := C' \otimes_{C} K \text{~and~} \sigma' := 1 \otimes_{C} \sigma.
$$
Then $K'$ is a commutative $C'$-algebra and $\sigma'$ an automorphism
of that $C'$-algebra. Moreover:
$$
\D_{K',\sigma'} := K'<\sigma',{\sigma'}^{-1}> = 
K' \otimes_{K} \D_{K,\sigma} = C' \otimes_{C} \D_{K,\sigma}.
$$
These equalities should be interpreted as natural (functorial) 
isomorphisms. \\

From any difference module $M = (E,\Phi)$ over $(K,\sigma)$, one
gets a difference module $M' = (E',\Phi')$ over $(K',\sigma')$
by putting:
$$
E' = K' \otimes_{K} E = C' \otimes_{C} E \text{~and~}
\Phi' = \sigma' \otimes_{K} \Phi = 1 \otimes_{C} \Phi.
$$
(This is indeed a left $\D_{K',\sigma'}$-module and it is projective
of finite rank over $K'$.) We shall write it $M' = C' \otimes_{k} M$
to emphasize the dependency on $C'$. The following proposition is 
the tool to tackle the case $k = 2$.

\begin{prop}
\label{prop:supermajik}
Let $M,N$ be two difference modules over $(K,\sigma)$. One has a
functorial isomorphism of $C'$-modules:
$$
\Ext_{\D_{K',\sigma'}}(C' \otimes_{C} M,C' \otimes_{C} N) 
\simeq C' \otimes_{C} \Ext_{\D_{K,\sigma}}(M,N),
$$
and a functorial epimorphism of $C'$-modules:
$$
C' \otimes_{C} \Hom_{\D_{K,\sigma}}(M,N) \rightarrow
\Hom_{\D_{K',\sigma'}}(C' \otimes_{C} M,C' \otimes_{C} N).
$$
\end{prop}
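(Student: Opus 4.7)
The plan is to apply the base-change functor $-\otimes_C C'$ to the complex of solutions attached to $(M,N)$ and to compare it with the complex of solutions attached to the base-changed pair $(M',N') = (C'\otimes_C M,\ C'\otimes_C N)$.

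I would start from the four-term exact sequence of $C$-modules supplied by Corollary \ref{coro:homologiecomplexesolutions} (writing $M = (E,\Phi)$, $N = (F,\Psi)$):
$$
0 \to \Hom(M,N) \to \Lin_{K}(E,F) \xrightarrow{t_{\Phi,\Psi}} \Lin_{\sigma}(E,F) \to \Ext(M,N) \to 0.
$$
The key technical step is identifying the effect of $-\otimes_C C'$ on the two middle terms. Since $E$ is projective of finite rank over $K$ (definition of a difference module) and $K' = K\otimes_C C'$, one has $-\otimes_C C' = -\otimes_K K'$ on $K$-modules, and $\Lin_K(E,-)$ commutes with such a base change of scalars. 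One thus obtains a canonical isomorphism $\Lin_K(E,F)\otimes_C C' \cong \Lin_{K'}(E',F')$. Rewriting $\Lin_\sigma(E,F) = \Lin_K(\sigma^*E,F)$ and observing that $\sigma^*E$ is again projective of finite rank, the same argument yields $\Lin_\sigma(E,F)\otimes_C C' \cong \Lin_{\sigma'}(E',F')$. A direct check shows that, under these identifications, $t_{\Phi,\Psi}\otimes 1_{C'}$ matches $t_{\Phi',\Psi'}$.

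Applying right-exactness of $-\otimes_C C'$ to the four-term sequence (split at the image of $t_{\Phi,\Psi}$ into two short exact sequences, then tensored piecewise) produces a sequence
$$
\Hom(M,N)\otimes_C C' \xrightarrow{\alpha} \Lin_{K'}(E',F') \xrightarrow{t_{\Phi',\Psi'}} \Lin_{\sigma'}(E',F') \to \Ext(M,N)\otimes_C C' \to 0
$$
which is exact at its two rightmost terms. Comparing the resulting description of the cokernel of $t_{\Phi',\Psi'}$ with the one provided by Corollary \ref{coro:homologiecomplexesolutions} applied to $(M',N')$, namely $\Ext(M',N')$, yields the first isomorphism of the proposition. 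For the $\Hom$ assertion, the image of $\alpha$ is automatically contained in $\ker(t_{\Phi',\Psi'}) = \Hom(M',N')$, which furnishes the natural $C'$-linear map of the statement. Its surjectivity amounts to exactness of the tensored sequence at $\Lin_{K'}(E',F')$: this is the genuinely subtle point, which I expect to be the main obstacle, and which accounts for why one only obtains an epimorphism and not an isomorphism for $\Hom$ in this generality; it would be addressed by an explicit diagram chase going beyond pure right-exactness, exploiting the factorisation of $t_{\Phi,\Psi}$ through its image.
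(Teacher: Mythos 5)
Your plan — identify the base-changed middle terms of the four-term complex-of-solutions sequence, tensor with $C'$, and compare with the fresh four-term sequence for $(M',N')$ — is exactly the paper's. For the $\Ext$ isomorphism the argument is complete and matches the paper; the only small slip is that $\Lin_\sigma(E,F) = \Lin_K(E,\sigma^*F)$, not $\Lin_K(\sigma^*E,F)$, though this does not affect the argument since $\sigma^*$ preserves projectivity and finite rank.

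For the $\Hom$ epimorphism you are actually \emph{more} careful than the paper, which simply asserts that the tensored four-term sequence ``is exact''. You correctly note that right-exactness of $- \otimes_{C} C'$ only yields exactness at the two rightmost terms, and that surjectivity of the $\Hom$ comparison map is precisely exactness at $\Lin_{K'}(E',F')$. But you then merely promise ``an explicit diagram chase going beyond pure right-exactness'' without producing one, and no diagram chase can conjure this exactness out of nothing: setting $I := \text{im}(t_{\Phi,\Psi})$, the defect of exactness at $\Lin_{K'}(E',F')$ is $\ker\bigl(I\otimes_{C} C'\to \Lin_\sigma(E,F)\otimes_{C} C'\bigr)$, which is the image of $\text{Tor}_1^{C}(\Ext(M,N),C')$ and has no reason to vanish for an arbitrary commutative ring $C$. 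So the gap you flag is a genuine one, shared by the paper's own proof; it disappears when $\Ext(M,N)$ is $C$-flat, which holds in the applications since theorem \ref{theo:themodulispaceabstract} and proposition \ref{prop:structuredesfibres} assume the relevant $\Ext$ modules free of finite rank over $C$. Finally, your last sentence conflates this surjectivity question with the reason one only gets an epimorphism rather than an isomorphism: the latter is about possible failure of injectivity of the comparison map (i.e.\ left-exactness of $-\otimes_{C} C'$), a distinct issue from exactness at $\Lin_{K'}(E',F')$.
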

\begin{proof}
We shall write $M' = C' \otimes_{C} M$, $E' = K' \otimes_{K} E$ etc.
The $K$-modules $E,F$ beeing projective of finite rank, there are
natural isomorphisms:
$$
C' \otimes_{C} \Lin_{K}(E,F) = \Lin_{K'}(E',F') \text{~et~}
C' \otimes_{C} \Lin_{\sigma}(E,F) = \Lin_{\sigma'}(E',F').
$$
(This is immediate if $E$ and $F$ are free, the general case follows.)
By tensoring the (functorial) exact sequence:
$$
0 \rightarrow \Hom_{\D_{K,\sigma}}(M,N) \rightarrow \Lin_{K}(E,F) 
\rightarrow \Lin_{\sigma}(E,F) \rightarrow \Ext_{\D_{K,\sigma}}(M,N)
\rightarrow 0,
$$
we get the exact sequence:
$$
C' \otimes_{C} \Hom_{\D_{K,\sigma}}(M,N) \rightarrow 
C' \otimes_{C} \Lin_{K}(E,F) \rightarrow C' \otimes_{C} \Lin_{\sigma}(E,F) 
\rightarrow C' \otimes_{C} \Ext_{\D_{K,\sigma}}(M,N) \rightarrow 0.
$$
Both conclusions then come by comparison with the exact sequence:
$$
0 \rightarrow \Hom_{\D_{K',\sigma'}}(M',N') \rightarrow \Lin_{K'}(E',F') 
\rightarrow \Lin_{\sigma'}(E',F') \rightarrow \Ext_{\D_{K',\sigma'}}(M',N')
\rightarrow 0.
$$
\end{proof}

\begin{prop}
Let $0 = M_{0} \subset M_{1} \subset \cdots \subset M_{k} = M$ be a
$k$-filtration with associated graded module 
$P_{1} \oplus \cdots \oplus P_{k}$. Then, setting 
$M'_{i} := C' \otimes_{C} M_{i}$ and
$P'_{i} := C' \otimes_{C} P_{i}$, we get a $k$-filtration 
$0 = M'_{0} \subset M'_{1} \subset \cdots \subset M'_{k} = M'$
with associated graded module $P'_{1} \oplus \cdots \oplus P'_{k}$.
\end{prop}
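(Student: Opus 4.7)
The plan is to reduce the statement to the exactness of the extension-of-scalars functor applied to the short exact sequences extracted from the filtration. First, I would observe that, for each $i = 1, \ldots, k$, the hypothesis on the associated graded module provides a short exact sequence of difference modules over $(K,\sigma)$:
\[
0 \to M_{i-1} \to M_i \to P_i \to 0.
\]
Since $P_i$ is a difference module, its underlying $K$-module is projective of finite rank, so this sequence is split as a sequence of $K$-modules.

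Next, I would apply the functor $C' \otimes_C - = K' \otimes_K -$. Because the sequence above is split over $K$, tensoring preserves its exactness, yielding:
\[
0 \to M'_{i-1} \to M'_i \to P'_i \to 0.
\]
The construction of the difference module structure on an extension of scalars, recalled at the start of Section~\ref{section:extscal} and used in the proof of Proposition~\ref{prop:supermajik}, ensures that the three terms are difference modules over $(K',\sigma')$ and that the two arrows are morphisms in the corresponding category. In particular the left arrow is injective, so $M'_{i-1}$ embeds as a submodule of $M'_i$, and the quotient $M'_i/M'_{i-1}$ is canonically isomorphic to $P'_i$ as a difference module over $(K',\sigma')$.

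Finally, I would concatenate these identifications for $i = 1, \ldots, k$, and combine them with the boundary conditions $M'_0 = 0$ and $M'_k = C' \otimes_C M_k = M'$ to produce the asserted filtration of $M'$ with associated graded module $P'_1 \oplus \cdots \oplus P'_k$. The only delicate point is verifying that the arrows obtained after extending scalars are indeed morphisms of $\D_{K',\sigma'}$-modules and that the naive subquotients coincide with the tensor products $C' \otimes_C P_i$; both are immediate from the functoriality of extension of scalars on $K$-split short exact sequences and from the compatibility of $\sigma'$ with $\sigma$, so no genuine obstacle is expected.
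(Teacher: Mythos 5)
Your proof takes exactly the same route as the paper's: identify the short exact sequences $0 \to M_{i-1} \to M_i \to P_i \to 0$, observe they are $K$-split because the $P_i$ are $K$-projective, and conclude that the base change $K' \otimes_K -$ preserves exactness. The paper states this in two lines; you merely spell out the routine checks (that the arrows remain $\D_{K',\sigma'}$-module morphisms and that the subquotients are the expected ones), which is a harmless expansion of the same argument.
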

\begin{proof}
The $P_{i}$ being projective as $K$-modules, the exact sequences
$0 \rightarrow M_{i-1} \rightarrow M_{i} \rightarrow P_{i} \rightarrow 0$
are $K$-split, so they give rise by the base change $K' \otimes_{K}$ 
to exact sequences
$0 \rightarrow M'_{i-1} \rightarrow M'_{i} \rightarrow P'_{i} \rightarrow 0$.
\end{proof}

If $(\underline{M},\underline{u})$ denotes the pair made up of the above 
$k$-filtered object and of a fixed isomorphism from $\gr M$ to 
$P_{1} \oplus \cdots \oplus P_{k}$, we shall write
$(C' \otimes_{C} \underline{M},1 \otimes_{C} \underline{u})$ 
the corresponding pair deduced from the proposition.

\begin{defi}
We define as follows a functor $F$ from the category of commutative
$C$-algebras to the category of sets. For any commutative $C$-algebra
$C'$, we set:
$$
F(C') := \F(C' \otimes_{C} P_{1},\ldots,C' \otimes_{C} P_{k}).
$$
For any morphism $C' \rightarrow C''$ of commutative $C$-algebras,
the map $F(C') \rightarrow F(C'')$ is given by:
$$
\text{class of~} (\underline{M'},\underline{u'}) \mapsto
\text{class of~} 
(C'' \otimes_{k'} \underline{M'},1 \otimes_{C} \underline{u'}).
$$
\end{defi}

The set $F(C')$ is well defined according to the previous constructions.
The map $F(C') \rightarrow F(C'')$ is well defined on pairs thanks to
the proposition, and the reader will check that it goes to the quotient.
Last, the functoriality (preservation of the composition of morphisms)
comes from the contraction rule of tensor products:
$$
C''' \otimes_{C''} (C'' \otimes_{C'} \underline{M'}) =
C''' \otimes_{C'} \underline{M'}.
$$


\section{Our moduli space}
\label{section:modulispace}
\index{moduli space}

To simplify, herebelow, instead of saying ``the functor $F$ is 
represented by an affine space over $C$ (with dimension $d$)'',
we shall say ``the functor $F$ is an affine space over $C$ (with 
dimension $d$)''. This is just the usual identification of a
scheme with the space it represents.

\begin{theo}
\label{theo:themodulispaceabstract}
\index{affine space}
Assume that, for $1 \leq i < j \leq k$, one has $\Hom(P_{j},P_{i}) = 0$
and that the $C$-module $\Ext(P_{j},P_{i})$ is free of finite rank
$\delta_{i,j}$. Then the functor 
$C' \leadsto F(C') := \F(C' \otimes_{C} P_{1},\ldots,C' \otimes_{C} P_{k})$
is an affine space over $C$ with dimension 
$\sum\limits_{1 \leq i < j \leq k} \delta_{i,j}$.
\end{theo}
\begin{proof}
When $k = 1$, it is trivial. When $k = 2$, writing $V$ the free 
$C$-module of finite rank $\Ext(P_{2},P_{1})$, and appealing to
proposition \ref{prop:supermajik}, we see that this is the functor
$C' \leadsto C' \otimes_{C} V$, which is represented by the symetric
algebra of the dual of $V$, an algebra of polynomials over $C$.
For $k \geq 3$, we use an induction based on a lemma of Babbitt 
and Varadarajan \cite[lemma 2.5.3, p. 139]{BV}:
\begin{lemm}
\label{lemm:BabVar}
Let $u: F \rightarrow G$ be a natural transformation between two functors
from commutative $C$-algebras to sets. Assume that $G$ is an affine space
over $C$ and that, for any commutative $C$-algebra $C'$, and for any
$b \in G(C')$, the following functor ``fiber above $b$'' from
commutative $C'$-algebras to sets:
$$
C'' \leadsto u_{C''}^{-1}\bigl(G(C' \rightarrow C'')(b)\bigr)
$$
is an affine space over $C'$. Then $F$ is an affine space over $C$.
\end{lemm}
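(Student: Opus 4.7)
The plan is to construct an explicit polynomial $C$-algebra representing $F$ by pasting together the representation of $G$ with the one supplied by the fiber hypothesis at the universal point of $G$. Since $G$ is an affine $C$-space, I would first write $G \simeq \Hom_{C\text{-alg}}(S,-)$ with $S := C[X_i \tq i \in I]$ a polynomial algebra, and let $\beta \in G(S)$ be the universal element corresponding to $\mathrm{id}_S$; representability then yields, for every commutative $C$-algebra $C'$ and every $b \in G(C')$, a unique $C$-algebra morphism $\phi_b : S \to C'$ with $b = G(\phi_b)(\beta)$.

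Next I would apply the fiber hypothesis to the universal pair $(C',b) = (S,\beta)$: the functor $\Phi_\beta : C'' \leadsto u_{C''}^{-1}\bigl(G(S \to C'')(\beta)\bigr)$ on commutative $S$-algebras is an affine space over $S$, say $\Phi_\beta \simeq \Hom_{S\text{-alg}}(T,-)$ with $T := S[Y_j \tq j \in J] = C[X_i, Y_j]$, again a polynomial $C$-algebra. My candidate representing object for $F$ is $T$, and I would build the natural bijection $\Theta_{C'} : \Hom_{C\text{-alg}}(T, C') \to F(C')$ as follows: a morphism $\psi : T \to C'$ restricts to $\psi|_S : S \to C'$, endowing $C'$ with an $S$-algebra structure in which $\psi$ is an $S$-algebra morphism, hence, by representability of $\Phi_\beta$, corresponds to an element of $\Phi_\beta(C') = u_{C'}^{-1}(G(\psi|_S)(\beta))$, which is a subset of $F(C')$. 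The inverse map takes $f \in F(C')$ with $b := u_{C'}(f)$, views $C'$ as an $S$-algebra via $\phi_b$, and attaches to $f \in \Phi_\beta(C')$ its representing $S$-algebra morphism $T \to C'$, which is in particular a $C$-algebra morphism.

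The main obstacle, and the actual content of the lemma, is the following compatibility which I would establish first: for any $b \in G(C')$, the fiber-above-$b$ functor on $C'$-algebras coincides with the base change along $\phi_b : S \to C'$ of the universal fiber functor $\Phi_\beta$, so that it is automatically represented by $T \otimes_S C' \simeq C'[Y_j]$, an affine $C'$-space of the common \emph{dimension} $|J|$ independent of $b$. The identification is tautological from the definitions: if $h : C' \to C''$ is any $C$-algebra morphism, viewing $C''$ as an $S$-algebra via $h \circ \phi_b$ yields $G(S \to C'')(\beta) = G(h \circ \phi_b)(\beta) = G(h)(b) = G(C' \to C'')(b)$, so the two fiber functors agree at $C''$. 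Once this is granted, the two maps above are mutually inverse and naturality in $C'$ follows from the naturality of both underlying representations (that of $G$ for the transition $\phi_b \mapsto h \circ \phi_b$, and that of $\Phi_\beta$ for the compatibility of representing morphisms under base change). Therefore $F$ is represented by the polynomial $C$-algebra $T = C[X_i, Y_j]$, i.e., $F$ is an affine $C$-space of dimension $|I| + |J|$. This in turn feeds the induction in the proof of theorem \ref{theo:themodulispaceabstract}, where the forgetful map $\F(P_1,\ldots,P_k) \to \F(P_1,\ldots,P_{k-1})$ has fibers given, via proposition \ref{prop:supermajik}, by $\Ext$-modules free of the appropriate ranks, so that the cardinals $|I|$ and $|J|$ become finite and sum up to $\sum_{1 \leq i < j \leq k} \delta_{i,j}$.
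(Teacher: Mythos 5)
Your proof follows exactly the route sketched in the paper's skeleton: represent $G$ by a polynomial algebra $S$, invoke the fiber hypothesis at the universal point $\beta \in G(S)$ to get $T = S[Y_j]$, and verify that $T$ represents $F$ via the tautological identification of each fiber above $b \in G(C')$ with the base change of the universal fiber along $\phi_b$. You merely supply the details of the bijection and naturality that the paper leaves as ``one then shows'', so this is the same proof, just carried out in full.
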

In \emph{loc. cit.}, this theorem is proved for $C = \C$, but the 
argument is plainly valid for any commutative ring. Here is its skeleton.
Choose $B = C[T_{1},\ldots,T_{d}]$ representing $G$. Take for $b$ the
identity of $G(B) = \Hom(B,B)$ (``general point''); the fiber is
represented by $B[S_{1},\ldots,S_{e}]$. One then shows that
$C[T_{1},\ldots,T_{d},S_{1},\ldots,S_{e}]$ represents $F$. This gives
by the way a computation of $\dim F$ as $\dim G + \dim $ of the
general fiber. In our case, all fibers will have the same dimension. \\

On our way to the proof of theorem \ref{theo:themodulispaceabstract},
we shall now have a closer look at these fibers.


\subsection{Structure of the fibers}

Before going to the proof of the theorem, we need an auxiliary result.
Remember that, all along, we assume that $\Hom(P_{j},P_{i}) = 0$ for 
$1 \leq i < j \leq k$.

\begin{prop}
\label{prop:structuredesfibres}
Let $C'$ be a commutative $C$-algebra and let $M'$ be a difference
module over $K' := C' \otimes_{C} K$, equipped with a $(k-1)$-filtration:
$0 = M'_{0} \subset M'_{1} \subset \cdots \subset M'_{k-1} = M'$
such that $\gr M' \simeq P'_{1} \oplus \cdots \oplus P'_{k-1}$
(as usual, $P'_{i} := C' \otimes_{C} P_{i}$). Then the functor in
commutative $C'$-algebras
$C'' \leadsto \Ext(C'' \otimes_{C} P_{k}, C'' \otimes_{C'} M')$
\index{affine space}
is an affine space over $C'$ with dimension
$\sum\limits_{1 \leq i \leq k} \delta_{i,k}$.
\end{prop}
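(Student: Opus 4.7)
The plan is to proceed by induction on the length $j$ of the filtration, establishing simultaneously the following two statements for every commutative $C'$-algebra $C''$: first, $\Hom(C''\otimes_C P_k, C''\otimes_{C'} M'_j) = 0$; and second, the functor $G_j : C'' \leadsto \Ext(C''\otimes_C P_k, C''\otimes_{C'} M'_j)$ is an affine space over $C'$ of dimension $\sum_{i=1}^{j} \delta_{i,k}$. Taking $j = k-1$ (since $M' = M'_{k-1}$) then yields the proposition.

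For the base case $j=1$, one has $C''\otimes_{C'} M'_1 \simeq C''\otimes_C P_1$, so proposition \ref{prop:supermajik} supplies both an epimorphism $C''\otimes_C \Hom(P_k,P_1) \twoheadrightarrow \Hom(C''\otimes_C P_k, C''\otimes_C P_1)$ and an isomorphism $G_1(C'') \simeq C''\otimes_C \Ext(P_k,P_1) \simeq (C'')^{\delta_{1,k}}$. The hypothesis $\Hom(P_k,P_1)=0$ kills the first Hom, and the second is represented by a polynomial ring in $\delta_{1,k}$ variables over $C'$.

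For the inductive step, I would apply corollary \ref{coro:longuesuite} to the short exact sequence $0 \to N''_{j-1} \to N''_j \to P''_j \to 0$ obtained by base change (this stays exact because $P'_j$ is $K'$-projective so the original sequence splits over $K'$), writing $N''_i := C''\otimes_{C'} M'_i$ and $P''_i := C''\otimes_C P_i$. The resulting long exact sequence has its first Hom vanishing by induction and its third Hom vanishing by proposition \ref{prop:supermajik} applied to $\Hom(P_k,P_j)=0$; exactness then forces $\Hom(P''_k, N''_j)=0$ (completing the first assertion) and leaves a short exact sequence
\begin{equation*}
0 \to \Ext(P''_k, N''_{j-1}) \to \Ext(P''_k, N''_j) \to \Ext(P''_k, P''_j) \to 0,
\end{equation*}
in which the rightmost term equals $C''\otimes_C \Ext(P_k,P_j) \simeq (C'')^{\delta_{j,k}}$ by proposition \ref{prop:supermajik}. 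I would then invoke the Babbitt--Varadarajan lemma \ref{lemm:BabVar} applied to the induced natural transformation $u : G_j \to H_j$, where $H_j$ is the affine space of dimension $\delta_{j,k}$ just described: over any $C'$-algebra $C_1$ and any $b \in H_j(C_1)$, the fiber functor on $C_1$-algebras is a torsor for the restriction of $C'' \mapsto \Ext(P''_k, N''_{j-1})$, which by the inductive hypothesis is an affine space of dimension $\sum_{i<j} \delta_{i,k}$, and the torsor is non-empty thanks to surjectivity in the sequence above. The lemma then delivers the desired dimension $\sum_{i\le j}\delta_{i,k}$ for $G_j$.

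The main obstacle, as usual when running Babbitt--Varadarajan, is to be sure that each fiber is genuinely an affine space rather than just a torsor, i.e.\ that the torsor structure admits a section functorial in $C''$; here this is fine because the quotient term in the short exact sequence is a \emph{free} $C''$-module (by proposition \ref{prop:supermajik} and the freeness hypothesis on $\Ext(P_k,P_j)$), hence projective, so the sequence splits, and a splitting can be fixed once and for all by choosing a $C$-basis of $\Ext(P_k,P_j)$. The only other delicate point to keep track of is compatibility of all the constructions under the base change $C' \to C''$, which is ensured by the projectivity (hence flatness) of the $K$-modules and $C'$-modules involved and by the functoriality clauses in proposition \ref{prop:supermajik} and corollary \ref{coro:longuesuite}.
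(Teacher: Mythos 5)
Your proof is correct, but it takes a heavier and genuinely different route from the paper's. The paper's argument has two short steps: first it applies proposition~\ref{prop:supermajik} (with $(K',\sigma')$, $C'\to C''$ in the roles of $(K,\sigma)$, $C\to C'$) \emph{once up front}, identifying the whole functor as $C''\leadsto C''\otimes_{C'}\Ext(P'_k,M')$; and then it carries out the d\'evissage purely at the level of $C'$-modules, obtaining that $\Ext(P'_k,M')$ is free of rank $\sum_{1\le i\le k}\delta_{i,k}$ (the split short exact sequences you wrote, but over $C'$ rather than over a varying $C''$). At that point no fibration argument is needed: a functor of the form $C''\leadsto C''\otimes_{C'}V$ with $V$ free of finite rank over $C'$ is simply represented by the symmetric algebra of $V^{\vee}$. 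By contrast, you run the d\'evissage at the functorial level and invoke the Babbitt--Varadarajan lemma \ref{lemm:BabVar} inside the induction, which is a strictly stronger tool than what the statement requires; in the paper that lemma only enters later, in the proof of theorem \ref{theo:themodulispaceabstract} itself, once this proposition is in hand.

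Two small remarks on your version. First, your worry about fixing a splitting ``once and for all'' is not quite the right point: to apply lemma \ref{lemm:BabVar} you do not need to split the exact sequence of functors, only to trivialize each fiber torsor, and any preimage $e_1\in G_j(C_1)$ of $b\in H_j(C_1)$ (which exists by the surjectivity you proved) does this automatically by naturality, since the translation $x\mapsto x-e_1|_{C_2}$ gives a functorial isomorphism from $\mathrm{Fib}_b$ onto the kernel functor. Second, note that once you invoke proposition \ref{prop:supermajik} over the base change $C'\to C''$ to identify $\Ext(P''_k,P''_j)$, you are very close to the paper's shortcut anyway: applying it to the middle term as well collapses your whole inductive fibration machinery into the module-level statement that the paper uses. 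So both proofs are valid, but the paper's buys you a shorter argument and keeps Babbitt--Varadarajan confined to the place where it is indispensable.
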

\begin{proof}
After proposition \ref{prop:supermajik}, this is the functor
$C'' \leadsto C'' \otimes_{C'} \Ext(P'_{k},M')$. From each exact sequence
$0 \rightarrow M'_{i-1} \rightarrow M'_{i} \rightarrow P'_{i} \rightarrow 0$
one draws the cohomology long exact sequence of corollary 
\ref{coro:longuesuite}; but, from proposition \ref{prop:supermajik}, 
one draws that, for any commutative $C$-algebra $C'$, one has
$\Hom(C' \otimes_{C} P_{j},C' \otimes_{C} P_{i}) = 0$ and that the
$C'$-module $\Ext(C' \otimes_{C} P_{j},C' \otimes_{C} P_{i})$ 
is free of finite rank $\delta_{i,j}$. According to the equalities
$\Hom(P'_{j},P'_{i}) = 0$, the long exact sequence is here shortened as:
$$
0 \rightarrow \Ext(P'_{k},M'_{i-1}) \rightarrow \Ext(P'_{k},M'_{i}) 
\rightarrow \Ext(P'_{k}, P'_{i}) \rightarrow 0,
$$
and, for $i = 1,\ldots,k-1$, these sequences are split, the term at
the right being free. So, in the end:
$$
\Ext(P'_{k},M') \simeq \bigoplus_{1 \leq i \leq k} \Ext(P'_{k}, P'_{i}),
$$
which is free of rank $\sum\limits_{1 \leq i \leq k} \delta_{i,k}$.
As in the case $k = 2$ (which is a particular case of the proposition), 
the functor mentioned is represented by the symetric algebra of the dual
of this module.
\end{proof}

For all $\ell$ such that $1 \leq \ell \leq k$, let us write:
\begin{align*}
V_{\ell} := \bigoplus_{1 \leq i \leq \ell} \Ext(P_{\ell}, P_{i}), \\
W_{\ell} := \bigoplus_{1 \leq i < j \leq \ell} \Ext(P_{j}, P_{i}), \\
V'_{\ell} := \bigoplus_{1 \leq i \leq \ell} \Ext(P'_{\ell}, P'_{i}), \\
W'_{\ell} := \bigoplus_{1 \leq i < j \leq \ell} \Ext(P'_{j}, P'_{i}).
\end{align*}
We consider $V_{\ell}, W_{\ell}$ as affine schemes over $C$ and
$V'_{\ell}, W'_{\ell}$ as affine schemes over $C'$, so that:
$$
V'_{\ell} = C' \otimes_{C} V_{\ell}, \quad W'_{\ell} = C' \otimes_{C} W_{\ell}.
$$
We improperly write $C' \otimes_{C} V$ the base change of affine schemes
$\text{Spec~} C' \otimes_{\text{Spec~} C} V$. Also, we do not distinguish
between the direct sums of the free $C$-modules $\Ext(P_{j}, P_{i})$ and
the product of the corresponding affine schemes. Note that, in the proof 
of the proposition, the isomorphism $\Ext(P'_{k},M') \simeq V'_{\ell}$ is 
functorial in $C'$. This entails:

\begin{coro}
\label{coro:structuredesfibres}
Each fiber $\Ext(P'_{k},M')$ is isomorphic to $V'_{k}$ as a scheme over $C'$.
\end{coro}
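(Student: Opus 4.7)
The plan is to recognize the corollary as a direct scheme-theoretic rephrasing of Proposition \ref{prop:structuredesfibres} together with the explicit $C'$-module isomorphism constructed inside its proof.

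First I would set up the representability of both sides. By Proposition \ref{prop:supermajik}, the fiber functor
$$C'' \leadsto \Ext\bigl(C'' \otimes_{C'} P'_{k},\, C'' \otimes_{C'} M'\bigr)$$
is naturally isomorphic to $C'' \leadsto C'' \otimes_{C'} \Ext(P'_{k},M')$. Since the proof of Proposition \ref{prop:structuredesfibres} established that $\Ext(P'_{k},M')$ is a \emph{free} $C'$-module of finite rank $\sum_{1\leq i<k}\delta_{i,k}$, this functor is represented by the affine space over $C'$ attached to the free module $\Ext(P'_{k},M')$. Likewise, by its very definition, the scheme $V'_{k} = C'\otimes_{C} V_{k}$ represents the functor $C'' \leadsto C''\otimes_{C'} V'_{k}$, which is also an affine space over $C'$ built from a free $C'$-module of the same rank.

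Next I would produce the required identification at the level of $C'$-modules, which is already implicit in the proof of the proposition. From the filtration of $M'$ one extracts the short exact sequences
$$0 \to \Ext(P'_{k},M'_{i-1}) \to \Ext(P'_{k},M'_{i}) \to \Ext(P'_{k},P'_{i}) \to 0$$
for $1 \leq i \leq k-1$, valid because the base-changed vanishing $\Hom(P'_{k},P'_{i})=0$ (ensured by Proposition \ref{prop:supermajik} and the hypothesis $\Hom(P_{k},P_{i})=0$) collapses the long exact sequences of corollary \ref{coro:longuesuite}. Each quotient $\Ext(P'_{k},P'_{i})$ is a free $C'$-module, hence projective, so each sequence splits; composing a choice of splittings yields a $C'$-linear isomorphism
$$\Ext(P'_{k},M') \;\simeq\; \bigoplus_{1\leq i < k} \Ext(P'_{k},P'_{i}) \;=\; V'_{k}.$$
Applying the functor $C''\otimes_{C'}(-)$ to both sides and invoking Proposition \ref{prop:supermajik} once more transports this module isomorphism into an isomorphism of the representing affine schemes over $C'$, which is precisely the statement of the corollary.

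The proof has no serious obstacle, only one cautionary point worth flagging: the splittings of the above short exact sequences are not canonical, so the isomorphism $\Ext(P'_{k},M') \simeq V'_{k}$ depends on those choices. The corollary nonetheless only asserts the \emph{existence} of an isomorphism of $C'$-schemes, not a canonical one, so this dependence is harmless and no further argument is needed. (It is exactly this non-canonicity that will later force the moduli interpretation of Theorem \ref{theo:themodulispaceabstract} to produce an affine space rather than a vector space.)
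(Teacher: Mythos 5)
Your proof is correct and follows essentially the same route as the paper: the paper's own justification is the single remark preceding the corollary, that the isomorphism $\Ext(P'_k,M')\simeq V'_k$ from the proof of Proposition~\ref{prop:structuredesfibres} is ``functorial in $C'$'', and you simply unfold that remark via Proposition~\ref{prop:supermajik} and the chosen splittings. Your cautionary note about non-canonicity usefully pins down what that ``functoriality'' amounts to here: one fixes a $C'$-linear isomorphism via splittings, and it then base-changes naturally in $C''$, which is all the scheme isomorphism over $C'$ requires.
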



\subsection{End of the proof of theorem \ref{theo:themodulispaceabstract}}

Now we end the proof of the theorem. Besides functor $F(C')$, we
consider the functor
$C' \leadsto G(C') := \F(C' \otimes_{C} P_{1},\ldots,C' \otimes_{C} P_{k-1})$,
of which we assume, by induction, that it is an affine space of dimension 
$\sum\limits_{1 \leq i < j \leq k-1} \delta_{i,j}$. The natural transformation 
from $F$ to $G$ is the one described in \ref{subsection:devissage}.
An element $b \in G(C')$ is the class of a pair
$(\underline{M'},\underline{u'})$, a $(k-1)$-filtered object over $C'$,
and the corresponding fiber is the one studied in the above auxiliary
proposition \ref{prop:structuredesfibres}. The lemma \ref{lemm:BabVar}
of Babbitt and Varadarajan then allows us to conclude.
\end{proof}

Looking at the proof of \ref{lemm:BabVar}, one moreover sees that,
if all fibers of $u: F \rightarrow G$ are isomorphic to a same affine
space $V$ (up to obvious extension of the base), then there is an
isomorphism $F \simeq V \times_{C} G$ such that $u$ corresponds to
the second projection. With the previous notations, we see that,
writing $\F_{\ell}$ the functor
$C' \leadsto \F(C' \otimes_{C} P_{1},\ldots,C' \otimes_{C} P_{\ell})$,
corollary \ref{prop:structuredesfibres} gives an isomorphism of schemes:
$$
\F_{\ell} \simeq \F_{\ell-1} \times V_{\ell}.
$$
By induction, we conclude:

\begin{coro}
\label{coro:themodulispaceabstract}
The functor in $C$-algebras
$C' \leadsto \F(C' \otimes_{C} P_{1},\ldots,C' \otimes_{C} P_{k})$
is isomorphic to the functor
$C' \leadsto 
\bigoplus_{1 \leq i < j \leq \ell} \Ext(C' \otimes_{C} P_{k}, C' \otimes_{C} P_{i})$.
That is, we have an isomorphism of affine schemes over $C$:
$$
\F(P_{1},\ldots,P_{k}) \simeq \prod_{1 \leq i < j \leq \ell} \Ext(P_{k}, P_{i}).
$$
\end{coro}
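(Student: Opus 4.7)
The plan is to derive the corollary as an immediate consequence of the proof machinery already set up for Theorem \ref{theo:themodulispaceabstract}, refining the Babbitt--Varadarajan argument so as to produce a \emph{global} trivialisation of the fibration $\F_\ell \to \F_{\ell-1}$ rather than merely a pointwise computation of dimensions. Concretely, I read $\Ext(P_k,P_i)$ in the statement as $\Ext(P_j,P_i)$ (for $1\le i<j\le k$), in accordance with the dimension formula of Theorem \ref{theo:themodulispaceabstract} and with the decomposition $V_\ell=\bigoplus_{1\le i<\ell}\Ext(P_\ell,P_i)$ introduced in the proof of Proposition \ref{prop:structuredesfibres}.

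First I would record the following strengthening of the lemma of Babbitt--Varadarajan, which is implicit in the cited proof: if $u\colon F\to G$ is a natural transformation, if $G$ is represented by $B=C[T_1,\dots,T_d]$, and if for the ``generic point'' $b\in G(B)=\Hom(B,B)$ given by $\Id_B$ the fiber functor is represented by $B[S_1,\dots,S_e]$ \emph{via an isomorphism natural in $B$-algebras and moreover obtained by base change from a fixed affine space $V$ over $C$}, then in fact $F\simeq V\times_C G$ with $u$ corresponding to the second projection. The verification is essentially contained in Babbitt--Varadarajan's argument: one checks that, under the hypothesis of uniform fibers, the representing algebra $C[T_1,\dots,T_d,S_1,\dots,S_e]$ of $F$ factors as a tensor product $\mathrm{Sym}(V^\vee)\otimes_C B$.

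Next I apply this to the forgetful morphism $\F_\ell\to \F_{\ell-1}$ described in subsection \ref{subsection:devissage}. Corollary \ref{coro:structuredesfibres} provides, functorially in the commutative $C'$-algebra $C'$ and in the pair $(\underline{M'},\underline{u'})$ representing a point of $\F_{\ell-1}(C')$, an isomorphism of $C'$-schemes between the fiber $\Ext(P'_\ell,M'_{\ell-1})$ and $V'_\ell=C'\otimes_C V_\ell$. The key point is that the splittings of the short exact sequences $0\to\Ext(P'_\ell,M'_{i-1})\to\Ext(P'_\ell,M'_i)\to\Ext(P'_\ell,P'_i)\to 0$ used in that proof arise from the canonical vanishing $\Hom(P'_\ell,P'_i)=0$ and from freeness of the right-hand terms, hence can be chosen compatibly. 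This yields precisely the uniform-fiber hypothesis, so the strengthened Babbitt--Varadarajan lemma gives an isomorphism $\F_\ell\simeq V_\ell\times_C \F_{\ell-1}$ over $\F_{\ell-1}$.

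Finally I would conclude by induction on $k$. The case $k=1$ is the trivial singleton; the case $k=2$ is the first nontrivial instance of Theorem \ref{theo:themodulispaceabstract} combined with Proposition \ref{prop:supermajik}. Iterating the trivialisation $\F_\ell\simeq V_\ell\times_C\F_{\ell-1}$ for $\ell=k,k-1,\dots,2$ produces the desired isomorphism of affine schemes
\[
\F(P_1,\dots,P_k)\;\simeq\;\prod_{\ell=2}^{k} V_\ell\;=\;\prod_{1\le i<j\le k}\Ext(P_j,P_i),
\]
and the corresponding isomorphism of functors follows by applying the same argument after base change to any commutative $C$-algebra $C'$, using the compatibility of all the constructions with such base changes (Proposition \ref{prop:supermajik}). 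The only genuine obstacle is the uniform-fiber refinement of Babbitt--Varadarajan in the first step; once one checks that the identifications of Proposition \ref{prop:structuredesfibres} depend only on the filtered datum through base change and can be promoted to an isomorphism of functors above $\F_{\ell-1}$, the rest is a mechanical induction.
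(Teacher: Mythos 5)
Your proposal is correct and follows essentially the same route as the paper: the authors also remark after the proof of Lemma \ref{lemm:BabVar} that uniform fibers force $F\simeq V\times_C G$, then invoke Corollary \ref{coro:structuredesfibres} to identify each fiber of $\F_\ell\to\F_{\ell-1}$ with $V'_\ell$, and conclude by induction. You are also right to read the statement with $\Ext(P_j,P_i)$ and the product over $1\le i<j\le k$, which is consistent with the dimension formula of Theorem \ref{theo:themodulispaceabstract} and with the paper's definition of $W_k$.
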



\section{Extension classes of difference modules}
\label{section:extensionsofdifferencemodules}
\index{extension classes of difference modules}

We shall now describe more precisely extension spaces in the case of 
a difference field $(K,\sigma)$. We keep all the previous notations.
Our main tools are theorem \ref{theo:descriptionextensions} and its
corollaries \ref{coro:homologiecomplexesolutions} and
\ref{coro:descmatrextabstrait}. We shall give more ``computational''
variants of these results. \\

\index{cyclic vector lemma}
We use the cyclic vector lemma (lemma \ref{lemm:cyclicvector}): the
proof given in \cite{LDV} is valid for general difference modules, under
the assumption that the characteristic of $K$ is $0$ and that $\sigma$
is not of finite order (\eg\ $q$ is not a root of unity). Although 
most of what follows remains true for $\D$-modules of arbitrary length
(because $\D$ is principal \cite{BAH}), the proofs, inspired by 
\cite[II.1.3]{Sab}, are easier for $\D/\D P$. Recall that, the center 
of $\D$ being $C$, all functors considered here are $C$-linear and produce 
$C$-vector spaces. For a difference module $M = (E,\Phi)$, we still write 
$E$ the $C$-vector space underlying $E$. 

\begin{prop}
\label{prop:complexedessolutions}
Let $M := \D/\D P$ and $N := (F,\Psi)$ two difference modules.
Then $\Ext^{i}(M,N) = 0$ for $i \geq 2$ and there is an exact sequence
of $C$-vector spaces:
$$
0 \rightarrow \Hom(M,N) \rightarrow F \overset{P(\Psi)}{\longrightarrow} 
F \rightarrow \Ext^{1}(M,N) \rightarrow 0.
$$
\end{prop}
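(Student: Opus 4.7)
The plan is to exhibit an explicit length–one free resolution of $M = \D/\D P$ in the category of left $\D$-modules, apply the functor $\Hom_{\D}(-,N)$, and interpret the resulting two–term complex as $F \xrightarrow{P(\Psi)} F$.

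First I would build the resolution. The ring $\D = \D_{K,\sigma}$ is a left and right Ore (left Euclidean) domain; in particular it has no zero divisors and any $P \in \D$ is regular. Right multiplication by $P$ is a morphism of left $\D$–modules $\rho_{P}: \D \to \D$, $d \mapsto dP$, and it is injective by the no–zero–divisor property. Its cokernel is by definition $\D/\D P = M$, so we obtain the short exact sequence of left $\D$–modules
$$
0 \longrightarrow \D \xrightarrow{\;\rho_{P}\;} \D \longrightarrow M \longrightarrow 0,
$$
which is a free (hence projective) resolution of length one. Consequently $\Ext^{i}(M,N) = 0$ for $i \geq 2$, and computing the remaining Ext groups reduces to the cohomology of the two–term complex $\Hom_{\D}(\D,N) \to \Hom_{\D}(\D,N)$ obtained by applying $\Hom_{\D}(-,N)$.

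Next I would identify this complex with $F \xrightarrow{P(\Psi)} F$. The evaluation at $1 \in \D$ gives a $C$-linear isomorphism $\Hom_{\D}(\D,N) \simeq N$, whose underlying set is $F$: to $\phi$ one associates $\phi(1)$, and the inverse sends $y \in F$ to the unique $\D$–linear map $d \mapsto d \cdot y$. Under this identification, the morphism induced by $\rho_{P}$ is $\phi \mapsto \phi \circ \rho_{P}$, which at $1$ gives $\phi(P) = P \cdot \phi(1)$. Writing $P = \sum a_{i} T^{i}$, the action of $T$ on $N = (F,\Psi)$ is exactly $\Psi$, so $P$ acts on $F$ as $P(\Psi) = \sum a_{i} \Psi^{i}$. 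The induced morphism is therefore $P(\Psi): F \to F$.

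Finally, taking the $H^{0}$ and $H^{1}$ of this two–term complex yields
$$
0 \longrightarrow \Hom(M,N) \longrightarrow F \xrightarrow{\;P(\Psi)\;} F \longrightarrow \Ext^{1}(M,N) \longrightarrow 0,
$$
as desired. The only genuine point to check is the compatibility of the right–multiplication action of $P$ on $\Hom_{\D}(\D,N)$ with the operator $P(\Psi)$ on $F$; this is the mild obstacle I anticipate, but it follows immediately from $\D$–linearity and the tautological identification $T \cdot y = \Psi(y)$.
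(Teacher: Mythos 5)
Your proof is correct and follows essentially the same route as the paper: both start from the two-step free presentation $\D \xrightarrow{\cdot P} \D \to M \to 0$, deduce $\Ext^{i}(M,N)=0$ for $i\geq 2$ because $\D$ is free, and identify the induced map $\Hom_{\D}(\D,N)\to\Hom_{\D}(\D,N)$ with $P(\Psi)$ by evaluating at $1\in\D$. The only (cosmetic) difference is that you make the injectivity of right multiplication by $P$ explicit to get a genuine length-one free resolution, whereas the paper just speaks of a presentation and reads the same long exact sequence off it.
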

\begin{proof}
From the presentation (in the category of left $\D$-modules):
$$
\D \overset{\times P}{\rightarrow} \D \rightarrow M \rightarrow 0,
$$
one draws the long exact sequence:
\index{long exact sequence}
\begin{align*}
0 \rightarrow \Hom(M,N) \rightarrow & \Hom(\D,N) \rightarrow \Hom(\D,N) \\
& \rightarrow \Ext^{1}(M,N) \rightarrow \Ext^{1}(\D,N) \rightarrow 
\Ext^{1}(\D,N) \rightarrow \cdots \\
& \cdots \rightarrow 
\Ext^{i}(M,N) \rightarrow \Ext^{i}(\D,N) \rightarrow \Ext^{i}(\D,N) 
\rightarrow \cdots
\end{align*}
Since $\D$ is free, $\Ext^{i}(\D,N) = 0$ for $i \geq 1$ and the portion
$\Ext^{i-1}(\D,N) \rightarrow \Ext^{i}(M,N) \rightarrow \Ext^{i}(\D,N)$
of the long exact sequence gives the first conclusion.

We are left to identify the portion $\Hom(\D,M) \rightarrow \Hom(\D,M)$.
Of course, 
$$
x \mapsto \lambda_{x} := \bigl(Q \mapsto Q.x = Q(\Psi)(x)\bigr) 
\text{~and~} f \mapsto f(1)
$$
are isomorphisms from $M$ to $\Hom(\D,M)$ and return reciprocal to each
other. Then, the map to be identified is $f \mapsto f \circ (\times P)$, 
where, of course, $\times P$ denotes the map $Q \mapsto Q P$. Conjugating
it with our isomorphisms yields
$x \mapsto \bigl(\lambda_{x} \circ (\times P)\bigr)(1) = (P.1)(\Psi)(x)$,
that is, $P(\Psi)$. 
\end{proof}

\begin{rema}
The complex $F \overset{P(\Psi)}{\longrightarrow} F$ (in degrees $0$ and $1$)
thus has cohomology $\Hom(M,N)$, $\Ext^{1}(M,N)$. This is clearly functorial
in $N$. On the other hand, there is a part of arbitrariness in the choice
of $P$, but, after \cite{BAH} \S 6.1, the homotopy class of the complex
depends on $M$ alone.
\end{rema}

\begin{coro}
\label{coro:complexedessolutions}
\index{complex of solutions}
Let $M = (E,\Phi)$. The \emph{complex of solutions of $M$}:
$$
E \overset{\Phi - \Id}{\longrightarrow} E
$$
has cohomology $\Gamma(M)$, $\Gamma^{1}(M)$.
\end{coro}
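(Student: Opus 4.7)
The plan is to derive this directly from Proposition \ref{prop:complexedessolutions} by a careful choice of the first argument. Recall from the paper that the unit difference module $\1$ admits the presentation $\1 \simeq \D/\D P$ with $P = \sigma - 1$, and that by definition $\Gamma(M) = \Hom(\1, M)$, with derived functors $\Gamma^{i}(M) = \Ext^{i}(\1, M)$.

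First I would apply Proposition \ref{prop:complexedessolutions} with the roles $M \leadsto \1 = \D/\D(\sigma-1)$ and $N \leadsto M = (E,\Phi)$. The proposition then yields an exact sequence of $C$-vector spaces
\begin{equation*}
0 \rightarrow \Hom(\1, M) \rightarrow E \overset{P(\Phi)}{\longrightarrow} E \rightarrow \Ext^{1}(\1, M) \rightarrow 0,
\end{equation*}
together with the vanishing $\Ext^{i}(\1, M) = 0$ for $i \geq 2$. Next I would identify the differential: since $P = \sigma - 1$, substituting the semi-linear automorphism $\Phi$ for $\sigma$ gives $P(\Phi) = \Phi - \Id$, which is precisely the differential of the complex of solutions appearing in the corollary statement.

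Finally, recognizing the outer terms of the exact sequence as $\Gamma(M)$ and $\Gamma^{1}(M)$ by the definitions above, we conclude that the cohomology of the two-term complex $E \overset{\Phi - \Id}{\longrightarrow} E$ (placed in degrees $0$ and $1$) is $\Gamma(M)$ in degree $0$ and $\Gamma^{1}(M)$ in degree $1$. There is no real obstacle here; the only thing to check carefully is that the identification $f \mapsto f(1)$ of $\Hom(\D, M)$ with $E$ used in the proof of Proposition \ref{prop:complexedessolutions} conjugates the map $f \mapsto f \circ (\times P)$ into $P(\Phi) = \Phi - \Id$ acting on $E$, which is exactly what was established in that proof.
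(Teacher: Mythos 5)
Your proposal is correct and is essentially the paper's own proof: the paper likewise obtains the corollary by applying Proposition \ref{prop:complexedessolutions} with $M := \1 = \D/\D(\sigma-1)$, $P := \sigma - 1$, and $N := (E,\Phi)$, so that $P(\Phi) = \Phi - \Id$. The only difference is that you spell out the identification $f \mapsto f(1)$ a bit more explicitly, which is a reasonable elaboration rather than a different argument.
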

\begin{proof}
In the proposition, take $M := \1$, $P := \sq - 1$ and $N := (E,\Phi)$.
\end{proof}

Note that this is functorial in $M$. \\

\begin{rema}
Applying the corollary to $M^{\vee} \otimes N$ implies that the map
$f \mapsto \Psi \circ f \circ \Phi^{-1} - f$ has kernel $\Hom(M,N)$
(which is obvious) and that its cokernel is in one-to-one correspondance
with $\Ext^{1}(M,N)$, which is similar to the conclusion of theorem 
\ref{theo:descriptionextensions}. However, in this way, we do not get 
the identification of the operations on extensions: this would be possible 
using \cite[\S 6.3]{BAH}.
\end{rema}

\begin{coro}
\label{coro:complexedessolutionsbis}
Let $M = \D/\D P$. Then, for any dual $P^{\vee}$ of $P$, the complex:
$$
K \overset{P^{\vee}(\sigma)}{\longrightarrow} K
$$
has cohomology $\Gamma(M)$, $\Gamma^{1}(M)$.
\end{coro}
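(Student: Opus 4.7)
The plan is to combine Proposition~\ref{prop:complexedessolutions} (applied to the dual module $M^{\vee}$) with the duality isomorphism $\Ext^{i}(A,B) \simeq \Ext^{i}(B^{\vee},A^{\vee})$ available in a rigid tensor category. More precisely, by the definition of a dual polynomial recalled in paragraph~\ref{subsubsection:functorofsolutions}, the datum of $P^{\vee}$ means that $M^{\vee} \simeq \Dq/\Dq P^{\vee}$ in $\DM$, so that $M^{\vee}$ admits the same type of cyclic presentation as $M$ but with $P^{\vee}$ replacing $P$.

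First I would establish the duality identification $\Gamma^{i}(M) \simeq \Ext^{i}(M^{\vee},\1)$. For $i=0$ this is the degree-zero instance of the adjunction formula of Theorem~\ref{theo:adjunction}: $\Hom(\1,M) = \Hom(\1,(M^{\vee})^{\vee}) = \Hom(\1,\Homint(M^{\vee},\1)) = \Hom(M^{\vee} \otimes \1,\1) = \Hom(M^{\vee},\1)$. For the derived version, I would invoke Proposition~\ref{prop:GammaetExt}, which gives $\Ext^{i}(M^{\vee},\1) \simeq \Gamma^{i}((M^{\vee})^{\vee} \otimes \1) = \Gamma^{i}(M)$; here the canonical isomorphism $(M^{\vee})^{\vee} \simeq M$ from the rigidity of $\DMa$ (Theorem~\ref{theo:quasitannakiennes}) is used. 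This step is essentially formal once the adjunction and the identification of $\Ext$ with $\Gamma$ of the internal Hom are in place.

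Then I would apply Proposition~\ref{prop:complexedessolutions} with the roles of $M$ and $N$ played respectively by $M^{\vee} = \Dq/\Dq P^{\vee}$ and by $\1 = (K,\sigma)$. The proposition then asserts that the complex
\[
K \overset{P^{\vee}(\sigma)}{\longrightarrow} K
\]
(concentrated in degrees $0$ and $1$) computes $\Hom(M^{\vee},\1)$ and $\Ext^{1}(M^{\vee},\1)$, and that all higher $\Ext^{i}(M^{\vee},\1)$ vanish. Combining with the identification of the previous paragraph, this gives exactly the desired $\Gamma(M)$ and $\Gamma^{1}(M)$ as the cohomology of the complex $K \xrightarrow{P^{\vee}(\sigma)} K$.

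The only genuine subtlety, and thus the main point to verify carefully, is that the identification $\Gamma^{i}(M) \simeq \Ext^{i}(M^{\vee},\1)$ is functorial (and in particular does not depend on the choice of $P^{\vee}$, which is non-unique as observed after Lemma~\ref{lemm:cyclicvector}). This is guaranteed by the fact that both sides are derived functors of naturally isomorphic left exact functors, so the isomorphism is canonical; in particular, different choices of $P^{\vee}$ yield homotopy-equivalent complexes, consistent with the remark following Proposition~\ref{prop:complexedessolutions} that the homotopy class of the complex depends on $M^{\vee}$ alone.
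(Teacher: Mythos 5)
Your proof is correct and follows essentially the same approach as the paper's: apply Proposition~\ref{prop:complexedessolutions} with $N := \1$ and use the duality identification $\Gamma^{i}(X) \simeq \Ext^{i}(X^{\vee},\1)$ furnished by Proposition~\ref{prop:GammaetExt} together with the adjunction of Theorem~\ref{theo:adjunction}. The only difference is one of bookkeeping: the paper applies the proposition to $M$ to read off $\Gamma^{i}(M^{\vee})$ as cohomology of $K \xrightarrow{P(\sigma)} K$ and then substitutes $M \mapsto M^{\vee}$, whereas you apply it directly to $M^{\vee} = \D/\D P^{\vee}$; your closing paragraph on independence of the choice of $P^{\vee}$ usefully spells out what the paper merely flags with ``(no functoriality here!)''.
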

\begin{proof}
In the proposition, take $N := \1 = (K,\sigma)$. 
This gives $\Gamma(M^{\vee})$, $\Gamma^{1}(M^{\vee})$ as cohomology 
of $K \overset{P(\sigma)}{\longrightarrow} K$.
Now, replace $M$ by $M^{\vee}$. (Of course, no functoriality here!)
\end{proof}

\begin{exem}
For $u \in K^{*} = \GL_{1}(K)$, where $K = \Ka$ or $\Kf$, put 
$M_{u} := (K,\Phi_{u}) = \Dq/\Dq (\sq - u^{-1})$. A dual of $\sq - u^{-1}$
is, for instance, $\sq - u$. One has $M_{u}^{\vee} = M_{u^{-1}}$ and 
$M_{u} \otimes M_{v} = M_{u v}$. \\
From the corollaries above, we draw that 
$K \overset{u^{-1} \sq - 1}{\longrightarrow} K$, resp. 
$K \overset{\sq - u}{\longrightarrow} K$ has cohomology $\Gamma(M_{u})$, 
$\Gamma^{1}(M_{u})$. From the proposition, we draw that 
$K \overset{v^{-1} \sq - u^{-1}}{\longrightarrow} K$
has cohomology $\Hom(M_{u},M_{v})$, $\Ext^{1}(M_{u},M_{v})$.
\end{exem}


\section{The cohomological equation}
\label{section:cohomologicalequation}
\index{cohomological equation}

We here prepare the grounds for the examples of chapter 
\ref{chapter:examplesStokes} by giving some practical recipes.


\subsection{Some inhomogeneous equations}
\label{subsection:inhomogeneous}

Like in the theory of linear differential equations, many interesting
examples come in dimension $2$; see for instance example
\ref{exem:archetypal} page \pageref{exem:archetypal} , and the more
detailed studies of chapter \ref{chapter:generalnonsense}, paragraph 
\ref{subsection:prototypal} and chapter \ref{chapter:examplesStokes}, 
section \ref{section:qEuler}. \\

So let $a,b \in K^{*}$ and $u \in K$. Extensions of $M_{b} := (K,\Phi_{b})$ 
by $M_{a} := (K,\Phi_{a})$ have form $N_{a,b,u} := (K^{2},\Phi_{A_{u}})$ where
$A_{u} := \begin{pmatrix} a & u \\ 0 & b \end{pmatrix}$. An isomorphism
of extensions from $N_{a,b,u}$ to $N_{a,b,v}$ would be a matrix
$F := \begin{pmatrix} 1 & f \\ 0 & 1 \end{pmatrix}$ such that
$b \sigma f - a f = v - u$. More generally, the space $\Ext(M_{b},M_{a})$
is isomorphic to the cokernel of the $C$-linear map $b \sigma - a$ from 
$K$ to itself. We shall call \emph{cohomological equation} the following
first order inhomogeneous equation:
\begin{equation}
\label{equation:equationcohomologique}
b \sigma f - a f = u.
\end{equation}
This can be seen as the obstruction to finding an isomorphism $F$
from $A_{0}$ to $A_{u}$. \\

More generally, let $L := a_{0} + \cdots + a_{n} \sigma^{n} \in \D$.
By vectorializing the corresponding equation we get a system with
matrix:
$$
A = \begin{pmatrix}
0 & 1 & 0 & \ldots & 0 \\
0 & 0 & 1 & \ldots & 0 \\
\vdots & \vdots & \vdots & \ddots & \vdots \\
0 & 0 & 0 & \ldots & 1 \\
- a_{0}/a_{n} & - a_{1}/a_{n} & - a_{2}/a_{n} & - \ldots & - a_{n-1}/a_{n} 
\end{pmatrix}.
$$
Actually, $M := (K^{n},\Phi_{A}) \simeq (\D/\D L)^{\vee}$. From corollary
\ref{coro:complexedessolutionsbis}, we deduce:
$$
\text{Coker~} L \simeq \Gamma^{1}(M) \simeq \Ext(\1,M).
$$
This can be seen as an obstruction to finding an isomorphism 
$\begin{pmatrix} I_{n} & X \\ 0 & 1 \end{pmatrix}$ from
$\begin{pmatrix} A & 0 \\ 0 & 1 \end{pmatrix}$ to
$\begin{pmatrix} A & U \\ 0 & 1 \end{pmatrix}$. In fact,
an isomorphism 
$\begin{pmatrix} I_{n} & X \\ 0 & 1 \end{pmatrix}$ from
$\begin{pmatrix} A & U \\ 0 & 1 \end{pmatrix}$ to
$\begin{pmatrix} A & V \\ 0 & 1 \end{pmatrix}$ would correspond 
to a solution of $\sigma X - A X = V - U$. Assuming for instance
$a_{n} = 1$ and writing $x_{i}$ the components of $X$, this gives 
the equations $\sigma x_{i} - x_{i} = v_{i} - u_{i}$ ($i = 1,\ldots,n-1$)
and $\sigma x_{n} + a_{0} x_{1} + \cdots + a_{n-1} x_{n} = v_{n} - u_{n}$.
From this, one can solve trivially to get an equivalent of $U$ with
components $0,\ldots,0,u$ (exercice for the reader). And, if $U$
has this form, finding an isomorphism 
$\begin{pmatrix} I_{n} & X \\ 0 & 1 \end{pmatrix}$ from
$\begin{pmatrix} A & 0 \\ 0 & 1 \end{pmatrix}$ to
$\begin{pmatrix} A & U \\ 0 & 1 \end{pmatrix}$
is equivalent to solving $L x_{1} = u$.
Still more generally, it is easy to see that finding an isomorphism 
$\begin{pmatrix} I_{n} & X \\ 0 & I_{p} \end{pmatrix}$ from
$\begin{pmatrix} A & 0 \\ 0 & B \end{pmatrix}$ to
$\begin{pmatrix} A & U \\ 0 & B \end{pmatrix}$ is equivalent 
to solving $(\sigma X) B - A X = U$, and to obtain anew an 
identification of the cokernel of $X \mapsto (\sigma X) B - A X$
with $\Ext\bigl((K^{p},\Phi_{B}),(K^{n},\Phi_{A})\bigr)$. 


\subsection{A homotopy}
\label{subsection:complexhomotopy}

The following is intended to be an explanation of the equivalence
of various computations above. Let $(K,\sigma)$ be a difference field 
with constant field $C$. The $\sigma$-difference operator 
$P := \sigma^{n} + a_{1} \sigma^{n-1} + \cdots + a_{n}$ gives rise to 
a $C$-linear complex $K \overset{P}{\rightarrow} K$. Writing $A_{P}$ 
the companion matrix described in \ref{subsubsection:functorofsolutions},
we also have a complex $K^{n} \overset{\Delta}{\rightarrow} K^{n}$, where we
have set $\Delta := \sigma - A_{P}$. We then have a morphism of complexes:
$$
\begin{CD}
K  @>{P}>> K \\
@V{V}VV   @V{I}VV  \\
K^{n} @>{\Delta}>> K^{n}
\end{CD}, 
\text{~where~} V(f) := 
\begin{pmatrix} f \\ \sq f \\ \vdots \\ \sq^{n-1} f \end{pmatrix}
\text{~and~} I(g) := \begin{pmatrix} 0 \\ \vdots \\ 0 \\ g \end{pmatrix}.
$$
(The equality $\Delta \circ V = I \circ P$ is obvious.) We now introduce 
the operators $P_{i} := \sigma^{n-i} + a_{1} \sigma^{n-i-1} + \cdots + a_{n-i}$ 
(they are related to the Horner scheme for $P$). We then have a morphism of 
complexes in the opposite direction:
$$
\begin{CD}
K  @>{P}>> K \\
@A{\pi_{1}}AA   @A{\Pi}AA  \\
K^{n} @>{\Delta}>> K^{n}
\end{CD}, 
\text{~where~} 
\pi_{1} \begin{pmatrix} f_{1} \\ \vdots \\ f_{n} \end{pmatrix} := f_{1}
\text{~and~} 
\Pi \begin{pmatrix} g_{1} \\ \vdots \\ g_{n} \end{pmatrix} := 
\sum_{i=1}^{n} P_{i} g_{i}.
$$
(Of course, one must check that $\Pi \circ \Delta = P \circ \pi_{1}$.)
Clearly $(\pi_{1},\Pi) \circ (V,I)$ is the identity of the first complex. 
We are going to see that $(V,I) \circ (\pi_{1},\Pi)$ is homotopic to the 
identity of the second complex, whence their homological equivalence 
(see \cite[\S 2.4, def. 4,5 and prop. 5]{BAH}). To that end, we introduce 
a backward operator $K^{n} \overset{\Delta'}{\leftarrow} K^{n}$ defined by 
the following relation:
$$
\Delta'\begin{pmatrix} g_{1} \\ \vdots \\ g_{n} \end{pmatrix} := 
\begin{pmatrix} g'_{1} \\ \vdots \\ g'_{n} \end{pmatrix}, \text{~where~} 
g'_{i} := \sum_{j+k = i-1} \sigma^{j} g_{k} \Longrightarrow
\begin{cases}
V \circ \pi_{1} - \Id_{K^{n}} = \Delta' \circ \Delta, \\
I \circ \Pi - \Id_{K^{n}} = \Delta \circ \Delta'. 
\end{cases}
$$
(The computations are mechanical and again left to the reader.) 
This implies that the two complexes are indeed homotopic.


\backmatter

\bibliography{smfcal}

\chapter*{Index of notations}
\label{indexnotations}

$\Ra$, $\Ka$, $\Rf$, $\Kf$ 
\dotfill \pageref{not1}

$q \in \C$ 
\dotfill \pageref{not2}

$\sq f(z)$ 
\dotfill \pageref{not3}

$\Eq$, $p: \C^{*} \rightarrow \Eq$, $[\lambda;q]$, $\overline{\lambda}$ 
\dotfill \pageref{not4}

$\sq X = A X$ 
\dotfill \pageref{not5}

$\theta(z;q)$, $\theta(z)$ 
\dotfill \pageref{not6}

$\thq(z)$ 
\dotfill \pageref{not7}

$\Rfs$, $\Kfs$ 
\dotfill \pageref{not8}

$\D_{K,\sigma}$, $\D$ 
\dotfill \pageref{not9}

$K^{\sigma}$ 
\dotfill \pageref{not10}

$\DMod$ 
\dotfill \pageref{not11}

$(E,\Phi)$ 
\dotfill \pageref{not12}

$DiffMod(K,\sigma)$ 
\dotfill \pageref{not13}

$(K^{n},\Phi_{A})$, $\Phi_{A}(X)$ 
\dotfill \pageref{not14}

$\1$, $\Gamma(M)$, $\Gamma^{i}(M)$ 
\dotfill \pageref{not15}

$\Homint(M,N)$, $M^{\vee}$ 
\dotfill \pageref{not16} 

$M \otimes N$, $\rk M$ 
\dotfill \pageref{not17} 

$\bigl(\Ka,\sq\bigr)$, $\bigl(\Kf,\sq\bigr)$, $F[A]$, $\Dq$ 
\dotfill \pageref{not18}

$N(P)$, $S(P)$, $r_{P}$
\dotfill \pageref{not19}

$N(M)$, $S(M)$, $r_{M}$
\dotfill \pageref{not20}

$\tsh(z)$
\dotfill \pageref{not21}

$M_{(\mu)}$
\dotfill \pageref{not22}

$M_{\leq \mu}$, $M_{< \mu}$, $M_{(\mu)}$, $\gr\ M$
\dotfill \pageref{not23}

$\F(M_{0})$, $\F(P_{1},\ldots,P_{k})$, $(M,g)$
\dotfill \pageref{not24}

$\Bq f(\xi)$
\dotfill \pageref{not25}

$\C(\{\xi\})_{q,1}$
\dotfill \pageref{not26}

$\F(P,P') \rightarrow \Ext(P',P)$
\dotfill \pageref{not27}

$A_U$, $M_U$
\dotfill \pageref{not28}

$\G$
\dotfill \pageref{not29}

$M_{U} \sim M_{V}$
\dotfill \pageref{not30}

$\G^{A_{0}}$
\dotfill \pageref{not31}

$\hat{F}_{U}$, $\hat{F}_{U,V}$
\dotfill \pageref{not32}

$\text{Irr~}(P)$, $\text{Irr~}(M)$
\dotfill \pageref{not33}

$\underline{\End}(M_{0}) = M_{0}^{\vee} \otimes M_{0}$
\dotfill \pageref{not34}

$M_U$, $A_U$
\dotfill \pageref{not35}

$K_{\mu,\nu}$
\dotfill \pageref{not36}

$Red(\mu_{1},A_{1},\mu_{2},A_{2},U)$
\dotfill \pageref{not37}

$\tsh$
\dotfill \pageref{not38}

$\B_{q,d}$
\dotfill \pageref{not39}

$\overline{M_{U}}$
\dotfill \pageref{not40}

$\Sigma$, $\Sigma (A)$
\dotfill \pageref{not41}

$S_{n-1} \hat{f}$
\dotfill \pageref{not42}

$\Aa(U)$, $\Aa_{0}(U)$, $\Bb(U)$
\dotfill \pageref{not43}

$U_{\infty}$, $\Aa(U_{\infty})$, $\Aa_{0}(U_{\infty})$, $\Bb(U_{\infty})$
\dotfill \pageref{not44}

$\mathcal{C}^{\infty}_{Whitney}(K')$
\dotfill \pageref{not45}

$W_n(K')$
\dotfill \pageref{not46}

$\tilde W_n(K')$
\dotfill \pageref{not47}

$\Lambda_I$, $\Lambda_I^G$, $\Lambda_I^{\G}$, $\Lambda_I(M)$
\dotfill \pageref{not48}

$H^1(\Eq;\Lambda_I)$
\dotfill \pageref{not49}

$p_0$
\dotfill \pageref{not50}

$\Cc^0(\UU;\GL_n(\Aa))$, $C^0(\UU;\GL_n(\Aa))$, $C^1(\UU;\Lambda_I)$,
$C^0(\UU; \GL_n\bigl(\Rf\bigr))$, 
\dotfill \pageref{not51}

$C^0(\UU;\Lambda_I) \backslash \Cc^0(\UU;\GL_n(\Aa)) 
/H^0\bigl(\UU;\GL_n(\Aa)\bigr)$
\dotfill \pageref{not52}

$C^0(\UU;\Lambda_I) \backslash \Cc^0(\UU;\GL_n(\Aa)) /\GL_n\bigl(\Ra\bigr)$
\dotfill \pageref{not52}

$\Ee(X)$
\dotfill \pageref{not53}

${\mathbf M}_g$, $\overset{\bullet}{\mathbf M}_g$, 
$\bigl({\mathbf M}_g,\pi,(\C,0)\bigr)$
\dotfill \pageref{not54}

$\widehat{\mathbf M}_g$, $\hat{\mathbf F}$, $\hat\pi$,
$\hat\pi_0: \hat{\mathbf F} \rightarrow  \bigl((\C,0){\hat\vert}\{0\}\bigr)$
\dotfill \pageref{not55}

$\Bigl(\widehat{\mathbf M}_g,\hat\pi,\bigl((\C,0){\hat\vert}\{0\}\bigr)\Bigr)$,
$\bigl(\hat{\mathbf F},\hat\pi_0,((\C,0){\hat\vert}\{0\})\bigr)$
\dotfill \pageref{not55}

$\GL^{M_0}_n\bigl(\Rf\bigr)$
\dotfill \pageref{not56}

$[\lambda] := [\lambda;q]$, 
$\Lambda = \nu_1 [\lambda_1] + \cdots + \nu_m [\lambda_m]$
\dotfill \pageref{not57}

$\lmod \Lambda \rmod$, $\Vert \Lambda \Vert$
\dotfill \pageref{not58}

$d_q(z,[\lambda])$, $\Vert q \Vert_1$
\dotfill \pageref{not59}

$D([\lambda];\rho)$, $D^c([\lambda];\rho)$
\dotfill \pageref{not60}

$d_q(z,\Lambda)$, $D(\Lambda;\rho)$, $D^c(\Lambda;\rho)$
\dotfill \pageref{not61}

$\delta(z,\Lambda)$, $D_\delta(\Lambda;\rho)$
\dotfill \pageref{not62}

$S(\Lambda,\epsilon;R)$, $S(\Lambda,\epsilon)$
\dotfill \pageref{not63}

$\BB^{\Lambda}$
\dotfill \pageref{not64}

$e(z) := e(z;q) :=\theta\left(\lmod z\rmod;\lmod q \rmod\right)$
\dotfill \pageref{not65}

$W_{(\infty)}$, $W_{(0)}$
\dotfill \pageref{not66}

$\AA_{q}^{\Lambda}$, $\AA_{q;|\Lambda|}^{\Lambda}$
\dotfill \pageref{not67}

$J(f)$
\dotfill \pageref{not68}

$f \in \AA_q^{[\lambda]}$ 
\dotfill \pageref{not69}

$D^\eta_{i,n}$, $\CC^\eta_{i,n}$, $\RR_N$
\dotfill \pageref{not70}

$\RR_{j,N}$
\dotfill \pageref{not71}

$C_{j,N}^{\delta,r}$
\dotfill \pageref{not72}

$\Lambda F(0)$
\dotfill \pageref{not73}

$\EE_0^{\Lambda}$, $\theta_\Lambda(z)$
\dotfill \pageref{not74}

$(q^{-1};q^{-1})_\infty$
\dotfill \pageref{not75}

$\Lambda^\prime < \Lambda$, $\Lambda/\Lambda^\prime$, 
$(\Lambda/\Lambda^\prime)F(0)$, $\EE_{(\Lambda_1,\Lambda_2)}^{\Lambda}$
\dotfill \pageref{not76}

$\Lambda_{\ge i}$, $\Lambda_{\le i}$
\dotfill \pageref{not77}

$\EE_{(\Lambda_1,\Lambda_2,\ldots,\Lambda_m)}^{\Lambda}$,
$\OO_{(\Lambda_1,\Lambda_2,\ldots,\Lambda_m)}^{\Lambda}$, 
\dotfill \pageref{not78}

$\OO_{(\bf O)}^{\bf O}$
\dotfill \pageref{not79}

$\CA$
\dotfill \pageref{not80}

$F^A_{(\Lambda_1,\ldots,\Lambda_{k-1})}$, ${\ff}^A$
\dotfill \pageref{not81}

$[\lambda;q:{A_0}]$
\dotfill \pageref{not82}

$\text{St}_{\lambda_0}(\lambda;A)$
\dotfill \pageref{not83}

$\thqc(z)$
\dotfill \pageref{not84}

$f_{\overline{c}}$, $S_{\overline{c}} \hat{f}$
\dotfill \pageref{not85}

$\div_{\Eq}$, $\Sigma_{A_{0}}$, $\text{Sp}$
\dotfill \pageref{not86}

$T_{A_{0},c}$
\dotfill \pageref{not87}

$U_{c}$
\dotfill \pageref{not88}

$E_{i,j}$, $E_{i,j,c,d}$
\dotfill \pageref{not89}

$Z^{1}_{pr}(\UU,\Lambda_I(M_0))$
\dotfill \pageref{not90}

$\G$$\G_{A_{0}}$, $\g$, $\g^{\geq \delta}$, $\G^{\geq \delta}$, $g^{(\delta)}$ 
\dotfill \pageref{not91}

$\Lambda_{I}(M_{0})$, $\lambda^{(\delta)}$, $V^{(\delta)}$
\dotfill \pageref{not92}

$\F_{\leq \delta}(M_{0})$
\dotfill \pageref{not93}

$S_{c,d} \hat{F}_{A}$
\dotfill \pageref{not94}

$\F_{M}$
\dotfill \pageref{not95}

$\F_{M}(U)$
\dotfill \pageref{not96}

$\underline{\End}^{> 0}(M_{0})$
\dotfill \pageref{not97}

$p := q^{-1}$, $(a;p)_{n}$, $(a;p)_{\infty}$, $(a_{1},\ldots,a_{k};p)_{n}$,
$(a_{1},\ldots,a_{k};p)_{\infty}$
\dotfill \pageref{not98}

$\theta(z;q)$, $\thq$ 
\dotfill \pageref{not99}

$\theta_{q,a}(z)$
\dotfill \pageref{not100}

$\left[h\right]_{n}$
\dotfill \pageref{not101}

$S_{\overline{\lambda}} \hat{f} = S_{\Lambda} \hat{f}$
\dotfill \pageref{not102}

${}_{2} \Phi_{1}(a,b;c;q,z)$
\dotfill \pageref{not103}

$K(\lambda,\mu,z)$
\dotfill \pageref{not104}

$T^{2} M$, $S^{2} M$, $\Lambda^{2} M$
\dotfill \pageref{not105}

$s(\alpha,\beta;q,x)$ 
\dotfill \pageref{not106}

$F(\alpha,\beta;q,x)$, $G(\alpha,\beta;q,x)$
\dotfill \pageref{not107}

$M(\alpha,\beta;q,x)$
\dotfill \pageref{not108}

$U_{\lambda}(x)$, $V_{\lambda}(x)$, $S_{n}(x;q)$
\dotfill \pageref{not109}

$U(\lambda)$, $V(\lambda)$
\dotfill \pageref{not110}

$\theta_{0,1}(x)$, $\theta_{0,1}(x,\omega)$, 
$\theta_{1,1}(x)$, $\theta_{1,1}(x,\omega)$
\dotfill \pageref{not111}

$F(D)$, $f_{0,1}(x)$, $f_{0,1}(x,\omega)$
\dotfill \pageref{not112}

$(\underline{M},\underline{u})$
\dotfill \pageref{not113}

$\F(P_{1},\ldots,P_{k})$
\dotfill \pageref{not114}

$\F(P_{1},P_{2})$, $\Ext(P_{2},P_{1})$
\dotfill \pageref{not115}

$\F(P_{1},P_{2},P_{3})$
\dotfill \pageref{not116}

$(K,\sigma)$
\dotfill \pageref{not117}

$K^{\sigma}$, $\D_{K,\sigma}$, $\DKMod$
\dotfill \pageref{not118}

$(V,\Phi)$
\dotfill \pageref{not119}

$\sigma^* V$
\dotfill \pageref{not120}

$\1$, $(K^n,\Phi_A)$
\dotfill \pageref{not121}

$\DM$, $\Ext^i(M,N)$ 
\dotfill \pageref{not122}

$\Lin_{\sigma}(V,W)$, $\Lin_K(M,N)$
\dotfill \pageref{not123}

$t_{\Phi,\Psi}$
\dotfill \pageref{not124}

$\Gamma(M)$
\dotfill \pageref{not125}

$\Gamma^i(M)$, $\Ext^i(\1,M)$
\dotfill \pageref{not126}

$\Phi \otimes \Psi$, $M \otimes N$ 
\dotfill \pageref{not127}

$\Homint(M,N)$
\dotfill \pageref{not128}

$M^{\vee}$
\dotfill \pageref{not129}

$\G$
\dotfill \pageref{not130}

$A_{U}$, $A_{0}$
\dotfill \pageref{not131}

$\Gamma_{u}(y,x)$
\dotfill \pageref{not132}

$t_{\Phi,\Psi}$
\dotfill \pageref{not133}

\printindex 

\end{document}